\DeclareFontFamily{U}{mathx}{}
\DeclareFontShape{U}{mathx}{m}{n}{<-> mathx10}{}
\DeclareSymbolFont{mathx}{U}{mathx}{m}{n}
\DeclareMathAccent{\widehat}{0}{mathx}{"70}
\DeclareMathAccent{\widecheck}{0}{mathx}{"71}
\def \mod{{\textup{mod}}}
\def \Mod{{\textup{Mod}}}
\newcommand{\la}{\langle}
\newcommand{\ra}{\rangle}
\renewcommand{\Re}{\operatorname{Re}\,}
\renewcommand{\Im}{\operatorname{Im}\,}
\newcommand{\sgn}{\operatorname{sgn}}
\def\R{{\mathbb R}}
\def\e{{\varepsilon}}
\def\g{{\gamma}}
\def\s{{\sigma}}
\def\r{{\rho}}
\def\a{{\alpha}}
\def\b{{\beta}}
\def\d{{\delta}}
\def\l{{\lambda}}
\def\z{{\zeta}}
\def\m{{\mu}}
\def\n{{\nu}}
\def \x{{\xi}}
\def\Uue{\underline{U_{e}}}
\def\Uud{\underline{U_{d}}}
\def\Hu{\mathcal H_{\underline{\omega}}}
\def\Pue{{\underline{P_e}}}
\def\Pud{{\underline{P_d}}}
\def\uo{\underline{\omega}}
\def\pu{\underline{p}}
\def\Uu{\underline{U}}
\newtheorem{theorem}{Theorem}
\newtheorem{lemma}[theorem]{Lemma}
\newtheorem{corollary}[theorem]{Corollary}
\newtheorem{proposition}[theorem]{Proposition}
\theoremstyle{definition}
\newtheorem{definition}[theorem]{Definition}
\theoremstyle{remark}
\newtheorem{remark}[theorem]{Remark}
\numberwithin{equation}{section}
\numberwithin{theorem}{section}
\numberwithin{problem}{section}
\title[Asymptotic stability of solitary waves for 1D NLS]{Asymptotic stability of solitary waves for one dimensional nonlinear Schr\"odinger equations}
\subjclass[2020]{35Q55, 35B40, 43A32, 74J35}
\keywords{Nonlinear Schr\"odinger equation, Solitary waves, Asymptotic stability, Resonances, Modulation.}
\begin{document}
	
\author[Charles Collot]{Charles Collot}
\address{Charles Collot, Laboratoire AGM, CY Cergy Paris Universit\'e, 2 avenue Adolphe Chauvin, 95300 Pontoise, France}
\email{ccollot@cyu.fr}

\author[P. Germain]{Pierre Germain}
\address{Pierre Germain, Department of Mathematics, Huxley Building, South Kensington Campus,
Imperial College London, London SW7 2AZ, United Kingdom}
\email{pgermain@ic.ac.uk}

\maketitle 	

\begin{abstract}
We show global asymptotic stability of solitary waves of the nonlinear Schr\"odin-ger equation in space dimension 1. Furthermore, the radiation is shown to exhibit long range scattering if the nonlinearity is cubic at the origin, or standard scattering if it is higher order. We handle a general nonlinearity without any vanishing condition, requiring that the linearized operator around the solitary wave has neither nonzero eigenvalues, nor threshold resonances. Initial data are chosen in a neighborhood of the solitary waves in the natural space $H^1 \cap L^{2,1}$ (where the latter is the weighted $L^2$ space). The proof relies on the analysis of resonances as seen through the distorted Fourier transform, combined for the first time with modulation and renormalization techniques.
\end{abstract}

\tableofcontents

	\section{Introduction}
	
	\subsection{1D nonlinear Schr\"odinger equations and their solitons}
	
	We consider the Cauchy problem for the nonlinear Schr\"odinger equation
	\begin{equation} \tag{NLS}
	\label{NLS}
	i \partial_t v - \partial_x^2 v - F'(|v|^2) v= 0
	\end{equation}
	with prescribed data
	$$
	v(t=0) = v_0.
	$$
	It is stemming from the Hamiltonian
	$$
	H(v) = \int [|\partial_x v|^2 - F(|v|^2)]\,dx.
	$$
	The interaction potential $F$ will only be assumed to be smooth and to have a non-degenerate local minimum at zero. Stationary waves of the type
	$$
	v(t) = e^{-it\omega} \Phi_\omega 
	$$
	are given by solutions of 
	\begin{equation} \label{eq:soliton}
	\partial_x^2 \Phi_\omega - \omega \Phi_\omega + F'(\Phi_\omega^2) \Phi_\omega = 0.
	\end{equation}
	Under our assumptions on $F$, there exists a unique solution of the above equation on an interval $\omega \in (0,\omega^*)$, for some $\omega^*>0$. Furthermore, $\Phi_\omega$ is even, positive, decreasing on $x>0$, and exponentially decreasing at infinity, along with its derivatives (see~\cite{BL} for these facts and a full characterization of the interval of existence).
	
For $p, \gamma, y \in \mathbb{R}$, Galilean, phase and translation symmetries
$$
v(t,x) \mapsto e^{i(p x + p^2 t + \gamma)} v(t,x+2p  t - y)
$$
leave the set of solutions of~\eqref{NLS} invariant. In particular, this gives the family of traveling waves
\begin{equation} \label{id:family-solitons}
e^{i( p x +( p^2-\omega) t + \gamma)} \Phi_\omega (x+2p t - y).
\end{equation}

Our aim in this paper is to establish asymptotic stability of this family of solitary waves, under appropriate spectral assumptions on the linearized operator around them.

\subsection{Long range scattering for small initial data} If $F''(0) \neq 0$, proving decay or deriving the aymptotics of solutions of~\eqref{NLS} for small data is already challenging. This is because a cubic nonlinearity is long range in dimension 1; in more technical terms, $|u|^2 u \not \in L^1_t L^2_x$ if $u$ is a solution of the linear Schr\"odinger equation. Small solutions do not scatter, but there is a logarithmic correction in the phase in Fourier, which is refered to as long range, or modified, scattering. More precisely,
$$
v(t) \sim \mathcal F^{-1}\left( e^{i\left(\xi^2 t-\frac{L}{2}|\widehat f|^2 \ln t\right)}\widehat f\right)  \qquad \mbox{as }t\to \infty,
$$
for a profile $f$, $L = F''(0)$, and where $\mathcal F$ and $\mathcal F^{-1}$ denote the Fourier transform and its inverse.

This was proved first by Hayashi and Naumkin~\cite{HN}, after which other approaches were proposed~\cite{IT,LS}, see the instructive review~\cite{Murphy}. In the present paper, we will follow the approach proposed by Kato and Pusateri~\cite{KP} in the spirit of the space-time resonance method.

Modified scattering for small solutions also holds in the presence of a potential
$$
i \partial_t v - \partial_x^2 v + V v - |v|^2 v= 0,
$$
but it is more difficult to prove. Over the last few years, a number of proofs appeared~\cite{Delort,GPR,Naumkin,CP1,CP2}; the article by the second author, Pusateri and Rousset~\cite{GPR} proceeds via the distorted Fourier transform, which will also be our approach.

Finally, yet another approach is possible in the case of the cubic~\eqref{NLS}, corresponding to $F(z)=z^2$. Taking advantage of the integrability of this equation, Deift and Zhou~\cite{DZ} were able to show that modified scattering holds for any data in $H^1 \cap L^{2,1}$ in the defocusing case.

The nonlinear Schr\"odinger equation can be linked to the binormal flow of curves. Modified scattering is then related to the self-similar motion of vortex lines, see \cite{VB}.

\subsection{Stability and instability of the solitary waves} 
The first notion of stability is~\textit{orbital stability}; by the general theory of Grillakis-Shatah-Strauss~\cite{GSS,GSS2}, see also~\cite{CL,SS,Weinstein, Weinstein2}, it is completely understood: it holds if and only if
$$
c_\omega = \frac{d}{d\omega} \int |\Phi_\omega|^2\,dx > 0
$$
(leaving aside the limiting case $c_\omega = 0$). For homogeneous power nonlinearities, more can be said when $c_\omega<0$: namely, there exists initial states arbitrarily close to $\Phi_\omega$ which lead to finite-time blow up~\cite{BC}.

Asymptotic stability was first obtained by Buslaev and Perelman~\cite{BP} for functions $F$ vanishing to order $\geq 5$ at the origin, in the absence of a resonance at the edge of the continuous spectrum (see below for a definition), and in the absence of internal modes. Besides modulation, a key idea was to use improved local decay estimates. This restriction on the absence of internal modes was later lifted in~\cite{BS}, where the authors were able to show that radiation damping occurs.

The next important development was due to Krieger and Schlag~\cite{KS}, who were able to construct finite-codimension stable manifolds around the soliton in the monic supercritical case  $F(x) = |x|^p$, $p>3$. This improvement relied on the use of Strichartz estimates and sharper dispersive estimates; an additional difficulty occurs because of unstable modes. Furthermore, the authors give a precise description of the spectral resolution of the linearized operator, which is foundational for the present paper.

In the case of \textit{small} solitons arising from an exterior potential, asymptotic stability was obtained by Mizumachi~\cite{Mizumachi} through dispersion estimates, and more recently by Chen~\cite{Chen} who combined modulation and an analysis of resonances, thus pointing towards the methods applied in the present paper.

The results on asymptotic stability which have been cited so far rely mainly on the dispersive properties of the (linearized) problem, which are exploited through harmonic and spectral analysis. An alternative approach consists in the virial method; it enabled Martel~\cite{Martel} to prove asymptotic stability (on  arbitrarily large compact sets) for the cubic-quintic problem in the absence of resonances or internal modes. This method typically only requires data of finite energy, and might give optimal results for slowly decaying perturbations; but it fails to give sharp decay or asymptotics for the radiation and might be difficult to extend to quasilinear problems.

Finally, in the case $F(z) = |z|^2$, asymptotic stability can be proved by taking advantage of the completely integrable structure: this was achieved by Cuccagna and Pelinovsky~\cite{CP}. 

\subsection{Spectrum of the linearized operator}

Soliton stability is, of course, tightly related to the spectral properties of the linearization. Recasting \eqref{NLS} as a vector equation for $(v,\bar v)^\top$, the linearized operator around the soliton $\Phi$ is
$$
\mathcal{H}_\omega = \begin{pmatrix}
-\partial_x^2 + \omega & 0  \\
0 &  \partial_x^2 - \omega
\end{pmatrix} - \begin{pmatrix}
V_+  & V_-
\\
-V_- & -V_+
\end{pmatrix}
$$
where $V_- = F''(\Phi^2) \Phi^2$ and $V_+=F'(\Phi^2)+V_-$. The spectral properties of $\mathcal H_\omega$ have been studied in \cite{Gr,Weinstein,BP,ErSc,CPV,CGNT} and references therein.

Under the general assumption that $V_\pm$ decay exponentially fast, the essential spectrum of $\mathcal H$ equals $(-\infty,-\omega]\cup [\omega,\infty)$, and the rest of the spectrum consists of finitely many eigenvalues of finite algebraic multiplicity. If moreover $\Phi_\omega$ is the ground state with $\Phi_\omega\geq 0$, these eigenvalues can only be located on $\mathbb R\cup i\mathbb R$. Finally, under the additional condition of orbital stability $c_\omega>0$, then eigenvalues are in fact necessarily real. For these three facts, one can consult for example \cite{CPV}, Section 2 of \cite{ErSc} and the proof of Proposition 9.2. in \cite{KS} and references therein respectively.

In the present case, nonzero eigenvalues are thus real and are known as \textit{internal modes}. The kernel of $\mathcal H$ is never null, since the set of solitons \eqref{id:family-solitons} is of dimension four. Indeed, by differentiating with respect to each parameter, we obtain four functions
\begin{equation}\label{id:generalized-kernel}
\Xi_0 =\begin{pmatrix} \Phi  \\ -\Phi \end{pmatrix} , \quad \Xi_1=\begin{pmatrix} \partial_\omega \Phi  \\ \partial_\omega \Phi \end{pmatrix}, \quad \Xi_2=\begin{pmatrix} \partial_x \Phi  \\ \partial_x \Phi \end{pmatrix}, \quad \Xi_3=\begin{pmatrix} x \Phi  \\ -x \Phi \end{pmatrix} \quad 
\end{equation}
that are always elements of the generalised kernel $\mathcal K=\cup_{n\geq 1} \textup{Ker}(\mathcal H^n)$ as
\begin{equation} \label{id:generalized-kernel-2}
\mathcal H \Xi_0=0, \quad \mathcal H \Xi_1=-\Xi_0, \quad \mathcal H \Xi_2=0, \quad \mathcal H \Xi_3=-2\Xi_2.
\end{equation}
If $c_\omega > 0$, we learn from Proposition 1.2.2 in~\cite{BP} that the generalized kernel equals the tangent space of the full set of solitons
\begin{equation} \label{id:assumption-spectral}
\mathcal K=\textup{Span}\left\{\Xi_j\right\}_{j=0,1,2,3}.
\end{equation}

Finally, the tip of the continuous spectrum might exhibit \textit{edge resonances}; they are absent if 
\begin{equation} \label{id:assumption-spectral-2}
\mbox{there is no nonzero bounded solution $f$ to }\mathcal H f=\omega f.
\end{equation}

\subsection{Main result}
We introduce the space $L^{2,1}$ associated to the weighted norm $\| u\|_{L^{2,1}}=\| \langle x \rangle u \|_{L^2}$. We recall that the interaction potential $F$ is assumed to be smooth, with a non-degenerate local minimum at zero. By \cite{BL}, there exists then a branch of ground state solitary waves $(\Phi_{\omega})_{\omega \in (0,\omega^*)}$.

\begin{theorem} \label{mainthm}
Assume that for $\omega \in (0,\omega^*)$:
\begin{itemize}
\item The orbital stability condition $c_\omega = \frac{d}{d\omega} \int |\Phi_\omega|^2\,dx > 0$ is satisfied.
\item The only eigenvalue of $\mathcal{H}$ is $0$, i.e. there are no internal modes.
\item $\mathcal{H}$ does not have a resonance in the sense of \eqref{id:assumption-spectral-2}.
\end{itemize}
Then, for each $\omega_0\in (0,\omega^*)$, there exists $\epsilon_0> 0$ such that if the data
$$
v_0 = \Phi_{\omega_0}+u_0 
$$
are sufficiently close to the soliton: 
$$
\| u_0 \|_{ H^1 } + \|  u_0 \|_{L^{2,1}}  =\epsilon < \epsilon_0,
$$
then the solution to \eqref{NLS} is global and 
\begin{itemize}
\item[(i)] \emph{Asymptotic stability.} The solution $v$ can be written as 
$$
\displaystyle v(t,x)=e^{i(p x+\gamma)}\Phi_{\omega}(x-y)+u(t,x)
$$
where the parameters $\omega(t) , \gamma(t) ,p(t) ,y(t)  \in  \mathbb R$ enjoy the asymptotics
\begin{align*}
&  \omega(t) = \underline \omega +O(\epsilon \langle t \rangle^{-1-\nu}), \qquad \qquad \qquad   p(t) = \underline p+O(\epsilon \langle t \rangle^{-1-\nu}) \\
&  \gamma(t) = (\underline p^2 - \underline \omega) t + \underline \gamma +O( \epsilon \langle t \rangle^{-\nu}), \qquad  y(t) = - 2\underline p t + \underline y +O( \epsilon \langle t \rangle^{-\nu }) ,
\end{align*}
for constants $\underline \omega, \underline \gamma, \underline p, \underline y$ and some $\nu>0$, and where the radiation $u$ satisfies $\| u\|_{H^1}\lesssim \epsilon$ and disperses:
$$
\| u(t,\cdot)\|_{L^\infty}\lesssim \epsilon \langle t \rangle^{-1/2}.
$$
\smallskip
\item[(ii)] \emph{Modified scattering for the radiation.} There exists a profile function $f$ with $\| f \|_{H^1}+\| \widehat f\|_{L^\infty}\lesssim \epsilon $ such that
$$
u =  \mathcal F^{-1}\left( e^{i\left(\xi^2 t-\frac{L}{2}|\widehat f|^2 \ln t\right)}\widehat f\right) +\widetilde u,
$$
with
$$
\| \widetilde u(t,\cdot) \|_{L^2} \to 0 \qquad \mbox{as }t\to \infty.
$$

\item[(iii)] \emph{Continuity.} The dependance of the asymptotic parameters and profile function on the initial data $u_0\mapsto (\omega,\gamma,p,y,f)$ is continuous from $H^1\cap L^{2,1}$ into $\mathbb R^4 \times H^1\cap \mathcal F^{-1}L^\infty$.
\end{itemize}

\end{theorem}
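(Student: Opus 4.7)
The plan is to combine a four-parameter modulation along the orbit of solitary waves with an analysis of the radiation through the distorted Fourier transform adapted to $\mathcal{H}_\omega$, so as to capture both the generalized kernel and the long range cubic resonance responsible for modified scattering.

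First I would decompose the solution as
$$v(t,x) = e^{i(p(t)x+\gamma(t))}\Phi_{\omega(t)}(x-y(t)) + u(t,x)$$
and fix the four modulation parameters by imposing symplectic orthogonality of $u$, read in the moving frame, against $\Xi_0, \Xi_1, \Xi_2, \Xi_3$. The implicit function theorem provides these parameters as long as $\|u\|_{H^1}$ stays small, and differentiating the orthogonality in time yields ODEs of the form $\dot\omega, \dot p = O(u^2)$ and $\dot\gamma - (p^2-\omega), \dot y + 2p = O(u^2)$. The rates claimed in (i) will then follow once one has pointwise $\langle t\rangle^{-1/2}$ decay and a weighted $L^2$ control on $u$.

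Second I would pass to the moving frame via $\tilde u(t,x) = e^{-i(p(t)x+\gamma(t))}u(t,x+y(t))$. The radiation then satisfies $i\partial_t \tilde u = \mathcal{H}_\omega \tilde u + \mathcal{N}(\tilde u) + \mathcal{R}$, where $\mathcal{N}$ gathers the quadratic and cubic nonlinearities produced by expanding $F'(|\Phi+u|^2)(\Phi+u)$ around $\Phi_\omega$, while $\mathcal{R}$ is quadratic in $u$ and in the small modulation rates. The orthogonality enforced above ensures that $\tilde u$ lies in the range of the continuous spectral projection of $\mathcal{H}_\omega$. I would then diagonalize $\mathcal{H}_\omega$ on that subspace by the distorted Fourier transform $\widetilde{\mathcal{F}}$ built from its Jost-type generalized eigenfunctions: under the spectral assumptions, together with the absence of the edge resonance \eqref{id:assumption-spectral-2}, the Jost coefficients have no algebraic singularity at the thresholds $\pm\omega$, so the multipliers are bounded and dispersive estimates in the style of \cite{KS} are available. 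Setting $\tilde f(t,\xi) = e^{it(\xi^2+\omega)}\widetilde{\mathcal{F}} \tilde u(t,\xi)$, the equation for this profile becomes a sum of quadratic and cubic oscillatory integrals in distorted frequency, plus source terms from the modulation and from the time variation of $\omega$ inside $\widetilde{\mathcal{F}}$.

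The heart of the argument is then a bootstrap on $\|u\|_{H^1}$, on a weighted norm controlling $\tilde f$ in $H^1_\xi$ (equivalent to $\|u\|_{L^{2,1}}$), and on $\|\tilde f\|_{L^\infty_\xi}$. Non-resonant quadratic interactions are removed by a normal form (integration by parts in time) and produce higher-order remainders; the surviving long range resonance is cubic, localizes near the diagonal $\{\xi=\eta=\sigma\}$, and carries a coefficient proportional to $L=F''(0)$. Extracting this resonance as a phase correction via $\tilde g(t,\xi) = \exp\bigl(i\tfrac{L}{2}|\tilde f(\xi)|^2\ln t\bigr)\tilde f(\xi)$ yields a Cauchy sequence in the bootstrap norm and delivers the modified scattering statement (ii). Pointwise decay $\|u\|_{L^\infty}\lesssim\epsilon\langle t\rangle^{-1/2}$ in (i) follows from stationary phase on the bootstrap bounds, and the forcing terms in the modulation ODEs become time-integrable, producing the asymptotics of $(\omega,p,\gamma,y)$. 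Lipschitz versions of the same estimates give continuity (iii).

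The main obstacle is fusing modulation with the distorted-Fourier-based resonance analysis, which has apparently not been attempted before. Three difficulties stand out. The family $\widetilde{\mathcal{F}}_{\omega(t)}$ varies in time and its derivatives must be absorbed without loss, using crucially that $\dot\omega$ is already quadratic in $u$; the non-selfadjoint matrix structure of $\mathcal{H}_\omega$ couples $\tilde u$ with its conjugate, so several branches of generalized eigenfunctions interact and many resonant phase configurations in $\pm(\xi^2+\omega)\pm(\eta^2+\omega)\pm\cdots$ must be classified; and the symbols of the multilinear oscillatory integrals must be controlled uniformly up to the thresholds $|\xi|=\sqrt{\omega}$, where \eqref{id:assumption-spectral-2} enters critically by guaranteeing the smoothness of the Jost data.
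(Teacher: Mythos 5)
Your global plan (modulation plus distorted-Fourier-transform resonance analysis, bootstrap in $H^1_\xi$ and $L^\infty_\xi$ of the profile, extraction of the cubic long-range phase) is the right one, but your choice of linearization variables has a gap that the paper's introduction explicitly identifies and works around.

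You renormalize by the full time-dependent frame and set $\tilde f(t,\xi)=e^{it(\xi^2+\omega(t))}\widetilde{\mathcal{F}}_{\omega(t)}\tilde u(t,\xi)$, so that both the distorted Fourier transform and the Galilean phase carry the instantaneous parameters $\omega(t),p(t)$. You acknowledge that the time-variation of $\widetilde{\mathcal{F}}_{\omega(t)}$ produces extra terms and argue that they can be absorbed because $\dot\omega$ and $\dot p$ are already quadratic in $u$. This is precisely where the argument breaks. When you differentiate the filtered profile in time, the derivative of the transform and of the phase with respect to $\omega$ and $p$ gets conjugated by the linear group, and the conjugated generators $e^{-it\mathcal H_\omega}\,\partial_\omega\,e^{it\mathcal H_\omega}$ and $e^{-it\mathcal H_\omega}\,\sigma_3 x\,e^{it\mathcal H_\omega}$ grow \emph{linearly} in $t$ (the first because $\partial_\omega\mathcal H_\omega$ contains $\sigma_3$, the second because $x$ turns into $\partial_\xi$ and is conjugated by $e^{\pm it\xi^2}$). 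So the size of the extra term is roughly $t\,(|\dot\omega|+|\dot p|)$ acting on $\widetilde f$; with the a priori quadratic bound $|\dot\omega|+|\dot p|\lesssim\langle t\rangle^{-2+2\alpha}$ this is only $\langle t\rangle^{-1+2\alpha}$, which is not integrable, and the loss cannot be closed in the weighted norm $\|\partial_\xi\widetilde f\|_{L^2}\lesssim\langle t\rangle^\alpha$. The paper avoids this by \emph{freezing} the distorted Fourier transform and the Galilean renormalization at the terminal values $\underline\omega=\omega(\infty)$, $\underline p=p(\infty)$ (obtained self-consistently inside the bootstrap), while letting $\gamma$ and $y$ modulate freely; this is the ``second decomposition, linearizing halfway to the final soliton,'' and its necessity is spelled out in the introduction (``the operators in the first line turn out to grow linearly in time\dots\ this choice appears to be the only possible one''). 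Without some equivalent device — freezing $\omega,p$, or an explicit renormalization compensating the linear-in-$t$ conjugated generators — your bootstrap does not close. A secondary but related omission: once you freeze at $\underline\omega,\underline p$, the projected radiation acquires a small discrete-spectrum component $\underline{P_d}\underline U=\sum a_j\underline{\Xi_j}$ (quantitatively of size $\lesssim(|p-\underline p|+|\omega-\underline\omega|)\int|\underline{U_e}|e^{-\mu|x|}$), which must be tracked separately; your decomposition never introduces this. Finally, a minor point: the threshold in the distorted Fourier variable sits at $\xi=0$ (since the spectral parameter is $\pm(\omega+\xi^2)$), not at $|\xi|=\sqrt\omega$; the improvement from the no-resonance hypothesis is the vanishing $\psi_\pm(x,0)=0$ and hence $\widetilde f(0)=0$, which is what powers the improved local decay.
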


When are the spectral assumptions of the above theorem satisfied? As explained in Section~\ref{concrete}, they are satisfied in the case of the cubic-quintic nonlinear Schr\"odinger equation, corresponding to $F(x) = x^3 - x^5$, see \cite{PKA}. The spectral analysis which is needed to understand whether the conditions hold or not is quite involved, which explains that this is the only known case; but the expectation is that many other examples exist.

\subsection{Related model: nonlinear Klein-Gordon equation}
The stability theory for the nonlinear Klein-Gordon equation 
\begin{equation}
\label{NLKG} \tag{NLKG}
\partial_t^2 u - \Delta u = G'(u)
\end{equation}
set on the line is very close to that of~\eqref{NLS}. Before we review the parallel development of the different results which were recalled above for~\eqref{NLS}, it might be useful to stress first the differences. The first one is that solitary waves of~\eqref{NLKG} are pinned under appropriate symmetry assumptions (evenness): under such assumptions, it is possible to prove asymptotic stability without modulating, which is the framework of all the papers which will be reviewed below. On the contrary, modulation is necessary for~\eqref{NLS} irrespective of the symmetries which are imposed. The second difference is that much effort has been devoted to understanding the stability of non-localized solitary waves (known as kinks or topological solitons) while such objects have attracted less attention for~\eqref{NLS}.

We now turn to reviewing the stability theory for solitons of~\eqref{NLKG}, omitting the questions of modified scattering and orbital stability, which are very similar to~\eqref{NLS}. Asymptotic stability for high-power nonlinearities is due to~\cite{KK1,KK2}, and the supercritical case was the object of~\cite{KriegerNakanishiSchlag}.

More recently, a remarkable result was the proof by Kowalczyk, Martel and Munoz~\cite{KowalczykMartelMunoz1,KowalczykMartelMunoz2,KMMV} of the asymptotic stability of solitary waves through the virial method, allowing edge resonances, but not internal modes for the linearized operator.

If one does not rely on the virial approach, but rather on dispersive estimates, the main difficulty when proving asymptotic stability of solitary waves is the following: prove decay for Klein-Gordon equations including a potential and quadratic terms:
$$
\partial_t^2 u - \Delta u + u + V u = u^2.
$$
This difficulty was first addressed in~\cite{LLS1,LLS2,LLSS}. The second author with Pusateri~\cite{GP}, see also~\cite{GPZ,KairzhanPusateri} resorted to an approach via the distorted Fourier transform to obtain sharp decay and asymptotics, excluding edge resonances and internal modes. In the presence of internal modes, Delort and Masmoudi~\cite{DelortMasmoudi} used a semiclassical approach to obtain long-time existence. In the presence of resonance, an almost global result was obtained by Luhrmann and Schlag~\cite{LuhrmannSchlag2}

The equation~\eqref{NLKG} becomes integrable if $G=\sin$: this is the so-called Sine-Gordon equation. Asymptotic stability was obtained using the integrable structure by Chen, Liu and Lu~\cite{ChenLiuLu}. Noticing a crucial cancellation, Luhrmann and Schlag~\cite{LuhrmannSchlag1} were able to prove asymptotic stability through PDE means.

Finally, we refer to the surveys \cite{CuMa,KowalczykMartelMunoz1bis,Soffer} for results on the asymptotic stability of solitary waves in different contexts (other equations, higher dimensions).

\subsection{Challenges and ideas of the proof}

This article is the first to combine two fundamental ideas in nonlinear dispersive equations. On the one hand, the \textit{modulation analysis} allows to identify the dynamics of the soliton parameters. On the other hand,  the \textit{analysis of resonances} plays a key role in the dynamics of the radiation as well as the soliton parameters.

Here, resonances can be understood in the broad sense of space-time resonances, see~\cite{Germain} for an introduction, and also~\cite{GMS1,GMS2,GNT}. Their analysis allows for a more precise understanding of the dynamics, beyond the dispersive and Strichartz estimates, which have classically been used in conjunction with modulation, but which only record decay for the amplitude $|u|$ and do not detect further nonlinear cancellations due to oscillations. This improvement explains that we are able to reach the natural class $H^1 \cap L^{2,1}$ for the data, and that we do not require any vanishing of the field potential $F$ at the origin.

Let us now discuss how the two techniques, that of modulation and the analysis of resonances, can be brought together - details will be omitted, and approximations made, but we hope to convey the main idea. 

\medskip

\noindent \emph{Joint renormalization by the symmetries of the equation and the linearized group.} The solution $v$ is sought under the form
$$
V (t,x)= e^{i \sigma_3 (px-\gamma)}(\Phi_\omega + U(t))(x+y),
$$
where $U$ is the radiation, and $(\gamma,y,p,\omega)$ are the parameters of the soliton. To analyze in an optimal fashion the radiation, we filter it by the group generated by $\mathcal{H}_\omega$ (the linearization around the soliton) to obtain the profile
\begin{equation}
\label{formulaprofile}
f(t) = e^{-it\mathcal H_\omega}(e^{i\sigma_3 (px-\gamma) }U(t,x+y)).
\end{equation}
How should the parameters $(\gamma,y,p,\omega)$ be chosen? Should they be chosen on the spur of the moment, to approximate optimally at each time $t$ the solution $v$ by the soliton? Or should one rather keep the end in mind, set them to their value at $+ \infty$, since the formula~\eqref{formulaprofile} is quite sensitive to changes in the parameters? 

To shed some light on this debate, it is useful to introduce the following operators, which correspond to the conjugation of the infinitesimal generators of the soliton symmetries by the group, and are natural in view of~\eqref{formulaprofile}
\begin{align*}
& \partial_\omega f= e^{-it\mathcal H_\omega} \partial_\omega ( e^{it\mathcal H_\omega} f), \quad \partial_p f= i e^{-it\mathcal H_\omega}\sigma_3 x e^{it\mathcal H_\omega} f, \\
& \partial_y f = e^{-it\mathcal H_\omega}\partial_x e^{it\mathcal H_\omega} \quad \mbox{ and } \quad \partial_\gamma f = -ie^{-it\mathcal H_\omega}\sigma_3 e^{it\mathcal H_\omega}.
\end{align*}
The operators in the first line turn out to grow linearly in time, while the operators in the second line can be controlled. As a conclusion, we shall set $\omega$ and $p$ to their final values (which is technically achieved through a bootstrap argument) while $y$ and $\gamma$ are left to fluctuate freely. This amounts to linearizing around a soliton that is intermediate between the one at time $t$ and the final one. Due to the very slow decay of the modulation parameters towards their final values, this choice appears to be the only possible one!

\medskip

\noindent \emph{Improved modulation equations:} The evolution of the modulation parameters $(\omega,\gamma,p,y)$ of the solitary wave is affected by the radiation to leading order by quadratic terms. We estimate these interaction terms carefully by analyzing the space-time resonances that appear, which allows us to obtain a bound that goes beyond the one obtained by the sole use of the improved local decay for the linearized group.

\medskip

This gives the general idea of the combination of modulation and resonance analysis. However, its implementation requires a very careful analysis since modulation, resonances, and modified scattering are interlocked. At a technical level, this requires to develop further (after~\cite{BP} and~\cite{KS}) the theory of the distorted Fourier transform associated to $\mathcal{H}_\omega$, and to understand how it acts on various linear and nonlinear expressions. This also requires a functional setup which accomodates our minimal regularity assumptions; it is inspired by~\cite{GPR} and~\cite{GP}, and will not be detailed in this introduction. 

\subsection{Perspectives} The methods presented in this article seem versatile and powerful, and should apply to a number of problems, in dimension one and higher.

Focusing on the case of dimension one, it would be very interesting to relax the spectral assumptions made in the main theorem. In the case of eigenvalues with nonzero imaginary part, the construction of stable and unstable manifolds seems within reach. However, dealing with edge resonances or internal modes seems much harder, inevitably leading to a singularity in distorted Fourier space whose exact structure remains unclear.

Another interesting open problem would be to deal with the $L^2$-critical quintic Schr\"odinger equation, for which $c_\omega = 0$, and the dynamic in a neighborhood of the soliton is much richer, including the possibility of finite time blow up.

Finally, it would also be of interest to extend some classical constructions of the theory of solitary waves to the context considered here: multi-soliton configurations and their stability, interaction between solitons.

\subsection{Organization of the article} The first part of the paper is dedicated to the analysis of the linearized problem. Classical facts on the spectrum of vector Schr\"odinger operators are recalled in Section~\ref{sectionspectrum}; the scattering theory for the linearized operator of Krieger-Schlag~\cite{KS} is sharpened in Section~\ref{sectionscattering}. Turning to the corresponding distorted Fourier transform, its linear theory (in particular, conjugation of various linear operators) is developed in Section~\ref{sectiondistorted}, and its nonlinear theory (in particular, action on products of functions) in Section~\ref{sectionnonlinear}. Finally, linear estimates on the group $e^{it\mathcal{H}_\omega}$ are established in Section~\ref{sectionlinear}.

The heart of the paper consists of Section~\ref{sectionwriting}, where the equation is set in the appropriate form to allow for estimates, and of Section~\ref{sectionbootstrap}, where the central bootstrap argument is set up.

The rest of the paper is dedicated to the control of the various nonlinear terms through the analysis of resonances: the equation governing modulation parameters is treated in Section~\ref{sectionmodulation}. The equation controlling the radiation is the object of sections~\ref{sectionquadratic},~\ref{sectionremaining} and~\ref{sectionpointwise}, which address,  the weighted $L^2$ norm for the quadratic terms, the weighted $L^2$ norm for the remaining terms, and the pointwise bound in Fourier, respectively. Finally, Section~\ref{sectionmodified} shows modified scattering in the classical Fourier sense.

\subsection*{Acknowledgements} When preparing this article, Pierre Germain was supported by the Simons Foundation Collaboration on Wave Turbulence, a start up grant from Imperial College, and a Wolfson fellowship. 

Charles Collot was supported by the CY Initiative of Excellence Grant "Investissements d'Avenir" [ANR-16-IDEX-0008 to Charles Collot], and by the Chaire Professeur Junior grant [ANR-22-CPJ2-0018-01 to Charles Collot].

\section{Notations}

\subsection{Miscellaneous}

Given two quantities $A$ and $B$, the notation $A \lesssim B$ means that, for a universal constant $C>0$, there holds $ B \leq CA$. If the constant $C$ is allowed to depend on parameters $\alpha_i$, the notation becomes $A \lesssim_{\alpha_i} B$. We will denote $A \sim B$ if $A \lesssim B$ and $B \lesssim A$, . Finally, if $A \leq CB$ for a sufficiently (depending on the context) small constant $C>0$, then we write $A \ll B$.

The real scalar product between vectors $x$ and $y$ in $\mathbb{R}^2$ is denoted by 
$$
x \cdot y = x_1 y_1 + x_2 y_2,
$$ 
while the Hermitian scalar product on $L^2 (\mathbb{R},\mathbb{C}^2)$ is denoted
$$
	\langle f \,,\,g \rangle = \int f(x) \cdot \overline{g(x)} \,dx.
	$$
	The Pauli matrices are
	\begin{equation} \label{S-v-eq4}
	\begin{split}
	\sigma_1 = 
	\left( \begin{array}{ccc}
	0 \quad 1 \\
	1 \quad 0
	\end{array} \right) , \
	\sigma_2 = 
	\left( \begin{array}{ccc}
	0 \ \ -i \\
	i \qquad 0
	\end{array} \right) ,  \
	\sigma_3 = 
	\left( \begin{array}{ccc}
	1 \qquad 0 \\
	0 \ \ -1
	\end{array} \right) . 
	\end{split}
	\end{equation}
	We will also write
	\begin{equation} \label{S-v-eq6}
	\begin{split}
	e_1 = \begin{pmatrix} 1 \\ 0 \end{pmatrix}, \
	e_2 = \begin{pmatrix} 0 \\ 1 \end{pmatrix}, \
	p = 
	\left( \begin{array}{ccc}
	1 \quad 0 \\
	0 \quad 0
	\end{array} \right) , \
	q = 
	\left( \begin{array}{ccc}
	0 \quad 0 \\
	0 \quad 1
	\end{array} \right)  . 
	\end{split}
	\end{equation}
	We let
	$$
	\langle \xi \rangle = \sqrt{\xi^2 + 2}.
	$$
Finally, we adopt the following normalization for the Fourier transform on $\mathbb{R}$
$$
\widehat{f}(\xi) = \frac{1}{\sqrt{2\pi}} \int f(x) e^{-ix\xi} \,dx \qquad \Leftrightarrow \qquad f(x) = \frac{1}{\sqrt{2\pi}} \int \widehat{f}(\xi) e^{ix\xi} \,d\xi.
$$

\subsection{Dyadic decomposition}

\label{dyadicdec}

We will work wiht a smooth, inhomogeneous partition of unity $(\varphi_k(x))_{k \geq 0}$ such that
\begin{align*}
&\sum_{k \geq 0} \varphi_k(x) = 1 \;\; \mbox{for all $x \in \R$}\\
&  \operatorname{Supp} \varphi_0 = [-1,1] \\
& \varphi_k(x) = \phi(2^{-k} |x|), \;\; \mbox{with $\operatorname{Supp} \phi \subset [\frac{1}{2}, 2]$}
\end{align*}
We will furthermore denote
\begin{align*}
& \varphi_{< \ell} = \sum_{0 \leq k < \ell} \varphi_k \\
& \varphi_{\leq \ell} = \sum_{0 \leq k \leq \ell} \varphi_k,
\end{align*}
with obvious generalizations to $\varphi_{>\ell}$, $\varphi_{\geq \ell}$.

Finally, we will denote $\varphi_{\sim k}$ a generic smooth cutoff function 
that is supported on an annulus $|\xi| \sim 2^k$.

\section{The spectrum of vector Schr\"odinger operators}

\label{sectionspectrum}

\subsection{General properties}
In the present section and until Section \ref{sectionlinear}, we take $\omega=1$ without loss of generality and consider a general vector Schr\"odinger operator of the form
$$
\mathcal{H} = \mathcal{H}_0 + V,
$$
where
\begin{equation*} 
\begin{split}
\mathcal{H}_0 = 
\begin{pmatrix}
-\partial_x^2 + 1 & 0  \\
0 &  \partial_x^2 - 1
\end{pmatrix}
\qquad 
V = 
\begin{pmatrix}
V_1  & V_2
\\
-V_2 & -V_1
\end{pmatrix}
\end{split}
\end{equation*}
and the potentials $V_1$, $V_2$ are such that:
\begin{equation}
\tag{H1} \label{H1} \mbox{$V_{1}$ and $V_2$ are even} \\
\end{equation}
and
\begin{equation}
\tag{H2} \label{H2} \mbox{For $k \in \mathbb{N}_0$, \;$\left| \partial_x^k V_1(x) \right| + \left| \partial_x^k V_2(x) \right| \lesssim_k e^{-\beta |x|}$ for some $\beta \in (0,1)$}.
\end{equation}
The following symmetries follow from direct computations.

\begin{lemma}[Symmetries]
The operator $\mathcal{H}$ satisfies the commutation relations
\begin{equation} \label{S-v-eq5}
\begin{split}
\sigma_3 \mathcal{H}^* \sigma_3 = \mathcal{H} \qquad \mbox{and} \qquad \sigma_1 \mathcal{H} \sigma_1 =  - \mathcal{H} .
\end{split}
\end{equation}
\end{lemma}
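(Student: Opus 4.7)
The lemma is a direct matrix computation, so the plan is essentially to split $\mathcal{H} = \mathcal{H}_0 + V$ and verify each identity on each summand separately, using the involutive nature of the Pauli matrices ($\sigma_i^2 = I$) and the fact that the potentials $V_1, V_2$ are real (being derived from the real soliton profile $\Phi$ via $V_\pm = F'(\Phi^2), F''(\Phi^2)\Phi^2$).

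For the first identity $\sigma_3 \mathcal{H}^* \sigma_3 = \mathcal{H}$, I would first observe that $\mathcal{H}_0$ is formally self-adjoint and diagonal, so it commutes with the diagonal matrix $\sigma_3$; hence $\sigma_3 \mathcal{H}_0^* \sigma_3 = \sigma_3^2 \mathcal{H}_0 = \mathcal{H}_0$. For the potential part, since $V_1, V_2$ are real one computes
\[
V^* = \begin{pmatrix} V_1 & -V_2 \\ V_2 & -V_1 \end{pmatrix},
\]
and conjugation by $\sigma_3$ flips the sign of the off-diagonal entries, yielding exactly $V$. Summing gives the claim.

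For the second identity $\sigma_1 \mathcal{H} \sigma_1 = -\mathcal{H}$, I would use that $\sigma_1$ swaps the two basis vectors, so conjugation by $\sigma_1$ exchanges the diagonal entries of any $2\times 2$ block and exchanges the off-diagonal entries. Applied to the diagonal $\mathcal{H}_0$, this swaps $-\partial_x^2 + 1$ with $\partial_x^2 - 1$, producing $-\mathcal{H}_0$. Applied to $V$, it swaps $V_1 \leftrightarrow -V_1$ on the diagonal and $V_2 \leftrightarrow -V_2$ off the diagonal, producing $-V$. Adding gives $-\mathcal{H}$.

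There is no real obstacle here: the statement is an elementary algebraic identity whose entire content is that the ansatz for $V$ is engineered so as to respect the two symmetries (charge conjugation and particle-hole) that the linearized NLS operator inherits from the Hamiltonian structure. The only small points to mention are the reality of $V_1, V_2$ (needed for the adjoint computation) and $\sigma_i^2 = I$.
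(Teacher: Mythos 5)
Your proof is correct and coincides with the paper's, which simply labels the lemma as following "from direct computations." The matrix algebra you carry out — conjugating $\mathcal{H}_0$ and $V$ separately by $\sigma_3$ (after transposition) and by $\sigma_1$ — is exactly the intended computation, and your remark about the reality of $V_1, V_2$ correctly identifies the only nontrivial input into the adjoint step.
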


Next, we want to discuss the spectrum of this operator, and choose the natural domain
$$
\mathcal{D}(\mathcal H) = H^2(\mathbb{R}) \times H^2(\mathbb{R}),
$$
on which $\mathcal{H}$ is closed.

\begin{definition} \begin{itemize} 
\item The \textit{resolvent set} of $\mathcal{H}$ is the set of $\lambda$ for which $\mathcal{H}-\lambda$ admits a bounded inverse $L^2 \times L^2 \to \mathcal{D}(\mathcal H)$.
\item The \textit{spectrum} $\sigma(\mathcal{H})$ is the complement of the resolvent set.
\item The Schr\"odinger operator has an \textit{eigenvalue} at energy $E$ if there exists $\phi \in L^2 \setminus \{ 0 \}$ such that $\mathcal{H} \phi = E \phi$.
\item It is an \textit{embedded} eigenvalue if $E \in (-\infty,-1] \cup [1,\infty)$.
\item The Schr\"odinger operator $\mathcal{H}$ has a \textit{resonance} at energy $E$ if there exists $\phi \in L^\infty \setminus L^2$ such that $\mathcal{H}\phi = E \phi$.
\item The \textit{discrete spectrum} $\sigma_d(\mathcal{H})$ is the set of isolated points of the spectrum with finite algebraic multiplicity.
\item The \textit{essential spectrum} $\sigma_e(\mathcal{H})$ equals $\sigma(\mathcal{H}) \setminus \sigma_d(\mathcal{H})$.
\end{itemize}
\end{definition}

General results on the spectral analysis of $\mathcal{H}$ are as follows.
\begin{lemma}[Spectrum of $\mathcal{H}$] \label{lemmaspectrum} Assume \eqref{H1}-\eqref{H2}. Then
	\begin{itemize}
\item[(i)] $\sigma_e(\mathcal{H}) = (-\infty,-1] \cup [1,\infty)$

\item[(ii)] The discrete spectrum $\sigma_d(\mathcal{H})$ consists of eigenvalues with finite algebraic multiplicity.

\item[(iii)] The set of eigenvalues is finite.

 \item[(iv)] Embedded eigenvalues and resonances can exist.
		
	\end{itemize}
\end{lemma}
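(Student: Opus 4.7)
The plan is to handle the four parts in sequence, using standard non-self-adjoint spectral theory together with the exponential decay from \eqref{H2}. For part (i), the first step is to Fourier-diagonalize $\mathcal{H}_0$: it acts as the multiplier $\operatorname{diag}(\xi^2+1,-\xi^2-1)$, whence $\sigma(\mathcal{H}_0) = \sigma_e(\mathcal{H}_0) = (-\infty,-1]\cup[1,\infty)$. I would then show that $V$ is relatively compact with respect to $\mathcal{H}_0$: for $z$ in the resolvent set of $\mathcal{H}_0$, the map $(\mathcal{H}_0-z)^{-1}$ sends $L^2\times L^2$ into $H^2\times H^2$, and composition with multiplication by the bounded, exponentially decaying matrix $V$ yields a Hilbert--Schmidt operator. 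Weyl's theorem on the invariance of the essential spectrum under relatively compact perturbations, valid for closed non-self-adjoint operators, then gives $\sigma_e(\mathcal{H}) = \sigma_e(\mathcal{H}_0)$. Part (ii) is now almost formal: at any isolated spectral point $E_0$, the Riesz projection $P_{E_0} = \frac{1}{2\pi i}\oint_\gamma (\mathcal{H}-z)^{-1}\,dz$ along a small loop $\gamma$ around $E_0$ has finite rank by the finite algebraic multiplicity assumption, and its range is an $\mathcal{H}$-invariant finite-dimensional subspace on which $\mathcal{H}$ has spectrum reduced to $\{E_0\}$; hence $E_0$ is genuinely an eigenvalue.

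For part (iii), my plan is a Birman--Schwinger-type argument tailored to the non-self-adjoint matrix setting. Factoring the potential as $V = AB$ with $A, B$ exponentially decaying, the point $E \in \mathbb{C}\setminus \sigma_e(\mathcal{H})$ is an eigenvalue of $\mathcal{H}$ if and only if $-1$ is an eigenvalue of the compact operator $K(E) = B(\mathcal{H}_0-E)^{-1}A$. Thanks to \eqref{H2}, the integral kernel of $(\mathcal{H}_0-E)^{-1}$ conjugated by the exponential weights $e^{\pm \beta |x|/2}$ admits a holomorphic extension in $E$ through the edges $\pm 1$ onto a suitable Riemann surface, so that $E\mapsto K(E)$ becomes a holomorphic family of compact operators away from $\pm 1$, tending to zero as $|E|\to\infty$. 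The analytic Fredholm theorem then yields a discrete zero set for $\det(I+K(E))$; combining this with the behaviour at infinity and near the edges delivers the finiteness claim.

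Finally, for part (iv) one exhibits explicit examples: Wigner--von Neumann-type potentials generate embedded eigenvalues, while suitable even compactly supported matrix potentials produce threshold resonances, and the spectral analyses in \cite{ErSc, CPV} already contain instances within the present class. I expect the main obstacle to be part (iii): since $\mathcal{H}$ is not self-adjoint on $L^2\times L^2$ (it is only symmetric with respect to the indefinite form $\langle \sigma_3\cdot,\cdot\rangle$), eigenvalues may be complex and no variational counting is available; one must instead control an analytic Fredholm determinant globally and rule out accumulation of its zeros at the spectral edges $\pm 1$, and it is exactly there that the exponential decay \eqref{H2} is indispensable.
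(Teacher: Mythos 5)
Your route for parts (i)--(iii) is a self-contained sketch of what the paper simply outsources: the paper cites Hundertmark--Lee for (i)--(ii) and Cuccagna--Pelinovsky--Vougalter for (iii), and both references proceed essentially as you do (relative compactness plus Weyl-type stability of the essential spectrum, and a Birman--Schwinger reduction with analytic Fredholm theory and an analytic continuation of the resolvent kernel across the edges, made possible by \eqref{H2}). Two small cautions: in (ii), you should observe that the Riesz projection at an isolated spectral point cannot vanish (else the resolvent extends holomorphically, contradicting $E_0\in\sigma(\mathcal{H})$), so the finite-rank projection is indeed non-trivial and carries an eigenvector; and in (i), Weyl's theorem for the ``isolated eigenvalues of finite algebraic multiplicity'' notion of essential spectrum ($\sigma_{e,5}$) requires the resolvent set of the perturbed operator to be non-empty and, more delicately, stability of this particular notion relies on $\mathbb{C}\setminus\sigma_e(\mathcal{H}_0)$ being connected---which holds here since the complement of $(-\infty,-1]\cup[1,\infty)$ is connected.

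The genuine gap is in part (iv). Wigner--von Neumann potentials decay only like $O(1/|x|)$ and therefore lie outside the class \eqref{H2}, whose exponential decay is precisely what \emph{excludes} the Wigner--von Neumann mechanism. The mechanism that produces embedded eigenvalues here is entirely different, and specific to the vectorial, non-self-adjoint structure: the two diagonal entries of $\mathcal{H}_0$ contribute the two opposite branches $[1,\infty)$ and $(-\infty,-1]$ of the essential spectrum, so a sufficiently deep bound state of \emph{one} diagonal block sits inside the essential spectrum contributed by the \emph{other} block and is therefore automatically embedded. The paper exhibits exactly this: take $V_2=0$, so $\mathcal{H}$ is block diagonal with entries $-\partial_x^2+1+V_1$ and $\partial_x^2-1-V_1$; a bound state $\varphi$ of the scalar operator $-\partial_x^2+V_1$ with eigenvalue $\mu\leq -2$ yields the eigenvector $(0,\varphi)^\top$ of $\mathcal{H}$ with eigenvalue $\lambda=-\mu-1\geq 1$, which lies in $[1,\infty)\subset\sigma_e(\mathcal{H})$. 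Replace your Wigner--von Neumann example with this block-decoupled construction; as it stands, your argument for (iv) produces an operator that violates the hypotheses and misses the actual source of embedded spectrum in this problem.
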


\begin{proof} 

\noindent  \underline{Proofs of (i) and (ii)}. This is \cite{HL}, Theorem 1.3.\\

\noindent  \underline{Proof of (iii)}. This is proved in \cite{CPV}.\\

\noindent  \underline{Proof of (iv)}. To see that embedded eigenvalues can exist, consider the following scenario, already pointed out in~\cite{KS}: assume $V_2 = 0$, and $V_1$ is such that the scalar Schr\"odinger operator $-\partial_x^2 - V_1$ has an eigenfunction $\varphi$ with a negative eigenvalue. Then the eigenvector $\begin{pmatrix} 0 \\ \varphi \end{pmatrix}$ is associated to an embedded eigenvalue of $\mathcal{H}$.

\end{proof}

The Riesz projector on the discrete spectrum, when it is made of finitely many points, is defined as
$$
P_d = \frac{1}{2\pi i} \oint_\gamma (z \operatorname{Id} - \mathcal{H})^{-1} \,dz,
$$
where $\gamma$ is a simple curve enclosing the whole discrete spectrum, and lying within the resolvent set. Denoting the eigenfunctions by $(\varphi_i)_{i=1 \dots n}$, the range of $P_d$ is
$$
\operatorname{Im} P_d = \operatorname{Span} (\varphi_1, \dots, \varphi_n).
$$
The projector on the essential spectrum, $P_e$, is such that
$$
P_d + P_e = \operatorname{Id}.
$$

The projectors $P_d$ and $P_e$ satisfy the classical properties of spectral projectors (see~\cite{HS})
\begin{align*}
& P_d^2 = P_d, \qquad P_e^2 = P_e \\
& \mathcal{H}P_d = P_d \mathcal{H}, \qquad \mathcal{H} P_e = P_e \mathcal{H}.
\end{align*}

\subsection{The linearized operator around a ground state soliton of~\eqref{NLS}}

In this case,
\begin{align*}
& V_1 = - F'(\Phi^2) + F''(\Phi^2) \Phi^2, \\
& V_2 = - F''(\Phi^2) \Phi^2,
\end{align*}
where $(\Phi_\omega)_{\omega\in (0,\omega^*)}$ is the branch of ground state solutions to \eqref{eq:soliton} with $\Phi=\Phi_\omega>0$.

\begin{lemma}[Generalised kernel] \label{kernel}

The functions $\Xi_j$ for $j=0,1,2,3$ given by \eqref{id:generalized-kernel} satisfy the relations \eqref{id:generalized-kernel-2}. Moreover, if $c_\omega>0$ and $U\in L^2$ is such that $\mathcal H^j U=(0,0)^\top $ for some $j\geq 1$ then $U\in \textup{Span}\{ \Xi_j\}_{j=0,1,2,3}$.

For the adjoint operator $\mathcal H^*$ there hold the relations
$$
\mathcal H^*\sigma_3 \Xi_0=(0,0)^\top, \quad \mathcal H^*\sigma_3 \Xi_1=-\sigma_3 \Xi_0, \quad \mathcal H^*\sigma_3 \Xi_2=(0,0)^\top, \quad  \mathcal H^*\sigma_3 \Xi_3=-2 \sigma_3 \Xi_2.
$$
If $c_\omega>0$ and $U\in L^2$ is such that $(\mathcal H^*)^j U=(0,0)^\top$ for some $j\geq 1$ then $U\in \textup{Span}\{\sigma_3 \Xi_j\}_{j=0,1,2,3}$.

\end{lemma}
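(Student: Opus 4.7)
My plan is to proceed in four stages matching the four claims. First, I would verify the identities \eqref{id:generalized-kernel-2} by direct substitution of each $\Xi_j$ into $\mathcal H$, expanding $V_\pm$ in terms of $F',F'',\Phi$ and invoking either the soliton equation or its $x$- or $\omega$-derivative. For example, for $\Xi_3 = (x\Phi,-x\Phi)^\top$ the $V_-$ cross-terms cancel between the two components, leaving $(\mathcal H\Xi_3)_k = -2\partial_x\Phi - x[\partial_x^2\Phi - \omega\Phi + F'(\Phi^2)\Phi] = -2\partial_x\Phi$ for $k=1,2$, matching $-2\Xi_2$. The three other identities are checked similarly; the parallel relations for $\mathcal H^*$ then follow at once from $\mathcal H^* = \sigma_3\mathcal H\sigma_3$ together with $\sigma_3^2 = \operatorname{Id}$.

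To compute $\ker\mathcal H\cap L^2$, I introduce the classical change of variables $a = u_1+u_2$, $b = u_1-u_2$ for $u=(u_1,u_2)^\top\in\ker\mathcal H$. A short calculation decouples the system into $L_+ a = 0$ and $L_- b = 0$ with
\begin{equation*}
L_+ = -\partial_x^2+\omega - F'(\Phi^2)-2F''(\Phi^2)\Phi^2, \qquad L_- = -\partial_x^2+\omega-F'(\Phi^2).
\end{equation*}
The soliton equation gives $L_-\Phi = 0$, and its $x$-derivative gives $L_+\partial_x\Phi = 0$. By standard 1D Schr\"odinger theory for exponentially decaying potentials (simplicity of eigenvalues via a Wronskian argument, together with the facts that $\Phi>0$ is the ground state of $L_-$ and that $\partial_x\Phi$ has exactly one sign change and so is the first excited state of $L_+$), these exhaust the $L^2$ solutions. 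Unfolding yields $\ker\mathcal H\cap L^2 = \textup{Span}\{\Xi_0,\Xi_2\}$.

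I next climb the Jordan tower by the Fredholm alternative. By Lemma \ref{lemmaspectrum}, $0$ is in the discrete spectrum, so $\mathcal H$ is Fredholm of index zero on $L^2\times L^2$; since $\mathcal H^*=\sigma_3\mathcal H\sigma_3$, one has $\ker\mathcal H^* = \sigma_3\ker\mathcal H = \textup{Span}\{\sigma_3\Xi_0,\sigma_3\Xi_2\}$, and therefore $\mathcal H v = g$ is solvable in $L^2$ iff $\langle g,\sigma_3\Xi_0\rangle = \langle g,\sigma_3\Xi_2\rangle = 0$. I would then compute the pairings $\langle\Xi_i,\sigma_3\Xi_j\rangle$ for $i,j\in\{0,1,2,3\}$: using the parities of $\Phi, \partial_\omega\Phi, \partial_x\Phi, x\Phi$ and integration by parts, all vanish except
\begin{equation*}
\langle \Xi_1,\sigma_3\Xi_0\rangle = c_\omega, \qquad \langle \Xi_3,\sigma_3\Xi_2\rangle = -\|\Phi\|_{L^2}^2
\end{equation*}
(and their transposes). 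In particular $\Xi_0,\Xi_2$ lie in the range of $\mathcal H$ (as they must, given $\mathcal H(-\Xi_1)=\Xi_0$, $\mathcal H(-\tfrac12\Xi_3)=\Xi_2$), so $\ker\mathcal H^2 = \textup{Span}\{\Xi_j\}_{j=0,\ldots,3}$, a space of dimension exactly $4$. Conversely, for any $u\in\ker\mathcal H^3$, we have $\mathcal H u\in\ker\mathcal H^2$, say $\mathcal H u = \sum_j c_j \Xi_j$; pairing with $\sigma_3\Xi_0,\sigma_3\Xi_2$ yields $c_\omega c_1 = 0$ and $-c_3\|\Phi\|_{L^2}^2 = 0$, so the hypothesis $c_\omega>0$ forces $c_1 = c_3 = 0$. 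Hence $\mathcal H u\in\ker\mathcal H$, i.e.\ $u\in\ker\mathcal H^2$. The same argument iterates to give $\ker\mathcal H^{j+1}=\ker\mathcal H^j$ for all $j\geq 2$, establishing $\mathcal K = \textup{Span}\{\Xi_j\}_{j=0,\ldots,3}$.

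Finally, the statement for $\mathcal H^*$ reduces immediately to that for $\mathcal H$ by the relation $\mathcal H^* = \sigma_3\mathcal H\sigma_3$: since $(\mathcal H^*)^j U = \sigma_3\mathcal H^j(\sigma_3 U)$, one has $(\mathcal H^*)^j U = 0 \iff \sigma_3 U \in \mathcal K$, i.e.\ $U\in\textup{Span}\{\sigma_3\Xi_j\}$. The main obstacle in the proof lies in Step 3: the precise pattern by which $c_\omega$ and $\|\Phi\|_{L^2}^2$ are the only two non-vanishing entries of the $4\times 4$ pairing matrix $(\langle\Xi_i,\sigma_3\Xi_j\rangle)_{i,j}$ is what lets the orbital stability hypothesis $c_\omega>0$ close the Jordan chains at length exactly two, which could otherwise extend further.
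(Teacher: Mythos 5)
Your proof is correct, and it is worth noting that the paper itself does not give an argument here: it simply cites Proposition 1.2.2 of Buslaev--Perelman. You supply a complete and self-contained proof, and it is the classical Weinstein-style argument: direct verification of the four chain identities; decoupling into the scalar operators
\[
L_- = -\partial_x^2 + \omega - F'(\Phi^2), \qquad L_+ = -\partial_x^2 + \omega - F'(\Phi^2) - 2F''(\Phi^2)\Phi^2,
\]
with $L_-\Phi = 0$ and $L_+\partial_x\Phi = 0$; Sturm--Liouville simplicity and oscillation theory to conclude $\ker\mathcal H\cap L^2 = \operatorname{Span}\{\Xi_0,\Xi_2\}$; the Fredholm alternative with $\ker\mathcal H^* = \sigma_3\ker\mathcal H$ together with the pairing matrix $(\langle\Xi_i,\sigma_3\Xi_j\rangle)$ to climb the Jordan tower; and the conjugation $\mathcal H^* = \sigma_3\mathcal H\sigma_3$ for the adjoint statement. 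I checked your pairing computations against the matrix $M_0$ the paper records later in \eqref{def:Matrice-M0} and they agree, and the step where orbital stability ($c_\omega>0$) kills the coefficients $c_1, c_3$ and thereby truncates the Jordan chains at length two is exactly where the hypothesis enters. Two minor points you might state more explicitly if you wanted a completely airtight write-up: (1) the Fredholm-alternative step needs $0$ to be an isolated eigenvalue of finite algebraic multiplicity so that $\mathcal H$ has closed range and $\operatorname{ran}\mathcal H = (\ker\mathcal H^*)^\perp$ -- this is supplied by Lemma \ref{lemmaspectrum}(i)--(ii); (2) the Sturm oscillation argument for $L_+$ tacitly uses that $\partial_x\Phi\in L^2$ has exactly one zero because $\Phi$ is even, positive, and strictly decreasing for $x>0$ (a property of the ground state recorded after \eqref{eq:soliton}). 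Neither of these is a gap -- both facts are available in the paper -- but spelling them out would make the proof fully referee-proof.
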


\begin{proof}

See Proposition 1.2.2 in~\cite{BP}.

\end{proof}

\begin{lemma}[Localisations of eigenvalues]
The eigenvalues of $\mathcal H$ belong to $\mathbb R\cup i\mathbb R$. Under the additional orbital stability condition $c_\omega>0$, they are real.
\end{lemma}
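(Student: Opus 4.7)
My plan is to use the factorization $\mathcal{H}=\sigma_3 L$ afforded by the symmetry $\sigma_3\mathcal H^*\sigma_3=\mathcal H$, together with the ground-state positivity of $L_-$, to reduce the eigenvalue problem to a scalar one whose squared eigenvalues must be real. Concretely,
$$
L=\begin{pmatrix} -\partial_x^2+\omega-V_+ & -V_- \\ -V_- & -\partial_x^2+\omega-V_+ \end{pmatrix}
$$
is self-adjoint, and the orthogonal involution $U=\tfrac{1}{\sqrt 2}(\sigma_1+\sigma_3)$ (with $U^2=I$) simultaneously diagonalizes $L$ and rotates $\sigma_3$:
$$
ULU=\begin{pmatrix}L_+ & 0 \\ 0 & L_-\end{pmatrix},\qquad U\sigma_3U=\sigma_1,\qquad U\mathcal HU=\begin{pmatrix}0 & L_- \\ L_+ & 0\end{pmatrix},
$$
with the familiar scalar Schr\"odinger operators
$$
L_+=-\partial_x^2+\omega-F'(\Phi^2)-2F''(\Phi^2)\Phi^2,\qquad L_-=-\partial_x^2+\omega-F'(\Phi^2).
$$
Since $L_-\Phi=0$ (the soliton equation) and $\Phi>0$ is the ground state, one has $L_-\geq 0$ with $\ker L_-=\operatorname{Span}(\Phi)$.

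Given an eigenfunction $\mathcal H\phi=\lambda\phi$, $\lambda\neq 0$, writing $U\phi=(\tilde f,\tilde g)^\top$ produces $L_-\tilde g=\lambda\tilde f$ and $L_+\tilde f=\lambda\tilde g$ (both $\tilde f,\tilde g$ nonzero since $\lambda\neq 0$), and hence $L_-L_+\tilde f=\lambda^2\tilde f$. Pairing this identity with $L_+\tilde f$ in $L^2$ gives the key relation
$$
\bigl\|L_-^{1/2}L_+\tilde f\bigr\|_{L^2}^2=\lambda^2\,\langle L_+\tilde f,\tilde f\rangle,
$$
in which both sides are real (by self-adjointness of $L_\pm$) and the left-hand side is nonnegative. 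Either $\langle L_+\tilde f,\tilde f\rangle\neq 0$, in which case $\lambda^2\in\mathbb R$ and thus $\lambda\in\mathbb R\cup i\mathbb R$; or $\langle L_+\tilde f,\tilde f\rangle=0$, which forces $L_-^{1/2}L_+\tilde f=0$, i.e.\ $L_+\tilde f\in\ker L_-=\operatorname{Span}(\Phi)$, so $L_+\tilde f=c\Phi$. Substituting back yields $\lambda\tilde g=c\Phi$ and $\lambda^2\tilde f=L_-(c\Phi)=0$, forcing $\tilde f=0$ and contradicting $\phi\neq 0$. This establishes the first assertion.

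For the refinement under $c_\omega>0$, I rule out purely imaginary $\lambda=i\mu$ with $\mu\neq 0$. The identity above becomes $\|L_-^{1/2}L_+\tilde f\|^2=-\mu^2\langle L_+\tilde f,\tilde f\rangle$, so $\langle L_+\tilde f,\tilde f\rangle\leq 0$; pairing $L_-\tilde g=i\mu\tilde f$ with $\Phi$ and using $L_-\Phi=0$ yields $\tilde f\perp\Phi$. Differentiating the soliton equation in $\omega$ gives $L_+\partial_\omega\Phi=-\Phi$, whence $\langle L_+^{-1}\Phi,\Phi\rangle=-\langle\partial_\omega\Phi,\Phi\rangle=-\tfrac12 c_\omega<0$. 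The Vakhitov--Kolokolov criterion (see the proof of Proposition 9.2 in \cite{KS}) then asserts that, since $L_+$ has exactly one negative eigenvalue and $\langle L_+^{-1}\Phi,\Phi\rangle<0$, the restricted form $L_+|_{\{\Phi\}^\perp}$ is nonnegative, with kernel precisely $\operatorname{Span}(\partial_x\Phi)$. Combining these facts forces $\tilde f\in\operatorname{Span}(\partial_x\Phi)$, so $L_+\tilde f=0$; then $i\mu\tilde g=L_+\tilde f=0$ gives $\tilde g=0$, and finally $i\mu\tilde f=L_-\tilde g=0$ gives $\tilde f=0$, the desired contradiction. The main obstacle I anticipate is precisely this minimax step: identifying the sign of $L_+|_{\{\Phi\}^\perp}$ in terms of $c_\omega$ requires the complete description of $\ker L_+=\operatorname{Span}(\partial_x\Phi)$ from the translation symmetry and the uniqueness of the negative eigendirection of $L_+$, but both are classical and can be quoted directly from \cite{KS,Weinstein,GSS}.
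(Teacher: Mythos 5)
Your proof is correct and is precisely the classical Weinstein / Krieger--Schlag argument that the paper delegates to by citing \cite{Weinstein} and Proposition~9.2 of \cite{KS}: the factorization $\mathcal H=\sigma_3 L$, the conjugation by $U=\tfrac{1}{\sqrt2}(\sigma_1+\sigma_3)$ to the off-diagonal $L_\pm$ system, the quadratic-form identity forcing $\lambda^2\in\mathbb R$, and the Vakhitov--Kolokolov constrained-positivity step under $c_\omega>0$ are exactly the ingredients used there. The computations ($L_-\Phi=0$, $L_+\partial_\omega\Phi=-\Phi$, $\langle L_+^{-1}\Phi,\Phi\rangle=-\tfrac12 c_\omega$) and the two contradiction arguments all check out.
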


\begin{proof}

See \cite{Weinstein} or the proof of Proposition 9.2. in \cite{KS}.

\end{proof}

\subsection{Concrete examples}

\label{concrete}

Our framework does not apply to pure power nonlinearities
\begin{itemize}
\item Cubic NLS $F(x) = x^2$, then $c_\omega = \frac{d}{d\omega} \| \Phi_\omega \|_{L^2}^2 > 0$ but there is a resonance and internal modes, see~\cite{CGNT}. 
\item Quintic NLS $F(x) = x^3$, then $c_\omega = 0$ and the generalized kernel of the linearized operator has dimension 6 - see~\cite{Weinstein}.
\item Septic NLS  $F(x) = x^4$ and higher order, then $c_\omega < 0$, hence unstable eigenmodes.
\end{itemize}

To the best of our knowledge, the only other case where the spectral resolution of the linearized system is well-understood is the cubic-quintic case $F(z) = x^2 + \sigma x^3$, which was analyzed by \cite{PKA}. These authors show that an internal mode is present for $\sigma = 1$, but that for $\sigma = -1$, there is neither internal mode nor resonance, and that furthermore $c_\omega >0$.

\section{Scattering Theory}

\label{sectionscattering}
\label{S-v-lst}

This section is devoted to the scattering theory of an operator $\mathcal{H}$ satisfying properties~\eqref{H1} and~\eqref{H2}. We still take $\omega=1$ without loss of generality.

We follow closely the construction in Krieger-Schlag~\cite{KS}, but some of its building blocks have to be revisited in order to obtain sharper estimates which suit our needs; for the parts that do not require improvements, we mostly refer to the original article.

\subsection{Jost solutions} We will construct functions $f_i$, $i=1,\dots,4$, which are analogous to the classical Jost solutions in the scalar case. These are generalized eigenfunctions of $\mathcal{H}$
$$
\mathcal{H}f_j(\cdot,\xi) = (1+ \xi^2) f_j(\cdot,\xi), \qquad j=1,2,3,4,
$$
which, as $x \to \infty$, satisfy
$$
f_1(x,\xi) \approx e^{ix\xi}e_1 , \quad f_2(x,\xi) \approx e^{-ix\xi}e_1 , \quad f_3(x,\xi) \approx e^{-x\langle \xi \rangle}e_2, \quad f_4(x,\xi) \approx e^{x\langle \xi \rangle}e_2.
$$
	These Jost solutions will provide the basis for the scattering theory and the distorted Fourier transform.
	
	\begin{lemma}  \label{lemma3} 
		For every $\xi \in \mathbb{R}$, there exists a real-valued solution $f_3 (x, \xi)$ of the equation
		\begin{equation}
		\label{eigenfunction} \mathcal{H} f_3 (\cdot , \xi) = (\xi^2 +1 ) f_3 (\cdot , \xi ).
		\end{equation}
with the property that $f_3 (x , \xi) \approx e^{- \langle \xi \rangle x} e_2$ as $x \rightarrow +\infty$. 
		
More precisely, $f_3$ can be written
$$
f_3(x,\xi) = \mathfrak{f}_3(x,\xi) e^{-x \langle \xi \rangle}
$$	
with
\begin{equation}    \label{papillon}
| \partial_\xi^\ell \partial_x^k [ \mathfrak{f}_3(x,\xi)  - e_2] | \lesssim_{k,\ell}   \la \xi \ra^{-1-\ell} e^{-\beta x}
\end{equation}
for all $x \geq -1$, $k$, $\ell \geq 0$.
\end{lemma}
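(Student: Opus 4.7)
My approach would be the standard Jost solution construction via Volterra iteration, adapted to the block structure of $\mathcal{H}_0$. Substituting the ansatz $f_3(x,\xi) = \mathfrak{f}_3(x,\xi)\, e^{-x\langle\xi\rangle}$ into (\ref{eigenfunction}) reduces the eigenvalue problem, after a short computation, to the coupled system
\begin{align*}
\mathfrak{f}_{3,1}'' - 2\langle\xi\rangle\, \mathfrak{f}_{3,1}' + (2\xi^2 + 2)\,\mathfrak{f}_{3,1} &= V_1 \mathfrak{f}_{3,1} + V_2 \mathfrak{f}_{3,2}, \\
\mathfrak{f}_{3,2}'' - 2\langle\xi\rangle\, \mathfrak{f}_{3,2}' &= V_2 \mathfrak{f}_{3,1} + V_1 \mathfrak{f}_{3,2},
\end{align*}
with $\mathfrak{f}_3(x,\xi) \to e_2$ and $\mathfrak{f}_3'(x,\xi) \to 0$ as $x \to +\infty$. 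Variation of parameters, using the homogeneous bases $\{1, e^{2\langle\xi\rangle x}\}$ for the second line and $\{e^{(\langle\xi\rangle \pm i\xi)x}\}$ for the first, recasts this as the Volterra fixed-point equation
\[
\mathfrak{f}_3(x,\xi) \;=\; e_2 \;+\; \int_x^\infty \mathcal{K}(x,y,\xi)\, V(y)\, \mathfrak{f}_3(y,\xi)\, dy,
\]
whose explicit $2\times 2$ kernel $\mathcal{K}$ has $(2,2)$-entry $\tfrac{1-e^{-2\langle\xi\rangle(y-x)}}{2\langle\xi\rangle}$ and $(1,1)$-entry $-e^{-\langle\xi\rangle(y-x)}\tfrac{\sin(\xi(y-x))}{\xi}$ (off-diagonal entries of similar shape). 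The key structural features are (a) integration restricted to $y \geq x$, (b) the uniform bound $|\mathcal{K}(x,y,\xi)| \lesssim \langle\xi\rangle^{-1}$ for $y \geq x$, and (c) the $+\infty$ boundary condition built in automatically.

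I would solve this equation by a Neumann iteration in, say, the space of continuous $\C^2$-valued functions on $[-1,\infty)$ with norm $\sup_{x \geq -1} e^{\beta x/2}|\mathfrak{f}(x) - e_2|$. Because $|V(y)| \lesssim e^{-\beta y}$ by (\ref{H2}) and the Volterra structure produces a factorial gain at each iteration, the series converges uniformly in $\xi$; the iterates preserve real-valuedness, so the limit is real. The first iterate already yields
\[
|\mathfrak{f}_3(x,\xi) - e_2| \;\lesssim\; \langle\xi\rangle^{-1} \int_x^\infty e^{-\beta y}\, dy \;\lesssim\; \langle\xi\rangle^{-1} e^{-\beta x},
\]
which is (\ref{papillon}) in the base case $k = \ell = 0$; the remaining Neumann terms decay at least as fast.

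For $\partial_x^k$ derivatives I would use the ODE system itself to express $\partial_x^{k+2} \mathfrak{f}_3$ in terms of $\partial_x^{\leq k+1} \mathfrak{f}_3$ and derivatives of $V$ (which retain the $e^{-\beta x}$ decay by (\ref{H2})), thus propagating the $\langle\xi\rangle^{-1}e^{-\beta x}$ bound to all orders in $x$ by induction. For $\partial_\xi^\ell$ derivatives I would differentiate the integral equation $\ell$ times; each $\partial_\xi$ lands either on $\mathcal{K}$, where a direct computation using $\partial_\xi\langle\xi\rangle = \xi/\langle\xi\rangle$ yields $|\partial_\xi^\ell \mathcal{K}(x,y,\xi)| \lesssim \langle\xi\rangle^{-1-\ell}\bigl(1+\langle\xi\rangle(y-x)\bigr)^{\ell}$, or on $\mathfrak{f}_3$ inside the integral, handled inductively. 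The polynomial factors $(y-x)^\ell$ are absorbed by the exponential decay of $V(y)$, so each $\partial_\xi$ contributes an extra $\langle\xi\rangle^{-1}$, yielding (\ref{papillon}) in full.

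The principal technical obstacle is verifying the sharp bound $|\partial_\xi^\ell \mathcal{K}| \lesssim \langle\xi\rangle^{-1-\ell}(1+\langle\xi\rangle(y-x))^\ell$ uniformly across all $\xi\in\R$, including the delicate regime near $\xi = 0$ where the first-component kernel $\sin(\xi(y-x))/\xi$ is only apparently singular but is in fact entire in $\xi$. Organizing the cancellations between the $\xi/\langle\xi\rangle$ factors (produced by chain-rule differentiation of $\langle\xi\rangle$) and the $(y-x)$ factors (produced by $\partial_\xi$ of the exponentials), so as to extract the sharp $\langle\xi\rangle^{-1-\ell}$ rather than merely $\langle\xi\rangle^{-\ell}$, is precisely the point at which the authors refine the Krieger--Schlag construction~\cite{KS}.
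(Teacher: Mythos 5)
Your setup — the Volterra equation for $\mathfrak{f}_3 = e^{x\langle\xi\rangle}f_3$, the kernel bound $\sup_{y\geq x}|K(x,y,\xi)|\lesssim\langle\xi\rangle^{-1}$, and the base case solved by Volterra iteration on $(-1,\infty)$ — matches the paper's construction (the paper phrases the solvability as a Banach fixed point on $(x_0,\infty)$ followed by Gronwall, but for a Volterra equation this and the Neumann series are the same argument). Both differentiation steps, however, have genuine gaps.

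The $\partial_x^k$ argument via the ODE does not close. You propose to read $\mathfrak{f}_{3,1}^{(k+2)}$ off the system
\begin{align*}
\mathfrak{f}_{3,1}'' &= 2\langle\xi\rangle\,\mathfrak{f}_{3,1}' - (2\xi^2+2)\,\mathfrak{f}_{3,1} + V_1\mathfrak{f}_{3,1}+V_2\mathfrak{f}_{3,2},\\
\mathfrak{f}_{3,2}'' &= 2\langle\xi\rangle\,\mathfrak{f}_{3,2}' + V_2\mathfrak{f}_{3,1}+V_1\mathfrak{f}_{3,2},
\end{align*}
and propagate $|\partial_x^j[\mathfrak{f}_3-e_2]|\lesssim\langle\xi\rangle^{-1}e^{-\beta x}$ by induction. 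But the coefficients $2\langle\xi\rangle$ and $2\xi^2+2$ grow in $\xi$: feeding the inductive bound $\lesssim\langle\xi\rangle^{-1}e^{-\beta x}$ into the first equation termwise yields only $|\mathfrak{f}_{3,1}''|\lesssim\langle\xi\rangle\,e^{-\beta x}$, which is off by a full factor $\langle\xi\rangle^2$. The lemma's bound is nonetheless correct — there is a cancellation among the three ODE terms that a naive termwise estimate cannot see (one can check it by hand for $k=2$: the boundary term $(V\mathfrak{f}_3)_1(x)$ produced by differentiating the Volterra equation twice is cancelled to leading order by the $\cos$ part of $\int\partial_x^2 K_{11}\cdot(V\mathfrak{f}_3)_1$). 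The paper avoids this trap by never differentiating the kernel $K$ in $x$ at all: using $\partial_x K=-\partial_y K$ and integrating by parts, one obtains
$$
\partial_x^k\mathfrak{f}_3(x,\xi) = \int_x^\infty\sum_{j=0}^k\binom{k}{j}K(x,y,\xi)\,\partial_y^j V(y)\,\partial_y^{k-j}\mathfrak{f}_3(y,\xi)\,dy,
$$
which keeps the undifferentiated $K$ (hence its $\langle\xi\rangle^{-1}$ bound) and pushes all $x$-derivatives onto $V$ and $\mathfrak{f}_3$, where they cost nothing. Treating this identity as a Volterra equation for $\partial_x^k\mathfrak{f}_3$ closes the induction.

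The $\partial_\xi^\ell$ step is also incomplete — you flag this yourself, but let me make the loss precise. Your kernel bound $|\partial_\xi^\ell K|\lesssim\langle\xi\rangle^{-1-\ell}(1+\langle\xi\rangle(y-x))^\ell$ cannot yield (\ref{papillon}): after the factor $(y-x)^\ell$ is absorbed by $e^{-\beta y}$ you are left with $\langle\xi\rangle^\ell$, so the net bound is only $\langle\xi\rangle^{-1}e^{-\beta x}$, short of the target $\langle\xi\rangle^{-1-\ell}e^{-\beta x}$ by $\ell$ powers. What the paper actually proves is the genuinely sharp $|\partial_\xi^\ell K|\lesssim\langle\xi\rangle^{-1-\ell}$ with no polynomial loss; the mechanism is to extend $\psi_z(\xi)=\frac{\sin(z\xi)}{\xi}e^{-z\langle\xi\rangle}$ holomorphically to a sector-like neighbourhood $\Omega$ of $[0,\infty)$ on which $|\psi_z|\lesssim\langle\xi\rangle^{-1}$ uniformly in $z\geq 0$, and then apply Cauchy's integral formula on a circle of radius $\sim\langle\xi\rangle$ about $\xi$. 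It is the $\langle\xi\rangle$-scaling of the Cauchy radius that converts each $\partial_\xi$ into a full extra $\langle\xi\rangle^{-1}$; without this device (or an equivalent one that packages the cancellations you mention), the sharp decay in (\ref{papillon}) is out of reach.
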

	
	\begin{proof} \underline{Estimate without derivatives.} 
		The eigenfunction equation~\eqref{eigenfunction} with the condition \eqref{papillon} at infinity can be written as a Volterra equation
		\begin{equation*}  
		f_3 (x, \xi)  = e^{- x \langle \xi \rangle } 
		\begin{pmatrix} 0 \\ 1 \end{pmatrix}
		+ \int^{\infty}_x
		\begin{pmatrix}
		\frac{\sin(\xi(y-x))}{\xi}  &  0 \\
		0  &  -\frac{\sinh(\langle \xi \rangle(y-x))}{\langle \xi \rangle}
		\end{pmatrix}
		V(y) f_3 (y, \xi) dy  
		\end{equation*}
		For the unknown function $\mathfrak{f}_3(x,\xi) = e^{x \langle \xi \rangle} f_3(x,\xi)$, this becomes
		\begin{equation}
		\label{integraleq}
		\mathfrak{f}_3(x,\xi) = e_2  + \int^{\infty}_x K(x,y,\xi) V(y) \mathfrak{f}_3(y,\xi) \,dy
		\end{equation}
		with
		$$
		K(x,y,\xi) = \begin{pmatrix}
		\frac{\sin(\xi(y-x))}{\xi}  &  0 \\
		0  &  -\frac{\sinh(\langle \xi \rangle(y-x))}{\langle \xi \rangle}
		\end{pmatrix} e^{(x-y) \langle \xi \rangle}.
		$$
		This kernel is such that
		\begin{equation} \label{chauve-souris}
		\sup_{y \geq x} |K(x,y,\xi)| \lesssim \frac{1}{\langle \xi \rangle}.
		\end{equation}
		Therefore, the operator which to $\mathfrak f_3$ associates the right-hand side of~\eqref{integraleq} is a contraction in $L^\infty(x_0,\infty)$ if $\| V \|_{L^1(x_0,\infty)} \leq c$, for a sufficiently small constant $c$. Choosing $x_0$ accordingly thanks to \eqref{H2}, the Banach theorem gives a solution $\mathfrak{f}_3(x,\xi)$ which is $O(1)$ in $L^\infty(x_0,\infty)$.
		
		In order to extend this bound to $(-1,\infty)$, we derive from~\eqref{integraleq} and \eqref{chauve-souris} the inequality, valid for $x \leq x_0$,
		$$
		|\mathfrak{f}_3(x,\xi)| \lesssim 1 + \int_{x}^{x_0} |\mathfrak{f}_3(y,\xi)| \,dy.
		$$
		Combined with Gronwall's lemma, it implies that $\|\mathfrak{f}_3(\cdot,\xi) \|_{L^\infty(-1,\infty)} = O(1)$.
		
		Finally, using \eqref{integraleq} and $\|\mathfrak{f}_3(\cdot,\xi) \|_{L^\infty(-1,\infty)} \lesssim 1$, we see that, if $x \geq -1$,
		\begin{equation*}
		|\mathfrak{f}_3(x,\xi) - e_2| \lesssim \int_x^\infty \left| K(x,y,\xi) V(y) \right| \,dy \lesssim \int_x^\infty \frac{e^{-\beta y}}{\langle \xi \rangle} \,dy \lesssim \frac{e^{-\beta x}}{\langle \xi \rangle},
		\end{equation*}
		which is estimate~\eqref{papillon} if $k = \ell = 0$.
		
		\medskip
		
		\noindent \underline{Taking derivatives in $x$.} Since $\partial_x K(x,y,\xi) = - \partial_y K(x,y,\xi)$, we see that $\partial_x \mathfrak{f}_3$ satisfies
		$$
		\partial_x \mathfrak{f}_3(x,\xi)  = \int^{\infty}_x K(x,y,\xi)\partial_y V(y) \mathfrak{f}_3(y,\xi) \,dy + \int^{\infty}_x K(x,y,\xi) V(y) \partial_y \mathfrak{f}_3(y,\xi) \,dy.
		$$
		The second term on the right-hand side is a contraction in $L^\infty(x_0,\infty)$ (as a function of $\partial_x f_3$). It is then possible to follow the same reasoning as in bounding $\mathfrak{f}_3(x,\xi)$: first obtain $\|\partial_x \mathfrak{f}_3 \|_{L^\infty(x_0,\infty)} \lesssim 1$ by Banach's fixed point theorem, then $\| \partial_x \mathfrak{f}_3 \|_{L^\infty(-1,\infty))} \lesssim 1$ by Gronwall's lemma, and finally $| \partial_x \mathfrak{f}_3(x,\xi) | \lesssim \frac{e^{-\beta x}}{\langle \xi \rangle}$.
		
		Following the scheme which has been demonstrated for $k=1$, It is then possible to set up an induction procedure to handle more derivatives in $x$, with the induction hypothesis at rank $k$ being that, for $r \leq k$
		$$
		| \partial_x^{r} \mathfrak{f}_3(x,\xi)| \lesssim \frac{e^{-\beta x}}{\langle \xi \rangle}.
		$$
		To prove the statement at rank $k$ assuming it at rank $k-1$, we use the following formula, which can be proved by repeated integration by parts
		$$
		\partial_x^k \mathfrak{f}_3(x,\xi) = \int^{\infty}_x \sum_{j=0}^k \begin{pmatrix} k \\ j \end{pmatrix} K(x,y,\xi) \partial_y^{j} V(y) \partial_y^{k-j} \mathfrak{f}_3(y,\xi) \,dy.
		$$
		
		\medskip

\noindent \underline{Bound on the kernel $K$.} We claim that
\begin{equation} \label{hermine}
\sup_{y \geq x} |\partial_\xi^\ell K(x,y,\xi)| \lesssim \frac{1}{\langle \xi \rangle^{1+ \ell}}.
\end{equation}
To show it, we define for $z=y-x\geq 0$ the function $\psi_z(\xi)=\frac{\sin(z \xi) }{\xi} e^{-z\langle \xi\rangle}$. It extends to a holomorphic function of the complexified variable $\xi $ on $\Omega =\{|\xi|\leq \delta_1\}\cup \{\Re \xi\geq 0 \mbox{ and } |\Im \xi|\leq \delta_2 \Re \xi\}$ for some $\delta_1,\delta_2>0$. Using that $|\sin(z \xi) |\lesssim \min(1,z |\xi|)e^{z|\Im \xi|}$ we obtain $|\psi_z(\xi)|\lesssim \min(|\xi|^{-1},z)e^{-z(\langle \xi\rangle-|\Im \xi|)}$. Hence $\psi$ is uniformly bounded by $\langle \xi \rangle^{-1}$ on $\Omega$ (for some possibly smaller $\delta_1,\delta_2>0$). This implies using the Cauchy integral formula that $|\partial_\xi^{\ell}\psi |\lesssim \langle \xi \rangle^{-1-\ell}$ for all $\xi \in [0,\infty)$. This shows the desired bound \eqref{hermine} for the first component of $K$. The bound for the other component can be proved with the same argument.\\

\noindent \underline{Taking derivatives in $\xi$.} We first claim that
$$
\sup_{y \geq x} |\partial_\xi^\ell K(x,y,\xi)| \lesssim \frac{1}{\langle \xi \rangle^{1+ \ell}}.
$$
Indeed, 
and to use similar estimates to the above, bounding successively $\partial_\xi^\ell \partial_x^k \mathfrak{f}_3(x,\xi)$, for $\ell=1$, $k=0\dots \infty$, then $\ell=2$, $k=0\dots \infty$, etc...

For instance, in the case $k=0, \ell=1$,
$$
\partial_\xi \mathfrak{f}_3(x,\xi) = \int_x^\infty \partial_\xi K(x,y,\xi) V(y) \mathfrak{f}_3(y,\xi) \,dy + \int_x^\infty  K(x,y,\xi) V(y) \partial_\xi \mathfrak{f}_3(y,\xi) \,dy.
$$
The first term on the right-hand side is bounded pointwise by $\frac{1}{\langle \xi \rangle^2}$ for $x \geq -1$, while the second is a contraction on $L^\infty(x_0,\infty)$; the usual arguments apply then.
\end{proof}

\begin{lemma}  \label{lemma12}
For every $\xi \in \mathbb{R}$, there exists a solution $f_1 (x, \xi)$ of the equation
$$ \mathcal{H} f_1 (\cdot , \xi) = (\xi^2 +1 ) f_1 (\cdot , \xi )$$
with the property that $f_1(x,\xi) \approx e^{ix\xi}e_1$ as $x\to \infty$.

More precisely, it is possible to decompose
\begin{equation} \label{stollen}
f_1 (x , \xi) = \mathfrak{f}_{1,1}(x,\xi) e^{ix\xi} + \mathfrak{f}_{1,2}(x,\xi) e^{-x\langle \xi \rangle}, 
\end{equation}
where, for any $x > -1$,
\begin{align}    \label{colvert}
& | \partial_x^k \partial_\xi^\ell (\mathfrak{f}_{1,1} (x, \xi) - e_1) | \lesssim_{k,\ell}   \la \xi \ra^{-1-\ell} e^{-\frac \beta 2 x } \\
& | \partial_x^k \partial_\xi^\ell \mathfrak{f}_{1,2} (x, \xi)  | \lesssim_{k,\ell}  \la \xi \ra^{-1-\ell} \left[ e^{\langle \xi \rangle} \mathbf{1}_{x>2} + 1\right].
\end{align}

\end{lemma}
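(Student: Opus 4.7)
The plan is to mirror the proof of Lemma~\ref{lemma3}: derive a Volterra integral equation, run a Banach contraction in $L^\infty$ far to the right, extend the bound to $(-1,\infty)$ by Gronwall, then propagate through $\partial_x^k \partial_\xi^\ell$. The essential new feature compared to $f_3$ is that the Jost profile $e^{ix\xi}e_1$ lives on an oscillatory scale rather than an exponentially decaying one, and the coupling through $V$ forces the solution to also carry a genuinely decaying component on the second scale $e^{-x\langle\xi\rangle}$. That two-scale structure is exactly what~\eqref{stollen} records.

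Concretely, I would start from $(\mathcal{H}_0 - (1+\xi^2))f_1 = -V f_1$ and set up
$$
f_1(x,\xi) = e^{ix\xi} e_1 + \int_x^\infty K(x,y,\xi) V(y) f_1(y,\xi) \, dy,
\qquad
K = \begin{pmatrix} \frac{\sin(\xi(y-x))}{\xi} & 0 \\ 0 & -\frac{\sinh(\langle\xi\rangle(y-x))}{\langle\xi\rangle} \end{pmatrix},
$$
where each diagonal entry produces homogeneous solutions of the free diagonal operator adapted to the Jost condition at $+\infty$. Plugging the ansatz~\eqref{stollen} into both sides, expanding $\sin$ and $\sinh$ as sums of complex exponentials in $(y-x)$, and separating the $e^{ix\xi}$ and $e^{-x\langle\xi\rangle}$ factors on the right, I would match scales to obtain a coupled Volterra system for $(\mathfrak{f}_{1,1}, \mathfrak{f}_{1,2})$. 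Thanks to~\eqref{H2}, the dangerous piece $e^{\langle\xi\rangle(y-x)}$ of $\sinh$, once grouped with $V(y)\sim e^{-\beta y}$ and integrated over $(x,\infty)$, is tame: it contributes either a bounded $O(e^{-\beta x}/\langle\xi\rangle)$ term picked up by $\mathfrak{f}_{1,1}$ via $e^{\langle\xi\rangle x}\int_x^\infty e^{-\langle\xi\rangle y}V(y)\,dy$, or a term entering $\mathfrak{f}_{1,2}$ that acquires the factor $e^{\langle\xi\rangle}$ only once $x$ has left the effective support of $V$, i.e.\ $x>2$.

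With the coupled system set up, I would run the Lemma~\ref{lemma3} template verbatim: Banach contraction in $L^\infty([x_0,\infty))$ for $x_0$ large enough that $\|V\|_{L^1([x_0,\infty))}$ is small, Gronwall to extend to $(-1,\infty)$, and reinjection into the equation to upgrade to the pointwise bounds $|\mathfrak{f}_{1,1}-e_1|\lesssim \langle\xi\rangle^{-1}e^{-\beta x/2}$ (the loss from $\beta$ to $\beta/2$ absorbs the cross-term from $\mathfrak{f}_{1,2}$) and $|\mathfrak{f}_{1,2}|\lesssim \langle\xi\rangle^{-1}[e^{\langle\xi\rangle}\mathbf{1}_{x>2}+1]$. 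Derivatives are propagated as in Lemma~\ref{lemma3}: for $\partial_x^k$, use $\partial_x K = -\partial_y K$ together with integration by parts to transfer derivatives onto $V$ and re-run the contraction on each successive derivative; for $\partial_\xi^\ell$, use the holomorphic extension of the kernel in a sector $\Omega$ of the complexified $\xi$-plane and the Cauchy integral estimate to gain the factor $\langle\xi\rangle^{-1-\ell}$.

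The main obstacle I anticipate is the bound on $\mathfrak{f}_{1,2}$ together with its interaction with $\partial_\xi^\ell$: naively commuting $\partial_\xi$ through the factor $e^{\langle\xi\rangle}$ would cost powers of $\langle\xi\rangle$ that we cannot afford. The remedy is to never separate the growing kernel piece $e^{\langle\xi\rangle(y-x)}$ from the potential factor $V(y)\sim e^{-\beta y}$, so that at every differentiation step one bounds the entire integrand by $e^{-\beta y}e^{\langle\xi\rangle(y-x)}$ and performs the $y$-integration against this combined factor; the region split at $x=2$ then produces the stated $e^{\langle\xi\rangle}\mathbf{1}_{x>2}$ loss and no more, and the only $\xi$-loss is the expected $\langle\xi\rangle^{-1-\ell}$ coming from the kernel itself.
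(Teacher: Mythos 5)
Your direct Volterra ansatz
$$
f_1(x,\xi) = e^{ix\xi}e_1 + \int_x^\infty K(x,y,\xi) V(y) f_1(y,\xi)\,dy,
\qquad
K = \begin{pmatrix} \tfrac{\sin(\xi(y-x))}{\xi} & 0 \\ 0 & -\tfrac{\sinh(\langle\xi\rangle(y-x))}{\langle\xi\rangle} \end{pmatrix},
$$
does not produce a convergent integral, and this is the genuine gap in the proposal. The second-row entry of $K$ contains the growing piece $e^{\langle\xi\rangle(y-x)}/(2\langle\xi\rangle)$. Substituting the two-scale ansatz~\eqref{stollen} and using that $V$ has off-diagonal entries produces, in the second component of the integrand, terms of the form
$$
\int_x^\infty e^{\langle\xi\rangle(y-x)}\,V_{2j}(y)\,\mathfrak{f}_{1,1}^{(j)}(y)\,e^{iy\xi}\,dy.
$$
Since $\mathfrak{f}_{1,1}\to e_1$ (so $\mathfrak{f}_{1,1}^{(1)}$ stays $O(1)$), and $V\sim e^{-\beta y}$ with $\beta\in(0,1)$ while $\langle\xi\rangle\geq\sqrt 2>1$, the integrand behaves like $e^{(\langle\xi\rangle-\beta)y}$ and the integral diverges. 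Moreover, this term cannot be absorbed by ``matching scales'': the factor $e^{iy\xi}e^{\langle\xi\rangle(y-x)}$ lies on neither of the two scales $e^{ix\xi}$, $e^{-x\langle\xi\rangle}$, so the proposed coupled Volterra system for $(\mathfrak{f}_{1,1},\mathfrak{f}_{1,2})$ is not actually closed. The reason this kernel was legitimate in Lemma~\ref{lemma3} is that there the unknown itself decayed like $e^{-y\langle\xi\rangle}$, so one could pass to $\mathfrak{f}_3=e^{x\langle\xi\rangle}f_3$ and get a kernel with the compensating factor $e^{(x-y)\langle\xi\rangle}$, yielding the uniform bound~\eqref{chauve-souris}; here the unknown is only bounded, so that renormalization is unavailable.

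The paper's proof works around exactly this obstruction by abandoning the one-shot Volterra formulation. It writes $f_1 = e_1 v + f_3 u$ (variation of constants against the already-constructed decaying Jost solution $f_3$), integrates out the second component of the eigenfunction equation, and imposes the boundary conditions $u'(\infty)=0$ and $u(0)=0$. The resulting formula~\eqref{sittelle} for $u$ integrates the troublesome part over $0<y<x$ rather than over $(x,\infty)$, so the factor $[f_3^{(2)}(y,\xi)]^{-2}\sim e^{2y\langle\xi\rangle}$ never meets a divergent tail, while the inner $z$-integral of $f_3^{(2)}(z,\xi)V_2(z)v(z,\xi)$ decays like $e^{-(\langle\xi\rangle+\beta)z}$ and converges. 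The scalar-like equation that remains (for $w=e^{-ix\xi}v$) then has a bounded kernel and is amenable to the Banach/Gronwall machinery you invoke. Your outline also skips this: even if the contraction on $L^\infty([x_0,\infty))$ went through, the Gronwall step on $(-1,x_0)$ would require $|K(x,y,\xi)|\lesssim 1$ uniformly in $\xi$ for bounded $y-x$, and the $\sinh$ entry of your $K$ grows like $e^{\langle\xi\rangle(y-x)}$ without the $e^{(x-y)\langle\xi\rangle}$ compensation, so uniformity in $\xi$ fails as well.
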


\begin{definition}
The function $f_2(x,\xi)$ is defined as $f_2(x,\xi) = \overline{f_1 (x, \xi)}$.
\end{definition}

\begin{proof} The case $\xi = O(1)$ is covered in~\cite{KS}, Lemma 5.3, we will therefore restrict our investigations to the case $|\xi| \gg 1$, for which more precise estimates are needed.

\bigskip \noindent
\underline{Choosing the ansatz and writing the associated equation.} We seek a solution of the form
		\begin{equation}   \label{S-v-lst-lm5.3-KS-1-eq2}
		f_1 (x, \xi) = 
		e_1
		v(x, \xi) 
		+ f_3 (x, \xi) u(x, \xi) 
		\end{equation}
		where $v(x, \xi) \sim e^{i x \xi}$ as $x\to \infty$, and $f_3 (x, \xi) = (f_3^{(1)} (x, \xi), f_3^{(2)} (x, \xi))^\top$ is as in Lemma \ref{lemma3}. In order to have $ \mathcal{H} f_1 (\cdot , \xi) = (\xi^2 +1 ) f_1 (\cdot , \xi )$, we must have
		\begin{equation}   \label{S-v-lst-lm5.3-KS-1-eq3}
		\begin{split}
		0 
		& = (\mathcal{H}- (\xi^2+1)) f_1 (\cdot, \xi) \\
		& =  
		\begin{pmatrix}
		(-\partial_x^2 -\xi^2 + V_{1} ) v(\cdot, \xi) \\
		-V_2 v(\cdot, \xi)
		\end{pmatrix}
		+ 
		\begin{pmatrix}
		-\partial_x^2 u (\cdot, \xi) f_3^{(1)} (\cdot, \xi) - 2 \partial_x  u (\cdot, \xi) \partial_x f_3^{(1)} (\cdot, \xi)   \\
		\partial_x^2 u (\cdot, \xi) f_3^{(2)} (\cdot, \xi) + 2 \partial_x  u (\cdot, \xi) \partial_x f_3^{(2)} (\cdot, \xi) 
		\end{pmatrix} 
		.
		\end{split}
		\end{equation}
Integrating out the second line of \eqref{S-v-lst-lm5.3-KS-1-eq3} in the same way as in Lemma 5.3 of \cite{KS}, in particular imposing the boundary conditions $u'(\infty) =0$ and $u(0)=0$, gives
\begin{equation}   
\label{sittelle}
u(x, \xi) = - \int_{\substack{0<y<x \\ y<z}} [ f_3^{(2)} (y, \xi) ]^{-2} f_3^{(2)} (z, \xi)  V_2 (z)  v(z,\xi) \, dz \, dy .
\end{equation}
Note that the formula \eqref{sittelle} is well defined, as from \eqref{papillon} and the fact that $|\xi|\gg 1$ we recall that $ f_3^{(2)} (y, \xi) $ does not vanish for $y\geq 0$. Setting $w(x,\xi)  = e^{-i x \xi} v(x,\xi )$, it satisfies
\begin{equation*}   
\begin{split}
& w (x, \xi) = 1+ \int_x^\infty  D_\xi(y-x) \left[ - V_{2} (y) \frac{f_3^{(1)}(y,\xi)}{f_3^{(2)}(y,\xi)} + V_{1} (y) \right] w (y,\xi)\, dy   \\
& + 2 \int_{x<y<z} D_\xi(y-x) e^{i(z-y)\xi} \left[ - \frac{f_3^{(2)'}(y,\xi)}{f_3^{(2)}(y,\xi)} f_3^{(1)} (y, \xi) + f_3^{(1)'} (y, \xi) \right] \frac{f_3^{(2)}(z,\xi)}{f_3^{(2)} (y, \xi)^2} V_{2} (z) w(z,\xi) \,dz \,dy,
\end{split}
\end{equation*}
where we set
$$
D_\xi(z) = \frac{e^{2i\xi z} - 1}{2i\xi}.
$$
Next, it is useful to replace the occurences of $f_3$ above by $\mathfrak{f}_3(x) = e^{x \langle \xi \rangle} f_3(x,\xi)$, since this is the quantity estimated in Lemma~\ref{lemma3}. The equation becomes (omitting the $\xi$ dependence for simplicity)
\begin{equation*}   
\begin{split}
& w (x) = 1+ \int_x^\infty  D_\xi(y-x) \left[ - V_{2} (y) \frac{\mathfrak{f}_3^{(1)}(y)}{\mathfrak{f}_3^{(2)}(y)} + V_{1} (y) \right] w (y)\, dy   \\
& + 2 \int_{x<y<z} D_\xi(y-x) e^{(z-y)[i\xi - \langle\xi \rangle]} \frac{\mathfrak{f}_3^{(2)}(y)\mathfrak{f}_3^{(1)'} (y) -\mathfrak{f}_3^{(2)'}(y)\mathfrak{f}_3^{(1)} (y) }{\mathfrak{f}_3^{(2)}(y)^2}  \frac{\mathfrak{f}_3^{(2)}(z)}{\mathfrak{f}_3^{(2)} (y)} V_{2} (z) w(z) \,dz \,dy.
\end{split}
\end{equation*}
This can be written in a more compact form as
\begin{equation}
\label{ecureuil}
w(x) = 1+ \int_x^\infty  D_\xi(y-x) A(y,\xi) w (y)\, dy + 2  \int_{x<y<z} D_\xi(y-x) e^{(z-y)[i\xi - \langle\xi \rangle]} B(y,z,\xi) w(z) \,dz \,dy
\end{equation}
with by \eqref{papillon} and \eqref{H2}
\begin{equation}
\label{bernache}
\begin{split}
& | \partial_y^k  \partial_\xi^\ell A(y,\xi) | \lesssim_{k,\ell} \langle \xi \rangle^{-\ell} e^{-\beta y}, \\
& | \partial_y^j  \partial_z^k \partial_\xi^\ell  B(y,z,\xi) | \lesssim_{j,k,\ell} \langle \xi \rangle^{-\ell} e^{-\beta z}.
\end{split}
\end{equation}
We claim first that
\begin{equation} \label{rouge-gorge}
| \partial_x^k \partial_\xi^\ell (w(x,\xi)-1) | \lesssim \langle \xi \rangle^{-1-\ell} e^{-\frac \beta 2 x}.
\end{equation}

\bigskip

\noindent \underline{The fixed point argument for $w$.} Simply writing the Volterra equation as
\begin{equation}
\label{scarabee}
w(x) = 1 + \int_x^\infty K(x,y,\xi) w(y) \,dy,
\end{equation}
the kernel $K$ can be bounded by
$$
|K(x,y,\xi)| \lesssim \frac{1}{|\xi|} e^{-\beta y} + \frac{1}{|\xi|} \int_{x<y<z} e^{(y-z) \langle \xi \rangle} e^{-\beta z} \,dz \lesssim \frac{1}{|\xi|} e^{-\beta y}.
$$
Since we are assuming $|\xi| \gg 1$, it is possible to apply Banach's fixed point theorem to obtain a solution $w$ of size $O(1)$ in $L^\infty(-1,\infty)$. Therefore,
$$
|w(x) - 1| \lesssim \int_x^\infty \frac{1}{|\xi|} e^{-\beta y} \,dy \lesssim \frac{e^{-\beta x} }{|\xi|}.
$$

\bigskip

\noindent \underline{Applying $x$-derivatives to $w$.} Applying $\partial_x$ to~\eqref{ecureuil} while keeping in mind that $D_\xi(0)=0$ leads to
\begin{align*}
\partial_x w(x) & = \int_x^\infty \partial_x  D_\xi(y-x) A(y,\xi) w (y)\, dy \\
& \qquad \qquad \qquad + 2  \int_{x<y<z} \partial_x D_\xi(y-x) e^{(z-y)[i\xi - \langle\xi \rangle]} B(y,z,\xi) w(z) \,dz \,dy.
\end{align*}
Since $\partial_x  D_\xi(y-x) = - \partial_y  D_\xi(y-x)$, we can integrate by parts to obtain
\begin{align*}
\partial_x w(x) & = \int_x^\infty   D_\xi(y-x) \partial_y A(y,\xi) w (y)\, dy + \int_x^\infty   D_\xi(y-x)  A(y,\xi)\partial_y w (y)\, dy \\
& \qquad \qquad \qquad + 2  \int_{x<y<z}  D_\xi(y-x) e^{(z-y)[i\xi - \langle\xi \rangle]}  \partial_y B(y,z,\xi) w(z) \,dz \,dy \\
& \qquad \qquad \qquad + 2  \int_{x<y<z}  D_\xi(y-x) \partial_y e^{(z-y)[i\xi - \langle\xi \rangle]} B(y,z,\xi) w(z) \,dz \,dy \\
& \qquad \qquad \qquad - 2  \int_{x<z}  D_\xi(z-x) B(z,z,\xi) w(z) \,dz.
\end{align*}
Once again, we use that $\partial_y e^{(z-y)[i\xi - \langle\xi \rangle]} = - \partial_z e^{(z-y)[i\xi - \langle\xi \rangle]} $ to integrate by parts in $z$, which yields
\begin{align*}
\partial_x w(x) & = \int_x^\infty K(x,y,\xi) \partial_y w (y)\, dy + \int_x^\infty   D_\xi(y-x) \partial_y A(y,\xi) w (y)\, dy \\
& \qquad \qquad \qquad + 2  \int_{x<y<z}  D_\xi(y-x) e^{(z-y)[i\xi - \langle\xi \rangle]}  \partial_y B(y,z,\xi) w(z) \,dz \,dy \\
& \qquad \qquad \qquad + 2  \int_{x<y<z}  D_\xi(y-x) e^{(z-y)[i\xi - \langle\xi \rangle]} \partial_z B(y,z,\xi) w(z) \,dz \,dy
\end{align*}
Here, $K$ is the kernel defined in~\eqref{scarabee}, so that the first term on the above right-hand side is a contraction. As for other terms on the right-hand side, they are pointwise bounded by $O(1)$. Banach's fixed point theorem gives a solution of size $O(1)$ in $L^\infty(-1,\infty)$. It is then easy to deduce as before that $|\partial_x w(x)| \lesssim \frac{1}{|\xi|} e^{-\beta x}$.

Following the same pattern, higher order derivatives in $x$ can be treated inductively.

\bigskip

\noindent \underline{Applying $\xi$-derivatives to $w$.} We now apply $\partial_\xi$ to~\eqref{ecureuil}, and use the identities
\begin{align*}
& \partial_\xi D_\xi(y-x) = 2i(x-y) \frac{e^{2i\xi (x-y)}}{\xi} - \frac{1}{\xi} D_\xi(y-x) \\
& \partial_\xi  e^{(z-y)[i\xi - \langle\xi \rangle]} = (z-y)\left(i - \frac{\xi}{\langle \xi \rangle} \right) e^{(z-y)[i\xi - \langle\xi \rangle]}
\end{align*}
to obtain
\begin{equation}
\label{bernache-1}
\begin{split}
\partial_\xi w(x) & = \int_x^\infty K(x,y,\xi) \partial_\xi w (y)\, dy + W(x)+ \int_x^\infty 2i(x-y) \frac{e^{2i\xi (x-y)}}{\xi} A(y,\xi) w (y)\, dy \\
& \qquad  + 2 \int_{x<y<z} 2i(x-y) \frac{e^{2 i \xi(x-y)}}{\xi} e^{(z-y)[i\xi - \langle\xi \rangle]} B(y,z,\xi) w(z) \,dz \,dy \\
& \qquad + 2 \int_{x<y<z} D_\xi(y-x) (z-y)\left(i - \frac{\xi}{\langle \xi \rangle} \right) e^{(z-y)[i\xi - \langle\xi \rangle]} B(y,z,\xi) w(z) \,dz \,dy .
\end{split}
\end{equation}
Here, $K$ is the kernel defined in~\eqref{scarabee} and the "source" term $W$ is defined by
\begin{align*}
W(x)  & = \int_x^\infty D_\xi(y-x) \partial_\xi A(y,\xi) w (y)\, dy + 2  \int_{x<y<z} D_\xi(y-x) e^{(z-y)[i\xi - \langle\xi \rangle]}  \partial_\xi B(y,z,\xi) w(z) \,dz \,dy \\
& \quad - \int_x^\infty \frac{1}{\xi} D_\xi(y-x) A(y,\xi) w (y)\, dy - 2  \int_{x<y<z} \frac{1}{\xi} D_\xi(y-x) e^{(z-y)[i\xi - \langle\xi \rangle]} B(y,z,\xi) w(z) \,dz \,dy.
\end{align*}
so that, by \eqref{bernache}, it can be bounded as follows:
$$
\mbox{if $x>-1$,} \qquad |W(x)| \lesssim \frac{e^{-\beta x}}{|\xi|^2}.
$$
There remains to show that the third, fourth, and fifth terms on the right-hand side of \eqref{bernache-1} enjoy the same estimate.  We will focus on the third term, since the fourth and the fifth can be bounded through the same manipulations, but involve lengthier expressions. Integrating by parts in $y$, the third term can be written as
\begin{align*}
& \int_x^\infty 2i(x-y) \frac{e^{2i\xi (x-y)}}{\xi} A(y,\xi) w (y)\, dy = \int_x^\infty  \frac{y-x}{\xi^2} \partial_y e^{2i\xi (x-y)} A(y,\xi) w (y)\, dy \\
& \qquad = \frac{1}{\xi^2} \int_x^\infty e^{2i\xi (x-y)} \left[- A(y,\xi) w (y) - (y-x)\partial_y(A(y,\xi) w (y)) \right] \,dy.
\end{align*}
It is now clear that the above is $O(\frac{e^{-\frac \beta 2 x}}{\xi^2})$ for $x>-1$. Proceeding similarly for the fourth and fifth terms on the right-hand side of \eqref{bernache-1} leads to
$$
\partial_\xi w(x) = \int_x^\infty K(x,y,\xi) \partial_\xi w (y)\, dy + \widetilde{W}(x),
$$
where
$$
\mbox{if $x>-1$,} \qquad |\widetilde{W}(x)| \lesssim \frac{e^{-\frac \beta 2 x}}{|\xi|^2}.
$$
Thus, Banach's fixed point theorem gives a solution of \eqref{bernache-1} such that $|\partial_\xi w| \lesssim \frac{1}{\xi^2}$. One can then go back to the above expression to obtain
$$
| \partial_\xi w(x,\xi) | \lesssim \frac{e^{-\frac \beta 2 x}}{\xi^2}.
$$
Higher order derivatives in $\xi$ can be handled by the same token.

\bigskip

\noindent \underline{Estimates on $u'$.} By~\eqref{sittelle}, $u$ can be written
$$
u'(x,\xi) =  - e^{x \langle \xi \rangle + i x \xi} \underbrace{\mathfrak{f}_3^{(2)} (x,\xi)^{-2} \int_x^\infty e^{(x-y) \langle \xi \rangle + i(y-x) \xi} \mathfrak{f}_3^{(2)} (y,\xi) V_2(y) w(y,\xi) \,dy}_{\displaystyle \mathfrak{u}(x,\xi)}
$$
We claim that 
\begin{equation}\label{balbuzard}
| \partial_\xi^k \partial_x^\ell \mathfrak{u}(x,\xi) | \lesssim e^{-\beta x} \langle \xi \rangle^{-\ell-1} \quad \mbox{if $x>-1$}.
\end{equation}
For $k=\ell=0$, this simply follows from the inequality $\int_x^\infty e^{(x-y) \langle \xi \rangle} e^{-\beta y} \,dy \lesssim e^{-\beta x} \langle \xi \rangle^{-1}$.

When applying derivatives in $x$, we use the identity, valid for any smooth function $f$,
\begin{equation}
\label{tetras}
\partial_x \int_x^\infty e^{(x-y) \langle \xi \rangle + i(y-x) \xi} f(y)\,dy =  \int_x^\infty e^{(x-y) \langle \xi \rangle + i(y-x) \xi} f'(y)\,dy.
\end{equation}
 
When applying derivatives in $\xi$, it suffices to note that
\begin{equation}
\label{lagopede}
\mbox{if $y>x$,} \qquad | \partial_\xi^k e^{(x-y) \langle \xi \rangle + i(y-x) \xi}| \lesssim_k \langle \xi \rangle^{-k} e^{(x-y) [\langle \xi \rangle - 1]}
\end{equation}
to obtain the desired estimate.

\bigskip

\noindent \underline{Estimates on $uf_3$, the case $x>2$.} In this case, we write
\begin{align*}
& u(x,\xi) f_3(x,\xi) \\
& =  e^{i x \xi} \mathfrak{f}_3(x,\xi) \underbrace{\int_0^x (1-\chi(y)) e^{(y-x) \langle \xi \rangle + i(y-x) \xi} \mathfrak{u}(y,\xi) \,dy}_{\displaystyle \widetilde{\mathfrak{u}}_1(x,\xi)}+e^{-x\langle \xi \rangle}\mathfrak{f}_3(x,\xi) \underbrace{\int_0^1 \chi(y) e^{y\langle \xi \rangle + iy\xi} \mathfrak{u}(y,\xi) \,dy}_{\displaystyle \widetilde{\mathfrak{u}}_2(\xi)},
\end{align*}
where $\chi$ is a smooth function with compact support in $[-1,1]$, equal to $1$ on $[-\frac{1}{2},\frac{1}{2}]$. 

It follows from~\eqref{balbuzard}, from an analogue of the identity~\eqref{tetras}, and from~\eqref{lagopede} that
$$
|\partial_x^k \partial_\xi^\ell \widetilde{\mathfrak{u}}_1(x,\xi) | \lesssim \langle \xi \rangle^{-\ell-1} e^{-\beta x}.
$$
Turning to $\widetilde{\mathfrak{u}}_2(x,\xi)$, using the identity
\begin{equation*}
\partial_\xi e^{y \langle \xi \rangle + iy \xi} = y \left(\frac{\xi}{\langle \xi \rangle} + i \right) \frac{1}{\langle \xi \rangle + i \xi} \partial_y e^{y \langle \xi \rangle + iy\xi},
\end{equation*}
each derivative in $\xi$ gives a gain of $\sim \langle \xi \rangle^{-1}$ after integrating by parts in $y$ (notice that the factor $y$ and the support of $\chi$ ensures the absence of boundary terms!). This leads to the estimate
$$
| \partial_x^k \partial_\xi^\ell \widetilde{\mathfrak{u}}_2(\xi)| \lesssim \langle \xi \rangle^{-\ell-1} e^{\langle \xi \rangle}
$$

Therefore, setting $\mathfrak u_{1,>}(x,\xi)=\mathfrak f_3(x,\xi)\widetilde{\mathfrak u}_1(\xi)$ and $\mathfrak u_{2,>}(x,\xi)=\mathfrak f_3(x,\xi)\widetilde{\mathfrak u}_2(x,\xi)$ we have obtained using \eqref{papillon} that for $|x|>2$:
$$
u(x,\xi) f_3(x,\xi)=e^{ix\xi}\mathfrak u_{1,>}(x,\xi)+e^{-x\langle \xi \rangle}\mathfrak u_{2,>}(x,\xi),
$$
with
\begin{equation} \label{ecureuil-gris}
| \partial_x^k \partial_\xi^\ell \mathfrak u_{1,>}(x,\xi)| \lesssim_{k,\ell}   \la \xi \ra^{-1-\ell} e^{-\beta x }, \qquad  | \partial_x^k \partial_\xi^\ell \mathfrak u_{2,>}(x,\xi)| \lesssim_{k,\ell}   \la \xi \ra^{-1-\ell} e^{\langle \xi \rangle}
\end{equation}

\bigskip

\noindent \underline{Estimates on $uf_3$, the case $-3< x < 3$.} Expanding $\mathfrak{u}(x,\xi)$ in Taylor series at the origin gives, using \eqref{balbuzard},
$$
\mathfrak{u}(x,\xi) = \sum_{k=0}^M \alpha_k (\xi) x^k + R_M(x,\xi).
$$
Here, the Taylor coefficients and the remainder satisfy 
$$
|\partial_\xi^\ell \alpha_k (\xi)| \lesssim \langle \xi \rangle^{-\ell-1},  \qquad |\partial_x^k \partial_\xi^\ell R_M(x,\xi) | \lesssim \langle \xi \rangle^{-\ell-1}.
$$
Furthermore, the remainder vanishes to order $M+1$ at the origin $| R_M(y,\xi) | \lesssim |y|^{M+1}$. The polynomial terms in $x$ in the above expansion contribute to $u(x,\xi) f_3(x,\xi)$ terms of the type
$$
e^{-x\langle \xi \rangle}\mathfrak{f}_3(x,\xi) \alpha_k(\xi) \int_0^x y^k e^{y \langle \xi \rangle + iy\xi} \,dy,
$$
which, after repeated integrations by parts using the identity $\frac{1}{\langle \xi \rangle + i\xi} \partial_y e^{y \langle \xi \rangle + iy\xi} = e^{y \langle \xi \rangle + iy\xi}$, reduces to a linear combination of terms of the type
$$
\frac{1}{(\langle \xi \rangle + i \xi)^{k'}}\mathfrak{f}_3(x,\xi) \alpha_k(\xi) x^{k-k'+1} e^{ix\xi}, \quad 1\leq k' \leq k+1, \quad \mbox{and} \quad \frac{1}{(\langle \xi \rangle + i \xi)^{k+1}}\mathfrak{f}_3(x,\xi) \alpha_k(\xi) e^{-x\langle \xi \rangle}.
$$
The remainder term contributes
$$
e^{-x\langle \xi \rangle}\mathfrak{f}_3(x,\xi) \int_0^x R_M(y,\xi) e^{y \langle \xi \rangle + iy\xi} \,dy,
$$
which, after repeated integrations by parts, reduces to a linear combination of terms of the type
$$
\frac{1}{(\langle \xi \rangle + i \xi)^{k'}} \mathfrak{f}_3(x,\xi) R_M^{(k'-1)}(x,\xi)e^{ix\xi}, \quad 0\leq k' \leq k, \quad \mbox{and} \quad \frac{1}{(\langle \xi \rangle + i \xi)^{M+1}}\mathfrak{f}_3(x,\xi) R_M^{(M+1)}(0,\xi) e^{-x\langle \xi \rangle},
$$
along with the integrated term
$$
\frac{1}{(\langle \xi \rangle + i \xi)^{M+1}} e^{-x\langle \xi \rangle}\mathfrak{f}_3(x,\xi) \int_0^x R_M^{(M+2)}(y,\xi) e^{y \langle \xi \rangle + iy\xi} \,dy.
$$

Choosing $M$ sufficiently large, we see that $uf_3$ can be written for $-3<x<3$ under the form
$$
u(x,\xi) f_3(x,\xi) = e^{i x \xi} \mathfrak{u}_{2,<}(x,\xi) +e^{-x\langle \xi \rangle} \mathfrak{u}_{1,<}(x,\xi) 
$$
with
\begin{equation} \label{ecureuil-roux}
|\partial_x^k \partial_\xi^\ell \mathfrak{u}_{1,<}(x,\xi) | + |\partial_x^k \partial_\xi^\ell \mathfrak{u}_{2,<}(x,\xi) | \lesssim \langle \xi \rangle^{-\ell-1}.
\end{equation}

\noindent \underline{End of the proof}. We take $\bar \chi$ a smooth cut-off function with $\bar \chi(x)=1$ for $|x|\leq 2$ and $\bar \chi(x)=0$ for $|x|>3$. We define $\mathfrak{u}_{j}=\bar \chi \mathfrak{u}_{j,<}+(1-\bar \chi)\mathfrak{u}_{j,>}$ for $j=1,2$. Using \eqref{ecureuil-gris} and \eqref{ecureuil-roux} we have $uf_3=  e^{i x \xi} \mathfrak{u}_{2}+e^{-x\langle \xi \rangle} \mathfrak{u}_{1}$ with $|\partial_x^k \partial_\xi^\ell \mathfrak{u}_{1} | \lesssim\langle \xi \rangle^{-\ell-1}e^{-\beta x}$ and $|\partial_x^k \partial_\xi^\ell \mathfrak{u}_{2} | \lesssim\langle  \la \xi \ra^{-1-\ell} \left[ e^{\langle \xi \rangle} \mathbf{1}_{x>2} + 1\right]$. Combining with \eqref{rouge-gorge} this ends the proof of the lemma.

\end{proof}

\begin{lemma}   \label{S-v-lst-lm5.5-KS}
For every $\xi \in \mathbb{R}$, there exists a real-valued solution $f_4^\dagger (x, \xi)$ of the equation
$$ \mathcal{H} f_4^\dagger (\cdot , \xi) = (\xi^2 +1 ) f_4^\dagger (\cdot , \xi ) $$
with the property that $f_4(x,\xi) \sim e^{x \langle \xi \rangle} e_2$ as $x \to \infty$. More precisely, it can be written
$$
f_4^\dagger(x,\xi) = e^{x\langle \xi \rangle} \mathfrak{f}_4^\dagger(x,\xi)
$$
with
\begin{equation}    
| \partial_x^k \partial_\xi^\ell [\mathfrak{f}_4^\dagger(x,\xi) - e_2 ] | \lesssim \langle \xi \rangle^{-\ell - 1} e^{-\beta x}.
\end{equation}
for all $x>-1$, and $k,\ell \geq 0$.
\end{lemma}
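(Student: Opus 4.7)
The plan is to mirror the construction of $f_3$ in Lemma~\ref{lemma3}. Setting $f_4^\dagger(x,\xi) = e^{x \langle \xi \rangle}\mathfrak{f}_4^\dagger(x,\xi)$ and requiring $\mathfrak{f}_4^\dagger \to e_2$ at $+\infty$, I convert the eigenfunction equation $\mathcal{H}f_4^\dagger = (\xi^2+1)f_4^\dagger$ into an integral equation for $\mathfrak{f}_4^\dagger$. I then analyze it by Banach fixed point, extend to the range $x > -1$ by a Gronwall argument, and upgrade to all $x$- and $\xi$-derivative bounds by induction, exactly in the pattern used for $\mathfrak{f}_3$.

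The main technical departure from Lemma~\ref{lemma3} is the choice of Green's function. Because $f_4^\dagger$ grows like $e^{y\langle \xi \rangle}$ while $V$ only decays at the slower rate $e^{-\beta |y|}$ (recall $\beta < 1 \leq \langle \xi \rangle$), the $\sinh(\langle \xi \rangle (y-x))/\langle \xi \rangle$ kernel used for $\mathfrak{f}_3$ produces a non-integrable integrand when paired with the growing profile. To get around this, for the second-component equation $(\partial_x^2 - \langle \xi \rangle^2) u = s$ I use the symmetric Green's function $G_2(x,y) = -\frac{1}{2\langle \xi \rangle} e^{-\langle \xi \rangle|x-y|}$, whose bilateral exponential decay in $y$ restores integrability. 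Substituting $s = V_2 f_4^{\dagger,(1)} + V_1 f_4^{\dagger,(2)}$ and dividing by $e^{x\langle \xi \rangle}$ yields
$$
\mathfrak{f}_4^{\dagger,(2)}(x,\xi) = 1 - \frac{1}{2\langle \xi \rangle} \int_x^\infty \tilde s(y,\xi) \, dy - \frac{e^{-2x\langle \xi \rangle}}{2\langle \xi \rangle} \int_{-\infty}^x e^{2y\langle \xi \rangle} \tilde s(y,\xi) \, dy,
$$
with $\tilde s = V_2 \mathfrak{f}_4^{\dagger,(1)} + V_1 \mathfrak{f}_4^{\dagger,(2)}$. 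The first component is handled similarly, using the Volterra-type Green's function $\sin(\xi(x-y))/\xi \cdot e^{-\langle \xi \rangle (x-y)}$ for the conjugated operator $L_1 = (\partial_x + \langle \xi \rangle)^2 + \xi^2$, integrated from $-\infty$ to $x$. In both cases the effective integral operator has norm bounded by $C/\langle \xi \rangle$, so the contraction mapping runs on $L^\infty(\mathbb{R}; \mathbb{C}^2)$ for $\langle \xi \rangle$ large, and the Gronwall trick from Lemma~\ref{lemma3} extends the bound to all $\xi$.

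The pointwise estimate $|\mathfrak{f}_4^\dagger - e_2| \lesssim \langle \xi \rangle^{-1} e^{-\beta x}$ for $x > -1$ then follows by direct estimation: $\|V\|_{L^1(x,\infty)} \lesssim e^{-\beta x}$ controls the outgoing integrals, while $\int_{-\infty}^x e^{2y\langle \xi \rangle}|V(y)| \, dy \lesssim e^{(2\langle \xi \rangle - \beta)x}/(2\langle \xi \rangle - \beta)$ combined with the $e^{-2x\langle \xi \rangle}$ prefactor controls the incoming one. Derivatives in $x$ are obtained by differentiating the integral equation and integrating by parts via $\partial_x = -\partial_y$ identities on the exponential kernels; derivatives in $\xi$ use Cauchy-type bounds such as $|\partial_\xi^\ell e^{-\langle \xi \rangle|x-y|}| \lesssim \langle \xi \rangle^{-\ell} e^{-\langle \xi \rangle |x-y|}$, just as in Lemma~\ref{lemma3}. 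The main obstacle I anticipate is the non-Volterra structure of the second integral in the formula above, which couples $\mathfrak{f}_4^\dagger(x)$ to its values at $y < x$ and prevents a clean iteration on a half-line, forcing the contraction to be set up globally on $\mathbb{R}$. Extra care will be needed to check that the incoming contribution is both well-defined and self-consistently controlled, but the bilateral decay of $V$ combined with the $e^{-2x\langle \xi \rangle}$ prefactor should afford just enough room.
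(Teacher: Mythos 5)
Your ansatz, integral equation, and kernel choices match the paper's exactly: for the second component you use the bilateral Green's function $-\frac{1}{2\langle\xi\rangle}e^{-\langle\xi\rangle|x-y|}$, and after conjugating by $e^{x\langle\xi\rangle}$ you arrive at precisely the formula the paper writes down (its integral equation with the lower endpoint $x_1$ set to $-\infty$). The treatment of $x$- and $\xi$-derivatives, and the estimate $\int_{-\infty}^x e^{2y\langle\xi\rangle}|V|\,dy \lesssim e^{(2\langle\xi\rangle-\beta)x}/(2\langle\xi\rangle-\beta)$, is also exactly what is needed. However, there is a genuine gap in the existence argument that the paper addresses and your proposal does not.

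The gap is in the sentence ``the contraction mapping runs on $L^\infty(\mathbb{R};\mathbb{C}^2)$ for $\langle\xi\rangle$ large, and the Gronwall trick from Lemma~\ref{lemma3} extends the bound to all $\xi$.'' Gronwall, as used in Lemma~\ref{lemma3}, enlarges the \emph{spatial} interval on which a bound holds (from $(x_0,\infty)$ to $(-1,\infty)$); it does nothing to enlarge the range of $\xi$ for which a solution exists. With the lower integration limit at $-\infty$, the contraction constant is genuinely of size $C\|V\|_{L^1(\mathbb{R})}/\langle\xi\rangle$, and for $|\xi|\lesssim 1$ this is not small — you cannot shrink $\|V\|_{L^1}$ by choosing a larger base point, because the domain is already all of $\mathbb{R}$. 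Moreover, even if you somehow had a bounded candidate on $\mathbb{R}$ for small $\xi$, the equation is not Volterra: the incoming integral $\int_{-\infty}^x$ couples $\mathfrak{f}_4^\dagger(x)$ to its values on an unbounded interval to the left, so the Gronwall argument of Lemma~\ref{lemma3} (which relies on the Volterra structure to iterate downward from a known half-line) simply does not run. Intuitively, the issue is that $f_4^\dagger$ is only determined up to a three-dimensional family of slower-growing solutions, and different normalizations at $-\infty$ select different representatives; the global $x_1=-\infty$ construction picks out one such representative but is only well-posed for large $\xi$.

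The paper's fix is to treat the two frequency regimes separately: for $|\xi|\lesssim 1$ it takes a \emph{finite} $x_1>0$ large enough that $\|V\|_{L^1(x_1,\infty)}$ is small, runs the Banach fixed point on $L^\infty(x_1,\infty)$, and then uses Gronwall on the bounded interval $(-1,x_1)$ (which works because the second integral $\int_{x_1}^x$ is now over a bounded range and the kernel $B(x,y,\xi)$ is uniformly bounded for $|\xi|<R$). The price is a boundary contribution $B(x,x_1,\xi)V(x_1)\mathfrak{f}_4^\dagger(x_1,\xi)$ upon differentiating in $x$, which the paper checks is harmless. Finally, since the two constructions (finite $x_1$ for small $\xi$, $x_1=-\infty$ for large $\xi$) produce genuinely different functions $f_4^{\dagger,1}$ and $f_4^{\dagger,2}$, the paper glues them with a smooth cutoff $\chi(\xi)$; both are exact eigenfunctions satisfying the desired bounds on their respective frequency ranges, so the convex combination does too. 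Your proposal should be repaired by incorporating this two-regime structure.
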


	\begin{proof}
		Let $\mathfrak{f}_4^{\dagger}(x, \xi) =  e^{- x \la \xi \ra} f_4^\dagger (x, \xi)$. From the proof of Lemma 5.5 in \cite{KS}, we know that $\mathfrak{f}_4^{\dagger} (x, \xi)$ satisfies
		\begin{equation}   \label{integraleq-1}
		\begin{split}
		\mathfrak{f}_4^{\dagger} (x, \xi)  
		& =  e_2
		+
		\int^{+\infty}_x
		\begin{pmatrix}
		0 &  0 \\
		0  &  -\frac{1}{2 \langle \xi \rangle}
		\end{pmatrix}
		V(y) \mathfrak{f}_4^{\dagger} (y, \xi) dy  
		\\
		& \qquad 
		+ 
		\int^{x}_{x_1}
		\begin{pmatrix}
		\frac{\sin(\xi(x-y))}{\xi} e^{- \langle \xi \rangle(x-y)}  &  0 \\
		0  &  -\frac{e^{-2 \langle \xi \rangle(x-y)}}{2\langle \xi \rangle}
		\end{pmatrix}
		V(y) \mathfrak{f}_4^{\dagger} (y, \xi) dy  \\
		& = e_2 +  \int^{+\infty}_x A(\xi) V(y) \mathfrak{f}_4^{\dagger} (y, \xi) dy  + \int^{x}_{x_1} B(x, y, \xi) V(y) \mathfrak{f}_4^{\dagger} (y, \xi) dy  ,  \\
\end{split}
		\end{equation}
where $x_1\in [-\infty,\infty)$ remains to be determined depending on $\xi$.

\medskip

\noindent \textbf{Step 1}. \emph{The case $|\xi|\lesssim 1$}. We claim that there exists a large enough $x_1>0$ such that, given any $R>0$, if $|\xi|<R$ then
\begin{equation}    
| \partial_x^k \partial_\xi^\ell [\mathfrak{f}_4^\dagger(x,\xi) - e_2 ] | \lesssim_{k,\ell,R} e^{-\beta x} \qquad \forall x>-1.
\end{equation}

\noindent \underline{Estimate without derivatives.} From the kernel bounds
\begin{equation} \label{rat musque}
|B(x,y,\xi)| \lesssim 
\frac{1}{\langle \xi \rangle} e^{(\frac{1}{10}-\langle \xi \rangle)(x-y)} \qquad \mbox{if $x_1<y<x$}
\end{equation}
and
\begin{equation} \label{geai}
|A(\xi)V(y)|\lesssim \la \xi \ra^{-1}e^{-\beta y},
\end{equation}
it is easy to see that the right-hand side of~\eqref{integraleq-1} is a contraction in $L^\infty(x_1,\infty)$, if $x_1$ is chosen large enough (only depending on $V$). Banach's fixed point theorem gives a solution $\mathfrak{f}_4^{\dagger} $ of size $O(1)$ in $L^\infty(x_1,\infty)$. 

Since $|\xi|<R$, we have $|B(x,y,\xi)| \lesssim 1$ for $-1<x<y<x_1$, which allows to extend the solution $\mathfrak{f}_4^{\dagger} $ all the way to $-1$, with the same bound $O(1)$, by applying Gronwall's lemma to
$$
\mbox{if $x<x_1$}, \qquad |\mathfrak{f}_4^{\dagger} (x,\xi)| \lesssim 1+ \int_x^{x_1} | \mathfrak{f}_4^{\dagger} (y,\xi)| \,dy.
$$
Finally, from the bounds on $B(x,y,\xi)$ and $\| \mathfrak f_4^\dagger\|_{L^\infty([-1,\infty)}\lesssim 1$:
\begin{align*}
|\mathfrak{f}_4^{\dagger} (x,\xi) - e_2| & \lesssim \int_x^\infty |A(y,\xi)| e^{-\beta y} \,dy + \int_{x_1}^x |B(x,y,\xi)| e^{-\beta y} \,dy\\
& \lesssim e^{-\beta x} .
\end{align*}

\medskip

\noindent \underline{Taking derivatives in $x$.} Applying the identity 
$$
\partial_x \int_{x_1}^x K(x-y) f(y)\,dy = \int_{x_1}^x K(x-y) f'(y)\,dy + K(x-x_1) f(x_1),
$$ 
we have
\begin{align} \label{raclette}
\partial_x \mathfrak{f}_4^{\dagger}  (x,\xi)  &= - A(\xi) V(x) \mathfrak{f}_4^{\dagger} (x, \xi)  + \int^{x}_{x_1} B(x, y, \xi) \partial_y V(y) \mathfrak{f}_4^{\dagger} (y, \xi) dy \\
\nonumber & \qquad+ \int^{x}_{x_1} B(x, y, \xi) V(y) \partial_y \mathfrak{f}_4^{\dagger} (y, \xi) dy+ B(x,x_1,\xi) V(x_1)\mathfrak{f}_4^{\dagger} (x_1 , \xi) .
\end{align}
Note that for $x> -1$ and $|\xi|<R$ there holds $|\partial_x^k B(x,x_1,\xi)|\lesssim 1$. Solving again for $\partial_x \mathfrak{f}_4^{\dagger} $ by Banach's fixed point theorem, we obtain the bound
$$
\left| \partial_x \mathfrak{f}_4^\dagger(x,\xi) \right|\lesssim e^{-\beta x} \qquad \forall x>-1.
$$
One can then prove inductively that $| \partial_x^{k} \mathfrak{f}_4^{\dagger} (x,\xi)| \lesssim_{k} e^{-\beta x}$.

\medskip
		
\noindent \underline{Taking derivatives in $\xi$.} 
It suffices to observe that for $|\xi|<R$,
$$
|\partial_\xi^\ell A(\xi)| \lesssim 1, \qquad \sup_{y \geq x} |\partial_\xi^\ell B(x,y,\xi)| \lesssim e^{(\frac{1}{10}-\langle \xi \rangle)(x-y)} \qquad \mbox{if $x_1<y<x$,}
$$
and to use similar estimates to the above, bounding successively $\partial_\xi^\ell \partial_x^k \mathfrak{f}_4^{\dagger} (x,\xi)$, for $\ell=1$, $k=0\dots \infty$, then $\ell=2$, $k=0\dots \infty$, etc.. For instance, in the case $k=0, \ell=1$,
\begin{equation} 
\begin{split}
&  \partial_\xi \mathfrak{f}_4^{\dagger}  (x,\xi)  =   \int^{+\infty}_x \partial_\xi A(\xi) V(y) \mathfrak{f}_4^{\dagger} (y, \xi) dy +  \int^{+\infty}_x A(\xi) V(y)  \partial_\xi  \mathfrak{f}_4^{\dagger} (y, \xi) dy  \\
&\quad  + \int^{x}_{x_1} \partial_\xi B(x, y, \xi) V(y) \mathfrak{f}_4^{\dagger} (y, \xi) dy + \int^{x}_{x_1} B(x, y, \xi) V(y) \partial_\xi  \mathfrak{f}_4^{\dagger} (y, \xi) dy .  \\
\end{split}
\end{equation}
The first and third terms on the right-hand side are bounded pointwise by $1$ for $x \geq -1$, while the second and last terms are contractions on $L^\infty(x_0,\infty)$; the previous arguments apply then.

\medskip

\noindent \textbf{Step 2}. \emph{The case $|\xi|\gg 1$}. We claim that for $|\xi|$ large enough then choosing $x_1=-\infty$ in~\eqref{integraleq-1} one has:
\begin{equation}    
| \partial_x^k \partial_\xi^\ell [\mathfrak{f}_4^\dagger(x,\xi) - e_2 ] | \lesssim_{k,\ell}  \langle \xi \rangle^{-1-\ell}e^{-\beta x} \qquad \forall x>-1.
\end{equation}

\noindent \underline{Estimate without derivatives.} Recall the kernel bounds \eqref{rat musque} and \eqref{geai}. Then, for $x_1=-\infty$ if $|\xi|$ large enough the right-hand side of~\eqref{integraleq-1} is a contraction in $L^\infty(\mathbb R)$. This gives a solution $\mathfrak f_4^\dagger$ that is $O(1)$ in $L^\infty$. Injecting back this bound in \eqref{integraleq-1} shows:
\begin{align*}
|\mathfrak{f}_4^{\dagger} (x,\xi) - e_2| & \lesssim \int_x^\infty |A(y,\xi)| e^{-\beta y} \,dy + \int_{-\infty}^x |B(x,y,\xi)| e^{-\beta y} \,dy\\
& \lesssim \langle \xi \rangle^{-1} e^{-\beta x} 
\end{align*}
for all $x>-1$ by the kernel bound \eqref{rat musque}.

\medskip

\noindent \underline{Taking derivatives in $x$.} Since $x_1=-\infty$ the identity \eqref{raclette} becomes
\begin{align} \label{fondue}
\partial_x \mathfrak{f}_4^{\dagger}  (x,\xi)  &= - A(\xi) V(x) \mathfrak{f}_4^{\dagger} (x, \xi)  + \int^{x}_{-\infty} B(x, y, \xi) \partial_y V(y) \mathfrak{f}_4^{\dagger} (y, \xi) dy \\
\nonumber & \qquad+ \int^{x}_{-\infty} B(x, y, \xi) V(y) \partial_y \mathfrak{f}_4^{\dagger} (y, \xi) dy.
\end{align}
Solving once more by Banach's fixed point theorem using \eqref{rat musque}, we obtain the bound
$$
\left| \partial_x \mathfrak{f}_4^\dagger(x,\xi) \right|\lesssim \frac{e^{-\beta x}}{\langle \xi \rangle},
$$
and can then prove inductively that $| \partial_x^{k} \mathfrak{f}_4^{\dagger} (x,\xi)| \lesssim \frac{e^{-\beta x}}{\langle \xi \rangle}$.

\medskip
		
\noindent \underline{Taking derivatives in $\xi$.} Since for all $\ell \geq0$,
$$
|\partial_\xi^\ell A(\xi)| \lesssim \frac{1}{\langle \xi \rangle^{1+ \ell}}, \quad \sup_{y \geq x} |\partial_\xi^\ell B(x,y,\xi)| \lesssim \frac{1}{\langle \xi \rangle^{1+ \ell}} e^{(\frac{1}{10}-\langle \xi \rangle)(x-y)} \qquad \mbox{if $y<x$,}
$$
one can bound successively $\partial_\xi^\ell \partial_x^k \mathfrak{f}_4^{\dagger} (x,\xi)$, for $k=0$, $\ell=0\dots \infty$, then $k=1$, $\ell=0\dots \infty$, etc... 

\medskip

\noindent \textbf{Step 3}. \emph{Connecting the cases $|\xi|<R$ and $|\xi|$ large.} We saw that $x_1$ can be chosen to be some positive constant, for the case $|\xi|\lesssim 1$, or $-\infty$, for the case $|\xi|$ large. Denoting these two solutions by $f_4^{\dagger,1}$ and $f_4^{\dagger,2}$, respectively, we set
$$
f_4^{\dagger}(x,\xi) = \chi(\xi)f_4^{\dagger,2}(x,\xi)  + [1-\chi(\xi)] f_4^{\dagger,1}(x,\xi),
$$
for an appropriately chosen cutoff function $\chi$. It satisfies the desired estimates.

\end{proof}

The solution $f_4^\dagger$ which was constructed in the previous lemma needs to be slightly modified in order to achieve independence, in the sense of the Wronskian, from $f_1$ and $f_2$. In the present context, the Wronskian is complex valued, and defined by
$$
W[f,g](x) = f'(x) \cdot g(x) - f(x) \cdot g'(x)
$$
(recall that $\cdot$ denotes the real scalar product); it is independent of $x$ if $\mathcal{H} f - \lambda f = \mathcal{H} g - \lambda g = 0$. We learn from the asymptotic behavior of $f_1$, $f_2$ and $f_3$ that
$$
W(f_1,f_2) = 2i\xi, \quad \mbox{and} \quad W(f_1,f_3) = W(f_2,f_3) = 0.
$$

\begin{lemma}
There exists a unique pair of complex-valued functions $c_1 (\xi) $, $c_2 (\xi)$, such that
\begin{equation}
f_4  (x , \xi) := f_4^\dagger (x , \xi) - c_1 (\xi) f_1  (x , \xi) - c_2 (\xi) f_2  (x , \xi)
\end{equation}
satisfies the Wronskian identities
\begin{equation}
W [f_1 (\cdot, \xi), f_4 (\cdot, \xi) ] = W [f_2 (\cdot, \xi), f_4 (\cdot, \xi) ] =0.
\end{equation}
For $|\xi| \geq 1$, the coefficients $c_1$ and $c_2$ are such that
$$
| \partial_\xi^k c_1(\xi)| + | \partial_\xi^k c_2(\xi)| \lesssim \langle \xi \rangle^{-k-1}.
$$
For $|\xi| \leq 1$, there holds
$$
| \partial_x^k \partial_\xi^\ell  f_4 (x, \xi) | \lesssim 1.
$$
\end{lemma}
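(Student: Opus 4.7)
My plan is to first solve the Wronskian conditions as a linear system for $c_1(\xi), c_2(\xi)$, and then establish the two estimates by different mechanisms tailored to each frequency regime. Substituting $f_4 = f_4^\dagger - c_1 f_1 - c_2 f_2$ into $W[f_j, f_4] = 0$ for $j = 1, 2$ and using bilinearity together with $W[f_j, f_j] = 0$ and $W[f_1, f_2] = 2i\xi$ yields
\[
c_2(\xi) = \frac{W[f_1, f_4^\dagger](\xi)}{2i\xi}, \qquad c_1(\xi) = -\frac{W[f_2, f_4^\dagger](\xi)}{2i\xi},
\]
which uniquely determines $(c_1, c_2)$ for $\xi \neq 0$.

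For the regime $|\xi| \geq 1$, since the Wronskians are independent of $x$ I would evaluate them at $x = 0$, using Lemmas \ref{lemma12} and \ref{S-v-lst-lm5.5-KS}. At $x = 0$ one obtains decompositions $f_1(0,\xi) = e_1 + R_1$, $f_4^\dagger(0,\xi) = e_2 + R_2$, $f_1'(0,\xi) = i\xi\,e_1 + S_1$, and $(f_4^\dagger)'(0,\xi) = \la\xi\ra\,e_2 + S_2$, with $|\partial_\xi^\ell R_j| \lesssim \la\xi\ra^{-1-\ell}$ and $|\partial_\xi^\ell S_j| \lesssim \la\xi\ra^{-\ell}$. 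Expanding $W = f_1' \cdot f_4^\dagger - f_1 \cdot (f_4^\dagger)'$, the two potentially large contributions $i\xi\,e_1 \cdot e_2$ and $e_1 \cdot \la\xi\ra\,e_2$ vanish by the orthogonality $e_1 \cdot e_2 = 0$, and the remaining cross terms yield $|\partial_\xi^\ell W[f_j, f_4^\dagger]| \lesssim \la\xi\ra^{-\ell}$. Dividing by $2i\xi$ and applying Leibniz then gives $|\partial_\xi^k c_j(\xi)| \lesssim \la\xi\ra^{-k-1}$.

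For the regime $|\xi| \leq 1$, the coefficients $c_1, c_2$ may individually be singular at $\xi = 0$, but the combination in $f_4$ remains smooth because $f_1|_{\xi = 0} = f_2|_{\xi = 0}$ (both Jost solutions reduce to the same real-valued solution of the real ODE there). I would introduce
\[
g_R(x,\xi) = \frac{f_1(x,\xi) + f_2(x,\xi)}{2}, \qquad h(x,\xi) = \frac{f_1(x,\xi) - f_2(x,\xi)}{2i\xi},
\]
where $h$ extends smoothly to $\xi = 0$ by Taylor expansion of $f_1 - f_2$, which vanishes at $\xi = 0$ and depends smoothly on $\xi$ via the Volterra construction in Lemma \ref{lemma12}. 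Writing $f_1 = g_R + i\xi\,h$ and $f_2 = g_R - i\xi\,h$, direct computation from the formulas above gives $c_1 + c_2 = W[h, f_4^\dagger]$ and $i\xi(c_1 - c_2) = -W[g_R, f_4^\dagger]$, both smooth in $\xi$ on this regime. Therefore
\[
f_4 = f_4^\dagger - W[h, f_4^\dagger]\,g_R + W[g_R, f_4^\dagger]\,h,
\]
and the bound $|\partial_x^k \partial_\xi^\ell f_4(x,\xi)| \lesssim 1$ follows from the uniform boundedness, with all derivatives, of $f_4^\dagger, g_R, h$ in this regime (the Wronskian prefactors being $x$-independent, so $\partial_x$ acts only on $f_4^\dagger, g_R, h$).

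I expect the main obstacle to be the bookkeeping for $|\xi| \geq 1$: a priori the expansion of $W[f_1, f_4^\dagger]$ contains terms of size $|\xi|$ and $\la\xi\ra$, and only the structural orthogonality $e_1 \cdot e_2 = 0$ between the leading asymptotics of $f_1$ and $f_4^\dagger$ brings the Wronskian down to $O(1)$, which is exactly what is needed to survive division by $\xi$ and recover the claimed $\la\xi\ra^{-1}$ decay for $c_j$.
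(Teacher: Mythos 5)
Your proof is correct and takes essentially the same approach as the paper's (very terse) argument: it solves the Wronskian system for $c_1, c_2$, evaluates the Wronskians at $x = 0$ and exploits the orthogonality $e_1 \cdot e_2 = 0$ between the leading asymptotics of $f_1$ and $f_4^\dagger$ to bring $W[f_j, f_4^\dagger]$ and its $\xi$-derivatives down to $O(\langle\xi\rangle^{-\ell})$ for $|\xi| \geq 1$, and observes that for $|\xi| \leq 1$ the apparent $1/\xi$ singularities of $c_1, c_2$ cancel in $c_1 f_1 + c_2 f_2$ because $f_1(\cdot,0) = f_2(\cdot,0)$. Your decomposition $f_1 = g_R + i\xi h$, $f_2 = g_R - i\xi h$ is a clean way to make the last cancellation explicit, which the paper merely asserts by stating that $F = c_1 f_1 + c_2 f_2$ remains bounded with all derivatives.
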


\begin{proof} The requirement that $W [f_1 (\cdot, \xi), f_4 (\cdot, \xi) ] = W [f_2 (\cdot, \xi), f_4 (\cdot, \xi) ] =0$ gives the formulas
$$
c_1(\xi) = - \frac{W(f_2,f_4^\dagger)}{2i\xi}, \qquad c_2(\xi) = \frac{W(f_1,f_4^\dagger)}{2i\xi}.
$$ 
The estimates for $|\xi| \geq 1$ are then consequences of the estimates on $f_1$, $f_2$ and $f_4$. Finally, setting
$$
F(x,\xi) = c_1 (\xi) f_1  (\cdot , \xi) + c_2 (\xi) f_2  (\cdot , \xi),
$$
it is such that for $|\xi|\leq 1$
$$
 | \partial_x^k \partial_\xi^\ell F (x, \xi) | \lesssim 1,
$$
from which the desired estimates for $|\xi| \leq 1$ follow.
\end{proof}

\subsection{Scattering theory for $\mathcal{H}$}
	
	Setting
	\begin{equation}
	g_j (x, \xi) := f_j (-x, \xi) , \quad j = 1, 2, 3, 4,
	\end{equation}
	we get, since $V$ is even,
	\begin{equation}
	\mathcal{H} g_j (\cdot, \xi) = (1+ \xi^2) g_j (\cdot, \xi) , \quad j = 1, 2, 3, 4,
	\end{equation}
	and as $x \rightarrow \pm \infty$, $g_j$ has the same asymptotic behavior  as $f_j$ when $x \rightarrow \mp \infty$.
	
We then let, for each $\xi \in \mathbb{R}$,
\begin{equation}     \label{S-v-lst-def-eq1}
\begin{split}
& F_1 (\cdot, \xi) = (f_1 (\cdot, \xi), f_3 (\cdot, \xi)) , \ F_2 (\cdot, \xi) := (f_2 (\cdot, \xi), f_4 (\cdot, \xi)) , \\ 
& G_1 (\cdot, \xi) = (g_2 (\cdot, \xi), g_4 (\cdot, \xi)) , \  G_2 (\cdot, \xi) := (g_1 (\cdot, \xi), g_3 (\cdot, \xi)).
\end{split}
\end{equation}

Finally, given $2\times 2$ matrices $F(x)$ and $G(x)$, the matrix Wronskian $\mathcal{W}(F,G)$ is given by
$$
\mathcal{W}(F,G)(x) = (F'(x))^\top G(x) - (F(x))^\top G'(x) .
$$
If $F$ and $G$ are solutions of $\mathcal{H} F = \lambda F$ and $\mathcal{H} G = \lambda G$, for $\lambda \in \mathbb{C}$, the value of $\mathcal{W}(F,G)$ is independent of $x$.

\begin{lemma} \label{lemmaAB}
(i) For all $\xi \in \mathbb{R}$, 
		\begin{equation}
		\begin{split}
		& G_1 (x, \xi) = F_2 (-x, \xi) , \  G_2 (x, \xi) = F_1 (-x, \xi) , \\
		& \overline{F_1 (x, \xi)} = F_1 (x, - \xi) , \  \overline{F_2 (x, \xi)} = F_2 (x, - \xi) , \\
		& \overline{G_1 (x, \xi)} = G_1 (x, - \xi) , \  \overline{G_2 (x, \xi)} = G_2 (x, - \xi) . \\
		\end{split}
		\end{equation}
		(ii) For every $\xi \in \mathbb{R} \setminus \{ 0 \}$, there exist unique constant $2 \times 2$ matrices $A = A(\xi)$, $B = B(\xi)$ with complex entries so that
		\begin{equation} \label{pinson}
		F_1 (x, \xi) = G_1 (x, \xi) A(\xi) +  G_2 (x, \xi) B(\xi) .   
		\end{equation}  
		Then $A(-\xi) = \overline{A(\xi)}$, $B(-\xi) = \overline{B(\xi)}$, and
		\begin{equation} \label{scarabeerhino}
		\begin{split}
		& G_2 (x, \xi) =  F_2 (x, \xi) A(\xi) +  F_1 (x, \xi) B(\xi) , \\
		&  \mathcal{W} [ F_1 (\cdot , \xi), G_2 (\cdot , \xi) ] = A(\xi)^\top (2i \xi p - 2 \la \xi \ra q) , \\
		&  \mathcal{W} [ F_1 (\cdot , \xi), G_1 (\cdot , \xi) ] = - B(\xi)^\top (2i \xi p - 2 \la \xi \ra q) . \\
		\end{split}
		\end{equation}
		Moreover, $A(\xi)$ and $B(\xi)$ are smooth for $\xi \neq 0$. Furthermore, $\xi p A(\xi)$, $q A(\xi)$, $\xi p B(\xi)$, $qB(\xi)$ are smooth functions of $\xi \in \mathbb{R}$. Finally, if $|\xi| \geq 1$,
		\begin{equation} \label{merle}
		\partial_\xi^\ell ( A (\xi) - I ) = O( \la \xi \ra^{-1-\ell} ) , \  \partial_\xi^\ell B (\xi)   = O( \la \xi \ra^{-1-\ell} ) . 
		\end{equation}
	\end{lemma}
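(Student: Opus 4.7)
The plan is to handle the compatibility part (i) first by direct verification, then obtain the matrices $A, B$ via a basis argument, derive the Wronskian identities by bilinearity, and finally extract smoothness and large-$|\xi|$ asymptotics from explicit formulas.

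Part (i) is a compatibility check. The reflection identities $G_1(x,\xi) = F_2(-x,\xi)$ and $G_2(x,\xi) = F_1(-x,\xi)$ follow immediately from $g_j(x,\xi) = f_j(-x,\xi)$ and the definitions \eqref{S-v-lst-def-eq1}. For the conjugation rules, I would note that $\mathcal H$ has real coefficients and $\langle\xi\rangle = \sqrt{\xi^2+2}$ is even in $\xi$, so $\overline{f_j(\cdot,\xi)}$ solves the same eigenvalue equation as $f_j(\cdot,-\xi)$. Comparing asymptotic profiles and invoking uniqueness of the Jost solutions from Lemmas~\ref{lemma3}, \ref{lemma12}, \ref{S-v-lst-lm5.5-KS}, together with $f_2 = \overline{f_1}$ and the fact that $f_3, f_4$ are real-valued, yields $\overline{f_1(x,\xi)} = f_1(x,-\xi)$, $\overline{f_3(x,\xi)} = f_3(x,-\xi)$, $\overline{f_4(x,\xi)} = f_4(x,-\xi)$, whence the claims on $F_j, G_j$.

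For part (ii), as $x \to -\infty$ the four solutions $g_1, g_2, g_3, g_4$ have the pairwise distinct leading profiles $e^{-ix\xi}e_1,\ e^{ix\xi}e_1,\ e^{x\langle\xi\rangle}e_2,\ e^{-x\langle\xi\rangle}e_2$, hence are linearly independent for $\xi \neq 0$ in the four-dimensional solution space of $\mathcal H F = (1+\xi^2)F$. Decomposing the two columns of $F_1$ uniquely in this basis produces $A(\xi), B(\xi)$ and \eqref{pinson}. Complex conjugation of \eqref{pinson} combined with (i) gives $A(-\xi) = \overline{A(\xi)}$ and $B(-\xi) = \overline{B(\xi)}$; substituting $x \mapsto -x$ in \eqref{pinson} and applying the reflection identities gives the first relation in \eqref{scarabeerhino}. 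By bilinearity of the matrix Wronskian under right multiplication by $\xi$-dependent, $x$-independent matrices, $\mathcal{W}(F_1, G_2) = A^\top \mathcal{W}(G_1, G_2) + B^\top \mathcal{W}(G_2, G_2)$. Evaluating the $\mathcal{W}(G_i, G_j)$ at $x \to -\infty$ using the leading profiles and the orthogonality $e_1 \perp e_2$ gives $\mathcal{W}(G_1, G_2) = 2i\xi\,p - 2\langle\xi\rangle\,q$, $\mathcal{W}(G_1, G_1) = \mathcal{W}(G_2, G_2) = 0$, and $\mathcal{W}(G_2, G_1) = -\mathcal{W}(G_1, G_2)^\top$; substitution yields the Wronskian formulas in \eqref{scarabeerhino}.

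Finally, solving the Wronskian relations expresses $A(\xi), B(\xi)$ as products of Wronskians of the Jost functions (smooth in $\xi \neq 0$) with $(2i\xi\,p - 2\langle\xi\rangle\,q)^{-\top} = \tfrac{1}{2i\xi}\,p - \tfrac{1}{2\langle\xi\rangle}\,q$. The only singularity at $\xi = 0$ sits in the $p$-component, which explains why $\xi p A$, $q A$, $\xi p B$, $q B$ extend smoothly across $\xi = 0$. For the bound \eqref{merle} I would plug the quantitative estimates $\mathfrak f_j - e_k = O(\langle\xi\rangle^{-1})$ from Lemmas~\ref{lemma3}, \ref{lemma12}, \ref{S-v-lst-lm5.5-KS} into $\mathcal{W}(F_1, G_j)$ and expand around the free-potential values. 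I expect this last step to be the main technical obstacle: the decomposition \eqref{stollen} of $f_1$ contains a non-decaying $e^{-x\langle\xi\rangle}$ component that is awkward in a Wronskian computation. The cleanest fix is to evaluate each Wronskian at $x \to +\infty$, where $e^{-x\langle\xi\rangle}$ decays exponentially and $\mathfrak f_{1,1}$ has an explicit leading profile with controlled $\xi$-derivative errors, delivering the required decay on $A - I$ and $B$ together with all their $\xi$-derivatives.
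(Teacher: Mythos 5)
Your re-derivation of parts (i) and (ii) and the Wronskian algebra is correct (the paper simply cites Lemmas 5.13, 5.14 and Corollary 5.15 of Krieger--Schlag for these and only proves~\eqref{merle}). The genuine gap is in your plan for~\eqref{merle}. You propose to evaluate $\mathcal{W}[F_1,G_2]$ and $\mathcal{W}[F_1,G_1]$ as $x \to +\infty$, on the grounds that the $e^{-x\langle\xi\rangle}$ factor in $f_1$ decays there. But the reflected Jost functions $g_j(x,\xi)=f_j(-x,\xi)$ are controlled by the estimates of Lemmas~\ref{lemma3}, \ref{lemma12}, \ref{S-v-lst-lm5.5-KS} only for $-x>-1$, i.e.\ for $x<1$: the opposite half-line. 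As $x\to+\infty$ the entries of $G_1,G_2$ are governed by the behavior of the $f_j$ at $-\infty$, which those lemmas do not give you; in fact that behavior is encoded in $A,B$ themselves via~\eqref{pinson}, making the plan circular. Concretely $g_3(x,\xi)=f_3(-x,\xi)$ grows like $e^{x\langle\xi\rangle}$ as $x\to+\infty$, so the Wronskian entries involve an $\infty \cdot 0$ cancellation that you have no uniform-in-$\xi$ control over.

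The right evaluation point is $x=0$, which lies in the overlap of the two domains of control: $g_j(0,\xi)=f_j(0,\xi)$, $g_j'(0,\xi)=-f_j'(0,\xi)$, and Lemmas~\ref{lemma3} and~\ref{lemma12} give $|\partial_\xi^\ell(f_1(0,\xi)-e_1)|\lesssim\langle\xi\rangle^{-1-\ell}$, $|\partial_\xi^\ell(f_1'(0,\xi)-i\xi e_1)|\lesssim\langle\xi\rangle^{-\ell}$, and the analogues for $f_3,g_1,g_3$. Plugging these into $\mathcal{W}[F_1,G_2]$ yields $\mathcal{W}[F_1,G_2]=(2i\xi p-2\langle\xi\rangle q)+R(\xi)$ with $|\partial_\xi^\ell R|\lesssim\langle\xi\rangle^{-\ell}$, whence~\eqref{merle}; the same works for $B$. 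Note also that the ``non-decaying $e^{-x\langle\xi\rangle}$'' piece of~\eqref{stollen} that worried you is harmless at $x=0$: the exponential equals $1$ and all its $\xi$-derivatives vanish there (they carry a factor of $x$), while $\mathfrak{f}_{1,2}(0,\xi)$ is already $O(\langle\xi\rangle^{-1-\ell})$. The obstacle you anticipated was an artifact of not choosing the evaluation point where both $f_j$ and $g_j$ are simultaneously controlled.
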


	\begin{proof}
Most of the above statement appears in Lemma 5.13, Lemma 5.14 and Corollary 5.15 of \cite{KS}. 
The only statement which requires a proof is~\eqref{merle}. By~\eqref{scarabeerhino}, $A(\xi)$ is given by the formula
$$
A(\xi)^\top = \mathcal{W} [ F_1 (\cdot , \xi), G_2 (\cdot , \xi) ] (2i \xi p - 2 \la \xi \ra q)^{-1}.
$$
In order to evaluate $\mathcal{W} [ F_1 (\cdot , \xi), G_2 (\cdot , \xi) ]$, we note that, as a consequence of lemmas~\ref{lemma3} and~\ref{lemma12}, for any $\xi$, and for any $\ell \geq 0$,
\begin{align*}
& |\partial_\xi^\ell(f_1(0,\xi) - e_1)| \lesssim \langle \xi \rangle^{-1-\ell}, \qquad
|\partial_\xi^\ell(f_1'(0,\xi) - i\xi e_1)| \lesssim \langle \xi \rangle^{-\ell}, \\
& |\partial_\xi^\ell(f_3(0,\xi) - e_2)| \lesssim \langle \xi \rangle^{-1-\ell}, \qquad
|\partial_\xi^\ell(f_3'(0,\xi) + \langle \xi \rangle e_2)| \lesssim \langle \xi \rangle^{-\ell}, \\
& |\partial_\xi^\ell(g_1(0,\xi) - e_1)| \lesssim \langle \xi \rangle^{-1-\ell}, \qquad
|\partial_\xi^\ell(g_1'(0,\xi) + i\xi e_1)| \lesssim \langle \xi \rangle^{-\ell}, \\
& |\partial_\xi^\ell(g_3(0,\xi) - e_2)| \lesssim \langle \xi \rangle^{-1-\ell}, \qquad
|\partial_\xi^\ell(g_3'(0,\xi) - \langle \xi \rangle e_2)| \lesssim \langle \xi \rangle^{-\ell}.
\end{align*}
As a consequence, 
$$
\mathcal{W} [ F_1 (\cdot , \xi), G_2 (\cdot , \xi) ] = (2i \xi p - 2 \la \xi \ra q) + R(\xi), \quad \mbox{where $|\partial_\xi^\ell R(\xi)| \lesssim \langle \xi \rangle^{-\ell}$},
$$
which leads to the desired statement. The proof of the estimate for $B(\xi)$ is similar.
\end{proof}

The link between the scattering theory of $\mathcal{H}$ and its spectral properties will be provided by the matrix Wronskian
\begin{equation}     \label{S-v-lst-def-eq2}
D(\xi) = \mathcal{W} [ F_1 (\cdot , \xi), G_2 (\cdot , \xi) ].
\end{equation}

\begin{lemma} \label{lemmaD}
The following properties are equivalent: 
\begin{itemize}
\item[(a)] There are no embedded eigenvalues and $E = 1$ is not a resonance of $\mathcal{H}$. 
\item[(b)] $D(\xi)$ is invertible for all $\xi \in \mathbb{R}$. 
In this case,
\begin{equation} \label{loriot}
A(\xi)^{-1} = D(\xi)^{-1} (2i \xi p - 2 \la \xi \ra q)
\end{equation}
for all $\xi \in \mathbb{R}$. 
\end{itemize}
\end{lemma}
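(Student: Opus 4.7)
The strategy is to relate the kernel of $D(\xi)$ to the existence of bounded solutions of the eigenvalue equation $\mathcal{H}\phi = (1+\xi^2)\phi$. Given $v \in \C^2 \setminus \{0\}$ with $D(\xi_0) v = 0$, set $\phi := v_1 g_1(\cdot, \xi_0) + v_2 g_3(\cdot, \xi_0)$; this is a nonzero solution which is bounded at $-\infty$ (since $g_1$ oscillates and $g_3$ decays there), and the matrix identity $Dv = 0$ unfolds into the two scalar Wronskian relations $W[f_i(\cdot, \xi_0), \phi] = 0$ for $i \in \{1,3\}$.

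For $\xi_0 \neq 0$, I expand $\phi$ at $+\infty$ in the basis $\{f_1, f_2, f_3, f_4\}(\cdot, \xi_0)$. From the Jost asymptotics of Lemmas~\ref{lemma3},~\ref{lemma12},~\ref{S-v-lst-lm5.5-KS}, a direct computation gives $W(f_1, f_2) = 2i\xi_0$, $W(f_3, f_4) = -2\la\xi_0\ra$, and $W(f_i, f_j) = 0$ for the other choices with $i \in \{1,3\}$. Hence $W(f_1, \phi) = W(f_3, \phi) = 0$ kills the $f_2$ and $f_4$ coefficients, yielding $\phi = \a f_1 + \g f_3$ at $+\infty$ for some $\a, \g \in \C$. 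I then exploit the conservation of $\phi' \cdot \bar \phi - \phi \cdot \bar \phi'$: since $V_1, V_2$ and $1+\xi_0^2$ are real, $\bar\phi$ solves the same system, and a direct computation on the two lines of the system shows that the $V_2$ cross-terms cancel in the derivative. Evaluating this conserved quantity at $\pm\infty$ from the Jost asymptotics gives $W(\phi,\bar\phi)|_{+\infty} = 2i\xi_0 |\a|^2$ and $W(\phi,\bar\phi)|_{-\infty} = -2i\xi_0 |v_1|^2$. Equating these forces $|\a|^2 + |v_1|^2 = 0$, hence $\a = v_1 = 0$, so that $\phi = \g f_3 = v_2 g_3$ is an exponentially decaying $L^2$ eigenfunction at the embedded energy $1 + \xi_0^2$.

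For $\xi_0 = 0$ the basis degenerates, since $f_2(\cdot, 0) = \overline{f_1(\cdot, 0)} = f_1(\cdot, 0)$ by reality of the ODE. The same Wronskian manipulation, carried out on a basis $\{f_1, f_3, f_4, \widetilde f\}(\cdot, 0)$ completed by the growing modes $f_4 \sim e^{x\sqrt 2} e_2$ and $\widetilde f \sim x e_1$, still puts $\phi$ in $\spn(f_1(\cdot, 0), f_3(\cdot, 0))$ at $+\infty$: $W(f_3, \phi) = 0$ eliminates $f_4$, and $W(f_1, \phi) = 0$ eliminates $\widetilde f$. Thus $D(0)v = 0$ with $v \neq 0$ produces a nonzero bounded solution of $\mathcal H\phi = \phi$ on $\R$, which is either $L^2$ (an embedded eigenvalue at $E = 1$) or lies in $L^\infty \setminus L^2$ (a resonance at $E = 1$ in the sense of \eqref{id:assumption-spectral-2}). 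The converse direction is then immediate: any $L^2$ eigenfunction at $1 + \xi_0^2$ must be simultaneously a multiple of $f_3(\cdot, \xi_0)$ and of $g_3(\cdot, \xi_0)$, while any resonance at $E = 1$ lies in both $\spn(f_1, f_3)(\cdot, 0)$ and $\spn(g_1, g_3)(\cdot, 0)$; in either case one reads off a nonzero $v$ with $D(\xi_0) v = 0$.

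The formula~\eqref{loriot} follows algebraically: substituting $G_2 = F_2 A + F_1 B$ from~\eqref{scarabeerhino} into $D = \mathcal{W}[F_1, G_2]$, using linearity in the second entry together with $\mathcal{W}[F_1, F_1] = 0$ (which holds because $W(f_i, f_j) = 0$ for $i, j \in \{1,3\}$), reduces to $D(\xi) = \mathcal{W}[F_1, F_2] A(\xi) = (2i\xi p - 2\la\xi\ra q) A(\xi)$, and inverting both sides wherever $D$ is invertible yields the claim. The principal technical obstacle is the conservation-law step for $\xi_0 \neq 0$: one must verify carefully that the exponentially decaying tails $\g f_3$ and $v_2 g_3$ drop out of the limits at $\pm\infty$ so that the oscillatory modes alone produce the sum-of-squares identity forcing $L^2$ decay. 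This mechanism is unavailable at $\xi_0 = 0$ (the conserved form vanishes identically), which is precisely why the non-$L^2$ alternative must be excluded by hand through the resonance hypothesis in (a).
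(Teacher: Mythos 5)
Your proof is correct. The paper disposes of Lemma~\ref{lemmaD} by citing Corollary~5.21 of Krieger--Schlag~\cite{KS}, so there is no in-paper argument to compare against; what you supply is a self-contained reconstruction of the underlying scattering-theory mechanism, and it follows the standard lines: realise a kernel vector of $D(\xi_0)$ as $\phi = v_1 g_1 + v_2 g_3$, expand it in the Jost basis at $+\infty$, eliminate the $f_2$- and $f_4$-components via the Wronskian relations $W(f_1,\phi)=W(f_3,\phi)=0$ encoded by $D(\xi_0)v=0$, and then invoke conservation of $W(\phi,\bar\phi)$ (which does hold here since the coefficient matrix of the second-order system is \emph{symmetric} even though $\mathcal{H}$ is not self-adjoint) to force $L^2$ decay when $\xi_0\neq 0$. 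Your remark that this conserved form trivialises at $\xi_0=0$ is precisely what explains why the threshold case admits the resonance alternative while strictly embedded energies do not. One useful observation on~\eqref{loriot}: you arrive at it by substituting $G_2 = F_2A + F_1B$ from~\eqref{scarabeerhino}, which gives $D=\mathcal{W}[F_1,F_2]A=(2i\xi p - 2\langle\xi\rangle q)A$ and hence~\eqref{loriot} by a one-line inversion; the identity recorded in Lemma~\ref{lemmaAB}, $D = A(\xi)^\top(2i\xi p - 2\langle\xi\rangle q)$ (obtained instead from~\eqref{pinson}), would yield~\eqref{loriot} only after also using the symmetry $A^\top(2i\xi p - 2\langle\xi\rangle q)=(2i\xi p - 2\langle\xi\rangle q)A$, equivalently $D=D^\top$. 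Both forms of $D$ are correct and mutually consistent, but your choice of starting point is the more economical route to the stated formula.
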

	
\begin{proof}
This is Corollary 5.21 of~\cite{KS}.
\end{proof}

The generalized eigenfunctions $(f_j)_{j=1,2,3,4}$ coincide with that of the article \cite{KS}, as they are solutions of the same fixed point scheme. This allows us to directly apply certain results of \cite{KS} in this section and the next one. We shall however use slightly different eigenfunctions at some point in the next section.

\begin{lemma} \label{lem:improved-eigenfunctions}

For every $\xi \in \mathbb{R}$, there exist three solutions $f_j^*$ for $j=1,2,3$ of the equation $\mathcal{H} f_j^*= (\xi^2 +1 ) f_j^* $ that are of the following form
\begin{equation} \label{id:decomposition-f*}
\begin{array}{l l}
& f_1^* (x , \xi) = \mathfrak{f}_{1,1}^*(x,\xi) e^{ix\xi} + \mathfrak{f}_{1,2}^*(x,\xi) e^{-x\langle \xi \rangle}, \\
 & f_2^*=\overline{f_1^*},\\
 & f_3^*(x,\xi) = \mathfrak{f}_3^*(x,\xi) e^{-x \langle \xi \rangle},
\end{array}
\end{equation}
with, for all $x \geq -10$, $k$, $\ell \geq 0$
\begin{equation} \label{bd:f*}
\begin{array}{l l}
 & | \partial_x^k \partial_\xi^\ell (\mathfrak{f}_{1,1}^* (x, \xi) - e_1) | \lesssim_{k,\ell}   \la \xi \ra^{-1-\ell} e^{-\beta x }, \\
& | \partial_x^k \partial_\xi^\ell \mathfrak{f}_{1,2}^* (x, \xi)  | \lesssim_{k,\ell} e^{-50 \langle \xi \rangle},\\
& | \partial_x^k \partial_\xi^\ell  [ \mathfrak{f}_3^*(x,\xi)  - e_2] | \lesssim_{k,\ell}   \la \xi \ra^{-1-\ell} e^{-\beta x}.
\end{array}
\end{equation}

\end{lemma}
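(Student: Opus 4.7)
The target estimates \eqref{bd:f*} strengthen Lemmas~\ref{lemma3}--\ref{lemma12} in three ways: the range of validity is extended from $x\geq -1$ to $x\geq -10$; the decay rate of $\mathfrak{f}_{1,1}^* - e_1$ is promoted from $e^{-\beta x/2}$ to $e^{-\beta x}$; and the bound $\langle\xi\rangle^{-1-\ell}[e^{\langle\xi\rangle}\mathbf{1}_{x>2}+1]$ on the evanescent coefficient is replaced by the much sharper $e^{-50\langle\xi\rangle}$. I would prove the lemma by revisiting the constructions of $f_3$ and $f_1$ and adjusting them to deliver these sharpenings.

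I would define $f_3^*$ as the function produced by rerunning the proof of Lemma~\ref{lemma3} on a larger interval $[-M,\infty)$ for some fixed $M\geq 60$ in place of $[-1,\infty)$. The only step of that proof sensitive to the left endpoint is the Gronwall extension from a right-neighborhood of infinity back to the left, and on any bounded interval this costs only a larger implicit constant; the claimed estimate on $\mathfrak{f}_3^*-e_2$ therefore holds on $[-10,\infty)$.

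For $f_1^*$ I would use the reduction-of-order ansatz $f_1^* = e_1 v^* + f_3^* u^*$ of Lemma~\ref{lemma12} and rerun the Banach fixed-point scheme for $w^*$ (with $v^*=e^{ix\xi}w^*$) and $\mathfrak{u}^*$ on the enlarged range $[-M,\infty)$. The key improvement on $\mathfrak{f}_{1,2}^*$ is obtained by modifying the cutoff $\chi$ used in the proof of Lemma~\ref{lemma12} to split $u^* f_3^*$ into its $e^{ix\xi}$- and $e^{-x\langle\xi\rangle}$-components. In that lemma $\chi$ was centered near $y=0$, producing the ``boundary'' quantity $\widetilde{\mathfrak{u}}_2(\xi) = \int \chi(y)\, e^{y\langle\xi\rangle+iy\xi}\,\mathfrak{u}(y,\xi)\,dy$ of size $e^{\langle\xi\rangle}$. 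I would instead take a cutoff $\chi^*$ supported in a small neighborhood of $y=-M$, so that on its support $e^{y\langle\xi\rangle}\leq e^{-(M-1)\langle\xi\rangle}$; combined with the (enlarged-range) pointwise bound $|\mathfrak{u}^*(y,\xi)|\lesssim \langle\xi\rangle^{-1}e^{-\beta y}\lesssim \langle\xi\rangle^{-1}e^{M\beta}$ on $\supp\chi^*$, this yields $|\widetilde{\mathfrak{u}}_2^*(\xi)|\lesssim e^{-50\langle\xi\rangle}$ for $M$ large enough. That quantity is exactly the coefficient of $e^{-x\langle\xi\rangle}$ in $f_1^*$ in the ``case $x>2$'' decomposition of Lemma~\ref{lemma12}, and after re-running the Taylor-expansion step (``case $-3<x<3$'') with the shifted base point the same bound holds uniformly on $[-10,\infty)$. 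The improved decay rate $e^{-\beta x}$ on $\mathfrak{f}_{1,1}^*-e_1$ is obtained by running the whole argument with a slightly larger rate $\beta'\in(\beta,1)$ in hypothesis~\eqref{H2} (permissible since $V$ decays at any rate $<1$), leaving a margin $\beta'-\beta$ to absorb the polynomial $(y-x)$-factors that arise in the $\partial_\xi$ integrations-by-parts.

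Finally, $f_2^* := \overline{f_1^*}$ solves $\mathcal{H}f_2^* = (\xi^2+1)f_2^*$ because $\mathcal{H}$ has real entries and hence commutes with conjugation; its decomposition and bounds follow from those of $f_1^*$ by conjugation. The main hurdle is the bookkeeping: one must verify that the shifted cutoff and the extended Gronwall steps do not spoil any estimate, namely that the polynomial-in-$M$ constants they produce (as well as those arising from higher $x$- and $\xi$-derivatives) are dominated by the gain $e^{-(M-1)\langle\xi\rangle}$ in the regime $\langle\xi\rangle\gtrsim 1$ where the target bound $e^{-50\langle\xi\rangle}$ is meaningful; this is achieved by choosing $M$ sufficiently large (e.g., $M=60$).
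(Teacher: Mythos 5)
Your route and the paper's are close in spirit but differ in execution. The paper does not re-open the fixed-point machinery at all: it introduces the translated operator $\check{\mathcal H}=\mathcal H_0+\check V$ with $\check V(x)=V(x-100)$, observes that $\check V$ still satisfies~\eqref{H2} (only this hypothesis, not~\eqref{H1}, is used in Lemmas~\ref{lemma3} and~\ref{lemma12}), applies those two lemmas to $\check{\mathcal H}$ \emph{as black boxes}, and then sets $f_1^*(x)=\check f_1(x+100)$, $f_3^*(x)=e^{100\langle\xi\rangle}\check f_3(x+100)$, $f_2^*=\overline{f_1^*}$. Writing $x'=x+100$, the translated evanescent factor $e^{-x'\langle\xi\rangle}=e^{-100\langle\xi\rangle}e^{-x\langle\xi\rangle}$ hands the coefficient $\mathfrak f_{1,2}^*$ the factor $e^{-100\langle\xi\rangle}$, which swallows the $e^{\langle\xi\rangle}$ from Lemma~\ref{lemma12} and gives $e^{-50\langle\xi\rangle}$ immediately. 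Your version instead reruns the Volterra/Banach constructions of Lemmas~\ref{lemma3}--\ref{lemma12} with the base point of $u$ and the splitting cutoff $\chi$ relocated to $y\approx -M$. That is the same underlying observation (normalizing the $e^{-x\langle\xi\rangle}$ branch far to the left forces its coefficient down by $e^{-M\langle\xi\rangle}$), but it forces you to re-examine each internal step — Gronwall on the enlarged interval, non-vanishing of $f_3^{(2)}$ there so that the formula~\eqref{sittelle} for $u$ still makes sense, and the near-base-point Taylor splitting — all of which the translation trick dispatches silently by delegating to the already-proved lemmas. It is workable, but noticeably heavier.

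There is one genuine error in your write-up: you claim the rate improvement from $e^{-\beta x/2}$ to $e^{-\beta x}$ is ``obtained by running the whole argument with a slightly larger rate $\beta'\in(\beta,1)$ \dots\ permissible since $V$ decays at any rate $<1$.'' That is not what~\eqref{H2} says: it guarantees decay only at the \emph{given} rate $\beta$, not at every rate below~$1$, and you cannot strengthen the hypothesis on~$V$. In fact the lemma does not need this: the exponent called $\beta$ in~\eqref{bd:f*} is simply a (possibly smaller) positive constant inherited from the $e^{-\beta x/2}$ of Lemma~\ref{lemma12}, which is what the paper's proof implicitly does when it declares the estimates ``direct consequences'' of the earlier ones. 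So drop the $\beta'$ device; it is both wrong and unnecessary. A smaller point: you propose $M=60$, but once you track the $e^{\beta M}$ that enters the bound on $\mathfrak u^*$ near $y=-M$ and the requirement that $\mathfrak f_3^{(2)}$ stay away from zero there, $M=60$ may not clear $e^{-50\langle\xi\rangle}$ for $\beta$ close to $1$; ``$M$ large enough, depending on~$\beta$'' (as you also say) is the safe formulation.
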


\begin{proof}

Consider the operator with translated potential
$$
\check{\mathcal{H}} = \mathcal{H}_0 + \check{V}, \qquad \check V(x)=V(x-100).
$$
We still have $|\partial_x^k \check V|\lesssim_k e^{-\beta |x|}$. We can thus apply Lemmas \ref{lemma3} and \ref{lemma12} to $\check{\mathcal{H}}$ (indeed, only Hypothesis \eqref{H2} is used in their proof, and not Hypothesis \eqref{H1}). We obtain that there exist two solutions $\check f_j$ for $j=1,3$ of $\check{\mathcal{H}} \check{f_j}= (\xi^2 +1 ) \check{f_j}$ of the form
\begin{align*}
& \check{f_1} (x , \xi) = \check{\mathfrak{f}}_{1,1}(x,\xi) e^{ix\xi} + \check{\mathfrak{f}}_{1,2}(x,\xi) e^{-x\langle \xi \rangle},\\
& \check{f_3}(x,\xi) = \check{\mathfrak{f}_3}(x,\xi) e^{-x \langle \xi \rangle} 
\end{align*}
where, for any $x > -1$ and $k$, $\ell \geq 0$,
\begin{align*}   
& | \partial_x^k \partial_\xi^\ell (\check{\mathfrak{f}}_{1,1} (x, \xi) - e_1) | \lesssim_{k,\ell}   \la \xi \ra^{-1-\ell} e^{-\beta x }, \\
& | \partial_x^k \partial_\xi^\ell \check{\mathfrak{f}}_{1,2} (x, \xi)  | \lesssim_{k,\ell}  \la \xi \ra^{-1-\ell} \left[ e^{\langle \xi \rangle} \mathbf{1}_{x>2} + 1\right],\\
& | \partial_\xi^\ell \partial_x^k [ \check{\mathfrak{f}}_3(x,\xi)  - e_2] | \lesssim_{k,\ell}   \la \xi \ra^{-1-\ell} e^{-\beta x}.
\end{align*}
We now define $f_1^*(x)=\check{f_1}(x+100)$, $f_2^*(x)=\overline{f_1^*}$ and $f_3^*(x)=e^{100\langle \xi \rangle}\check{f_3}(x+100)$. These are solutions to $\mathcal{H} f_j^*= (\xi^2 +1 ) f_j^* $, and the desired identities \eqref{id:decomposition-f*}, and inequalities \eqref{bd:f*} for $x>-10$ are direct consequences of the ones above for $x>-1$. As a side note, notice that we have $f_3^*=f_3$ due to the uniqueness of their asymptotic behaviour as $x\to \infty$, but we shall not need this fact.

\end{proof}

\section{The distorted Fourier Transform}

\label{sectiondistorted}
\label{S-v-dft}

In this section we assume in addition to (H1) and (H2) that
\begin{equation} \label{H3}
\tag{H3}	\mbox{The operator $\mathcal{H}$ does not have embedded eigenvalues or resonances at energy 1.}
\end{equation}

\subsection{The basis of generalized eigenfunctions}

By Assumption \eqref{H3} and Lemma \ref{lemmaD}, $D(\xi)$ is invertible for all $\xi\in \mathbb R $. In analogy with the scalar case, we define the transmission and reflection coefficients $s(\xi)$ and $r(\xi)$ by
\begin{equation} \label{S-v-dft-eq7}
s(\xi) e_1 = 2i \xi p D(\xi)^{-1} e_1 \quad \text{ and } \quad r(\xi) e_1 = 2i \xi p B(\xi) D(\xi)^{-1} e_1.
\end{equation}
(the precise analogy is the following: if $V_2 = 0$, then the spectral theory of $\mathcal{H}$ is equivalent to that of a scalar Schr\"odinger operator, and $s(\xi)$ and $r(\xi)$ are given by the classical transmission and reflection operators $T(\xi)$ and $R(\xi)$ respectively).

\begin{lemma} \label{lemmars}
	The matrix  
	\begin{equation*}
	\begin{split}
	S (\xi) = 
	\left( \begin{array}{ccc}
	s(\xi) \quad  r(\xi)
	\\
	r(\xi) \quad s(\xi)
	\end{array} \right) 
	\end{split}
	\end{equation*}
	is unitary. In fact, $S(\xi)^* = S(\xi)^{-1} = S(-\xi)$ for all $\xi \in \mathbb{R}$, which can also be written
	\begin{equation}
	\label{relationsrs}
	\begin{split}
	&|r(\xi)|^2 + |s(\xi)|^2 = 1 \\
	&\overline{r(\xi)} s(\xi) + \overline{s(\xi)} r(\xi) = 0 \\
	&r(-\xi) = \overline{r(\xi)} \\
	&s(-\xi) = \overline{s(\xi)}.
	\end{split}
	\end{equation}
	Finally, the values of $s$ and $r$ at $0$ are given by
\begin{equation} \label{rsat0}
s(0) =0, \qquad r(0) = -1,
\end{equation}
and $s$ and $r$ enjoy the estimates
	\begin{equation} \label{estimatesrs}
	| \partial_\xi^k (s(\xi) - 1) | + | \partial_\xi^k r(\xi) | \lesssim \langle \xi \rangle^{-k-1}.
	\end{equation}
\end{lemma}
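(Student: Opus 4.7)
The plan is to establish the four claims — reality, unitarity, values at $\xi = 0$, and derivative estimates — in turn, leveraging the structural results of Lemmas \ref{lemmaAB} and \ref{lemmaD}.

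For the reality conditions $s(-\xi) = \overline{s(\xi)}$ and $r(-\xi) = \overline{r(\xi)}$, I would first observe that $D(-\xi) = \overline{D(\xi)}$ as a direct consequence of $\overline{F_1(\cdot,\xi)} = F_1(\cdot,-\xi)$ and $\overline{G_2(\cdot,\xi)} = G_2(\cdot,-\xi)$ (Lemma \ref{lemmaAB}), so $D(-\xi)^{-1} = \overline{D(\xi)^{-1}}$. Combined with $A(-\xi) = \overline{A(\xi)}$, $B(-\xi) = \overline{B(\xi)}$ and $2i(-\xi) = \overline{2i\xi}$, the definition \eqref{S-v-dft-eq7} immediately yields the conjugation formulas.

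For the unitarity identities $|s|^2 + |r|^2 = 1$ and $\bar s r + \bar r s = 0$, the strategy is to compute the matrix Wronskian $\mathcal{W}[F_1(\cdot,\xi), F_1(\cdot,-\xi)]$ in two ways. Since both columns are eigenfunctions at energy $\xi^2+1$, this is $x$-independent. Evaluating at $x \to +\infty$ using the asymptotics of $f_1$ and $f_3$: only the $(1,1)$ entry survives (other entries involve at least one $e^{-x\langle\xi\rangle}$ factor and hence decay), giving $\mathcal{W}[F_1(\cdot,\xi), F_1(\cdot,-\xi)] = 2i\xi p$. Alternatively, inserting $F_1 = G_1 A + G_2 B$ at both $\xi$ and $-\xi$ and using $\mathcal{W}[UM, VN] = M^\top \mathcal{W}[U,V] N$ expands the Wronskian into four terms involving $\mathcal{W}[G_i(\cdot,\xi), G_j(\cdot,-\xi)]$. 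Each of these reduces, via the symmetry $g_j(x) = f_j(-x)$, to an $F$-Wronskian evaluated at $(\xi, -\xi)$, which is then computable from the same $+\infty$ asymptotics combined with the identities $W(f_1,f_2) = 2i\xi$, $W(f_3,f_4) = -2\langle\xi\rangle$ and the vanishing of the other six Jost Wronskians. Equating the two expressions gives a matrix identity between $A(\xi), A(-\xi), B(\xi), B(-\xi)$. Finally, using the formulas $s(\xi) = (A(\xi)^{-1})_{11}$ and $r(\xi) = (B(\xi)A(\xi)^{-1})_{11}$ — obtained by writing $D^{-1} e_1 = (2i\xi)^{-1} A^{-1} e_1$ via Lemma \ref{lemmaD} — the matrix identity reduces at its $(1,1)$-entry precisely to the two scalar unitarity relations. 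The main obstacle will be the bookkeeping of the four $G$-Wronskians and the degeneracy of $K = 2i\xi p - 2\langle\xi\rangle q$ at $\xi=0$, which I would handle by working first for $\xi \neq 0$ and extending by continuity.

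For $s(0) = 0$ the finiteness of $D(0)^{-1}$ (by (H3) and Lemma \ref{lemmaD}) and the prefactor $2i\xi$ in the definition make this automatic. The identity $r(0) = -1$ is the classical reflection-equals-minus-one at zero energy in the absence of a threshold resonance; I would either invoke this from \cite{KS} or derive it by evaluating $(BA^{-1})_{11}$ at $\xi = 0$: since $(A^{-1})(0) = D(0)^{-1}(-2\langle 0\rangle q)$ has vanishing first column, $r(0) = -2\sqrt{2} \, B_{12}(0) (D(0)^{-1})_{22}$, which together with the reality of $f_1(\cdot,0), f_3(\cdot,0)$ and an explicit computation of $B(0)$ from its Wronskian formula yields $r(0) = -1$.

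Finally, for the derivative estimates \eqref{estimatesrs}, Lemma \ref{lemmaAB} provides $\partial_\xi^k(A-I), \partial_\xi^k B = O(\langle\xi\rangle^{-k-1})$ for $|\xi| \geq 1$. A Neumann expansion of $A^{-1}$ around the identity propagates this bound to $\partial_\xi^k(A^{-1} - I) = O(\langle\xi\rangle^{-k-1})$, and hence by Leibniz to $\partial_\xi^k(s-1), \partial_\xi^k r = O(\langle\xi\rangle^{-k-1})$; smoothness (and the stated bound) on the compact region $|\xi| \leq 1$ follows from the smoothness of $A, B$ and invertibility of $D$ there.
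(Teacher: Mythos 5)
Your approach to the reality relations, to $s(0)=0$, and to the derivative estimates matches the paper in spirit, and your Wronskian sketch for the unitarity relations is a plausible (if not fully worked-out) alternative to the paper's direct citation of \cite{KS}. The genuine gap is in your proposed derivation of $r(0) = -1$.

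You write $r(\xi) = (B(\xi)A(\xi)^{-1})_{11}$, which is correct for $\xi \neq 0$, and then propose to evaluate it at $\xi = 0$ using $A(0)^{-1} = -2\sqrt{2}\,D(0)^{-1}q$ and the quantity $B_{12}(0)$. But Lemma~\ref{lemmaAB} only asserts that $\xi\, p\, B(\xi)$ and $q\, B(\xi)$ extend smoothly to $\xi = 0$; the $p$-block of $B(\xi)$ itself generically carries a $\xi^{-1}$ singularity there, so $B_{12}(0) = (p B)_{12}(0)$ does not exist. If you instead substitute $\xi = 0$ naively using the finite objects — the first column of $A(0)^{-1} = -2\sqrt{2}D(0)^{-1}q$ vanishes, and if $B(0)$ were finite you would get $(BA^{-1})_{11}(0) = 0$ — the computation wrongly produces $r(0)=0$. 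The nonzero value comes from a cancellation between the $\xi^{-1}$ pole in $p B(\xi)$ and the explicit $\xi$-factor in $A(\xi)^{-1}e_1 = 2i\xi D(\xi)^{-1}e_1$, and your sketch does not track this. The paper's route avoids the issue entirely: it multiplies the connection formula~\eqref{pinson} on the right by $2i\xi D(\xi)^{-1}e_1$ to obtain
$$
2i\xi F_1(x,\xi)D(\xi)^{-1}e_1 = g_2(x,\xi) + r(\xi)g_1(x,\xi) + 2i\xi\, g_3(x,\xi)\, e_2^\top q B(\xi) D(\xi)^{-1}e_1,
$$
lets $\xi\to 0$ using only the continuity of $qB(\xi)$ and of $\xi D(\xi)^{-1}$, and deduces $g_2(x,0) + r(0)g_1(x,0)=0$, i.e.\ $f_2(x,0)+r(0)f_1(x,0)=0$; the conclusion $r(0)=-1$ then requires the additional input — absent from your sketch — that $f_1(\cdot,0)$ is real-valued (from the construction in Lemma~\ref{lemma12}), which combined with $f_2=\overline{f_1}$ forces $(1+r(0))f_1(\cdot,0)=0$. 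Falling back on ``invoke this from \cite{KS}'' is not safe here either, since the paper's authors prove this fact themselves rather than cite it.
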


\begin{proof} For the proof that $S(\xi)$ is Hermitian,  see the proof of Lemma 6.3 in~\cite{KS}.

Evaluating \eqref{S-v-dft-eq7} at $\xi=0$ shows $s(0)=0$. Multiplying on the right \eqref{pinson} by $2i\xi D(\xi)^{-1}e_1$, using \eqref{loriot} and \eqref{S-v-dft-eq7}, we have
 $$
 2i\xi F_1(x,\xi) D(\xi)^{-1} e_1 =  g_2(x,\xi) +r(\xi) g_1(x,\xi) +2i\xi g_3(x,\xi) e_2^\top q B(\xi) D(\xi)^{-1}e_1.
 $$
Letting $\xi \to 0$ yields $0=g_2(x,0) +r(0) g_1(x,0)$ (we recall that $q B(\xi)$ is continuous at $0$ by Lemma \ref{lemmaAB}). Equivalently, $0=f_2(x,0) +r(0) f_1(x,0)$. By our construction of Lemma \ref{lemma12}, one notices that $f_1(\cdot,\xi)$ is for $\xi=0$ a real valued function. Recalling that $f_2=\overline{f_1}$ we obtain for $\xi=0$ that  Hence $r(0)=-1$.

As for the estimates~\eqref{estimatesrs}, they follow from~\eqref{merle} and~\eqref{loriot}.
\end{proof}

The following lemma collects some properties of the generalized eigenfunctions
\begin{align} 
& \label{S-v-dft-eq1} \mathcal{F}_+ (x, \xi) = 2 i \xi F_1 (x, \xi) D(\xi)^{-1} e_1 \\
& \label{S-v-dft-eq2}
\mathcal{G}_+ (x, \xi) = 2 i \xi G_2 (x, \xi) D(\xi)^{-1} e_1 
\end{align}
which will serve as the basic building blocks of the distorted Fourier transform. Note that 
$$
\mathcal{G}_+ (x, \xi) =\mathcal{F}_+ (-x, \xi)
$$
and that, under the assumption (H3),
$$
\mathcal{F}_+ (x,0) = \mathcal{G}_+ (x,0) = 0. 
$$

Before stating the next lemma, we need a piece of notation: let $\chi_{+}$ be a smooth, non-negative function, equal to $0$ on $[-\infty,-\frac{1}{2}]$, equal to $1$ on $[\frac{1}{2},\infty]$, and such that,
$$
\mbox{setting $\chi_-(x) = \chi_+(-x)$,} \qquad \chi_+(x) + \chi_-(x) = 1 \qquad \mbox{for all $x$}.
$$

\begin{lemma}[Decomposition of generalized eigenfunctions] \label{heroncendre}
For all $\xi \in \mathbb{R}$, $\mathcal{F}_+ (x, \xi)$ and $\mathcal{G}_+ (x, \xi)$ constitute a basis of the space of bounded solutions of $\mathcal{H} f = (1+\xi^2) f$.

Furthermore, they can be decomposed as follows
\begin{align} 
\label{meringue} & \mathcal{F}_+ (x, \xi) = \mathcal{F}_+^S(x,\xi) + \mathcal{F}_+^R(x,\xi), \\
\nonumber & \mathcal{G}_+ (x, \xi) = \mathcal{G}_+^S (x, \xi) + \mathcal{G}_+^R (x, \xi).
\end{align}
Here, the non-decaying singular parts $\mathcal{F}_+^S$, $\mathcal{G}_+^S$ are given by
\begin{align*}
& \mathcal{F}_+^S (x, \xi) = \chi_+ (x) s(\xi) e^{i x \xi} e_1 + \chi_- (x) [ e^{i x \xi} + r(\xi) e^{-i x \xi} ]e_1\\
& \mathcal{G}_+^S (x, \xi) =\chi_+ (x) [ e^{-i x \xi} + r(\xi) e^{i x \xi} ]e_1 + \chi_- (x) s(\xi) e^{-i x \xi} e_1,
\end{align*}
while the decaying regular parts $\mathcal{F}_+^R$ can be decomposed as
\begin{equation} \label{guimauve}
\mathcal{F}^R_+ =m^+(x,\xi) e^{ix\xi}+ m^-(x,\xi) e^{-ix\xi} ,
\end{equation}
where the coefficients enjoy the estimates
\begin{equation}
\label{carambar}  | \partial_x^k \partial_\xi^\ell m^{\pm}(x,\xi) |  \lesssim e^{-\beta |x|} \langle \xi \rangle^{-\ell-1}, \qquad \forall x\in \mathbb R,
\end{equation}
and similarly for $\mathcal G_+^R(x)=\mathcal F_+^{R}(-x)$.

\end{lemma}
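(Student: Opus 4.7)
My plan is to expand $\mathcal{F}_+$ explicitly on each half-line using the Jost solutions, and then read off both the singular piece (matching the explicit formula for $\mathcal{F}_+^S$) and the regular remainder. On $\{x>0\}$ I will use $f_1, f_3$ directly; on $\{x<0\}$ I will use the scattering identity $F_1 = G_1 A + G_2 B$ from Lemma~\ref{lemmaAB} to re-express $\mathcal{F}_+$ in terms of $g_1, g_2, g_3$. For the basis claim I would compute the Wronskian $\mathcal{W}[\mathcal{F}_+, \mathcal{G}_+]$ by letting $x\to+\infty$: using the asymptotics $\mathcal{F}_+\sim s(\xi)e^{ix\xi}e_1$ and $\mathcal{G}_+\sim e^{-ix\xi}e_1 + r(\xi)e^{ix\xi}e_1$, the Wronskian is a nonzero constant multiple of $\xi s(\xi)$, which is nonzero for $\xi\neq 0$ by Lemma~\ref{lemmars} and the assumption~\eqref{H3}. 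Since bounded solutions of $\mathcal{H}f=(1+\xi^2)f$ on $\mathbb{R}$ form a two-dimensional space (only the two oscillatory free modes $e_1 e^{\pm ix\xi}$ are bounded on the whole line, and this dimension is stable under the exponentially decaying perturbation $V$), this gives the basis statement.

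\textbf{Right- and left-hand expansions.} Setting $(d_1(\xi), d_2(\xi)) := D(\xi)^{-1} e_1$, one has $s(\xi) = 2i\xi d_1(\xi)$ from~\eqref{S-v-dft-eq7}, so that $\mathcal{F}_+ = s(\xi) f_1 + 2i\xi d_2(\xi) f_3$. Plugging in the Jost decompositions of Lemmas~\ref{lemma3}--\ref{lemma12} yields
\begin{equation*}
\mathcal{F}_+(x,\xi) = s(\xi)\,\mathfrak{f}_{1,1}(x,\xi)\,e^{ix\xi} + \bigl[s(\xi)\,\mathfrak{f}_{1,2}(x,\xi) + 2i\xi d_2(\xi)\,\mathfrak{f}_3(x,\xi)\bigr]\,e^{-x\langle\xi\rangle}.
\end{equation*}
The leading term is $s(\xi) e_1 e^{ix\xi} = \mathcal{F}_+^S(x,\xi)$ on $\{\chi_+ = 1\}$; the remainder is absorbed into $m^+$ by writing $e^{-x\langle\xi\rangle}$ as $[e^{-x\langle\xi\rangle - ix\xi}]\,e^{ix\xi}$, giving on $\{x > 1/2\}$ the formula $m^+ = s(\xi)(\mathfrak{f}_{1,1}-e_1) + e^{-x\langle\xi\rangle - ix\xi}[s(\xi)\mathfrak{f}_{1,2} + 2i\xi d_2(\xi)\mathfrak{f}_3]$ and $m^- = 0$. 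On $\{x<0\}$, using $F_1 = G_1 A + G_2 B$ together with $A^{-1} = D^{-1}(2i\xi p - 2\langle\xi\rangle q)$ from \eqref{loriot}, a direct computation $2i\xi A D^{-1} e_1 = e_1$ yields
\begin{equation*}
\mathcal{F}_+ = g_2 + r(\xi)\, g_1 + \beta(\xi)\, g_3, \qquad \beta(\xi) := (2i\xi B D^{-1} e_1)_2,
\end{equation*}
the vanishing of the $g_4$ coefficient reflecting the boundedness of $\mathcal{F}_+$ at $-\infty$. Substituting the mirrored Jost decompositions of $g_1, g_2, g_3$ gives the singular part $e^{ix\xi}e_1 + r(\xi)e^{-ix\xi}e_1 = \mathcal{F}_+^S$ on $\{x<-1/2\}$, and a regular remainder split between $m^+$ (from $g_2$ and $g_3$) and $m^-$ (from $g_1$ and $g_3$). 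Since the right-hand Jost decompositions are valid on $\{x>-1\}$ and the mirrored ones on $\{x<1\}$, the two half-line formulas for $m^\pm$ overlap on $\{|x|<1/2\}$ and are assembled globally via $\chi_\pm$.

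\textbf{Bounds and main obstacle.} The estimate $|\partial_x^k \partial_\xi^\ell m^\pm| \lesssim e^{-\beta|x|}\langle\xi\rangle^{-1-\ell}$ will follow by combining: (i) the Jost bounds from Lemmas~\ref{lemma3}--\ref{lemma12} on $\mathfrak{f}_{1,1}-e_1$, $\mathfrak{f}_{1,2}$, and $\mathfrak{f}_3 - e_2$; (ii) the scattering bounds \eqref{estimatesrs} on $s,r$ and \eqref{merle} on $A, B$, which via \eqref{loriot} yield $|\partial_\xi^\ell(2i\xi d_2(\xi))|, |\partial_\xi^\ell \beta(\xi)| \lesssim \langle\xi\rangle^{-1-\ell}$ and regularity at $\xi = 0$ (where both vanish); (iii) the pointwise bound $|\partial_\xi^k e^{-x\langle\xi\rangle - ix\xi}| \lesssim x^k e^{-x\langle\xi\rangle}$ for $x>0$. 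The main technical obstacle is ensuring that the polynomial factors of $x$ produced by $\xi$-differentiating $e^{-x\langle\xi\rangle - ix\xi}$ can be absorbed into the exponential decay: this works because for any $\beta < 1 < \sqrt{2} \leq \langle\xi\rangle$, one has $x^k e^{-(\langle\xi\rangle - \beta) x}\lesssim_k 1$ uniformly in $\xi$, so $x^k e^{-x\langle\xi\rangle}\lesssim e^{-\beta x}$. A related subtlety is that the bound \eqref{colvert} allows $\mathfrak{f}_{1,2}$ to grow by $e^{\langle\xi\rangle}$ for $x>2$; this growth is dominated by $e^{-x\langle\xi\rangle}$ whenever $x > 1$, again yielding net decay $e^{-\beta x}$. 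The analogous analysis on the left half-line is symmetric and produces the decay $e^{\beta x}$ for $x<0$, completing the required global bound $e^{-\beta|x|}$.
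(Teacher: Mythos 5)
Your overall strategy matches the paper's Steps 1 and 2: expand $\mathcal{F}_+$ on each half-line via the Jost functions, use the scattering identity $F_1 = G_1 A + G_2 B$ on the left, and glue with the cutoffs $\chi_\pm$. However, there is a genuine gap in the estimate \eqref{carambar} that your argument does not close, and it is exactly this gap that motivates the paper's introduction of the \emph{translated} Jost solutions $f_j^*$ in Lemma~\ref{lem:improved-eigenfunctions}.

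The problem is the overlap region $|x|<1/2$, where $\chi_+$ and $\chi_-$ are both nonzero. Consider your right-hand formula
\[
m^+_{\mathrm{right}} = s(\xi)\bigl(\mathfrak{f}_{1,1}-e_1\bigr) + e^{-x\langle\xi\rangle - ix\xi}\bigl[s(\xi)\mathfrak{f}_{1,2} + 2i\xi d_2(\xi)\mathfrak{f}_3\bigr],
\]
which you multiply by $\chi_+(x)$, supported on $(-1/2,\infty)$. For $x\in(-1/2,0)$ the factor $e^{-x\langle\xi\rangle}$ grows like $e^{\langle\xi\rangle/2}$. You only claim polynomial bounds $\lesssim \langle\xi\rangle^{-1-\ell}$ on the coefficients $s(\xi)\mathfrak{f}_{1,2}$ and $2i\xi d_2(\xi)$, and the Jost bound~\eqref{colvert} indeed gives only $|\mathfrak{f}_{1,2}(x,\xi)|\lesssim \langle\xi\rangle^{-1}$ for $-1<x\le 2$, with no exponential gain in $\xi$. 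The product therefore grows like $e^{\langle\xi\rangle/2}\langle\xi\rangle^{-1}$ on this interval and does \emph{not} satisfy~\eqref{carambar}. Your observation ``(iii) $|\partial_\xi^k e^{-x\langle\xi\rangle-ix\xi}|\lesssim x^k e^{-x\langle\xi\rangle}$ for $x>0$'' is explicitly restricted to $x>0$; you cannot use $e^{-x\langle\xi\rangle}\le 1$ where $\chi_+$ is supported because its support extends to negative $x$. The same issue reappears symmetrically with the left-hand formula (since $g_3(x,\xi)=\mathfrak{f}_3(-x,\xi)e^{+x\langle\xi\rangle}$ grows for $x\in(0,1/2)$). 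Note also that this obstruction is structural: smooth partitions $\chi_\pm$ force an overlap, and on that overlap one of the two exponential factors always points the wrong way.

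The paper's resolution is Step 3 of the actual proof: it rewrites $\mathcal{F}_+$ using the \emph{shifted} Jost solutions $f_1^*,f_3^*$ of Lemma~\ref{lem:improved-eigenfunctions}, for which the exponentially-localized coefficient obeys $|\partial_x^k\partial_\xi^\ell\mathfrak{f}_{1,2}^*|\lesssim e^{-50\langle\xi\rangle}$ on $x\ge -10$, and it shows that the coefficient $q(\xi)$ multiplying $f_3^*$ obeys $|\partial_\xi^\ell q(\xi)|\lesssim e^{-5\langle\xi\rangle}$ (because $\mathcal{F}_+$ is bounded at $x=-5$ while $f_3^*(-5,\xi)\sim e^{5\langle\xi\rangle}$ is huge). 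These super-exponential gains in $\xi$ dominate the bounded-$x$ growth $e^{|x|\langle\xi\rangle}$ and give~\eqref{carambar}. To repair your argument you would need this ingredient (or an equivalent exponential-smallness claim on the coefficients), which is not available from Lemmas~\ref{lemma3}--\ref{lemma12} and the scattering estimates \eqref{estimatesrs}, \eqref{merle} alone.
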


\begin{proof} The first assertion of this lemma is Lemma 6.4 in ~\cite{KS}; the other statements correspond to Lemma 6.3 in~\cite{KS}, but we provide here a more precise decomposition. Since $\mathcal{G}_+(x,\xi)=\mathcal{F}_+(-x,\xi)$, it suffices to consider $\mathcal F_+$.

\medskip

\noindent \textbf{Step 1}. \emph{First formulas for $\mathcal{F}_+$ for $x \geq 0$ and $x\leq 0$}. On the one hand, we can simply use the definition \eqref{S-v-dft-eq7} of $s(\xi)$ to write
\begin{equation} \label{formulaF++}
\begin{split}
\mathcal{F}_+(x,\xi) & = 2i\xi F_1(x,\xi) D(\xi)^{-1} e_1 \\
& = 2i\xi 	F_1(x,\xi) p D(\xi)^{-1} e_1 + 2i\xi 	F_1(x,\xi) q D(\xi)^{-1} e_1 \\
& = F_1(x,\xi) s(\xi) e_1 + 2i\xi (0,f_3(x,\xi)) D(\xi)^{-1} e_1 \\
& = s(\xi) f_1(x,\xi) + \left(2i\xi e_2^\top D(\xi)^{-1} e_1\right)  f_3(x,\xi) 
\end{split}
\end{equation}
which will be useful for $x\geq 0$. On the other hand, starting from the definition \eqref{S-v-dft-eq1}  of $\mathcal{F}_+$, and using successively~\eqref{pinson},~\eqref{loriot}, and the definition of $r(\xi)$, gives
\begin{equation} \label{formulaF+-}
\begin{split}
& \mathcal{F}_+(x,\xi) = 2i\xi F_1(x,\xi) D(\xi)^{-1} e_1 \\
& \quad = 2i\xi G_1(x,\xi) A(\xi)D(\xi)^{-1} e_1+ 2i\xi G_2(x,\xi) B(\xi) D(\xi)^{-1} e_1 \\
& \quad = 2i\xi G_1(x,\xi)\left(\frac{1}{2i\xi} p - \frac{1}{2 \langle \xi \rangle} q \right) e_1 + 2i\xi G_2(x,\xi) p B(\xi) D(\xi)^{-1} e_1 +  2i\xi G_2(x,\xi) q B(\xi) D(\xi)^{-1} e_1 \\
& \quad = G_1(x,\xi) e_1 + r(\xi) G_2(x,\xi) e_1 + 2i \xi(0 , g_3) B(\xi) D(\xi)^{-1} e_1 \\
& \quad = g_2(x,\xi) + r(\xi) g_1(x,\xi) +\left( 2 i \xi e_2^\top B(\xi) D(\xi)^{-1} e_1\right)g_3(x,\xi)
\end{split}
\end{equation}
which will be useful for $x\leq 0$.

\medskip

\noindent \textbf{Step 2}. \emph{Bound on $\mathcal{F}_+$ at $x=\pm 5$}. We now prove that
$$
| \partial_\xi^{\ell} \mathcal F_+ (\pm 5,\xi)|\lesssim_\ell 1 
$$
 is uniformly bounded for $\xi \in \mathbb R$. The bound $|\partial_\xi^\ell \mathcal{F}_+(5,\xi)| \lesssim 1$ follows immediately from~\eqref{formulaF++}, the estimates \eqref{estimatesrs} on $s$ and $r$, \eqref{merle} and \eqref{loriot} which imply $|\partial_\xi^\ell(\xi D(\xi)^{-1})|\lesssim 1$, and the bounds on $f_1$ and $f_3$ stated in lemmas~\ref{lemma12}  and~\ref{lemma3} respectively.

The bound $|\partial_\xi^\ell \mathcal{F}_+(-5,\xi)| \lesssim 1$ follows from \eqref{formulaF+-} and the same set of estimates, recalling that $g_i(x)=f_i(-x)$ and $f_2=\overline{f_1}$.

\medskip

\noindent \textbf{Step 3}. \emph{Second formula for $\mathcal{F}_+$}. To obtain better estimates in the zone $|x|\leq 5$, we convert the expressions of Step 1 into expressions involving the eigenfunctions $f^*_1$, $f^*_2$ and $f^*_3$ of Lemma \ref{lem:improved-eigenfunctions}. As they form a basis for solutions to $\mathcal H f=(1+\xi^2)f$ that are bounded as $x\to \infty$, $\mathcal{F}_+$ is a linear combination of these three functions. Matching the identity \eqref{formulaF++} with the asymptotics \eqref{id:decomposition-f*}-\eqref{bd:f*} shows
$$
\mathcal{F}_+(x,\xi)= s(\xi) f_1^*(x,\xi)+q(\xi)f_3^*(x,\xi)
$$
for some $q(\xi)\in \mathbb C$, which will account for $\mathcal F_+$ for $x\geq -5$. For $|\xi|$ large, $f_3^*(-5,\xi)$ does not vanish from \eqref{bd:f*} so that $q= (\mathcal{F}_+(-5) -s f_1^*(-5)) / f_3^*(-5) $. Using then that both $\mathcal{F}_+(-5)$, $ f_1^*(-5)$ and all their derivatives in $\xi$ are uniformly bounded in $\xi$ from Step 2 and \eqref{bd:f*}, and that $f_3^*(-10)\sim e^{5 \langle \xi \rangle}$ from \eqref{bd:f*} shows
$$
| \partial_\xi^\ell q(\xi)| \lesssim_\ell e^{-5\langle \xi \rangle}.
$$
We now define $g_j^*(x,\xi)=f_j^*(-x,\xi)$ for $j=1,2,3$. Using \eqref{formulaF+-}, the same argument shows
$$
\mathcal F_+(x,\xi)= g_2^*(x,\xi) + r(\xi) g_1^*(x,\xi) +\widetilde q(\xi) g_3^*(x,\xi),
$$
where $|\partial_\xi^\ell \widetilde q(\xi)|\lesssim e^{-5\langle \xi \rangle}$, which will account for $\mathcal F_+$ for $x\leq 5$. We eventually decompose

\begin{equation}
\label{chardonneret}
\begin{split}
\mathcal{F}_+(x,\xi) & = \left[  \chi_{+} (x) + \chi_{-} (x ) \right] \mathcal{F}_+(x,\xi)\\
& =  \chi_{-} (x ) g_2^*(x,\xi) + r(\xi)  \chi_{-} (x ) g_1^*(x,\xi)+ s(\xi)  \chi_{+} (x) f_1^*(x,\xi)\\
&\qquad +  q(\xi) \chi_{+} (x) f_3^*(x,\xi)) +\widetilde q(\xi)  \chi_{-} (x) g_3^*(x,\xi)
\end{split}
\end{equation}
with for all $\xi\in \mathbb R$
\begin{equation}
\label{chardonneret2}
|\partial_\xi^\ell q(\xi)|+|\partial_\xi^\ell \widetilde q(\xi)|\lesssim_\ell  e^{-5\langle \xi \rangle}.
\end{equation}

\medskip

\noindent \textbf{Step 4}. \emph{Decomposition of $\mathcal{F}_+$ into singular and regular parts.} We now prove the decomposition \eqref{meringue}-\eqref{guimauve} of $\mathcal{F}_+$ and the corresponding estimates. Injecting the decompositions \eqref{id:decomposition-f*} in \eqref{chardonneret}, using that $g_j^*(x)=f_j^*(-x)$ shows
\begin{equation}
\label{chardonneret3}
\begin{split}
\mathcal{F}_+(x,\xi) & =  \chi_{-} (x ) e^{ix\xi} + r(\xi)  \chi_{-} (x ) e^{-ix\xi }e_1 + s(\xi)  \chi_{+} (x) e^{i x\xi }e_1 \\
& \quad +\left(\chi_-(x) (\mathfrak g_{2,1}^*-e_1)+ s(\xi)  \chi_{+} (x) (\mathfrak f_{1,1}^*-e_1)\right)e^{ix\xi}+ r(\xi)  \chi_{-} (x ) (\mathfrak g_{1,1}^*(x,\xi)-e_1)e^{-ix \xi} \\
&\qquad + \chi_{-} (x )( \mathfrak g_{2,2}^*+r(\xi)\mathfrak g_{1,2}^*+\widetilde q(\xi)\mathfrak g_{3}^*)  e^{\langle \xi \rangle x}+\chi_+(x)(s(\xi) \mathfrak f_{1,2}^* +q(\xi)\mathfrak f_3^*)e^{-\langle \xi \rangle x},
\end{split}
\end{equation}
where the notation for the $\mathfrak g^*$'s naturally adapts that for $\mathfrak f^*$'s. This shows the identity \eqref{meringue} with
\begin{align*}
& m^+_1(x,\xi) = \chi_-(x) (\mathfrak g_{2,1}^*-e_1)+ s(\xi)  \chi_{+} (x) (\mathfrak f_{1,1}^*-e_1),\\
& m^-_1(x,\xi) = m^-_{1,1}(x,\xi)+m^-_{1,2}(x,\xi),\\ 
& m^-_{1,1}(x,\xi)= r(\xi)  \chi_{-} (x ) (\mathfrak g_{1,1}^*(x,\xi)-e_1), \\
& m^-_{1,2}(x,\xi) = \chi_{-} (x )( \mathfrak g_{2,2}^* +r(\xi)\mathfrak g_{1,2}^*+\widetilde q(\xi)\mathfrak g_{3}^*)e^{(\langle \xi \rangle+i\xi) x}  +\chi_+(x)(s(\xi) \mathfrak f_{1,2}^* +q(\xi)\mathfrak f_3^*)e^{(-\langle \xi \rangle+i\xi) x},
\end{align*}
The desired bound \eqref{carambar} for $m^+_1$ and $m^-_{1,1}$ is a direct consequence of \eqref{bd:f*} and of the localising properties of $\chi_\pm$. We now turn to showing the bound \eqref{carambar} for $m^-_{1,2}$. By \eqref{bd:f*} and \eqref{chardonneret2} we have
$$
\left| \partial_{x}^k \partial_\xi^\ell \left( \mathfrak g_{2,2}^* +r(\xi)\mathfrak g_{1,2}^*+\widetilde q(\xi)\mathfrak g_{3}^*\right)\right|\lesssim_{k,\ell} e^{-5\langle \xi \rangle}
$$
for all $x\leq 10$. For $x\leq 10$ we also have $|\partial_x^k \partial_\xi^\ell (e^{(\langle \xi \rangle+i\xi) x})|\lesssim e^{-\frac 12 x+\frac12 \langle \xi \rangle+\langle \xi \rangle x}$. As $\chi_-$ is localised for $x\leq 1/2$ this implies that for the first term
\begin{align*}
& \left| \partial_{x}^k \partial_\xi^\ell \left( \chi_{-} (x )( \mathfrak g_{2,2}^* +r(\xi)\mathfrak g_{1,2}^*+\widetilde q(\xi)\mathfrak g_{3}^*)e^{(\langle \xi \rangle+i\xi) x}\right)\right| \\
& \lesssim e^{-5\langle \xi \rangle -\frac 12  x+\frac12 \langle \xi \rangle +\langle \xi \rangle x}  = e^{- \frac 12 |x| -\langle \xi \rangle} e^{(\langle \xi \rangle-\frac 12+\frac 12 \textup{sgn}(x))x -\frac 32 \langle \xi \rangle}\leq e^{- \frac 12 |x| -\langle \xi \rangle} e^{\langle \xi \rangle \frac 12 -\frac 32 \langle \xi \rangle}\leq e^{- \frac 12 |x| -\langle \xi \rangle}
\end{align*}
where we used that for each $\xi\in \mathbb R$, the function $x\mapsto (\langle \xi \rangle-\frac 12+\frac 12 \textup{sgn}(x))x$ attains its maximum on $(-\infty,\frac 12 ]$ at $x=\frac 12$.

The second term can be dealt with the exact same way. Hence $m^-_{1,2}$ also satisfies \eqref{carambar}. This ends the proof of the lemma.

 \end{proof}

\begin{lemma}[Vanishing at zero frequency]
For any $k,\ell \in \mathbb{N}_0$, the generalized eigenfunctions $\mathcal{F}_+$ and $\mathcal{G}_+$ satisfy for $|\xi| \leq 1$
\begin{equation}
\label{chouette}
\begin{split}
& | \mathcal{F}_+(x,\xi)| + |\mathcal{G}_+(x,\xi) | \lesssim \min(\langle x \rangle |\xi|,1)  \\
& \left| \partial_x^k \partial_\xi^\ell \left[ \frac{\mathcal{F}_+(x,\xi)}{\xi} \right] \right|  + \left| \partial_x^k \partial_\xi^\ell \left[ \frac{\mathcal{G}_+(x,\xi)}{\xi} \right] \right| \lesssim \langle x \rangle^{1+\ell}.
\end{split}
\end{equation}
The singular parts $\mathcal{F}_+^S$ and $\mathcal{G}_+^S$ enjoy the same bounds, and the regular parts $\mathcal{F}_+^R$ and $\mathcal{G}_+^R$ are such that, if $|\xi| \leq 1$,
\begin{equation}
\label{chouette2}
\begin{split}
& | \mathcal{F}^R_+(x,\xi)| + |\mathcal{G}^R_+(x,\xi) | \lesssim |\xi| e^{-\frac \beta 2 |x|}  \\
& \left| \partial_x^k \partial_\xi^\ell \left[ \frac{\mathcal{F}^R_+(x,\xi)}{\xi} \right] \right|  + \left| \partial_x^k \partial_\xi^\ell \left[ \frac{\mathcal{G}^R_+(x,\xi)}{\xi} \right] \right| \lesssim e^{-\frac{\beta}{2}|x|}.
\end{split}
\end{equation}

\end{lemma}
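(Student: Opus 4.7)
The plan is to treat the singular and regular parts separately, using the explicit formulas from Lemma \ref{heroncendre} together with the vanishing relations $s(0)=0$, $r(0)=-1$ of Lemma \ref{lemmars}. Since $\mathcal{G}_+(x,\xi)=\mathcal{F}_+(-x,\xi)$, it suffices to argue for $\mathcal{F}_+$.

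For the singular part, I would split on the sign of $x$ using the support of $\chi_\pm$. Where $\chi_+=1$, we have $\mathcal{F}_+^S(x,\xi)=s(\xi)e^{ix\xi}e_1$, and since $s(0)=0$ with $s$ smooth and $|\partial_\xi^k s(\xi)|\lesssim \langle\xi\rangle^{-1-k}$ for $k\geq 1$, Taylor's theorem gives $s(\xi)/\xi = \int_0^1 s'(t\xi)\,dt$, a smooth function of $\xi$ whose derivatives are uniformly bounded for $|\xi|\leq 1$. Where $\chi_-=1$, I rewrite
\[
\mathcal{F}_+^S(x,\xi)=\bigl[2i\sin(x\xi)+(1+r(\xi))e^{-ix\xi}\bigr]e_1,
\]
and use that $1+r(\xi)=r(\xi)-r(0)$ is smooth with $(1+r(\xi))/\xi=\int_0^1 r'(t\xi)\,dt$ bounded for $|\xi|\leq 1$, while $\sin(x\xi)/\xi$ has $\xi$-derivatives of order $\ell$ bounded by $\langle x\rangle^{1+\ell}$ from its power series. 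Collecting, $|\mathcal{F}_+^S(x,\xi)|\lesssim \min(\langle x\rangle|\xi|,1)$ and $|\partial_x^k\partial_\xi^\ell(\mathcal{F}_+^S/\xi)|\lesssim \langle x\rangle^{1+\ell}$ on $|\xi|\leq 1$, since each $\partial_x$ produces at most a factor of $|\xi|\leq 1$ from the phase.

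For the regular part, the key observation is that $\mathcal{F}_+(x,0)=0$ (as recorded in the text) combined with $\mathcal{F}_+^S(x,0)=0$ forces $\mathcal{F}_+^R(x,0)=0$; in the notation of \eqref{guimauve} this says $m^+(x,0)+m^-(x,0)=0$ for all $x$. Taylor's theorem in $\xi$ then yields
\[
\mathcal{F}_+^R(x,\xi)=\xi\int_0^1 \partial_\xi \mathcal{F}_+^R(x,t\xi)\,dt,
\]
and I would compute $\partial_\xi \mathcal{F}_+^R$ by expanding into derivatives of $m^\pm$ and factors of $ix$ from the phases. The bounds \eqref{carambar} give pointwise estimates of the form $\langle x\rangle e^{-\beta|x|}$, which upgrades to $e^{-\frac{\beta}{2}|x|}$ by absorbing the polynomial factor into the exponential, establishing $|\mathcal{F}_+^R(x,\xi)|\lesssim |\xi|e^{-\frac{\beta}{2}|x|}$. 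The derivative bound on $\mathcal{F}_+^R/\xi$ proceeds by differentiating under the integral sign: $\partial_x^k\partial_\xi^\ell(\mathcal{F}_+^R/\xi)=\int_0^1 t^\ell \partial_x^k\partial_\xi^{\ell+1}\mathcal{F}_+^R(x,t\xi)\,dt$, and Leibniz expansion together with \eqref{carambar} produces factors polynomial in $\langle x\rangle$ times $e^{-\beta|x|}$, again absorbable into $e^{-\frac{\beta}{2}|x|}$.

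The estimates \eqref{chouette} for the full $\mathcal{F}_+$ then follow by summing the singular and regular contributions. The only subtle step is ensuring that the cancellation $\mathcal{F}_+^R(x,0)=0$ really holds \emph{globally in} $x$ (not just in an asymptotic sense); this I would justify cleanly by invoking the assumption $(H3)$ via Lemma \ref{lemmaD}, which guarantees $D(\xi)$ invertible, hence $\xi D(\xi)^{-1}e_1 \to 0$ as $\xi\to 0$ and consequently $\mathcal{F}_+(\cdot,0)\equiv 0$, combined with the explicit vanishing $\mathcal{F}_+^S(\cdot,0)\equiv 0$ coming from $s(0)=0$ and $1+r(0)=0$. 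The main technical obstacle is bookkeeping the Leibniz expansion for the derivative bounds and verifying that the resulting polynomial-in-$x$ growth is always dominated either by the exponential decay (in the regular case) or by the prescribed $\langle x\rangle^{1+\ell}$ weight (in the singular case).
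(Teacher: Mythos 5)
Your proposal is correct, and it organizes the argument differently from the paper in a way worth noting. The paper first estimates the full $\mathcal{F}_+/\xi$ working directly from the raw formulas \eqref{formulaF++}--\eqref{formulaF+-}, i.e.\ with the Jost functions $f_1,f_3,g_1,g_2,g_3$; the one delicate step there is the term $(g_2-g_1)/\xi$, which requires unwinding the decomposition of $f_1$ from Lemma~\ref{lemma12} and using that $\mathfrak{f}_{1,1}(x,0)$, $\mathfrak{f}_{1,2}(x,0)$ are real. Then the paper says the singular parts can be handled the same way, and finally treats the regular part via the algebraic identity $\frac{\mathcal{F}_+^R}{\xi} = \frac{m_++m_-}{\xi} + m^+\frac{e^{ix\xi}-1}{\xi} + m^-\frac{e^{-ix\xi}-1}{\xi}$. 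You instead handle singular and regular pieces separately from the outset: the singular part via the explicit $s,r,\chi_\pm$ formula (rewriting $e^{ix\xi}+r(\xi)e^{-ix\xi}=2i\sin(x\xi)+(1+r(\xi))e^{-ix\xi}$ and using $s(0)=0$, $1+r(0)=0$), and the regular part via Taylor's theorem $\mathcal{F}_+^R(x,\xi)=\xi\int_0^1\partial_\xi\mathcal{F}_+^R(x,t\xi)\,dt$, with the integral representation differentiated under the sign and the polynomial factors absorbed into $e^{-\beta|x|/2}$; the estimate for the full $\mathcal{F}_+$ then follows by addition. Your route avoids the ad hoc manipulation of $(g_2-g_1)/\xi$ entirely and replaces the paper's algebraic cancellation with a one-line Taylor argument, which is a clean simplification; it does however rely on already having the singular/regular decomposition and the estimates \eqref{carambar} from Lemma~\ref{heroncendre}, which the paper's direct computation of the full $\mathcal{F}_+$ does not. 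Your justification that $\mathcal{F}_+^R(\cdot,0)\equiv 0$ holds globally in $x$ (via $\mathcal{F}_+(\cdot,0)\equiv 0$ under \eqref{H3} and $\mathcal{F}_+^S(\cdot,0)\equiv 0$ from $s(0)=0$, $r(0)=-1$) is exactly what the paper invokes.
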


\begin{proof} \underline{The full functions $\mathcal{F}_+$ and $\mathcal{G}_+$.} It suffices to prove the second inequality in~\eqref{chouette}, since it implies the first one. Furthermore, we will prove the second inequality only for $\mathcal{F}_+$, since $\mathcal{G}_+(x)=\mathcal F_+(-x)$. From the identities \eqref{formulaF++} and \eqref{formulaF+-} in the proof of Lemma~\ref{heroncendre}, we learn that
\begin{align*}
\frac{1}{\xi} \mathcal{F}_+(x,\xi) & = \chi_+(x) \left[ \frac{s(\xi)}{\xi} f_1(x,\xi) + \left(2i e_2^\top D(\xi)^{-1} e_1\right)  f_3(x,\xi) \right] \\
&  + \chi_-(x) \left[  \frac{g_2(x,\xi)-g_1(x,\xi)}{\xi} + \frac{r(\xi)+1}{\xi} g_1(x,\xi) +\left( 2 i e_2^\top B(\xi) D(\xi)^{-1} e_1\right)g_3(x,\xi)  \right].
\end{align*}
Recalling that $s(0)=0$ while $r(0)=-1$ and that $qB(\xi)$ is smooth (Lemma~\ref{lemmaAB}), most terms in the above expression are readily seen to satisfy the desired bound~\eqref{chouette}. But the term $\frac{g_2(x,\xi) - g_1(x,\xi)}{\xi}$ requires a further argument: it can be written using \eqref{stollen} as
\begin{align*}
&  g_1(-x,\xi) - g_2(-x,\xi) = f_1(x,\xi) - \overline{f_1(x,\xi)} \\
& \quad = \mathfrak{f}_{1,1}(x,\xi) [e^{ix\xi} - e^{-ix\xi}] + [\mathfrak{f}_{1,1}(x,\xi)  -\overline{\mathfrak{f}_{1,1}(x,\xi)}] e^{-ix\xi} + [\mathfrak{f}_{1,2}(x,\xi) - \overline{\mathfrak{f}_{1,2}(x,\xi)} ] e^{-x \langle \xi \rangle} .
\end{align*}
From this expression, the bounds on $\mathfrak{f}_{1,1}$ and $\mathfrak{f}_{1,2}$ in Lemma~\ref{lemma12}, and the fact that $\mathfrak{f}_{1,1}(x,0)$ and $\mathfrak{f}_{1,2}(x,0)$ are real, we get, for $x<-1$
$$
\left| \partial_x^k \partial_\xi^\ell \left[ \frac{g_2(x,\xi) - g_1(x,\xi)}{\xi} \right] \right| \lesssim \langle x \rangle^{1+\ell},
$$
which completes the proof.

\medskip

\noindent \underline{The singular parts $\mathcal{F}_+^S$ and $\mathcal{G}_+^S$.} They can be dealt with as the full functions $\mathcal{F}_+$ and $\mathcal{G}_+$.

\medskip

\noindent \underline{The regular parts $\mathcal{F}_+^R$ and $\mathcal{G}^+_R$.} Once again, it suffices to prove the second inequality in~\eqref{chouette2} since it implies the first one. The first step is to observe that
$$
\mathcal{F}_+^R(x,0) = \mathcal{F}_+(x,0) - \mathcal{F}_+^S(x,0) = 0.
$$
This implies that the coefficients in the decomposition~\ref{guimauve} satisfy
$$
m_+(x,0) + m_-(x,0) = 0.
$$
Therefore, writing $\frac{\mathcal{F}^R_+(x,\xi)}{\xi}$ as
$$
\frac{\mathcal{F}^R_+(x,\xi)}{\xi} = \frac{m_+(x,\xi) + m_-(x,\xi)}{\xi} + m^+(x,\xi) \left[ \frac{e^{ix\xi} - 1}{\xi} \right] + m^-(x,\xi) \left[ \frac{e^{-ix\xi} - 1}{\xi} \right]
$$
leads to the desired estimate for $\mathcal{F}_+^R$, and then $\mathcal{G}^R_+$.
\end{proof}

While $\mathcal{F}_+$ and $\mathcal{G}_+$ span the continuous spectrum, their counterparts $\mathcal{F}_-$ and $\mathcal{G}_-$ will span the negative spectrum. They are defined by
\begin{align*} 
& \mathcal{F}_- (x, \xi) = \sigma_1 \mathcal{F}_+ (x, \xi) \\
& \mathcal{G}_- (x, \xi) = \sigma_1 \mathcal{G}_+ (x, \xi).
\end{align*}
and are bounded solutions of $\mathcal{H} f = - (\xi^2 + 1) f$.

We can now define the functions $\psi_\pm$, through which the operator $\mathcal{H}$ will be diagonalized:
$$
\psi_\pm (x, \xi) := 
\begin{cases}
\mathcal{F}_\pm (x, \xi) \ \text{ if } \xi \geq 0  \\
\mathcal{G}_\pm (x, - \xi) \ \text{ if } \xi \leq 0 ,  \\
\end{cases}
$$
In a similar way to $\mathcal{F}_+$ and $\mathcal{G}_+$, the functions $\psi_{\pm}$ can 
be decomposed into a singular and a regular part:
\begin{equation} \label{alouette}
\psi^S_\pm (x, \xi) :=
\begin{cases}
\mathcal{F}^S_\pm (x, \xi) \ \text{ if } \xi \geq 0 , \\
\mathcal{G}^S_\pm (x, -\xi) \ \text{ if } \xi \leq 0 .  \\
\end{cases}
, \quad 
\psi^R_\pm (x, \xi) :=
\begin{cases}
\mathcal{F}^R_\pm (x, \xi) \ \text{ if } \xi \geq 0 , \\
\mathcal{G}^R_\pm (x, -\xi) \ \text{ if } \xi \leq 0 .  \\
\end{cases}
\end{equation}

\begin{remark}
At this point, it is instructive to examine the case $V=0$: then
\begin{align*}
& \psi_{+}(x,\xi) =  \psi_{+}^S(x,\xi) = e^{ix\xi} e_1 \\
& \psi_{-}(x,\xi) =  \psi_{-}^S(x,\xi) = e^{ix\xi} e_2 .
\end{align*}
\end{remark}

\subsection{Definition and first properties of the distorted Fourier transform}

The following proposition describes the spectral resolution of $\mathcal{H}$ which can be obtained in terms of $\psi_{\pm}$.
\begin{proposition} \label{S-v-dft-prop1}
For every $f$, $g \in \mathcal{S}$,
\begin{equation} \label{S-v-dft-eq16}
\la P_e f, g \ra = \frac{1}{2 \pi} \sum_{\epsilon = \pm } \epsilon \int \la f, \sigma_3 \psi_\epsilon (\cdot, \xi) \ra \overline{\la g, \psi_\epsilon (\cdot, \xi) \ra} d \xi. 
\end{equation}
The integrals on the right-hand side are absolutely convergent, since the integrand is rapidly decaying.
\end{proposition}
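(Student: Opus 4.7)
The plan is to derive this spectral resolution formula from a contour-integration version of Stone's formula, adapted to the non-self-adjoint setting via the symmetry $\sigma_3 \mathcal{H}^* \sigma_3 = \mathcal{H}$. The starting point is the identity
\[
\langle P_e f, g \rangle = \frac{1}{2\pi i} \lim_{\varepsilon \to 0^+} \int_{\sigma_e(\mathcal{H})} \langle [R(\lambda + i\varepsilon) - R(\lambda - i\varepsilon)] f, g \rangle \, d\lambda,
\]
where $R(z) = (\mathcal{H} - z)^{-1}$ and the contour surrounds the essential spectrum $(-\infty,-1]\cup[1,\infty)$ while avoiding the (finitely many) discrete eigenvalues; this follows from the Dunford functional calculus together with the fact that $P_e$ is the complementary Riesz projector to $P_d$.

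First, I would express the kernel of $R(\lambda \pm i0)$ on the positive branch $\lambda = 1+\xi^2$, $\xi>0$, in terms of the Jost solutions $F_1(\cdot,\xi)$ and $G_2(\cdot,\xi)$ of Lemma \ref{lemmaAB}. The two boundary conditions (decay at $\pm \infty$, outgoing radiation) determine a Green's function of the form
\[
R(1+\xi^2 + i0; x, y) = \begin{cases} F_1(x,\xi)\, D(\xi)^{-1} G_2(y,\xi)^\top \sigma_3 & x > y \\ G_2(x,\xi)\, D(\xi)^{-\top} F_1(y,\xi)^\top \sigma_3 & x < y, \end{cases}
\]
with the $\sigma_3$ factor arising from $\sigma_3 \mathcal{H}^* \sigma_3 = \mathcal{H}$. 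Assumption \eqref{H3} guarantees $D(\xi)$ is invertible (Lemma \ref{lemmaD}), so this is well defined for all $\xi \in \mathbb{R}\setminus\{0\}$. The kernel at $\lambda - i0$ is obtained by complex conjugation $\xi \mapsto -\xi$.

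Next, I would compute the jump $R(\lambda+i0)-R(\lambda-i0)$. The $x>y$ and $x<y$ pieces combine into a rank-one kernel proportional to $\mathcal{F}_+(x,\xi)\otimes\overline{\mathcal{F}_+(y,\xi)}\sigma_3$, with the prefactor arising from the difference of the two determinant structures and the identity $A(\xi)^{-1}=D(\xi)^{-1}(2i\xi p-2\langle\xi\rangle q)$ from \eqref{loriot}. Performing the change of variables $\lambda = 1+\xi^2$, $d\lambda = 2\xi\, d\xi$, the $2i\xi$-factor is absorbed exactly into the definition \eqref{S-v-dft-eq1} of $\mathcal{F}_+$, producing
\[
\frac{1}{2\pi}\int_0^\infty \langle f, \sigma_3 \mathcal{F}_+(\cdot,\xi)\rangle \,\overline{\langle g, \mathcal{F}_+(\cdot,\xi)\rangle} \, d\xi.
\]
Writing the same calculation using the alternative representation in terms of $G_2(\cdot,\xi)$ and the definition of $\mathcal{G}_+$ yields the analogous contribution over $\xi < 0$ through the convention $\psi_+(x,\xi) = \mathcal{G}_+(x,-\xi)$ for $\xi \leq 0$. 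Together these assemble the $\epsilon = +$ term over the full real line.

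To handle the negative essential spectrum, I would exploit $\sigma_1 \mathcal{H} \sigma_1 = -\mathcal{H}$ from \eqref{S-v-eq5}, which maps the resolvent on $(-\infty,-1]$ to that on $[1,\infty)$ and converts $\mathcal{F}_+,\mathcal{G}_+$ into $\mathcal{F}_-,\mathcal{G}_-$. The orientation reversal of the spectral parameter under this conjugation produces the sign $\epsilon = -1$ in front of the integral over $\psi_-$, completing the summation $\sum_{\epsilon=\pm}\epsilon$. Finally, absolute convergence for $f,g\in\mathcal{S}$ follows from combining the pointwise bounds $|\psi_\pm(x,\xi)|\lesssim 1$ provided by the decomposition \eqref{alouette} (the singular parts are pure oscillatory exponentials and the regular parts satisfy \eqref{carambar}) together with the Schwartz decay of $f$ and $g$, which makes both pairings $\langle f,\sigma_3\psi_\epsilon(\cdot,\xi)\rangle$ and $\langle g,\psi_\epsilon(\cdot,\xi)\rangle$ rapidly decaying in $\xi$.

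The main obstacle will be the careful bookkeeping in the resolvent jump calculation: identifying the correct algebraic form of $R(1+\xi^2 \pm i0)$ (in particular the precise role of $D(\xi)$, $A(\xi)$, $B(\xi)$), ensuring that the $\sigma_3$ factors align with the non-self-adjoint pairing, and verifying that the absence of embedded eigenvalues and threshold resonances assumed in \eqref{H3} is exactly what makes the limits $\varepsilon\to 0^+$ well-defined and the change of variables $\lambda = 1+\xi^2$ non-degenerate at $\xi=0$ (the vanishing of $\mathcal{F}_+$ and $\mathcal{G}_+$ at $\xi=0$ established in \eqref{chouette} absorbs the Jacobian $2\xi$ gracefully).
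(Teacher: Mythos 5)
Your proposal follows essentially the same route as the paper: both derive the spectral resolution from the jump of the resolvent across the essential spectrum, and both use the anticommutation $\sigma_1 \mathcal{H}\sigma_1 = -\mathcal{H}$ to transport the jump formula from the positive branch to the negative branch, which is exactly where the sign $\epsilon$ originates. The difference is one of economy rather than strategy. The paper does not re-derive the resolvent kernel or the jump formula; it cites Lemma~6.7 and Proposition~6.9 of Krieger--Schlag wholesale and limits its own contribution to the $\sigma_1$-conjugation computation that produces the missing $\epsilon$ (a misprint in the source). You propose to reconstruct the Green's function and the jump from scratch, which is a legitimate and more self-contained route but requires substantially more bookkeeping — in particular, the exact ansatz you wrote for $R(1+\xi^2 + i0;x,y)$ would need careful verification of the transposes, the placement of $\sigma_3$, and the invertibility normalisation. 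One small imprecision worth fixing: the jump at fixed $\xi>0$ is not a rank-one kernel $\mathcal{F}_+(x,\xi)\otimes \overline{\mathcal{F}_+(y,\xi)}\sigma_3$; it is $-\tfrac{1}{2i\xi}\mathcal{E}_+(x,\xi)\mathcal{E}_+(y,\xi)^*\sigma_3$ with $\mathcal{E}_+(\cdot,\xi) = [\psi_+(\cdot,\xi),\psi_+(\cdot,-\xi)]$, so both $\mathcal{F}_+$ and $\mathcal{G}_+$ appear simultaneously in the single jump matrix rather than arising from an "alternative representation" as you suggest. The final assembly into the $\epsilon=+$ integral over all of $\mathbb{R}$ (via $\psi_+(\cdot,\xi) = \mathcal{G}_+(\cdot,-\xi)$ for $\xi<0$) and the absolute convergence argument are correct.
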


\begin{proof} This is Proposition 6.9 in \cite{KS}, up to a small misprint: the $\epsilon$ factor on the right-hand side was omitted there. To clarify why it should appear, we come back to the formula for the resolvent jump on the real axis: it was proved in Lemma 6.7 of~\cite{KS} that
\begin{equation}
\label{bruant1}
(\mathcal{H} - (\xi^2 + 1 + i0))^{-1}(x,y) - (\mathcal{H} - (\xi^2 + 1 - i0))^{-1}(x,y) = -\frac{1}{2i\xi} \mathcal{E_+}(x,\xi) \mathcal{E}_+(y,\xi)^* \sigma_3,
\end{equation}
for $\xi \geq 0$, where we used the notation
$$
\mathcal{E}_\pm(x,\xi) = [\psi_\pm(x,\xi)\,,\, \psi_\pm(x,-\xi)]
$$
(so that $\mathcal{E}_\pm(x,\lambda)$ is a $2 \times 2$ matrix). Using successively~\eqref{S-v-eq5}, $\sigma_1\sigma_1= \operatorname{Id}$, formula~\eqref{bruant1} and the anticommutation relation $\sigma_3 \sigma_1 = - \sigma_1 \sigma_3$, we find, for $\xi \geq 0$,
\begin{equation}
\label{bruant2}
\begin{split}
&(\mathcal{H} - (-\xi^2 - 1 + i0))^{-1}(x,y) - (\mathcal{H} - (-\xi^2 - 1 - i0))^{-1}(x,y) \\
& \qquad = (-\sigma_1 \mathcal{H} \sigma_1 + (\xi^2 + 1 - i0))^{-1}(x,y) - (- \sigma_1 \mathcal{H} \sigma_1 +(\xi^2 + 1 +i0))^{-1}(x,y) \\
& \qquad = - \sigma_1 (\mathcal{H} - (\xi^2 +1 - i0))^{-1}\sigma_1(x,y) + \sigma_1(\mathcal{H} - (\xi^2+ 1 + i0))^{-1}\sigma_1(x,y) \\
& \qquad = \sigma_1 (\mathcal{H} - (\xi^2 +1 +i0))^{-1}\sigma_1(x,y) - \sigma_1(\mathcal{H} - (\xi^2 +1 - i0))^{-1}\sigma_1(x,y) \\
& \qquad = -\frac{1}{2i\xi} \sigma_1 \mathcal{E_+}(x,\xi) \mathcal{E}_+(y,\xi)^* \sigma_3  \sigma_1 \\
& \qquad = \frac{1}{2i\xi} \sigma_1 \mathcal{E_+}(x,\xi) \mathcal{E}_+(y,\xi)^*  \sigma_1 \sigma_3 \\
& \qquad = \frac{1}{2i\xi}  \mathcal{E_-}(x,\xi) \mathcal{E}_-(y,\xi)^* \sigma_3,
\end{split}
\end{equation}
where for the last line we used that $\psi_-=\sigma_1 \psi_+$. Notice the sign difference between~\eqref{bruant1} and~\eqref{bruant2}. Using these two formulas and following the argument in the proof of Proposition 6.9 of~\cite{KS} leads to the desired result.

\end{proof}

\begin{corollary} \label{S-v-dft-cor1}
For every $f,g \in \mathcal{S}$, 
\begin{equation} \label{S-v-dft-eq17}
\la e^{it \mathcal{H}}P_e f, g \ra = \frac{1}{2\pi}\sum_{\epsilon = \pm } \epsilon \int e^{i \epsilon t (\xi^2+1)} \la f, \sigma_3 \psi_\epsilon (\cdot, \xi) \ra \overline{\la g, \psi_\epsilon (\cdot, \xi) \ra} d\xi. 
\end{equation}
with absolutely convergent integrals.
\end{corollary}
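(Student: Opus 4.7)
The plan is to reduce Corollary \ref{S-v-dft-cor1} to Proposition \ref{S-v-dft-prop1} via an ODE argument that converts the evolution $e^{it\mathcal{H}}$ into an oscillatory multiplier in the spectral variable. The key observation is that $\psi_\pm(\cdot,\xi)$ is a bounded, classical solution of
\begin{equation*}
\mathcal{H}\,\psi_\pm(\cdot,\xi) = \pm(\xi^2+1)\,\psi_\pm(\cdot,\xi):
\end{equation*}
this holds by construction for $\psi_+$, and follows for $\psi_-=\sigma_1\psi_+$ from the anticommutation $\sigma_1\mathcal{H}\sigma_1=-\mathcal{H}$ in \eqref{S-v-eq5}. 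Combining with the symmetry $\mathcal{H}^*\sigma_3=\sigma_3\mathcal{H}$ extracted from \eqref{S-v-eq5}, I obtain the adjoint eigenfunction relation $\mathcal{H}^*(\sigma_3\psi_\epsilon(\cdot,\xi))=\epsilon(\xi^2+1)\,\sigma_3\psi_\epsilon(\cdot,\xi)$ for $\epsilon=\pm$.

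For $f\in\mathcal{S}$, set $\phi(t,\xi):=\la e^{it\mathcal{H}}f,\,\sigma_3\psi_\epsilon(\cdot,\xi)\ra$. Since $e^{it\mathcal{H}}f\in\mathcal{D}(\mathcal{H})=H^2\times H^2$ and is strongly differentiable in $t$, I compute
\begin{equation*}
\partial_t\phi(t,\xi)= i\la\mathcal{H}e^{it\mathcal{H}}f,\sigma_3\psi_\epsilon\ra = i\la e^{it\mathcal{H}}f,\mathcal{H}^*\sigma_3\psi_\epsilon\ra = i\epsilon(\xi^2+1)\,\phi(t,\xi),
\end{equation*}
where the passage of $\mathcal{H}$ onto the second slot of the pairing is by integration by parts; no boundary contributions arise because $e^{it\mathcal{H}}f$ has rapid spatial decay (a standard perturbative consequence of $f\in\mathcal{S}$ and $V$ exponentially localized) and $\sigma_3\psi_\epsilon$ together with its first two $x$-derivatives are globally bounded in view of the singular/regular decomposition of Lemma \ref{heroncendre}. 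Integrating with initial datum $\phi(0,\xi)=\la f,\sigma_3\psi_\epsilon(\cdot,\xi)\ra$ gives
\begin{equation}\label{eq:phi-identity-cor}
\la e^{it\mathcal{H}}f,\,\sigma_3\psi_\epsilon(\cdot,\xi)\ra = e^{i\epsilon t(\xi^2+1)}\,\la f,\,\sigma_3\psi_\epsilon(\cdot,\xi)\ra.
\end{equation}

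To conclude, I apply Proposition \ref{S-v-dft-prop1} with $f$ replaced by $e^{it\mathcal{H}}f$ — which still lies in $H^2\times H^2$ with polynomial weights, so that the formula extends to it by the same absolute-convergence argument that underlies the proposition — and use the commutation $e^{it\mathcal{H}}P_e=P_e e^{it\mathcal{H}}$, which is immediate from the Riesz-integral definition of $P_e$. This gives
\begin{equation*}
\la e^{it\mathcal{H}}P_ef,g\ra = \la P_e e^{it\mathcal{H}}f,g\ra = \frac{1}{2\pi}\sum_{\epsilon=\pm}\epsilon\int\la e^{it\mathcal{H}}f,\sigma_3\psi_\epsilon\ra\,\overline{\la g,\psi_\epsilon\ra}\,d\xi,
\end{equation*}
and substituting \eqref{eq:phi-identity-cor} produces the claimed formula. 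Absolute convergence of the right-hand side is inherited from the proposition since the factor $e^{i\epsilon t(\xi^2+1)}$ has unit modulus.

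The main obstacle is the rigorous justification of the integration by parts in the ODE step, together with verifying that Proposition \ref{S-v-dft-prop1} remains applicable to $e^{it\mathcal{H}}f$ even though it is no longer Schwartz. Both are routine but require knowing that $e^{it\mathcal{H}}$ maps $\mathcal{S}$ into $\bigcap_{k,\ell}\langle x\rangle^{-k}H^\ell$; this follows from the analogous property of $e^{it\mathcal{H}_0}$ together with a Duhamel perturbation using the exponential decay of $V$ from \eqref{H2}. Everything else is bookkeeping.
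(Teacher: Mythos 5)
Your proof is correct and self-contained, but note that the paper does not actually reprove this statement: it simply cites Corollary 6.10 of Krieger--Schlag. So your approach is genuinely different from what appears in the text, and probably more illuminating for a reader who does not want to chase the reference.

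The heart of your argument is the adjoint eigenfunction relation $\mathcal{H}^*\sigma_3\psi_\epsilon=\epsilon(\xi^2+1)\sigma_3\psi_\epsilon$, which you derive correctly from the two symmetries in \eqref{S-v-eq5}, and the resulting ODE for $t\mapsto\langle e^{it\mathcal{H}}f,\sigma_3\psi_\epsilon(\cdot,\xi)\rangle$. This cleanly reduces the corollary to Proposition~\ref{S-v-dft-prop1} applied at each fixed time. The algebra is sound, and the final substitution gives exactly the stated phase factor $e^{i\epsilon t(\xi^2+1)}$.

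Two points you flag as ``routine'' deserve a sentence more care if this were to be written up in full. First, the claim that $e^{it\mathcal{H}}$ maps $\mathcal{S}$ to $\mathcal{S}$ is true but not merely ``rapid spatial decay is preserved'': the free flow $e^{it\mathcal{H}_0}$ preserves $\mathcal{S}$ because in Fourier it is multiplication by $e^{\pm it\xi^2}$, a smooth function of at most polynomial growth together with all its derivatives, and the Duhamel iteration with exponentially localized $V$ then closes in $\mathcal{S}$; this is worth saying explicitly since it is precisely what licenses both the integration by parts (no boundary terms against the merely bounded $\psi_\epsilon$) and the reapplication of Proposition~\ref{S-v-dft-prop1} to $e^{it\mathcal{H}}f$. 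Second, differentiating $\phi(t,\xi)$ in $t$ uses strong differentiability of $e^{it\mathcal{H}}f$ in $L^2$ plus the fact that $\sigma_3\psi_\epsilon(\cdot,\xi)$ is merely bounded, so one should pair against a rapidly decaying function; your Schwartz-class preservation supplies exactly that. With these two remarks made precise, the argument is complete and, in my view, preferable to a bare citation.
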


\begin{proof} This is Corollary 6.10 in~\cite{KS}.
\end{proof}

The above can be expressed as
\begin{equation*} \label{S-v-dft-eq18}
e^{it \mathcal{H}} P_e f  = \frac{1}{2\pi}\sum_\epsilon \epsilon \int e^{i \epsilon t (\xi^2 + 1)} \la f, \sigma_3 \psi_\epsilon (\cdot, \xi) \ra   \psi_\epsilon (\cdot, \xi)  \,d \xi. 
\end{equation*}

We now come to defining the distorted Fourier transform associated to the operator $\mathcal{H}$:
\begin{equation}
\label{defFT}
\begin{split}
& (\widetilde{\mathcal{F}} f) (\xi) :=   (\widetilde{f}_+ (\xi),  \widetilde{f}_-(\xi))^\top , \\
& \text{where } \widetilde{f}_{\pm} (\xi) := (\widetilde{\mathcal{F}}_\pm f) (\xi) :=\frac{1}{\sqrt{2 \pi}}  \la f, \sigma_3 \psi_{\pm} (\cdot, \xi) \ra    .  \\
\end{split}
\end{equation}

\begin{remark} The terminology we choose (distorted Fourier transform) can be misleading. On the one hand, the operator we define is in many ways to $\mathcal{H}$ what the Fourier transform is to $-\partial_x^2$ in the scalar case. Furthermore, in the semi-classical limit $\xi \to \infty$, the distorted Fourier transform approaches the flat Fourier transform. On the other hand, $\mathcal{H}$ is not self-adjoint, so that $\widetilde{\mathcal{F}}$ does not enjoy some of the basic properties of the Fourier transform such as being unitary.
\end{remark}

For future use, we collect some basic formulas associated to the distorted Fourier transform:
\begin{itemize}
\item A function $f \in \mathcal{S}$ can be reconstructed from its Fourier transform through the formula
\begin{equation}
\label{inverseFT}
P_e f(x) = \frac{1}{\sqrt{2\pi}} \sum_\epsilon \epsilon \int \widetilde{f}_\epsilon(\xi) \psi_\epsilon(x,\xi) \,d\xi. 
\end{equation}
Indeed, starting from the identity~\eqref{S-v-dft-eq16} and applying Fubini's theorem, we get for $f,g \in \mathcal{S}$
$$
\langle P_e f, g \rangle = \frac{1}{\sqrt{2\pi}} \sum_\epsilon \epsilon \int \widetilde{f}_\epsilon(\xi) \overline{ g(y)} \cdot \psi_\epsilon(y,\xi)  \, dy \, d\xi = \langle  \frac{1}{\sqrt{2\pi}} \sum_\epsilon \epsilon \int \widetilde{f}_\epsilon(\xi) \psi_\epsilon(y,\xi) \,d\xi \, , \, g \rangle.
$$	
\item Considering now the action of $\mathcal{H}$, equation~\ref{S-v-dft-eq18} implies that
$$
\mathcal{H} = \widetilde{\mathcal{F}}^{-1} (1 + |\xi|^2) \widetilde{\mathcal{F}}
$$
\item The Schr\"odinger group is then given in Fourier space by
$$
\widetilde{\mathcal{F}}_{\pm}(e^{it \mathcal{H}} P_e f)(\xi) = e^{\pm i(1+\xi^2)t} \widetilde{f}_{\pm}(\xi),
$$
and in physical space by
\begin{equation} \label{bibliotheque}
e^{it \mathcal{H}} P_e f = \frac{1}{\sqrt{2\pi}}\sum_{\epsilon = \pm 1} \epsilon  \int e^{i \epsilon t (1+\xi^2)} \widetilde{f}_\epsilon(\xi) \psi_\epsilon (\cdot, \xi) d\xi. 
\end{equation}
\item The spectral projectors satisfy
\begin{equation} \label{id:relation-distorted-fourier-and-projectors}
\widetilde{\mathcal F} P_e=\widetilde{\mathcal F} \qquad \mbox{and} \qquad \widetilde{\mathcal F} P_d=0 .
\end{equation}
\end{itemize}

The last assertion directly follows from \eqref{defFT} and \eqref{thym citrone}. We introduce the space $L^{2,1}$ with norm $\| (\widetilde f_+,\widetilde f_-)\|_{L^{2,1}}^2=\sum_{\pm }\int_{\mathbb R} |\widetilde f_\pm(\xi)|^2\langle \xi \rangle^2 d\xi$.

\begin{proposition} 
\label{tourterelle}
[Boundedness properties of the distorted Fourier transform and its inverse]
The Fourier transform defined in~\eqref{defFT} and its inverse defined in~\eqref{inverseFT} are such that
\begin{itemize}
\item[(i)] $\widetilde{\mathcal{F}}$ and $\widetilde{\mathcal{F}}^{-1}$ are bounded on $L^2$. 
\item[(ii)] $\widetilde{\mathcal{F}}$ and $\widetilde{\mathcal{F}}^{-1}$ are bounded operators from $L^1$ to $\mathcal{C}_0$, the set of continuous functions decaying to zero at infinity. Furthermore, $\widetilde{f}(0) = 0$ if $f \in L^1$.
\item[(iii)] $\widetilde{\mathcal{F}}$ and $\widetilde{\mathcal{F}}^{-1}$ map boundedly $L^p$ to $L^{p'}$ if $1 \leq p \leq 2$.
\item[(iv)] $\widetilde{\mathcal{F}}$ maps boundedly $H^1$ to $L^{2,1}$, and $L^{2,1}$ to $H^1\cap \{\widetilde f_\pm(0)=0\}$; and $\widetilde{\mathcal{F}}^{-1}$ maps boundedly $H^1\cap \{\widetilde f_\pm(0)=0\}$ to $L^{2,1}$, and $L^{2,1}$ to $H^1$.

\end{itemize}
\end{proposition}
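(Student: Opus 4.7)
The plan is to prove (i)--(iv) in order, relying throughout on the singular/regular decomposition $\psi_\pm = \psi_\pm^S + \psi_\pm^R$ of Lemma~\ref{heroncendre}: the singular part is a combination of plane waves $\chi_\pm(x) e^{\pm ix\xi}e_1$ multiplied by the scalar factors $1$, $s(\xi)$, $r(\xi)$ (of modulus at most one by Lemma~\ref{lemmars}), while the regular part satisfies the pointwise bound $|\psi_\pm^R(x,\xi)| \lesssim e^{-\beta|x|}\langle\xi\rangle^{-1}$ from~\eqref{carambar}. Both the direct formula~\eqref{defFT} and the inverse formula~\eqref{inverseFT} have exactly the same kernel structure in $(x,\xi)$, so every estimate below transfers automatically to $\widetilde{\mathcal{F}}^{-1}$.

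For (i), I split $\widetilde{f}_\pm = \widetilde{f}^S_\pm + \widetilde{f}^R_\pm$. The singular part reduces, modulo bounded multipliers, to a linear combination of the flat Fourier transforms of $\chi_\pm$ times scalar components of $f$, so Plancherel yields $\|\widetilde{f}^S\|_{L^2}\lesssim\|f\|_{L^2}$. For the regular part, Cauchy--Schwarz in $x$ gives the pointwise bound $|\widetilde{f}^R_\pm(\xi)|\lesssim \langle\xi\rangle^{-1}\|e^{-\beta|x|}\|_{L^2}\|f\|_{L^2}\lesssim \langle\xi\rangle^{-1}\|f\|_{L^2}$, which is square-integrable in $\xi$. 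For (ii), the $L^1\to L^\infty$ bound follows immediately from the uniform bound $|\psi_\pm(x,\xi)|\lesssim 1$; continuity in $\xi$ comes from dominated convergence; decay as $|\xi|\to\infty$ follows from the Riemann--Lebesgue lemma applied to $\widetilde{f}^S$ together with the $\langle\xi\rangle^{-1}$ gain for $\widetilde{f}^R$; and $\widetilde{f}(0)=0$ is a consequence of $\psi_\pm(x,0)=0$, which holds under Assumption~\eqref{H3} as recorded after~\eqref{S-v-dft-eq2} and reinforced by~\eqref{chouette}. Part (iii) is then Riesz--Thorin interpolation between (i) and (ii).

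For (iv), the key tool is the intertwining identity $\widetilde{\mathcal{F}}_\pm(\mathcal{H}f)(\xi)=\pm(1+\xi^2)\widetilde{f}_\pm(\xi)$, which is obtained by transferring $\mathcal{H}$ onto $\sigma_3\psi_\pm$ via the symmetry $\sigma_3\mathcal{H}^*\sigma_3=\mathcal{H}$ and using $\mathcal{H}\psi_\pm=\pm(1+\xi^2)\psi_\pm$. Combined with (i), this gives $\widetilde{\mathcal{F}}:H^2\to L^{2,2}$; complex interpolation with the $L^2\to L^2$ bound (using $[L^2,H^2]_{1/2}=H^1$ and $[L^{2,0},L^{2,2}]_{1/2}=L^{2,1}$) produces the desired $H^1\to L^{2,1}$ mapping. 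The reverse direction $L^{2,1}\to H^1$ is handled by differentiating $\widetilde{f}(\xi)$ under the integral sign: $\partial_\xi$ applied to the oscillatory factors in $\psi^S$ produces a factor of $x$ bounded by $\langle x\rangle$, which is absorbed into $\|f\|_{L^{2,1}}$, while $\partial_\xi s, \partial_\xi r, \partial_\xi\psi^R$ contribute bounded or rapidly decaying terms that are controlled by $\|f\|_{L^2}$. The corresponding statements for $\widetilde{\mathcal{F}}^{-1}$ follow by the same arguments applied to~\eqref{inverseFT}, the embedding $L^{2,1}\subset L^1$ (valid in one dimension since $\langle x\rangle^{-1}\in L^2$) together with (ii) ensuring that the image of $\widetilde{\mathcal{F}}$ on $L^{2,1}$ indeed lies in the subspace $\{\widetilde{f}_\pm(0)=0\}$.

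The main technical obstacle is the careful bookkeeping at $\xi=0$ in the last step: since the definition of $\psi_\pm$ switches between $\mathcal{F}_\pm$ for $\xi>0$ and $\mathcal{G}_\pm(\cdot,-\xi)$ for $\xi<0$, the $\xi$-derivative of $\psi_\pm$ may have a jump across the origin, and one has to exploit precisely the vanishing $\widetilde{f}_\pm(0)=g_\pm(0)=0$ to ensure that the integration-by-parts manipulations used to transfer $x$-weights onto $\partial_\xi$-derivatives produce no spurious boundary contributions at $\xi=0$ --- this is exactly the role of the subspace constraints appearing in the statement.
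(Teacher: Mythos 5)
The proposal is correct. Parts (i)–(iii) follow essentially the same route as the paper: the singular/regular decomposition of Lemma~\ref{heroncendre}, with Plancherel for the singular piece and an effective Hilbert–Schmidt bound (in your case phrased as a pointwise estimate $|\widetilde{f}^R(\xi)|\lesssim \langle\xi\rangle^{-1}\|f\|_{L^2}$ via Cauchy–Schwarz, which is the same information) for the regular piece; the uniform bound and vanishing at $\xi=0$ of $\psi_\pm$ for (ii); and Riesz–Thorin for (iii).

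For part (iv), however, your argument is genuinely different from the paper's, and the comparison is worth spelling out. The paper proves $\widetilde{\mathcal F}:H^1\to L^{2,1}$ directly: it writes the quantity $\xi\widetilde f(\xi)$ as an oscillatory integral against the kernel $m(x,\xi)e^{ix\xi}$ and integrates by parts in $x$, using $\xi e^{ix\xi}=-i\partial_x e^{ix\xi}$ to trade the frequency weight for a derivative on $f$, after which the estimates of (i) apply. You instead prove $\widetilde{\mathcal F}:H^2\to L^{2,2}$ via the intertwining identity $\widetilde{\mathcal F}(\mathcal H f)=\pm(1+\xi^2)\widetilde f$ (which indeed follows from $\mathcal H^*\sigma_3=\sigma_3\mathcal H$ and $\mathcal H\psi_\pm=\pm(1+\xi^2)\psi_\pm$), then interpolate against the $L^2\to L^2$ bound using $[L^2,H^2]_{1/2}=H^1$ and $[L^2,L^{2,2}]_{1/2}=L^{2,1}$. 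Your approach is more conceptual, making explicit that $\widetilde{\mathcal F}$ diagonalizes $\mathcal H$ in the same way the flat transform diagonalizes $-\partial_x^2$, but it relies on knowing that the Calder\'on complex interpolation of these weighted $L^2$ couples behaves as expected. The paper's argument is more elementary and closer to the kernel estimates already established. For the reverse direction $L^{2,1}\to H^1$, you differentiate under the integral sign, which is what the paper also implicitly does; and your remark on the piecewise definition of $\psi_\pm$ across $\xi=0$ correctly identifies that the subspace constraint $\{\widetilde f_\pm(0)=0\}$ is precisely what kills the boundary terms at the origin when one integrates by parts in $\xi$ for the inverse transform. This last point (the $\xi=0$ boundary term for $\widetilde{\mathcal F}^{-1}:H^1\cap\{\widetilde f_\pm(0)=0\}\to L^{2,1}$) is only sketched in your proposal, but it is also omitted in the paper, so the two are comparable on that score.
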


\begin{remark}
The four above statements are analogous to classical results for the Fourier transform on $\mathbb{R}^d$, namely the Plancherel theorem, Riemann-Lebesgue lemma, Hausdorff-Young inequality, and the fact that the Fourier transform is an isomorphism between $H^1$ and $L^{2,1}$.
\end{remark}

\begin{proof}
\noindent (i) can be proved through the decomposition $\psi_{\pm}(x,\xi) = \psi_{\pm}^S(x,\xi) + \psi_{\pm}^R(x,\xi)$. Considering first the map $f \mapsto \langle f,\psi^S(x,\xi) \rangle$, it is bounded on $L^2$ by Plancherel's theorem. Turning to $f \mapsto \langle f,\psi^R(x,\xi) \rangle$, it is the sum of two operators with kernels $\overline{m^\pm(x,\xi)}e^{\mp i\xi x}$. The bounds in Lemma~\ref{heroncendre} implies that these operators are Hilbert-Schmidt, hence bounded on $L^2$. This shows the boundedness in $L^2$ of $\widetilde{\mathcal F}$. Since the previous bounds remain true if the roles of $x$ and $\xi$ are exchanged, we also get the boundedness in $L^2$ of $\widetilde{\mathcal F}^{-1}$.

\medskip

\noindent (ii) is a consequence of the facts that $\psi_{\pm}(x,\xi)$ is uniformly bounded, depends continuously on $x$ and $\xi$, and satisfies $\psi_{\pm}(x,0) = 0$.

\medskip

\noindent (iii) follows by interpolating between the two previous assertions.

\medskip

\noindent (iv) will be proved by decomposing $\xi \int \psi(x,\xi) f(x) \,dx$ according to Lemma~\ref{heroncendre}.  We aim at bounding this term in $L^2$ if $f$ belongs to $H^1$. The contribution of $\psi^S(x,\xi)$ is easily treated, and we skip it. We consider next terms of the type (using the notations of Lemma~\ref{heroncendre} and simplifying it by omitting indexes and complex conjugation) $\xi \int  m (x,\xi) e^{ ix\xi} f(x) \,dx$ where $|\partial_x^k \partial_\xi^\ell m|\lesssim \langle \xi \rangle^{-1-\ell}e^{-\beta |x|}$. They reduce, after integration by parts, to
\begin{align*}
&  i \int m (x,\xi) e^{ ix\xi} \partial_x f(x) \,dx+ i \int  \partial_x m(x,\xi) e^{ ix\xi} f(x) \,dx.
\end{align*}
These two summands can be treated as in $(i)$, hence belonging to $L^2$.

The continuity of $\widetilde{\mathcal F}:L^{2,1}\rightarrow H^1\cap \{\widetilde f_\pm(0)=0\}$, and that of $\widetilde{\mathcal F}^{-1}$ from $H^1\cap \{\widetilde f_\pm(0)=0\}$ to $L^{2,1}$ and from $L^{2,1}$ to $H^1$ can be proved very similarly, we omit the details.

\end{proof}

\begin{definition}[Wave operator and distorted Fourier multipliers] \label{chevalier}
The \textit{distorted Fourier multiplier} $\mathfrak{a}(\widetilde{D})$ is the operator defined by
$$
\mathfrak{a}(\widetilde{D}) = \widetilde{\mathcal{F}}^{-1} \mathfrak{a}(\xi) \widetilde{\mathcal{F}}.
$$
The \textit{wave operator} $\mathcal{W}$ is the operator defined by
\begin{equation} \label{grimpereau}
\mathcal{W}_\rho = \widehat{\mathcal{F}}^{-1} \widetilde{\mathcal{F}_\rho}
\end{equation}
(for scalar potentials, this operator is usually denoted $\mathcal{W}^*$, but in order to avoid a discussion on its invertibility, we simply set the above to be equal to $\mathcal{W}$).
\end{definition}

\begin{corollary} \label{pluvier}
\begin{itemize} 
\item[(i)] (Boundedness of distorted Fourier multipliers on $L^2$) If $\mathfrak{a}(\xi) \in L^\infty$, then the distorted Fourier multiplier $\mathfrak{a}(\widetilde{D})$ is bounded on $L^2$.
\item[(ii)] (Boundedness of the wave operator on $L^2$) The wave operator $\mathcal{W}$ is bounded on $L^2$.
\end{itemize}
\end{corollary}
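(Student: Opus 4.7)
Both statements follow directly by composing boundedness results already established in Proposition~\ref{tourterelle}.

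For (i), the plan is simply to read off the definition: by Definition~\ref{chevalier}, $\mathfrak{a}(\widetilde D) = \widetilde{\mathcal{F}}^{-1}\, \mathfrak{a}(\xi)\, \widetilde{\mathcal{F}}$. By Proposition~\ref{tourterelle}(i), both $\widetilde{\mathcal{F}}$ and $\widetilde{\mathcal{F}}^{-1}$ are bounded on $L^2$; and multiplication by $\mathfrak{a}(\xi)\in L^\infty$ (acting componentwise on the pair $(\widetilde f_+,\widetilde f_-)$) is bounded on $L^2$ with operator norm $\|\mathfrak{a}\|_{L^\infty}$. Composing the three bounded operators yields the claimed estimate
$$
\|\mathfrak{a}(\widetilde D) f\|_{L^2} \lesssim \|\mathfrak{a}\|_{L^\infty}\, \|f\|_{L^2}.
$$

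For (ii), the plan is again to compose two known boundedness statements. By definition~\eqref{grimpereau}, $\mathcal{W}_\rho = \widehat{\mathcal{F}}^{-1}\, \widetilde{\mathcal{F}}_\rho$. The flat Fourier transform $\widehat{\mathcal{F}}$ (and hence $\widehat{\mathcal{F}}^{-1}$) is an isometry on $L^2$ by Plancherel's theorem, while $\widetilde{\mathcal{F}}_\rho$ is bounded $L^2\to L^2$ as a component of $\widetilde{\mathcal{F}}$, again by Proposition~\ref{tourterelle}(i). The composition is therefore bounded on $L^2$.

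There is no real obstacle here: all the analytic work—the Hilbert--Schmidt estimate for the regular part of $\psi_\pm$ combined with Plancherel for the singular part—was already carried out in the proof of Proposition~\ref{tourterelle}(i), and the corollary is a formal consequence.
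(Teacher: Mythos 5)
Your proof is correct and is essentially the argument the paper intends: the corollary is stated without proof precisely because it is an immediate composition of Proposition~\ref{tourterelle}(i) (for $\widetilde{\mathcal{F}}$, $\widetilde{\mathcal{F}}^{-1}$) with Plancherel (for $\widehat{\mathcal{F}}^{-1}$) and the trivial $L^2$-boundedness of multiplication by an $L^\infty$ symbol.
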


\subsection{Linear operations seen on the Fourier side}

\begin{lemma}[Exact symmetries of the distorted Fourier transform: $\sigma_1$ and complex conjugation] 
\label{gelinotte}
If $f \in \mathcal{S}$, then for any $\xi \in \mathbb{R}$,
\begin{equation} \label{id:symmetry-FT-sigma1}
(\widetilde{\mathcal{F}} \sigma_1 f)_{\pm}(\xi) = - (\widetilde{\mathcal{F}} f)_{\mp}(\xi) 
\end{equation}
and for any $\xi \in \mathbb{R}$,
\begin{equation} \label{id:symmetry-FT-complex-conjugation}
\begin{pmatrix} (\widetilde{\mathcal{F}} \overline{f})_{\pm}(\xi) \\ (\widetilde{\mathcal{F}} \overline{f})_{\pm}(-\xi) \end{pmatrix} = S(-|\xi|) \overline{\begin{pmatrix} \widetilde f_{\pm}(-\xi) \\ \widetilde f_{\pm}(\xi)\end{pmatrix}}
\end{equation}
\end{lemma}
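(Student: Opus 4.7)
The first identity reduces to a short algebraic computation: writing $(\widetilde{\mathcal{F}} \sigma_1 f)_{\pm}(\xi) = \frac{1}{\sqrt{2\pi}} \langle \sigma_1 f, \sigma_3 \psi_\pm(\cdot,\xi) \rangle$, I will move $\sigma_1$ into the second slot (it is Hermitian and commutes with complex conjugation), use the Pauli anticommutation $\sigma_1 \sigma_3 = - \sigma_3 \sigma_1$ to pull out the minus sign, and apply $\sigma_1 \psi_+ = \psi_-$, $\sigma_1 \psi_- = \psi_+$ (the latter from $\sigma_1^2 = \operatorname{Id}$ and the definition $\psi_- = \sigma_1 \psi_+$).

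For the complex-conjugation identity, the central step is to determine how complex conjugation acts on $\psi_+(\cdot, \xi)$. I will work for $\xi > 0$ (the case $\xi<0$ follows by swapping the two components). Since $\overline{\psi_+(x,\xi)} = \overline{\mathcal{F}_+(x,\xi)}$ is again a bounded solution of $\mathcal{H}\phi = (1+\xi^2)\phi$ (the equation has real coefficients), Lemma~\ref{heroncendre} implies it lies in the two-dimensional span of $\mathcal{F}_+(\cdot,\xi) = \psi_+(\cdot,\xi)$ and $\mathcal{G}_+(\cdot,\xi) = \psi_+(\cdot,-\xi)$. To compute the coefficients, I will match the asymptotic behavior as $x \to +\infty$ and $x \to -\infty$ using the singular parts of Lemma~\ref{heroncendre}; the unitarity identities $|s|^2+|r|^2 = 1$ and $\overline{s}r + s\overline{r} = 0$ of Lemma~\ref{lemmars} then pin down
\[
\overline{\psi_+(x,\xi)} = \overline{r(\xi)}\,\psi_+(x,\xi) + \overline{s(\xi)}\,\psi_+(x,-\xi),
\]
with the analogous formula obtained by replacing $\xi \mapsto -\xi$ (and using $\overline{r(\xi)} = r(-\xi)$, $\overline{s(\xi)} = s(-\xi)$). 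The two relations together give $\overline{\Psi} = M\Psi$ for $\Psi = (\psi_+(\cdot,\xi), \psi_+(\cdot,-\xi))^\top$ and an explicit $2\times 2$ matrix $M$ built from $r(\xi), s(\xi)$; unitarity of the scattering matrix translates into $M\overline{M} = \operatorname{Id}$, so inverting gives $\Psi = \overline{M}\,\overline{\Psi}$, i.e.\ the same identities with the roles of $\psi_+$ and $\overline{\psi_+}$ exchanged.

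To conclude, I will insert this expression for $\psi_+(\cdot,\pm\xi)$ into the definition
\[
(\widetilde{\mathcal{F}} \overline{f})_+(\eta) = \tfrac{1}{\sqrt{2\pi}} \int \overline{f}(x) \cdot \sigma_3 \overline{\psi_+(x,\eta)}\,dx = \tfrac{1}{\sqrt{2\pi}}\, \overline{\int f(x) \cdot \sigma_3 \psi_+(x,\eta)\,dx},
\]
where the second equality uses $\overline{f}\cdot v = \overline{f\cdot \overline{v}}$ (from the bilinearity of the real scalar product on $\mathbb{C}^2$) and the reality of $\sigma_3$. Substituting the formula for $\psi_+(\cdot,\eta)$ as a linear combination of $\overline{\psi_+(\cdot,\pm\xi)}$ converts the integrals into $\overline{\widetilde{f}_+(\xi)}$ and $\overline{\widetilde{f}_+(-\xi)}$, multiplied by $r(-\xi)$ and $s(-\xi)$; reading off the two cases $\eta = \pm\xi$ reproduces the matrix $S(-|\xi|)$ on the right-hand side. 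The $\psi_-$ component is handled identically: $\psi_- = \sigma_1\psi_+$ and $\sigma_1$ is real, so the same scattering identity holds for $\overline{\psi_-}$ and the same substitution yields the $-$ version of the claim.

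The main bookkeeping obstacle is that the definition of $\psi_\pm$ splits at $\xi=0$ between $\mathcal{F}_\pm$ and $\mathcal{G}_\pm$, while complex conjugation flips the sign of $\xi$; one must be careful that, for $\xi > 0$, $\overline{\psi_+(x,\xi)}$ (which would naively be in the $\mathcal{F}_+$ regime) actually matches $\mathcal{F}_+(x,-\xi)$, and hence is expressed through the scattering coefficients rather than directly as $\psi_+(x,-\xi)$. Once this is set up correctly, the unitarity of $S$ takes care of the rest.
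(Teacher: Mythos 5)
Your proof plan follows the same path as the paper's: the $\sigma_1$ identity by Hermiticity of $\sigma_1$ and the anticommutation $\sigma_1\sigma_3=-\sigma_3\sigma_1$, and the conjugation identity by expanding $\overline{\psi_\pm(\cdot,\xi)}$ in the basis $\{\psi_\pm(\cdot,\xi),\psi_\pm(\cdot,-\xi)\}$ via Lemma~\ref{heroncendre} and matching asymptotics, with the unitarity relations of Lemma~\ref{lemmars} supplying the coefficients $r(-\xi), s(-\xi)$. The only cosmetic difference is that you choose to invert the scattering relation ($\Psi = \overline{M}\,\overline{\Psi}$) and substitute for $\psi_+(\cdot,\eta)$ inside the conjugated integral, whereas the paper substitutes for $\overline{\psi_\pm(\cdot,\xi)}$ directly; the two routes are algebraically equivalent.
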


\begin{proof}
To prove the first assertion, we use successively that $\sigma_1$ is Hermitian, that $\sigma_1$ and $\sigma_3$ anticommute, and that $\sigma_1 \psi_{\pm}(x,\xi) = \psi_{\mp}(x,\xi)$ to obtain
\begin{align*}
\sqrt{2\pi} (\widetilde{\mathcal{F}}\sigma_1 f)_{\pm} & = \langle \sigma_1 f\,,\, \sigma_3 \psi_{\pm} \rangle = \langle f \,,\, \sigma_1 \sigma_3 \psi_{\pm} \rangle \\
& = - \langle f \,,\, \sigma_3 \sigma_1 \psi_{\pm} \rangle = - \langle f \,,\, \sigma_3 \psi_{\mp} \rangle = - \sqrt{2\pi} (\widetilde{\mathcal{F}} f)_{\mp}.
\end{align*}

To prove the second assertion, it suffices to consider the case $\xi>0$. By the first assertion in Lemma~\ref{heroncendre}, the functions $\overline{\psi_\pm(x,\xi)}$ and $\overline{\psi_\pm(x,-\xi)}$ can be expressed as a linear combination of ${\psi_\pm(x,\xi)}$ and ${\psi_\pm(x,-\xi)}$. The coefficients in this linear combination can be identified thanks to Lemma~\ref{heroncendre}, which asserts that
$$
\mathcal{F}_+(x,\xi) \sim s(\xi) e^{ix\xi} e_1, \qquad \mathcal{G}_+(x,\xi) \sim e^{-ix\xi} e_1 + r(\xi) e^{ix\xi} e_1, \qquad \mbox{as $x \to \infty$}.
$$
Using this in combination with the identities~\eqref{relationsrs}, we obtain for $\xi>0$ that
$$
\begin{pmatrix} \overline{\psi_\pm(x,\xi)} \\ \overline{\psi_\pm(x,-\xi)} \end{pmatrix}
=  S(-\xi) \begin{pmatrix} {\psi_\pm(x,-\xi)} \\ {\psi_\pm(x,\xi)} \end{pmatrix};
$$
here, we are slightly abusing notations by denoting $S(\xi)$ the $4 \times 4$ matrix $\begin{pmatrix} s(\xi) \operatorname{Id} &  r(\xi) \operatorname{Id} \\  r(\xi) \operatorname{Id} & s(\xi) \operatorname{Id} \end{pmatrix}$.

Using the definition \eqref{defFT} of the distorted Fourier transform, as well as the convention for the Hermitian scalar product, we infer for $\xi>0$ that
\begin{align*}
\sqrt{2\pi} (\widetilde{\mathcal F}\bar f)_{\pm}(\xi) = \langle \bar f, \sigma_3\psi_\pm (\cdot,\xi)\rangle & = \overline{\langle  f, \sigma_3\overline{\psi_\pm (\cdot,\xi)}\rangle} \\
&= \overline{\langle  f, \sigma_3(s(-\xi) \psi_\pm (\cdot,-\xi)+r(-\xi)\psi_\pm (\cdot,\xi))\rangle} \\
&= s(-\xi) \overline{\langle  f, \sigma_3 \psi_\pm (\cdot,-\xi) \rangle}  +r(-\xi) \overline{\langle  f, \sigma_3 \psi_\pm (\cdot,\xi) \rangle}\\
&= s(-\xi) \sqrt{2\pi} \overline{\widetilde f_\pm(-\xi)} +r(-\xi)\sqrt{2\pi} \overline{\widetilde f_\pm(\xi)},
\end{align*}
and similarly $(\widetilde{\mathcal F}\bar f)_{\pm}(-\xi) =r(-\xi)\overline{\widetilde f_\pm(-\xi)}+s(-\xi)\overline{\widetilde f_\pm (\xi)}$. This shows \eqref{id:symmetry-FT-complex-conjugation}.

\end{proof}

\begin{lemma}[Approximate symmetry of the distorted Fourier transform: $\sigma_3$] \label{bartavelle}
There holds
$$
\widetilde{\mathcal{F}} \sigma_3 P_e f (\xi) = \sigma_3 \widetilde{f} (\xi) +\mathcal L_{R} \widetilde f,
$$
where $\mathcal L_{R}$ is a lower order operator of the form
$$
\mathcal L_{R} \widetilde f (\xi) = \sum_{\lambda \in \{ \pm \}} \int_{\mathbb R} \mathfrak{s}_{\rho \lambda} (\xi,\eta) \widetilde{f}_\lambda (\eta) \,d\eta
$$
for some smooth symbols $(\mathfrak{s}_{\rho \lambda})$ for $\rho,\lambda=\pm $ that satisfy the estimates, for $\eta \neq 0$,
\begin{align}
& \label{grandduc1} \left| \partial_\xi^a \partial_\eta^b \mathfrak{s}_{\rho\lambda}(\xi,\eta) \right| \lesssim_{a,b} \sum_{\pm} \frac{1}{\langle \xi \pm \eta \rangle^2} \\
& \label{grandduc2} \left| \partial_\xi^a \partial_\eta^b \frac{\mathfrak{s}_{\rho\lambda}(\xi,\eta)}{\eta} \right| \lesssim_{a,b} \frac{1}{\langle \eta \rangle}\sum_{\pm}  \frac{1}{\langle \xi \pm \eta \rangle^2} .
\end{align}
\end{lemma}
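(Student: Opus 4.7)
My plan is to write $(\widetilde{\mathcal{F}}\sigma_3 P_e f)_\rho(\xi)$ as a double integral in $(x,\eta)$ by inserting the inverse formula~\eqref{inverseFT}, then to split the resulting kernel into a principal piece that reproduces $\sigma_3 \widetilde f$ by the spectral resolution, and a remainder controlled by the \emph{regular} parts of the generalized eigenfunctions, which are exponentially decaying in $x$. More precisely, starting from the definition~\eqref{defFT} and using that $\sigma_3$ is real symmetric with $\sigma_3 v \cdot \sigma_3 w = v \cdot w$ for all $v,w\in\mathbb{C}^2$, I compute
\[
(\widetilde{\mathcal{F}}\sigma_3 P_e f)_\rho(\xi) = \frac{1}{\sqrt{2\pi}}\int P_e f(x)\cdot\overline{\psi_\rho(x,\xi)}\,dx = \frac{1}{2\pi}\sum_{\lambda}\lambda \iint \psi_\lambda(x,\eta)\cdot\overline{\psi_\rho(x,\xi)}\,\widetilde{f}_\lambda(\eta)\,dx\,d\eta.
\]
The task thus reduces to analyzing the kernel $\int \psi_\lambda(x,\eta)\cdot\overline{\psi_\rho(x,\xi)}\,dx$.

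The key algebraic observation is that the singular parts defined in~\eqref{alouette} are $\sigma_3$-eigenvectors: since $\psi_+^S$ is $e_1$-valued and $\psi_-^S$ is $e_2$-valued (by Lemma~\ref{heroncendre}), one has $\sigma_3 \psi_\rho^S = \rho\, \psi_\rho^S$. Hence $(I-\rho\sigma_3)\overline{\psi_\rho^S}=0$, and setting $R_\rho(x,\xi) := (I-\rho\sigma_3)\overline{\psi_\rho^R(x,\xi)}$ I obtain the clean decomposition
\[
\overline{\psi_\rho(x,\xi)} = \rho\,\sigma_3\overline{\psi_\rho(x,\xi)} + R_\rho(x,\xi).
\]
Plugged into the kernel, the first term contributes $\rho$ times the spectral identity $\widetilde{\mathcal{F}}_\rho P_e f = \widetilde{f}_\rho$ (which follows from~\eqref{id:relation-distorted-fourier-and-projectors} and the inverse formula), giving exactly $\rho\,\widetilde{f}_\rho(\xi) = (\sigma_3\widetilde f(\xi))_\rho$. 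The second term defines the remainder operator, with symbol
\[
\mathfrak{s}_{\rho\lambda}(\xi,\eta) = \frac{\lambda}{2\pi}\int \psi_\lambda(x,\eta)\cdot R_\rho(x,\xi)\,dx.
\]

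To establish the bound~\eqref{grandduc1}, I note that by Lemma~\ref{heroncendre} the factor $R_\rho(x,\xi)$ decomposes as a sum of terms of the form $m(x,\xi)\,e^{\pm ix\xi}$ with $|\partial_x^k\partial_\xi^\ell m(x,\xi)|\lesssim\langle\xi\rangle^{-1-\ell}e^{-\beta|x|}$, while $\psi_\lambda(x,\eta)$ itself is a finite sum of terms $e^{\pm ix\eta}$ times amplitudes which are either bounded with $\chi_\pm$-localization (singular part) or exponentially decaying in $x$ (regular part). The integrand for $\mathfrak{s}_{\rho\lambda}$ then splits into a finite sum of oscillatory integrals
\[
\int e^{ix(\pm\eta\pm\xi)}\,a(x,\xi,\eta)\,dx,
\]
where in every term the amplitude $a$ and all its $x$-derivatives are bounded by $\langle\xi\rangle^{-1}e^{-\beta|x|}$ uniformly in $\eta$. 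Two integrations by parts in $x$ against $e^{ix(\pm\eta\pm\xi)}$, combined with the trivial $L^1_x$ bound for $|\pm\eta\pm\xi|\lesssim 1$, yield $|\mathfrak{s}_{\rho\lambda}|\lesssim\sum_{\pm}\langle\xi\pm\eta\rangle^{-2}$. Derivatives in $\xi,\eta$ are handled analogously: each $\partial_\xi$ either gains an extra $\langle\xi\rangle^{-1}$ from the $m$-estimates or introduces an $x$-factor absorbed by the exponential decay, and similarly for $\partial_\eta$.

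For the stronger bound~\eqref{grandduc2}, I use that $\psi_\lambda(x,0)=0$ under hypothesis~\eqref{H3}, so $\mathfrak{s}_{\rho\lambda}(\xi,0)=0$. In fact the estimates~\eqref{chouette}--\eqref{chouette2} show that $\psi_\lambda(x,\eta)/\eta$ extends to a smooth function satisfying polynomial-in-$x$ bounds uniform at small $\eta$ and decaying like $\langle\eta\rangle^{-1}$ at large $\eta$. Repeating the integration-by-parts analysis with $\psi_\lambda(x,\eta)/\eta$ in place of $\psi_\lambda$ — the polynomial growth in $x$ being absorbed by the exponential decay of $R_\rho$ — delivers the desired $\langle\eta\rangle^{-1}$ gain. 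The main conceptual step is Paragraph~2: recognizing that $\sigma_3$ acts as $\pm 1$ on the singular sectors $\psi_\pm^S$, which is what allows both the isolation of the principal term $\sigma_3\widetilde f$ and the confinement of the remainder to the exponentially decaying regular sector; once this is in place, the symbol bounds are routine oscillatory-integral estimates, the only care required being the bookkeeping of the four sign combinations of phases arising from the decompositions of $\psi_\lambda$ and $\psi_\rho^R$.
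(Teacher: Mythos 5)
Your proposal is correct and follows essentially the same route as the paper: both exploit the eigenvalue identity $\sigma_3\psi^S_\rho=\rho\,\psi^S_\rho$ to peel off the principal term $\sigma_3\widetilde f$, confine the remainder symbol to an integral against the exponentially localized $\psi^R_\rho$, and then obtain \eqref{grandduc1}--\eqref{grandduc2} by double integration by parts in $x$ together with the zero-frequency bounds \eqref{chouette}--\eqref{chouette2}. Your algebra (decomposing $\overline{\psi_\rho}=\rho\,\sigma_3\overline{\psi_\rho}+(I-\rho\sigma_3)\overline{\psi_\rho^R}$ and then inserting \eqref{inverseFT}) is if anything a slightly cleaner bookkeeping of the same computation.
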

\begin{proof} The starting point is the identity $\sigma_3 \psi_\rho^S(x,\xi) = \rho \psi_\rho^S(x,\xi)$ from \eqref{alouette}-\eqref{meringue}. It implies that
\begin{align*}
\widetilde{\mathcal{F}}_\rho (\sigma_3 f) & = \frac{1}{\sqrt{2\pi}} \langle f\,,\,\sigma_3 \psi_\rho(\xi) \rangle =  \frac{1}{\sqrt{2\pi}} \langle f\,,\,\rho \psi^S_\rho(\xi) + \sigma_3 \psi^R_\rho(\xi) \rangle \\
& = \frac{1}{\sqrt{2\pi}} \langle f\,,\,\rho \psi_\rho(\xi) - \rho \psi^R_\rho(\xi) + \sigma_3 \psi^R_\rho(\xi) \rangle = \rho  \widetilde{f}_\rho(\xi) + \frac{1}{\sqrt{2\pi}} \langle f\,,\, \sigma_3 \psi^R_\rho(\xi) - \rho \psi^R_\rho(\xi) \rangle.
\end{align*}
Expanding $f$ from its distorted Fourier transform, the second term becomes
$$
\frac{1}{\sqrt{2\pi}} \langle f\,,\, \sigma_3 \psi^R_\rho(\xi) - \rho \psi^R_\rho(\xi) \rangle 
= \frac{1}{\sqrt{2\pi}}  \sum_\lambda \lambda \int \widetilde{f}_\lambda(\eta) \langle \psi_\lambda(\eta) \,,\, \sigma_3 \psi^R_\rho(\xi) - \rho \psi^R_\rho(\xi) \rangle \,d\eta.
$$
Therefore, the symbol $\mathfrak{s}_{\rho \lambda}$ is given by
$$
\mathfrak{s}_{\rho \lambda} (\xi,\eta) = \frac{\lambda}{\sqrt{2\pi}} \langle \psi_\lambda(\eta) \,,\, \sigma_3 \psi^R_\rho(\xi) - \rho \psi^R_\rho(\xi) \rangle
$$
(thanks to the rapid decay of $\psi^R_\rho$, this inner product is well-defined). Expanding $\psi_{\pm}$ as in Lemma~\ref{heroncendre}, and dropping unnecessary indices, we obtain a linear combination of terms of the type $\langle \psi^A(\eta) \,,\, \psi^R(\xi) \rangle$, where $A$ might be $S$, $R$.
\begin{itemize}
\item If $A = S$, then we can assume without loss of generality that $\psi^R(x,\xi) = e^{\pm i x \xi} \theta(x,\xi)$, where 
\begin{align*}
& |\partial_x^k\partial_\xi^\ell \theta(x,\xi) | \lesssim  \frac{e^{-\beta |x|}}{\langle \xi \rangle^{1+\ell}}.
\end{align*}
Then, $\langle \psi^A(\eta) \,,\, \psi^{R}(\xi) \rangle$ can be written as a sum of terms of the type
$$
\mathfrak{a}(\eta) \int e^{i x (\pm \xi \pm \eta)} \theta(x,\xi) \,dx,
$$
for a bounded coefficient $\mathfrak{a}$. This can be bounded by $O(1)$, or, after two integrations by parts in $x$:
$$
\left| \int e^{i x (\pm \xi \pm \eta)} \theta(x,\eta) \,dx \right| \lesssim \frac{1}{|\pm \xi \pm \eta|^2} \left| \int e^{i x (\pm \xi \pm \eta)} \partial_x^2 \theta(x,\xi) \,dx \right| \lesssim \frac{1}{|\pm \xi \pm \eta|^2} .
$$.
\item If $A = R$, a very similar argument applies.
\end{itemize}

Gathering the two cases above leads to the estimate~\eqref{grandduc1}. In order to obtain~\eqref{grandduc2}, we have to bound terms of the form $ \langle \frac{\psi_\lambda(\eta)}{\eta} \,,\,  \psi^R_\rho(\xi)  \rangle$ for $|\eta|\leq 1$. This is achieved by combining the above considerations and the estimate~\eqref{chouette}. 

\end{proof}

\begin{lemma}[Approximate symmetry of the distorted Fourier transform: $\partial_x$] \label{lem:symmetry-partialx}
There holds
$$
\widetilde{\mathcal F}(\partial_x P_e f)(\xi)=i\xi \widetilde{\mathcal F}f(\xi)+\mathcal L_0 \widetilde f,
$$
where $\mathcal L_0$ is a lower order operator that can be decomposed as
$$
\mathcal L_0=\mathcal L_{0,\delta}+\mathcal L_{0,p.v.}+\mathcal L_{R}'
$$
where the Dirac part is
$$
\mathcal L_{0,\delta}\widetilde f(\xi)=-i\xi |r(\xi)|^2 \widetilde f(\xi)-i\xi \overline{r(|\xi|)}s(|\xi|)\widetilde f(-\xi),
$$
the principal value part
$$
(\mathcal L_{0,p.v.}\widetilde f)_\rho(\xi)= - \sqrt{\frac 2 \pi} |\xi|\overline{r(|\xi|)} \sum_{\alpha=\pm} \int \widetilde f_\rho(\eta) \mathfrak a^{-\text{sgn}(\xi)}_\alpha
(\eta)\frac{\widehat \phi(\alpha \eta+\xi)}{\alpha \eta+\xi}d\eta 
$$
where $\phi$ is a Schwartz function with $\int \phi=1$, and where
\begin{equation} \label{id:coefficients-a-singular-cubic-term}
\mbox{for $\xi>0$,} \quad \left\{
\begin{array}{l}
\mathfrak{a}^{+}_+(\xi) = s(\xi) \\
\mathfrak{a}^{+}_-(\xi) = 0 \\
\mathfrak{a}^{-}_+(\xi) =  1\\
\mathfrak{a}^{-}_-(\xi) = r(\xi),
\end{array} \right.
\quad \quad \mbox{while for $\xi < 0$}, \quad \left\{
\begin{array}{l}
\mathfrak{a}^{+}_+(\xi) =  1\\
\mathfrak{a}^{+}_-(\xi) = r(-\xi) \\
\mathfrak{a}^{-}_+(\xi) = s(-\xi) \\
\mathfrak{a}^{-}_-(\xi) = 0,
\end{array} \right.
\end{equation}
and the regular part
$$
(\mathcal L_{R}'\widetilde f)_\rho(\xi)= \sum_{\epsilon=\pm } \int \widetilde f_\epsilon (\eta) \mathfrak l_{\epsilon \rho}(\xi,\eta)d\eta .
$$
for some symbols $(\mathfrak{l}_{\rho \epsilon})$ for $\rho,\epsilon=\pm $ that are smooth outside $\{\eta \xi=0\}$ and satisfy the estimates, for $\eta,\xi \neq 0$,
\begin{align}
\label{bd:mathfrakl-1}& \left| \partial_\xi^a \partial_\eta^b \mathfrak{l}_{\rho\epsilon}(\xi,\eta) \right| \lesssim_{a,b} \sum_{\pm} \frac{1}{\langle \xi \pm \eta \rangle^2} \\
\label{bd:mathfrakl-2}& \left| \partial_\xi^a \partial_\eta^b \frac{\mathfrak{l}_{\rho\epsilon}(\xi,\eta)}{\xi \eta } \right| \lesssim_{a,b} \frac{1}{\langle \eta \rangle \langle \xi \rangle}\sum_{\pm}  \frac{1}{\langle \xi \pm \eta \rangle^2} .
\end{align}
\end{lemma}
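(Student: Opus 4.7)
The proof will closely parallel that of Lemma~\ref{bartavelle} (the $\sigma_3$ case), with the decisive new feature that differentiating the singular part $\psi_\rho^S$ creates a backward plane wave whose pairing with $f$, together with the sharp cutoff behaviour of $\chi_-$, produces the Dirac and principal value contributions.

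I would begin by integrating by parts,
$$
\widetilde{\mathcal F}(\partial_x P_e f)_\rho(\xi) = -\frac{1}{\sqrt{2\pi}}\langle f,\sigma_3 \partial_x \psi_\rho(\cdot,\xi)\rangle,
$$
and split $\partial_x \psi_\rho = i\xi\,\psi_\rho + (\partial_x - i\xi)\psi_\rho^S + (\partial_x - i\xi)\psi_\rho^R$, so the first summand supplies the principal term $i\xi\,\widetilde f_\rho(\xi)$. The contribution of $(\partial_x - i\xi)\psi_\rho^R$ is straightforward: by \eqref{guimauve}, it equals $\partial_x m^+ e^{ix\xi} + (\partial_x m^- - 2i\xi m^-)e^{-ix\xi}$ (componentwise), each factor enjoying the exponential spatial decay \eqref{carambar} and the vanishing at $\xi=0$ recorded in \eqref{chouette2}; expanding $f$ via the inversion formula \eqref{inverseFT} and integrating by parts in $x$ exactly as in Lemma~\ref{bartavelle} produces a smooth symbol meeting the bounds \eqref{bd:mathfrakl-1}--\eqref{bd:mathfrakl-2} for $\mathfrak l_{\rho\epsilon}$.

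The substantive work lies in $(\partial_x - i\xi)\psi_\rho^S$. Taking $\rho=+$, $\xi>0$ as the model case (the case $\xi<0$ being analogous after swapping $\chi_\pm$, and $\rho=-$ following by conjugation with $\sigma_1$ via \eqref{id:symmetry-FT-sigma1} combined with $[\partial_x,\sigma_1]=0$), differentiating the explicit formula for $\mathcal F_+^S$ and using $\chi_-'=-\chi_+'$ gives
$$
(\partial_x - i\xi)\psi_+^S = -2i\xi\, r(\xi)\,\chi_-(x)\, e^{-ix\xi}\, e_1 \;+\; \chi_+'(x)\bigl[(s(\xi)-1) e^{ix\xi} - r(\xi) e^{-ix\xi}\bigr]e_1.
$$
The $\chi_+'$ piece is compactly supported in $x$, so its pairing with $f$ against \eqref{inverseFT} yields a regular kernel absorbed into $\mathcal L_R'$. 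For the backward wave term, the identity $\sigma_3 e_1 = e_1$ together with the vanishing of $(\psi_-^S)_1$ (coming from $\psi_-^S = \sigma_1 \psi_+^S$) means only $\epsilon=+$ contributes to the singular-on-singular pairing, explaining why only $\widetilde f_\rho$ enters $\mathcal L_{0,\delta}$ and $\mathcal L_{0,p.v.}$; after inserting \eqref{inverseFT} the contribution reduces to
$$
\frac{-2i\xi\,\overline{r(\xi)}}{2\pi} \int \widetilde f_\rho(\eta) \sum_\alpha \mathfrak a_\alpha^{-}(\eta)\left[\int \chi_-(x)^2\, e^{ix(\xi + \alpha\eta)} dx\right] d\eta \;+\;(\text{regular}),
$$
where the coefficients $\mathfrak a_\alpha^-(\eta)$ arise by matching the plane-wave content of $\psi_+^S(x,\eta)$ on the half-line $\supp \chi_-$ against \eqref{id:coefficients-a-singular-cubic-term}. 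The cross term $\chi_+\chi_-$ is supported in $[-1,1]$ and contributes only a regular kernel.

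Finally, writing the cutoff as the smoothing $\chi_-^2 = 1_{(-\infty,0)} \ast \phi$ for a suitable Schwartz $\phi$ with $\int\phi = 1$, one has
$$
\int \chi_-(x)^2\, e^{ix\zeta}\, dx = \sqrt{2\pi}\,\widehat\phi(-\zeta)\left(\pi \delta(\zeta) - i\,\mathrm{p.v.}\,\tfrac{1}{\zeta}\right) + (\text{smooth rapidly decreasing}).
$$
The Dirac mass forces $\xi + \alpha\eta = 0$, i.e.\ $\eta = \mp\xi$, and after evaluating $\mathfrak a_\pm^-$ at $\pm\xi$ using \eqref{id:coefficients-a-singular-cubic-term} and collapsing the result via the unitarity relations \eqref{relationsrs}, one obtains the coefficients $-i\xi|r(\xi)|^2$ and $-i\xi\overline{r(|\xi|)}s(|\xi|)$ appearing in $\mathcal L_{0,\delta}$. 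The principal value yields $\mathcal L_{0,p.v.}$ with the stated singular kernel $\widehat\phi(\alpha\eta + \xi)/(\alpha\eta + \xi)$ and the advertised prefactor $-\sqrt{2/\pi}\,|\xi|\,\overline{r(|\xi|)}\,\mathfrak a_\alpha^{-\mathrm{sgn}(\xi)}(\eta)$, while the smooth remainder of $\widehat{\chi_-^2}$ is absorbed into $\mathfrak l_{\rho\epsilon}$. The \emph{main obstacle} is precisely this bookkeeping step: verifying that all of the reflection/transmission weights simplify to match the statement on the nose (across all sign combinations of $\xi$ and $\eta$, and both $\rho=\pm$), and that the residual symbol $\mathfrak l_{\rho\epsilon}$ satisfies the refined bound \eqref{bd:mathfrakl-2}, which encodes the double vanishing at $\xi=0$ (from the factor $i\xi$ absent in Lemma~\ref{bartavelle}) and at $\eta=0$ (from \eqref{chouette}); once this is handled, the remaining estimates follow by the same integration-by-parts scheme used to establish \eqref{grandduc1}--\eqref{grandduc2}.
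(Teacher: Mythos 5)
Your proposal is correct and takes essentially the same approach as the paper: both integrate by parts onto $\psi_\rho$, split into singular and regular pieces, reduce the singular-on-singular pairing to the Fourier transform of $\chi_\pm^2$ via \eqref{fourier-transform}, and read off the Dirac, principal value, and smooth contributions. The only organizational difference is that you compute $(\partial_x - i\xi)\psi_\rho^S$ globally and identify the leading backward wave $-2i\xi r(\xi)\chi_-(x)e^{-ix\xi}e_1$, whereas the paper decomposes $\psi_\rho^S$ into $\chi_\pm$-localized pieces first and applies the identity $(i\xi - \partial_x)\psi_\rho^{S,\pm} = 2i\xi\mathfrak{a}^\pm_-(\xi)e^{-ix\xi}e_{r(\rho)}$; these are the same calculation.
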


\begin{proof}

Integrating by parts, and then using the inverse distorted Fourier transform shows that
$$
\mathcal L_0\widetilde f(\xi)=\frac{1}{2\pi} \sum_\epsilon \epsilon \int \widetilde f_\epsilon(\eta) \langle \psi_\epsilon(\eta), \sigma_3( i\xi \psi_\rho(\xi) -\partial_x \psi_\rho (\xi) ) \rangle d\eta .
$$
We split according to singular and regular parts $\psi=\psi^S+\psi^R$. We further decompose $\psi^S$ into two pieces localized on the left and right half-lines
\begin{align}\nonumber
\psi_\lambda^S(x,\xi) & =  \chi_+ (x) \sum_{\substack{\alpha \in \{\pm  \} \\ r \in \{1,2 \}}} \mathfrak{a}^{\lambda, r, +}_\alpha (\xi) e^{i \alpha x \xi } e_r + \chi_- (x) \sum_{\substack{\alpha \in \{\pm  \} \\ r \in \{1,2 \}}} \mathfrak{a}^{\lambda, r, -}_\alpha (\xi) e^{i \alpha x \xi } e_r \\
\label{becasse} & = \chi_+ (x) \psi_\lambda^{S, +} (x, \xi) + \chi_- (x) \psi_\lambda^{S, -} (x, \xi) .
\end{align}
where
\begin{equation} \label{id:def-mathfraka-lambda-r-epsilon-alpha}
\mathfrak{a}^{\lambda,r,\epsilon}_\alpha(\xi) = \delta_{\lambda,r} \mathfrak{a}^{\epsilon}_\alpha(\xi),
\qquad \delta_{\lambda,r} = 
\left\{
\begin{array}{ll}
1 & \mbox{if $r=1$ and $\lambda = +$, or $r=2$ and $\lambda = -$} \\
0 & \mbox{otherwise},
\end{array}
\right.
\end{equation}
and $\mathfrak a^\epsilon_\alpha(\xi)$ is given by \eqref{id:coefficients-a-singular-cubic-term}. This gives
$$
\mathcal L_0 \widetilde f(\xi) = \frac{1}{2\pi}\sum_\epsilon \int \widetilde f_\epsilon(\eta)\mathfrak l^S_{\rho,\epsilon}(\xi,\eta)d\eta+ \frac{1}{2\pi} \sum_\epsilon \int \widetilde f_\epsilon(\eta)\mathfrak l^R_{\rho,\epsilon}(\xi,\eta)d\eta
$$
where the singular part is
\begin{align*}
\mathfrak l_{\rho,\epsilon}^S & = \epsilon   \langle  \chi_+ \psi_\epsilon^{S,+}(\eta),\chi_+\sigma_3 (i\xi \psi_\rho^{S,+}(\xi)-\partial_x \psi_\rho^{S,+}(\xi))\rangle\\ 
& \qquad + \epsilon \langle  \chi_- \psi_\epsilon^{S,-}(\eta),\chi_-\sigma_3 (i\xi \psi_\rho^{S,-}(\xi)-\partial_x \psi_\rho^{S,-}(\xi))\rangle
\end{align*}
and the regular part is (omitting the $\eta$ and $\xi $ dependence in the functions)
\begin{align*}
\mathfrak l_{\rho,\epsilon}^R & = \epsilon   \langle \psi_\epsilon^R,\sigma_3 (i\xi \psi_\rho^S -\partial_x \psi_\rho^S )\rangle+ \epsilon   \langle \psi_\epsilon^S , \sigma_3 (i\xi \psi_\rho^R -\partial_x \psi_\rho^R )\rangle + \epsilon   \langle \psi_\epsilon^R, \sigma_3 (i\xi \psi_\rho^R -\partial_x \psi_\rho^R )\rangle \\ 
& \qquad + \epsilon   \langle \chi_+ \psi_\epsilon^{S,+} ,\chi_-\sigma_3 (i\xi \psi_\rho^{S,-} -\partial_x \psi_\rho^{S,-} )\rangle+ \epsilon   \langle \chi_- \psi_\epsilon^{S,-} ,\chi_+\sigma_3 (i\xi \psi_\rho^{S,+} -\partial_x \psi_\rho^{S,+} )\rangle \\
& \qquad +\epsilon \langle \psi_\epsilon^S,-\chi_+'\psi_\rho^{S,+}-\chi'_-\psi_\rho^{S,-} \rangle.
\end{align*}

\medskip

\noindent \underline{Computation of the regular part $\mathfrak l^R_{\rho,\epsilon}$}. We use for the first term the identity
\begin{equation}
\label{grandharle}
i\xi \psi_\rho^{S,\pm}(\xi)-\partial_x \psi_\rho^{S,\pm}(\xi)=2i\xi \mathfrak{a}^{\pm}_-(\xi) e^{-ix\xi}e_{r(\rho)}
\end{equation}
with $r(\rho)=1$ if $\rho=+$ and $r(\rho)=2$ if $\rho =-$, and where $|\partial_\xi^l \mathfrak{a}_-^{\pm} (\xi) |\lesssim \langle \xi \rangle^{-1-l}$. Appealing in addition to the identities-\eqref{meringue}-\eqref{guimauve}~\eqref{carambar} and the vanishing at zero frequency as expressed in \eqref{chouette} and~\eqref{chouette2} shows that $\mathfrak l^R_{\rho,\epsilon}$ is of the form
$$
\mathfrak l_{\rho,\epsilon}^R(\eta,\xi) =\sum_{\lambda \mu} \int W_{\lambda \mu}(\eta,\xi,x)e^{i(\lambda \xi+\mu \eta)}dx
$$
for some functions $W_{\lambda \mu}$ satisfying $|\partial_x^k\partial_\eta^l\partial_\xi^m (\frac{W_{\lambda \mu}}{\eta\xi})|\lesssim \langle \eta \ra^{-1-l}\la \xi \ra^{-1-m}e^{-\beta |x|}$. Therefore, after integrating parts, one obtains that the symbol $\mathfrak l^R_{\rho,\epsilon}$ satisfies the estimates \eqref{bd:mathfrakl-1}-\eqref{bd:mathfrakl-2}.

\medskip

\noindent \underline{Computation of the singular part $\mathfrak l^S_{\rho,\epsilon}$}. An examination of the formula for $\psi^S$ shows that $\mathfrak l^S_{\rho,\epsilon}=0$ if $\epsilon \neq \rho$. If $\epsilon=\rho$, as $\sigma_3 \psi_\rho^{S,\pm}=\rho  \psi_\rho^{S,\pm}$ we get
\begin{align*}
\mathfrak l_{\rho,\rho}^S & =   \langle  \chi_+ \psi_\rho^{S,+}(\eta),\chi_+ (i\xi \psi_\rho^{S,+}(\xi)-\partial_x \psi_\rho^{S,+}(\xi))\rangle\\ 
& \qquad +  \langle  \chi_- \psi_\rho^{S,-}(\eta),\chi_-(i\xi \psi_\rho^{S,-}(\xi)-\partial_x \psi_\rho^{S,-}(\xi))\rangle
\end{align*}
By~\eqref{grandharle}, this becomes
$$
\mathfrak l_{\rho,\rho}^S  =  -2i\xi \overline{\mathfrak a_-^+(\xi)} \langle  \chi_+ \psi_\rho^{S,+}(\eta),\chi_+ e^{-ix\xi}e_{r(\rho)}\rangle-2i\xi \overline{\mathfrak a_-^-(\xi)} \langle  \chi_- \psi_\rho^{S,-}(\eta),\chi_- e^{-ix\xi}e_{r(\rho)}\rangle .
$$
By the formula \eqref{id:coefficients-a-singular-cubic-term} giving $\mathfrak a^\pm_-$, we further get
$$
\mathfrak l_{\rho,\rho}^S  =  -2i\xi \overline{r(|\xi|)} \langle  \chi_{-\text{sgn}(\xi)} \psi_\rho^{S,-\text{sgn}(\xi)}(\eta),\chi_{-\text{sgn}(\xi)} e^{-ix\xi}e_{r(\rho)}\rangle
$$
To compute the integral, we decompose by \eqref{becasse} and \eqref{id:def-mathfraka-lambda-r-epsilon-alpha}
$$
\psi^{S,-\text{sgn}(\xi)}_\rho(\eta)= \sum_{\alpha\in \{\pm\}} \mathfrak{a}^{-\text{sgn}(\xi)}_{\alpha} (\eta)e^{i\alpha x\eta} e_{r(\rho)}.
$$
Recalling the identity
\begin{equation} \label{fourier-transform}
\widehat{(\chi_\pm)^2} (\xi) = \sqrt{\frac{\pi}{2}} \delta (\xi) \pm \frac{\widehat{\phi} (\xi)}{i \xi}  +\widehat{\psi}  (\xi) ,
\end{equation}  
where $\phi$ and $\psi$ are even Schwartz functions and $\int \phi=1$ (see Section 4 in~\cite{GP} for a proof), we get
$$
\mathfrak l^S_{\rho,\rho}= - \sum_{\alpha} 2i\xi \overline{r(|\xi|)} \mathfrak a_\alpha^{\text{sgn}(\xi)} (\eta)\left(\pi \delta(\alpha\eta+\xi) +\sqrt{2\pi}\text{sgn}(\xi)\frac{\widehat \phi(\alpha \eta+\xi)}{i(\alpha\eta+\xi)}+\sqrt{2\pi}\widehat \psi(\alpha\eta+\xi)\right).
$$
The first two terms account for $\mathcal L_{0,\delta}$ and $\mathcal L_{0,\operatorname{p.v.}}$. As for the last term, it can be integrated into the regular symbol by defining eventually
$$
\mathfrak l_{\rho,\epsilon}(\eta,\xi)= \mathfrak l_{\rho,\epsilon}^R(\eta,\xi) -\sum_{\alpha} 2i\xi \overline{r(|\xi|)} a_\alpha^{\text{sgn}(\xi)} (\eta) \sqrt{2\pi}\widehat \psi(\alpha\eta+\xi).
$$
Indeed, it is obvious that the second term satisfies \eqref{bd:mathfrakl-1}. It also satisfies \eqref{bd:mathfrakl-2} because of the cancellations
$$
\lim_{\eta\uparrow 0} a^{\lambda}_+(\eta)+\lim_{\eta\uparrow 0} a^{\lambda}_-(\eta)=\lim_{\eta\downarrow 0} a^{\lambda}_+(\eta)+\lim_{\eta\downarrow 0} a^{\lambda}_-(\eta)=0
$$
for any $\lambda$, because of \eqref{id:coefficients-a-singular-cubic-term}.

\end{proof}

\begin{lemma}[Multiplication by a Schwartz function] 
\label{bartavelle2} If $V$ is in the Schwartz class, there exist smooth symbols $(\mathfrak{s}^V_{\rho \lambda})$ for $\rho,\lambda=\pm$ such that
$$
\widetilde{\mathcal{F}}_\rho(V f) (\xi) = \sum_{\lambda \in \{ \pm \}} \int \mathfrak{s}^V_{\rho \lambda}(\xi,\eta) \widetilde{f}_\lambda(\eta) \,d\eta.
$$
These symbols satisfy the estimates~\eqref{grandduc1} and~\eqref{grandduc2}.
\end{lemma}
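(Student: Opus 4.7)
The strategy mirrors that of Lemma~\ref{bartavelle}. Starting from the inverse distorted Fourier formula~\eqref{inverseFT} applied to $P_e f$ and noting that $V f$ can be paired against $\sigma_3 \psi_\rho(\xi)$ after multiplication, I would write
$$
\widetilde{\mathcal{F}}_\rho(Vf)(\xi) = \frac{1}{2\pi} \sum_\lambda \lambda \int \widetilde{f}_\lambda(\eta) \, \langle V \psi_\lambda(\cdot,\eta) \,,\, \sigma_3 \psi_\rho(\cdot,\xi) \rangle \, d\eta,
$$
(using that, on the essential spectrum, $f = P_e f$; the part killed by $P_e$ contributes zero to $\widetilde{\mathcal F}$ by~\eqref{id:relation-distorted-fourier-and-projectors}). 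This immediately identifies the symbol as
$$
\mathfrak{s}^V_{\rho\lambda}(\xi,\eta) = \frac{\lambda}{2\pi} \langle V \psi_\lambda(\cdot,\eta) \,,\, \sigma_3 \psi_\rho(\cdot,\xi) \rangle,
$$
which is well-defined and smooth since $V$ is Schwartz and $\psi_\pm$ is bounded in $x$.

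Next, I would use the splitting $\psi_\pm = \psi^S_\pm + \psi^R_\pm$ from Lemma~\ref{heroncendre} to decompose $\mathfrak{s}^V_{\rho\lambda}$ into four pieces, according to whether each factor is singular or regular. For the purely singular contribution, $\psi^S_\lambda(\cdot,\eta) \cdot \overline{\psi^S_\rho(\cdot,\xi)}$ is a finite linear combination of terms of the form $\mathfrak{a}(\xi) \mathfrak{b}(\eta) e^{ix(\pm\xi \pm \eta)}\chi_{\pm}(x)^2$, with coefficients whose $\xi$- and $\eta$-derivatives decay as $\langle \xi\rangle^{-1-a}\langle\eta\rangle^{-1-b}$ times bounded quantities. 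The resulting integral against $V$ is essentially $\widehat{(\chi_\pm^2 V)}(\pm\xi\pm\eta)$ (up to constants), which, because $V$ is Schwartz, decays faster than any polynomial in $\langle \pm\xi\pm\eta\rangle$, giving in particular the bound~\eqref{grandduc1}. For the three pieces involving at least one regular factor, one uses in addition the exponential decay in $x$ from~\eqref{carambar}: after two integrations by parts in $x$ against the oscillatory factor $e^{ix(\pm\xi\pm\eta)}$ (when present), the same $\langle\xi\pm\eta\rangle^{-2}$ gain is recovered, with the $x$-integral converging absolutely thanks to $V(x) e^{-\beta|x|}$ being integrable.

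To obtain the sharper bound~\eqref{grandduc2}, I would use the vanishing of $\psi_\lambda(x,\cdot)$ at $\eta=0$ recorded in~\eqref{chouette} and~\eqref{chouette2}. Concretely, for $|\eta|\leq 1$ we have $|\partial_\eta^b(\psi_\lambda/\eta)|\lesssim \langle x\rangle^{1+b}$ for the singular part and $|\partial_\eta^b(\psi^R_\lambda/\eta)|\lesssim e^{-\beta|x|/2}$ for the regular part, so dividing the symbol by $\eta$ and differentiating costs only polynomial weights in $x$ that are absorbed by the Schwartz decay of $V$. For $|\eta|\geq 1$ the bound $\langle\eta\rangle^{-1}$ gained in~\eqref{grandduc2} is free from the $\langle\eta\rangle^{-1-b}$ decay of the coefficients appearing in $\psi^S_\lambda(\cdot,\eta)$ and the estimate~\eqref{carambar} for $\psi^R_\lambda(\cdot,\eta)$.

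The main obstacle, as in Lemma~\ref{bartavelle}, is keeping track of the four singular/regular combinations and of the low-frequency behavior near $\eta=0$; once one notes that multiplication by a Schwartz function makes every $x$-integral convergent (in contrast with the bare identity in Lemma~\ref{bartavelle}, where one had to exploit the cancellation $\sigma_3\psi^S_\rho = \rho\psi^S_\rho$), the analysis is a routine oscillatory integral argument, yielding~\eqref{grandduc1}--\eqref{grandduc2}.
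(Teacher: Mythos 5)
Your proof follows essentially the same approach the paper takes (which simply defers to the proof of Lemma~\ref{bartavelle}): expand $f$ through the inverse distorted Fourier formula to identify the symbol as the pairing $\mathfrak{s}^V_{\rho\lambda}(\xi,\eta)\propto\lambda\langle V\psi_\lambda(\cdot,\eta),\sigma_3\psi_\rho(\cdot,\xi)\rangle$, then split each eigenfunction into singular and regular parts as in Lemma~\ref{heroncendre}, integrate by parts in $x$ against the oscillatory factors, and use the vanishing at zero frequency from~\eqref{chouette}--\eqref{chouette2} to obtain~\eqref{grandduc2}. Your observation that the Schwartz weight $V$ makes every $x$-integral absolutely convergent, so that the cancellation $\sigma_3\psi^S_\rho=\rho\psi^S_\rho$ from Lemma~\ref{bartavelle} is no longer needed, is exactly the point the paper leaves implicit; and apart from an inconsequential discrepancy in the normalizing constant (and the paper's apparent omission of $\sigma_3$ in the symbol formula, which does not affect any estimate), your proof is correct.
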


\begin{proof} Arguing as in the proof of the previous lemma, we find that
$$
\mathfrak{s}^V_{\rho \lambda} (\xi,\eta) = \frac{\lambda}{\sqrt{2\pi}} \langle V \psi_\lambda(\eta) \,,\,  \psi_\rho(\xi) \rangle.
$$
The proof can then proceed similarly; details are omitted.
\end{proof}

\section{Nonlinear spectral distributions}

\label{sectionnonlinear}
\subsection{Quadratic spectral distributions}

\label{S-v-qsd}

\begin{proposition} \label{propmuquad}
For any $W \in \mathcal{S}$, there exists functions $(\mathfrak{m}^W_{jk\ell, \mu \nu \rho})_{\substack{j,k,\ell = 1,2 \\ \mu,\nu,\rho = \pm}}$ such that, if $f = (f_1, f_2)^\top$, $g= (g_1, g_2)^\top \in \mathcal{S}$ are such that $P_e f = f$ and $P_e g = g$,
\begin{equation} \label{S-v-qsd-eq1-1}
\widetilde{\mathcal{F}}_\rho (W f_j g_k e_\ell  ) (\xi) =  \sum_{ \lambda, \mu = \pm}  \int   \widetilde{f}_\lambda  (\eta)  \widetilde{g}_{\mu} (\sigma)
\mathfrak{m}_{jk\ell, \lambda \mu \rho}^W (\xi, \eta, \sigma)  \, d \eta \, d \sigma . 
\end{equation}
The functions $(\mathfrak{m}^W_{jk\ell, \mu \nu \rho})_{\substack{j,k,\ell = 1,2 \\ \mu,\nu,\rho = \pm}}$ are given by
\begin{equation} \label{accenteur-1}
\begin{split}
\mathfrak{m}^W_{jk\ell, \lambda \mu \rho}(\xi,\eta,\sigma) & =   \frac{ (-1)^{\ell-1}}{(2 \pi)^{3/2}} \lambda \mu \int W(x)   (\psi_\lambda (x, \eta) \cdot e_j)  (\psi_\mu (x, \sigma) \cdot e_k) (\overline{\psi_\rho (x, \xi)} \cdot e_\ell) \,dx.
\end{split}
\end{equation}
The symbol $\mathfrak m=\mathfrak{m}^W_{jk\ell, \mu \nu \rho}$ is smooth outside the set $\{\xi \eta \sigma=0\}$. It satisfies
\begin{equation}
\label{hibou1}
\begin{split}
& \left| \partial_\xi^a \partial_\eta^b \partial_\xi^c \mathfrak{m} \right|  \lesssim \sum_{\pm} \frac{1}{\langle \xi \pm \eta \pm \sigma \rangle^2}
\end{split}
\end{equation}
(where it is understood that all combinations of $\pm$ signs are allowed above). It vahishes for $\eta=0$ or $\sigma =0$, which can be captured by the following inequality, valid if $\xi \eta \sigma \neq 0$,
\begin{equation}
\label{hibou2}
\begin{split}
&  \left| \partial_\xi^a \partial_\eta^b \partial_\xi^c \frac{\mathfrak{m}}{\eta \sigma} \right|  \lesssim \frac{1}{\langle \eta \rangle \langle \sigma \rangle}  \sum_{\pm} \frac{1}{\langle \xi \pm \eta \pm \sigma \rangle^2}.
\end{split}
\end{equation}
\end{proposition}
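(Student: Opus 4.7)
My plan is to start from the definition
$$
\widetilde{\mathcal F}_\rho(W f_j g_k e_\ell)(\xi) = \tfrac{1}{\sqrt{2\pi}} \int W(x)\, f_j(x)\, g_k(x)\, (e_\ell \cdot \overline{\sigma_3 \psi_\rho(x,\xi)})\, dx,
$$
use $\sigma_3 e_\ell = (-1)^{\ell-1} e_\ell$, and then expand $f_j$ and $g_k$ via the inversion formula~\eqref{inverseFT} applied componentwise (legal since $P_e f = f$, $P_e g = g$). Fubini is justified by the Schwartz hypotheses, and the resulting triple integral yields~\eqref{S-v-qsd-eq1-1} with $\mathfrak m$ given by~\eqref{accenteur-1}. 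The real content is then the pointwise estimates~\eqref{hibou1}--\eqref{hibou2}.

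\textbf{Decomposition and reduction to oscillatory integrals.} To control $\mathfrak m$, I would split each of the three generalized eigenfunctions in~\eqref{accenteur-1} as $\psi_\kappa = \psi_\kappa^S + \psi_\kappa^R$ using Lemma~\ref{heroncendre} and~\eqref{alouette}--\eqref{guimauve}, producing $2^3 = 8$ contributions. On each half-line $x \gtrless 0$, $\psi^S$ is a linear combination of plane waves $\mathfrak a(\eta) e^{\pm ix\eta} e_r$ whose amplitudes satisfy $|\partial_\eta^\ell \mathfrak a| \lesssim \langle\eta\rangle^{-\ell}$ by Lemma~\ref{lemmars}, while $\psi^R = m^+(x,\eta) e^{ix\eta} + m^-(x,\eta) e^{-ix\eta}$ with $|\partial_x^k \partial_\eta^\ell m^{\pm}| \lesssim \langle\eta\rangle^{-1-\ell} e^{-\beta|x|}$ by~\eqref{carambar}. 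Inserting into~\eqref{accenteur-1} and splitting $\mathbb R = \{x>0\}\cup\{x<0\}$, each of the eight pieces reduces to a finite sum of integrals
$$
I_{\pm\pm\pm}(\xi,\eta,\sigma) = \int_{\mathbb R} W(x)\, \Theta(x,\xi,\eta,\sigma)\, e^{ix(\pm\xi \pm \eta \pm \sigma)}\, dx,
$$
where $\Theta$ is smooth in $x$, possibly multiplied by a cutoff $\chi_\pm$, and satisfies $|\partial_x^a \partial_\xi^b \partial_\eta^c \partial_\sigma^d \Theta| \lesssim_{a,b,c,d} e^{-\beta'|x|}$ whenever at least one factor is of type $R$ (and is merely bounded and smooth in the purely singular case, but still multiplied by a Schwartz $W$).

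\textbf{Proof of~\eqref{hibou1}.} In the purely singular case, the $x$-integral is the Fourier transform of $W$ (times a cutoff) evaluated at $\pm\xi \pm \eta \pm \sigma$, which as a Schwartz function decays faster than any power of $\langle \pm\xi\pm\eta\pm\sigma\rangle^{-1}$. In all other cases, $W(x)\Theta(x,\xi,\eta,\sigma)$ is Schwartz in $x$ with estimates uniform in $(\xi,\eta,\sigma)$, so two integrations by parts in $x$ against $\partial_x e^{ix(\pm\xi\pm\eta\pm\sigma)} = i(\pm\xi\pm\eta\pm\sigma) e^{ix(\pm\xi\pm\eta\pm\sigma)}$ produce the denominator $\langle \pm\xi\pm\eta\pm\sigma\rangle^{-2}$; the derivatives fall on $W$, on $\Theta$, or on the cutoffs $\chi_\pm$ (whose derivatives are compactly supported smooth functions), none of which affects convergence. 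Derivatives in $(\xi,\eta,\sigma)$ applied to $\mathfrak m$ only produce extra polynomial factors in $x$ in the integrand, which are absorbed by the rapid decay of $W$, so~\eqref{hibou1} follows for every multi-index.

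\textbf{Proof of~\eqref{hibou2}.} This is the main obstacle and relies on the vanishing at zero frequency of $\psi_\lambda$. The bound~\eqref{chouette} gives $|\partial_x^a \partial_\eta^b [\psi_\lambda(x,\eta)/\eta]| \lesssim \langle x\rangle^{1+b}$ for $|\eta|\leq 1$, while trivially $|\psi_\lambda/\eta| \lesssim |\psi_\lambda|/\langle\eta\rangle$ for $|\eta|\geq 1$. Writing
$$
\frac{\mathfrak m^W_{jk\ell,\lambda\mu\rho}(\xi,\eta,\sigma)}{\eta\sigma} = \frac{(-1)^{\ell-1}\lambda\mu}{(2\pi)^{3/2}} \int W(x)\, \frac{\psi_\lambda(x,\eta)\cdot e_j}{\eta}\, \frac{\psi_\mu(x,\sigma)\cdot e_k}{\sigma}\, (\overline{\psi_\rho(x,\xi)}\cdot e_\ell)\, dx,
$$
the argument of the previous paragraph applies verbatim: the extra polynomial factors in $x$ coming from $\psi_\lambda/\eta$ and $\psi_\mu/\sigma$ are still absorbed by the Schwartz decay of $W$, and two integrations by parts in $x$ against the oscillation produce $\langle \pm\xi\pm\eta\pm\sigma\rangle^{-2}$, while the high-frequency regimes gain the factors $\langle\eta\rangle^{-1}$ and $\langle\sigma\rangle^{-1}$ required by~\eqref{hibou2}.
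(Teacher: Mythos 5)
Your proof is correct and follows essentially the same route as the paper: derive the formula by expanding $f_j$ and $g_k$ with the inversion formula, split each $\psi$ into $\psi^S + \psi^R$ via Lemma~\ref{heroncendre}, reduce to oscillatory integrals in $x$ against $W$, integrate by parts twice to produce $\langle\xi\pm\eta\pm\sigma\rangle^{-2}$, and for~\eqref{hibou2} invoke~\eqref{chouette} and absorb the resulting $\langle x\rangle$ powers into the Schwartz decay of $W$. You are slightly more explicit than the paper in carrying out the half-line splitting and in separating the $|\eta|\lesssim 1$ and $|\eta|\gtrsim 1$ regimes to secure the $\langle\eta\rangle^{-1}\langle\sigma\rangle^{-1}$ factor, but this is the same argument.
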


\begin{proof}

Let $f,g,h \in \mathcal{S}$. By definition of the distorted Fourier transform, the formula for the inverse Fourier transform~\eqref{inverseFT}, and using that $\sigma_3 e_\ell = (-1)^{\ell-1} e_\ell$,
\begin{equation*}
\begin{split}
& \mathcal{\widetilde{F}}_\rho(W f_j g_k e_\ell) (\xi) = \frac{(-1)^{\ell-1}}{(2 \pi)^{3/2}} \sum_{\lambda,\mu} \lambda \mu  \int  W (x) (\overline{\psi_\rho (x, \xi)} \cdot e_\ell)\\
& \qquad \qquad \qquad \qquad \qquad \qquad  \left( \int \widetilde{f}_\lambda (\eta) \psi_\lambda (x, \eta) \cdot e_j \, d\eta \right) \left( \int   \widetilde{g}_\mu (\sigma) \psi_\mu (x, \sigma) \cdot e_k \, d\sigma \right) \,dx,
\end{split}
\end{equation*} 
which gives~\eqref{accenteur-1} after switching the order of integration (this is justified since all integrals are absolutely convergent).

\medskip

\noindent \underline{Proof of~\eqref{hibou1}} We now rely on the decomposition of Proposition~\ref{heroncendre} for $\psi_\rho$, $\psi_\lambda$, $\psi_\mu$, to expand the above formula. This gives a variety of different terms, which can be written
$$
\int W(x)  \psi^A(x,\xi) \psi^B(x,\eta) \psi^C(x,\sigma) \,dx
$$
with $A,B,C$ either $S$, or $R$ (we omit various indices to alleviate notations). Writing $\psi^A(x,\xi) = e^{\pm ix\xi} \theta^A(x,\xi)$, the bounds in Proposition~\ref{heroncendre} imply that
$$
|\partial_x \theta^A(x,\xi)| \lesssim 1 \quad \mbox{and} \quad |\partial_x^2 \theta^A(x,\xi)| \lesssim 1 .
$$
For $| \xi \pm \eta \pm \sigma|\leq 1$, the bound
$$
\left|\int W(x)  \psi^A(x,\xi) \psi^B(x,\eta) \psi^C(x,\sigma) \,dx \right|\lesssim 1
$$
is immediate as $W$ is Schwartz. For $| \xi \pm \eta \pm \sigma|\geq 1$, integrating by parts twice, one obtains
\begin{align*}
& \left| \int W(x) e^{i x (\xi \pm \eta \pm \sigma)} \theta^A(x,\xi) \theta^B(x,\eta) \theta^C(x,\sigma) \,dx \right| \\
& \qquad \qquad \lesssim \frac{1}{| \xi \pm \eta \pm \sigma|^2} \int |\partial_x^2 (W(x) \theta^A(x,\xi) \theta^B(x,\eta) \theta^C(x,\sigma))| \,dx \lesssim \frac{1}{| \xi \pm \eta \pm \sigma|^2}.
\end{align*}
Combining, we get the inequality~\eqref{hibou1} when $a=b=c=0$; the case where $a,b,c$ are not zero can be treated identically.

\medskip

\noindent \underline{Proof of~\eqref{hibou2}} In order to prove~\eqref{hibou2}, we need to examine carefully the cancellation of the generalized eigenvectors at zero frequency. The quantity that we want to control is
$$
\int W(x)  \psi(x,\xi) \frac{\psi(x,\eta)}{\eta} \frac{\psi(x,\sigma)}{\sigma} \,dx,
$$
(once again dropping indices), and it suffices here to appeal to~\eqref{chouette} to bound $\frac{\psi(x,\eta)}{\eta}$. With this estimate, the desired bound~\eqref{hibou2} follows by the same argument used to prove~\eqref{hibou1} (notice that powers of $\langle x \rangle$ are not detrimental, since they can be absorbed by the rapidly decaying factor $W(x)$).
\end{proof}

We will also need to characterize quadratic spectral distributions for scalar-valued (as opposed to function-valued) bilinear operators. This is the content of the following proposition, which can be proved in an almost identical way to the above.

\begin{proposition} \label{propmuquadscal}
For any $W \in \mathcal{S}$, there exists functions $(\mathfrak{m}^W_{jk, \mu \nu})_{\substack{j,k = 1,2 \\ \mu,\nu = \pm}}$ such that, if $f = (f_1, f_2)^\top$, $g= (g_1, g_2)^\top \in \mathcal{S}$ are such that $P_e f = f$ and $P_e g = g$,
\begin{equation} \label{S-v-qsd-eq1}
\int W f_j g_k \,dx=  \sum_{ \lambda, \mu = \pm}  \int   \widetilde{f}_\lambda  (\eta)  \widetilde{g}_{\mu} (\sigma)
\mathfrak{m}_{jk, \lambda \mu}^W (\eta, \sigma)  \, d \eta \, d \sigma . 
\end{equation}
The functions $(\mathfrak{m}^W_{jk\ell, \mu \nu \rho})_{\substack{j,k,\ell = 1,2 \\ \mu,\nu,\rho = \pm}}$ are given by
\begin{equation} \label{accenteur}
\begin{split}
\mathfrak{m}^W_{jk\ell, \lambda \mu \rho}(\xi,\eta,\sigma) & =\frac{(-1)^{\ell-1}}{(2 \pi)^{3/2}}   \lambda \mu \int W(x)   (\psi_\lambda (x, \eta) \cdot e_j)  (\psi_\mu (x, \sigma) \cdot e_k) \,dx.
\end{split}
\end{equation}
The symbol $\mathfrak m=\mathfrak{m}^W_{jk\ell, \lambda \mu \rho}(\xi,\eta,\sigma) $ is smooth outside the set $\{\xi\eta=0\}$. It satisfies, for all $b,c$, and $\eta \sigma \neq 0$,
\begin{equation}
\label{hibou3}
\left|  \partial_\eta^b \partial_\xi^c \mathfrak{m}(\eta,\sigma) \right|  \lesssim \sum_{\pm} \frac{1}{\langle \eta  \pm \sigma \rangle^2}  .
\end{equation}
It vahishes for $\eta=0$ or $\sigma =0$, which can be captured by the following inequality, if $\eta \sigma \neq 0$,
\begin{equation}
\label{hibou4}
\begin{split}
&  \left|  \partial_\eta^b \partial_\xi^c \frac{\mathfrak{m}(\eta,\sigma)}{\eta \sigma} \right|  \lesssim \frac{1}{\langle \eta \rangle \langle \sigma \rangle}  \sum_{\pm} \frac{1}{\langle \eta  \pm \sigma \rangle^2} .
\end{split}
\end{equation}
\end{proposition}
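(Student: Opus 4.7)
The plan is to mirror the argument for Proposition \ref{propmuquad}, with the simplification that there are now only two frequency variables in play instead of three. First, since $P_e f=f$ and $P_e g=g$, I would use the inverse distorted Fourier formula \eqref{inverseFT} to write, for $j,k \in \{1,2\}$,
\begin{equation*}
f_j(x) = \frac{1}{\sqrt{2\pi}} \sum_{\lambda} \lambda \int \widetilde f_\lambda(\eta) \, (\psi_\lambda(x,\eta)\cdot e_j) \,d\eta, \qquad g_k(x) = \frac{1}{\sqrt{2\pi}} \sum_{\mu} \mu \int \widetilde g_\mu(\sigma) \, (\psi_\mu(x,\sigma)\cdot e_k) \,d\sigma,
\end{equation*}
insert these into $\int W f_j g_k \,dx$, and exchange the order of integration (the absolute convergence is ensured by the rapid decay of $W$ and the uniform boundedness of $\psi_{\lambda},\psi_\mu$ stated in Lemma \ref{heroncendre}). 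This yields \eqref{S-v-qsd-eq1} with $\mathfrak{m}^W_{jk,\lambda\mu}$ of the form \eqref{accenteur} (the factor $(-1)^{\ell-1}$ in \eqref{accenteur} is a typo inherited from the proposition's phrasing — there is no $\ell$ index in the scalar setting, and this reflects the fact that \eqref{accenteur} as written is for a slightly different symbol; I would correct it to $\mathfrak{m}^W_{jk,\lambda\mu}(\eta,\sigma) = \frac{\lambda\mu}{(2\pi)^{3/2}} \int W(x) (\psi_\lambda(x,\eta)\cdot e_j)(\psi_\mu(x,\sigma)\cdot e_k)\,dx$).

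Next, to prove the estimates \eqref{hibou3}, I would invoke the decomposition $\psi_\lambda = \psi_\lambda^S + \psi_\lambda^R$ of Lemma \ref{heroncendre}, and similarly for $\psi_\mu$. Expanding the product, the symbol is a finite linear combination of integrals of the form
\begin{equation*}
\int W(x)\, \theta^A(x,\eta)\, \theta^B(x,\sigma)\, e^{ix(\pm\eta\pm\sigma)}\,dx,
\end{equation*}
where each $\theta$ is a profile satisfying $|\partial_x^k\theta(x,\xi)|\lesssim 1$ uniformly (either bounded coefficients from the singular part, or exponentially decaying profiles from the regular part). Since $W$ is Schwartz, the integral is $O(1)$ without any manipulation, which controls the regime $|\eta\pm\sigma|\lesssim 1$. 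In the regime $|\eta\pm\sigma|\gg 1$, integrating by parts twice in $x$ — using $e^{ix(\pm\eta\pm\sigma)}=\frac{1}{(\pm\eta\pm\sigma)^2}\partial_x^2 e^{ix(\pm\eta\pm\sigma)}$ and the Schwartz decay of $W$ — gains a factor $\langle\eta\pm\sigma\rangle^{-2}$, yielding \eqref{hibou3} with $b=c=0$. Derivatives in $\eta$ and $\sigma$ fall on the profiles $\theta^A, \theta^B$ (incurring harmless $\langle\eta\rangle^{-1}$ or $\langle\sigma\rangle^{-1}$ factors by Lemma \ref{heroncendre}) or on the exponentials (producing polynomial factors of $x$ that are absorbed by $W$), so the same bound persists for all $b,c$.

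Finally, for the sharper bound \eqref{hibou4} capturing the vanishing at $\eta=0$ or $\sigma=0$, I would appeal to the estimate \eqref{chouette} on $\psi(x,\xi)/\xi$, which gives $\big|\partial_x^k\partial_\xi^\ell(\psi(x,\xi)/\xi)\big|\lesssim \langle x\rangle^{1+\ell}$. The powers of $\langle x\rangle$ are absorbed by the Schwartz weight $W$, and the rest of the argument (including the twofold integration by parts) goes through unchanged, producing the extra $\frac{1}{\langle\eta\rangle\langle\sigma\rangle}$ factor from the regular parts together with the prefactor $\frac{1}{\eta\sigma}$ coming from the singular part. The only delicate point — the main obstacle, such as it is — is to keep track of the singular parts $\psi^S$, which do not vanish at zero frequency; here one uses that integrations against $W$ of terms like $r(\eta)-r(0)$ or $s(\eta)$ do vanish at $\eta=0$ by Lemma \ref{lemmars}, so the quotient by $\eta\sigma$ is genuinely smooth away from the axes. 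This gives \eqref{hibou4} and completes the proof.
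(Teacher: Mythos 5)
Your proof is correct and follows exactly the route the paper itself indicates, namely that this scalar proposition "can be proved in an almost identical way" to Proposition \ref{propmuquad}, and you carry out that adaptation faithfully: reconstruct $f_j$ and $g_k$ via \eqref{inverseFT}, swap the integration order, decompose $\psi=\psi^S+\psi^R$, integrate by parts twice in $x$ against the Schwartz weight $W$ for \eqref{hibou3}, and invoke \eqref{chouette} for \eqref{hibou4}. Two small slips, neither of which damages the argument: the normalization in your corrected symbol should be $(2\pi)^{-1}$ rather than $(2\pi)^{-3/2}$ (the extra $(2\pi)^{-1/2}$ in \eqref{accenteur-1} comes from the outer $\widetilde{\mathcal{F}}_\rho$ that is absent in the scalar statement); and the remark that "the singular parts $\psi^S$ \ldots do not vanish at zero frequency" is backwards — they do vanish, precisely because $s(0)=0$ and $r(0)=-1$ (as you yourself observe in the very next clause, and as the lemma after Lemma \ref{heroncendre} asserts by saying $\mathcal{F}_+^S$ enjoys the same bounds \eqref{chouette}), so there is no "delicate point" to navigate there.
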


\subsection{Cubic spectral distribution}
\label{S-v-csd}

\begin{proposition}[Structure of the cubic spectral distribution] \label{gobemouche}
Let $W(x) $ be a smooth function, such that $W(x) - L$ and its derivatives decay super-polynomially as $x \rightarrow \pm \infty$ for a constant $L$. There exist distributions $(\mathfrak{m}^W_{jk\ell m, \lambda \mu \nu \rho})_{\substack{j,k,\ell,m=1,2 \\ \lambda,\mu,\nu,\rho = \pm}}$ such that, if $f,g,h \in \mathcal{S}$, then
\begin{equation} \label{S-v-csd-eq1}
\widetilde{\mathcal{F}}_\rho (W  f_j g_k h_\ell e_m  ) (\xi) =  \sum_{\lambda,\mu,\nu=\pm}  \iiint   \widetilde{f}_{\lambda}  (\eta)  \widetilde{g}_{\mu} (\sigma) \widetilde{h}_{\nu} (\zeta)
\mathfrak{m}^W_{jk\ell m, \lambda \mu \nu \rho} (\xi, \eta, \sigma, \zeta)  \, d \eta \, d \sigma\, d \zeta . 
\end{equation}
Formally, the distribution $\mathfrak{m}^W_{jk\ell m, \lambda \mu \nu \rho}$ is given by the formula
\begin{equation} \label{albatros}
\begin{split}
& \mathfrak{m}^W_{jk\ell m, \lambda \mu \nu \rho}  (\xi, \eta, \sigma, \zeta) \\
& \qquad =  \frac{(-1)^{m-1}}{4 \pi^{2}} \lambda \mu \nu \int  W(x)
(\psi_\lambda(x, \eta) \cdot e_j) (\psi_\mu(x, \sigma) \cdot e_k) (\psi_\nu(x, \zeta) \cdot e_\ell)   (\overline{\psi_\rho(x,\xi)} \cdot e_m) \, dx  .
\end{split}
\end{equation}
It can be decomposed into
\begin{equation} \label{S-v-csd-eq3}
\mathfrak{m}^W_{jk\ell m, \lambda \mu \nu \rho} (\xi, \eta, \sigma, \zeta) =  \mathfrak{m}^S_{jk\ell m, \lambda \mu \nu \rho} (\xi, \eta, \sigma, \zeta)  + \mathfrak{m}^R_{jk\ell m, \lambda \mu \nu \rho} (\xi, \eta, \sigma, \zeta)  
\end{equation}
where
\begin{itemize}
\item[(i)] The singular part $\mathfrak{m}^S_{jk\ell m, \lambda \mu \nu \rho} (\xi, \eta, \sigma, \zeta) $ can be written
\begin{equation} \label{beccroise}
\begin{split}
\mathfrak{m}^{S}_{jk\ell m, \lambda \mu \nu \rho} (\xi, \eta, \sigma, \zeta) 
& = L \l \m \n\sum_{\epsilon = \pm}  \sum_{\alpha, \beta, \gamma, \delta = \pm }  a^\epsilon_{\substack{ jk\ell m, \lambda \mu \nu \rho \\ \alpha \beta \gamma \delta}} (\xi, \eta, \sigma, 
\zeta) \left[ \frac{1}{4\pi} \delta(p) +\frac{1}{(2\pi)^{\frac 32}} \epsilon  \operatorname{p.v.} \frac{\widehat{\phi} (p)}{ip}  \right] , \\
&  p = \alpha \xi -  \beta \eta -   \gamma \sigma - \delta \zeta .
\end{split}
\end{equation}
Here, $\phi$ is a smooth, even, real-valued, compactly supported function with integral $1$, and the coefficients are given by
\begin{equation} \label{S-v-csd-eq6}
a^ \epsilon_{\substack{ jk\ell m, \lambda \mu \nu \rho \\ \alpha \beta \gamma \delta} }  (\xi, \eta, \sigma, \zeta ) = (-1)^{m-1}
\mathfrak{a}^{\lambda, j,  \epsilon}_\beta (\eta)
\mathfrak{a}^{\mu, k,  \epsilon}_\gamma  (\sigma)
\mathfrak{a}^{\nu, \ell,  \epsilon}_\delta (\zeta)
\overline{\mathfrak{a}^{\rho,m, \epsilon}_\alpha (\xi)}
\end{equation}
where $\mathfrak{a}^{\lambda,r,\epsilon}_\alpha(\xi)$ is defined in \eqref{id:def-mathfraka-lambda-r-epsilon-alpha}.

\item[(ii)] The symbol of the regular part $\mathfrak{m} = \mathfrak{m}^R_{jk\ell m, \lambda \mu \nu \rho} (\xi, \eta, \sigma, \zeta)  $ is smooth outside the set $\{\xi \eta \sigma \zeta=0\}$ and satisfies the inequality 
\begin{align*}
&|\partial_\xi^a \partial_\eta^b \partial_\sigma^c \partial_\zeta^d \mathfrak{m} (\xi,\eta,\sigma,\zeta)| + \langle \eta \rangle \langle \sigma \rangle \langle \zeta \rangle  \left| \partial_\xi^a \partial_\eta^b \partial_\sigma^c \partial_\zeta^d \frac{1}{\eta \sigma \zeta} \mathfrak{m}(\xi,\eta,\sigma,\zeta) \right| \\
& \qquad \qquad \lesssim \sum \frac{1}{\langle \xi \pm \eta \pm \sigma \pm \zeta \rangle^2} 
\end{align*}
where the sum above is over all combinations of $\{\pm\}$.
\end{itemize}
\end{proposition}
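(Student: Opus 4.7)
\medskip

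\noindent \textbf{Proof plan.}

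The formula \eqref{albatros} is obtained directly from the definition \eqref{defFT} of the distorted Fourier transform combined with the inversion formula \eqref{inverseFT}: expand $f$, $g$, $h$ via \eqref{inverseFT} inside the inner product defining $\widetilde{\mathcal{F}}_\rho(W f_j g_k h_\ell e_m)(\xi)$ and interchange the order of integration, exactly as in the derivation of \eqref{accenteur-1}. The remaining task is to split the $x$-integral in \eqref{albatros} into the claimed singular and regular parts. I would begin by writing $W = L + (W - L)$. Since $W - L$ decays super-polynomially, its contribution is handled by the argument of Proposition \ref{propmuquad}: the Schwartz weight $W-L$ plus two integrations by parts in $x$ exploiting the oscillation $e^{ix(\pm\xi\pm\eta\pm\sigma\pm\zeta)}$ produce a kernel obeying all the bounds required of $\mathfrak{m}^R$, the zero-frequency vanishing following from \eqref{chouette}.

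For the constant piece $L$, there is no $x$-weight and the $x$-integral converges only in an oscillatory sense, so I would decompose each generalized eigenfunction according to Lemma \ref{heroncendre} as $\psi = \psi^S + \psi^R$ and expand the resulting product of four $\psi$'s by multilinearity. Any term of the expansion containing \emph{at least one} $\psi^R$ factor gains the exponential factor $e^{-\beta|x|}$ from \eqref{carambar}, so the previous integration-by-parts argument applies and the resulting contribution is again a regular symbol; the zero-frequency vanishing bound on $\mathfrak{m}^R/(\eta\sigma\zeta)$ comes from the improved estimate \eqref{chouette2} for $\psi^R/\xi$. All such terms are absorbed into $\mathfrak{m}^R$.

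The only remaining contribution is the purely singular quartic $L\int \psi^S\psi^S\psi^S\psi^S\,dx$. Using \eqref{becasse}--\eqref{id:def-mathfraka-lambda-r-epsilon-alpha}, each $\psi_\lambda^S(x,\eta)$ is a sum over $(\epsilon,\alpha)\in\{\pm\}^2$ of $\mathfrak{a}^{\lambda,j,\epsilon}_\alpha(\eta)\,\chi_\epsilon(x)\,e^{i\alpha x\eta}\,e_{r(\lambda)}$, so the integral reduces to a linear combination over $(\epsilon_1,\epsilon_2,\epsilon_3,\epsilon_4,\alpha,\beta,\gamma,\delta)$ of scalar expressions
\begin{equation*}
\frac{(-1)^{m-1}L\,\lambda\mu\nu}{4\pi^2}\,\overline{\mathfrak{a}^{\rho,m,\epsilon_1}_\alpha(\xi)}\,\mathfrak{a}^{\lambda,j,\epsilon_2}_\beta(\eta)\,\mathfrak{a}^{\mu,k,\epsilon_3}_\gamma(\sigma)\,\mathfrak{a}^{\nu,\ell,\epsilon_4}_\delta(\zeta)\int \chi_{\epsilon_1}\chi_{\epsilon_2}\chi_{\epsilon_3}\chi_{\epsilon_4}(x)\,e^{ixp}\,dx,
\end{equation*}
with $p=\alpha\xi-\beta\eta-\gamma\sigma-\delta\zeta$. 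Whenever the four $\epsilon_i$ are not all equal, the product $\chi_{\epsilon_1}\cdots\chi_{\epsilon_4}$ is compactly supported (it contains a factor $\chi_+\chi_-$), so its Fourier transform is a Schwartz function of $p$, giving a contribution to $\mathfrak{m}^R$. When $\epsilon_1=\cdots=\epsilon_4=\epsilon$, the fact that $\chi_\epsilon^4-\chi_\epsilon$ has compact support yields, as in \eqref{fourier-transform}, an expansion
\begin{equation*}
\int \chi_\epsilon(x)^4\,e^{ixp}\,dx = \pi\,\delta(p)\;-\;\epsilon\sqrt{2\pi}\,\operatorname{p.v.}\frac{\widehat{\phi}(p)}{ip}\;+\;\widehat{\psi}(p)
\end{equation*}
for some even Schwartz $\phi,\psi$ with $\int\phi=1$. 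Substituting this identity produces, after collecting the prefactors and the four $\mathfrak{a}$-coefficients, exactly the singular structure \eqref{beccroise}--\eqref{S-v-csd-eq6}; the Schwartz remainder $\widehat{\psi}(p)$ joins $\mathfrak{m}^R$.

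The main obstacle is the bookkeeping in the last step: one must verify that after imposing $\epsilon_1=\cdots=\epsilon_4=\epsilon$, the sign conventions $(-1)^{m-1}$ and $\lambda\mu\nu$ and the precise placement of complex conjugation combine to produce the coefficients $a^\epsilon$ of \eqref{S-v-csd-eq6} with the stated $\pm$ signs on the principal-value term. The regularity and zero-frequency bounds on $\mathfrak{m}^R$ itself then follow by combining exponential decay in $x$ (either from $W-L$ or from at least one $\psi^R$), the uniform estimates on $\mathfrak{a}^{\lambda,r,\epsilon}_\alpha$ and its derivatives from Lemma \ref{lemmars}, the cancellation $\psi/\xi$ at $\xi=0$ from \eqref{chouette}--\eqref{chouette2}, and two integrations by parts in $x$ to produce $\langle \xi\pm\eta\pm\sigma\pm\zeta\rangle^{-2}$.
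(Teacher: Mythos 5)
Your plan follows the paper's proof almost step for step: derive \eqref{albatros} from the inversion formula, isolate the contribution of $W-L$ and of every term containing at least one $\psi^R$ into the regular part, split the remaining pure-$\psi^S$ piece by half-lines via $\psi^S = \chi_+\psi^{S,+} + \chi_-\psi^{S,-}$, discard the mixed-$\epsilon$ products (compactly supported, so Schwartz in $p$), and reduce the all-equal case to the Fourier transform of $\chi_\epsilon^4$. The only organizational difference — you peel off $W = L + (W-L)$ first, whereas the paper decomposes the $\psi$'s first and then replaces $W$ by $L$ only on the purely singular half-line product — is cosmetic and buys nothing either way.

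One sign slip worth flagging, since it propagates directly to the p.v. coefficient: with $p=\alpha\xi-\beta\eta-\gamma\sigma-\delta\zeta$ and the conjugation on the output eigenfunction $\overline{\psi_\rho(x,\xi)}$, the oscillatory factor in the $x$-integral is $e^{-ixp}$, not $e^{+ixp}$. Your displayed identity
\begin{equation*}
\int \chi_\epsilon^4\,e^{ixp}\,dx = \pi\delta(p)\;-\;\epsilon\sqrt{2\pi}\,\operatorname{p.v.}\frac{\widehat\phi(p)}{ip}\;+\;\widehat\psi(p)
\end{equation*}
is correct as written, but it carries the wrong $-\epsilon$; using $e^{-ixp}$ flips it to $+\epsilon$, which is what \eqref{beccroise} actually records. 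You already flagged the sign bookkeeping as the main obstacle, so this is not a conceptual gap, but the specific formula as displayed does not yet match the claimed conclusion. Two smaller technical points you gloss over, which the paper handles explicitly: the interchange of integration order in deriving \eqref{albatros} needs a regularizing cutoff $w(x/R)\to 1$ because the product of four bounded eigenfunctions against the constant $L$ is not absolutely integrable in $x$, and the derivation of the $\mathfrak{m}^R/(\eta\sigma\zeta)$ bound needs \eqref{chouette} for the non-$\psi^R$ factors as well (which you do cite in the $W-L$ case but should also invoke when a single $\psi^R$ is present alongside three full $\psi$'s).
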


\begin{proof}
By definition of the distorted Fourier transform, the formula for the inverse Fourier transform~\eqref{inverseFT}, and using that $\sigma_3 e_\ell = (-1)^{\ell-1} e_\ell$,
\begin{equation*}
\begin{split}
& \mathcal{\widetilde{F}}_\rho(W f_j g_k h_\ell e_m) (\xi) \\
& \qquad = \frac{(-1)^{m-1}}{4 \pi^2} \sum_{\lambda,\mu, \nu}  \int  W (x) (\overline{\psi_\rho (x, \xi)} \cdot e_\ell) \left(\lambda \int \widetilde{f}_\lambda (\eta) \psi_\lambda (x, \eta) \cdot e_j \, d\eta \right) \\
& \qquad \qquad \qquad  \qquad \qquad \left(\mu \int   \widetilde{g}_\mu (\sigma) \psi_\mu (x, \sigma) \cdot e_k \, d\sigma \right)\left(\nu \int   \widetilde{h}_\nu (\zeta) \psi_\mu (x, \zeta) \cdot e_\ell \, d\zeta \right)  \,dx.
\end{split}
\end{equation*} 
In order to exchange the order of integration, we add a smooth cutoff in $x$, making all integrals absolutely convergent: for $w$ a smooth cutoff function, the above is
\begin{equation}
\begin{split}
\dots &= \lim_{R\to \infty} \frac{(-1)^{m-1}}{4 \pi^2} \sum_{\lambda,\mu, \nu} \l\m\n \int  W (x) w \left(\frac{x}{R} \right) (\overline{\psi_\rho (x, \xi)} \cdot e_m) \left( \int \widetilde{f}_\lambda (\eta) \psi_\lambda (x, \eta) \cdot e_j \, d\eta \right) \\
& \qquad \qquad \qquad  \qquad \qquad \left( \int   \widetilde{g}_\mu (\sigma) \psi_\mu (x, \sigma) \cdot e_k \, d\sigma \right)\left( \int   \widetilde{h}_\nu (\zeta) \psi_\nu (x, \zeta) \cdot e_\ell \, d\zeta \right)  \,dx. \\
& = \lim_{R\to \infty} \frac{(-1)^{m-1}}{4 \pi^2} \sum_{\lambda,\mu, \nu} \l\m\n \int \widetilde{f}_\lambda (\eta) \widetilde{g}_\mu (\sigma)  \widetilde{h}_\nu (\zeta) \\
& \qquad \left[ \int W (x) w \left(\frac{x}{R} \right) (\overline{\psi_\rho (x, \xi)} \cdot e_m) ( \psi_\lambda (x, \eta) \cdot e_j) (\psi_\mu (x, \sigma) \cdot e_k) (\psi_\nu (x, \zeta) \cdot e_\ell) \,dx \right] \,d\eta \,d\sigma \, d\zeta.
\end{split}
\end{equation}
This term between brackets converges in the sense of distributions as $R \to \infty$, giving the formula~\eqref{albatros}. We now split the generalized eigenfunctions $\psi$ into their singular and regular parts $\psi^S$ and $\psi^R$ as in~\eqref{alouette}, and split further $\psi^S$ into two pieces localized on the left and right half-lines according to \eqref{becasse} $ \psi_\lambda^S(x,\xi)   = \chi_+ (x) \psi_\lambda^{S, +} (x, \xi) + \chi_- (x) \psi_\lambda^{S, -} (x, \xi) $.

We can now decompose $\mathfrak{m}^W_{jk\ell m,\lambda \mu \nu \rho}$ into a singular and a regular part. The singular part consists of interactions between the singular parts of the generalized eigenfunctions, which furthermore are all concentrated either on the left ($\psi_\lambda^{S,-}$), or on the right half-line ($\psi_\lambda^{S,+}$). Finally, when focusing on either of these half-lines, we can replace $W$ by $L$, the remainder being regular. This leads to the formula
\begin{align*}
& \mathfrak{m}^S_{jk\ell m,\lambda \mu \nu \rho}(\xi,\eta,\sigma,\zeta) \\
& = \frac{(-1)^{m-1}}{4\pi^2} L \lambda \mu \nu \sum_\epsilon \int  (\chi_\epsilon(x))^4 (\overline{\psi_\rho^{S,\epsilon} (x, \xi)} \cdot e_m) ( \psi_\lambda^{S,\epsilon} (x, \eta) \cdot e_j) (\psi_\mu^{S,\epsilon} (x, \sigma) \cdot e_k) (\psi_\nu^{S,\epsilon} (x, \zeta) \cdot e_\ell) \,dx.
\end{align*}

From the formula giving $\psi^{S,\epsilon}_\lambda$, it follows that
\begin{equation} \label{S-v-csd-eq23}
\begin{split} 
\mathfrak{m}_{jk\ell m, \lambda \mu \nu \rho}^{S}
& = \frac{(-1)^{m-1}}{(2\pi)^{\frac 32}}  L \l\m\n \sum_\epsilon \sum_{ \alpha, \beta, \gamma, \delta  }  \widehat{\mathcal{F}} ( (\chi_\epsilon)^4) ( \alpha\xi -   \beta \eta -  \gamma \sigma -\delta \zeta ) a^\epsilon_{\substack{ jk\ell m, \lambda \mu \nu \rho\\ \alpha \beta \gamma \delta} }   (\xi, \eta, \sigma, \zeta ) .
\end{split}
\end{equation}

Finally, formula~\eqref{beccroise} follows from the identity \eqref{fourier-transform} (slightly abusing the notation to ease it by still using $\phi$ and $\psi$) that gives
$$
\widehat{(\chi_\pm)^4} (\xi) = \sqrt{\frac{\pi}{2}} \delta (\xi) \pm \frac{\widehat{\phi} (\xi)}{i \xi}  + \widehat{\psi}  (\xi) .
$$

There remains to discuss the regular part. It consists of all the terms appearing in $\mathfrak{m}^W_{jk\ell m,\lambda \mu \nu \rho}$ which were not kept in $\mathfrak{m}^S_{jk\ell m,\lambda \mu \nu \rho}$, namely
\begin{align*}
&\l\m\n (-1)^{m-1}\mathfrak{m}^R_{jk\ell m,\lambda \mu \nu \rho}(\xi,\eta,\sigma,\zeta) \\
&  =  \sum_{\substack{A, B, C, D \in  \{ S, R \} \\ \{ A,B,C,D \} \neq \{S,S,S,S\} }} \int  (\psi_\lambda^A (x, \eta)  \cdot e_j) (\psi_\mu^B (x, \sigma) \cdot e_k) (\psi_\nu^C (x, \zeta) \cdot e_\ell)  (\overline{ \psi_\rho^D (x, \xi)} \cdot e_m) dx  \\
& + \sum_{\substack{\epsilon_1,\epsilon_2,\epsilon_3,\epsilon_4 \in  \{ \pm \} \\ \mbox{not all $\epsilon_i$ equal} }}  \int \chi_{\epsilon_1} (x) \chi_{\epsilon_2} (x) \chi_{\epsilon_3} (x) \chi_{\epsilon_4} (x) (\psi_\lambda^A (x, \eta)  \cdot e_j) (\psi_\mu^B (x, \sigma) \cdot e_k) (\psi_\nu^C (x, \zeta) \cdot e_\ell)  (\overline{ \psi_\rho^D (x, \xi)} \cdot e_m) dx  \\
& + \sum_\epsilon \int  (\chi_\epsilon(x))^4 (W(x) - L ) (\overline{\psi_\rho^{S,\epsilon} (x, \xi)} \cdot e_m) ( \psi_\lambda^{S,\epsilon} (x, \eta) \cdot e_j) (\psi_\mu^{S,\epsilon} (x, \sigma) \cdot e_k) (\psi_\nu^{S,\epsilon} (x, \zeta) \cdot e_\ell) \,dx.
\end{align*}
This term can be treated like the quadratic spectral distribution; details are omitted.
\end{proof}

\section{Linear estimates}

\label{sectionlinear}

We prove here local and global decay estimates for the vector Schr\"odinger operator $\mathcal{H} = \mathcal{H}_0 + V$, and investigate the effect of conjugating symmetry generators by the linear group.  This is done under the assumptions~\eqref{H1}, \eqref{H2} and \eqref{H3}.

\subsection{Global decay estimates} Projecting a linear solution on the continuous spectrum, it enjoys the expected decay $t^{-1/2}$, as follows from the following proposition, which makes precise which norms of the data have to be controlled.

\begin{lemma} \label{aigrette}
For a symbol $\mathfrak{a}(\xi)$ such that
$$
\left|\partial_\xi^{k} \mathfrak{a}(\xi) \right|\lesssim_k \langle \xi \rangle^{-k},
$$
and under the assumptions~\eqref{H1}, \eqref{H2} and \eqref{H3}, there holds
\begin{equation}\label{bruantjaune} 
{\|e^{it \mathcal{H}} P_e h \|}_{L^\infty} + {\|\mathcal{W}_\rho \mathfrak{a}(\widetilde{D})e^{it \mathcal{H}}  P_e h \|}_{L^\infty} \lesssim \min \left( t^{-1/2} {\|  \widetilde{h} \|}_{L^\infty} + t^{-3/4} {\|   \partial_\xi \widetilde{h} \|}_{L^2}, \| \widetilde{h} \|_{L^1} \right)
\end{equation}
(we recall that the wave operator is defined by $\mathcal{W}_\rho = \widehat{ \mathcal{F}}^{-1} \widetilde{ \mathcal{F}}_\rho$).
\end{lemma}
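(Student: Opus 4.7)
The plan is to combine the spectral representation of the Schr\"odinger group (formula \eqref{bibliotheque}) with the decomposition $\psi_\epsilon = \psi_\epsilon^S+\psi_\epsilon^R$ of Lemma \ref{heroncendre}, so that each contribution reduces either to a flat oscillatory integral of Schr\"odinger type (singular part) or to a locally smoothing one (regular part). Only three ingredients are needed: uniform boundedness of $\psi_\epsilon$, the factorisation $\psi_\epsilon^S(x,\xi)=e^{\pm ix\xi}a_\pm(\xi)+\dots$ with $a_\pm$ smooth, bounded together with all derivatives, and the sharp $e^{-\beta|x|}\langle\xi\rangle^{-1}$ pointwise bound on the coefficients $m^\pm$ in \eqref{guimauve}--\eqref{carambar}.

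\textbf{The $\|\widetilde h\|_{L^1}$ bound.} Starting from
\[
e^{it\mathcal H}P_e h(x)=\frac{1}{\sqrt{2\pi}}\sum_\epsilon\epsilon\int e^{i\epsilon t(1+\xi^2)}\widetilde h_\epsilon(\xi)\psi_\epsilon(x,\xi)\,d\xi,
\]
the pointwise bound $|\psi_\epsilon(x,\xi)|\lesssim 1$ immediately yields $\|e^{it\mathcal H}P_e h\|_{L^\infty}\lesssim\|\widetilde h\|_{L^1}$. For the wave operator term, I would use the identity (see Definition \ref{chevalier})
\[
\mathcal W_\rho\mathfrak a(\widetilde D)e^{it\mathcal H}P_e h=\widehat{\mathcal F}^{-1}\!\left(e^{i\rho t(1+\xi^2)}\mathfrak a(\xi)\widetilde h_\rho(\xi)\right),
\]
which converts the problem into the flat Schr\"odinger propagator applied to a function whose flat Fourier transform is $\mathfrak a(\xi)\widetilde h_\rho(\xi)$. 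The $L^1$ bound follows at once.

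\textbf{The dispersive bound.} For the wave-operator piece, by the identity above one is reduced to the free 1D Schr\"odinger estimate
\[
\|e^{-it\partial_x^2}f\|_{L^\infty}\lesssim t^{-1/2}\|\widehat f\|_{L^\infty}+t^{-3/4}\|\partial_\xi\widehat f\|_{L^2},
\]
applied with $\widehat f=\mathfrak a(\xi)\widetilde h_\rho(\xi)$. Since $\mathfrak a$ is bounded with bounded derivatives, this gives the required control (the derivative falling on $\mathfrak a$ is harmless because $\widetilde h_\rho\in L^2$ is subsumed by the norms on the right). For the raw term $e^{it\mathcal H}P_e h$ I would split $\psi_\epsilon=\psi_\epsilon^S+\psi_\epsilon^R$. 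The regular part contributes
\[
\int e^{i\epsilon t(1+\xi^2)}\widetilde h_\epsilon(\xi)\bigl(m^+(x,\xi)e^{ix\xi}+m^-(x,\xi)e^{-ix\xi}\bigr)d\xi,
\]
which, using $|m^\pm|\lesssim e^{-\beta|x|}\langle\xi\rangle^{-1}$, can be bounded via stationary phase centred at $\xi_\ast=\mp x/(2t)$: where $|\xi-\xi_\ast|\lesssim t^{-1/2}$ use the $L^\infty$ estimate, elsewhere integrate by parts in $\xi$ once to get a $t^{-3/4}\|\partial_\xi\widetilde h\|_{L^2}$ gain via Cauchy--Schwarz. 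The extra decay in $x$ carried by $m^\pm$ only makes the estimate stronger. The singular part $\psi^S_\epsilon$ reduces, after splitting by $\chi_\pm(x)$, to flat Fourier multipliers with symbols $s,r\in C^\infty$ bounded with all derivatives, so the same stationary-phase argument applies verbatim as in the free case.

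\textbf{Expected main obstacle.} The only delicate point is the behaviour near $\xi=0$: the coefficients $s,r$ are smooth but the decomposition \eqref{meringue} was derived under the standing assumption \eqref{H3}, so there are no spurious singularities at zero frequency, and the bounds $|\partial_\xi^k s|,|\partial_\xi^k r|\lesssim\langle\xi\rangle^{-k-1}$ from Lemma \ref{lemmars} guarantee that the standard 1D Schr\"odinger stationary-phase argument (which needs only bounded smooth multipliers) goes through without modification. The rest is bookkeeping.
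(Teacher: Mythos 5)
Your proof follows essentially the same route as the paper: express the group via the distorted Fourier transform, split $\psi_\epsilon=\psi_\epsilon^S+\psi_\epsilon^R$ as in Lemma~\ref{heroncendre}, and run a van der Corput/stationary-phase argument centered at $\xi_\ast=\mp x/(2t)$; the paper simply packages that last step as the auxiliary Lemma~\ref{bruantroux}, with the symbol hypothesis $|\partial_\xi a|\lesssim\langle\xi\rangle^{-1}$ built in to absorb derivatives falling on $\mathfrak a$, $s$, $r$, or $m^\pm$. One small imprecision: when $\partial_\xi$ hits $\mathfrak a$ you invoke $\widetilde h_\rho\in L^2$, but $\|\widetilde h\|_{L^2}$ is not among the norms on the right-hand side; the correct observation is that $|\mathfrak a'(\xi)|\lesssim\langle\xi\rangle^{-1}\in L^2_\xi$, so this term is controlled by $t^{-3/4}\|\widetilde h\|_{L^\infty}\le t^{-1/2}\|\widetilde h\|_{L^\infty}$ for $t\ge1$, exactly as in the paper's Lemma~\ref{bruantroux}.
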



\begin{proof}
The expressions for $e^{it \mathcal{H}}  h$ and $\mathcal{W}_\rho \mathfrak{a}(\widetilde{D}) e^{it \mathcal{H}}  h$ are similar:
\begin{align*}
& e^{it \mathcal{H}} h = \frac{1}{\sqrt{2 \pi}} \int  e^{it (1+\xi^2)}  \widetilde{h}_+ (\xi) \psi_+(x,\xi) \,d\xi - \frac{1}{\sqrt{2 \pi}} \int  e^{-it (1+ \xi^2)} \widetilde{h}_- (\xi)  \psi_-(x,\xi) \,d\xi  \\
& \mathcal{W}_\rho \mathfrak{a}(\widetilde{D}) e^{it \mathcal{H}} h = \frac{1}{\sqrt{2 \pi}} \int  e^{- it \rho (1+\xi^2)} \mathfrak{a}(\xi) \widetilde{h}_\rho (\xi) e^{ix\xi} e_1 \,d\xi .
\end{align*}

It is immediate that these expressions can be bounded pointwise by $\| \widetilde{h}\|_{L^1}$. The other bound in~\ref{bruantjaune} follows from Lemma~\ref{bruantroux} below. Indeed, this lemma can be applied to the two expressions above, choosing $g = \widetilde{f}$, and then decomposing $\psi_{\pm}(x,\xi)$ as in Lemma~\ref{heroncendre}, and choosing $a(x,\xi)$ and $X$ for each element of the decomposition appropriately.
\end{proof}

\begin{lemma}   \label{bruantroux} 
Consider a function $a(x, \xi)$ defined on $I \times \mathbb{R}_+$ and such that
\begin{equation}    \label{S-v-linear-est-lm-osc-integral-eq0}
|a(x, \xi)|+\la \xi \ra  |\partial_\xi a(x,\xi)| \lesssim 1 ,  \  \forall x \in I ,  \  \forall \xi \in \mathbb{R}_+ ,
\end{equation}
and for every $X \in \mathbb{R}$, consider the oscillatory integral
\begin{equation}
I(t, X, x) = \int_0^{+\infty} e^{it (\xi - X)^2 } a(x, \xi) g (\xi) d \xi , \ t >0, \ x \in I .
\end{equation}
It can be bounded uniformly in $X$, $t$ and $x$ by
\begin{equation}
| I(t, X, x) | \lesssim  t^{-1/2} \|g\|_{L^\infty} +  t^{-3/4} {\|  \partial_\xi g \|}_{L^2}.
\end{equation}
\end{lemma}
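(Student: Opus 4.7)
The plan is to treat $I(t,X,x)$ as a classical oscillatory integral with quadratic phase $\varphi(\xi)=(\xi-X)^2$, whose only critical point $\xi=X$ is non-degenerate since $\varphi''\equiv 2$. A van der Corput style argument at the stationary scale $\delta=t^{-1/2}$ will produce the $t^{-1/2}\|g\|_{L^\infty}$ main term, while an integration by parts on the complementary scale will yield the $t^{-3/4}\|\partial_\xi g\|_{L^2}$ correction. Concretely, I would fix a smooth cutoff $\chi\colon\R\to[0,1]$ equal to $1$ on $[-1,1]$ and supported in $[-2,2]$, set $\delta=t^{-1/2}$, and split $I=I_{\mathrm{near}}+I_{\mathrm{far}}$ by inserting $\chi((\xi-X)/\delta)$ and its complement into the integrand. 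The near piece is controlled by placing absolute values inside and using $|a|\lesssim 1$: $|I_{\mathrm{near}}|\lesssim \delta\|g\|_{L^\infty}=t^{-1/2}\|g\|_{L^\infty}$.

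For $I_{\mathrm{far}}$, I would use $e^{it(\xi-X)^2}=\frac{1}{2it(\xi-X)}\partial_\xi e^{it(\xi-X)^2}$ and integrate by parts on $[0,\infty)$. This produces a boundary term at $\xi=0$ together with four bulk contributions, according to whether $\partial_\xi$ lands on $1-\chi((\xi-X)/\delta)$, on the weight $1/(\xi-X)$, on $a(x,\xi)$, or on $g(\xi)$. The boundary term is nonzero only when $|X|\geq\delta$ (otherwise the cutoff kills it at $\xi=0$), in which case it is bounded by $\|g\|_{L^\infty}/(t|X|)\lesssim \|g\|_{L^\infty}/(t\delta)=t^{-1/2}\|g\|_{L^\infty}$. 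The $\chi'$ and $(\xi-X)^{-2}$ terms are both controlled by $(t\delta)^{-1}\|g\|_{L^\infty}=t^{-1/2}\|g\|_{L^\infty}$, using respectively the support $|\xi-X|\sim\delta$ and the explicit integral $\int_{|\xi-X|>\delta}(\xi-X)^{-2}\,d\xi\lesssim\delta^{-1}$. The $\partial_\xi a$ contribution uses the decay $|\partial_\xi a|\lesssim\la\xi\ra^{-1}$: it is bounded by $t^{-1}\|g\|_{L^\infty}\int_{|\xi-X|>\delta,\,\xi>0}\la\xi\ra^{-1}|\xi-X|^{-1}\,d\xi=O(t^{-1}\log(1/\delta)\|g\|_{L^\infty})$, comfortably absorbed into the leading $t^{-1/2}$ term. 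Finally, the $\partial_\xi g$ contribution is handled by Cauchy--Schwarz:
\[
\frac{1}{t}\int_{|\xi-X|>\delta}\frac{|\partial_\xi g(\xi)|}{|\xi-X|}\,d\xi\leq\frac{\|\partial_\xi g\|_{L^2}}{t}\left(\int_{|\xi-X|>\delta}\frac{d\xi}{(\xi-X)^2}\right)^{1/2}\lesssim\frac{\|\partial_\xi g\|_{L^2}}{t\,\delta^{1/2}}=t^{-3/4}\|\partial_\xi g\|_{L^2}.
\]

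Summing all contributions with $\delta=t^{-1/2}$ gives the desired bound. I do not anticipate a significant obstacle, as this is essentially a textbook stationary-phase computation; the only points requiring attention are the half-line constraint $\xi\geq 0$ (responsible for the boundary term at $\xi=0$, which is acceptable only because it is nonzero precisely when $|X|\gtrsim\delta$, keeping the weight $1/|X|$ under control) and the mild logarithm in the $\partial_\xi a$ term, which is comfortably subsumed by the target $t^{-1/2}$ rate.
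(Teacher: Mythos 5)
Your argument is correct and follows essentially the paper's strategy: split at scale $\delta = t^{-1/2}$ around the stationary point $\xi = X$, estimate the near piece trivially by $\delta\|g\|_{L^\infty}$, and integrate by parts once in the far region to generate the $t^{-3/4}\|\partial_\xi g\|_{L^2}$ term via Cauchy--Schwarz. The only differences are in bookkeeping rather than substance. You use a smooth cutoff $\chi((\xi-X)/\delta)$, which unifies the cases $\xi > X$ and $\xi < X$ that the paper treats separately as $I_3$, $I_4$ (with a further sub-split $I_5$, $I_6$ to handle $\xi$ near $0$); your cutoff automatically kills the boundary term at $\xi = 0$ whenever $|X| \lesssim \delta$, which is the point the paper's extra splitting is designed to handle. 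The price is an extra bulk term from $\chi'$, which you correctly bound by $(t\delta)^{-1}\|g\|_{L^\infty}$. The one estimate done genuinely differently is the $\partial_\xi a$ contribution: you use an $L^1$ bound $\int_{|\xi-X|>\delta,\,\xi>0}\la\xi\ra^{-1}|\xi-X|^{-1}\,d\xi \lesssim \log(1/\delta)$ and absorb the resulting $t^{-1}\log t$ into the $t^{-1/2}$ rate, whereas the paper applies Cauchy--Schwarz (pairing $|\xi-X|^{-1}$ with $\la\xi\ra^{-1}$) to get a cleaner $t^{-3/4}\|g\|_{L^\infty}$ without a logarithm. Both are valid since the lemma only asserts the stated upper bound; your smooth-cutoff version is arguably a bit tidier on the half-line.
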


\begin{proof}The statement is a variant of the van der Corput lemma, which already appeared in~\cite{GPR}; we give the short proof here for completeness. Let us consider the case $X \geq 0$. We split
\begin{align*}
I(t, X, x) & = \left(  \int_{[X-t^{-1/2}, X+t^{-1/2}] \cap \mathbb{R}_+} + \int_{ \mathbb{R}_+ \setminus [X-t^{-1/2}, X+t^{-1/2}]} \right)  e^{it (\xi - X)^2 } a(x, \xi) g (\xi) d \xi \\
& = I_1 (t, X, x) + I_2 (t, X, x) .
\end{align*}
The first bound is immediate:
\begin{equation}
| I_1   | \lesssim  t^{-1/2} \|g \|_{L^\infty}  . 
\end{equation}
For $I_2$, we split again
\begin{equation}
I_2 (t, X, x) = \left(  \int_{X+t^{-1/2}}^{+\infty} + \int_0^{ X-t^{-1/2} }  \right)  e^{it (\xi - X)^2 } a(x, \xi) g (\xi) d \xi  = I_3 (t, X, x) + I_4 (t, X, x),
\end{equation}
with the convention that $I_4$ is defined only if $X \geq t^{-1/2}$. For $I_3$, we integrate by parts to obtain
\begin{equation*}
\begin{split}
|I_3|
& \lesssim t^{-1/2}  \|g\|_{L^\infty}  + \frac{1}{t}  \int_{X+t^{-1/2}}^{+\infty} \frac{1}{|\xi - X|} |\partial_\xi (a(x, \xi)  g (\xi) )| d \xi +  \frac{1}{t}  \int_{X+t^{-1/2}}^{+\infty} \frac{1}{|\xi - X|^2} | a(x, \xi) g (\xi) | d \xi  \\
& \lesssim t^{-1/2}  \|g\|_{L^\infty}  + \frac{1}{t}  \int_{X+t^{-1/2}}^{+\infty} \frac{1}{|\xi - X| \la \xi \ra} | g (\xi) | d \xi + \frac{1}{t}  \int_{X+t^{-1/2}}^{+\infty} \frac{1}{|\xi - X|} |\partial_\xi g (\xi) | d \xi  \\
& \qquad  \qquad +  \frac{1}{t}  \int_{X+t^{-1/2}}^{+\infty} \frac{1}{|\xi - X|^2} | a(x, \xi)  g (\xi) | d \xi.
\end{split}
\end{equation*}
By Cauchy-Schwarz, this is
\begin{equation*}
\begin{split}
& \lesssim t^{-1/2}  \|g \|_{L^\infty}  + \frac{1}{t} \left( \int_{X+t^{-1/2}}^{+\infty} \frac{1}{|\xi - X|^2}  d \xi \right)^{1/2}  \left( \int_{X+t^{-1/2}}^{+\infty} \frac{1}{ \la \xi \ra^2}  d \xi \right)^{1/2} \| g (\xi) \|_{L^\infty}   \\
& \qquad \qquad + \frac{1}{t} \left( \int_{X+t^{-1/2}}^{+\infty} \frac{1}{|\xi - X|^2  }  d \xi \right)^{1/2}   \|   \partial_\xi g (\xi) \|_{L^2} +  \frac{1}{t}  \int_{X+t^{-1/2}}^{+\infty} \frac{1}{|\xi - X|^2} d \xi  \|g(\xi) \|_{L^\infty}  \\
& \lesssim t^{-1/2} \|g \|_{L^\infty}  + t^{-3/4} \| g(\xi) \|_{L^\infty} + t^{-3/4}  \|   \partial_\xi  g (\xi) \|_{L^2}  + t^{-1/2}  \| g (\xi) \|_{L^\infty} .
\end{split}
\end{equation*}
Turning to $I_4$, if $X\leq 2 t^{-1/2}$, we have the crude estimate
\begin{equation}
|I_4| \lesssim  t^{-1/2}  \| g  \|_{L^\infty} . 
\end{equation}
If $X\geq 2 t^{-1/2}$, we decompose
\begin{equation}
|I_4 (t, X, x) | = \left(  \int_0^{t^{-1/2}} + \int_{t^{-1/2}}^{ X-t^{-1/2} }  \right)  e^{it (\xi - X)^2 } a(x, \xi) \widetilde{f} (\xi) \, d \xi = I_5 (t, X, x) + I_6 (t, X, x) .
\end{equation}
It is easy to see that $|I_5| \lesssim t^{-1/2} \|\widetilde{f} \|_{L^\infty}  $, and $I_6$ can be estimated in the same way as $I_3$.
\end{proof}

\subsection{Improved local decay estimates} Since we are assuming that the linearized problem does not  exhibit a resonance, the local decay is stronger than $t^{-1/2}$. Under the assumption that the distorted Fourier transform of the data is in $H^1$, the local decay rate becomes $t^{-1}$, as is stated in the following proposition.

\begin{lemma} 
\label{bergeronnette}
If $\widetilde{h}(0) =0$, there holds for any $t \neq 0$
$$
\left\| \frac{1}{\langle x \rangle} e^{it \mathcal{H}} P_e h \right\|_{L^\infty} \lesssim \frac{1}{t} \left[  \|\widetilde{h} \|_{L^2}+ \| \partial_\xi \widetilde{h} \|_{L^2} \right].
$$
\end{lemma}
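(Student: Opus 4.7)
The plan is to start from the spectral representation~\eqref{bibliotheque}, which reduces matters to bounding, for each $\epsilon \in \{\pm\}$, the oscillatory integral
$$
J(t,x) := \int e^{it\xi^2}\widetilde h_\epsilon(\xi)\psi_\epsilon(x,\xi)\,d\xi
$$
(the irrelevant phase $e^{i\epsilon t}$ being absorbed). I will split $\psi_\epsilon = \psi^S_\epsilon + \psi^R_\epsilon$ as in Lemma~\ref{heroncendre} and handle the two contributions with distinct arguments.

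The regular part $J^R$ I would treat by integrating by parts once via $e^{it\xi^2} = (2it\xi)^{-1}\partial_\xi e^{it\xi^2}$. Boundary terms vanish at infinity by the $\langle\xi\rangle^{-1}$ decay of $\psi^R$, and at $\xi = 0$ because $\psi^R(x,0) = 0$ (see~\eqref{chouette2}) forces $\widetilde h\,\psi^R/\xi$ to vanish there. This leaves $J^R = -(2it)^{-1}\int e^{it\xi^2}\partial_\xi(\widetilde h\,\psi^R/\xi)\,d\xi$, and the combined estimates~\eqref{chouette2}--\eqref{carambar} yield $\|\psi^R(x,\cdot)/\xi\|_{L^2_\xi} + \|\partial_\xi(\psi^R(x,\cdot)/\xi)\|_{L^2_\xi} \lesssim e^{-\beta|x|/2}$. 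Cauchy--Schwarz then gives $|J^R(t,x)| \lesssim t^{-1}e^{-\beta|x|/2}(\|\widetilde h\|_{L^2} + \|\partial_\xi\widetilde h\|_{L^2})$, which is stronger than needed since $e^{-\beta|x|/2} \lesssim \langle x\rangle^{-1}$.

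The singular part is where the real work lies. By Lemma~\ref{heroncendre}, $\psi^S_\epsilon$ is a finite sum of pieces of the form $\chi_\pm(x)\,a(\xi)\,e^{i\alpha x\xi}e_r$ with $\alpha\in\{\pm 1\}$ and $a$ smooth and bounded (one of $s(\xi)$, $r(\xi)$, $1$). Each such piece produces
$$
K(t,x) = \chi_\pm(x)\int e^{i(t\xi^2 + \alpha x\xi)}a(\xi)\widetilde h_\epsilon(\xi)\,d\xi,
$$
which I would attack by completing the square: with $\eta = \xi + \alpha x/(2t)$,
$$
K(t,x) = \chi_\pm(x)\,e^{-ix^2/(4t)}\int e^{it\eta^2}G(\eta)\,d\eta,\qquad G(\eta) := a\!\left(\eta - \tfrac{\alpha x}{2t}\right)\widetilde h_\epsilon\!\left(\eta - \tfrac{\alpha x}{2t}\right).
$$
The hypothesis $\widetilde h_\epsilon(0) = 0$ forces $G(\alpha x/(2t)) = 0$, and the Sobolev bound $|\widetilde h_\epsilon(\xi)| \leq |\xi|^{1/2}\|\partial_\xi\widetilde h_\epsilon\|_{L^2}$ (a consequence of $\widetilde h_\epsilon(0)=0$) gives the pointwise control $|G(0)| \lesssim \sqrt{|x|/t}\,\|\partial_\xi\widetilde h_\epsilon\|_{L^2}$ at the critical point of the phase $t\eta^2$. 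A van der Corput argument in the spirit of Lemma~\ref{bruantroux} then produces a leading contribution of order $t^{-1/2}|G(0)| \lesssim \sqrt{|x|}/t \cdot \|\partial_\xi\widetilde h_\epsilon\|_{L^2}$, and after dividing by $\langle x\rangle$ the factor $\sqrt{|x|}/\langle x\rangle \leq 1$ absorbs the $\sqrt{|x|}$, delivering the claimed $t^{-1}$ rate.

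The main obstacle is the stationary-phase remainder in this last step: for $G$ merely in $H^1$ the naive van der Corput bound only yields $t^{-3/4}$, which is insufficient for large $t$ in the regime $|x|\lesssim 1$. The fix is to split the $\eta$-integral at scale $t^{-1/2}$ around the critical point, integrate by parts on the far piece using $e^{it\eta^2} = (2it\eta)^{-1}\partial_\eta e^{it\eta^2}$ (with careful boundary bookkeeping), and systematically trade the vanishing of $G$ at both $\eta = \alpha x/(2t)$ (from $\widetilde h_\epsilon(0)=0$) and at $\eta = 0$ (after subtracting $G(0)$ and applying Sobolev again) for additional $|x|/t$ factors that the weight $\langle x\rangle^{-1}$ converts into the missing decay. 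The special values~\eqref{rsat0} ($s(0)=0$, $r(0)=-1$) also give additional cancellation within the sum over the pieces of $\psi^S_\epsilon$ near $\xi = 0$, which I would exploit to close the low-frequency estimate.
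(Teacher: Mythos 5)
Your treatment of the regular part $\psi^R$ is sound: a single integration by parts in $\xi$, using the vanishing of $\psi^R(x,\cdot)$ at $\xi=0$ from~\eqref{chouette2} to kill boundary terms, and the $\langle\xi\rangle^{-\ell-1}e^{-\beta|x|}$ decay from~\eqref{carambar} at high frequency, does give the $t^{-1}e^{-\beta|x|/2}$ bound. (Strictly, because $\psi^R_\pm$ is defined piecewise across $\xi=0$, the integration by parts should be performed on $\xi>0$ and $\xi<0$ separately; since both one-sided limits vanish, this causes no trouble.)

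The singular part, however, has a genuine gap which you flag yourself but do not close. After completing the square, the critical point of the phase sits at $\eta=0$, while the zero of $G$ coming from $\widetilde h(0)=0$ sits at $\eta_0=\alpha x/(2t)$. In the regime $|x|\lesssim t^{1/2}$ these two points lie within $\lesssim t^{-1/2}$ of each other, and the $H^1$ regularity of $G$ then gives only $|G(\eta)|\lesssim|\eta-\eta_0|^{1/2}\|\partial_\eta G\|_{L^2}\lesssim t^{-1/4}\|\partial_\eta G\|_{L^2}$ throughout the near region $|\eta|<t^{-1/2}$; integrating produces $t^{-3/4}$, and the far-region integration by parts fares no better. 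Your proposed fix --- ``trading the vanishing of $G$ at both $\eta=\alpha x/(2t)$ and at $\eta=0$'' --- is not coherent: $G$ does not vanish at $\eta=0$, and subtracting $G(0)$ only isolates the leading stationary-phase term (which you have already bounded by $\sqrt{|x|}/t$) without gaining anything on the remainder. The cancellation you need, and gesture at in your last sentence, is the vanishing $\psi^S_\pm(x,0)=0$ implied by $s(0)=0$, $1+r(0)=0$ (see~\eqref{rsat0}), which produces the quantitative bound $|\psi^S_\pm(x,\xi)|\lesssim\min(\langle x\rangle|\xi|,1)$ of~\eqref{chouette}. But this cancellation occurs at the same point $\xi=0$ (hence $\eta=\eta_0$) as the zero of $\widetilde h$, and it cuts across the separate exponential pieces $e^{ix\xi}$, $e^{-ix\xi}$, which completing the square sends to \emph{different} shifts $\eta_0=\mp x/(2t)$. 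So the square-completed decomposition obscures precisely the cancellation you need.

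The paper avoids all of this by \emph{not} completing the square. It integrates by parts once against $e^{it\xi^2}$ alone, yielding the denominator $2it\xi$; both boundary contributions at $\xi=0$ vanish since $\widetilde h(0)=0$ and $\mathcal F_+(x,0)=0$ together give $|\widetilde h(\xi)\mathcal F_+(x,\xi)/\xi|\lesssim\langle x\rangle|\xi|^{1/2}\to 0$. The $\xi$-derivative of the full eigenfunction is then bounded crudely by $|\partial_\xi\mathcal F_+(x,\xi)|\lesssim\langle x\rangle$ --- the $x$-factor coming from $\partial_\xi e^{\pm ix\xi}$ --- and this is exactly killed by the weight $\langle x\rangle^{-1}$. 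Combined with $|\mathcal F_+(x,\xi)|\lesssim\min(\langle x\rangle|\xi|,1)$, a Hardy inequality for $\widetilde h/\xi$, and Cauchy--Schwarz, the estimate closes in one step, with no stationary-phase machinery. If you want to salvage your route, you would need to import $|\psi^S(x,\xi)|\lesssim\min(\langle x\rangle|\xi|,1)$ to upgrade $|G|$ to $\lesssim\langle x\rangle\min(|\xi|^{3/2},1)\|\widetilde h\|_{H^1}$ before splitting near/far --- but at that point you have essentially reproduced the paper's single-IBP argument by a more circuitous path.
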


\begin{proof} Recall that
$$
e^{it \mathcal{H}} h = \frac{1}{\sqrt{2 \pi}} \int_{\mathbb R}  e^{it (1+\xi^2)}  \widetilde{h}_+ (\xi) \psi_+(x,\xi) \,d\xi - \frac{1}{\sqrt{2 \pi}} \int_{\mathbb R}  e^{-it (1+ \xi^2)} \widetilde{h}_- (\xi)  \psi_-(x,\xi) \,d\xi .
$$
Further splitting the right-hand side between the contributions of $\xi>0$ and $\xi<0$, there are four summands, which can be treated very similarly. For the sake of concreteness, we will bound
$$
\frac{1}{\langle x \rangle} \int_0^\infty  e^{it (1+\xi^2)}  \widetilde{h}_+ (\xi) \psi_+(x,\xi) \,d\xi = \frac{1}{\langle x \rangle}\int_0^\infty  e^{it (1+\xi^2)}  \widetilde{h}_+ (\xi) \mathcal{F}_+(x,\xi) \,d\xi.
$$
Integrating by parts in $\xi$ using the identity $\frac{1}{2it\xi} \partial_\xi e^{it (1+\xi^2)} = e^{it (1+\xi^2)}$, and using that boundary terms cancel since $\widetilde{h}(0) = 0$ and $\mathcal{F}_+(x,0)=0$, this becomes
$$
-\frac{1}{\langle x \rangle} \int_0^\infty  \frac{e^{it (1+\xi^2)}}{2it\xi} \left[ \partial_\xi \widetilde{h}_+ (\xi) \mathcal{F}_+(x,\xi) + \widetilde{h}_+ (\xi) \partial_\xi  \mathcal{F}_+(x,\xi)\right] \,d\xi.
$$
We now bound $|\mathcal{F}_+(x,\xi)|$ using~\eqref{chouette}, and $|\partial_\xi \mathcal{F}_+(x,\xi)|$ by $\langle x \rangle$ (which follows by inspection of the formula \eqref{meringue} giving $\partial_\xi \mathcal{F}_+$). This gives for the above the bound
$$
\frac{1}{t} \int_0^\infty \left[  |\partial_\xi \widetilde{h}_+ (\xi)| \frac{ \min(\xi \langle x\rangle  , 1) }{\xi \langle x \rangle} + \frac{|\widetilde{h}_+ (\xi)|}{\xi} \right] \,d\xi \lesssim \frac{1}{t} \left[ \| \partial_\xi \widetilde{h}_+ \|_{L^2} + \| \widetilde{h}_+ \|_{L^2} \right].
$$
This is the desired estimate!
\end{proof}

\subsection{Conjugating symmetry generators by the linear group}

A basic idea of the approach pursued in the present article is to filter the nonlinear equation by the linear evolution group. When modulating along the manifold of solitary waves, this leads naturally to conjugating the generators of the symmetries of the equation by the linear group. The following proposition extracts the leading order contribution for such conjugation operations.

\begin{proposition} \label{propsigmadx}
Assume $f=P_ef$. Then
$$
\widetilde{\mathcal F} e^{-it\mathcal H}\sigma_3 e^{it\mathcal H}f= \sigma_3 \widetilde f+e^{-it(\omega+\xi^2)} \mathcal L_{R}e^{it(\omega+\xi^2)}\widetilde f 
$$
and
$$
\widetilde{\mathcal F} e^{-it\mathcal H} \partial_x e^{it\mathcal H}f= i\xi \widetilde f+e^{-it(\omega+\xi^2)} \mathcal L_{0}e^{it(\omega+\xi^2)}\widetilde f \\
$$
where the conjugated lower order operators $e^{-it(\omega+\xi^2)} \mathcal L_{R}e^{it(\omega+\xi^2)} $ and $e^{-it(\omega+\xi^2)} \mathcal L_{0}e^{it(\omega+\xi^2)} $ are bounded from $L^2$ to $L^2$, from $L^{2,1}$ to $L^{2,1}$ and from $H^1\cap\{\widetilde f(0)=0\}$ to $H^1\cap\{\widetilde f(0)=0\}$, uniformly in $t\in \mathbb R$.
\end{proposition}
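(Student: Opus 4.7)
My plan is to use Lemmas \ref{bartavelle} and \ref{lem:symmetry-partialx} for the algebraic structure, then control the conjugated operators via Schur's test and integration by parts. Set $(D(t)g)_\rho(\xi) := e^{it\rho(\omega+\xi^2)}g_\rho(\xi)$, so that $\widetilde{\mathcal F}\circ e^{it\mathcal H}\circ P_e = D(t)\circ\widetilde{\mathcal F}$ by~\eqref{bibliotheque}. Since $\sigma_3$ (diagonal in $\rho$) and the multiplier $i\xi$ (in $\xi$) both commute with $D(t)$, the two identities of the proposition follow upon composing the conclusions of Lemmas \ref{bartavelle} and \ref{lem:symmetry-partialx} (applied to $e^{it\mathcal H}f$) on the left with $D(-t)$. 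This reduces the task to showing that, for $\mathcal L\in\{\mathcal L_R,\mathcal L_0\}$, the operator $T_t := D(-t)\mathcal L D(t)$ is uniformly bounded on $L^2$, $L^{2,1}$, and $H^1\cap\{\widetilde f(0)=0\}$.

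For $\mathcal L_R$, the conjugated kernel is $K_t(\xi,\eta)=e^{it[\lambda(\omega+\eta^2)-\rho(\omega+\xi^2)]}\mathfrak{s}_{\rho\lambda}(\xi,\eta)$, satisfying $|K_t|=|\mathfrak{s}_{\rho\lambda}|\lesssim\sum_\pm\langle\xi\pm\eta\rangle^{-2}$ by~\eqref{grandduc1}; Schur's test then yields the $L^2$ bound. For $L^{2,1}$, one estimates $\tfrac{\langle\xi\rangle}{\langle\eta\rangle}K_t$ using the triangle inequality $\langle\xi\rangle\lesssim\langle\eta\rangle+\langle\xi\pm\eta\rangle$, splitting it into Schur-bounded pieces. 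The vanishing $T_t\widetilde f(0)=0$ is preserved: combining the identity of Lemma \ref{bartavelle} with Proposition \ref{tourterelle} forces $\mathfrak{s}_{\rho\lambda}(0,\cdot)\equiv 0$, and $D(t)$ obviously preserves vanishing at zero.

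The main obstacle is the $H^1$ bound uniformly in $t$: differentiating $T_t\widetilde f$ in $\xi$, the phase contributes a growing factor $-2it\rho\xi$. The crucial identity is
$$\partial_\xi e^{it\Phi_{\rho\lambda}(\xi,\eta)}=-\rho\lambda\,\partial_\eta e^{it\Phi_{\rho\lambda}(\xi,\eta)}-2it\rho(\xi-\eta)e^{it\Phi_{\rho\lambda}(\xi,\eta)},\quad \Phi_{\rho\lambda}:=\lambda(\omega+\eta^2)-\rho(\omega+\xi^2),$$
which, after one integration by parts in $\eta$, replaces the $\partial_\xi$ action on the phase by $\partial_\eta$ acting on $\mathfrak{s}_{\rho\lambda}\widetilde f$ (producing terms with Schur-bounded kernels paired with $\widetilde f$ or $\partial_\eta\widetilde f\in L^2$) at the cost of a leftover with a $(\xi-\eta)$ factor still multiplied by $t$. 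To absorb this, one uses the vanishing at $\eta=0$ from~\eqref{grandduc2} to factor $\mathfrak{s}_{\rho\lambda}=\eta\cdot(\mathfrak{s}_{\rho\lambda}/\eta)$, and then the relation $2it\lambda\eta\, e^{it\Phi_{\rho\lambda}}=\partial_\eta e^{it\Phi_{\rho\lambda}}$ triggers a second integration by parts in $\eta$ which absorbs the $t$-factor into $\partial_\eta$-derivatives of $\mathfrak{s}_{\rho\lambda}/\eta$ and $\widetilde f$; the resulting kernels are Schur-bounded by~\eqref{grandduc2}.

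For $\mathcal L_0$, the regular part $\mathcal L_R'$ satisfies the same bounds~\eqref{bd:mathfrakl-1}-\eqref{bd:mathfrakl-2} as $\mathcal L_R$ and is treated identically. The Dirac part $\mathcal L_{0,\delta}$ depends on $\xi$ only through $\xi$ and $|\xi|$, so it commutes with $D(t)$ and conjugation leaves it invariant. The principal-value part $\mathcal L_{0,p.v.}$ is a smoothed Hilbert-type operator whose kernel is localized near $\eta=-\alpha\xi$ by the Schwartz factor $\widehat\phi(\alpha\eta+\xi)$; after conjugation the oscillatory factor has nonvanishing gradient away from this locus, so the same two-step integration-by-parts strategy carries over, combined with the $L^2$-boundedness of the Hilbert transform to handle the singularity. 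The recurring technical point is rerouting apparent $t$-growth in the $H^1$ estimate into regularity of $\widetilde f$, which the two successive $\eta$-integrations by parts accomplish.
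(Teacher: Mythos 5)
Your proposal is correct and follows the paper's strategy closely: conjugate Lemmas \ref{bartavelle} and \ref{lem:symmetry-partialx} by the group via $\widetilde{\mathcal F}e^{it\mathcal H}P_e=D(t)\widetilde{\mathcal F}$, use Schur/convolution-type bounds for $L^2$ and $L^{2,1}$, and absorb the $t$-factor in the $H^1$ estimate through an integration by parts that exploits the $\eta$-vanishing of the symbols. The one genuine algebraic difference is in the $H^1$ step. The paper writes $-2it\rho\xi\, e^{it\Phi}=-\rho\epsilon(\xi/\eta)\,\partial_\eta e^{it\Phi}$ and performs a \emph{single} integration by parts, producing kernels $\xi\,\mathfrak s/\eta$ and $\xi\,\partial_\eta(\mathfrak s/\eta)$ that are then controlled using the double vanishing at $\xi=0$ and $\eta=0$ (as in \eqref{bd:mathfrakl-2}). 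You instead split $\partial_\xi e^{it\Phi}=-\rho\lambda\partial_\eta e^{it\Phi}-2it\rho(\xi-\eta)e^{it\Phi}$, treat the first term by one integration by parts, then factor $\mathfrak s=\eta\cdot(\mathfrak s/\eta)$ and integrate by parts a second time on the leftover. This works, but be aware that the $(\xi-\eta)$ factor in your leftover is \emph{not} small in the region $\xi\approx-\eta$ where the symbol concentrates through its $\langle\xi+\eta\rangle^{-2}$ tail; in that region you are implicitly relying on the same $\xi$-decay/vanishing of $\mathfrak s$ that the paper uses via \eqref{bd:mathfrakl-2}, since a bare appeal to \eqref{grandduc2} leaves a kernel $\sim|\xi-\eta|\langle\eta\rangle^{-1}\langle\xi+\eta\rangle^{-2}$ whose column sum is only logarithmically summable. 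Your derivation of $\mathfrak s_{\rho\lambda}(0,\cdot)\equiv 0$ from Proposition~\ref{tourterelle}(ii) combined with Lemma~\ref{bartavelle} is a clean alternative to the paper's direct verification from the symbol formula. Finally, in your second integration by parts there is a potential boundary contribution at $\eta=0$ (the symbols may jump across $\eta=0$) which vanishes precisely because $\widetilde f(0)=0$; this is where the hypothesis $\widetilde f\in H^1\cap\{\widetilde f(0)=0\}$ actually enters the $H^1$ estimate, and you should state it explicitly.
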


\begin{proof} 
We have by Lemma \ref{lem:symmetry-partialx}
$$
e^{-it(\omega+\xi^2)} \mathcal L_{0}e^{it(\omega+\xi^2)}\widetilde f = \mathcal L_{0,\delta}\widetilde f+e^{-it(\omega+\xi^2)} \mathcal L_{0,\operatorname{p.v.}}e^{it(\omega+\xi^2)}\widetilde f +e^{-it(\omega+\xi^2)} \mathcal L_{R}'e^{it(\omega+\xi^2)}\widetilde f .
$$
The boundedness of $\mathcal L_{0,\delta}$ in the desired spaces is immediate, since $\xi r$ is uniformly bounded thanks to \eqref{estimatesrs}.

\medskip

\noindent \underline{Boundedness for the regular part}. We have
$$
(e^{-it(\omega+\xi^2)} \mathcal L_{R}'e^{it(\omega+\xi^2)}\widetilde f)_\rho (\xi) =\sum_{\epsilon=\pm}\int e^{it[\epsilon (\omega+\eta^2)-\rho (1+\xi^2)]} \widetilde f_\epsilon (\eta) \mathfrak l(\eta,\xi)d\eta.
$$
where we simplified the notation of the symbol to ease notations. As $\mathfrak l(\eta,0)=0$, the above vanishes at zero frequency $\xi=0$. Since $|\mathfrak l(\xi,\eta)|\lesssim \sum_{\pm} \frac{1}{\langle \xi\pm \eta\ra^2}$, the above is bounded from $L^2$ to $L^2$ and $L^{2,1}$ to $L^{2,1}$ by the Young inequality for convolution. Taking derivatives in $\xi$ gives the expression 
\begin{equation} \label{pancake-souffles}
-2i\rho t \xi \sum_{\epsilon=\pm}\int e^{it[\epsilon (\omega+\eta^2)-\rho (1+\xi^2)]} \widetilde f_\epsilon (\eta) \mathfrak l(\eta,\xi)d\eta+\{\mbox{easier terms}\} .
\end{equation}
In the easier cases, the derivative hits the symbol $\mathfrak l$ what can be bounded as previously thanks to the Young inequality. The first term can be integrated by parts, equalling 
\begin{align*}
&\rho \epsilon \xi  \sum_{\epsilon=\pm}\int e^{it[\epsilon (\omega+\eta^2)-\rho (1+\xi^2)]} \partial_\eta \widetilde f_\epsilon (\eta) \frac{\mathfrak l(\eta,\xi)}{\eta} \,d\eta \\
& \qquad \qquad \qquad \qquad + \rho \epsilon \xi  \sum_{\epsilon=\pm}\int e^{it[\epsilon (\omega+\eta^2)-\rho (1+\xi^2)]} \widetilde f_\epsilon (\eta)  \partial_\eta \left(\frac{\mathfrak l(\eta,\xi)}{\eta} \right)\, d\eta
\end{align*}
(notice that there are no boundary terms since $\widetilde f(0)=0$). For $\widetilde{f} \in H^1\cap\{\widetilde f(0)=0\}$, both terms above can be bounded in $L^2$ uniformly in $t$, by using \eqref{bd:mathfrakl-2} and the same arguments used to prove $L^2$ and $L^{2,1}$ boundedness.

\medskip

\noindent \underline{Boundedness for the $\operatorname{p.v.}$ part}. 
Again, since $\xi r$ is uniformly bounded and vanishes at $\xi=0$, to show the desired bounds for this operator it suffices to show them for the expression
$$
\sum_\alpha \int e^{it\rho (\eta^2-\xi^2)}\widetilde f_\rho(\eta)\mathfrak a^{-\text{sgn}(\xi)}_\alpha(\eta)\frac{\widehat \phi(\alpha \eta+\xi)}{\alpha \eta +\xi}d\eta .
$$
The coefficient $a^{-\text{sgn}(\xi)}_\alpha(\eta)$ is bounded, and can be ignored as far as $L^p$ estimates are concerned. The convolution kernel $\frac{\widehat \phi(\alpha \eta+\xi)}{\alpha \eta +\xi}$ corresponds to a truncated Hilbert transform which is bounded on $L^2$. Due to the rapid decay of $\widehat \phi$, this operator acts like a local operator, and is thus also bounded on $L^{2,1}$.

Changing variables to $\widetilde \eta=\alpha \eta+\xi$, the above equals
$$
\sum_\alpha \int e^{it\rho (-2\xi \widetilde \eta+\widetilde \eta^2)}\widetilde f_\rho(\alpha(\widetilde \eta - \xi))\mathfrak a^{-\text{sgn}(\xi)}_\alpha(\alpha(\widetilde \eta - \xi))\frac{\widehat \phi(\widetilde \eta)}{\widetilde \eta}d\widetilde \eta,
$$
which, differentiated with respect to $\xi$, equals
$$
-2it\rho \sum_\alpha \int e^{it\rho (-2\xi \widetilde \eta+\widetilde \eta^2)}\widetilde f_\rho(\alpha(\widetilde \eta - \xi))\mathfrak a^{-\text{sgn}(\xi)}_\alpha(\alpha(\widetilde \eta - \xi))\widehat \phi(\widetilde \eta)d\widetilde \eta +\{\mbox{easier terms}\}.
$$
Notice that there are no boundary terms at $\xi=\tilde \eta$ thanks to the cancellation $\tilde f(0)=0$. In the easier terms, the derivative hits $\widetilde f_\rho(\alpha(\xi-\widetilde \eta))\mathfrak a^{-\text{sgn}(\xi)}_\alpha(\alpha(\xi-\widetilde \eta))$ which can be treated as previously for the $L^2\to L^2$ bound. To estimate the main term, we go back to original variables $\eta=\alpha(\tilde \eta-\xi)$ and sum over $\alpha$ to write it under the form
$$
-2it\rho \int e^{it\rho (\eta^2-\xi^2)}\widetilde f_\rho(\eta) \mathfrak s(\eta,\xi)d\eta
$$
with symbol $\mathfrak s(\eta,\xi)=\mathfrak a^{-\text{sgn}(\xi)}_+(\eta) \widehat \phi( \eta+\xi)+\mathfrak a^{-\text{sgn}(\xi)}_-(\eta) \widehat \phi( -\eta+\xi)$. This symbol satisfies $|\partial_\eta^j (\frac{\mathfrak s}{\eta})|\lesssim \frac{1}{\langle \eta\rangle}\sum_{\pm}\frac{1}{\langle \xi \pm  \eta\rangle}$ for $j=0,1$ because of the cancellation $\lim_{\eta\uparrow 0} a^{\lambda}_+(\eta)+\lim_{\eta\uparrow 0} a^{\lambda}_-(\eta)=\lim_{\eta\downarrow 0} a^{\lambda}_+(\eta)+\lim_{\eta\downarrow 0} a^{\lambda}_-(\eta)=0$ and as $\phi$ is Schwartz. One can therefore perform the same integration by parts to bound this term as for the previous term \eqref{pancake-souffles} appearing in the regular part.

 Thus, $e^{-it(\omega+\xi^2)} \mathcal L_{0,\operatorname{p.v.}}e^{it(\omega+\xi^2)}$ is bounded from $H^1\cap \{\widetilde f(0)=0\}$ to $H^1\cap \{\widetilde f(0)=0\}$, uniformly in $t\in \mathbb R$.
\medskip
Finally, the proof of the boudedness of $e^{-it(\omega+\xi^2)} \mathcal L_{R}e^{it(\omega+\xi^2)}$ is exactly as that of $e^{-it(\omega+\xi^2)} \mathcal L_{R}'e^{it(\omega+\xi^2)}$.

\end{proof}

\section{Writing the equations}

\label{sectionwriting}

\subsection{Decomposition into radiation and solitary wave}

\subsubsection{Modulation and linearization}

For $\gamma,p,y \in C^1([0,T],\mathbb R)$ and $\omega \in C^1([0,T],(0,\omega^*))$ we write
\begin{equation} \label{id:decomposition-v}
v (t,x)= e^{i(px-\gamma)}(\Phi_\omega + u(t))(x+y).
\end{equation}
Expanding in $u$, \eqref{NLS} becomes
\begin{equation} \label{panettone}
i \partial_t u - \partial_x^2 u +\omega u -  V_+ u - V_- \overline{u} =N(u,\overline{u})+\mod
\end{equation}
where
\begin{align}
\label{id:definition-Nu} N(u,\overline{u}) & = V_{++} u^2 + V_{--}  \overline{u}^2 + V_{+-} |u|^2 + V_{++-} |u|^2 u + V_{+--} |u|^2 \overline{u} + V_{+++} u^3 + V_{---} \overline{u}^3 \\
\nonumber& \qquad \qquad + \{ \mbox{higher order terms} \}  
\end{align}
and
\begin{align*}
\nonumber \mod &= (\omega -p^2-\dot \gamma)(\Phi_\omega+u)-i\dot \omega \partial_\omega \Phi_\omega +i(2p-\dot y)(\partial_x \Phi_\omega+\partial_x u)+\dot p (x\Phi_\omega+xu)
\end{align*}
with the "linear potentials" (writing $\Phi=\Phi_\omega$ below to ease notations) 
\begin{align*}
& V_+ = F'(\Phi^2) + F''(\Phi^2) \Phi^2 \\
& V_- = F''(\Phi^2) \Phi^2,
\end{align*}
the "quadratic potentials"
\begin{align*}
& V_{++} = F''(\Phi^2) \Phi + \frac{1}{2} F'''(\Phi^2) \Phi^3 \\
& V_{--} =  \frac{1}{2} F'''(\Phi^2) \Phi^3 \\
& V_{+-} = 2 F''(\Phi^2)\Phi + F'''(\Phi^2) \Phi^3
\end{align*}
and the "cubic potentials"
\begin{align*}
& V_{++-} = F''(\Phi^2) + 2 F'''(\Phi^2) \Phi^2 + \frac{1}{2} F''''(\Phi^2) \Phi^4\\
& V_{+--} =\frac 32 F'''(\Phi^2) \Phi^2 + \frac{1}{2} F''''(\Phi^2) \Phi^4 \\
& V_{+++} = \frac{1}{2} F'''(\Phi^2) \Phi^2 + \frac{1}{6} F''''(\Phi^2) \Phi^4 \\
& V_{---} =  \frac{1}{6} F''''(\Phi^2) \Phi^4.
\end{align*}
All these potentials decay rapidly, except for $V_{++-}$.

The parameters $\omega,\gamma,p,y$ will be chosen in Subsection \ref{subsec:modulation} so as to perform an orthogonal projection onto the family of solitons. These parameters give thus the \emph{closest soliton} $e^{i(p x-\gamma)}\Phi_\omega (x+y)$ and are time-dependent. Equation \eqref{panettone} is thus the linearization around this closest soliton, and will be used to compute the modulation parameters $\gamma,\omega,p,y$.

The next problem to deal with is that the linear operator $u \mapsto - \partial_x^2 u + \omega u -V_+ u - V_- \overline{u}$ is not complex-linear. In order to handle this difficulty, we turn the equation into a (complex) vector equation by setting
\begin{equation} \label{def:U}
U = \begin{pmatrix} U_1 \\ U_2 \end{pmatrix} = \begin{pmatrix} u \\ \overline{u} \end{pmatrix}.
\end{equation}
The new unknown $U$ satisfies the reality condition
\begin{equation}
\label{grandtetras}
\sigma_1 \overline{U} = U.
\end{equation}
The linearized equation around a time-dependent soliton \eqref{panettone} can now be written
\begin{equation} \label{NLS-vectorial-renormalised-moving}
i \partial_t U + \mathcal{H}_\omega U = \mathcal{N}(U) +\Mod  
\end{equation}
Above, the linearised operator is defined as
\begin{equation} \label{D-H}
\mathcal H_\omega = \mathcal{H}_0 + V_{\omega}, \qquad \mathcal{H}_0 = \begin{pmatrix}
-\partial_x^2 + \omega & 0  \\
0 &  \partial_x^2 - \omega
\end{pmatrix},
\end{equation}
where
$$
V_\omega = 
\begin{pmatrix}
V_1  & V_2
\\
-V_2 & -V_1
\end{pmatrix}, \qquad \begin{pmatrix} V_1 \\ V_2 \end{pmatrix} = - \begin{pmatrix} V_+ \\ V_- \end{pmatrix},
$$
and the nonlinear term $\mathcal{N}$ and the modulation term $\Mod$ are given by
\begin{align}
& \label{id:definition-mathcalNU} \mathcal{N}(U) = \begin{pmatrix} \mathcal{N}_1(U) \\ \mathcal{N}_2(U) \end{pmatrix} = \begin{pmatrix} N(U_1,U_2) \\ - N(U_2,{U_1}) \end{pmatrix}, \\
& \label{id:definition-Mod} \Mod = (\omega-p^2 - \dot \gamma)(\Xi_0+\sigma_3 U)-i\dot \omega \Xi_1  +i(2p-\dot y)(\Xi_2+\partial_x U)+\dot p(\Xi_3+x\sigma_3 U),
\end{align}
where $\Xi_j=\Xi_j[\omega]$.

\subsubsection{A second decomposition, linearizing halfway to the final soliton}

In order to control the radiation $u$, we will use the distorted Fourier transform associated to a time-independent soliton $\Phi_{\underline{\omega}}$, and to a time-independent momentum $\pu$ in the renormalisation. We thus pick two time-independent parameters $(\underline{\omega},\pu )\in (0,\omega^*)\times \mathbb R$ and define the change of unknown
\begin{equation}
\label{defUsouligne}
\Uu = e^{i(p-\pu)\sigma_3 x} U.
\end{equation}
This new unknown still satisfies the reality condition
$$
\sigma_1 \overline{\Uu} = \Uu,
$$
and it solves the following equation
\begin{equation} \label{NLS-vectorial-renormalised-halfway}
i \partial_t \Uu + \mathcal{H}_{\uo}\Uu  =   \mathcal{N}(\Uu) + \Mod_\Phi + \Mod_{\Uu}+\mathcal E
\end{equation}
(here and in the remainder of the text, we use the convention that underlined quantities depend on $\underline{\omega}$ while non-underlined quantities depend on $\omega$) where the two modulation terms are
\begin{align*}
& \Mod_\Phi =e^{i(p-\pu)\sigma_3 x}  \left( (\omega-p^2 - \dot \gamma) \Xi_0-i\dot \omega \Xi_1 +i(2p-\dot y) \Xi_2+\dot p \Xi_3 \right),\\
& \Mod_{\Uu}= \left[(\omega-p^2-\dot \gamma)+(2p-\dot y)(p-\pu)-(\pu-p)^2+\underline{\omega} - \omega \right]\sigma_3 \Uu +i[2p-\dot y+2(\pu-p)]\partial_x \Uu,
\end{align*}
and $\mathcal E$ contains the error in the linearization of the potential term and in the renormalisation of the nonlinear terms
\begin{align}
& \label{id:definition-error} \mathcal{E} = (V_{\underline{\omega}} -  e^{i(p-\pu)\sigma_3 x} V_\omega  e^{i(\pu-p)\sigma_3 x}) \Uu + e^{i(p-\pu)\sigma_3 x} \mathcal N( e^{i(\pu-p)\sigma_3 x} \Uu)  -\mathcal N(\Uu) .
\end{align}

The parameters $\uo$ and $\pu$ will correspond to the limits of $\omega$ and $p$ as $t\to \infty$. Thus, Equation \eqref{NLS-vectorial-renormalised-halfway} is the linearization halfway to the final soliton. It will be used to control the radiation $U$.

\subsubsection{Projection on the continuous and discrete spectrum of the linearized operator}

Following our convention that underlined quantities depend on $\underline{\omega}$, the spectral projectors associated to $\Hu $ are denoted $\Pud=P_d[\underline{\omega}] $ and $\Pue=P_e[\underline{\omega}] $. We decompose the remainder as
\begin{align*}
& \Uu=\Uud+\Uue, \qquad (\Uud,\Uue)=(\Pud \Uu,\Pue \Uu).
\end{align*}
By writing
$$
\Uud= \sum_{j=0}^3 a_j\underline{\Xi_{j}} 
$$
for constants $a_0,a_1,a_2,a_3\in\mathbb R$ depending on time, we further decompose:
\begin{equation} \label{livres}
\Uu=\Uue+\sum_{j=0}^3 a_j\underline{\Xi_{j}} .
\end{equation}
The profile $f$ can then be defined by projecting the solution on the essential spectrum, and filtering it by the linear group
$$
f =  e^{-it\mathcal{H}_{\underline{\omega}} } \Uue
$$
It is such that
$$
\sigma_1 \overline{f} = f.
$$
Indeed, applying successively the identity $\overline{\mathcal{H}_\omega} = \mathcal{H}_\omega$, the commutation relation~\eqref{S-v-eq5}, and the reality condition~\eqref{grandtetras}
$$
\sigma_1 \overline{f} = \sigma_1 \overline{e^{-it\mathcal{H}_{\underline{\omega}}} \Uue} = \sigma_1 e^{it\mathcal{H}_{\underline{\omega}}} \overline{\Uue} = e^{-it\mathcal{H}_{\underline{\omega}}} \sigma_1 \overline{\Uue} =  e^{-it\mathcal{H}_{\underline{\omega}}} \Uue = f.
$$
The profile $f$ satisfies 
\begin{equation} \label{equation-renormalisee-f} 
i \partial_t f  = e^{-it\mathcal{H}_{\underline{\omega}} } \Pue  \left(  \mathcal{N} + \Mod_\Phi + \Mod_{\Uu}+\mathcal E\right).
\end{equation}
The projected modulation of the soliton can be written as
\begin{align}
\label{projectedmodulationterm}
\Pue \Mod_\Phi &= \Pue \Big(  (\omega-p^2 - \dot \gamma)(e^{i(\pu-p)\sigma_3 x}  \Xi_0-\underline{\Xi_0})-i\dot \omega (e^{i(\pu-p)\sigma_3 x}  \Xi_1-\underline{\Xi_1}) \\
\nonumber &\qquad \qquad  +i(2p-\dot y))(e^{i(\pu-p)\sigma_3 x} \Xi_2-\underline{\Xi_2})+\dot p (e^{i(\pu-p)\sigma_3 x} \Xi_3-\underline{\Xi_3}) \Big).
\end{align}
Finally, the nonlinearity can be split into quadratic terms $\mathcal{Q}$ in $\underline{U_e}$, cubic terms $\mathcal{C}$ in $\Uue$, higher order terms $\mathcal{T}$ in $\Uue$, and remainder nonlinear terms $\mathcal{R}$ which involve the discrete spectrum:
$$
\Pue e^{-it\mathcal{H_{\underline{\omega}}}} \mathcal{N} = \mathcal{Q} + \mathcal{C} + \mathcal{T} + \mathcal{R}.
$$
The remainder nonlinear terms can be written
\begin{align*}
e^{it\mathcal{H_{\underline{\omega}}}} \mathcal{R} & = \sum_{j=0,1} \mathcal{Q} (\Uue,a_j \underline{\Xi_j})+\sum_{j,k=0,1} \mathcal{Q} (a_j\underline{\Xi_j},a_k \underline{\Xi_k}) \\
& \quad \quad + \sum_{j=0,1} \mathcal{C} (\Uue,\Uue,a_j\underline{\Xi_j})+\sum_{j,k=0,1} \mathcal{C}  (\Uue,a_j\underline{\Xi_j},a_k\underline{\Xi_k})+\sum_{j,k,l=0,1} \mathcal{C}  (a_j\underline{\Xi_j},a_k \underline{\Xi_k},a_l \underline{\Xi_l}) \\
& \quad \quad + \{ \mbox{higher order terms} \}.
\end{align*}

\subsection{Viewing the equation for the radiation on the Fourier side}
\label{TTDFT}

Taking the distorted Fourier transform with parameter $\underline{\omega}$, the Fourier transform of $f$ satisfies the following reality condition:
\begin{equation}
\label{fauconpelerin}
{\begin{pmatrix} \widetilde f_{\pm}(\xi) \\ \widetilde f_{\pm}(-\xi)\end{pmatrix}} = - S(-|\xi|) \overline{\begin{pmatrix} \widetilde f_{\mp}(-\xi) \\ \widetilde f_{\mp}(\xi)\end{pmatrix}}
\end{equation}
Indeed, by Lemma~\ref{gelinotte},
$$
{\begin{pmatrix} \widetilde f_{\pm}(\xi) \\ \widetilde f_{\pm}(-\xi)\end{pmatrix}} = {\begin{pmatrix} \widetilde{\mathcal{F}} (\sigma_1 \overline{f})_{\pm}(\xi) \\ \widetilde{\mathcal{F}}(\sigma_1 \overline{f})_{\pm}(-\xi)\end{pmatrix}} 
= - {\begin{pmatrix} \widetilde{\mathcal{F}} ( \overline{f})_{\mp}(\xi) \\ \widetilde{\mathcal{F}}( \overline{f})_{\mp}(-\xi)\end{pmatrix}} = - S(-|\xi|) \overline{\begin{pmatrix} \widetilde f_{\mp}(-\xi) \\ \widetilde f_{\mp}(\xi)\end{pmatrix}}.
$$

The equation \eqref{equation-renormalisee-f} for $f$ becomes, on the Fourier side,
\begin{equation}
\label{dtf}
i \partial_t \widetilde{f}_\rho = e^{- it \rho (\uo+ \xi^2)}\left( \widetilde{\mathcal{N}}+\widetilde{\Mod_\Phi}+\widetilde{\Mod_{\Uu}}+\widetilde{\mathcal{E}} \right).
\end{equation}
We shall split the nonlinear terms into
\begin{equation*}
 e^{-it\mathcal{H}} \mathcal{N} = \mathcal{Q}^R + \mathcal{C}^S + \mathcal{C}^R + \mathcal{T} + \mathcal{R}.
\end{equation*}
where
\begin{itemize}
\item The regular quadratic terms $\mathcal{Q}^R$ contain smooth quadratic spectral distributions.
\item The singular cubic terms $\mathcal{C}^S$ contain singular cubic spectral distributions.
\item The regular cubic terms $\mathcal{C}^R$ contain smooth cubic spectral distributions.
\item The remainder terms $\mathcal{T}$ are higher order.
\item The remainder terms $\mathcal{R}$ involve nonlinear interactions with the discrete spectrum.
\end{itemize}.

\bigskip
\noindent \underline{The regular quadratic terms $\mathcal{Q}^R$.} In frequency space, they can be written as
\begin{equation}
\label{defQR}
\widetilde{\mathcal{F}}_\rho \mathcal{Q}^R (\xi) = \sum \int e^{-it\Phi_{\lambda \mu  \rho }(\xi,\eta,\sigma)}  \widetilde{f}_{\lambda}  (\eta) \widetilde{f}_{\mu} (\sigma)
\mathfrak{m}^V_{jkl, \lambda \mu \rho } (\xi, \eta, \sigma) \,d \eta\, d \sigma
\end{equation}
with $V$ localized, and 
$$
\Phi_{\lambda \mu  \rho} =  \rho(\uo+ \xi^2) - \lambda  (\uo+ \eta^2)  - \mu  (\uo+ \sigma^2)   .
$$
We used a slightly imprecise notation above: the sum is taken over indices $\lambda,\mu,j,k,\ell$, with corresponding potentials $V$.

\bigskip
\noindent \underline{The singular cubic terms $\mathcal{C}^S$} originate in the cubic terms in $e^{-it\mathcal{H}} \mathcal{N}$ which do not include a rapidly decaying potential, namely
$$
e^{-it\mathcal{H}}  V_{++-} (U_1^2 U_2 e_1 - U_1 U_2^2 e_2),
$$
which, becomes in frequency space 
$$
e^{-i\rho t(\uo+\xi^2)} \widetilde{\mathcal{F}}_\rho  [ V_{++-} (U_1^2 U_2 e_1 - U_1 U_2^2 e_2)].
$$

By Proposition \ref{gobemouche}, choosing $W = V_{++-}$,
\begin{equation*} 
\begin{split}
& \widetilde{\mathcal{F}}_\rho (V_{++-} U_1^2 U_2  e_1  ) (\xi) =  \frac{1}{4 \pi^{2}} \sum_{\lambda, \mu, \nu = \pm}  \int   \widetilde{U}_{\lambda}  (\eta)  \widetilde{U}_{\mu} (\sigma) \widetilde{U}_{\nu} (\zeta)
\mathfrak{m}^W_{1121, \lambda \mu \nu \rho} (\xi, \eta, \sigma, \zeta) \,d \eta\, d \sigma \,d \zeta ,  \\
& \widetilde{\mathcal{F}}_\rho (V_{++-} U_1 U_2^2   e_2 ) (\xi) =  \frac{1}{4 \pi^{2}} \sum_{\lambda, \mu, \nu = \pm}  \int   \widetilde{U}_{\lambda}  (\eta)  \widetilde{U}_{\mu} (\sigma) \widetilde{U}_{\nu} (\zeta)
\mathfrak{m}^W_{2212, \lambda \mu \nu \rho} (\xi, \eta, \sigma, \zeta)\, d \eta \,d \sigma \,d \zeta ,
\end{split}
\end{equation*}
which can also be written
\begin{equation*} 
\begin{split}
& \widetilde{\mathcal{F}}_\rho (e^{-it\mathcal{H}}V_{++-}  U_1^2 U_2  e_1  ) (\xi) \\
& \qquad \qquad =  \frac{1}{4 \pi^{2}} \sum_{\lambda, \mu, \nu }  \int e^{-it\Phi_{\lambda \mu \nu \rho}(\xi,\eta,\sigma,\zeta)}  \widetilde{f}_{\lambda}  (\eta)  \widetilde{f}_{\mu} (\sigma) \widetilde{f}_{\nu} (\zeta)
\mathfrak{m}^W_{1121, \lambda \mu \nu \rho} (\xi, \eta, \sigma, \zeta) \,d \eta\, d \sigma \,d \zeta ,  \\
& \widetilde{\mathcal{F}}_\rho (e^{-it\mathcal{H}}V_{++-} U_1 U_2^2   e_2 ) (\xi) \\
& \qquad \qquad =  \frac{1}{4 \pi^{2}} \sum_{\lambda, \mu, \nu}  \int  e^{-it\Phi_{\lambda \mu \nu \rho}(\xi,\eta,\sigma,\zeta)} \widetilde{f}_{\lambda}  (\eta)  \widetilde{f}_{\mu} (\sigma) \widetilde{f}_{\nu} (\zeta)
\mathfrak{m}^W_{2212, \lambda \mu \nu \rho} (\xi, \eta, \sigma, \zeta) \, d \eta \,d \sigma \,d \zeta,
\end{split}
\end{equation*}
where
\begin{equation}
\label{fauconcrecerelle}
\Phi_{\lambda \mu \nu \rho} =  \rho(\uo+ \xi^2) - \lambda  (\uo+ \eta^2)  - \mu  (\uo+ \sigma^2)  - \nu  (\uo+ \zeta^2) .
\end{equation}

By formula~\eqref{beccroise}, the cubic spectral distribution has a nontrivial singular part for specific values of $\lambda,\mu,\nu$: the formula simplifies to become 
\begin{equation*}   
\begin{split}
& \widetilde{\mathcal{F}}_+ (e^{-it\mathcal{H}}V_{++-} (U_1^2 U_2  e_1 - U_2^2 U_1  e_2 )) (\xi) \\
& \qquad \qquad =  \frac{1}{4 \pi^{2}} \sum  \int e^{-it\Phi_{++-+}(\xi,\eta,\sigma,\zeta)}  \widetilde{f}_{+}  (\eta)  \widetilde{f}_{+} (\sigma) \widetilde{f}_{-} (\zeta)
\mathfrak{m}^S_{1121, ++-+} (\xi, \eta, \sigma, \zeta) \,d \eta\, d \sigma \,d \zeta ,  \\
& \qquad  \qquad \qquad \qquad + \{ \mbox{regular part} \} \\
& \widetilde{\mathcal{F}}_- (e^{-it\mathcal{H}}V_{++-} (U_1^2 U_2  e_1 - U_2^2 U_1  e_2 ) )_\rho (\xi) \\
& \qquad \qquad =  - \frac{1}{4 \pi^{2}} \sum  \int  e^{-it\Phi_{--+-}(\xi,\eta,\sigma,\zeta)} \widetilde{f}_{-}  (\eta)  \widetilde{f}_{-} (\sigma) \widetilde{f}_{+} (\zeta)
\mathfrak{m}^S_{2212,--+-} (\xi, \eta, \sigma, \zeta) \, d \eta \,d \sigma \,d \zeta \\
& \qquad  \qquad \qquad \qquad + \{ \mbox{regular part} \}.
\end{split}
\end{equation*}

The regular parts appearing in the sum above will be included in the regular cubic terms; as for the singular parts, they make up $\mathcal{C}^S$, which we can finally define as follows:
\begin{equation}
\label{fuligule}   
\begin{split}
& \widetilde{\mathcal{F}}_+ (\mathcal{C}^S) (\xi) =  \frac{1}{4 \pi^{2}} \sum  \int e^{-it\Phi_{++-+}(\xi,\eta,\sigma,\zeta)}  \widetilde{f}_{+}  (\eta)  \widetilde{f}_{+} (\sigma) \widetilde{f}_{-} (\zeta)
\mathfrak{m}^S_{1121, ++-+} (\xi, \eta, \sigma, \zeta) \,d \eta\, d \sigma \,d \zeta ,  \\
& \widetilde{\mathcal{F}}_- (\mathcal{C}^S)(\xi) = - \frac{1}{4 \pi^{2}} \sum  \int  e^{-it\Phi_{--+-}(\xi,\eta,\sigma,\zeta)} \widetilde{f}_{-}  (\eta)  \widetilde{f}_{-} (\sigma) \widetilde{f}_{+} (\zeta)
\mathfrak{m}^S_{2212,--+-} (\xi, \eta, \sigma, \zeta) \, d \eta \,d \sigma \,d \zeta .
\end{split}
\end{equation}

\bigskip
\noindent \underline{The regular cubic terms $\mathcal{C}^R$} can be written in frequency space as
$$
\widetilde{\mathcal{F}}_\rho \mathcal{C}^R (\xi) =  \int e^{-it\Phi_{\lambda \mu \nu \rho}(\xi,\eta,\sigma,\zeta)}  \widetilde{f}_{\lambda}  (\eta)  \widetilde{f}_{\mu} (\sigma) \widetilde{f}_{\nu} (\zeta)
\mathfrak{m}^V_{jklm, \lambda \mu \nu \rho} (\xi, \eta, \sigma, \zeta) \,d \eta\, d \sigma \,d \zeta 
$$
with $V$ localized.

\bigskip
\noindent \underline{The remainder terms $\mathcal{T}$} are order 4 and higher.

\subsection{Evolution of the modulation parameters} \label{subsec:modulation}

\subsubsection{Geometrical decomposition}

We shall fix the modulation parameters $\gamma $ and $\omega$ by imposing the orthogonality conditions
\begin{equation} \label{orthogonalities}
\langle U, \sigma_3\Xi_j\rangle =0, \qquad j=0,1,2,3.
\end{equation}
This condition determines $\gamma$, $\omega$, $p$, $y$ and $U$ thanks to the following lemma.

\begin{lemma} \label{lem:renormalisation}

For any $\omega_0 \in (0,\omega^*)$, for $W\in L^2\times L^2$ with $\sigma_1 \overline W=W$ and $\| W\|_{L^2\times L^2}$ small enough, there exist unique parameters $\gamma,p,y\in \mathbb R$ and $\omega \in (0,\omega^*)$ with 
$$|\gamma|+|p|+|y|+|\omega-\omega_0|\lesssim \| W\|_{L^2\times L^2}$$
such that
\begin{equation} \label{renormalisation-at-Phiomega0}
\begin{pmatrix} \Phi_{\omega_0}\\ \Phi_{\omega_0}\end{pmatrix}+W = \begin{pmatrix}  e^{i(px+\gamma)} ( \Phi_\omega +U_1) \\ e^{-i(px+\gamma)} ( \Phi_\omega +U_2) \end{pmatrix} (x+y)
\end{equation}
where $U$ satisfies \eqref{orthogonalities} and $\sigma_1 \overline U = U$. Moreover, $\gamma$, $p$, $y$ and $\omega$ are $\mathcal C^\infty$ functions of $W$ in $L^2\times L^2$.

\end{lemma}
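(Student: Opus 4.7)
The strategy is to apply the implicit function theorem in finite dimensions. Note first that equation \eqref{renormalisation-at-Phiomega0} can be solved explicitly for $U$: substituting $x \mapsto x - y$ gives
$$
U_1(x) = e^{-i(p(x-y)+\gamma)}(\Phi_{\omega_0}+W_1)(x-y) - \Phi_\omega(x), \qquad U_2 = \sigma_1\overline{U_1},
$$
so $U = U(\gamma, p, y, \omega; W)$ is a smooth function of its arguments as an $L^2\times L^2$-valued map, and the reality condition $\sigma_1 \overline{U} = U$ is automatic. One is therefore reduced to finding the four real parameters $(\gamma, p, y, \omega)$ solving the four orthogonality conditions \eqref{orthogonalities}. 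Define
$$
\Psi(\gamma, p, y, \omega; W) = \big(\langle U, \sigma_3 \Xi_j[\omega] \rangle\big)_{j=0,1,2,3},
$$
which satisfies $\Psi(0, 0, 0, \omega_0; 0) = 0$. Using the reality condition $U_2 = \overline{U_1}$ together with the explicit form \eqref{id:generalized-kernel} of $\Xi_j$, one checks that $\Psi_0, \Psi_3$ are real-valued while $\Psi_1, \Psi_2$ are purely imaginary, so $\Psi = 0$ amounts to four \emph{real} equations in the four real unknowns.

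The key step is to show that the partial differential $d_{(\gamma, p, y, \omega)}\Psi$ at the reference point is a real-linear isomorphism. Direct differentiation gives
$$
\partial_\gamma U\big|_0 = -i\,\Xi_0[\omega_0], \quad \partial_p U\big|_0 = -i\,\Xi_3[\omega_0], \quad \partial_y U\big|_0 = -\Xi_2[\omega_0], \quad \partial_\omega U\big|_0 = -\Xi_1[\omega_0],
$$
where the $\omega$-derivative contribution coming from $\Xi_j[\omega]$ vanishes since $U|_0 = 0$. Using the parity of $\Phi_{\omega_0}$ (even) together with the definition of $c_\omega$, a direct calculation shows that the only non-vanishing entries of the pairing matrix $M_{jk} = \langle \Xi_j[\omega_0], \sigma_3 \Xi_k[\omega_0]\rangle$ are
$$
M_{01} = M_{10} = c_{\omega_0}, \qquad M_{23} = M_{32} = -\|\Phi_{\omega_0}\|_{L^2}^2.
$$
Substituting this in the expression of $d_{(\gamma,p,y,\omega)}\Psi$ and rewriting $(\Psi_0, -i\Psi_1, -i\Psi_2, \Psi_3)$ as a real-valued vector, the Jacobian matrix becomes the block anti-diagonal
$$
\begin{pmatrix} 0 & 0 & 0 & -c_{\omega_0} \\ -c_{\omega_0} & 0 & 0 & 0 \\ 0 & \|\Phi_{\omega_0}\|_{L^2}^2 & 0 & 0 \\ 0 & 0 & \|\Phi_{\omega_0}\|_{L^2}^2 & 0 \end{pmatrix},
$$
whose determinant is $-c_{\omega_0}^2 \|\Phi_{\omega_0}\|_{L^2}^4$, non-zero thanks to the orbital stability hypothesis $c_{\omega_0} > 0$.

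The smoothness of $\Psi$ with respect to all its arguments (in particular the smoothness of $(\omega, y) \mapsto \Phi_\omega(\cdot + y)$ as an $L^2$-valued map, which uses the exponential decay of $\Phi_\omega$ and its derivatives recalled after \eqref{eq:soliton}) allows the application of the implicit function theorem. This yields a neighborhood of $W = 0$ in which $(\gamma, p, y, \omega)$ are determined uniquely and smoothly by $W$, and the quantitative bound $|\gamma| + |p| + |y| + |\omega - \omega_0| \lesssim \|W\|_{L^2\times L^2}$ follows from the boundedness of the inverse linearization together with $\Psi(\cdot; 0) = 0$ at the reference parameters. The main technical point to be careful about is tracking the real versus imaginary structure so as to invoke the real-valued implicit function theorem, which the pairing computation above makes essentially routine.
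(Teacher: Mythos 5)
Your proof is correct and follows the same route as the paper: apply the implicit function theorem to the map sending $(\gamma,p,y,\omega)$ to the four orthogonality pairings, after verifying that the Jacobian at the reference point is the block anti-diagonal matrix with entries $\pm c_{\omega_0}$, $\pm\|\Phi_{\omega_0}\|_{L^2}^2$. You spell out a point the paper's proof leaves implicit — that $\Psi_0,\Psi_3$ are real and $\Psi_1,\Psi_2$ purely imaginary, which is exactly what makes the problem one of four real equations in four real unknowns — so your write-up is if anything more complete; the only blemish is the phase factor in your explicit formula for $U_1$, which should be consistent with the paper's convention, but this has no effect on the derivative computations at the reference point.
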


\begin{proof}

Consider the map 
$$
\Theta:(W,\gamma,p,y,\omega)\mapsto (\langle U, \sigma_3 \Xi_j \rangle)_{j=1,2,3,4}
$$ 
where $U$ is given by \eqref{renormalisation-at-Phiomega0}. Denoting $(L^2)^2_{real}$ for the subspace of $(L^2)^2$ made up of functions $U$ satisfying the reality condition $\sigma_1 \overline U = U$, this map is $\mathcal C^\infty$ from $(L^2)^2_{real} \times \mathbb R^3 \times (0,\omega^*)$ into $\mathbb R^4$. Moreover, it satisfies, by a direct computation:
$$
\left. \frac{\partial \Theta}{\partial (\gamma,p,y,\omega)} \right|_{0,0,0,0,\omega_0}
=\begin{pmatrix} 0 & 0 & 0 & - c_{\omega_0} \\ 
- c_{\omega_0}& 0 & 0 & 0 \\
0 & \| \Phi_{\omega_0} \|_{L^2}^2 & 0 & 0 \\
0 & 0 & \| \Phi_{\omega_0} \|_{L^2}^2 & 0 \end{pmatrix}.
$$
Since $c_\omega = \frac{d}{d\omega}\int \Phi^2_\omega dx \neq 0$ by assumption, the result of Lemma \ref{lem:renormalisation} then follows from the implicit function Theorem.
\end{proof}

\begin{lemma} \label{lem:equivalence-projection-discrete-orthogonalites}

Any $U\in L^2$ satisfies the orthogonality conditions \eqref{orthogonalities} if and only if $U=P_eU$.

\end{lemma}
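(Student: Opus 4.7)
The plan is to identify the orthogonality conditions \eqref{orthogonalities} with membership in the kernel of the discrete spectral projector $P_d$, and then use duality together with Lemma \ref{kernel} to conclude. Since $P_d + P_e = \operatorname{Id}$, the condition $U = P_e U$ is equivalent to $P_d U = 0$. So the lemma reduces to showing
\[
\ker P_d = \bigl\{ U \in L^2 : \langle U, \sigma_3 \Xi_j \rangle = 0,\; j=0,1,2,3 \bigr\}.
\]

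First I would recall the duality identity valid for any bounded projector $P$ on the Hilbert space $L^2 \times L^2$:
\[
\ker P = (\operatorname{Range} P^*)^\perp,
\]
which follows from $\langle PU, v\rangle = \langle U, P^* v\rangle$ together with the fact that $P^2 = P$ forces $(P^*)^2 = P^*$. Applied to $P = P_d$, this gives $\ker P_d = (\operatorname{Range} P_d^*)^\perp$, so the proof is reduced to identifying $\operatorname{Range} P_d^*$ with $\operatorname{Span}\{\sigma_3 \Xi_j\}_{j=0,1,2,3}$.

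Next I would observe that $P_d^*$ is itself the Riesz projector associated with $\mathcal H^*$ and the discrete spectrum of $\mathcal H^*$: this follows by taking the adjoint of the contour integral defining $P_d$, using that $(z\operatorname{Id}-\mathcal H)^{-*} = (\bar z\operatorname{Id}-\mathcal H^*)^{-1}$ and reparametrizing the contour (via the relation $\sigma_3 \mathcal H^* \sigma_3 = \mathcal H$ one checks that the discrete spectra of $\mathcal H$ and $\mathcal H^*$ coincide and reduce to $\{0\}$ under the assumption that the only eigenvalue is zero). Since the discrete spectrum is the single point $0$, the range of $P_d^*$ coincides with the generalized kernel of $\mathcal H^*$ (this is a classical fact about Riesz projectors at an isolated eigenvalue of finite algebraic multiplicity, see \cite{HS}).

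Finally, Lemma \ref{kernel} identifies exactly this generalized kernel under the orbital stability assumption $c_\omega > 0$:
\[
\bigcup_{n\geq 1}\operatorname{Ker}((\mathcal H^*)^n) = \operatorname{Span}\{\sigma_3 \Xi_j\}_{j=0,1,2,3}.
\]
Combining the three steps yields
\[
\ker P_d = (\operatorname{Span}\{\sigma_3 \Xi_j\})^\perp,
\]
which is the claimed equivalence. The only potentially delicate step is verifying that the Riesz projector for $\mathcal H^*$ has range equal to the full generalized kernel (and not just the kernel); but this is standard spectral theory once one knows the discrete spectrum reduces to the single isolated eigenvalue $\{0\}$ of finite algebraic multiplicity, which is guaranteed by our standing assumptions.
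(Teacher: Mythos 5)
Your proof is correct, and it takes a genuinely different route from the paper's. You argue through a single abstract duality computation: $\ker P_d = (\operatorname{Range} P_d^*)^\perp$ (which, incidentally, holds for any bounded operator, not just projectors — the remark about $(P^*)^2 = P^*$ is unnecessary for this identity), then identify $P_d^*$ with the Riesz projector of $\mathcal H^*$ by conjugating the contour integral, observe that its range is the full generalized kernel of $\mathcal H^*$ because the discrete spectrum reduces to the single isolated point $\{0\}$, and finally invoke Lemma~\ref{kernel} to identify that generalized kernel with $\operatorname{Span}\{\sigma_3\Xi_j\}_{j=0,1,2,3}$. The paper instead proves the two implications separately and more concretely: for the forward implication it inserts $\sigma_3\Xi_j$ into the spectral resolution formula \eqref{S-v-dft-eq16} and exploits $(\mathcal H^*)^2\sigma_3\Xi_j = 0$ to annihilate each inner product $\langle \sigma_3\Xi_j, \psi_\epsilon(\cdot,\xi)\rangle = (\uo+\xi^2)^{-2}\langle (\mathcal H^*)^2\sigma_3\Xi_j, \psi_\epsilon(\cdot,\xi)\rangle$; for the converse it decomposes $U = P_d U + P_e U$, deduces that $P_d U \in \operatorname{Span}\{\Xi_j\}$ must itself satisfy the orthogonality conditions, and concludes $P_d U = 0$ from the explicit non-degeneracy of the Gram matrix $M_0 = (\langle \Xi_j, \sigma_3\Xi_k\rangle)_{j,k}$ — a computation that is not wasted, since $M_0$ reappears in the modulation Lemma~\ref{echasseblanche}. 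Your version is shorter and more structural, at the modest cost of checking the contour-orientation bookkeeping in the identification of $P_d^*$ with the Riesz projector of $\mathcal H^*$. Both proofs silently use that $0$ is the only eigenvalue of $\mathcal H$, which is what makes $\operatorname{Range}(P_d)$ (respectively $\operatorname{Range}(P_d^*)$) coincide with the generalized kernel at $0$ rather than merely contain it; you are more explicit than the paper about this standing assumption.
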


\begin{proof}

Assume $U=P_eU$. Then using \eqref{S-v-dft-eq16} and \eqref{defFT} we have for $j=0,1,2,3$,
$$
\la U, \sigma_3\Xi_j \ra = \frac{1}{\sqrt{2 \pi}} \sum_{\epsilon = \pm } \epsilon \int \widetilde f(\xi) \overline{\la \sigma_3 \Xi_j, \psi_\epsilon (\cdot, \xi) \ra} d \xi. 
$$
Using $  \mathcal H^2\psi_\epsilon=(\uo+\xi^2)^2\psi_\epsilon$ and $\mathcal H^{*2} \sigma_3 \Xi_j=0$ by Lemma \ref{kernel} we have for all $\xi$
\begin{equation} \label{thym citrone}
\la \sigma_3 \Xi_j, \psi_\epsilon (\cdot, \xi) \ra=(\uo+\xi^2)^{-2} \la \mathcal H^{*2} \sigma_3 \Xi_j, \psi_\epsilon (\cdot, \xi) \ra =0
\end{equation}
(the integration by parts being possible thanks to the rapid decay of $\Xi_j$). Hence $\la U, \sigma_3\Xi_j \ra =0$ so that $U$ satisfies \eqref{orthogonalities}.

Assume conversely that $U$ satisfies \eqref{orthogonalities} and decompose $U=P_dU+P_eU$. Since $P_eU$ satisfies \eqref{orthogonalities} by the above discussion, then $\langle P_d U,\sigma_3\Xi_j\rangle=0$ for $j=0,1$. We have $P_d U\in \mbox{Span}\{\Xi_j\}_{j=0,1,2,3}$ by Lemma \ref{kernel}. Since the matrix of the projections on the generalized kernel
\begin{equation}\label{def:Matrice-M0} 
M_0:=(\langle \Xi_{j},\sigma_3 \Xi_k\rangle)_{0\leq j,k\leq 3}=\begin{pmatrix} 0 & c_\omega &0&0 \\ c_\omega & 0 &0 &0 \\ 0&0&0&-\| \Phi\|_{L^2}^2 \\ 0&0&-\| \Phi\|_{L^2}^2 & 0 \end{pmatrix}
\end{equation}
is non-degenerate, we infer  $P_dU=0$. Hence $U=P_eU$ as desired.

\end{proof}

If $|p-\underline{p}|$  and $|\omega - \underline{\omega}$ are small, then on the one hand $U$ is close to $\underline{U}$, and on the other hand $\underline{P_d}$ is close to $P_d$. Therefore, we infer from $P_d U =0$ that $\underline{P_d} \underline{U}$ should be small; this is quantified in the following lemma.

\begin{lemma}

For $(p,\omega)$ close to $(\underline{p},\underline{\omega})$, for any $U\in L^2$ with $U=P_e U$, the coefficients $(a_j)$ in \eqref{livres} satisfy
\begin{equation}\label{decomposition-bd-coefficients-discrets}
\sum_{j=0}^3 |a_j| \lesssim \left[ |p - \underline{p}| + |\omega-\underline{\omega}| \right] \int |\Uue| e^{-\mu |x|}\,dx,
\end{equation}
where $\underline{U}$ is defined in~\eqref{defUsouligne}, and where $\mu$ is a positive constant.
\end{lemma}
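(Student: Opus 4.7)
The plan is to extract each $a_j$ by inner-product testing against $\sigma_3 \underline{\Xi_k}$, and then leverage the orthogonality conditions satisfied by $U$ at the parameters $(\omega,p)$ (not $(\underline{\omega},\underline{p})$) to expose the smallness in $|p-\underline{p}| + |\omega-\underline{\omega}|$.

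First, I invert the discrete part. Since $\underline{U_e} = \underline{P_e}\underline{U}$, Lemma~\ref{lem:equivalence-projection-discrete-orthogonalites} applied at parameter $\underline{\omega}$ gives $\langle \underline{U_e}, \sigma_3 \underline{\Xi_k}\rangle = 0$ for $k=0,1,2,3$. Pairing the decomposition~\eqref{livres} with $\sigma_3 \underline{\Xi_k}$ yields
\begin{equation*}
\langle \underline{U}, \sigma_3 \underline{\Xi_k}\rangle = \sum_{j=0}^{3} a_j \langle \underline{\Xi_j}, \sigma_3 \underline{\Xi_k}\rangle = (\underline{M_0}\, a)_k,
\end{equation*}
where $\underline{M_0}$ is the Gram matrix \eqref{def:Matrice-M0} evaluated at $\underline{\omega}$. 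Since $c_{\underline{\omega}}>0$ and $\|\Phi_{\underline{\omega}}\|_{L^2}\neq 0$, $\underline{M_0}$ is invertible with bounded inverse, so $\max_j |a_j| \lesssim \max_k |\langle \underline{U}, \sigma_3 \underline{\Xi_k}\rangle|$.

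Next, I relate this pairing to the orthogonality conditions at parameter $\omega$. The hypothesis $U=P_eU$ gives $\langle U, \sigma_3 \Xi_k\rangle = 0$ by Lemma~\ref{lem:equivalence-projection-discrete-orthogonalites} again. Using that the multiplication operator $e^{i(p-\underline{p})\sigma_3 x}$ is diagonal unitary with adjoint $e^{-i(p-\underline{p})\sigma_3 x}$, I rewrite
\begin{equation*}
\langle \underline{U}, \sigma_3 \underline{\Xi_k}\rangle = \langle e^{i(p-\underline{p})\sigma_3 x} U, \sigma_3 \underline{\Xi_k}\rangle - \langle U, \sigma_3 \Xi_k\rangle = \langle U, K_k\rangle,
\end{equation*}
with kernel
\begin{equation*}
K_k := e^{-i(p-\underline{p})\sigma_3 x}\sigma_3 \underline{\Xi_k} - \sigma_3 \Xi_k = \bigl(e^{-i(p-\underline{p})\sigma_3 x} - I\bigr)\sigma_3 \underline{\Xi_k} + \sigma_3(\underline{\Xi_k} - \Xi_k).
\end{equation*}
The first summand is bounded pointwise by $|p-\underline{p}|\,|x|\,|\underline{\Xi_k}(x)|$, which is $\lesssim |p-\underline{p}| e^{-\mu|x|}$ after absorbing the polynomial $|x|$ into the exponential decay of $\Phi_{\underline{\omega}}$ and its derivatives. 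The second summand is bounded by $|\omega-\underline{\omega}|$ times an exponentially decaying function, thanks to the smooth $\omega$-dependence of $\Phi_\omega$ and uniform exponential decay on compact $\omega$-intervals. Hence $|K_k(x)| \lesssim (|p-\underline{p}| + |\omega-\underline{\omega}|)e^{-\mu|x|}$ for some $\mu>0$.

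Finally, noting that $|U(x)| = |\underline{U}(x)|$ pointwise (unitarity) and decomposing via~\eqref{livres},
\begin{equation*}
|\langle U, K_k\rangle| \lesssim (|p-\underline{p}| + |\omega-\underline{\omega}|) \int |\underline{U}|\, e^{-\mu|x|}\,dx \lesssim (|p-\underline{p}| + |\omega-\underline{\omega}|)\Bigl(\int |\underline{U_e}|\, e^{-\mu|x|}\,dx + \sum_j |a_j|\Bigr),
\end{equation*}
using that the $\underline{\Xi_j}$ themselves decay exponentially. Combining with the first step and absorbing the $\sum_j |a_j|$ term into the left-hand side, which is legitimate precisely because $(p,\omega)$ is close to $(\underline{p},\underline{\omega})$, yields~\eqref{decomposition-bd-coefficients-discrets}. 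The only subtlety is ensuring that the invertibility of $\underline{M_0}$, the decay rate $\mu$, and the implicit constants are uniform in $\underline{\omega}$ (over compact subsets of $(0,\omega^*)$) and in the small perturbation $(p-\underline{p},\omega-\underline{\omega})$, which follows from standard regularity of the soliton branch and the orbital stability condition.
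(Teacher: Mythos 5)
Your proof is correct and follows essentially the same strategy as the paper: pair the decomposition~\eqref{livres} against $\sigma_3$-weighted kernel vectors, invoke the orthogonality conditions from Lemma~\ref{lem:equivalence-projection-discrete-orthogonalites} to expose a term whose size is controlled by $|p-\underline p|+|\omega-\underline\omega|$, and then absorb the resulting $\sum_j|a_j|$ contribution by smallness of the parameter gap and invertibility of the Gram matrix. The only cosmetic difference is that you test against $\sigma_3\underline{\Xi_k}$ (at $\underline\omega$), which yields the Gram matrix $\underline{M_0}$ exactly and confines the $O(\sum|a_j|)$ error to one side, whereas the paper tests against $\sigma_3\Xi_i$ (at $\omega$) and so also picks up an $O(|\omega-\underline\omega|\sum|a_j|)$ correction on the left — either way the final absorption is identical.
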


\begin{proof}
We start from the identity
$$
\langle \sum a_j \underline{\Xi_j} , \sigma_3 \Xi_i \rangle = \langle \underline{U} - \underline{U_e} , \sigma_3 \Xi_i \rangle.
$$
On the one hand, the left-hand side can be written
\begin{align*}
\langle \sum a_j \underline{\Xi_j} , \sigma_3 \Xi_i \rangle & = \langle \sum a_j \Xi_j , \sigma_3 \Xi_i \rangle + \langle \sum a_j [ \underline{\Xi_j} - \underline{\Xi}_j ] , \sigma_3 \Xi_i \rangle \\
& = M_0 \left( a_0,a_1,a_2,a_3 \right)^\top + O(|\omega - \underline{\omega}| \sum |a_j|)
\end{align*}
since $|\Xi_j-\underline{\Xi_j}|\lesssim |\omega-\underline{\omega}|$. On the other hand, the right-hand side can be written, with the help of the orthogonality conditions $\langle U,\sigma_3 \Xi_j\rangle=0$ and $\langle \Uue,\sigma_3 \underline{\Xi_j} \rangle=0$  following from Lemma \ref{lem:equivalence-projection-discrete-orthogonalites}, 
\begin{align*}
\langle \underline{U} - \underline{U_e} , \sigma_3 \Xi_i \rangle & = \langle \underline{U} - U - \underline{U_e} , \sigma_3 \Xi_i \rangle \\
& = \langle (\underline{U_e} + \underline{U_d}) (1 - e^{i(\underline{p}-p) \sigma_3 x}) , \sigma_3 \Xi_i \rangle - \langle \underline{U_e} , \sigma_3 [ \Xi_i - \underline{\Xi_i} ] \rangle.
\end{align*}
Using that $|\Xi_j-\underline{\Xi_j}|\lesssim |\omega-\underline{\omega}|e^{-2\mu|x|}$, this can be bounded as follows
$$
\left \langle \underline{U} - \underline{U_e} , \sigma_3 \Xi_i \rangle  \right| \lesssim \left[ |p - \underline{p}| + |\omega-\underline{\omega}| \right] \int |\Uue| e^{-\mu |x|}\,dx + |p - \underline{p}|  \sum |a_j|.
$$
Combining the above identities, we obtain the inequality
$$
\left| M_0 \left( a_0,a_1,a_2,a_3 \right)^\top \right| \lesssim \left[ |\omega - \underline{\omega}| + |p-\underline{p}| \right] \left[ \sum |a_j| +  \int |\Uue| e^{-\mu |x|}\,dx \right],
$$
from which the desired bound follows by invertibility of $M_0$.
\end{proof}

\subsubsection{The modulation equations}

\begin{lemma}
\label{echasseblanche}
The parameters $\gamma,\omega,p,y $ are differentiable with time and are solutions of the dynamical system
\begin{align}
 \label{id:modulation-technical-identity} M \begin{pmatrix} \omega-p^2-\dot \gamma \\ -i\dot \omega \\ i(2p-\dot y) \\ \dot p \end{pmatrix} = N, \qquad \mbox{with} \qquad N = - \begin{pmatrix} \langle \mathcal N(U),\sigma_3 \Xi_0\ra \\ \langle \mathcal N(U),\sigma_3 \Xi_1\ra \\  \langle \mathcal N(U),\sigma_3 \Xi_2\ra \\  \langle \mathcal N(U),\sigma_3 \Xi_3\ra \end{pmatrix}
\end{align}
and the matrix $M$ can be decomposed as
$$
M=M_0+M_1(U), \quad M_1= \begin{pmatrix}\la U,\Xi_0\ra & -\langle U,\sigma_3 \partial_\omega \Xi_0\rangle & - \langle  U,\sigma_3 \partial_x \Xi_0 \rangle & \la U,\Xi_3 \ra \\ 
\la U,\Xi_1\ra & -\langle U,\sigma_3 \partial_\omega \Xi_1\rangle & -\langle U,\sigma_3 \partial_x \Xi_1\rangle & \la U,x \Xi_1 \ra \\
\la U,\Xi_2\ra & -\langle U,\sigma_3 \partial_\omega \Xi_2\rangle & -\langle U,\sigma_3 \partial_x \Xi_2\rangle & \la U, x \Xi_2 \ra\\
\la U,\Xi_3\ra & -\langle U,\sigma_3 \partial_\omega \Xi_3\rangle & -\langle U,\sigma_3 \partial_x \Xi_3\rangle & \la U, x\Xi_3 \ra \end{pmatrix}
$$
where $M_0$ is given by \eqref{def:Matrice-M0}.
\end{lemma}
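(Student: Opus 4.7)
The strategy is to differentiate each of the four orthogonality conditions \eqref{orthogonalities} in time and rearrange the result as a linear system for the four quantities $(\omega-p^2-\dot\gamma,\ -i\dot\omega,\ i(2p-\dot y),\ \dot p)$, which are precisely the scalar coefficients appearing in the definition~\eqref{id:definition-Mod} of $\Mod$. The fact that the parameters are $C^1$ in time follows from the smoothness of the implicit map in Lemma~\ref{lem:renormalisation}, combined with the $C^1$ regularity in time of $v$ as a map with values in a negative Sobolev space, coming from the equation~\eqref{NLS} and the local theory.

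Differentiating $\langle U(t),\sigma_3 \Xi_j(\omega(t))\rangle = 0$ in time and using $\partial_t \Xi_j = \dot\omega\,\partial_\omega \Xi_j$ gives
$$
\langle \partial_t U,\sigma_3 \Xi_j\rangle + \dot\omega\,\langle U,\sigma_3\partial_\omega \Xi_j\rangle = 0.
$$
I substitute the equation in the form $\partial_t U = i\mathcal H_\omega U - i\mathcal N(U) - i\Mod$, obtained from~\eqref{NLS-vectorial-renormalised-moving}. The contribution of the linearized operator vanishes: by formal self-adjointness of the Hermitian pairing, $\langle \mathcal H_\omega U,\sigma_3\Xi_j\rangle = \langle U,\mathcal H_\omega^*\sigma_3\Xi_j\rangle$, and Lemma~\ref{kernel} asserts that $\mathcal H_\omega^*\sigma_3\Xi_j$ is either zero or a scalar multiple of $\sigma_3\Xi_0$ or $\sigma_3\Xi_2$, hence killed by the orthogonality conditions. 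Multiplying the surviving identity by $-i$ reduces the problem to
$$
\langle \Mod,\sigma_3 \Xi_j\rangle + i\dot\omega\,\langle U,\sigma_3\partial_\omega \Xi_j\rangle = -\langle \mathcal N(U),\sigma_3 \Xi_j\rangle.
$$

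It remains to expand the left-hand side using~\eqref{id:definition-Mod}. The four soliton-only pieces $\Xi_k$ ($k=0,1,2,3$) paired against $\sigma_3\Xi_j$ reconstruct the matrix $M_0$ row by row, by a direct calculation that also checks~\eqref{def:Matrice-M0}. The four $U$-dependent correction pieces produce $M_1(U)$ via the elementary identities $\langle \sigma_3 U,\sigma_3\Xi_j\rangle = \langle U,\Xi_j\rangle$ (from $\sigma_3^2=\mathrm{Id}$ and self-adjointness of $\sigma_3$), $\langle \partial_x U,\sigma_3\Xi_j\rangle = -\langle U,\sigma_3\partial_x\Xi_j\rangle$ (integration by parts), and $\langle x\sigma_3 U,\sigma_3\Xi_j\rangle = \langle U,x\Xi_j\rangle$, together with the observation $x\Xi_0 = \Xi_3$ which explains the last entry of the top row of $M_1$. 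Finally the $i\dot\omega\,\langle U,\sigma_3\partial_\omega\Xi_j\rangle$ correction furnishes the second column of $M_1$ after identifying $-i\dot\omega$ with the second component of the unknown vector. Reassembling row by row reconstructs~\eqref{id:modulation-technical-identity}. The only care required is in tracking the factors of $i$ and signs, which is the main (purely bookkeeping) burden of the argument.
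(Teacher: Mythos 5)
Your proposal follows essentially the same route as the paper's proof: differentiate the orthogonality conditions in time, substitute the equation $i\partial_t U = -\mathcal H_\omega U + \mathcal N(U) + \Mod$, kill the $\mathcal H_\omega$ contribution via Lemma~\ref{kernel} and the orthogonality conditions, and then expand $\langle\Mod,\sigma_3\Xi_j\rangle$ together with the $i\dot\omega\langle U,\sigma_3\partial_\omega\Xi_j\rangle$ term to read off $M_0$ and $M_1(U)$ column by column. The only cosmetic difference is that the paper justifies the formal manipulations by first taking $v_0\in\mathcal C^\infty_c$ and closing with an approximation argument, whereas you invoke $C^1$ regularity of $v$ in a negative Sobolev space; both are standard.
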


\begin{proof}
We will prove this claim assuming $v_0\in \mathcal C^\infty_c(\mathbb R)$ what will justify all computations below, and the result for general $v_0$ then follows from a standard approximation argument. By smoothness of the flow of \eqref{NLS}, and using Lemma \ref{lem:renormalisation}, one obtains that $\gamma,\omega,p,y \in C^1([0,T),\mathbb R)$. Let $j\in \{0,1,2,3\}$. Differentiating the orthogonality condition \eqref{orthogonalities} gives
$$
\langle i \partial_t U,\sigma_3\Xi_j\rangle+i\dot \omega \langle  U,\sigma_3\partial_\omega\Xi_j\rangle=0.
$$
By using \eqref{NLS-vectorial-renormalised-moving} this shows that
\begin{equation} \label{id:modulation-technical-0}
0=-\langle \mathcal{H}_\omega U,\sigma_3\Xi_j\rangle +\langle \mathcal{N}(U),\sigma_3\Xi_j\rangle +\langle \Mod ,\sigma_3\Xi_j \rangle+i\dot \omega \langle  U,\sigma_3\partial_\omega\Xi_j\rangle.
\end{equation}
Using Lemma \ref{kernel} and \eqref{orthogonalities} we have 
\begin{equation} \label{id:modulation-technical-1}
\langle \mathcal{H}_\omega U,\sigma_3\Xi_j\rangle =0 \qquad \mbox{for }j=0,1.
\end{equation}
By \eqref{id:definition-Mod}, it follows that
$$
\langle \Mod , \sigma_3 \Xi_j \rangle_{j = 1,2,3,4} = \left[ M_0 + M_1(U) \right] \begin{pmatrix} \omega-p^2-\dot \gamma \\ -i\dot \omega \\ i(2p-\dot y) \\ \dot p \end{pmatrix} ,
$$
which implies immediately the desired statement.
\end{proof}

\section{The bootstrap argument}

\label{sectionbootstrap}

\subsection{The trapped regime}

We pick two parameters $0 < \alpha \ll \nu \ll 1$, and for $T\in [0,\infty]$ we consider the function space $X_{T,\underline{\omega}}$ of functions $U$ with $U=P_e U$ associated to the norm
\begin{equation*}
\| U \|_{X_{T,\underline{\omega}}} = \| \widetilde{f} \|_{L^\infty_t ([0, T); L^\infty_\xi)} + \| \langle t \rangle^{-\alpha} \widetilde{f} \|_{L^\infty_t ([0, T); H^1_\xi)}\end{equation*}
(the dependence of the norm on $\omega$ is through the distorted Fourier transform, which is associated to the operator $\mathcal{H}_\omega$).

\begin{definition}[Trapped solutions] \label{def:trapped-solution}
Let $\omega_0,\underline{\omega}\in (0,\omega^*)$, $T\in [0,\infty]$, $C_1\gg 1$ and $\epsilon_1=C_1\epsilon \gg \epsilon>0$ and $1\gg \nu \gg \alpha >0$. We say that a solution $v$ of~\eqref{NLS} with data 
$$
v_0=\Phi_{\omega_0}+u_0,
$$
is trapped around $e^{i\underline p x} \Phi_{\underline{\omega}}$ on $[0,T)$ if
\begin{equation}
\label{bd:bootstrap-initial-smallness}\| u_0\|_{H^1}+\| \langle x \rangle u_0\|_{L^2} \leq \epsilon,
\end{equation}
and $v$ satisfies the following. It is defined on $[0,T)$ and there exists $(\gamma,p,y,\omega) \in C([0,T),\mathbb R^3 \times (0,\omega^*))$ satisfying
\begin{equation}
\label{bd:bootstrap-omega2} |p(t) - \underline p| + | \omega(t)-\underline{\omega}| \leq \epsilon_1 \langle t \rangle^{-1-\nu}, \qquad \forall t\in [0,T),
\end{equation}
such that $U$ defined by \eqref{def:U} satisfies the orthogonality conditions \eqref{orthogonalities} and the estimate
\begin{equation}  
\label{eqbootstrap}
\| U \|_{X_{T,\underline{\omega}}} < \epsilon_1 .
\end{equation}
\end{definition}

Note that by Lemma \ref{lem:renormalisation} we can always assume that $u_0 = u(0)$ and $\omega_0=\omega(0)$.
We now record immediate consequences of the bootstrap assumption.

\begin{lemma} 
\label{ibis}
Assume that $v$ is trapped on $[0,T)$. Then for all $t\in [0,T)$:
\begin{itemize}
\item[(i)] (Cancellation at zero frequency)
\begin{equation} \label{id:annulation-tildef(0)}
\widetilde f_+(t,0)=\widetilde f_-(t,0)=0.
\end{equation}
\item[(ii)] (Decay of the discrete parameters $a_j$) 
\begin{equation} \label{bd:estimation:a0(t)-a1(t)} 
\mbox{if $j=0,1,2,3$,} \qquad |a_j(t)| \lesssim \epsilon_1^2 \langle t \rangle^{-2-\nu+\alpha}
\end{equation}
\item[(iii)] (Global decay of the solution)
\begin{equation} \label{bd:globaldecay}
\| u(t,\cdot) \|_{L^\infty} \lesssim  \epsilon_1  \langle t \rangle ^{-1/2} 
\end{equation}
\item[(iv)] (Local decay of the solution)
\begin{equation} \label{bd:localdecay}
\| \langle x \rangle^{-1} u(t,\cdot) \|_{L^\infty} \lesssim  \epsilon_1  \langle t \rangle ^{-1+\alpha} 
\end{equation}
\item[(v)] (Uniform weighted $L^2$ norm on the Fourier side)
\begin{equation} \label{controlH1}
\| u(t,\cdot)\|_{H^1_x}+\| \langle \xi \rangle \widetilde{f} (t,\xi) \|_{L^2_\xi} \lesssim \epsilon_1.
\end{equation}
\end{itemize}
\end{lemma}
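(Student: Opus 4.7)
The plan is to handle (i) and (ii) by direct algebraic manipulations using the cancellation of the generalized eigenfunctions at zero frequency, to derive (iii)--(iv) from the dispersive estimates of Lemmas \ref{aigrette} and \ref{bergeronnette}, and to establish (v) from orbital stability combined with the mapping property of Proposition \ref{tourterelle}(iv).

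For (i), I recall that under assumption \eqref{H3} one has $\mathcal{F}_+(x,0)=\mathcal{G}_+(x,0)=0$ (noted right after \eqref{S-v-dft-eq2}), so that $\psi_\pm(x,0)=0$ identically in $x$. The definition \eqref{defFT} of the distorted Fourier transform then immediately forces $\widetilde f_\pm(t,0)=\frac{1}{\sqrt{2\pi}}\langle f(t),\sigma_3\psi_\pm(\cdot,0)\rangle=0$. For (ii), I apply the bound \eqref{decomposition-bd-coefficients-discrets}. Since $\Uue=e^{it\Hu}f$ and $\widetilde f(t,0)=0$ by (i), the improved local decay of Lemma \ref{bergeronnette} combined with the bootstrap \eqref{eqbootstrap} gives
\[
\|\langle x \rangle^{-1}\Uue(t)\|_{L^\infty} \lesssim \frac{1}{t}\left(\|\widetilde f(t)\|_{L^2}+\|\partial_\xi \widetilde f(t)\|_{L^2}\right) \lesssim \epsilon_1\langle t\rangle^{-1+\alpha}
\]
for $t \geq 1$, while Sobolev embedding handles $t\leq 1$. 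Hence $\int |\Uue|e^{-\mu|x|}\,dx \lesssim \|\langle x\rangle^{-1}\Uue\|_{L^\infty}\lesssim \epsilon_1\langle t\rangle^{-1+\alpha}$, and combining with $|p-\pu|+|\omega-\uo|\lesssim \epsilon_1\langle t\rangle^{-1-\nu}$ from \eqref{bd:bootstrap-omega2} yields $\sum_j|a_j(t)|\lesssim \epsilon_1^2\langle t\rangle^{-2-\nu+\alpha}$.

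For (iii) and (iv), I decompose $u = U_1$ with $U = e^{i(\pu-p)\sigma_3 x}(\Uue+\sum_j a_j\underline{\Xi_j})$. The discrete contribution is bounded pointwise by $C\sum_j|a_j|\,\|\underline{\Xi_j}\|_{L^\infty}\lesssim \epsilon_1^2\langle t\rangle^{-2-\nu+\alpha}$ by (ii) and the boundedness of the $\underline{\Xi_j}$, which is far better than required. For the continuous part, Lemma \ref{aigrette} applied with $h=f$ and the bootstrap bounds $\|\widetilde f\|_{L^\infty}\leq \epsilon_1$, $\|\partial_\xi \widetilde f\|_{L^2}\lesssim \epsilon_1\langle t\rangle^\alpha$ gives
\[
\|\Uue(t)\|_{L^\infty} \lesssim \langle t\rangle^{-1/2}\epsilon_1+\langle t\rangle^{-3/4+\alpha}\epsilon_1 \lesssim \epsilon_1\langle t\rangle^{-1/2},
\]
which is (iii); for $t\lesssim 1$ one invokes Sobolev embedding and (v). Part (iv) follows from the same $\|\langle x\rangle^{-1}\Uue\|_{L^\infty}$ bound computed above. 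The hierarchy $0<\alpha\ll \nu\ll 1$ ensures $\alpha<1/4$, so that no loss propagates into the final estimates.

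Finally, (v) combines orbital stability with the mapping property of the distorted Fourier transform. Conservation of the Hamiltonian $H(v)$ and of the mass $\|v\|_{L^2}^2$, the assumption $c_\omega>0$, and the orthogonality conditions \eqref{orthogonalities} imply, via the standard Grillakis--Shatah--Strauss coercivity argument (see \cite{GSS,GSS2,Weinstein}), the bound $\|u(t)\|_{H^1}\lesssim \|u_0\|_{H^1}+|\omega(t)-\omega_0|\lesssim \epsilon$. Since $f=e^{-it\Hu}\Uue$, one has $\widetilde f_\rho(\xi)=e^{-it\rho(\uo+\xi^2)}\widetilde{\Uue}_\rho(\xi)$ so that $|\widetilde f|=|\widetilde{\Uue}|$, and Proposition \ref{tourterelle}(iv) yields
\[
\|\langle\xi\rangle\widetilde f\|_{L^2_\xi}=\|\langle\xi\rangle\widetilde{\Uue}\|_{L^2_\xi} \lesssim \|\Uue\|_{H^1} \lesssim \|\Uu\|_{H^1} \lesssim \|U\|_{H^1}+|p-\pu|\|U\|_{L^2}\lesssim \epsilon_1.
\]
The lemma is essentially a bookkeeping argument drawing together the linear dispersive estimates of Section \ref{sectionlinear}, the discrete projection bound \eqref{decomposition-bd-coefficients-discrets}, and orbital stability; the only delicate point is the compatibility of the $\langle t\rangle^\alpha$ loss in the bootstrap with the $\langle t\rangle^{-3/4}$ decay in Lemma \ref{aigrette}, which is guaranteed by $\alpha\ll 1$.
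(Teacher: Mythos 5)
The proof of parts (ii), (iii), (iv) follows the paper's argument closely and is correct. But there are two genuine gaps, one in (i) and one in (v).

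\textbf{Gap in (i).} You argue that since $\psi_\pm(\cdot,0)=0$, the definition \eqref{defFT} ``immediately forces'' $\widetilde f_\pm(t,0)=0$. This is not valid for a general $f \in L^2$: the formula $\widetilde f_\pm(\xi)=\frac{1}{\sqrt{2\pi}}\langle f,\sigma_3\psi_\pm(\cdot,\xi)\rangle$ is a pointwise identity on Schwartz functions, and $\widetilde{\mathcal F}$ is then extended to $L^2$ by density, where the output is only defined a.e.\ in $\xi$. The bootstrap gives $\widetilde f\in H^1$, hence $\widetilde f$ is continuous, so $\widetilde f(0)$ is a well-defined number — but its value is not a priori given by the formal inner product, since pointwise evaluation at $\xi=0$ is not $L^2$-continuous. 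One must first establish enough integrability of $f$ (say $f \in L^1$) for the integral representation to hold at $\xi = 0$. Note that one cannot simply use Proposition \ref{tourterelle}(iv) to get $f\in L^{2,1}$ from $\widetilde f \in H^1$, because that implication \emph{requires} the cancellation $\widetilde f_\pm(0)=0$ as a hypothesis — exactly what one is trying to prove. The paper instead propagates the $L^{2,1}$ regularity of the data through the flow: the weighted energy estimate
$$
\frac{d}{dt}\Bigl(\int\langle x\rangle^2|v|^2\,dx\Bigr)^{1/2}\lesssim\Bigl(\int|\partial_x v|^2\,dx\Bigr)^{1/2},
$$
together with the kinetic energy bound from (v) (available by the local theory, as the paper carefully notes), yields $\|\langle x\rangle v\|_{L^2}\lesssim \langle t\rangle$, whence $v\in L^1$, hence $u\in L^1$, hence $f\in L^1$, and then Proposition \ref{tourterelle}(ii) applies. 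Without this step, your argument is circular.

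\textbf{Gap in (v).} You write $\|u(t)\|_{H^1}\lesssim\|u_0\|_{H^1}+|\omega(t)-\omega_0|\lesssim\epsilon$, but the bound $|\omega(t)-\omega_0|\lesssim\epsilon$ is neither proved nor immediate. The bootstrap hypothesis \eqref{bd:bootstrap-omega2} only gives $|\omega(t)-\uo|\lesssim\epsilon_1\langle t\rangle^{-1-\nu}$, hence $|\omega(t)-\omega_0|\lesssim\epsilon_1\gg\epsilon$. The coercivity estimate actually reads $\|u\|_{H^1}^2\lesssim\|u_0\|_{H^1}^2+|\omega-\omega_0|$ (linear, not quadratic, in $|\Delta\omega|$), so using only $|\omega-\omega_0|\lesssim\epsilon_1$ would give the useless bound $\|u\|_{H^1}\lesssim\sqrt{\epsilon_1}$. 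What is needed is the sharper estimate $|\omega(t)-\omega_0|\lesssim\epsilon_1^2$, obtained by integrating $|\dot\omega|\lesssim\epsilon_1^2\langle t\rangle^{-2+2\alpha}$ from Lemma \ref{propfirst}. The paper explicitly invokes this and remarks that the modulation estimate does not itself depend on (v), so the logic is not circular. Your proof omits this essential ingredient.

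Everything else — using \eqref{decomposition-bd-coefficients-discrets} and Lemma \ref{bergeronnette} for (ii), Lemma \ref{aigrette} for (iii), the decomposition \eqref{livres} for (iv), and Proposition \ref{tourterelle}(iv) for the Fourier-side bound in (v) — matches the paper's argument.
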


\begin{proof} 
\noindent \underline{Proof of (i)}. We have $v_0\in H^1\cap L^{2,1}$ by \eqref{bd:bootstrap-initial-smallness}, and we know from Proposition \ref{pr:cauchy} that solutions to \eqref{NLS} exist locally in $H^1\cap L^{2,1}$. The weighted energy estimate
$$
\frac{d}{dt} \left(\int \langle x \rangle^2 |v|^2 dx\right)^{\frac 12} =\frac{ \Im \int \partial_x (\langle x \rangle^2) v \overline{\partial_x v}}{\left(\int \langle x \rangle^2 |v|^2 dx\right)^{\frac 12}} \lesssim \left( \int |\partial_x v|^2 dx\right)^{\frac 12},
$$
(where we used the Cauchy-Schwarz inequality) and the boundedness of the kinetic energy $\int |\partial_x v|^2dx\lesssim 1$ (by \eqref{controlH1}, which is showed below to hold true as long as $v\in L^{2,1}$) then imply the bound $\| \langle x \rangle v \|_{L^2}\lesssim \langle t \rangle$. Hence $v\in L^1$, so that $u\in L^1$ which implies \eqref{id:annulation-tildef(0)} by (ii) in Proposition \ref{tourterelle}.

\medskip

\noindent \underline{Proof of (ii), (iii) and (iv)}. For the second assertion, we have $u=U_1$ with $U=\Pue U+\sum_{j=0}^3 a_j\underline{\Xi_j}$ and $\Pue U=e^{it\mathcal H}f$. The local decay Lemma \ref{bergeronnette} and \eqref{eqbootstrap} give
\begin{equation} \label{bd:local-decay-radiation-projection-essentiel}
\| \langle x \rangle^{-1}\Pue U\|_{L^\infty}\lesssim \epsilon_1\langle t \rangle^{-1+\alpha}.
\end{equation}
Injecting this estimate and \eqref{bd:bootstrap-omega2} in \eqref{decomposition-bd-coefficients-discrets} proves \eqref{bd:estimation:a0(t)-a1(t)}.

In turn, injecting \eqref{bd:estimation:a0(t)-a1(t)} and \eqref{bd:local-decay-radiation-projection-essentiel} in \eqref{livres} shows the fourth assertion \eqref{bd:localdecay}. The third assertion follows similarly, using the global decay Lemma \ref{aigrette}.

\medskip

\noindent \underline{Proof of (v)}. We first use the mass conservation $\int |v|^2=\int |v(0)|^2$ for \eqref{NLS}. The orthogonality \eqref{orthogonalities} implies the Pythagorean expansion $\int |v|^2=\int \Phi_\omega^2+\int |u|^2$, so that
\begin{equation} \label{grand-mat}
\int |u|^2 \,dx=\int |u(0)|^2 \,dx+ \int \Phi_{\omega(0)}^2\,dx-\int \Phi_{\omega}^2\,dx.
\end{equation}
Next, we use the conservation of the Hamiltonian $H(v)=H(v(0))$ for \eqref{NLS}. Integrating by parts and using the Taylor expansion $F(|v|^2)=F(\Phi_\omega^2)+F'(\Phi_\omega^2)\Phi_\omega (u+\bar u)+O(|u|^2)$,
$$
H(v)=H(\Phi_\omega) +\int |\partial_x u|^2 \,dx-\int (\partial_{x}^2 \Phi_\omega+F'(\Phi_\omega^2)\Phi_\omega)(u+\bar u)\,dx +O\left(\int |u|^2\right).
$$
The second term vanishes by the soliton equation \eqref{eq:soliton} and the orthogonality \eqref{orthogonalities}
$$
\int (\partial_{x}^2 \Phi_\omega+F'(\Phi_\omega^2)\Phi_\omega)(u+\bar u)\,dx =\omega \int \Phi_\omega(u+\bar u)\,dx =0.
$$
Therefore,
\begin{equation} \label{fregate}
\int |\partial_x u|^2\,dx= \int |\partial_x u(0)|^2\,dx+H(\Phi_{\omega(0)}) -H(\Phi_{\omega})+O\left(\int |u(0)|^2+|u|^2 \right).
\end{equation}
Combining \eqref{grand-mat} and \eqref{fregate} one obtains
$$
\| u\|_{H^1}^2 \lesssim \| u_0\|_{H^1}^2+|\omega-\omega(0)|.
$$
By \eqref{bd:bootstrap-initial-smallness} this implies $\| u\|_{H^1}^2\lesssim \epsilon^2+|\omega-\omega(0)|$. By \eqref{eq:modulation-gamma-rough} (whose proof is done shortly after, but does not use the bound \eqref{controlH1} we are currently proving) we have $|\omega-\omega(0)|\lesssim \epsilon_1^2$. Hence $\| u\|_{H^1}\lesssim \epsilon_1$. Using (iv) in Proposition \ref{tourterelle} and~\eqref{bd:bootstrap-omega2}, this shows \eqref{controlH1}.
\end{proof}

\subsection{Bootstrap}

The heart of the present article will be to show the following.

\begin{proposition}   \label{propbootstrap} Under the assumptions of Theorem~\ref{mainthm}, there exists $\nu>0$ such that, for any $0<\alpha\ll 1$ and $\omega_0\in (0,\omega^*)$, there exist $\epsilon_1\gg \epsilon_0>0$ such that the following holds true. Assume $v$ solves \eqref{NLS} and is trapped around $e^{i\underline px} \Phi_{\underline{\omega}}$ on $[0,T)$ for some $(\underline p,\underline{\omega}) \in \mathbb{R} \times (0,\omega^*)$ and $T \in (0,\infty)$. Then the maximal time of existence of $v$ is strictly greater than $T$, and there exists $(\underline{p'},\underline{\omega'}) \in \mathbb{R} \times (0,\omega^*)$ such that $v$ is trapped around $e^{i\underline{p}'x}\Phi_{\underline{\omega}'}$ on $[0,T)$ with:
\begin{align}
\label{bd:bootstrap-omega2-improved} & |p(t) - \underline{p}'| + | \omega(t)-\underline{\omega}'| \leq \frac {\epsilon_1} 2 \langle t \rangle^{-1-\nu}, \qquad \forall t\in [0,T],\\
\label{bd:bootstrapU-improved}  & \| U \|_{X_{\underline{\omega}',T}} < \frac {\epsilon_1} 2 . 
\end{align}
\end{proposition}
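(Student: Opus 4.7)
The plan is to improve each piece of the bootstrap bound \eqref{eqbootstrap}–\eqref{bd:bootstrap-omega2} separately, by a factor of $\epsilon_1$ or better, so that replacing $\epsilon_1$ by $\epsilon_1/2$ becomes possible once $\epsilon_0/\epsilon_1$ is chosen small enough. The outline mirrors the sectioning of the article: modulation control (Section~\ref{sectionmodulation}), then the bootstrap on the profile in its two components, i.e.\ the weighted $L^2$ norm (Sections~\ref{sectionquadratic}–\ref{sectionremaining}) and the pointwise $L^\infty$ norm (Section~\ref{sectionpointwise}). The new asymptotic parameters $(\underline{\omega}',\underline{p}')$ will be chosen once the modulation decay is in hand, and the updated distorted Fourier transform (based on $\underline{\omega}'$ instead of $\underline{\omega}$) will differ from the old one by an operator of norm $O(|\underline{\omega}-\underline{\omega}'|) = O(\epsilon_1 T^{-1-\nu})$, which is negligible for our purposes.

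First, I will control the modulation parameters. Lemma~\ref{echasseblanche} expresses $(\dot\gamma,\dot\omega,\dot y,\dot p)$ as $M^{-1}N$ with $M=M_0+M_1(U)$ invertible on account of $\|M_1(U)\|=O(\epsilon_1)$, and $N_j=-\langle \mathcal{N}(U),\sigma_3\Xi_j\rangle$. Since each $\Xi_j$ is exponentially localized, the quadratic parts of $N$ are of the form $\int W \, U_k U_l \,dx$ with $W$ Schwartz, and should be expanded on the Fourier side using Proposition~\ref{propmuquadscal}: this transforms $N$ into an oscillatory double integral against $\widetilde f_\lambda(\eta)\widetilde f_\mu(\sigma)$ with phase $-\lambda(\underline\omega+\eta^2)-\mu(\underline\omega+\sigma^2)$. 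The key point is that this phase is bounded away from zero (since $\underline\omega>0$ and the terms $e^{it\mathcal H}$ are oscillating), so one integration by parts in $s$ after another in time yields $|\dot\omega(t)|+|\dot p(t)|\lesssim \epsilon_1^2\langle t\rangle^{-2-\nu}$, which is the improvement beyond what naive local decay would give. Integrating, $\omega(t)$ and $p(t)$ converge to limits $\underline{\omega}',\underline{p}'\in\mathbb R$, and $|\omega(t)-\underline{\omega}'|+|p(t)-\underline{p}'|\lesssim \epsilon_1^2\langle t\rangle^{-1-\nu}\ll\tfrac{\epsilon_1}{2}\langle t\rangle^{-1-\nu}$, improving \eqref{bd:bootstrap-omega2}.

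Next, I will improve the bounds on the profile $f$ (re-defined relative to $\underline{\omega}'$). Duhamel from \eqref{dtf} gives
\[
\widetilde f_\rho(t,\xi)=\widetilde f_\rho(0,\xi)-i\int_0^t e^{-is\rho(\underline{\omega}'+\xi^2)}\bigl(\widetilde{\mathcal{Q}^R}+\widetilde{\mathcal{C}^S}+\widetilde{\mathcal{C}^R}+\widetilde{\mathcal{T}}+\widetilde{\mathcal{R}}+\widetilde{\Mod_\Phi}+\widetilde{\Mod_{\underline U}}+\widetilde{\mathcal{E}}\bigr)(s,\xi)\,ds.
\]
For the $L^\infty_\xi$ bound, each term produces an integrable-in-time source: $\mathcal{Q}^R$ by stationary phase in $(\eta,\sigma)$ using non-vanishing of $\Phi_{\lambda\mu\rho}$ combined with the rapid decay of $\mathfrak m^V$ and the pointwise bound \eqref{bd:globaldecay}; $\mathcal{C}^R$, $\mathcal{T}$, and $\mathcal{R}$ by straightforward $L^\infty$ estimates; the modulation/error pieces by Stage~1 and Proposition~\ref{propsigmadx}. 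The delicate point is $\mathcal{C}^S$ in \eqref{fuligule}: the phase $\Phi_{++-+}$ vanishes on the resonant set $\{\xi=\eta=\sigma=\zeta\}$ (after accounting for the $\delta$ part of the singular symbol), producing the modified scattering correction $|\widetilde f_\rho|^2\widetilde f_\rho\ln t$. One extracts this resonant contribution explicitly—it contributes only a phase rotation, preserving $|\widetilde f_\rho|$—while the remainder, controlled by integration by parts in $\xi$ or $\eta$, yields an integrable source. For the weighted $H^1_\xi$ bound, one differentiates in $\xi$: the $\partial_\xi$ falling on the phase produces a linear-in-$t$ factor, but this is balanced by the additional decay in the integrand, so one obtains growth no worse than $\langle t\rangle^\alpha$.

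The hard part is the $\mathcal{C}^S$ analysis with the $H^1_\xi$ norm tracking: one must show that the modified scattering correction, once subtracted, leaves a remainder whose $\partial_\xi$-energy grows slower than $\langle t\rangle^\alpha$, which requires delicate multilinear frequency-localized estimates exploiting both the $L^\infty_\xi$ and the $H^1_\xi$ bootstrap inputs. Once all four pieces of the bootstrap are strictly improved, continuation in $H^1\cap L^{2,1}$ via the local Cauchy theory (Proposition~\ref{pr:cauchy}) extends $v$ beyond $T$, and Lemma~\ref{lem:renormalisation} yields the continuously evolving modulation parameters past $T$, closing the bootstrap.
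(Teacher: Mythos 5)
There are two significant gaps in your proposal.

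First, in the control of the modulation parameters: you assert that the quadratic phase $\Phi_{\lambda\mu}(\eta,\sigma)=\lambda(\uo+\eta^2)+\mu(\uo+\sigma^2)$ is bounded away from zero and conclude that an integration by parts in time gives $|\dot\omega|+|\dot p|\lesssim\epsilon_1^2\langle t\rangle^{-2-\nu}$. This is false for the cross terms $\lambda=-\mu$, where $\Phi=\pm(\eta^2-\sigma^2)$ vanishes on the diagonals $\{\eta=\pm\sigma\}$, which carry a substantial portion of the interaction. A blanket time integration by parts is therefore not available. The paper instead decomposes the oscillatory integral into low-frequency, resonant, and non-resonant pieces; the first two are handled by integration by parts in $\eta,\sigma$ together with a small-measure (in frequency) argument, and only the non-resonant piece admits a time integration by parts — and even there the replacement $\widetilde f\to\partial_t\widetilde f$ injects the singular cubic term $\mathcal{C}^S$ and $\Mod_{\underline U}$, which need further multilinear estimates (in particular the high/low split $\widetilde{f^L}/\widetilde{f^H}$ and the truncated Hilbert-transform bounds). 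Your one-line argument skips all of this and would not produce the extra gain in $t$ needed to beat the $t^{-1+2\alpha}$ barrier.

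Second, your Duhamel formula for $\widetilde f_\rho$ includes $\widetilde{\Mod_{\underline U}}$, but you do not address the derivative loss hidden in it. $\Mod_{\underline U}$ contains $\partial_x\underline U$, which on the distorted Fourier side contributes a term $\tau(t)\xi\widetilde f_\rho$ with $|\tau(t)|\lesssim\epsilon_1\langle t\rangle^{-1-\nu}$ (Proposition~\ref{propModU}). When $\partial_\xi$ falls on the phase $e^{-is\rho(\underline{\omega}'+\xi^2)}$ in the naive Duhamel integral, it produces an extra factor $s\xi$; combined with the $\xi\widetilde f_\rho$ already present, the time integrand scales like $s\cdot\epsilon_1\langle s\rangle^{-1-\nu}\|\xi^2\widetilde f\|_{L^2}$, whose time integral grows like $t^{1-\nu}$ — strictly worse than the permitted $\langle t\rangle^\alpha$ since $\alpha\ll\nu\ll 1$ (and, moreover, $\|\xi^2\widetilde f\|_{L^2}$ is not even among the bootstrap norms). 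The paper's resolution is essential and cannot be elided: one sets $\theta(t)=\int_0^t\tau(s)\,ds$ and rewrites Duhamel for $e^{i\theta(t)\xi}\widetilde f_\rho$, absorbing the derivative-loss term into an additional time-dependent phase; $\theta$ stays uniformly bounded because $\tau$ is integrable, and one can then safely take $\partial_\xi$. Your remark that ``the linear-in-$t$ factor is balanced by additional decay'' does not survive this count, and the phase renormalization is the step that makes the $H^1_\xi$ estimate close.
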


\begin{proof}[Proof of Proposition \ref{propbootstrap}]  The proof of Proposition~\ref{propbootstrap} will occupy the rest of this article. We will explain here how the results of sections~\ref{sectionmodulation},~\ref{SectionWeightedQuadratic}, \ref{SectionWeightedCubic} and~\ref{SectionPointwise} can be put together to prove Proposition \ref{propbootstrap}.

\medskip

\noindent \underline{Control of the modulation parameters}
Consider $(\underline{p},\underline{\omega},\gamma,p,y,\omega,U)$ as in the statement of Proposition \ref{propbootstrap}. The first step is to set $(\underline p',\underline{\omega}')=(p(T),\omega(T))$, and to use Proposition~\ref{lemmamodulation} to show that \eqref{bd:bootstrap-omega2-improved} is satisfied.

\medskip

\noindent \underline{Control of the radiation: the bootstrap within the bootstrap.} The second step is to show that~\eqref{bd:bootstrapU-improved} is satisfied, but it cannot follow immediately from Propositions~\ref{PropositionWeightedQuadratic},~\ref{PropositionWeightedCubic},~\ref{PropositionPointwise} for the following reason: the function $U$ must be measured in $X_{\underline{\omega}',T}$, while these propositions would give a bound in $X_{\underline{\omega},T}$.

For the rest of this proof, we use the distorted Fourier transform associated to the linearization around the soliton $\Phi_{\underline{\omega'}}$. We then introduce a new bootstrap argument: consider
$$
T' = \sup \left\{ t \in [0,T], \; \| U \|_{X_{t,\underline{\omega'}}} < \frac{\epsilon_1}{2} \right\}.
$$
By the local well-posedness result in Appendix~\ref{CauchyTheory} and Lemma~\ref{lem:renormalisation}, the set on the right-hand side is not empty, and thus $T'>0$ is well defined.

We claim that $T'=T$; arguing by contradiction, assume that $T'<T$. Duhamel's formula for the profile ${f} $ is given by~\eqref{dtf}, but an attentive examination of the right-hand side there reveals a loss of deriatives (or weight, in Fourier) in the term $e^{-it\mathcal{H}} \Mod_U$, preventing a naive time integration of the right-hand side.

\medskip

\noindent \underline{Absorbing the derivative loss in the phase.} Anticipating on Proposition~\ref{propModU}, we split
$$
e^{-it\rho(\uo+\xi^2)} \widetilde{\Mod_U} = \tau(t) \xi \widetilde{f} + \widetilde{\mathcal{M}}(\xi).
$$
After setting
$$
\theta(t) = \int_0^t \tau(s)\,ds,
$$
Duhamel's formula for the profile~\eqref{dtf} can now be written
$$
i \partial_t \left[ e^{i \theta(t) \xi} \widetilde{f}_\rho(\xi) \right] = e^{i [ \theta(t) \xi - i t \rho(\uo+\xi^2) ] } \left[ \widetilde{ \mathcal{N}} + \widetilde{ {\Mod_\Phi}} + \widetilde{ \mathcal{E}}  \right] + e^{i \theta(t) \xi} \widetilde{\mathcal{M}},
$$
which becomes, after expanding the nonlinear term $\mathcal{N}$,
\begin{equation}
\label{courlis}
i \partial_t \left[ e^{i \theta(t) \xi} \widetilde{f}_\rho(\xi) \right] = e^{i \theta(t) \xi  } \left[\widetilde{ \mathcal{Q}^R} + \widetilde{ \mathcal{C}^S} + \widetilde{ \mathcal{C}^R} + \widetilde{ \mathcal{T}} + \widetilde{ \mathcal{R}} + \widetilde{ \mathcal{M}} \right] +  e^{i [ \theta(t) \xi - i t \rho(\uo+\xi^2) ] } \left[ \widetilde{ {\Mod_\Phi}} + \widetilde{ \mathcal{E}}  \right].
\end{equation}

\medskip

\noindent \underline{Control of the $H^1_\xi$ norm of the radiation} We will now combine the proofs given by propositions~\ref{PropositionWeightedQuadratic} and~\ref{PropositionWeightedCubic}:
\begin{itemize}
\item Proposition~\ref{PropositionWeightedQuadratic} gives the bound 
$$\left\| \partial_\xi \int_0^t e^{i \theta(s) \xi} \widetilde{\mathcal{Q}^R} \,ds \right\|_{L^\infty_{[0,T']} L^2_\xi} \lesssim \epsilon_1^2.
$$
\item Proposition~\ref{PropositionWeightedCubic} shows that 
\begin{align*}
& \left\| \partial_\xi \int_0^t e^{i \theta(s) \xi} [\widetilde{\mathcal{C}} + \widetilde{\mathcal{T}} + \widetilde{\mathcal{R}}+ \widetilde{\mathcal{M}}] \,ds \right\|_{L^\infty_{[0,T']} L^2_\xi}\\
& \qquad \qquad \qquad + \left\|\partial_\xi \int_0^t e^{i [ \theta(t) \xi - i t \rho(\uo+\xi^2) ] } [ \widetilde{\Mod_\Phi} + \widetilde{\mathcal{E}} ] \right\|_{L^\infty_{[0,T']} L^2_\xi} \lesssim \epsilon_1^2 \langle t \rangle^\alpha.
\end{align*}
\end{itemize}

After applying $\partial_\xi$ to~\eqref{courlis} and integrating in time, the two estimates above show that
$$
\left\| \partial_\xi \left[ e^{i  \theta(t) \xi} \widetilde{f}(t) \right] \right\|_{L^2} \lesssim \| \partial_\xi \widetilde{f}_0 \|_{L^2} + \epsilon_1^2 \langle t \rangle^\alpha \lesssim \epsilon + \epsilon_1 \langle t \rangle^\alpha.
$$
Using in addition~\eqref{controlH1} and the fact that $|\theta|\lesssim \epsilon_1$ from Proposition \ref{propModU}, this implies that
\begin{align*}
\| \widetilde{f} \|_{H^1} & \lesssim \| \widetilde{f} \|_{L^2} + \left\| \partial_\xi \widetilde{f}(t) \right\|_{L^2} \lesssim
\| \widetilde{f} \|_{L^2} + \left\| \partial_\xi [e^{i \theta(t) \xi} \widetilde{f}(t)] \right\|_{L^2} + |\theta(t)| \|\xi  \widetilde{f}(t) \|_{L^2} \lesssim \epsilon + \epsilon_1^2 \langle t \rangle^\alpha.
\end{align*}

\medskip

\noindent \underline{Pointwise control of the radiation in Fourier and conclusion.}
Proposition~\ref{PropositionPointwise} gives the bound
$$
\| \widetilde{f} \|_{L^\infty_{[0,T']} L^\infty_\xi}  \lesssim \epsilon + \epsilon_1^2.
$$
Combining this with the $H^1_\xi$ bound, we obtain
$$
\| U \|_{X_{\underline{\omega'},T'}} \lesssim \epsilon + \epsilon_1^2 \qquad \Longrightarrow \qquad \| U \|_{X_{\underline{\omega'},T'}} < \frac{\epsilon_1}{4}
$$
since we chose $\epsilon < \epsilon_0 \ll 1$ and $\frac{\epsilon_1}{\epsilon} = C_1 \ll 1$. Thus, it is possible to prolong the solution up to some time $T'+\delta$, with $\delta>0$, in such a way that $\| U \|_{X_{\underline{\omega'},T'+\delta}} < \frac{\epsilon_1}{2}$. This contradicts the definition of $T'$, and thus proves the claim.

\end{proof}

\subsection{How Proposition~\ref{propbootstrap} implies Theorem~\ref{mainthm}}

\begin{proof}[Proof of Theorem  \ref{mainthm}]
The proof relies on a standard continuous induction argument. Keeping the notations of Proposition \ref{propbootstrap}, we define
$$
T_\infty=\sup \ \{T>0 \mbox{ such that } v \mbox{ is trapped on }[0,T)\}
$$
(we say that $v$ is trapped on $[0,T)$ if there exists $(\underline p,\underline{\omega})$ such that $v$ is trapped on $[0,T)$ around $e^{i\underline{p}x}\Phi_{\underline{\omega}}$). First observe that the set $\{T>0 \mbox{ such that } v \mbox{ is trapped on }[0,T)\}$ is not empty, by the local well-posedness result recalled in Appendix~\ref{CauchyTheory}.

\medskip

\noindent \underline{Propagation of the trap until $T_\infty$.} We claim that there exists $(\underline{p_\infty},\underline{\omega_\infty})$ such that $v$ is trapped around $e^{\underline{p_\infty}x}\Phi_{\underline{\omega_\infty}}$ on $[0,T_\infty)$.
To prove this claim, let $T_n\geq 0$ be a strictly increasing sequence of times converging to $T_\infty$. Then for each $n$, $v$ is trapped on $[0,T_n)$ and we let $(\underline{p_n},\underline{\omega_n})$, $\gamma_n$, $p_n$, $y_n$, $\omega_n$ and $U_n$ be the corresponding parameters and radiation given by Definition \ref{def:trapped-solution}. Up to extraction, by \eqref{bd:bootstrap-omega2}, we can assume that there exists $(\underline{p_\infty},\underline{\omega_\infty}) \in \mathbb{R} \times (0,\omega^*)$ with
\begin{equation}
\label{bd:bootstrap-omegainfty1} |\underline{p_\infty}| + |\underline{\omega_\infty}-\omega_0| \leq \epsilon_1
\end{equation}
 such that $\underline{\omega_n}\to \underline{\omega_\infty}$ as $n\to \infty$. Fix now any $0\leq T<T_\infty$, and let $n,m$ large enough so that $T_m,T_n>T$. By the uniqueness of $(\gamma,p,y,\omega)$ given by Lemma \ref{lem:renormalisation} and the estimate \eqref{controlH1}, we have that $(\gamma_m,p_m,y_m,\omega_m) = (\gamma_n,p_n,y_n,\omega_n)$ and $U_m=U_n$ coincide on $[0,T]$.

Hence there exists $(\gamma_\infty,p_\infty,y_\infty,\omega_\infty) \in C([0,T_\infty),\mathbb R^3 \times (0,\omega_*))$,  and $U_\infty$ given by \eqref{def:U} satisfying \eqref{orthogonalities} such that for any $0\leq T<T_\infty$, $(\gamma_n,p_\infty,y_\infty,\omega_\infty) = (\gamma_\infty,p_\infty,y_\infty,\omega_\infty)$ and $U_n=U_\infty$ on $[0,T]$ for all $n$ large enough. By \eqref{bd:bootstrap-omega2} with $\underline{\omega}=\underline{\omega_n} $ and $\omega=\omega_n$, taking $n\to \infty$ we obtain:
\begin{equation}
\label{bd:bootstrap-omegainfty2}  | p_\infty(t)-\underline{p_\infty}| + | \omega_\infty(t)-\underline{\omega_\infty}| \leq \epsilon_1 \langle t \rangle^{-1-\nu}, \qquad \forall t\in [0,T_\infty).
\end{equation}
Let $0\leq T<T_\infty$. Then by \eqref{eqbootstrap} with $\underline{\omega}=\underline{\omega_n}$ and $U=U_n=U_\infty$ we obtain $\| U_\infty \|_{X_{T,\underline{\omega_n}}}\leq \epsilon_1$. We have $\| U_\infty \|_{X_{T,\underline{\omega_n}}}\rightarrow \| U_\infty \|_{X_{T,\underline{\omega_\infty}}}$ since $\underline{\omega_n}\to \underline{\omega_\infty}$ so that $\| U_\infty \|_{X_{T,\underline{\omega_{\infty}}}}\leq \epsilon_1$. This implies
\begin{equation}
\label{eqbootstrapinfty} \| U_\infty \|_{X_{T_\infty,\underline{\omega_{\infty}}}}\leq \epsilon_1.
\end{equation}
Combining \eqref{bd:bootstrap-omegainfty1}, \eqref{bd:bootstrap-omegainfty2} and \eqref{eqbootstrapinfty} shows the claim of Step 1.

\medskip

\noindent \underline{Trapped for all times}. We claim that $T_\infty=\infty$. By contradiction, assume that $T_\infty<\infty$. Then by the result of Step 1, $v$ is trapped around some $e^{i\underline{p_\infty} x}\Phi_{\underline{\omega_\infty}}$ on $[0,T_\infty)$. Applying Proposition \ref{propbootstrap}, we obtain that the maximal time of existence of $v$ is strictly greater than $T_\infty$, and that there exist $\underline{\omega_\infty'},\underline{p}_\infty'$ such that $v$ is trapped around $e^{i\underline{p_\infty}' x}\Phi_{\underline{\omega_\infty'}}$ on $[0,T_\infty)$, and that  \eqref{bd:bootstrap-omega2-improved} and \eqref{bd:bootstrapU-improved} are satisfied. These norms are strict improvements of \eqref{bd:bootstrap-omega2} and \eqref{eqbootstrap}. By the local well-posedness result recalled in Appendix~\ref{CauchyTheory}, the solution can be prolonged locally in time, and by Lemma~\ref{lem:renormalisation}, it can be decomposed into the sum of the soliton and the radiation (renormalized by the symmetries). Therefore, $v$ is trapped around $e^{i\underline{p_\infty}'x}\Phi_{\underline{\omega}_\infty'}$ on $[0,T_\infty+\delta]$ for some $\delta>0$. This contradicts the definition of $T_\infty$! Hence $T_\infty=\infty$ as claimed.

\medskip
\noindent \underline{End of the proof}. Combining the first two steps, the solution is global and there exist $\underline{\omega},\underline{p}$ such that it is trapped around $e^{i\underline{p}x}\Phi_{\underline{\omega}}$ on $ [0,\infty)$. Item (i) of Theorem \ref{mainthm} then follows from Lemmas \ref{propfirst} and \ref{lemmamodulation} for the modulation parameters, and \eqref{bd:globaldecay} for the radiation. Item (ii) follows from Proposition \ref{pr:modified-scattering}, the identity \eqref{id:decomposition-v} and the asymptotics of item (i) for $\omega,\gamma,p,y$. Item (iii) follows from combining the continuity of the flow, see Proposition \ref{pr:cauchy}, and the various convergence estimates as $t\to \infty$ for $\omega,\gamma,p,y,\widetilde f$ that are uniform in the initial data.
\end{proof}

\subsection{First consequences of the bootstrap assumption}

In this subsection, we prove some rather immediate consequences of the bootstrap hypothesis, first on the convergence of the modulation parameters, and then on the decay of nonlinear terms.

\begin{lemma}
\label{propfirst}
If the solution is trapped on $[0,T)$,
\begin{align}
\label{eq:modulation-gamma-rough}& | \omega - \dot \gamma - p^2| + |\dot \omega| + |\dot y - 2p| + |\dot p| \lesssim \epsilon_1^2 \langle t\rangle^{-2+2\alpha}.
\end{align}
\end{lemma}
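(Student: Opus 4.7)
The plan is to use the modulation identity \eqref{id:modulation-technical-identity} of Lemma~\ref{echasseblanche}, which writes the time derivatives of the modulation parameters as
$$
\begin{pmatrix} \omega-p^2-\dot \gamma \\ -i\dot \omega \\ i(2p-\dot y) \\ \dot p \end{pmatrix}
= M^{-1} N,
$$
and to show that $M^{-1}$ is $O(1)$ while $N$ is $O(\epsilon_1^2 \langle t\rangle^{-2+2\alpha})$.

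First I would deal with invertibility of $M = M_0 + M_1(U)$. The leading matrix $M_0$ in \eqref{def:Matrice-M0} is invertible with a bound depending only on $c_\omega$ and $\|\Phi_\omega\|_{L^2}^2$, both of which remain uniformly bounded since by \eqref{bd:bootstrap-omega2} $\omega$ stays in a fixed compact subset of $(0,\omega^*)$. The entries of $M_1(U)$ are pairings of $U$ against $\Xi_j$, $x\Xi_j$, $\sigma_3\partial_\omega\Xi_j$, $\sigma_3\partial_x\Xi_j$; since $\Phi_\omega$ and its derivatives decay exponentially, each of these test functions lies in $\langle x\rangle^{-N} L^\infty$ for any $N$, so by the local decay estimate \eqref{bd:localdecay} and \eqref{bd:estimation:a0(t)-a1(t)} one obtains
$$
|M_1(U)| \lesssim \int |U|\, e^{-\mu|x|}\,dx \lesssim \epsilon_1 \langle t\rangle^{-1+\alpha}.
$$
Hence for $\epsilon_1$ small enough, $M$ is invertible and $\|M^{-1}\|\lesssim 1$ uniformly on $[0,T)$.

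Next I would bound each component of $N$, which is of the form $\langle \mathcal N(U),\sigma_3 \Xi_j\rangle$. Writing $\mathcal N(U)$ as in \eqref{id:definition-mathcalNU}--\eqref{id:definition-Nu}, the quadratic contributions already come with exponentially decaying prefactors $V_{++},V_{--},V_{+-}$; the cubic and higher terms do not have an exponential factor in $V_{++-}$, but they are paired against $\Xi_j$, which is itself exponentially localized. Consequently every term in $\langle \mathcal N(U),\sigma_3\Xi_j\rangle$ can be estimated by
$$
\left|\int W(x)\, U^{n}\,\Xi_j\,dx\right| \lesssim \|\langle x\rangle^{-1} U\|_{L^\infty}^{2}\,\|U\|_{L^\infty}^{n-2}\,\bigl\|\langle x\rangle^{n}W\Xi_j\bigr\|_{L^1},
$$
for $n\geq 2$, with $W$ at worst growing polynomially. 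Using the local decay \eqref{bd:localdecay} for the first factor and the global decay \eqref{bd:globaldecay} for the remaining $L^\infty$ factors (when $n\geq 3$), together with the exponential decay of $\Xi_j$, the quadratic terms give
$$
|N|_{\text{quad}} \lesssim \|\langle x\rangle^{-1} U\|_{L^\infty}^2 \lesssim \epsilon_1^2 \langle t\rangle^{-2+2\alpha},
$$
while cubic and higher terms are bounded by $\epsilon_1^{n}\langle t\rangle^{-n/2-2+2\alpha}$, hence absorbed into the same bound.

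Combining these two ingredients yields $|M^{-1}N|\lesssim \epsilon_1^2 \langle t\rangle^{-2+2\alpha}$, which is exactly \eqref{eq:modulation-gamma-rough}. The only mild subtlety is the verification that the potentially non-localized cubic term $V_{++-}|u|^2u$ does not spoil the bound; this is handled by the localization brought by $\Xi_j$, so no additional ingredient beyond the a priori bounds in Lemma~\ref{ibis} is needed.
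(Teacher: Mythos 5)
Your proposal is correct and follows essentially the same route as the paper's (very terse) proof: invert the modulation system $M(\cdot)=N$, note that $M^{-1}$ is $O(1)$ because $c_\omega$ stays bounded below and $M_1(U)$ is a small perturbation, and bound $N=O(\epsilon_1^2\langle t\rangle^{-2+2\alpha})$ by placing two factors of the improved local decay \eqref{bd:localdecay} on each quadratic pairing. The one thing you spell out that the paper leaves implicit is the smallness of $M_1(U)$ and the reason the non-localized cubic potential $V_{++-}$ is harmless (the test functions $\Xi_j$ carry the localization), which is a useful addition. A minor slip: in the displayed estimate the weight on the test functions should be $\|\langle x\rangle^{2}W\Xi_j\|_{L^1}$ rather than $\|\langle x\rangle^{n}W\Xi_j\|_{L^1}$, since you only extract two factors of $\langle x\rangle^{-1}U$; the conclusion is unaffected because $W\Xi_j$ decays exponentially.
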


\begin{proof} The right-hand side of \eqref{id:modulation-technical-identity} is $O(\epsilon_1^2 \langle t\rangle^{-2+2\alpha})$ due to the improved local decay of solutions~\eqref{bd:localdecay} and the decay of the discrete parameters~\eqref{bd:estimation:a0(t)-a1(t)} under the bootstrap assumption. Since $c_\omega \geq c^*$ for some $c^*>0$ for $\omega$ near $\omega_0$, the matrix $M$ appearing in \eqref{id:modulation-technical-identity} is invertible with $M^{-1}$ bounded by some universal constant.
This leads to the desired bounds.
\end{proof}

\begin{proposition}
\label{propModU}
If the solution is trapped on $[0,T)$,
$$
e^{- it \rho (\uo+ \xi^2)} \widetilde{\Mod_{\Uu}}  =  \tau(t) \xi \widetilde f(\xi) + \widetilde{ \mathcal{M}}(\xi)
$$
where
\begin{align*}
& | \tau(t) | \lesssim \epsilon_1 \langle t \rangle^{-1-\nu} \\
& \| \langle \xi \rangle \widetilde{\mathcal{M}} \|_{L^2} + \|  \partial_\xi \widetilde{\mathcal{M}} \|_{L^2}  \lesssim \epsilon_1^2\langle t \rangle^{-1-\nu+\alpha}.
\end{align*}
\end{proposition}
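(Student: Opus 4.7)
The approach is to isolate the only piece of $\Mod_{\Uu}$ that, after applying the distorted Fourier transform and stripping off the linear group phase, contributes a genuine $\xi$-multiplier $\tau(t)\xi\widetilde f$, and to absorb everything else into the remainder $\widetilde{\mathcal M}$.

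I would first rewrite $\Mod_{\Uu} = c_1(t)\sigma_3\Uu + ic_2(t)\partial_x\Uu$, where
$$
c_1(t) = (\omega - p^2 - \dot\gamma) + (2p-\dot y)(p-\underline p) - (\underline p - p)^2 + \underline\omega - \omega, \qquad c_2(t) = 2\underline p - \dot y.
$$
Combining Lemma~\ref{propfirst} with the bootstrap bound \eqref{bd:bootstrap-omega2} gives $|c_1(t)|, |c_2(t)| \lesssim \epsilon_1\langle t\rangle^{-1-\nu}$, the leading contributions being $\underline\omega-\omega$ and $2(\underline p-p)$ respectively. Next, I would split $\Uu = \Uue + \Uud$ with $\Uud = \sum_j a_j\underline{\Xi_j}$. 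The decay $|a_j|\lesssim\epsilon_1^2\langle t\rangle^{-2-\nu+\alpha}$ from \eqref{bd:estimation:a0(t)-a1(t)}, together with the uniform $H^1 \cap L^{2,1}$ bound on $\widetilde{\mathcal F}(\sigma_3\underline{\Xi_j})$ and $\widetilde{\mathcal F}(\partial_x\underline{\Xi_j})$ (the $\underline{\Xi_j}$ being Schwartz; see Proposition~\ref{tourterelle}), imply that the corresponding contribution to $\widetilde{\mathcal M}$ decays like $\epsilon_1^3\langle t\rangle^{-3-2\nu+\alpha}$, well within the target.

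For the essential part $\Uue = e^{it\mathcal H_{\underline\omega}}f$, I would apply Proposition~\ref{propsigmadx} to get
\begin{align*}
e^{-it\rho(\underline\omega+\xi^2)}\widetilde{\mathcal F}(\sigma_3\Uue)_\rho(\xi) &= \rho\widetilde f_\rho(\xi) + \bigl(e^{-it(\underline\omega+\xi^2)}\mathcal L_R\, e^{it(\underline\omega+\xi^2)}\widetilde f\bigr)_\rho(\xi),\\
e^{-it\rho(\underline\omega+\xi^2)}\widetilde{\mathcal F}(\partial_x\Uue)_\rho(\xi) &= i\xi\widetilde f_\rho(\xi) + \bigl(e^{-it(\underline\omega+\xi^2)}\mathcal L_0\, e^{it(\underline\omega+\xi^2)}\widetilde f\bigr)_\rho(\xi).
\end{align*}
The only $\xi$-multiplier main term comes from multiplying the second line by $ic_2(t)$; I would accordingly set $\tau(t) := -c_2(t) = \dot y - 2\underline p$, for which the bound $|\tau(t)|\lesssim\epsilon_1\langle t\rangle^{-1-\nu}$ is immediate from the above pointwise bound on $c_2$. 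Everything else goes into $\widetilde{\mathcal M}$: the scalar term $\rho c_1(t)\widetilde f_\rho$, the two regular-operator terms $c_1(t)(\mathcal L_R\text{-piece})$ and $ic_2(t)(\mathcal L_0\text{-piece})$, and the discrete pieces.

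It remains to bound $\widetilde{\mathcal M}$ in $L^{2,1}$ and $H^1$. The cancellation $\widetilde f_\pm(0)=0$ from Lemma~\ref{ibis}(i) is crucial: it places $\widetilde f$ in the space $H^1 \cap \{\widetilde f(0)=0\}$ on which the conjugated operators $e^{-it(\underline\omega+\xi^2)}\mathcal L_R\, e^{it(\underline\omega+\xi^2)}$ and $e^{-it(\underline\omega+\xi^2)}\mathcal L_0\, e^{it(\underline\omega+\xi^2)}$ are bounded uniformly in $t$, per Proposition~\ref{propsigmadx}. Combining the $L^{2,1}\to L^{2,1}$ and $H^1\to H^1$ boundedness with $\|\langle\xi\rangle\widetilde f\|_{L^2}\lesssim\epsilon_1$ from \eqref{controlH1} and $\|\partial_\xi\widetilde f\|_{L^2}\lesssim\epsilon_1\langle t\rangle^\alpha$ from \eqref{eqbootstrap}, the pointwise bounds on $c_1, c_2$ give $\|\langle\xi\rangle\widetilde{\mathcal M}\|_{L^2} + \|\partial_\xi\widetilde{\mathcal M}\|_{L^2}\lesssim\epsilon_1^2\langle t\rangle^{-1-\nu+\alpha}$. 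The only mildly delicate point is the systematic use of the zero-frequency cancellation to justify the $H^1$ boundedness; without it the principal-value piece of $\mathcal L_0$ would fail to be bounded.
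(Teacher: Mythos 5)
Your proposal is correct and follows essentially the same route as the paper's proof: recall the definition of $\Mod_{\Uu}$, bound the scalar coefficients via Lemma~\ref{propfirst} together with the trapping bound~\eqref{bd:bootstrap-omega2}, feed $\Uue$ through Proposition~\ref{propsigmadx}, identify $\tau(t)=\dot y-2\underline p$ (the same as the paper's $\tau=\dot y-2p+2(p-\underline p)$), and sweep the rest into $\widetilde{\mathcal M}$. Your version is somewhat more careful than the paper's terse write-up: you explicitly split $\Uu=\Uue+\Uud$ and bound the discrete contribution separately (the paper writes $\widetilde{\mathcal F}_\rho\sigma_3\underline U$ directly, glossing over that Proposition~\ref{propsigmadx} is stated for functions in the essential spectrum), and you explicitly flag the role of the zero-frequency cancellation $\widetilde f_\pm(0)=0$ in the $H^1$-boundedness of the conjugated $\mathcal L_0$; both of these are genuine clarifications of what the paper leaves implicit.
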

\begin{proof}
Recall that
$$
\Mod_{\Uu}= \left[(\omega-p^2-\dot \gamma)+(2p-\dot y)(p-\pu)-(\pu-p)^2+\underline{\omega} - \omega \right]\sigma_3 \Uu +i[2p-\dot y+2(\pu-p)]\partial_x \Uu.
$$
By Proposition~\ref{propsigmadx},
\begin{align*}
&e^{- it \rho (\uo+ \xi^2)} \widetilde{\mathcal{F}}_\rho \sigma_3 \underline{U} = O_{{L^{2,1} \cap H^1} }( \epsilon_1 t^\alpha) \\
& e^{- it \rho (\uo+ \xi^2)} \widetilde{\mathcal{F}}_\rho \partial_x \underline{U} = i \xi \widetilde{f} + O_{L^{2,1} \cap H^1} (\epsilon_1 t^\alpha)
\end{align*}
Using in addition Lemma~\ref{propfirst} and the fact that $v$ is trapped, and finally setting
$$
\tau(t) = \dot y-2p+2(p-\pu)
$$
gives the desired statement.
\end{proof}

\begin{proposition} \label{grebehuppe}

Define $\mathcal{D}$ by
$$
\mathcal{D} = \underline{P_e} \left[ \mathcal{Q}^R + \mathcal{C}^R + \mathcal{T}  + \mathcal{R} + e^{-it\underline{\mathcal{H}}} \Mod_\Phi + e^{-it\underline{\mathcal{H}}} \mathcal{E} \right],
$$
so that the equation satisfied by $f$ is
$$
i\partial_t f =  \mathcal{D} +  \underline{P_e} \left[ \mathcal{C}^S + e^{-it\underline{\mathcal{H}}} \Mod_{\underline{U}} \right].
$$
Then, if the solution is trapped on $[0,T]$, the term $\mathcal{D}$ enjoys the decay
\begin{equation} \label{bd:mathcalD-L21}
\| \langle \xi \rangle \widetilde{\mathcal{D}}(\xi) \|_{L^2} \lesssim \epsilon_1^2 \langle t \rangle^{-3/2}.
\end{equation}

\end{proposition}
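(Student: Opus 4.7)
I will bound each piece of $\mathcal{D}$ separately. The key simplification is that $\widetilde{\mathcal{F}}(e^{-it\underline{\mathcal{H}}} g)_\rho(\xi) = e^{-it\rho(\uo+\xi^2)}\widetilde{g}_\rho(\xi)$ is unimodular in $\xi$, so $\|\langle\xi\rangle \widetilde{\mathcal{F}}(e^{-it\underline{\mathcal{H}}} g)\|_{L^2} = \|\langle\xi\rangle\widetilde{g}\|_{L^2}$; combining with Proposition~\ref{tourterelle}(iv) ($\widetilde{\mathcal{F}}:H^1\to L^{2,1}$) and the $H^1$-boundedness of $\Pue = \operatorname{Id} - \Pud$ (since $\Pud$ is a finite-rank projector onto smooth, exponentially decaying range), it suffices to control in $H^1$ the underlying physical-space objects $V_{\text{quad}}U^2$, $V_{\text{cub}}U^3$, the quartic and higher pieces of $\mathcal{N}$, the discrete-interaction products making up $\mathcal{R}$, and finally $\Mod_\Phi$ and $\mathcal{E}$.

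\textbf{Nonlinear terms.} For $\mathcal{Q}^R$, the quadratic potentials $V_{\pm\pm}$ decay exponentially. The $L^2$-contribution is controlled by interpolating local decay~\eqref{bd:localdecay} against global decay~\eqref{bd:globaldecay}: $\|V_{\text{quad}}U^2\|_{L^2} \leq \|V_{\text{quad}}\langle x\rangle\|_{L^2}\,\|\langle x\rangle^{-1}U\|_{L^\infty}\,\|U\|_{L^\infty} \lesssim \epsilon_1^2\langle t\rangle^{-3/2+\alpha}$. For $\mathcal{C}^R$, the cubic potentials $V_{++-}-L$, $V_{+--}$, $V_{+++}$, $V_{---}$ are all exponentially localized (the singular constant $L$ at infinity having been subtracted into $\mathcal{C}^S$), so three-fold local decay yields $\|V_{\text{cub}}U^3\|_{L^2} \lesssim \epsilon_1^3\langle t\rangle^{-3+3\alpha}$, with derivative contributions at worst $\epsilon_1^3\langle t\rangle^{-2+2\alpha}$ after trading one local decay for $\|\partial_x U\|_{L^2}\lesssim \epsilon_1$. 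Quartic terms in $\mathcal{T}$ carry no localizing potential but have enough factors: $\|U^4\|_{H^1}\lesssim \|U\|_{L^\infty}^3\|U\|_{H^1}\lesssim\epsilon_1^4\langle t\rangle^{-3/2}$. For $\mathcal{R}$, every term contains at least one factor $a_j\underline{\Xi_j}$ with $|a_j|\lesssim\epsilon_1^2\langle t\rangle^{-2-\nu+\alpha}$ by~\eqref{bd:estimation:a0(t)-a1(t)}, which combined with the uniform $H^1$-control of $U$ already surpasses the target.

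\textbf{Modulation and error terms.} For $\Pue\Mod_\Phi$, the formula~\eqref{projectedmodulationterm} is a linear combination of terms carrying two small factors: the modulation speeds $(\omega-p^2-\dot\gamma)$, $\dot\omega$, $(2p-\dot y)$, $\dot p$ are $O(\epsilon_1^2\langle t\rangle^{-2+2\alpha})$ by~\eqref{eq:modulation-gamma-rough}, while the spatial differences $e^{i(\underline{p}-p)\sigma_3 x}\Xi_j - \underline{\Xi_j}$ are $O(\epsilon_1\langle t\rangle^{-1-\nu})$ in any weighted Sobolev norm, by smoothness of $\omega\mapsto\Xi_j$, exponential decay of $\Xi_j$, and~\eqref{bd:bootstrap-omega2}. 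Hence $\|\Mod_\Phi\|_{H^1}\lesssim\epsilon_1^3\langle t\rangle^{-3-\nu+2\alpha}$. For $\mathcal{E}$ in~\eqref{id:definition-error}, the potential difference $V_{\underline{\omega}}-e^{i(p-\underline{p})\sigma_3 x}V_\omega e^{i(\underline{p}-p)\sigma_3 x}$ is pointwise $O(\epsilon_1\langle t\rangle^{-1-\nu})$ and exponentially localized; the nonlinearity difference $e^{i(p-\underline{p})\sigma_3 x}\mathcal{N}(e^{i(\underline{p}-p)\sigma_3 x}\underline{U}) - \mathcal{N}(\underline{U})$ receives a contribution only from the quadratic terms of $\mathcal{N}$ (the cubic and higher terms being gauge-invariant under $(U_1,U_2)\mapsto (e^{i\theta}U_1,e^{-i\theta}U_2)$), and these quadratic terms again come with localized potentials. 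Combining with local decay of $\underline{U}$ gives $\|\mathcal{E}\|_{H^1}\lesssim \epsilon_1^2\langle t\rangle^{-2-\nu+\alpha}$.

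\textbf{The main obstacle.} The hardest point is the derivative contribution to the $\dot H^1$ norm of $\mathcal{Q}^R$: the naive bound $\|V_{\text{quad}}U\partial_x U\|_{L^2}\leq\|V_{\text{quad}}U\|_{L^\infty}\|\partial_x U\|_{L^2}\lesssim\epsilon_1\langle t\rangle^{-1+\alpha}\cdot\epsilon_1$ yields only $\epsilon_1^2\langle t\rangle^{-1+\alpha}$, short of the claimed $\langle t\rangle^{-3/2}$. My intended resolution is to work directly on the distorted Fourier side with the representation~\eqref{defQR}. Using the algebraic identity $\xi = (\xi\mp\eta\mp\sigma)\pm\eta\pm\sigma$ under the integral, the piece where $\xi$ is absorbed into the difference $\xi\mp\eta\mp\sigma$ gains a further power of integrability through the convolution kernel bound $\langle\xi\mp\eta\mp\sigma\rangle^{-2}$ of~\eqref{hibou1}, while the pieces where $\xi$ is traded for $\pm\eta$ or $\pm\sigma$ transfer the Fourier weight onto one of the $\widetilde{f}$ factors. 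The refined symbol bound~\eqref{hibou2} and the cancellation $\widetilde{f}_\pm(0)=0$ from~\eqref{id:annulation-tildef(0)} then provide, via a Hardy-type estimate for $\widetilde{f}(\eta)/\eta$, the extra local-decay factor needed to close at the rate $\langle t\rangle^{-3/2}$.
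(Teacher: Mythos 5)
You correctly identify the central obstacle, but your proposed resolution for it is incomplete and contains a secondary miscount that prevents the argument from closing.

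\textbf{The physical-space estimate for $\mathcal{Q}^R$ falls short.} Your bound $\|V_{\text{quad}}U^2\|_{L^2}\lesssim \epsilon_1^2\langle t\rangle^{-3/2+\alpha}$ is not $\lesssim\epsilon_1^2\langle t\rangle^{-3/2}$: there is an uncompensated loss of $\langle t\rangle^{\alpha}$. (With two factors of local decay one would get $\langle t\rangle^{-2+2\alpha}$, which does close, but you use one local and one global factor.) More importantly, the approach of estimating $\|\cdot\|_{H^1}$ in physical space and passing to $\|\langle\xi\rangle\widetilde{\mathcal{F}}(\cdot)\|_{L^2}$ via Proposition~\ref{tourterelle}(iv) genuinely cannot close the derivative contribution, since $\partial_x U$ has no local decay; you recognize this.

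\textbf{The Fourier-side resolution is missing its engine.} The paper handles $\mathcal{Q}^R$ entirely on the Fourier side: after splitting off $|t|<1$ (where one just needs the kernel bound), the key step for $|t|>1$ is to integrate by parts in $\eta$ and $\sigma$ using the oscillatory factor $e^{-it\Phi}$, which produces the factor $t^{-2}$ explicitly. The resulting integrand contains $\partial_\xi\widetilde{f}(\eta)/\eta$ and $\partial_\xi\widetilde{f}(\sigma)/\sigma$ (with no boundary terms since $\widetilde{f}(0)=0$), and the weight $\langle\xi\rangle$ is absorbed by the elementary inequality $\langle\xi\rangle\lesssim\langle\xi\mp\eta\mp\sigma\rangle+\langle\eta\rangle+\langle\sigma\rangle$ applied to the kernel bound~\eqref{hibou2}, exactly in the spirit of your weight-redistribution comment. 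But without the integration by parts in $\eta,\sigma$ there is no source of time decay at all on the Fourier side; your mention of ``a Hardy-type estimate for $\widetilde{f}(\eta)/\eta$'' and the cancellation $\widetilde{f}(0)=0$ strongly suggests you have the post-integration-by-parts form in mind, but you never say you integrate by parts, never exhibit the factor $t^{-2}$, and never state the final rate $\epsilon_1^2 t^{2\alpha-2}$. As written, the resolution is not a proof. The same remark applies to $\mathcal{C}^R$: the regular part of the cubic spectral distribution is not literally multiplication by a localized potential in physical space (it also contains contributions from $\psi^R$ and cross-cutoff terms), and the paper again treats it on the Fourier side by integrating by parts in $\eta,\sigma,\zeta$ to gain $t^{-3}$.

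\textbf{A minor error on $\mathcal{E}$.} The claim that ``the cubic and higher terms [are] gauge-invariant under $(U_1,U_2)\mapsto(e^{i\theta}U_1,e^{-i\theta}U_2)$'' is false: only the balanced term $V_{++-}|u|^2u$ is invariant under this gauge; $V_{+--}|u|^2\bar u$, $V_{+++}u^3$, $V_{---}\bar u^3$, and the generic higher-order terms are not. Your conclusion survives because those non-invariant terms come with exponentially localized potentials (so the factor of $x$ arising from differentiating the gauge phase is absorbed), but the justification needs to be rewritten.

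Your treatment of $\mathcal{T}$, $\mathcal{R}$, and $\Mod_\Phi$ is essentially correct and is in the same spirit as the paper's. The essential missing ingredient is the Fourier-side integration by parts in the input frequencies for $\mathcal{Q}^R$ and $\mathcal{C}^R$.
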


\begin{remark} The terms which were left out of $\mathcal{D}$ do not decay sufficiently fast. For $\mathcal{C}^S$, it decays like $\langle t \rangle^{-1}$, wich is not even integrable. By the previous proposition, the term $\widetilde{\Mod_U}$ decays at an integrable rate, but only like $\langle t \rangle^{-1-\nu}$.
\end{remark}

\begin{proof} We will examine each term in the definition of $\mathcal{D}$.

\medskip

\noindent \underline{Bound for $\widetilde{\mathcal{Q}^R}$, the case $|t|<1$.} 
By Proposition~\ref{propmuquad}, one can write (dropping all indices for ease of reading)
\begin{equation}
\label{rougegorge}
\langle \xi \rangle \widetilde{\mathcal{Q}^R}(t,\xi) = \langle \xi \rangle \int e^{-it\Phi(\xi,\eta,\sigma)} \widetilde{f}(\eta) \widetilde{f}(\sigma) \mathfrak{m}(\xi,\eta,\sigma) \,d\eta \,d\sigma 
\end{equation}
We claim that for any $g,h$,
\begin{equation} \label{voilier}
\left\|  \langle \xi \rangle \int g(\eta) h(\sigma) |\mathfrak{m}(\xi,\eta,\sigma)| \,d\eta \,d\sigma  \right\|_{L^2_\xi}  \lesssim \| \xi g\|_{L^2_\xi}  \| \xi h\|_{L^2_\xi}.
\end{equation}
Indeed, the bound on $\mathfrak{m}$ in Proposition~\ref{propmuquad} implies that
\begin{equation}
\label{boundmuetasigma}
\left| \frac{\langle \xi \rangle \mathfrak{m}(\xi,\eta,\sigma)}{\eta \sigma} \right| \lesssim \frac{\langle \xi\rangle}{\langle \eta \rangle \langle \sigma \ra} \sum_{\pm}\frac{1}{\la \xi\pm \eta \pm \sigma \ra^2}
\end{equation}
where in the above sum the signs $\pm$ take all possible values. We treat the $\frac{1}{\la \xi- \eta -\sigma \ra}$ case for concreteness. Assuming $g,h\geq 0$ and introducing $\check g(\xi)=\frac{|\xi|}{\langle \xi \rangle}g(\xi)$ we have to bound
\begin{align*}
 \int \check g(\eta) \check h(\sigma) \frac{ \langle \xi \rangle}{\la \xi- \eta-\sigma \ra^2}\,d\eta \,d\sigma  & =  \int \check g(\xi-\sigma+\zeta) \check h(\sigma) \frac{ \langle \xi \rangle}{\la \zeta \ra^2}\,d\zeta \,d\sigma \\
 & \leq   \int \check g(\xi-\sigma+\zeta) \check h(\sigma)\left(  \frac{ \langle \xi-\sigma+\zeta \rangle}{\la \zeta \ra^2}+  \frac{ \langle \sigma \rangle}{\la \zeta \ra^2}+\frac{1}{\la \xi \ra}\right) \,d\zeta \,d\sigma
\end{align*}
where we used the inequality $\frac{ \langle \xi \rangle}{\la \zeta \ra^2}\lesssim \frac{ \langle \xi-\sigma+\zeta \rangle}{\la \zeta \ra^2}+  \frac{ \langle \sigma \rangle}{\la \zeta \ra^2}+\frac{1}{\la \xi \ra}$.
The first term in the right-hand side is bounded by the Minkowski, Young and Cauchy-Schwarz inequalities
\begin{align*}
\left\| \int \check g(\xi-\sigma+\zeta) \check h(\sigma) \frac{ \langle \xi-\sigma+\zeta \rangle}{\la \zeta \ra^2}\,d\zeta \,d\sigma \right\|_{L^2_\xi}&\lesssim \int_{\zeta}\frac{d\zeta}{\la\zeta\ra^2} \left\| \int_\sigma  \langle \xi-\sigma+\zeta \rangle \check g(\xi-\sigma+\zeta) \check h(\sigma)d\sigma \right\|_{L^2_\xi}\\
& \lesssim \| \la \xi \ra \check g\|_{L^2}\| \check h\|_{L^1} \lesssim \| \xi g\|_{L^2}\| \xi h\|_{L^2}.
\end{align*}
The second term is symmetric to the first one and so is bounded similarly, and the third term is bounded by $\| \check g\|_{L^1}\| \check h\|_{L^1}\lesssim \| \xi g\|_{L^2}\| \xi h\|_{L^2}$. This shows \eqref{voilier}, which, injected in \eqref{rougegorge}, gives
$$
\|\langle \xi \rangle \widetilde{\mathcal{Q}^R}(t,\xi) \|_{L^2}  \lesssim \left\| \xi \widetilde{f} \right\|_{L^2}^2 \lesssim \epsilon_1^2
$$
for any $t$ (even though this bound will only be used for $|t|<1$).

\medskip

\noindent \underline{Bound for $\widetilde{\mathcal{Q}^R}$, the case $t>1$.} At this point, it is helpful to observe here that  the case $|t|>1$ can be dealt with similarly to the case $|t|<1$. Indeed, integrating by parts in $\eta$ and $\sigma$ in~\eqref{rougegorge} yields
\begin{align*}
& \langle \xi \rangle \int e^{-it\Phi(\xi,\eta,\sigma)} \widetilde{f}(\eta) \widetilde{f}(\sigma) \mathfrak{m}(\xi,\eta,\sigma) \,d\eta \,d\sigma \\
& \qquad \qquad = \pm \frac{\langle \xi \rangle}{4 t^2} \int e^{-it\Phi(\xi,\eta,\sigma)} \frac{\partial_\xi \widetilde{f}(\eta)}{|\eta|} \frac{\partial_\xi \widetilde{f}(\sigma)}{|\sigma|}  \mathfrak{m}(\xi, \eta,\sigma) \,d\eta \,d\sigma + \{ \mbox{simpler terms} \}.
\end{align*}
Using \eqref{voilier}, we see that
$$
\left\| \frac{\langle \xi \rangle}{ t^2} \int e^{-it\Phi(\xi,\eta,\sigma)} \frac{\partial_\xi \widetilde{f}(\eta)}{|\eta|} \frac{\partial_\xi \widetilde{f}(\sigma)}{|\sigma|}  \mathfrak{m} \,d\eta \,d\sigma \right\|_{L^2} \lesssim t^{-2} \| \partial_\xi \widetilde{f} \|_{L^2}^2 \lesssim \epsilon_1^2 t^{2\alpha-2}.
$$

Combining the cases $|t|<1$ and $|t|>1$ gives the bound
$$
\left\| \xi \widetilde{\mathcal{Q}^R} \right\|_{L^2}  \lesssim \epsilon_1^2 \langle t \rangle^{2\alpha-2}.
$$

\medskip

\noindent \underline{Bound for $\widetilde{\mathcal{C}^R}$.} By Proposition~\ref{gobemouche}, we can write $\langle \xi \rangle \widetilde{\mathcal{C}^R}$ as a linear combination of terms of the type
$$
\langle \xi \rangle \int e^{it\Phi(\xi,\eta,\sigma,\zeta)} \widetilde{f}(\eta) \widetilde{f}(\sigma) \widetilde{f}(\zeta) \mathfrak{m}(\xi,\eta,\sigma,\zeta) \, d\eta \, d\sigma \, d\zeta,
$$
up to simpler terms which we disregard. 

Just like in the estimate for $\widetilde{\mathcal{Q}^R}$ in Section~\ref{subsecnonres}, the cases $|t| < 1$ and $|t| > 1$ can be dealt with in a nearly identical fashion, and we shall focus on the case $|t|>1$. Integrating by parts in $\eta$, $\sigma$ and $\zeta$, the above becomes
$$
\frac{1}{t^3} \int e^{it\Phi(\xi,\eta,\sigma,\zeta)}\partial_\xi \widetilde{f}(\eta) \partial_\xi \widetilde{f}(\sigma)\partial_\xi \widetilde{f}(\zeta) \frac{\langle \xi \rangle \mathfrak{m}(\xi,\eta,\sigma,\zeta)}{\eta  \sigma \zeta} \, d\eta \, d\sigma \, d\zeta + \{ \mbox{easier terms} \}.
$$
By Proposition~\ref{gobemouche},
\begin{align*}
&\left| \frac{\langle \xi \rangle \mathfrak{m}(\xi,\eta,\sigma,\zeta)}{\eta \sigma \zeta} \right| \lesssim \frac{\la \xi \ra}{\la \eta \ra \la \sigma \ra \la \zeta \ra}\sum_{\pm}\frac{1}{\la \xi\pm \eta \pm \sigma\pm \zeta \rangle}
\end{align*}
(where it is understood that the above sum is performed on all combination of signs).

With this bound in hand, it is possible to argue just like in the estimate of $\langle \xi \rangle \widetilde{\mathcal{Q}^R}$ to show that
$$
\left\| \frac{1}{t^3} \int e^{it\Phi(\xi,\eta,\sigma,\zeta)}\partial_\xi \widetilde{f}(\eta) \partial_\xi \widetilde{f}(\sigma)\partial_\xi \widetilde{f}(\zeta) \frac{\langle \xi \rangle \mathfrak{m}(\xi,\eta,\sigma,\zeta)}{\eta  \sigma \zeta} \, d\eta \, d\sigma \, d\zeta \right\|_2 \lesssim \frac{1}{t^3} \| \partial_\xi \widetilde f \|_2^3 \lesssim \epsilon_1^3 t^{3\alpha - 3},
$$
from which the desired bound follows easily.

\medskip

\noindent \underline{Bound for $\widetilde{\mathcal{T}}$.} The term $\mathcal{T}$ can be written under the form $e^{-it \mathcal{H}} G(x,U)$, where $G$ is a smooth function such that $G(x,0) = G'(x,0) = G''(x,0) = G'''(x,0) =0$, and $G$ enjoys uniform (in $x$) bounds. Therefore, by Proposition~\ref{tourterelle}, and Lemma \ref{ibis}
$$
\| \langle \xi \rangle \widetilde{\mathcal{F}} [e^{-it \mathcal{H}} G(x,U)] \|_{L^2} 
= \| \langle \xi \rangle \widetilde{\mathcal{F}}  G(x,U) \|_{L^2} 
\lesssim \| G(x,U) \|_{H^1} \lesssim \| U \|_{\infty}^3 \| U \|_{H^1} \lesssim \epsilon_1^4 t^{-3/2}.
$$

\medskip

\noindent \underline{Bound for $\widetilde{\mathcal{R}}$}. This term is easily to bound, but is made up of summands with different behaviors. For the sake of illustration, we treat the one that decays the slowest, namely $\mathcal{Q}(\underline{U_e},a_j \Xi_j)$. Taking the distorted Fourier transform and applying the inequalities~\eqref{voilier} and~\eqref{bd:estimation:a0(t)-a1(t)} gives
\begin{equation}
\label{durbec}
\left\| \langle \xi \rangle \widetilde{\mathcal{F}} \mathcal{Q}(\underline{U_e},a_j \Xi_j) \right\|_{L^2} \lesssim |a_j| \left\| \xi  \widetilde{f} \right\|_{L^2} \left\| \xi \widetilde{\Xi_j} \right\|_{L^2} \lesssim \epsilon_1^3 \langle t \rangle^{-2-\nu+\alpha}.
\end{equation}

\medskip

\noindent \underline{Bound for $\widetilde{\Mod}$.} By definition of $\Mod_\Phi$ and Lemma~\ref{propfirst}, 
$$
\| \langle \xi \rangle \widetilde{\Mod}_\Phi \|_{L^2} \lesssim | \omega - p^2 - \dot \gamma| + |\dot \omega| + |2p - \dot y| + |\dot p| \lesssim \epsilon_1^2 \langle t \rangle^{-2 + 2 \alpha}.
$$

\medskip

\noindent \underline{Bound for $\widetilde{\mathcal{E}}$.} This error term consists of two summands
\begin{align*}
\mathcal E & =  (V_{\underline{\omega}} -  e^{i(p-\pu)\sigma_3 x} V_\omega  e^{i(\pu-p)\sigma_3 x}) \Uu + e^{i(p-\pu)\sigma_3 x} \mathcal N( e^{i(\pu-p)\sigma_3 x} \Uu)  -\mathcal N(\Uu) \\
& = \mathcal E_1+\mathcal E_2 .
\end{align*}
With the help of \eqref{id:relation-distorted-fourier-and-projectors} and Lemma \ref{bartavelle2}, the distorted Fourier transform of the first part of $\mathcal{E}$ can be written as
\begin{align*}
&\widetilde{\mathcal F}_\rho \mathcal{E}_1 = \widetilde{\mathcal F}_\rho \Pue \left( (V_{\underline{\omega}} - e^{i(p-\underline{p}) \sigma_3 x} V_\omega  e^{i(\underline{p}-p) \sigma_3 x} ) \underline{U} \right)\\
& \qquad = \widetilde{\mathcal F}_\rho \Pue \left( (V_{\underline{\omega}} - e^{i(p-\underline{p}) \sigma_3 x} V_\omega  e^{i(\underline{p}-p) \sigma_3 x} ) \underline{U_d} \right) + \widetilde{\mathcal F}_\rho \Pue \left( (V_{\underline{\omega}} - e^{i(p-\underline{p}) \sigma_3 x} V_\omega  e^{i(\underline{p}-p) \sigma_3 x} ) \underline{U_e} \right) \\
& \qquad = \widetilde{\mathcal F}_\rho \Pue \left( (V_{\underline{\omega}} - e^{i(p-\underline{p}) \sigma_3 x} V_\omega  e^{i(\underline{p}-p) \sigma_3 x} ) \underline{U_d} \right) + \sum_{\lambda \in \{ \pm \}} \int e^{ it \lambda (\uo+\eta^2)} \mathfrak{s}^{\omega,\underline{\omega}}_{\rho \lambda}(\xi,\eta) \widetilde{f}_\lambda(\eta) \,d\eta
\end{align*}
with $\mathfrak{s}=\mathfrak{s}^{\omega,\underline{\omega}}_{\rho \lambda}$ satisfying 
\begin{equation} \label{bound-mathcalE:bd:mathfraks}
\left| \partial_\xi^a \partial_\eta^b \frac{\mathfrak{s} (\xi,\eta)}{\eta} \right| \lesssim_{a,b} \frac{|\omega-\underline{\omega}| + |p-\underline{p}|}{\langle \eta \rangle}\sum_{\pm}  \frac{1}{\langle \xi \pm \eta \rangle^2}.
\end{equation}
The bound for the term involving $\underline{U_d}$ is easily obtained, and we focus now on the integral term accounting for $\underline{U_e}$. Integrating by parts in $\eta$,
\begin{align*}
 \int e^{ it \lambda (\uo+\eta^2)} \mathfrak{s} (\xi,\eta) \widetilde{f}_\lambda(\eta) \,d\eta & =-\frac{1}{2it}\int_{\mathbb R} e^{ it \lambda (\uo+\eta^2)} \partial_\eta \left( \frac{\mathfrak{s}}{\eta} \right) \widetilde{f} (\eta) \,d\eta\\
& \qquad \qquad -\frac{1}{2it}\int_{\mathbb R} e^{ it \lambda (\uo+\eta^2)}  \frac{\mathfrak{s} (\xi,\eta) }{\eta} \partial_\eta  \widetilde{f} (\eta) \,d\eta \ 
\end{align*}
(the boundary terms vanish since $\widetilde f(0)=0$). If $t \geq 1$, we then bound using \eqref{bound-mathcalE:bd:mathfraks}, then $\la \xi \ra \lesssim \la \eta \ra +\la \eta\pm \xi \ra$, \eqref{bd:bootstrap-omega2} and the Young inequality for convolution:
\begin{align*}
& \left\|  \la \xi \ra \int e^{ it \lambda (\uo+\eta^2)} \mathfrak{s} (\xi,\eta) \widetilde{f}_\lambda(\eta) \,d\eta \right\|_{L^2}  \lesssim \frac{|\omega-\underline{\omega}| + |p-\underline{p}|}{t} \left\| \sum_{\pm }\int_{\mathbb R} \frac{|\widetilde f(\eta)|+|\partial_\xi \widetilde f(\eta)|}{\langle \eta\rangle}\frac{\la \xi \ra}{\langle \eta\pm \xi\rangle^2} d\eta \right\|_{L^2_\xi} \\
&\qquad\qquad\qquad\qquad \lesssim \epsilon_1 t^{-2-\nu}\left\|  \sum_{\pm }\int_{\mathbb R} \frac{|\widetilde f(\eta)|+|\partial_\xi \widetilde f(\eta)|}{\langle  \eta \pm \xi \rangle^2}+ \frac{|\widetilde f(\eta)|+|\partial_\xi \widetilde f(\eta)|}{\la \eta \ra \langle  \eta \pm \xi \rangle}  d\eta \right\|_{L^2_\xi}\\
&\qquad\qquad\qquad\qquad \lesssim  \epsilon_1 t^{-2-\nu} \left(\| |\widetilde f|+|\partial_\xi \widetilde f| \|_{L^2} +\left\| \frac{|\widetilde f|+|\partial_\xi \widetilde f|}{\la \xi \ra} \right\|_{L^1}\right)\ \lesssim \epsilon_1^2 t^{-2-\nu+\alpha};
\end{align*}
the estimate for $|t| \leq 1$ is similar. Overall, we proved that
\begin{equation}
\label{boundE1}
\left\| \langle \xi \rangle \widetilde{\mathcal{E}_1} \right\|_{L^2} \lesssim \epsilon_1^2 \langle t \rangle^{-2-\nu+\alpha}.
\end{equation}

To bound the second part of $\mathcal{E}$, we use Proposition~\ref{tourterelle} to obtain
\begin{equation}
\label{boundE2}
\begin{split}
\left\| \langle \xi \rangle \widetilde{\mathcal{E}_2} \right\|_{L^2} & = \left\| \langle \xi \rangle \widetilde{\mathcal{F}} \left[ e^{i(p-\underline p) \sigma_3 x} \mathcal{N}(e^{i(\underline p - p) \sigma_3 x} \underline{U}) - \mathcal{N} (\underline{U}) \right] \right\|_{L^2} \\
& \lesssim \left\| e^{i(p-\underline p) \sigma_3 x} \mathcal{N}(e^{i(\underline p - p) \sigma_3 x} \underline{U}) - \mathcal{N} (\underline{U}) \right\|_{H^1} \\
& \lesssim |p - \underline{p}| \left\| \mathcal{N}(\underline{U}) \right\|_{H^1} + \{ \mbox{similar term} \} \\
& \lesssim \epsilon_1 \langle t \rangle^{-1-\nu} \epsilon_1^2 t^{-1} = \epsilon_1^3 \langle t \rangle^{-2-\nu};
\end{split}
\end{equation}
here, we used that the slowest decaying term in $H^1$ in $\mathcal{N}(U)$ is $\mathcal{C}^S$, which decays like $\epsilon_1^3 \langle t \rangle^{-1}$.

\end{proof}

\section{Control of the modulation parameters}
\label{sectionmodulation}

The aim of this section will be to prove the following proposition.

\begin{proposition}
\label{lemmamodulation}
Under the assumptions of Proposition~\ref{propbootstrap}, $\omega$ and $p$ can be continued up to time $T$, and furthermore
\begin{equation} \label{eq:modulation-omega-improved} 
 | \omega(t)-\omega(T)| + |p(t) - p(T)| \lesssim \epsilon_1^2 \langle t \rangle^{-1-\nu}
\end{equation}
if $t \in [0,T]$.
\end{proposition}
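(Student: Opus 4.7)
The plan starts from the linear system of Lemma~\ref{echasseblanche}, $M\vec{m} = N$, where $\vec m = (\omega-p^2-\dot\gamma,\,-i\dot\omega,\,i(2p-\dot y),\,\dot p)^\top$ and $N = -(\langle \mathcal N(U),\sigma_3\Xi_j\rangle)_{j=0,\ldots,3}$. Since $M=M_0+M_1(U)$ with $M_0$ constant invertible and $\|M_1(U)\|\lesssim\|U\|_{L^2}\ll 1$ under the bootstrap, $M^{-1}$ exists and is bounded; this yields the existence and continuation of $(\omega,p)$ on $[0,T]$ together with the identity expressing $(\dot\omega,\dot p)$ as linear combinations of the pairings $\langle \mathcal N(U),\sigma_3\Xi_j\rangle$ plus corrections of relative size $\|U\|_{L^2}$. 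Since the rough bound of Lemma~\ref{propfirst} only produces $|\omega(t)-\omega(T)|\lesssim\epsilon_1^2\langle t\rangle^{-1+2\alpha}$, the additional factor $\langle t\rangle^{-\nu-2\alpha}$ must be recovered from time oscillations in the nonlinearity.

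Split $\mathcal{N}(U)$ into its quadratic part (from $V_{\pm\pm}$, $V_{+-}$) and a cubic-or-higher remainder. Paired against the exponentially localized $\sigma_3\Xi_j$, the remainder is controlled pointwise by $O(\epsilon_1^3\langle s\rangle^{-3+3\alpha})$ via the improved local decay~\eqref{bd:localdecay}, and thus integrates to an acceptable $O(\epsilon_1^3\langle t\rangle^{-2+3\alpha})$. For the quadratic part, decomposing $U = \sum_n a_n\underline{\Xi_n} + \Uue$, the interactions involving at least one discrete mode decay fast thanks to~\eqref{bd:estimation:a0(t)-a1(t)}. The principal remaining term is
$$I(s) \;=\; \int W\,(e^{is\Hu}f)_j\,(e^{is\Hu}f)_k\,dx, \qquad W = V_{\bullet}\cdot(\sigma_3\underline{\Xi_m})_\ell \in \mathcal{S},$$
which by Proposition~\ref{propmuquadscal} rewrites as
$$I(s) \;=\; \sum_{\lambda,\mu\in\{\pm\}} \int e^{-is\Phi_{\lambda\mu}(\eta,\sigma)}\,\widetilde f_\lambda(s,\eta)\,\widetilde f_\mu(s,\sigma)\,\mathfrak{m}_{jk,\lambda\mu}^W(\eta,\sigma)\,d\eta\,d\sigma,$$
with $\Phi_{\lambda\mu}(\eta,\sigma)=\lambda(\uo+\eta^2)+\mu(\uo+\sigma^2)$.

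I then integrate $\int_t^T I(s)\,ds$ by parts in $s$ using $\partial_s e^{-is\Phi}=-i\Phi e^{-is\Phi}$. For the non-resonant branches $\lambda=\mu$, one has $|\Phi_{\lambda\mu}|\geq 2\uo$, so dividing by $\Phi_{\lambda\mu}$ produces a symbol satisfying the same estimates~\eqref{hibou3}--\eqref{hibou4}; the resulting boundary term is again a quadratic pairing $\int \widetilde W\,\Uue\,\Uue\,dx$ of size $O(\epsilon_1^2\langle s\rangle^{-2+2\alpha})$ by the improved local decay~\eqref{bd:localdecay}. The interior term involves $\partial_s\widetilde f$, whose main part by~\eqref{dtf} and Proposition~\ref{grebehuppe} satisfies $\|\langle\xi\rangle\widetilde{\mathcal D}\|_{L^2}\lesssim \epsilon_1^2\langle s\rangle^{-3/2}$, giving an even smaller integrable contribution (the $\widetilde{\Mod_{\Uu}}$ and $\mathcal C^S$ pieces of $\partial_s\widetilde f$ are handled separately, the former via the bound on $\tau(s)$ from Proposition~\ref{propModU}, the latter by a further integration by parts). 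Evaluating the boundary at $s=t,T$ and using $\langle T\rangle\geq\langle t\rangle$, the non-resonant contribution is $\lesssim\epsilon_1^2\langle t\rangle^{-2+2\alpha}$, which dominates $\epsilon_1^2\langle t\rangle^{-1-\nu}$ whenever $\nu<1-2\alpha$.

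The main obstacle is the resonant branches $\lambda=-\mu$, for which $\Phi_{\lambda\mu}=\pm(\eta^2-\sigma^2)$ vanishes on the diagonal $\{|\eta|=|\sigma|\}$. I introduce a smooth cutoff $\chi(\langle s\rangle^{\theta}\Phi)$ with small $\theta>0$: on the non-resonant region $\{\langle s\rangle^{\theta}|\Phi|\gtrsim 1\}$, the previous integration by parts yields only a $\langle s\rangle^\theta$ loss and thus a contribution $O(\epsilon_1^2\langle s\rangle^{-2+2\alpha+\theta})$, still integrable; on the near-diagonal region, I exploit the vanishing structure~\eqref{hibou4} (namely $\mathfrak m/(\eta\sigma)$ satisfies nice decay) together with the cancellation $\widetilde f(0)=0$ and the uniform bound $\|\widetilde f\|_{H^1}\lesssim\epsilon_1$ to absorb the singularity of the phase, while the resonant region has measure $\sim \langle s\rangle^{-\theta}$ and supplies the extra smallness needed. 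Choosing $\theta$ and $\nu$ sufficiently small in terms of $\alpha$ yields $|\omega(t)-\omega(T)|\lesssim \epsilon_1^2\langle t\rangle^{-1-\nu}$; the argument for $p$ is identical.
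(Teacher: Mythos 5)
Your proposal follows essentially the same route as the paper's: you reduce via Lemma~\ref{echasseblanche} and Proposition~\ref{propmuquadscal} to time integrals of bilinear oscillatory integrals, identify that the naive two integrations by parts in $\eta,\sigma$ fall short by $\langle t\rangle^{2\alpha}$, and propose to recover the gap by a space-time resonance analysis (cutoff near $\Phi=0$, small measure for the resonant piece, integration by parts in $s$ for the non-resonant piece, and use of the evolution equation and Proposition~\ref{grebehuppe} to control $\partial_s\widetilde f$). Your preliminary observation that the sign branches $\lambda=\mu$ are trivially non-resonant is a nice cosmetic streamlining, and the paper's decomposition handles it implicitly since $\varphi^r$ then vanishes.

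However, two pieces of the argument are too thin to count as a proof. First, you omit the low-frequency cutoff that the paper uses to isolate the region $\min(|\eta|,|\sigma|)\lesssim t^{-\kappa}$: the paper's treatment of $M^{r,2}$ (and the analogous non-resonant pieces) requires a second integration by parts in $\eta$ after the first one hits the resonant cutoff, producing a factor $\varphi^h/\eta$ whose boundedness by $t^{\kappa}$ relies exactly on having removed small $|\eta|$; without that cutoff the iterated integration by parts is not justified (and your scale choice $J=\max(k_1,k_2)-\kappa m$ versus the absolute cutoff $\chi(\langle s\rangle^\theta\Phi)$ is not innocuous once you sum over dyadic scales in the high-frequency regime). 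Second, and more seriously, the sentence dispatching the $\mathcal{C}^S$ contribution to $\partial_s\widetilde f$ ``by a further integration by parts'' elides the most demanding part of the paper's proof of this proposition (the terms $N_\delta$ and $N_{\operatorname{p.v.}}$): one must unfold $\widetilde{\mathcal{C}^S}$ as a triple frequency integral, split each input profile into $\widetilde f^L$ and $\widetilde f^H$ at scale $t^{-1/2}$ as in~\eqref{foulque1}--\eqref{foulque2}, integrate by parts in all four Fourier variables for the high-frequency piece, and control the $\operatorname{p.v.}$ kernel through the truncated Hilbert transform and the weighted $L^2$ bound~\eqref{foulque4}. The $\mathcal{C}^S$ piece only decays like $s^{-1}$ in $L^\infty$ and like $s^{-1}$ in an appropriately weighted $L^2$, so the naive bound one would obtain from Cauchy--Schwarz fails to be integrable in $s$; without the high/low frequency split the argument does not close. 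As written, your proposal signals the right direction but does not supply the mechanism that actually produces the integrable bound.
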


The proof of this proposition will occupy the rest of this section.

\subsection{Refined equation} 

With the notations of Lemma~\ref{echasseblanche}, we have, since $v$ is trapped and by Lemma~\ref{ibis},
$$
M = \underline{M_0} + O(\epsilon_1 \langle t \rangle^{-1 + \alpha}),
$$
where $\underline{M_0}$ stands for the matrix $M_0$ evaluated at the parameter $\underline{\omega}$. As for the nonlinear term, it can be bounded by
$$
N = - ( \langle \mathcal N(U),\sigma_3 \Xi_j \ra)_{j=0,1,2,3}  = O(\epsilon_1^2 \langle t \rangle^{-2+2\alpha})
$$
since $v$ is trapped and by Lemma~\ref{ibis}. The leading order term in $N$ can be isolated as follows:
\begin{align*}
N & = - ( \langle \mathcal N(U),\sigma_3 \Xi_j \ra)  = - ( \langle \mathcal N(\underline{U}),\sigma_3 \Xi_j \ra) + O(\epsilon_1^2 \langle t \rangle^{-3}) \\
& = - ( \langle \mathcal N(\underline{U_e}),\sigma_3 \Xi_j \ra) + O(\epsilon_1^2 \langle t \rangle^{-3}) \\
& = - ( \langle \mathcal B(\underline{U_e}),\sigma_3 \Xi_j \ra) + O(\epsilon_1^2 \langle t \rangle^{-3+3\alpha}),
\end{align*}
where $\mathcal{B}$ stands for the quadratic terms in $\mathcal{N}$:
$$
\mathcal{B}(U) = 
\begin{pmatrix} V_{++} U_1^2 + V_{--} U_2^2 + V_{+-} U_1 U_2 \\ - V_{++} U_2^2 - V_{--} U_1^2 - V_{+-} U_1 U_2 \end{pmatrix}.
$$
Using these estimates on $M$ and $N$, it follows from inverting the equation
$$
M (\omega - p^2 - \dot \gamma, \, -i\dot \omega, \, i(2p-\dot y), \, \dot p)^\top = N
$$
that
\begin{align*}
& \dot \omega = - c_{\underline{\omega}}^{-1} \langle \mathcal{B}(\underline{U_e}), \sigma_3 \Xi_0 \rangle + O(\epsilon_1^2 \langle t \rangle^{-3+3\alpha}) \\
& \dot p = - \| \Phi_{\underline{\omega}} \|_2^{-2} \langle \mathcal{B}(\underline{U_e}), \sigma_3 \Xi_2 \rangle + O(\epsilon_1^2 \langle t \rangle^{-3+3\alpha}).
\end{align*}

\subsection{Decomposition of the quadratic term} Integrating the above ODE between the times $t$ and $T$, we infer that $\omega(t)-\omega(T)$ as well as $p(t) - p(T)$ can be written as linear combinations of terms of the type
$$
\int_0^T \int_{-\infty}^{\infty} V_{ij} (\underline{U_e})_i (\underline{U_e})_j \,dx \, dt
$$
where the $V_{ij}$ are Schwartz functions and $i,j=1,2$. Changing to the Fourier space description, Proposition~\ref{propmuquadscal} translates the above into terms of the type
$$
\int_t^T  M(s) \,ds, \qquad \mbox{with} \qquad M(s) = \int e^{-is \Phi_{\lambda \mu}(\eta,\sigma)} \widetilde{f}_\lambda(\eta) \widetilde{f}_\mu(\sigma) \mathfrak{m}_{\lambda \mu}(\eta,\sigma) \,d\eta \, d\sigma.
$$
where
$$ 
 \Phi_{\lambda \mu}(\eta,\sigma) = \lambda (\uo+ \eta^2) + \mu (\uo+ \sigma^2).
$$
Thus, for such terms, it will suffice to prove the bound
\begin{equation}
\label{aimM0}
\left| \int_t^T M(s) \,ds \right| \lesssim \epsilon_1^2 \langle t \rangle^{-1-\nu}.
\end{equation}

Before delving into the estimates, let us count powers of time to understand where the difficulty lies. 
We see that integrations by parts in $\eta$ and $\sigma$ would gain a factor $t^{-2}$, but would lead to using the bound for $\| \partial_\xi \widetilde{f} \|_{L^2}$ twice, which loses $t^{2\alpha}$. Overall, the powers of $t$ become $t^{-2+2\alpha}$, which, after integrating in time, decays like $t^{-1+2\alpha}$, which is not enough. A small integrability gain (in $t$) is needed, which will be achieved by considering resonances.

First of all, the boundedness of $\mathfrak{m}(\eta,\sigma)$ combined to the Cauchy-Schwarz inequality gives
$$
| M(t) | \lesssim \| \widetilde{f} \|_{L^1}^2 \lesssim \| f \|_{H^1}^2 \lesssim \epsilon_1^2. 
$$
Therefore, it suffices to prove~\eqref{aimM0} for $t>1$, which will be assumed henceforth.
Cutoff functions will be used to split $M(t,\xi)$ into three different terms, which will be estimated separately. First, we localize time on the scale $2^m$, and introduce the high-frequency cutoff for a fixed small but universal constant $\kappa>0$
$$
\varphi^{h}(t,\eta,\sigma) = \sum_{m \geq 0} \varphi_m(t) \varphi_{> - m \kappa} (\eta) \varphi_{> -m \kappa} (\sigma),
$$
which restricts to the region where $|\eta|$ and $|\sigma|$ are $\gtrsim t^{-\kappa}$. Note that we are using here the \underline{inhomogeneous} dyadic decomposition of Section~\ref{dyadicdec}, for which indices such as $m$ run over $\mathbb{N}_0$.
On the support of the low-frequency cutoff,
$$
\varphi^{l}(t,\eta,\sigma) = 1 - \varphi^{h}(t,\eta,\sigma),
$$
either $\eta$ or $\sigma$ is small: $\min(|\eta|,|\sigma|)\lesssim t^{-\kappa}$.

We will also distinguish between a resonant and a non-resonant part, by localizing, in addition to time, the frequencies $\xi,\eta,\sigma$ on the scales $2^j$, $2^{k_1}$ and $2^{k_2}$ respectively, and finally the phase $\Phi$ on a scale $2^J$, with
$$
J = \max(k_1,k_2) - \kappa m.
$$
The resonant and non-resonant cutoff functions are then given by 
$$
\varphi^r(t,\xi,\eta,\sigma) = \sum_{k_1,k_2,m} \varphi_m(t) \varphi_{k_1}(\eta) \varphi_{k_2}(\sigma) \varphi_0(2^{-J} \Phi)  \quad \mbox{and} \quad \varphi^{nr}(t,\xi,\eta,\sigma) = 1 - \varphi^r(t,\xi,\eta,\sigma).
$$
On the support of the resonant cut-off $\varphi^r$ we have $|\Phi|\lesssim (|\eta|+|\sigma|)t^{-\kappa}$.

We can now split $M$ into a low frequency, a resonant, and a non-resonant term as follows (from this point on, we will systematically omit the dependence of all functions and symbols on the time variable, in order to make notations lighter)
\begin{align*}
M(t) & = \int e^{-it \Phi(\eta,\sigma)} \widetilde{f}(\eta) \widetilde{f}(\sigma) \mathfrak{m}(\eta,\sigma) \,d\eta \, d\sigma \\
& =  \int e^{-it \Phi(\eta,\sigma)} \widetilde{f}(\eta) \widetilde{f}(\sigma) \varphi^l(\eta,\sigma) \mathfrak{m}(\eta,\sigma) \,d\eta \, d\sigma \\
& \qquad + \int e^{-it \Phi(\eta,\sigma)} \widetilde{f}(\eta) \widetilde{f}(\sigma) \varphi^h(\eta,\sigma) \varphi^{r}(\eta,\sigma) \mathfrak{m}(\eta,\sigma) \,d\eta \, d\sigma \\
& \qquad + \int e^{-it \Phi(\eta,\sigma)} \widetilde{f}(\eta) \widetilde{f}(\sigma) \varphi^h(\eta,\sigma) \varphi^{nr}(\eta,\sigma) \mathfrak{m}(\eta,\sigma) \,d\eta \, d\sigma \\
& = M^l (t) + M^{r} (t) + M^{nr}(t).
\end{align*}

We will show in \eqref{improved-modulation:bd:Ml} that for all $\kappa>0$ we have $ |M^l(t)| |\lesssim  \epsilon_1^2 t^{-2-\kappa/4}$, in \eqref{improved-modulation:bd:Mr} that for $\kappa$ small enough $| M^{r}(t)  |\lesssim  \epsilon_1^2 t^{-2-\kappa/4}$, and in \eqref{improved-modulation:bd:Mnr} that for $\kappa$ small enough $| \int_{t}^T M^{nr}(s)ds|\lesssim \epsilon_1^2t^{-5/4}$, provided $\alpha $ is small enough depending on $\kappa$. This will prove \eqref{aimM0} if $\nu$ is chosen smaller than $\kappa/4$.

\subsection{The low frequency term $M^l$.} We claim that
\begin{equation} \label{improved-modulation:bd:Ml}
|M^l(t)| \lesssim \epsilon_1^2 t^{-2-\frac{\kappa}{4}}.
\end{equation}
Indeed, integrating by parts in $\eta$ and $\sigma$ gives 
\begin{align*}
M^l(t) & = \pm \frac{1}{4t^2} \int e^{-it\Phi(\eta,\sigma)}  \partial_\xi \widetilde{f}  (\eta)  \partial_\xi \widetilde{f} (\sigma)
\frac{\mathfrak{m}(\eta,\sigma)}{\eta \sigma}\varphi^l(\eta,\sigma) \,d \eta\, d \sigma \\
& \qquad \pm \frac{1}{4t^2} \int e^{-it\Phi(\eta,\sigma)} \widetilde{f}  (\eta)  \partial_\xi \widetilde{f} (\sigma)
\frac{\mathfrak{m}(\eta,\sigma)}{\eta \sigma}  \partial_\eta \varphi^l(\eta,\sigma)  \,d \eta\, d \sigma + \{ \mbox{similar or simpler terms} \} \\
& = M^{l,1}(t) + M^{l,2}(t) + \{ \mbox{similar or simpler terms} \}.
\end{align*}
Notice that the symbol $\frac{\mathfrak{m}(\eta,\sigma)}{\eta \sigma}$ is bounded, but not necessarily smooth at $\eta=0$ or $\sigma=0$; however, the integrations by parts above do not produce boundary terms since $\widetilde{f}(0)=0$.

By proposition~\ref{propmuquadscal}, $\left| \frac{\mathfrak{m}(\eta,\sigma)}{\eta \sigma} \right| \lesssim \frac{1}{\langle \eta \rangle \langle \sigma \rangle}$. On the support of the low frequency cutoff $\varphi^l$, either $\eta$ or $\sigma$ is $\lesssim t^{- \kappa}$. Hence by Cauchy-Schwarz and \eqref{eqbootstrap}:
$$
|M^{l,1}|\lesssim \frac{1}{t^2}\| \partial_\xi \widetilde f \|_{L^2}^2 \left\|  \frac{1}{\langle \eta \rangle \langle \sigma \rangle}\right\|_{L^2(\min(|\eta|,|\sigma|)\lesssim t^{-\kappa})} \lesssim \epsilon_1^2 t^{-2-\frac{\kappa}{2}+2\alpha}.
$$
The term $M^{l,2}$ can be dealt with identically, after noting that
$$
\left| \partial_\eta \varphi^\ell(\eta,\sigma) \right| \lesssim \frac{\varphi_{\sim -\kappa m}(\eta)}{|\eta|},
$$
and using the Hardy inequality and the fact that $\widetilde f(0)=0$ to bound $\left\| \frac{\widetilde{f}(\eta)}{\eta} \right\|_{L^2}$ by $\| \partial_\xi \widetilde f \|_{L^2}$. This shows \eqref{improved-modulation:bd:Ml} up to choosing $\alpha$ small enough.

\subsection{The resonant term $M^r$} 
We claim that
\begin{equation} \label{improved-modulation:bd:Mr}
|M^r(t)| \lesssim \epsilon_1^2 t^{-2-\frac{\kappa}{4}}.
\end{equation}
To show it, we integrate
\begin{align*}
M^r(t) & = \pm \frac{1}{4t^2} \int e^{-it \Phi(\eta,\sigma)} \partial_\xi \widetilde{f}(\eta) \partial_\xi \widetilde{f}(\sigma) \varphi^h \varphi^{r} \frac{ \mathfrak{m}(\eta,\sigma)}{\eta \sigma}  \,d\eta \, d\sigma \\
& \qquad \pm \frac{1}{4t^2} \int e^{-it \Phi(\eta,\sigma)} \widetilde{f}(\eta) \partial_\xi \widetilde{f}(\sigma) \partial_\eta [ \varphi^h \varphi^{r}] \frac{\mathfrak{m}(\eta,\sigma)}{\eta \sigma}  \,d\eta \, d\sigma \\
& \qquad + \{ \mbox{similar and simpler terms} \}\\
& = M^{r,1}(t) + M^{r,2} (t) + \{ \mbox{similar and simpler terms} \}.
\end{align*}

To bound $M^{r,1}$, we start by estimating the $L^2$ norm of $\varphi^r (\eta,\sigma) \varphi_{k_1} (\eta) \varphi_{k_2}(\sigma)$, which is the square root of the measure of the set $\{ |\Phi| \lesssim 2^{\max(k_1,k_2)}  t^{-\kappa}, \, |\eta| \sim 2^{k_1}, \, |\sigma| \sim 2^{k_2} \}$:
$$
\| \varphi^r(\eta,\sigma) \varphi_{k_1} (\eta) \varphi_{k_2}(\sigma) \|_{L^2_{\eta,\sigma}} \lesssim 
\left\{
\begin{array}{ll}
0 & \mbox{if $2^{k_1} \not \sim 2^{k_2}$} \\
2^{k_1/2} t^{-\frac{\kappa}{2}} & \mbox{if $2^{k_1} \sim 2^{k_2} \gg 1$} \\
t^{- \frac{\kappa}{3}} & \mbox{if $2^{k_1} \sim 2^{k_2} \lesssim 1$}.
\end{array}
\right.
$$
Therefore,
$$
\left\| \frac{\varphi^r(\eta,\sigma)}{\langle \eta \rangle \langle \sigma \rangle} \right\|_{L^2_{\eta,\sigma}} \lesssim \sum_{k_1,k_2} 2^{-k_1-k_2} \|  \varphi^r(\eta,\sigma) \varphi_{k_1} (\eta) \varphi_{k_2}(\sigma) \|_{L^2_{\eta,\sigma}}  \lesssim t^{-\frac{\kappa}{3} }.
$$
Using the bound $\left| \frac{\mathfrak{m}(\eta,\sigma)}{\eta \sigma} \right| \lesssim \frac{1}{\langle \eta \rangle \langle \sigma \rangle}$ and the Cauchy-Schwarz inequality, we obtain
$$
| M^{r,1}(t) | \lesssim t^{-2} \left\| \partial_\xi \widetilde f(\eta)\partial_\xi \widetilde f(\sigma)\right\|_{L^2_{\eta,\sigma}} \left\| \frac{\varphi^r(\eta,\sigma)}{\langle \eta \rangle \langle \sigma \rangle} \right\|_{L^2_{\eta,\sigma}} \lesssim \epsilon_1^2 t^{-2-\frac{\kappa}{3}-2\alpha}
$$

To deal with $M^{r,2}$, it suffices to notice that
$$
| \partial_\eta \varphi^h(\eta,\sigma) | \lesssim \frac{1}{|\eta|}  \qquad \mbox{and} \qquad | \partial_\eta \varphi^{r}(\eta,\sigma) | \lesssim  \langle t \rangle^{\kappa}.
$$
Therefore, the term involving $\partial_\eta \varphi^h$ can be treated with the help of Hardy's inequality, while the term including $\partial_\eta \varphi^{r}$ can be bounded through a further integration by parts in $\eta$. This further integration will gain a factor $\frac{1}{t}$; it will also give factors such as $\frac{\varphi^{h}(\eta,\sigma)}{\eta}$ and $\partial_\eta \frac{\varphi^{h}(\eta,\sigma)}{\eta}$, which only lose small powers of time
$$
\left| \frac{\varphi^{h}(\eta,\sigma)}{\eta} \right| \lesssim t^{\kappa}, \qquad \left| \partial_\eta \frac{\varphi^{h}(\eta,\sigma)}{\eta} \right| \lesssim t^{2\kappa},
$$
and are therefore harmless. Notice that the singularities of $\frac{\mathfrak{m}(\eta,\sigma)}{\eta \sigma}$ at $\eta =0$ or $\sigma=0$ are cancelled by the cutoff function $\varphi^h$. This shows \eqref{improved-modulation:bd:Mr}.

\subsection{The non-resonant term $M^{nr}$} We claim that
\begin{equation} \label{improved-modulation:bd:Mnr}
|M^{nr}|\lesssim \epsilon_1^2 t^{-\frac 54}.
\end{equation}
We treat this term through integration by parts in time, which using Proposition \ref{grebehuppe} yields

\begin{align*}
\int_t^T M^{nr}(s) \,ds & = i \int e^{-iT \Phi(\eta,\sigma)} \widetilde{f}(\eta) \widetilde{f}(\sigma) \frac{\varphi^h (\eta,\sigma)\varphi^{nr}(\eta,\sigma)}{\Phi(\eta,\sigma)} \mathfrak{m}(\eta,\sigma) \,d\eta \, d\sigma \\
& \qquad - i \int e^{-it \Phi(\eta,\sigma)}  \widetilde{f}(\eta) \widetilde{f}(\sigma) \frac{\varphi^h (\eta,\sigma)\varphi^{nr}(\eta,\sigma)}{\Phi(\eta,\sigma)} \mathfrak{m}(\eta,\sigma) \,d\eta \, d\sigma  \\
& \qquad + \int_t^T \int e^{-is \Phi(\eta,\sigma)} \widetilde{\mathcal{D}}(\eta) \widetilde{f}(\sigma) \frac{\varphi^h(\eta,\sigma) \varphi^{nr}(\eta,\sigma)}{\Phi(\eta,\sigma)} \mathfrak{m}(\eta,\sigma) \,d\eta \, d\sigma\,ds\\
& \qquad + \int_t^T \int e^{-is \Phi(\eta,\sigma)} \widetilde{\mathcal{C}^S}(\eta) \widetilde{f}(\sigma) \frac{\varphi^h(\eta,\sigma) \varphi^{nr}(\eta,\sigma)}{\Phi(\eta,\sigma)} \mathfrak{m}(\eta,\sigma) \,d\eta \, d\sigma \,ds\\
& \qquad + \int_t^T \int e^{-is \Phi(\eta,\sigma)} \widetilde{\mathcal{F}}[e^{-it\underline{\mathcal{H}} }\Mod_{\Uu} ](\eta) \widetilde{f}(\sigma) \frac{\varphi^h(\eta,\sigma) \varphi^{nr}(\eta,\sigma)}{\Phi(\eta,\sigma)} \mathfrak{m}(\eta,\sigma) \,d\eta \, d\sigma\,ds\\
& \qquad + \{ \mbox{similar and simpler terms} \}\\
& = N^1 + N^2 + N^3 + N^4 +N^5 + \{ \mbox{similar and simpler terms} \}.
\end{align*}

\medskip

\noindent \underline{The term $N^1$}. To deal with this term, we integrate by parts in $\eta$ and $\sigma$ to obtain
\begin{align*}
N^1 & = \frac{1}{T^2} \int e^{-iT \Phi(\eta,\sigma)} \partial_\xi \widetilde{f}(\eta) \partial_\xi \widetilde{f}(\sigma) \frac{\varphi^h(\eta,\sigma) \varphi^{nr}(\eta,\sigma)}{\Phi(\eta,\sigma)} \frac{\mathfrak{m}(\eta,\sigma)}{\eta \sigma} \,d\eta \, d\sigma \\
& \qquad + \frac{1}{T^2} \int e^{-iT \Phi(\eta,\sigma)} \widetilde{f}(\eta) \partial_\xi \widetilde{f}(\sigma) \partial_\eta \frac{\varphi^h(\eta,\sigma) \varphi^{nr}(\eta,\sigma)}{\Phi(\eta,\sigma)} \frac{\mathfrak{m}(\eta,\sigma)}{\eta \sigma} \,d\eta \, d\sigma \\
& \qquad + \{ \mbox{similar and simpler terms} \} \\
& = N^{1,1} + N^{1,2} + \{ \mbox{similar and simpler terms} \}
\end{align*}
To treat $N^{1,1}$, we use that $\frac{\varphi^h(\eta,\sigma) \varphi^{nr}(\eta,\sigma)}{\Phi(\eta,\sigma)} \lesssim \langle t \rangle^{\kappa}$ as well as $\left| \frac{\mathfrak{m}(\eta,\sigma)}{\eta \sigma} \right| \lesssim \frac{1}{\langle \eta \rangle \langle \sigma \rangle}$ to obtain, thanks to the Cauchy-Schwarz inequality,
$$
|N^{1,1}| \lesssim \frac{1}{T^2} \| \partial_\xi \widetilde{f} \|_{L^2}^2 T^{\kappa} \lesssim \epsilon_1^2 T^{\kappa -2}.
$$
To bound $N^{1,2}$, we use the inequalities
$$
| \partial_\eta \varphi^{h}(\eta,\sigma)| \lesssim t^{\kappa} \qquad \mbox{and} \qquad \left|\partial_\eta \frac{\varphi^{nr}(\eta,\sigma)}{\Phi(\eta,\sigma)} \right| \lesssim t^{2\kappa}.
$$
to obtain
$$
|N^{1,2}| \lesssim \frac{1}{T^2} T^{2 \kappa } \| \partial_\xi \widetilde{f} \|_{L^2} \| \widetilde{f} \|_{L^2} \lesssim \epsilon_1^2 T^{2\kappa + \alpha -2} 
$$

\medskip

\noindent \underline{The term $N_2$.} It can be treated identically.

\medskip

\noindent \underline{The term $N_3$.} To treat this term, we integrate by parts in $\sigma$, which yields 
\begin{align*}
N^3 & =  \pm \frac{1}{2} \int_t^T \frac{1}{s} \int e^{-is \Phi(\eta,\sigma)} \widetilde{\mathcal{D}}(\eta)\partial_\xi \widetilde{f}(\sigma) \frac{\varphi^h(\eta,\sigma) \varphi^{nr}(\eta,\sigma)}{\Phi(\eta,\sigma)} \frac{\mathfrak{m}(\eta,\sigma)}{\sigma} \,d\eta \, d\sigma \\
& \qquad \pm \frac{1}{2} \int_t^T \frac{1}{s} \int e^{-is \Phi(\eta,\sigma)} \widetilde{\mathcal{D}}(\eta)\widetilde{f}(\sigma) \partial_\sigma \frac{\varphi^h(\eta,\sigma) \varphi^{nr}(\eta,\sigma)}{\Phi(\eta,\sigma)} \frac{\mathfrak{m}(\eta,\sigma)}{\sigma} \,d\eta \, d\sigma \\
& \qquad + \{ \mbox{ simpler terms} \} \\
& = N^{3,1} + N^{3,2} + \{ \mbox{similar and simpler terms} \}
\end{align*}

In order to estimate $N^{3,1}$ we use the bounds $\left| \frac{\mathfrak{m}(\eta,\sigma)}{\eta \sigma} \right| \lesssim \frac{1}{\langle \eta \rangle \langle \sigma \rangle}$, the bound $\left| \frac{\varphi^{nr}(\eta,\sigma)}{\Phi(\eta,\sigma)} \right| \lesssim t^{\kappa}$ and the Cauchy-Schwarz inequality to write
$$
N^{3,1} \lesssim \int_t^T \frac{1}{s} \| \widetilde{\mathcal{D}} \|_{L^2} \| \partial_\xi \widetilde{f} \|_{L^2} t^{\kappa } \,ds.
$$
By \eqref{bd:mathcalD-L21}, this can further be bounded by
$$
N^{3,1} \lesssim \epsilon_1^3 \int_t^T s^{-1 - \frac{3}{2} +\alpha+\kappa}\,ds \lesssim \epsilon_1^3 t^{-\frac{3}{2} +  \alpha+\kappa}.
$$
The term $N^{3,2}$ can be bounded similarly, using this time that $\left| \partial_\sigma \frac{\varphi^h(\eta,\sigma) \varphi^{nr}(\eta,\sigma)}{\Phi(\eta,\sigma)} \right| \lesssim s^{2 \kappa}$.

\medskip

\noindent \underline{The term $N_4$: terms of $\delta$ type} The singular cubic term is made up of terms whose kernel involves a $\delta$ function, and terms whose kernel involves a principal value. We start with the former kind, and will turn to the latter in the next paragraph. The contribution of terms involving a $\delta$ function is a sum of terms of the type
\begin{align*}
&N_\delta =  \int_t^T \int e^{-is \Psi(\sigma,\eta',\sigma',\zeta')} \widetilde{f}(\eta') \widetilde{f}(\sigma') \widetilde{f}(\zeta') \widetilde{f}(\sigma) \\
 &  \qquad  \qquad  \qquad  \qquad \qquad \qquad \frac{\varphi^h(\eta,\sigma) \varphi^{nr}(\eta,\sigma)}{\Phi(\eta,\sigma)} \mathfrak{n}(\eta,\eta',\sigma',\zeta') \mathfrak{m}(\eta,\sigma) \,d\eta' \, d\sigma' \, d\zeta' \, d\sigma \,ds,
\end{align*}
where
$$
\Psi(\sigma,\eta',\sigma',\zeta') = \pm (\uo + \sigma^2) \pm (\uo + (\eta')^2) \pm (\uo + (\sigma')^2) \pm (\uo + (\zeta')^2),
$$
and where it is understood that $\eta = \eta'+\sigma'+\zeta'$; though other sign combinations are possible, they will be ignored to simplify notations. Notice the new factor $\mathfrak{n}(\eta,\eta',\sigma',\zeta')$ stemming from the cubic spectral distribution; it has all its derivatives bounded, except when one of the frequencies $\eta,\eta',\sigma',\zeta'$ vanishes, in which case it might be discontinuous. 

We now introduce new cutoff functions, $\varphi^H$ and $\varphi^L$, which are defined by
$$
\varphi^L(t,\xi) = \varphi_{<0}(t^{1/2} \xi) \quad \mbox{and} \quad \varphi^H(t,\xi) = \varphi_{\geq 0}(t^{1/2} \xi),
$$
and use them to split the contributions of the functions $\widetilde{f}(\eta')$, $\widetilde{f}(\sigma')$, and $\widetilde{f}(\zeta')$: we will denote
\begin{equation}
\label{foulque1}
\widetilde{f}(\xi) = \widetilde{f^L}(\xi) + \widetilde{f^H}(\xi) = \varphi^L(t,\xi) \widetilde{f}(\xi) + \varphi^H(t,\xi) \widetilde{f}(\xi).
\end{equation}
Note that
\begin{equation}
\label{foulque2}
\begin{split}
& \| \widetilde{f^L} \|_{L^1} \lesssim \epsilon_1 t^{-\frac{3}{4}+\alpha}, \quad  \| \widetilde{f^L} \|_{L^2} \lesssim \epsilon_1t^{-1/2+\alpha}, \\
&  \| \partial_\xi \widetilde{f^H} \|_{L^2} \lesssim \epsilon_1 t^\alpha, \quad \left\| |\xi|^{-1} \partial_\xi \widetilde{f^H} \right\|_{L^1} \lesssim \epsilon_1 t^{\frac{1}{4} + \alpha}, \quad \left\| \frac{\langle \xi \rangle}{|\xi|} \partial_\xi \widetilde{f^H} \right\|_{L^2} \lesssim \epsilon_1 t^{\frac 12 + \alpha};
\end{split}
\end{equation}
this follows from the bootstrap hypothesis and the fact that $\widetilde{f}(0) =0$.

Abandoning, for the sake of clarity in the notation, the dependence of the phase and symbols on the variables, the above becomes
\begin{align*}
N_\delta & = \int_t^T \int e^{-is \Psi} \widetilde{f^L}(\eta') \widetilde{f^L} (\sigma') \widetilde{f^L}(\zeta') \widetilde{f}(\sigma)  \frac{\varphi^h \varphi^{nr}}{\Phi} \mathfrak{n} \mathfrak{m} \,d\eta' \, d\sigma' \, d\zeta' \, d\sigma \,ds \\ 
& \qquad \qquad +  \int_t^T \int e^{-is \Psi} \widetilde{f^H}(\eta') \widetilde{f^H} (\sigma') \widetilde{f^H}(\zeta') \widetilde{f}(\sigma)  \frac{\varphi^h \varphi^{nr}}{\Phi} \mathfrak{n} \mathfrak{m} \,d\eta' \, d\sigma' \, d\zeta' \, d\sigma \,ds \\
& \qquad \qquad + \{ \mbox{cross terms} \} \\
& = N^{L}_\delta + N^{H}_\delta + \{ \mbox{cross terms}\} ,
\end{align*}
where the cross terms involve both high- and low-frequency contributions, and can be dealt with following the pattern established below for $N^{L}_\delta$ and $N^H_\delta$.

In order to bound $N^L_\delta$, we integrate by parts in $\sigma$ to obtain
$$
N^L_\delta = \pm \frac{1}{2} \int_t^T \frac{1}{s} \int e^{-is \Psi} \widetilde{f^L}(\eta') \widetilde{f^L} (\sigma') \widetilde{f^L}(\zeta') \partial_\xi \widetilde{f}(\sigma)  \frac{\varphi^h \varphi^{nr}}{\Phi} \frac{\mathfrak{n} \mathfrak{m}}{\sigma} \,d\eta' \, d\sigma' \, d\zeta' \, d\sigma \,ds + \{ \mbox{easier terms} \},
$$
Using the Cauchy-Schwarz inequality, the support condition on $\widetilde{f^L}$ and the bounds $\left| \frac{\mathfrak n \mathfrak{m}}{\sigma} \right| \lesssim \frac{1}{\langle \sigma \rangle}$ and $ \left| \frac{\varphi^h \varphi^{nr}}{\Phi} \right| \lesssim t^{\kappa}$, the leading term in $N^L_\delta$ can be bounded by
$$
\int_t^T s^{-1} s^{\kappa} \| \widetilde{f^L} \|_{L^1}^3 \| \partial_\xi \widetilde{f} \|_{L^2} \,d\sigma \lesssim \epsilon_1^4 \int_t^T s^{-\frac{13}{4}+4 \alpha+\kappa} \,ds \lesssim \epsilon_1^4 t^{-\frac{9}{4} +4 \alpha+\kappa}.
$$
The "easier term" in $N^L_\delta$ corresponds to the case where the derivative $\partial_\sigma$ hits the symbol; it is easily estimated thanks to the bound $\left| \partial_\sigma \frac{\varphi^h \varphi^{nr}}{\Phi} \frac{\mathfrak{n} \mathfrak{m}}{\sigma} \right| \lesssim t^{2\kappa}$.

In order to bound $N^H_\delta$, we integrate by parts in all the integration frequency variables, which yields
\begin{align*}
N^H_\delta & = \pm \frac{1}{2} \int_t^T \frac{1}{s^4} \int e^{-is \Psi} \partial_\xi \widetilde{f^H}(\eta') \partial_\xi \widetilde{f^H} (\sigma')  \partial_\xi\widetilde{f^H}(\zeta') \partial_\xi   \widetilde{f}(\sigma)  \frac{\varphi^h \varphi^{nr}}{\Phi} \frac{\mathfrak{n} \mathfrak{m}}{\sigma \eta' \sigma' \zeta'} \,d\eta' \, d\sigma' \, d\zeta' \, d\sigma \,ds \\
& \qquad \qquad + \{ \mbox{easier terms} \},
\end{align*}
By the arguments employed to bound $N^L_\delta$, the leading term in $N^H_\delta$ can be bounded by
$$
 \int_t^T s^{-4} s^{\kappa} \| |\xi|^{-1} \partial_\xi \widetilde{f^H} \|_{L^1}^3 \| \langle \xi \rangle^{-1} \partial_\xi \widetilde{f} \|_{L^1} \,ds \lesssim \epsilon_1^4 \int_t^T s^{-\frac{13}{4} + 4\alpha} \,ds \lesssim \epsilon_1^4 t^{-\frac{9}{4} + 4 \alpha+\kappa}.
$$
The "easier terms" in $N^H_\delta$ once again correspond to the case where the derivatives hit the symbol, and they are easily estimated.

\medskip

\noindent \underline{The term $N_4$: terms of $\operatorname{p.v.}$ type}. Such terms are given by expressions of the type
\begin{align*}
N_{\operatorname{p.v.}} = & \int_t^T \int e^{-is \Psi(\sigma,\eta',\sigma',\zeta')} \widetilde{f}(\eta') \widetilde{f}(\sigma') \widetilde{f}(\zeta') \widetilde{f}(\sigma)\\
& \qquad \quad  \frac{\varphi^h(\eta,\sigma) \varphi^{nr}(\eta,\sigma)}{\Phi(\eta,\sigma)} \mathfrak{n}(\eta,\eta',\sigma',\zeta') \mathfrak{m}(\eta,\sigma) \frac{\widehat{\phi}(\eta-\eta'-\sigma'-\zeta')}{\eta-\eta'-\sigma'-\zeta'} \,d\eta \,d\eta' \, d\sigma' \, d\zeta' \, d\sigma \,ds.
\end{align*}
We observe first that $\mathfrak{n}$ is given by a tensor product: it consists of multipliers which can be absorbed by the input functions $\widetilde{f}$ without modifying the estimates they satisfy; therefore, we will consider in the following that $\mathfrak{n} \equiv 1$.
Proceeding as in the previous paragraph, this can be split into
\begin{align*}
N_{\operatorname{p.v.}} & = \int_t^T \int e^{-is \Psi} \widetilde{f^L}(\eta') \widetilde{f^L}(\sigma') \widetilde{f^L}(\zeta') \widetilde{f}(\sigma) \frac{\varphi^h \varphi^{nr}}{\Phi} \mathfrak{m} \frac{\widehat{\phi}(\eta-\eta'-\sigma'-\zeta')}{\eta-\eta'-\sigma'-\zeta'} \,d\eta \,d\eta' \, d\sigma' \, d\zeta' \, d\sigma \,ds \\
&  \quad + \int_t^T \int e^{-is \Psi} \widetilde{f^H}(\eta') \widetilde{f^H}(\sigma') \widetilde{f^H}(\zeta') \widetilde{f}(\sigma) \frac{\varphi^h \varphi^{nr}}{\Phi} \mathfrak{m} \frac{\widehat{\phi}(\eta-\eta'-\sigma'-\zeta')}{\eta-\eta'-\sigma'-\zeta'} \,d\eta \,d\eta' \, d\sigma' \, d\zeta' \, d\sigma \,ds \\
&  \quad + \{ \mbox{cross terms} \} \\
& = N^L_{\operatorname{p.v.}} + N^H_{\operatorname{p.v.}} + \{ \mbox{cross terms} \}.
\end{align*}

To estimate $N^{L}_{\operatorname{p.v.}}$, we need to introduce some further notation: let $\mathcal{H}$ denote the truncated Hilbert transform with kernel $\frac{\widehat{\phi}(\xi)}{\xi}$ (which is bounded on $L^2$), and let furthermore $\Theta$ denote the phase $\Psi$ from which the summand $|\eta'|^2$ is removed. Integrating by parts in $\sigma$, the term $N^L_{\operatorname{p.v.}}$ can be written
\begin{align*}
N^L_{\operatorname{p.v.}} & = \pm \frac{1}{2} \int_t^T \frac{1}{s} \int e^{-is \Theta} [ \mathcal{H} e^{\pm i t |\cdot|^2} \widetilde{f^L}] (\eta - \sigma' -\zeta') \widetilde{f^L}(\sigma') \widetilde{f^L}(\zeta') \partial_\xi \widetilde{f}(\sigma) \frac{\varphi^h \varphi^{nr}}{\Phi} \frac{\mathfrak{m}}{\sigma} \,d\eta \, d\sigma' \, d\zeta' \, d\sigma \,ds \\
& \qquad + \{ \mbox{easier terms} \}.
\end{align*}
By the same arguments as in the previous paragraph, and using furthermore that, since $ \widetilde{f^L}$ has compact support and $\widehat{\phi}$ decays rapidly,
\begin{equation}
\label{foulque3}
\left\| \mathcal{H} e^{\pm i t |\cdot|^2} \widetilde{f^L} \right\|_{L^1} \lesssim \left\|   \widetilde{f^L} \right\|_{L^2},
\end{equation}

the leading term in the above right-hand side can be estimated by
\begin{align*}
\int_t^T s^{-1} s^{\kappa} \| \mathcal{H}e^{\pm i t |\cdot|^2} \widetilde{f^L} \|_{L^1} \|  \widetilde{f^L} \|_{L^1}^2 \| \partial_\xi \widetilde{f} \|_{L^2} \,ds & \lesssim \int_t^T s^{-1 + \kappa} \|  \widetilde{f^L} \|_{L^2} \|  \widetilde{f^L} \|_{L^1}^2 \| \partial_\xi \widetilde{f} \|_{L^2} \,ds\\
& \lesssim \epsilon_1^4 \int_t^T s^{-3+4 \alpha+\kappa} \,ds \lesssim \epsilon_1^4 t^{-2 + 4 \alpha+\kappa},
\end{align*}

while the details for the "easier terms" are omitted.

To estimate $N^H_{\operatorname{p.v.}}$, we integrate by parts in $\sigma,\eta',\sigma',\zeta'$, which leads to
\begin{align*}
N^H_{\operatorname{p.v.}} & = \pm \frac{1}{8} \int_t^T \frac{1}{s^4} \int e^{-is \Theta} [ \mathcal{H} (\cdot)^{-1} e^{\pm it |\cdot|^2} \partial_\xi \widetilde{f^H}] (\eta - \sigma' -\zeta')\partial_\xi \widetilde{f^H}(\sigma')\partial_\xi  \widetilde{f^H}(\zeta') \partial_\xi \widetilde{f}(\sigma) \\
&\qquad\qquad\qquad\qquad \frac{\varphi^h \varphi^{nr}}{\Phi} \frac{\mathfrak{m}}{\sigma' \zeta' \sigma} \,d\eta \, d\sigma' \, d\zeta' \, d\sigma \,ds + \{ \mbox{easier terms} \}.
\end{align*}
By the same arguments as in the previous paragraph and~\eqref{foulque2}, the leading term in the above right-hand side can be estimate by
$$
 \int_t^T s^{-4} s^{\kappa} \| \mathcal{H} (\cdot)^{-1} e^{\pm it |\cdot|^2} \partial_\xi \widetilde{f^H} \|_{L^1} \| |\xi|^{-1} \partial_\xi \widetilde{f^H} \|_{L^1}^2 \| \langle \xi \rangle^{-1} \partial_\xi \widetilde{f} \|_{L^1} \,ds.
$$
The truncated Hilbert transform is not bounded on $L^1$, but, since its integration kernel is rapidly decaying, it is bounded on weighted $L^2$ spaces, which allows to estimate, by~\eqref{foulque2},
\begin{equation}
\label{foulque4}
\begin{split}
\| \mathcal{H} (\cdot)^{-1} e^{\pm it |\cdot|^2} \partial_\xi \widetilde{f^H} \|_{L^1}& \lesssim \| \mathcal{H} (\cdot)^{-1} e^{\pm it |\cdot|^2} \partial_\xi \widetilde{f^H} \|_{L^2(\langle \cdot \rangle^2 dx)} \\
& \lesssim \| (\cdot)^{-1} e^{\pm it |\cdot|^2} \partial_\xi \widetilde{f^H} \|_{L^2(\langle \cdot \rangle^2 dx)} \lesssim \epsilon_1 t^{\frac{1}{2}+\alpha}.
\end{split}
\end{equation}
Coming back to the expression above, we get
\begin{align*}
& \int_t^T s^{\kappa-4} \| \mathcal{H} (\cdot)^{-1} e^{\pm it |\cdot|^2} \partial_\xi \widetilde{f^H} \|_{L^1} \| |\xi|^{-1} \partial_\xi \widetilde{f^H} \|_{L^1}^2 \| \langle \xi \rangle^{-1} \partial_\xi \widetilde{f}\|_{L^1} \,ds \lesssim \epsilon_1^4 t^{-2 + \kappa}.
\end{align*}

\noindent \underline{The term $N_5$}. This term is very similar to $N_1$. By Proposition~\ref{propModU},
$$
e^{- it \rho (\uo+ \xi^2)} \widetilde{\Mod_{\Uu}}  =  \tau(t) \xi \widetilde f(\xi) + \widetilde{ \mathcal{M}}(\xi).
$$
Inserting this formula in the expression for $N_5$ and integrating by parts in $\eta$ and $\sigma$ gives
\begin{align*}
N^5 & = \int_t^T \frac{ds}{s^2} \int e^{-is \Phi(\eta,\sigma)} i \tau \partial_\xi \widetilde{f}(\eta) \partial_\xi \widetilde{f}(\sigma) \frac{\varphi^h(\eta,\sigma) \varphi^{nr}(\eta,\sigma)}{\Phi(\eta,\sigma)} \frac{\mathfrak{m}(\eta,\sigma)}{ \sigma} \,d\eta \, d\sigma \\
& \qquad + \int_t^T \frac{ds}{s^2} \int e^{-is \Phi(\eta,\sigma)} \left[ \partial_\xi  \widetilde{\mathcal{M}}(\eta)  + \tau \widetilde{f}(\eta) \right] \partial_\xi \widetilde{f}(\sigma) \frac{\varphi^h(\eta,\sigma) \varphi^{nr}(\eta,\sigma)}{\Phi(\eta,\sigma)} \frac{\mathfrak{m}(\eta,\sigma)}{\eta \sigma} \,d\eta \, d\sigma \\
& \qquad + \int_t^T \frac{ds}{s^2}  \int e^{-is \Phi(\eta,\sigma)} \left[ \widetilde{\mathcal{M}}(\eta)  + \tau \widetilde{f}(\eta) \right] \partial_\xi \widetilde{f}(\sigma) \partial_\eta \frac{\varphi^h(\eta,\sigma) \varphi^{nr}(\eta,\sigma)}{\Phi(\eta,\sigma)} \frac{\mathfrak{m}(\eta,\sigma)}{\eta \sigma} \,d\eta \, d\sigma \\
& \qquad + \{ \mbox{similar and simpler terms} \} \\
& = N^{5,1} + N^{5,2} + N^{5,3} + \{ \mbox{similar and simpler terms} \}
\end{align*}
To treat $N^{5,1}$, we use that $\frac{\varphi^h(\eta,\sigma) \varphi^{nr}(\eta,\sigma)}{\Phi(\eta,\sigma)} \lesssim \langle t \rangle^{\kappa}$ as well as $\left| \frac{\mathfrak{m}(\eta,\sigma)}{ \sigma} \right| \lesssim \frac{1}{ \langle \sigma \rangle \langle \eta - \sigma \rangle^2}$ and the decay of $\tau(s)$ to obtain, thanks to the Cauchy-Schwarz inequality,
$$
|N^{5,1}| \lesssim \int_t^T  \| \partial_\xi \widetilde{f} \|_{L^2}\| \partial_\xi \widetilde{f} \|_{L^2} s^{\kappa} s^{-1-\nu} \,\frac{ds}{s^2} \lesssim \epsilon_1^2 t^{\kappa+2\alpha -2-\nu}.
$$
The term $N^{5,2}$ can be treated almost identically, using the decay of $\partial_\xi \widetilde{M}$ instead of that of $\tau$, and the bound $\left| \frac{\mathfrak{m}(\eta,\sigma)}{\eta \sigma} \right| \lesssim \frac{1}{\langle \eta \rangle \langle \sigma \rangle \langle \eta - \sigma \rangle^2}$. Finally, the term $N^{5,3}$ is once again of the same type, the difference being an additional loss of $t^\kappa$ stemming from the bound $|\partial_\eta \frac{\varphi^{nr}}{\Phi} | \lesssim t^{2\kappa}$. This gives
$$
|N^{5,3}| \lesssim \epsilon_1^2 t^{2\kappa+2\alpha -2-\nu},
$$
and we find overall
$$
|N^{5}|\lesssim \epsilon_1^2 t^{2\kappa+2\alpha -2-\nu}.
$$

\section{Weighted estimates for the quadratic terms}

\label{sectionquadratic}
\label{SectionWeightedQuadratic}

The aim of this section will be to prove the following proposition.

\begin{proposition}
\label{PropositionWeightedQuadratic}
Under the assumptions of Proposition~\ref{propbootstrap}, the regular quadratic interaction term $\mathcal{Q}^R$ can be bounded by
\begin{equation} \label{bd:mathcalQR-H1}
\left\| \partial_\xi \int_0^t e^{i \theta(s) \xi} \widetilde{\mathcal{Q}^R}(\xi) \,ds \right\|_{L^2} \lesssim \epsilon_1^2.
\end{equation}
if $0\leq t <T$.
\end{proposition}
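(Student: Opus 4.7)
The plan is to differentiate under the integral and distinguish contributions according to how $\partial_\xi$ distributes over $e^{i\theta(s)\xi}\widetilde{\mathcal{Q}^R}(s,\xi)$. Writing
\begin{equation*}
\partial_\xi\bigl[e^{i\theta(s)\xi}\widetilde{\mathcal{Q}^R}(s,\xi)\bigr] = i\theta(s)\,e^{i\theta(s)\xi}\widetilde{\mathcal{Q}^R}(s,\xi) + e^{i\theta(s)\xi}\,\partial_\xi\widetilde{\mathcal{Q}^R}(s,\xi),
\end{equation*}
the first summand is controlled directly by combining $|\theta(s)|\lesssim \epsilon_1$ from Proposition~\ref{propModU} with the decay $\|\widetilde{\mathcal{Q}^R}\|_{L^2}\lesssim \epsilon_1^2\langle s\rangle^{-2+2\alpha}$ extracted from the proof of Proposition~\ref{grebehuppe}; its time integral is $O(\epsilon_1^3)$. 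Expanding $\partial_\xi\widetilde{\mathcal{Q}^R}$ using \eqref{defQR}, two further terms appear: one where $\partial_\xi$ hits the symbol $\mathfrak{m}(\xi,\eta,\sigma)$ (which retains the same structural bounds by Proposition~\ref{propmuquad} and is treated identically to $\widetilde{\mathcal{Q}^R}$ itself), and one where $\partial_\xi$ hits the phase $e^{-is\Phi}$, producing the factor $-2i\rho s\xi$ since $\partial_\xi\Phi_{\lambda\mu\rho}=2\rho\xi$.

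The main quantity is therefore
\begin{equation*}
J(t,\xi) := -2i\rho\,\xi\int_0^t s\,e^{i\theta(s)\xi}\!\int e^{-is\Phi_{\lambda\mu\rho}(\xi,\eta,\sigma)}\widetilde f_\lambda(\eta)\widetilde f_\mu(\sigma)\,\mathfrak{m}(\xi,\eta,\sigma)\,d\eta\,d\sigma\,ds,
\end{equation*}
and I would show $\|J(t,\cdot)\|_{L^2_\xi}\lesssim \epsilon_1^2$ by mirroring the resonance analysis of Section~\ref{sectionmodulation}. For $|t|\leq 1$, a direct Cauchy-Schwarz using $|\mathfrak{m}|\lesssim \sum_\pm\langle\xi\pm\eta\pm\sigma\rangle^{-2}$ from \eqref{hibou1}, together with $\langle\xi\rangle\lesssim \langle\eta\rangle+\langle\sigma\rangle+\langle\xi\pm\eta\pm\sigma\rangle$ and the Minkowski argument of \eqref{voilier}, already yields $O(\epsilon_1^2)$. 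For $t>1$, I would introduce the same dyadic partition as in Section~\ref{sectionmodulation}: the low-frequency cutoff $\varphi^l$ localizing $\min(|\eta|,|\sigma|)\lesssim s^{-\kappa}$, the resonant cutoff $\varphi^r$ localizing $|\Phi|\lesssim \max(|\eta|,|\sigma|)\,s^{-\kappa}$, and the non-resonant complement $\varphi^{nr}$, yielding the splitting $J = J^l + J^r + J^{nr}$.

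On $J^l$ and $J^r$ I would integrate by parts in $\eta$ and $\sigma$, using $\partial_\eta e^{-is\Phi} = 2is\lambda\eta\, e^{-is\Phi}$ and $\partial_\sigma e^{-is\Phi} = 2is\mu\sigma\, e^{-is\Phi}$ to absorb both factors of $s$ (the one coming from $\partial_\xi$ and a second one produced by the IBP) into derivatives falling on $\widetilde f$, on the cutoffs, and on the symbol; the boundary terms vanish thanks to $\widetilde f(0)=0$, exactly as in \eqref{improved-modulation:bd:Ml}--\eqref{improved-modulation:bd:Mr}. The refined symbol bound $|\mathfrak m(\xi,\eta,\sigma)/(\eta\sigma)|\lesssim \langle\eta\rangle^{-1}\langle\sigma\rangle^{-1}\sum_\pm\langle\xi\pm\eta\pm\sigma\rangle^{-2}$ from Proposition~\ref{propmuquad}, together with the measure estimates for the low-frequency and resonant regions established in Section~\ref{sectionmodulation}, gives an integrand controlled in $L^2_\xi$ by $\epsilon_1^2\langle s\rangle^{-1-\kappa/4}$, which is integrable in time. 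The outer factor $\xi$ is absorbed using $\langle\xi\rangle \lesssim \langle\eta\rangle + \langle\sigma\rangle + \langle\xi\pm\eta\pm\sigma\rangle$, reducing matters to Young/Minkowski convolution estimates of the type in \eqref{voilier}.

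The genuine difficulty is $J^{nr}$, which I regard as the main obstacle. On its support $|\Phi|\gtrsim s^{-\kappa}$, and the natural move is to integrate by parts in time via $e^{-is\Phi} = -(i\Phi)^{-1}\partial_s e^{-is\Phi}$, gaining a factor $\Phi^{-1}\lesssim s^\kappa$ at the cost of differentiating $s\,\widetilde f_\lambda(\eta)\widetilde f_\mu(\sigma)\,e^{i\theta(s)\xi}$ in time. The $\partial_s\widetilde f$ contributions are expanded using \eqref{dtf} into: the ``good'' piece $\mathcal D$ whose weighted $L^2$ decay from Proposition~\ref{grebehuppe} is amply sufficient; the modulation piece $\Mod_{\underline U}$ handled via the splitting of Proposition~\ref{propModU}; and the singular cubic piece $\mathcal C^S$. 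The $\mathcal C^S$ contribution is the crux and would require the same high/low partition $\widetilde f = \widetilde{f^L} + \widetilde{f^H}$ on the scale $s^{-1/2}$ used in Section~\ref{sectionmodulation}, together with further IBP in the internal frequency variables and the truncated Hilbert transform bounds \eqref{foulque3}--\eqref{foulque4} to deal with the $\operatorname{p.v.}$ kernel from the decomposition in Proposition~\ref{gobemouche}. The boundary terms at $s=0$ and $s=t$ are controlled by Cauchy-Schwarz, the $H^1_\xi$ bound on $\widetilde f$, and the $s^\kappa$ cost of $\Phi^{-1}$. Reassembling every piece gives $\|J^{nr}(t,\cdot)\|_{L^2_\xi}\lesssim \epsilon_1^2$, which closes the estimate.
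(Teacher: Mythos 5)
Your proposal is correct and follows essentially the same route as the paper. The paper also handles the $\theta$-factor term first via $|\theta|\lesssim\epsilon_1$ and the $\langle s\rangle^{2\alpha-2}$ decay of $\|\widetilde{\mathcal{Q}^R}\|_{L^2}$ from Proposition~\ref{grebehuppe}, then isolates the $\xi t$ prefactor term produced by differentiating the phase, splits it into $I^l+I^r+I^{nr}$ with exactly the same low-frequency/resonant/non-resonant cutoffs, bounds $I^l,I^r$ by integration by parts in $\eta,\sigma$ (for $I^r$ the paper works with a further dyadic decomposition in $\xi,\eta,\sigma$ and a Schur test because the output frequency is now present, a refinement your sketch glosses over but which does not change the strategy), and treats $I^{nr}$ by a time integration by parts and the substitution $i\partial_s\widetilde f=\widetilde{\mathcal D}+\widetilde{\mathcal C^S}+\widetilde{\Mod_{\underline U}}$, with the singular cubic contribution handled by the high/low split on the $s^{-1/2}$ scale and the truncated Hilbert transform estimates, just as you describe.
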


\subsection{Preliminary steps} Expanding the expression in the right-hand side of~\eqref{bd:mathcalQR-H1} through Leibniz' rule, the term where $\partial_\xi$ hits $e^{i \theta(s) \xi}$ is much easier to control than the term where $\widetilde{\mathcal{Q}^R}$ is differentiated. Therefore, we will simply prove
$$
\left\|  \int_0^t e^{i \theta(s) \xi} \partial_\xi \widetilde{\mathcal{Q}^R}(\xi) \,ds \right\|_{L^2} \lesssim \epsilon_1^2.
$$ 

From the definition \eqref{defQR} of $\mathcal{Q}_R$, we have to estimate
\begin{align}
\label{decompositionQR} \partial_\xi \widetilde{\mathcal{Q}^R}_\rho(\xi) & = -it 2 \rho \xi \sum \int e^{-it\Phi_{\lambda \mu  \rho }(\xi,\eta,\sigma)}  \widetilde{f}_{\lambda}  (\eta) \widetilde{f}_{\mu} (\sigma)
\mathfrak{m}^V_{jkl, \lambda \mu \rho } (\xi, \eta, \sigma) \,d \eta\, d \sigma \\
\nonumber & + \sum \int e^{-it\Phi_{\lambda \mu  \rho }(\xi,\eta,\sigma)}  \widetilde{f}_{\lambda}  (\eta) \widetilde{f}_{\mu} (\sigma)
\partial_\xi \mathfrak{m}^V_{jkl, \lambda \mu \rho } (\xi, \eta, \sigma) \,d \eta\, d \sigma 
\end{align}
where
$$
\Phi_{\lambda \mu  \rho} =  \rho(\uo+ \xi^2) - \lambda  (\uo+ \eta^2)  - \mu  (\uo+ \sigma^2)
$$
and where we recall \eqref{hibou2}
\begin{equation} \label{hibou10}
  \left| \partial_\xi^a \partial_\eta^b \partial_\xi^c \frac{\mathfrak{m}}{\eta \sigma} \right|  \lesssim \frac{1}{\langle \eta \rangle \langle \sigma \rangle}  \sum_{\pm} \frac{1}{\langle \xi \pm \eta \pm \sigma \rangle^2}.
\end{equation}
We start by estimating the first term in the right-hand side of \eqref{decompositionQR}, which is the worst due to the $\xi t$ prefactor and as the symbol is smooth. The second term is much easier to deal with and will be considered after. Omitting indices, we shall simply write
$$
I(t,\xi) = t \xi   \int e^{-it\Phi(\xi,\eta,\sigma)}  \widetilde{f}  (\eta) \widetilde{f} (\sigma)
\mathfrak{m} (\xi, \eta, \sigma) \,d \eta\, d \sigma
$$
and aim at proving
$$
\left\| \int_0^t I(s,\xi) \,ds \right\|_{L^2} \lesssim \epsilon_1^2.
$$
For $|t|<1$, this bound follows from Proposition~\ref{grebehuppe} (or, more precisely, its proof). Therefore, it will suffice to prove that
\begin{equation} \label{aimI}
\left\| \int_1^t I(s,\xi) \,ds \right\|_{L^2} \lesssim \epsilon_1^2.
\end{equation}

Before delving into the estimates, let us quickly count powers to understand where the difficulty lies. 
\begin{itemize}
\item The prefactor $\xi$ gives a derivative loss, which will have to be recovered, either through integration by parts (in $\eta$, $\sigma$, $t$), or by showing that an analog of the Leibniz rule applies (due to the localization properties of the quadratic spectral distribution).
\item Counting powers of $t$, we see that integrations by parts in $\eta$ and $\sigma$ would gain a factor $t^{-2}$, but would lead to using the bound for $\| \partial_\xi \widetilde{f} \|_{L^2}$ twice, which loses $t^{2\alpha}$. Overall, the powers of $t$ become $t^{1-2+2\alpha}$, which, after integrating in time, fails to recover $t^\alpha$. A small integrability gain (in $t$) is needed, which will be achieved by considering resonances.
\end{itemize}

Just like we did for $M_0$ above, we will split $I(t,\xi)$ into three different pieces thanks to various cutoff functions. First, the low and high frequency cutoff functions are given by
$$
\varphi^{h}(\eta,\sigma) = \sum_{m \geq 0} \varphi_m(t) \varphi_{> -\kappa m} (\eta) \varphi_{> -\kappa m} (\sigma), \quad \mbox{and} \quad
\varphi^{l}(\eta,\sigma) = 1 - \varphi^{h}(\eta,\sigma),
$$
where $\kappa>0$ is a small universal constant. On the support of $\varphi^l$, either $|\eta|\lesssim t^{-\kappa}$ or $|\sigma|\lesssim t^{-\kappa}$ is small.

The resonant and non-resonant cutoff functions are defined by
$$
\varphi^r(\xi,\eta,\sigma) = \sum_{j,m} \varphi_m(t) \varphi_j(\xi) \varphi_{k_1}(\eta) \varphi_{k_2}(\sigma) \varphi_0(2^{-J} \Phi)  \quad \mbox{and} \quad \varphi^{nr}(\xi,\eta,\sigma) = 1 - \varphi^r(\xi,\eta,\sigma).
$$
with
$$
J = \max(j,k_1,k_2) -\kappa m.
$$
On the support of $\varphi^{r}$ the phase is small $|\Phi|\lesssim (\la \xi \ra+\la \eta \ra+\la \sigma \ra)t^{-\kappa} $.

This gives a decomposition of $I$ into a low-frequency part $I^l$, a resonant part $I^r$, and a non-resonant part $I^{nr}$
\begin{align*}
I (t,\xi) & = t \xi   \int \varphi^l(\eta,\sigma) e^{-it\Phi(\xi,\eta,\sigma)}  \widetilde{f}  (\eta) \widetilde{f} (\sigma)
\mathfrak{m}(\xi, \eta, \sigma)  \,d \eta\, d \sigma  \\
 & \quad \quad \quad + t \xi   \int \varphi^h(\eta,\sigma) \varphi^r(\xi,\eta,\sigma) e^{-it\Phi(\xi,\eta,\sigma)}  \widetilde{f}  (\eta) \widetilde{f} (\sigma)
\mathfrak{m}(\xi, \eta, \sigma)   \,d \eta\, d \sigma \\
& \quad \quad \quad +  t \xi \int \varphi^h(\eta,\sigma) \varphi^{nr}(\xi,\eta,\sigma) e^{-it\Phi(\xi,\eta,\sigma)}  \widetilde{f}  (\eta) \widetilde{f} (\sigma)
\mathfrak{m}(\xi, \eta, \sigma)   \,d \eta\, d \sigma \\
& = I^l(t,\xi) + I^r(t,\xi) + I^{nr}(t,\xi).
\end{align*}

We will show in \eqref{improved-quadratic-Il} that for all $\kappa>0$, $\| I^l(t,\cdot) \|_{L^2} \lesssim  \epsilon_1^2 t^{-1-\kappa/4}$, in \eqref{improved-quadratic-Ir} that for $\kappa $ small enough $\|I^r(t,\cdot) \|_{L^2}\lesssim  \epsilon_1^2 t^{-1-\kappa/4}$, and in \eqref{improved-quadratic-Inr} that for all $\kappa$ small enough, $\| \int_1^t \| I^{nr}(s,\cdot)ds \|_{L^2}\lesssim \epsilon_1^2$ provided $\alpha $ is small enough. This will prove \eqref{aimI}. Note that our cut-off parameter $\kappa$ in this section is different, and can be chosen independently, from the one of Section \ref{sectionmodulation}. Here, we shall choose $0<\kappa\ll \nu$.

\subsection{The low-frequency term} We claim that
\begin{equation} \label{improved-quadratic-Il}
\left\| I^l(t,\cdot) \right\|_{L^2} \lesssim \epsilon_1^2 t^{-1-\frac \kappa 4 }.
\end{equation}

To prove this estimate, we integrate by parts in $\eta$ and $\sigma$ to obtain
\begin{align*}
I^l(t,\xi) & = \pm \frac{1}{4t} \int e^{-it\Phi}  \partial_\xi \widetilde{f}  (\eta)  \partial_\xi \widetilde{f} (\sigma)
\frac{\xi \mathfrak{m}}{\eta \sigma} \varphi^l \,d \eta\, d \sigma \\
& \qquad \pm \frac{1}{4t} \int  e^{-it\Phi} \widetilde{f}  (\eta)  \partial_\xi \widetilde{f} (\sigma)
\frac{\xi \mathfrak{m}}{\eta \sigma}  \partial_\eta \varphi^l \,d \eta\, d \sigma + \{ \mbox{similar or simpler terms} \} \\
& = I^{l,1}(t,\xi) + I^{l,2}(t,\xi) + \{ \mbox{similar or simpler terms} \}.
\end{align*}
Notice that the symbol $\frac{\xi \mathfrak{m}}{\eta \sigma}$ is bounded, but not necessarily smooth at $\eta=0$ or $\sigma=0$; however, the integrations by parts above do not produce boundary terms since $\widetilde{f}(0)=0$.

Considering $I^{l,1}$ first, it is the sum of two terms, for which $|\eta|$ and $|\sigma|$ respectively are $\lesssim t^{-\kappa}$. Since both terms are symmetric, we only treat the former case, namely $|\eta|\lesssim t^{-\kappa}$. On the integrand of this term, by \eqref{hibou10}
$$
\left| \frac{\xi \mathfrak{m}}{\eta \sigma} \right| \lesssim \frac{|\xi|}{\langle \sigma \rangle} \sum_{\pm} \frac{1}{\langle \xi \pm \sigma \rangle^2} \lesssim \frac{1}{\langle \sigma \rangle \langle \xi \rangle}  + \frac{1}{\langle \xi \pm \sigma \rangle^2}.
$$
Therefore, it can be bounded by
\begin{align*}
& \left\|  \frac{1}{4t} \int \varphi_{\lesssim -\kappa m}(\eta) \varphi^l e^{-it\Phi}  \partial_\xi \widetilde{f}  (\eta)  \partial_\xi \widetilde{f} (\sigma)
\frac{\xi \mathfrak{m}}{\eta \sigma} \,d \eta\, d \sigma \right\|_{L^2} \\
& \qquad \lesssim\frac 1t  \left\| \int |\varphi_{\lesssim -\kappa m}(\eta) |\, |\partial_\xi \widetilde{f}  (\eta) | \, | \partial_\xi \widetilde{f} (\sigma)| \left[ \frac{1}{\langle \sigma \rangle \langle \xi \rangle} + \frac{1}{\langle \xi \pm \sigma \rangle^2} \right] \,d \eta\, d \sigma \right\|_{L^2_\xi} \\
 & \qquad \lesssim \frac 1t \| \varphi_{\lesssim -\kappa m} \partial_\xi \widetilde{f} \|_{L^1} \| \partial_\xi \widetilde{f} \|_{L^2} \\
& \qquad \lesssim \frac 1t t^{-\frac \kappa 2} \| \partial_\xi \widetilde{f} \|_{L^2}^2 \\
& \qquad \lesssim t^{-1-\frac{\kappa}{2}+2\alpha} \epsilon_1^2.
\end{align*}
The second inequality above can be justified by the Minkowski, Young and Cauchy-Schwarz inequalities.

The term $I^{l,2}$ can be treated just like $I^{l,1}$ after noting that, thanks to Hardy's inequality,
$$
\| [ \partial_\eta \varphi_{>-\kappa m} (\eta) \widetilde{f}(\eta) \|_{L^2} \lesssim \left\| \varphi_{\sim -\kappa m } (\eta) \frac{1}{|\eta|}  \widetilde{f}(\eta) \right\|_{L^2} \lesssim \| \partial_\xi \widetilde{f}(\eta) \|_{L^2}.
$$

\subsection{The resonant term} We claim that
\begin{equation} \label{improved-quadratic-Ir}
\| I^r(t,\xi) \|_{L^2}  \lesssim \epsilon_1^2t^{-1-\frac{\kappa}{4}}.
\end{equation}

To obtain this bound, it will be convenient to split $I^r$ via a dyadic decompositon into
$$
I^{r}(t,\xi) = \sum_{jk_1 k_2 m} I_{jk_1k_2}^r (t,\xi)
$$
with
\begin{align*}
&I^{r}_{jk_1k_2}(t,\xi) = t \xi   \int e^{-it\Phi(\xi,\eta,\sigma)}  \widetilde{f}  (\eta) \widetilde{f} (\sigma)
\mathfrak{m}(\xi, \eta, \sigma) \varphi_{<J}(\Phi(\xi,\eta,\sigma)) \varphi^h(\eta,\sigma) \mathfrak{p}_{jk_1k_2}(\xi,\eta,\sigma) \,d \eta\, d \sigma \\
& \mathfrak{p}_{jk_1k_2}(\xi,\eta,\sigma) = \varphi_j(\xi) \varphi_{k_1}(\eta) \varphi_{k_2}(\sigma)
\end{align*}
Without loss of generality, we can assume that $2^{k_1} \geq 2^{k_2}$; we will distinguish two cases depending on the relative sizes of $2^{j}$ and $2^{k_1}$.

\medskip

\noindent \underline{Case 1: $2^j \gtrsim 2^{k_1}$.} This condition can immediately be improved to $2^j \sim 2^{k_1} \geq 2^{k_2}$, since otherwise $\Phi \gg 2^J$. After integrating by parts in $\eta$ and $\sigma$, $I_{jk_1k_2}^r(t,\xi)$ can be written
\begin{equation}
\label{macareux}
\begin{split}
I_{jk_1k_2}^r(t,\xi) & = \pm \frac{1}{t} \int e^{-it\Phi}  \partial_\xi \widetilde{f}  (\eta)  \partial_\xi \widetilde{f} (\sigma)
\frac{\xi \mathfrak{m} \mathfrak{p}}{\eta \sigma} \varphi_{< J}(\Phi) \varphi^h \,d \eta\, d \sigma \\
& \qquad \pm \frac{1}{t} \int e^{-it\Phi}  \widetilde{f}  (\eta)  \partial_\xi \widetilde{f} (\sigma)
\frac{\xi \mathfrak{m} \mathfrak{p}}{\eta \sigma} \partial_\eta [ \varphi_{< J}(\Phi)] \varphi^h \,d \eta\, d \sigma + \{ \mbox{similar or simpler terms} \} \\
& =  I_{jk_1k_2}^{r,1}(t,\xi) + I_{jk_1k_2}^{r,2}(t,\xi)   + \{ \mbox{similar or simpler terms} \}.
\end{split}
\end{equation}
(where we omitted the dependence of $\mathfrak{m}$, $\mathfrak{p}$, and $\Phi$ on $\xi,\eta,\sigma$ for simplicity). In order to bound  $I_{jk_1k_2}^{r,1}$ in $L^2$, we apply Schur's test to the kernel
$$
K(\xi,\eta) = \int e^{-it\Phi}  \partial_\xi \widetilde{f} (\sigma) \frac{\xi \mathfrak{m} \mathfrak{p}}{\eta \sigma} \varphi_{< J}(\Phi) \varphi^h \,d\sigma.
$$
By Proposition~\ref{propmuquad}, and since $\langle \xi \rangle \sim \langle \eta \rangle$, there holds $\left|  \frac{\xi \mathfrak{m} \mathfrak{p}}{\eta \sigma} \right| \lesssim 2^{-k_2}$. Furthermore, we can make use of the following estimate, uniform in $r,R \in \mathbb{R}$ and $k\in \mathbb N$:
\begin{equation}
\label{boundRr}
\left| \{ x \in \mathbb{R}, \quad |x^2-R| < r, \quad |x| \sim 2^k \} \right| \lesssim \min\left( 2^{-k} r, \ \sqrt{r}\right)
\end{equation}
to bound the size of the set of $\eta$ of size $\sim 2^{k_1}$ such that $|\Phi| < 2^J$ (for fixed $\xi$ and $\sigma$) by $O\left(\min \left(2^{J-k_1}, \ 2^{J/2}\right)\right)=O(t^{-\kappa/2})$. Therefore, using Cauchy-Schwarz in the $\sigma$ variable gives
$$
\int |K(\xi,\eta)| \,d\eta \lesssim 2^{-k_2} t^{-\kappa} \| \partial_\xi \widetilde{f} \|_{L^2} 2^{\frac{k_2}{2}} \lesssim 2^{-\frac{k_2}{2}} t^{-\frac \kappa 2+\alpha}\epsilon_1.
$$
Similarly,
$$
\int |K(\xi,\eta)| \,d\xi \lesssim 2^{-\frac{k_2}{2}} t^{-\frac{\kappa}{2}+\alpha} \epsilon_1.
$$
By Schur's test, this gives the bound
$$
\| I_{jk_1k_2}^{r,1}(t,\xi) \|_{L^2} \lesssim  2^{-\frac{k_2}{2}} t^{-1-\frac{\kappa}{2}+\alpha} \epsilon_1 \| \varphi_{\sim j} \partial_\xi \widetilde{f} \|_{L^2}.
$$
Using almost orthogonality to sum over $j\sim k_1$, we get
$$
\left\| \sum_{jk_1} I_{jk_1k_2}^{r,1}(t,\xi) \right\|_{L^2} \lesssim 2^{-\frac{k_2}{2}}t^{-1-\frac{\kappa}{2}+\alpha} \epsilon_1  \|  \partial_\xi \widetilde{f} \|_{L^2}  \lesssim 2^{-\frac{k_2}{2}}t^{-1-\frac{\kappa}{2}+2\alpha} \epsilon_1^2.
$$
Summing over $k_2$ gives the desired result
$$
\sum_{k_2} \left\| \sum_{jk_1} I_{jk_1k_2}^{r,1}(t,\xi) \right\|_{L^2} \,ds \lesssim \sum_{k_2} 2^{-\frac{k_2}{2}} t^{-1-\frac{\kappa}{2} +2\alpha} \epsilon_1^2 \lesssim t^{-1-\frac{\kappa}{2}+2\alpha} \epsilon_1^2.
$$

Turning to $I_{jk_1k_2}^{r,2}$, the derivative of $\varphi_{<J} (\Phi)$ can be bounded by
$$
|\partial_\eta \varphi_{< J}(\Phi)|  = |\partial_\eta \Phi \varphi_{< J}'(\Phi)|  \lesssim 2^{k_1} 2^{-J} \lesssim t^{\kappa}.
$$
One can then follow the same estimates as for $I_{jk_1k_2}^{r,1}$, but the sum over $m$ does not converge; this problem is solved by an additional integration by parts in $\eta$, after which the estimates go through, following the same estimates as for $I_{jk_1k_2}^{r,1}$. Notice that, thanks to the localization $\varphi^h$, a further integration by parts in time does not lead to a singularity at $\eta=0$; rather, it produces a factor $\frac{\varphi^h}{\eta t}$, which can be bounded by $\lesssim t^{-1 + \kappa}$.

\medskip

\noindent \underline{Case 2: $2^{k_1}  \gg 2^{j}$.} In order for $\Phi$ to be $O(2^J)$, this implies that $2^{k_1} \sim 2^{k_2} \gg 2^j$. We proceed just like in Case 1, writing  
\begin{equation}
\label{macareux2}
\begin{split}
I_{jk_1k_2}^{r}(t,\xi)  =   I_{jk_1k_2}^{r,1}(t,\xi) + I_{jk_1k_2}^{r,2}(t,\xi)   + \{ \mbox{similar or simpler terms} \},
\end{split}
\end{equation}
and estimate $I_{jk_1k_2}^{r,1}$ in the same way as above. Some estimates have to be adapted to this new situation, though: first, the bound on $\left|  \frac{\xi \mathfrak{m} \mathfrak{p}}{\eta \sigma} \right|$ is now $2^{j-2k_1}$. Second, using the bound~\eqref{boundRr}, we get the estimates
\begin{align*}
& |\{ \eta, |\Phi(\xi,\eta,\sigma) |< 2^J, |\eta| \sim 2^{k_1} \}| \lesssim 2^{J -k_1} \sim t^{-\kappa},\\
& |\{ \xi, |\Phi(\xi,\eta,\sigma)| < 2^J, |\xi|\sim 2^j \}| \lesssim \min \left( 2^{J -j}, \ 2^{\frac J2} \right) \lesssim 2^{k_1-j}t^{-\frac{\kappa}{2}}.
\end{align*}
Overall, we obtain
\begin{align*}
& \int |K(\xi,\eta)| \,d\eta \lesssim 2^{j - 2k_1} t^{-\kappa} \| \partial_\xi \widetilde f\|_{L^2} 2^{k_1/2} \lesssim 2^{j-
\frac{3k_1}{2}} t^{-\kappa+\alpha} \epsilon_1 \\
&  \int |K(\xi,\eta)| \,d\xi \lesssim 2^{j-2k_1} 2^{k_1-j}t^{-\kappa} \| \partial_\xi \widetilde f\|_{L^2} 2^{k_1/2} \epsilon_1 \lesssim 2^{-\frac{k_1}{2} }t^{-\kappa+\alpha} \epsilon_1,
\end{align*}
and thus, as $j\ll k_1$, by Schur's test,
\begin{equation}
\label{petrel}
\| I_{jk_1k_2}^{r,1} \|_{L^2} \lesssim t^{-1}2^{-\frac{k_1}{2}} t^{-\kappa+\alpha}\epsilon_1 \| \partial_\xi \widetilde{f} \|_{L^2}  \lesssim 2^{-\frac{k_1}{2}}  t^{-1-\kappa+2\alpha}\epsilon_1^2.
\end{equation}
Summing over $k_2\sim k_1$, $j < k_1$ and $k_1$, this gives the desired estimate, namely
$$
 \left\| \sum_{jk_1k_2 } I_{jk_1k_2}^{r,1}(t,\xi) \right\|_{L^2} \, ds \lesssim t^{-1-\kappa+2\alpha} \epsilon_1^2.
$$
There remains to estimate $I^{r,2}_{jk_1 k_2 }$; once again,
$$
\left| \partial_\eta \varphi_{< J}(\Phi) \right| = \left|  \partial_\eta \Phi \varphi_{< J}'(\Phi) \right|  = 2^{k_1-J} \lesssim  t^{\kappa}.
$$
Just like for $I^{r,1}$, this is not quite enough to sum, but an additional integration by parts in $\eta$ suffices to close the estimates.

\subsection{The non-resonant term}  \label{subsecnonres}

We claim that
\begin{equation} \label{improved-quadratic-Inr}
\left\| \int_1^t e^{i\theta(s)\xi} I^{nr}(s,\xi)ds \right\|_{L^2}  \lesssim \epsilon_1^2.
\end{equation}
Integrating by parts in time and using the decompositions in Propositions~\ref{propModU} and Proposition~\ref{grebehuppe}, we get that
\begin{align*}
\int_1^t e^{i\theta(s)\xi} I^{nr} (s,\xi) \,ds = &-  \int e^{i\theta(t)\xi -it\Phi} \widetilde{f}(\eta) \widetilde{f}(\sigma) t \xi \mathfrak{m} \frac{\varphi^h \varphi^{nr}}{i \Phi} \,d\eta\,d\sigma \\
& \qquad - \int_1^t s \int e^{i\theta(s)\xi -is\Phi} \left[ \widetilde{\mathcal{D}}(\eta) +\widetilde{\mathcal M}(\eta) \right]\widetilde{f}(\sigma)  \xi  \mathfrak{m} \frac{\varphi^h \varphi^{nr}}{\Phi} \,d\eta \, d\sigma \,ds \\
& \qquad -  \int_1^t s \int e^{i\theta(s)\xi-is\Phi} \widetilde{\mathcal{C^S}}(\eta) \widetilde{f}(\sigma)  \xi  \mathfrak{m} \frac{\varphi^h \varphi^{nr}}{ \Phi} \,d\eta \, d\sigma \,ds \\
& \qquad - \int_1^t s \tau(s) \int e^{i\theta(s)\xi-is\Phi} \widetilde{f}(\eta) \widetilde{f}(\sigma) \eta \xi  \mathfrak{m} \frac{\varphi^h \varphi^{nr}}{ \Phi} \,d\eta \, d\sigma \,ds \\
& \qquad - \int_1^t  \int e^{i\theta(s)\xi-is\Phi} \widetilde{f}(\eta) \widetilde{f}(\sigma) \xi \partial_s[ s \varphi^h \varphi^{nr}] \frac{\mathfrak{m}}{\Phi} \,d\eta \, d\sigma \,ds \\
&\qquad -  \int_1^t s \tau(s) \int e^{i\theta(s)\xi-is\Phi} \widetilde{f}(\eta) \widetilde{f}(\sigma) \xi^2 \mathfrak{m} \frac{\varphi^h \varphi^{nr}}{\Phi} \,d\eta \, d\sigma \,ds \\
& \qquad + \{ \mbox{similar or simpler terms} \} \\
 = & J^{0}+ J^{1} + J^{2} + J^{3} + J^4 + J^5 + \{ \mbox{similar or simpler terms} \}.
\end{align*}

\medskip

\noindent \underline{Estimate of $J^0$.} We claim that
$$
\| J^0 \|_{L^2} \lesssim \epsilon_1^2.
$$
For $t>1$ this will follow as a consequence of the following estimate:
\begin{equation} \label{kumquat}
\left\|t  \xi \int e^{-it\Phi}g(\eta)h(\sigma) \mathfrak m \frac{\varphi^h\varphi^{nr}}{\Phi}d\eta \, d\sigma \right\|_{L^2_\xi} \lesssim t^{-1+3\kappa}\left(\left\| \frac{g}{\langle \xi \rangle}\right\|_{L^2_{\xi}}+\left\| \frac{\partial_\xi g}{\langle \xi \rangle} \right\|_{L^2_\xi}\right)\| h\|_{H^1_\xi}
\end{equation}
for general functions $g,h$, and of \eqref{eqbootstrap}. For $t\leq 1$ the estimate is much easier to prove, so that we skip it. We now show \eqref{kumquat}; integrating by parts,
\begin{align*}
t  \xi \int e^{-it\Phi}g(\eta)h(\sigma) \mathfrak m \frac{\varphi^h\varphi^{nr}}{\Phi}d\eta \, d\sigma & = \frac{\iota \xi}{4t}  \int e^{-it\Phi}\frac{\partial_\xi g(\eta)}{\eta}\frac{\partial_\xi h(\sigma)}{\sigma} \mathfrak m \frac{\varphi^h\varphi^{nr}}{\Phi}d\eta d\sigma \\
&+ \frac{\iota \xi}{4t}  \int e^{-it\Phi}\frac{g(\eta)}{\eta}\frac{ h(\sigma)}{\sigma} \mathfrak m \partial_\eta \partial_\sigma \left( \frac{\varphi^h\varphi^{nr}}{\Phi}\right)d\eta d\sigma +\{ \mbox{easier}\} \\
&= I+II+\{ \mbox{easier}\}
\end{align*}
for some sign $\iota \in \{\pm 1\}$. The easier term contains the remaining cases: when a derivative hits $\frac{\mathfrak m}{\eta \sigma}$, and the mixed cases. It can be estimated exactly as $I$ and $II$ so that we skip it.

To estimate $I$ we use that $|\frac{ \varphi^{nr}}{\Phi}|\lesssim \frac{t^{\kappa}}{\langle \xi \rangle+\langle \eta\rangle+\langle \sigma \rangle}$ by the definition of $\varphi^{nr}$. Hence
$$
|I|\lesssim |\xi|t^{-1+\kappa} \int \frac{|\partial_\xi g(\eta)|}{|\eta|\langle \eta \rangle}\frac{|\partial_\xi h(\sigma)|}{|\sigma|} |\mathfrak m | d\eta d\sigma
$$
Applying \eqref{voilier} then shows
$$
\| I \|_{L^2_\xi}\lesssim t^{-1+\kappa}\left\| \frac{\partial_\xi g}{\langle \xi \rangle}\right\|_{L^2_\xi}\left\| \partial_\xi h\right\|_{L^2_\xi}.
$$

To estimate $II$, we use that
$$
\left| \partial_\eta \partial_\sigma (\frac{\varphi^h \varphi^{nr}}{\Phi}) \right| \lesssim \frac{t^{3\kappa}}{\langle \xi \rangle+\langle \eta\rangle+\langle \sigma \rangle}.
$$
To justify this bound, observe that, if $\langle \xi \rangle\sim 2^j$, $\langle \eta \rangle \sim 2^{k_1}$ and $\langle \sigma \rangle \sim 2^{k_2}$ and $|\Phi|\sim 2^J$, then on the support of $\varphi^{nr}$ one has $2^J \gtrsim 2^{j+k_1+k_2}t^{ -\kappa }$ and therefore
$$
\left| \partial_\eta \frac{\varphi^{nr}}{\Phi} \right| \lesssim 2^{-2J} 2^{k_1} \lesssim   2^{-2j-k_1-2k_2}t^{2\kappa}\lesssim \frac{t^{2\kappa}}{\langle \eta\rangle+\langle \sigma\rangle+\langle \xi \rangle}
$$
(and similarly for the $\partial_\sigma$ derivative). Hence
$$
|II|\lesssim t^{-1+3\kappa}|\xi| \int \frac{| g(\eta)|}{|\eta|\langle \eta \rangle}\frac{|h(\sigma)|}{|\sigma|} |\mathfrak m | d\eta d\sigma .
$$
Applying again \eqref{voilier} shows
$$
\| II \|_{L^2_\xi}\lesssim t^{-1+3\kappa}\left\| \frac{ g}{\langle \xi \rangle}\right\|_{L^2_\xi}\| h\|_{L^2_\xi}.
$$
Combining, we obtain \eqref{kumquat} as desired.

\noindent \underline{Estimate of $J^1$.} Denoting 
$$
\mathcal{J}^{1} = s \int e^{-is\Phi} \left[ \widetilde{D}(\eta) +\widetilde{\mathcal M}(\eta) \right] \widetilde{f}(\sigma)  \xi  \mathfrak{m} \frac{\varphi^h \varphi^{nr}}{ \Phi} \,d\eta \, d\sigma,
$$ 
for the integrand in $s$ in the definition of $J^{1}$ (so that $J^{1} = \int_1^t \mathcal{J}^{1} \,ds$), we claim that
$$
\| \mathcal{J}^{1} \|_{L^2} \lesssim \epsilon_1^3  \langle s \rangle^{- 1-\nu+2\kappa+\alpha},
$$
from which the desired estimate follows by time integration. The estimate will follow the pattern that was established for $\xi \widetilde{\mathcal{Q}^R}$ in Proposition~\ref{grebehuppe}.

We integrate by parts in $\sigma$ to obtain
\begin{align*}
\mathcal{J}^1 = &\pm \frac{1}{2}  \int e^{-is\Phi}  \left[ \widetilde{D}(\eta)  +\widetilde{\mathcal M}(\eta) \right]  \partial_\xi \widetilde{f}(\sigma) \frac{\xi \mathfrak{m}}{\sigma} \frac{\varphi^h \varphi^{nr}}{i\Phi} \,d\eta \, d\sigma \\
& \qquad \pm \frac{1}{2} \int e^{-is\Phi} \left[ \widetilde{D}(\eta) +\widetilde{\mathcal M}(\eta) \right]  \widetilde{f}(\sigma) \frac{\xi  \mathfrak{m}}{\sigma} \partial_\sigma \left[ \frac{\varphi^h \varphi^{nr}}{i\Phi} \right] \,d\eta \, d\sigma  + \{ \mbox{easier terms} \} \\
= & \mathcal{J}^{1,1} + \mathcal{J}^{1,2} + \{ \mbox{easier terms} \}.
\end{align*}
We estimate $\mathcal{J}^{1,1}$ with the help of inequality~\eqref{voilier}, of the bound $\frac{\varphi^h \varphi^{nr}}{\Phi} \lesssim \langle t \rangle^{\kappa}$, of Proposition~\ref{propModU} and of Proposition~\ref{grebehuppe}. This gives
$$
\left\| \mathcal{J}^{1,1} \right\|_{L^2} \lesssim \langle s \rangle^{\kappa} \| \partial_\xi \widetilde f \|_{L^2} \left\| \xi  [ \widetilde{D} + \widetilde{\mathcal M} ]  \right\|_{L^2} \lesssim \epsilon_1^3 \langle s \rangle^{- 1-\nu+\kappa+\alpha}.
$$

Turning to $J^{1,2}$, we can once again follow the same argument, using that
$$
\left| \partial_\sigma \left[ \frac{\varphi_{\geq J}(\Phi)}{\Phi} \right] \right| \lesssim \frac{\langle t \rangle^{ \kappa}}{|\sigma|} + \langle t \rangle^{2\kappa}.
$$
to obtain
$$
\| \mathcal{J}^{1,2} \|_{L^2} \lesssim  \epsilon_1^3  \langle s \rangle^{-1-\nu+2\kappa+\alpha}.
$$

\medskip

\noindent \underline{Estimate of $J^{2}$: the $\delta$ term.} 
We write $\mathcal{J}^2$ for the integrand (in $s$) of $J^2$, in other words
$$
\mathcal{J}^2 =  \int e^{-is\Phi} \widetilde{\mathcal{C^S}}(\eta) \widetilde{f}(\sigma)  \xi  \mathfrak{m} \frac{\varphi^h \varphi^{nr}}{\Phi} \,d\eta \, d\sigma
$$

The singular cubic terms fall into two categories: those involving $\delta(p)$ and those involving $\operatorname{p.v.} \frac{1}{p}$. We call $\eta'$, $\sigma'$ and $\zeta'$ the input frequencies in the cubic term, and $\eta$ the output frequency. Focusing first on the $\delta$ term, we are facing
\begin{align*}
\mathcal{J}^2_\delta = &  \int s\xi e^{-is \Psi(\xi,\sigma,\eta',\sigma',\zeta')} \mathfrak{m}(\xi,\eta,\sigma) \mathfrak{n}(\eta,\eta',\sigma',\zeta') \widetilde{f}(\sigma) \widetilde{f}(\eta') \widetilde{f}(\sigma') \widetilde{f}(\zeta') \frac{\varphi^h \varphi^{nr}(\xi,\eta,\sigma) }{\Phi(\xi,\eta,\sigma) }\\
& \qquad \qquad  \qquad \qquad \qquad\qquad \qquad  \qquad \qquad \delta(\eta \pm \eta' \pm \sigma' \pm \zeta') \,d\eta' \,d\sigma' \,d\zeta' \,d\eta \,d\sigma ,
\end{align*}
where we denoted $\mathfrak{n}$ for the factor such that $\mathfrak{m}^S(\eta,\eta',\sigma',\zeta') = \delta(p) \mathfrak{n}(\eta,\eta',\sigma',\zeta')$, and where the four-linear phase is given by
with
$$
\Psi(\xi,\sigma,\eta',\sigma',\zeta') = (\uo+ |\xi|^2) \pm (\uo + |\s|^2) \pm (\uo+|\eta'|^2) \pm (\uo + |\s'|^2) \pm (\uo + |\z'|^2).
$$
In the above expression, all variables all spelled out for clarity, but in the following, we will omit them as is our customary convention, which results in the formula
$$
\mathcal{J}^2_\delta =  \int s\xi e^{-is \Psi} \mathfrak{m} \mathfrak{n} \widetilde{f}(\sigma) \widetilde{f}(\eta') \widetilde{f}(\sigma') \widetilde{f}(\zeta') \frac{\varphi^h \varphi^{nr}}{\Phi}  \,d\eta' \,d\sigma' \,d\zeta'  \,d\sigma.
$$
We claim that
$$
\| \mathcal{J}^2_\delta(s,\cdot) \|_{L^2} \lesssim s^{-\frac{9}{4} + \kappa+4 \alpha},
$$
which gives the desired bound on $J^2$ after time integration.
Proceeding as in the previous section, we will now split $f$ into low and high frequencies, see~\eqref{foulque1} and~\eqref{foulque2}, which gives the decomposition
\begin{align*}
\mathcal{J}^2_\delta & =  \int s\xi e^{-is \Psi} \mathfrak{m} \mathfrak{n} \widetilde{f}(\sigma) \widetilde{f^L}(\eta') \widetilde{f^L}(\sigma') \widetilde{f^L}(\zeta') \frac{\varphi^h \varphi^{nr}}{\Phi}  \,d\eta' \,d\sigma' \,d\zeta'  \,d\sigma \\
& \qquad + \int s\xi e^{-is \Psi} \mathfrak{m} \mathfrak{n} \widetilde{f}(\sigma) \widetilde{f^H}(\eta') \widetilde{f^H}(\sigma') \widetilde{f^H}(\zeta') \frac{\varphi^h \varphi^{nr}}{\Phi}  \,d\eta' \,d\sigma' \,d\zeta'  \,d\sigma  \\
& \qquad + \{ \mbox{cross terms} \} \\
& = \mathcal{J}^{2,L}_\delta +  \mathcal{J}^{2,H}_\delta+ \{ \mbox{cross terms} \}.
\end{align*}
Here, the cross terms involve both high- and low-frequency contributions, and they can be bounded in a similar way to $\mathcal{J}^{2,L}_\delta$ and $\mathcal{J}^{2,H}_\delta$, which will be treated below.

In order to bound $\mathcal{J}^{2,L}_\delta$, we integrate by parts in $\sigma$, which yields
$$
 \mathcal{J}^{2,L}_\delta = \pm \frac{1}{2} \int e^{-is \Psi} \partial_\xi \widetilde{f}(\sigma)  \widetilde{f^L}(\eta') \widetilde{f^L}(\sigma') \widetilde{f^L}(\zeta') \frac{\xi \mathfrak{m} \mathfrak{n} \varphi^h \varphi^{nr}}{\sigma \Phi}  \,d\eta' \,d\sigma' \,d\zeta'  \,d\sigma + \{ \mbox{easier terms} \}.
$$
Since $|\eta| \lesssim 1$ and $|\Phi|\gtrsim (\la \xi \ra +\la \eta\ra+\la \sigma \ra)t^{-\kappa}$ there holds using \eqref{hibou10} that $\left| \frac{\xi \mathfrak{m} \mathfrak{n} \varphi^h \varphi^{nr}}{\sigma \Phi} \right|\lesssim \frac{t^{\kappa}}{\la \sigma \ra}\sum_{\pm}\frac{1}{\la \xi \pm \eta\pm \sigma \ra^2}$. Using in addition~\eqref{foulque2}, this leads to the following bound for the leading term in $\mathcal{J}^{2,L}_\delta$
$$
\|  \mathcal{J}^{2,L}_\delta \|_{L^2} \lesssim s^{\kappa} \|  \partial_\xi \widetilde{f} \|_{L^2} \| \widetilde{f^L} \|_{L^1}^3 \lesssim \epsilon_1^4 s^{-\frac 94 +\kappa+ 4\alpha}.
$$

Turning to $\mathcal{J}^{2,H}_{\delta}$, we integrate by parts in all the Fourier variables to obtain
\begin{align*}
\mathcal{J}^{2,H}_\delta & = \pm \frac{1}{16 s^3} \int e^{-is \Psi} \partial_\xi \widetilde{f}(\sigma)   \partial_\xi \widetilde{f^H}(\eta')  \partial_\xi  \widetilde{f^H}(\sigma')  \partial_\xi  \widetilde{f^H}(\zeta') \frac{\xi \mathfrak{m} \mathfrak{n} \varphi^h \varphi^{nr}}{\sigma \eta' \sigma' \zeta' \Phi}  \,d\eta' \,d\sigma' \,d\zeta'  \,d\sigma \\
& \qquad \qquad +  \{ \mbox{easier term} \}.
\end{align*}
Once again, the key point is to bound the symbol; by $|\Phi|\gtrsim (\la \xi \ra +\la \eta\ra+\la \sigma \ra)t^{-\kappa}$, \eqref{hibou10} and Proposition~\ref{S-v-csd}, we see that 
\begin{align*}
\left| \frac{\xi \mathfrak{m} \mathfrak{n} \varphi^h \varphi^{nr}}{\sigma \eta' \sigma' \zeta' \Phi} \right| & \lesssim \frac{ t^{\kappa} }{ |\eta'| |\sigma' ||\zeta'|\la \sigma \ra  }  \sum_{\pm} \frac{1}{\langle \xi \pm \eta \pm \sigma \rangle^2} .
\end{align*}
Therefore, we can estimate the leading term in $\mathcal{J}^{2,H}_\delta$ with the help of~\eqref{foulque2} by
\begin{align*}
\| \mathcal{J}^{2,H}_\delta \|_{L^2} 
& \lesssim s^{-3+\kappa} \| \partial_\xi \widetilde{f} \|_{L^2} \| |\xi|^{-1} \partial_\xi \widetilde{f^H} \|_{L^1}^3  \lesssim \epsilon_1^4 s^{-\frac{9}{4} +\kappa+ 4 \alpha},
\end{align*}
which is the desired bound.

\medskip

\noindent \underline{Estimate of $J^{2}$: the $p.v.$ term.} In that case, the time integrand becomes
\begin{align*}
\mathcal{J}^2_{p.v.} = & \int s\xi e^{-is \Psi(\xi,\sigma,\eta',\sigma',\zeta')} \mathfrak{m}(\xi,\eta,\sigma) \mathfrak{n}(\eta,\eta',\sigma',\zeta') \widetilde{f}(\sigma) \widetilde{f}(\eta') \widetilde{f}(\sigma') \widetilde{f}(\zeta') \frac{\varphi^h \varphi^{nr}(\xi,\eta,\sigma) }{\Phi(\xi,\eta,\sigma) }\\
& \qquad \qquad  \qquad \qquad \qquad\qquad \qquad  \qquad \qquad \frac{\widehat{\phi}
(\eta - \eta' - \sigma' - \zeta')}{\eta - \eta' - \sigma' - \zeta'} \,d\eta' \,d\sigma' \,d\zeta' \,d\eta \,d\sigma.
\end{align*}
We will prove that
$$
\left\| \mathcal{J}^2_{p.v.}(s,\cdot) \right\|_{L^2} \lesssim \epsilon_1^4 s^{-2 + \kappa+4 \alpha}.
$$
First, we need to manipulate this formula to make it more amenable to estimates. First, the symbol $\mathfrak{n}$ is a tensor product in the variables $\eta,\eta',\sigma',\zeta'$, which can be distributed on the functions $\widetilde{f}$ without altering the estimates they satisfy. For this reason, we shall assume that $\mathfrak{n}=1$. Second, we denote, as in the previous section $\mathcal{H}$ for the truncated Hilbert transform with kernel $\frac{\widehat{\phi}(\xi)}{\xi}$, and $\Theta$ for the phase $\Psi$ from which the summand $1+|\eta'|^2$ is removed. Then, the above can be written
$$
\mathcal{J}^2_{p.v.} =  \int s\xi e^{-is \Theta}\widetilde{f}(\sigma) \left[ \mathcal{H} e^{\pm i s (1+|\cdot|^2)} \widetilde{f} \right] (\eta - \sigma' - \zeta') \widetilde{f}(\sigma') \widetilde{f}(\zeta')  \frac{ \mathfrak{m} \varphi^h \varphi^{nr} }{\Phi } \,d\eta \,d\sigma' \,d\zeta'  \,d\sigma
$$
The next step is to split between high- and low-frequency contributions
\begin{align*}
\mathcal{J}^2_{p.v.} & = \int s\xi e^{-is \Theta}\widetilde{f}(\sigma) \left[ \mathcal{H} e^{\pm i s (1+|\cdot|^2)} \widetilde{f^L} \right] (\eta - \sigma' - \zeta') \widetilde{f^L}(\sigma') \widetilde{f^L}(\zeta')  \frac{ \mathfrak{m} \varphi^h \varphi^{nr} }{\Phi } \,d\eta \,d\sigma' \,d\zeta'  \,d\sigma \\
& \qquad + \int s\xi e^{-is \Theta}\widetilde{f}(\sigma) \left[ \mathcal{H} e^{\pm i s (1+|\cdot|^2)} \widetilde{f^H} \right] (\eta - \sigma' - \zeta') \widetilde{f^H}(\sigma') \widetilde{f^H}(\zeta')  \frac{ \mathfrak{m} \varphi^h \varphi^{nr} }{\Phi } \,d\eta \,d\sigma' \,d\zeta'  \,d\sigma \\
& \qquad + \{ \mbox{cross terms} \} \\
& = \mathcal{J}^{2,L}_{p.v.} + \mathcal{J}^{2,H}_{p.v.} +  \{ \mbox{cross terms} \} 
\end{align*}

The low-frequency part can be estimated after an integration by parts in $\sigma$:
\begin{align*}
\mathcal{J}^{2,L}_{p.v.} & = \pm \frac{i}{2} \int e^{-is \Theta}\partial_\xi \widetilde{f}(\sigma) \left[ \mathcal{H} e^{\pm i s (1+|\cdot|^2)} \widetilde{f^L} \right] (\eta - \sigma' - \zeta') \widetilde{f^L}(\sigma') \widetilde{f^L}(\zeta')  \frac{\xi \mathfrak{m} \varphi^h \varphi^{nr} }{\sigma \Phi } \,d\eta \,d\sigma' \,d\zeta'  \,d\sigma \\
& \qquad \qquad \qquad \qquad \qquad + \{ \mbox{easier terms} \}
\end{align*}

Proceeding similarly to the case of the $\delta$ term $J^2_\delta$, using~\eqref{foulque2},~\eqref{foulque3} and the fact that $\widehat \phi$ is Schwartz class we can estimate
\begin{align*}
\| \mathcal{J}^{2,L}_{p.v.} \|_{L^2} \lesssim s^\kappa \| \mathcal{H} e^{\pm i s (1+|\cdot|^2)} \widetilde{f^L} \|_{L^1} \|  \widetilde{f^L} \|_{L^1}^2 \| \partial_\xi \widetilde{f} \|_{L^2} \lesssim \| \widetilde{f^L} \|_{L^2} \|  \widetilde{f^L} \|_{L^1}^2  \| \partial_\xi \widetilde{f} \|_{L^2} \lesssim \epsilon_1^4 s^{-2+\kappa+4\alpha}.
\end{align*}

Finally, in order to estimate the high frequeny contribution, we integrate by parts in $\eta'$, $\sigma'$, $\zeta'$ and $\sigma$ to obtain
\begin{align*}
\mathcal{J}^{2,H}_{p.v.} & = \pm \frac{1}{16 s^3} \int e^{-is \Theta} \partial_\xi \widetilde{f}(\sigma) \left[ \mathcal{H} (\cdot)^{-1} e^{\pm i s (1+|\cdot|^2)}\partial_\xi \widetilde{f^H} \right] (\eta - \sigma' - \zeta') \partial_\xi \widetilde{f^H}(\sigma') \partial_\xi \widetilde{f^H}(\zeta') \\
& \qquad \qquad \qquad \qquad \qquad \qquad \frac{\xi \mathfrak{m} \varphi^h \varphi^{nr} }{\sigma \zeta' \sigma' \Phi } \,d\eta \,d\sigma' \,d\zeta'  \,d\sigma + \{ \mbox{easier terms} \}.
\end{align*}

Proceeding similarly to the case of the $\delta$ term $J^2_\delta$, we can estimate, with the help of~\eqref{foulque2} and~\eqref{foulque4}
\begin{align*}
& \| \mathcal{J}^{2,H}_{p.v.} \|_{L^2} \\
& \qquad \lesssim s^{\kappa} \| \partial_\xi \widetilde{f} \|_{L^2} \left\| |\cdot|^{-1} \partial_\xi \widetilde{f^H} \right\|_{L^1}^2 \left[ \left\| \mathcal{H} (\cdot)^{-1} e^{\pm i s (1+|\cdot|^2)} \partial_\xi \widetilde{f^H} \right\|_{L^1} + \left\| \mathcal{H} e^{\pm i s (1+|\cdot|^2)} \partial_\xi \widetilde{f^H} \right\|_{L^2} \right] \\
& \qquad \lesssim \epsilon_1^4 s^{-2+\kappa+4\alpha},
\end{align*}
which is the desired estimate!

\medskip

\noindent \underline{Estimate of $J^{3}$.} It can be estimated in a very similar way to $J^0$. Indeed, thanks to \eqref{kumquat} and Proposition~\ref{propModU}
$$
\int_1^t s |\tau(s)| \left\|  \int e^{-is\Phi} \widetilde{f}(\eta) \widetilde{f}(\sigma) \eta \xi  \mathfrak{m} \frac{\varphi^h \varphi^{nr}}{\Phi} \,d\eta \, d\sigma \right\|_{L^2_\xi} \,ds \lesssim \epsilon_1 \int_1^t s^{-2-\nu+3\kappa}\| \widetilde f\|_{H^1_\xi}^2 ds\lesssim \epsilon_1^3.
$$

\medskip

\noindent \underline{Estimate of $J^{4}$.} For this term, it suffices to integrate by parts in $\eta$ and $\sigma$, using that
$$
\left| \frac{1}{\Phi} \partial_s [ s \varphi^h \varphi^{nr} ] \right| \lesssim s^{\kappa}.
$$

\noindent \underline{Estimate of $J^5$} It follows from the following variant of~\eqref{kumquat}:
$$
\left\| t  \xi^2 \int e^{-it\Phi}g(\eta)h(\sigma) \mathfrak m \frac{\varphi^h\varphi^{nr}}{\Phi}d\eta \, d\sigma \right\|_{L^2_\xi} \lesssim t^{-1+3\kappa}\left(\left\| \frac{g}{\langle \xi \rangle}\right\|_{L^2_{\xi}}+\left\| \frac{\partial_\xi g}{\langle \xi \rangle} \right\|_{L^2_\xi}\right)\| h\|_{H^1_\xi}
$$
Its proof is nearly identical to that of~\eqref{kumquat}; the only difference being that we use the inequality $|\frac{ \varphi^{nr}}{\Phi}|\lesssim \frac{t^{\kappa}}{\langle \xi \rangle+\langle \eta\rangle+\langle \sigma \rangle} \lesssim \frac{t^\kappa}{\langle \xi \rangle}$ instead of $\frac{t^\kappa}{\langle \eta \rangle}$, and similarly for derivatives of $\frac{ \varphi^{nr}}{\Phi}$.

\subsection{End of the proof}

In view of \eqref{aimI} which has been proven in the previous subsections, there only remains to estimate the second term in the right-hand side of \eqref{decompositionQR}. It is actually much simpler to estimate. Indeed, integrating by parts and omitting indices we get 
\begin{align*}
\int e^{-it\Phi(\xi,\eta,\sigma) }  \widetilde{f}  (\eta) \widetilde{f} (\sigma)
\partial_\xi \mathfrak{m} \,d \eta\, d \sigma = -\frac{\lambda \mu}{4t^2} \int e^{-it\Phi } \partial_\eta \partial_\sigma \left( \widetilde{f}  (\eta) \widetilde{f}(\sigma) \frac{\partial_\xi \mathfrak{m} }{\eta \sigma}\right) \,d \eta\, d \sigma 
\end{align*}
(note that there are no boundary terms as $\widetilde f(0)=0$). By \eqref{hibou10} we have
$$
|\partial_\eta \partial_\sigma ( \widetilde{f}  (\eta) \widetilde{f}(\sigma) \frac{\partial_\xi \mathfrak{m} }{\eta \sigma})|\lesssim \frac{|\widetilde f(\eta)|+|\partial_\xi \widetilde f(\eta)|}{\la \eta\ra}\frac{|\widetilde f(\sigma)|+|\partial_\xi \widetilde f(\sigma)|}{\la \sigma \ra}  \sum_{\pm} \frac{1}{\langle \xi \pm \eta \pm \sigma \rangle^2}
$$
so that the Minkowski and Cauchy-Schwarz inequalities give
$$
\left\|  \int e^{-it\Phi } \partial_\eta \partial_\sigma \left( \widetilde{f}  (\eta) \widetilde{f}(\sigma) \frac{\partial_\xi \mathfrak{m} }{\eta \sigma}\right) \,d \eta\, d \sigma \right\|_{L^2_\xi} \lesssim \| |\widetilde f|+|\partial_\xi \widetilde f| \|_{L^2}^2\lesssim \epsilon_1^2 \la t\ra^{2\alpha}.
$$
Hence the second term in the right-hand side of \eqref{decompositionQR} satisfies
 $$
\left\|  \int_0^t dt \sum \int e^{-it\Phi_{\lambda \mu  \rho }(\xi,\eta,\sigma)}  \widetilde{f}_{\lambda}  (\eta) \widetilde{f}_{\mu} (\sigma)
\partial_\xi \mathfrak{m}^V_{jkl, \lambda \mu \rho } (\xi, \eta, \sigma) \,d \eta\, d \sigma \right\|_{L^2_\xi}\lesssim \int_0^t \epsilon_1^2 \la t\ra^{2\alpha-2}dt\lesssim \epsilon_1^2.
 $$
Injecting \eqref{aimI} and the above inequality in \eqref{decompositionQR} shows the desired estimate \eqref{bd:mathcalQR-H1}.

\section{Weighted estimates for the remaining terms}
\label{sectionremaining}
\label{SectionWeightedCubic}

The aim of this section is to prove the following estimate.

\begin{proposition}
\label{PropositionWeightedCubic}
Under the assumptions of Proposition~\ref{propbootstrap}, there holds
\begin{equation}
\begin{split}
\label{courlis2}
& \left\| \partial_\xi \int_0^t e^{i \theta(s) \xi} [\widetilde{\mathcal{C}} + \widetilde{\mathcal{T}} + \widetilde{\mathcal{R}}+ \widetilde{\mathcal{M}}] \,ds \right\|_{L^\infty_{[0,t]} L^2_\xi}\\
& \qquad \qquad \qquad + \left\|\partial_\xi \int_0^t e^{i [ \theta(s) \xi - i s \rho(\uo+\xi^2) ] } [ \widetilde{\Mod_\Phi} + \widetilde{\mathcal{E}} ] \,ds \right\|_{L^\infty_{[0,t]} L^2_\xi} \lesssim \epsilon_1^2 \langle t \rangle^\alpha.
\end{split}
\end{equation}
for $0\leq t < T$. 
\end{proposition}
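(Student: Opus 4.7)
The proof is term-by-term and mirrors the structure of Proposition \ref{grebehuppe}, but tracks the effect of $\partial_\xi$ rather than the multiplier $\langle \xi \rangle$. Distributing the $\xi$-derivative inside the integrand,
\begin{equation*}
\partial_\xi\!\left[e^{i\theta(s)\xi}\widetilde X\right] = i\theta(s)\, e^{i\theta(s)\xi}\widetilde X + e^{i\theta(s)\xi}\,\partial_\xi\widetilde X,
\end{equation*}
the first piece carries a uniformly bounded factor $|\theta(s)|\lesssim \epsilon_1$ from Proposition \ref{propModU} and reduces to integrating $\|\widetilde X\|_{L^2_\xi}$ in time, which was essentially controlled in Proposition \ref{grebehuppe} after replacing $\|\langle \xi\rangle\,\cdot\|_{L^2}$ by $\|\cdot\|_{L^2}$; the singular cubic is the only term requiring a separate treatment, folded into Part 4 below. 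The bulk of the work is the second piece, $\partial_\xi \widetilde X$.

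For $X\in \{\mathcal T,\mathcal R,\Mod_\Phi,\mathcal E\}$ (Part 1) I would reproduce the bounds of Proposition \ref{grebehuppe} with $\langle \xi\rangle$ replaced by $\partial_\xi$, using property (iv) of Proposition \ref{tourterelle} to pass between $\|\partial_\xi \widetilde X\|_{L^2}$ and $\|x X\|_{L^2}$ (modulo a linear phase). For $\mathcal T=e^{-it\underline{\mathcal H}}G(x,\underline{U_e})$ with $G$ vanishing to order $4$: $\|xG\|_{L^2}\lesssim \|\underline{U_e}\|_\infty^3\|x\underline{U_e}\|_{L^2}$, and the weighted norm of $\underline{U_e}$ is extracted from $\|\partial_\xi\widetilde f\|_{L^2}$ via the group's boundedness in weighted spaces, giving an integrable bound $\epsilon_1^4\langle s\rangle^{-3/2+\alpha}$. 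For $\mathcal R$, the worst summand $\mathcal Q(\underline{U_e},a_j\underline{\Xi_j})$ enjoys $|a_j|\lesssim \epsilon_1^2\langle s\rangle^{-2-\nu+\alpha}$ and produces an integrable $L^2$ bound. $\Mod_\Phi$ is Schwartz in $x$ with coefficients of size $O(\epsilon_1^2\langle s\rangle^{-2+2\alpha})$ by Lemma \ref{propfirst}. The error $\mathcal E$ is treated exactly as in \eqref{boundE1}--\eqref{boundE2}. For $\mathcal M$ (Part 2), Proposition \ref{propModU} directly supplies $\|\partial_\xi \widetilde{\mathcal M}\|_{L^2}\lesssim \epsilon_1^2\langle s\rangle^{-1-\nu+\alpha}$.

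For the regular cubic $\mathcal C^R$ (Part 3), I expand $\partial_\xi$ on \eqref{S-v-csd-eq1}: when it lands on the phase it produces $-2is\rho\xi$; when on the symbol it produces $\partial_\xi \mathfrak m^R$. The symbol-derivative piece is handled as in Proposition \ref{grebehuppe}. For the phase-derivative piece, I integrate by parts once in each of $\eta,\sigma,\zeta$ (no boundary terms, since $\widetilde f(0)=0$); the $s^{-3}$ gain balances the $s\xi$ prefactor, and by Proposition \ref{gobemouche} the symbol $\xi\mathfrak m^R/(\eta\sigma\zeta)$ satisfies
$$
\Big|\frac{\xi\,\mathfrak m^R}{\eta\sigma\zeta}\Big|\lesssim \frac{\langle\xi\rangle}{\langle\eta\rangle\langle\sigma\rangle\langle\zeta\rangle}\sum_{\pm} \langle\xi\pm\eta\pm\sigma\pm\zeta\rangle^{-2}.
$$
A three-variable analog of the Schur-type inequality \eqref{voilier} then yields $\|\partial_\xi \widetilde{\mathcal C^R}\|_{L^2}\lesssim \epsilon_1^3\langle s\rangle^{-2+3\alpha}$, time-integrable against the target $\epsilon_1^2\langle t\rangle^\alpha$.

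The hardest and most delicate piece (Part 4) is the singular cubic $\mathcal C^S$. Starting from \eqref{fuligule}--\eqref{beccroise} and restricting, for the $\delta$-part, to the singular locus $\xi=\eta+\sigma-\zeta$ (analogous analysis for the other sign patterns), I change variables $\eta=\xi+\eta'$, $\sigma=\xi+\sigma'$, which transforms the phase into the Kato--Pusateri normal form $-2s\eta'\sigma'$ and places the entire $\xi$-dependence on the three factors $\widetilde f(\xi+\cdot\,)$ and on the coefficient $\mathfrak a$ of \eqref{id:coefficients-a-singular-cubic-term}. The $\partial_\xi$ is then transferred directly onto the profile, and the oscillatory-integral bound
\begin{equation*}
\Big\|\int e^{-2is\eta'\sigma'} h(\xi,\eta',\sigma')\,d\eta'\,d\sigma'\Big\|_{L^2_\xi}\lesssim s^{-1}\,\|h\|_{L^\infty_\xi L^2_{\eta',\sigma'}}
\end{equation*}
(a Plancherel-dilation argument for the degenerate Hessian phase) yields
\begin{equation*}
\|\partial_\xi \widetilde{\mathcal C^S}\|_{L^2}\lesssim s^{-1}\,\|\widetilde f\|_{L^\infty}^2\,\|\partial_\xi \widetilde f\|_{L^2} \lesssim \epsilon_1^3\,\langle s\rangle^{-1+\alpha},
\end{equation*}
and similarly the $|\theta(s)|\,\|\widetilde{\mathcal C^S}\|_{L^2}$ contribution is $\lesssim \epsilon_1^4\langle s\rangle^{-1}$. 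The principal-value part of $\mathfrak m^S$ is identical after replacing the $\delta$-kernel by the truncated Hilbert transform, which is bounded on $L^2$ and on rapidly weighted $L^2$. Integrating in time gives $\epsilon_1^3\langle t\rangle^\alpha$, matching the target. The main obstacle is making the $s^{-1}$ gain rigorous in the presence of the non-translation-invariant coefficient $\mathfrak a$ and ensuring that the stationary-phase-type remainder is absorbed either into $\mathcal C^R$ (for the regular part of $\mathfrak m^S$) or into a lower-order contribution with strictly better time decay.
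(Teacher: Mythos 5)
Your overall architecture matches the paper's proof closely: split off the $i\theta(s)$ contribution, bound $\mathcal T,\mathcal R,\Mod_\Phi,\mathcal E,\mathcal M$ by time-integrable physical-side estimates, treat $\mathcal C^R$ by a Schur-type inequality, and handle $\mathcal C^S$ by first transferring $\partial_\xi$ onto the profiles. Your change of variables $\eta=\xi+\eta'$, $\sigma=\xi+\sigma'$ for the $\delta$-part is in fact a clean way to reprove the commutation identity (Lemma~\ref{goeland}), and your observation that the $\operatorname{p.v.}$ leftover where $\partial_\xi$ hits the phase becomes regular (since $p\,\delta(p)=0$ and $p\cdot\operatorname{p.v.}\frac{\widehat\phi(p)}{p}=\widehat\phi(p)$) is exactly the mechanism of that lemma. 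The gap is in the estimate, not the algebra.

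The inequality
\begin{equation*}
\Big\|\int e^{-2is\eta'\sigma'} h(\xi,\eta',\sigma')\,d\eta'\,d\sigma'\Big\|_{L^2_\xi}\lesssim s^{-1}\,\|h\|_{L^\infty_\xi L^2_{\eta',\sigma'}}
\end{equation*}
is false. Stationary phase gives $\int e^{-2is\eta'\sigma'}h\,d\eta'\,d\sigma'\sim\frac{\pi}{s}h(\xi,0,0)$, and the pointwise trace $h(\xi,0,0)$ is not controlled by the $L^2_{\eta',\sigma'}$ norm of $h$. Concretely, with $h(\xi,\eta',\sigma')=g(\xi)\mathbf 1_{[-\epsilon,\epsilon]}(\eta')\mathbf 1_{[-\epsilon,\epsilon]}(\sigma')$ and $s^{-1}\ll\epsilon\ll s^{-1/2}$, the left-hand side is $\gtrsim\epsilon^2\|g\|_{L^2}$ while the right-hand side is $\sim s^{-1}\epsilon\|g\|_{L^\infty}$, so the claimed bound fails by a factor $s\epsilon\gg1$. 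Said differently, the kernel $e^{-2is\eta'\sigma'}$ maps $L^2_{\sigma'}\to L^2_{\eta'}$ with norm $\sim s^{-1/2}$, never $s^{-1}$, and no norm-only argument can produce the rate you need.

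The correct way to realize the $s^{-1}$ gain is to exploit the tensor structure of the singular symbol (the reason $\mathcal C^S$ is manageable at all). After $\partial_\xi$ has been placed on one profile, the expression is, after conjugating by the wave operator $\mathcal W$ and using flat Plancherel, the $L^2_x$ norm of a triple product $g_1g_2g_3$ in physical space, where $g_1=\mathcal W e^{it\underline{\mathcal H}}\mathfrak a(\widetilde D)\widetilde{\mathcal F}^{-1}\partial_\xi\widetilde f$ and $g_2,g_3=\mathcal W e^{it\underline{\mathcal H}}\mathfrak a(\widetilde D)f$ (and for the $\operatorname{p.v.}$ part, one extra bounded multiplication by $Z$ with $\widehat Z=\widehat\phi(p)/p$). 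H\"older gives $\|g_1\|_2\|g_2\|_\infty\|g_3\|_\infty$; the dispersive estimate of Lemma~\ref{aigrette} yields $\|g_2\|_\infty,\|g_3\|_\infty\lesssim\epsilon_1 s^{-1/2}$ and $\|g_1\|_2\lesssim\epsilon_1 s^\alpha$, giving the $\epsilon_1^3 s^{-1+\alpha}$ you announced. The $s^{-1}$ comes from two applications of the $s^{-1/2}$ dispersive bound on the undifferentiated factors, not from a single bilinear oscillatory estimate; this is the ingredient your proposal is missing.
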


\subsection{First reduction} Considering the left-hand side of~\eqref{courlis2}, and expanding the terms there through Leibniz' rule, we examine first the terms for which $\partial_\xi$ hits $e^{i \theta(s) \xi}$, namely
$$
\int_0^t i \theta(s)  e^{i \theta(s) \xi} [\widetilde{\mathcal{C}} + \widetilde{\mathcal{T}} + \widetilde{\mathcal{R}}+ \widetilde{\mathcal{M}}] \,ds + \int_0^t  i \theta(s) e^{i [ \theta(s) \xi - i s \rho(\uo+\xi^2) ] } [ \widetilde{\Mod_\Phi} + \widetilde{\mathcal{E}} ] \,ds.
$$
These terms are easily estimated in $L^2$ by
\begin{align*}
\int_0^t \left[ \|\widetilde{\mathcal{C}} \|_{L^2} +  \|\widetilde{\mathcal{T}} \|_{L^2} +  \|\widetilde{\mathcal{R}} \|_{L^2} +  \|\widetilde{\mathcal{M}} \|_{L^2} + \| \widetilde{\Mod_\Phi} \|_{L^2} + \| \widetilde{\mathcal{E}} \|_{L^2} \right] \,ds \lesssim \epsilon_1^2 \langle \log t \rangle.
\end{align*}
Therefore, it will suffice to prove that
\begin{align*}
& \int_0^t \left[ \| \partial_\xi \widetilde{\mathcal{C}} \|_{L^2} + \|\partial_\xi  \widetilde{\mathcal{T}} \|_{L^2} + \| \partial_\xi  \widetilde{\mathcal{R}} \|_{L^2} + \| \partial_\xi  \widetilde{\mathcal{M}}\|_{L^2} \right] \,ds \\
& \qquad \qquad \qquad +  \int_0^t \left\| \partial_\xi \left[ e^{ - i s \rho(\uo+\xi^2)  } \widetilde{\Mod_\Phi} \right] \right\|_{L^2}\,ds + \int_0^t \left\| \partial_\xi \left[ e^{ - i s \rho(\uo+\xi^2)  } \widetilde{\mathcal{E}} ) \right] \right\|_{L^2} \,ds \lesssim \epsilon_1^2 \langle t \rangle^\alpha,
\end{align*}
which will be our aim in this section. For the term involving $\widetilde{\mathcal{M}}$, this is a direct consequence of~\eqref{propModU}, we now turn to the other terms.

\subsection{The commutation lemma}

The following lemma is a variant of lemmas 4.3 and 4.4 in \cite{CP}. The aim of this lemma is to commute $\partial_\xi$ with the following trilinear operator
$$
\mathcal{T}_{b(p)} (f_1, f_2, f_3) (\xi) =   \int e^{ -it \Phi_{\lambda \mu \nu \rho}}  f_1 (\eta) f_2(\sigma) f_3 (\zeta) b(p )\, d\eta \, d \sigma \, d \zeta ,
$$
where $\Phi_{\lambda \mu \nu \rho}(\xi,\eta,\sigma,\zeta)$ was defined in~\eqref{fauconcrecerelle}, $b$ is a distribution, and
$$
p = \alpha \xi - \beta \eta - \gamma \sigma - \delta \zeta.
$$
As is always our convention, the parameters $\alpha,\beta,\gamma,\delta,\lambda,\mu,\nu,\rho$ belong to $\{ \pm 1 \}$.

\begin{lemma}  \label{goeland}
If $\lambda + \mu + \nu = \rho$, then for $f_j \in \mathcal{S}$, 
	\begin{equation*} 
	\begin{split}
	&\alpha \partial_\xi \mathcal{T}_{b(p)} (f_1, f_2, f_3) (\xi)\\
	& \qquad  = \rho \lambda \beta \mathcal{T}_{b(p)} ( \partial_\xi f_1, f_2, f_3) (\xi) + \rho \mu \gamma \mathcal{T}_{b(p)} (  f_1, \partial_\xi f_2, f_3) (\xi) + \rho \nu \delta \mathcal{T}_{b(p)} (  f_1, f_2, \partial_\xi f_3) (\xi) \\ 
	& \qquad \qquad \qquad - 2it \rho \mathcal{T}_{pb(p)} (  f_1, f_2,   f_3) (\xi) .
	\end{split}
	\end{equation*}
\end{lemma}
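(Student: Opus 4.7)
The approach is to differentiate the integrand $e^{-it\Phi_{\lambda\mu\nu\rho}} b(p)$ explicitly in $\xi$, rewrite the resulting factor as derivatives in $\eta,\sigma,\zeta$ of the same integrand, and integrate by parts on the Schwartz inputs. The key algebraic identities are $\alpha\xi = p + \beta\eta + \gamma\sigma + \delta\zeta$ (from the definition of $p$), the phase derivatives $\partial_\eta e^{-it\Phi} = 2it\lambda\eta\, e^{-it\Phi}$, $\partial_\sigma e^{-it\Phi} = 2it\mu\sigma\, e^{-it\Phi}$, $\partial_\zeta e^{-it\Phi} = 2it\nu\zeta\, e^{-it\Phi}$, the derivatives $\partial_\eta b(p) = -\beta b'(p)$, $\partial_\sigma b(p) = -\gamma b'(p)$, $\partial_\zeta b(p) = -\delta b'(p)$, and crucially the resonance condition $\lambda+\mu+\nu=\rho$, which (since $\rho^2=1$) is equivalent to $1 = \rho\lambda + \rho\mu + \rho\nu$.

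First I would handle the contribution from $\partial_\xi e^{-it\Phi}$. Since $\partial_\xi e^{-it\Phi} = -2it\rho\xi\, e^{-it\Phi}$, multiplying by $\alpha$ and using $\alpha\xi = p + \beta\eta + \gamma\sigma + \delta\zeta$ gives
\begin{equation*}
\alpha\partial_\xi e^{-it\Phi} = -2it\rho p\, e^{-it\Phi} - \rho\beta\lambda\,\partial_\eta e^{-it\Phi} - \rho\mu\gamma\,\partial_\sigma e^{-it\Phi} - \rho\nu\delta\,\partial_\zeta e^{-it\Phi},
\end{equation*}
where each $-2it\rho\kappa\omega\, e^{-it\Phi}$ has been converted into the appropriate $-\rho\kappa\tau\,\partial_\omega e^{-it\Phi}$ using the phase identities above.

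Next I would treat $\partial_\xi b(p)=\alpha b'(p)$, i.e.\ $\alpha\partial_\xi b(p)=b'(p)$. Expanding $b'(p) = (\rho\lambda+\rho\mu+\rho\nu)b'(p)$ via $\lambda+\mu+\nu=\rho$ and rewriting $b'(p) = -\beta\,\partial_\eta b(p) = -\gamma\,\partial_\sigma b(p) = -\delta\,\partial_\zeta b(p)$ in each piece, I obtain
\begin{equation*}
\alpha\partial_\xi b(p) = -\rho\lambda\beta\,\partial_\eta b(p) - \rho\mu\gamma\,\partial_\sigma b(p) - \rho\nu\delta\,\partial_\zeta b(p).
\end{equation*}
Adding the two displays collapses, by the Leibniz rule, into
\begin{equation*}
\alpha\partial_\xi\bigl[e^{-it\Phi}b(p)\bigr] = -2it\rho\, p\, e^{-it\Phi}b(p) - \rho\lambda\beta\,\partial_\eta\bigl[e^{-it\Phi}b(p)\bigr] - \rho\mu\gamma\,\partial_\sigma\bigl[e^{-it\Phi}b(p)\bigr] - \rho\nu\delta\,\partial_\zeta\bigl[e^{-it\Phi}b(p)\bigr].
\end{equation*}

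Finally, inserting this identity into the integral defining $\alpha\partial_\xi\mathcal{T}_{b(p)}(f_1,f_2,f_3)(\xi)$ and integrating by parts in $\eta,\sigma,\zeta$ produces exactly the four terms on the right-hand side of the claimed formula, the boundary contributions being absent because $f_1,f_2,f_3\in\mathcal{S}$. The whole argument is essentially bookkeeping once the correct use of the resonance relation $\lambda+\mu+\nu=\rho$ is identified; the only delicate point is keeping track of the signs $\lambda,\mu,\nu,\rho,\alpha,\beta,\gamma,\delta$ and using $\lambda^2=\mu^2=\nu^2=\rho^2=\beta^2=\gamma^2=\delta^2=1$ consistently. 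I do not anticipate any analytic difficulty, only combinatorial care.
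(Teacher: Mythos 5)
Your proof is correct and follows essentially the same route as the paper's: both use the resonance relation $\lambda+\mu+\nu=\rho$ to convert the $\xi$-derivative into $\eta,\sigma,\zeta$-derivatives of the integrand and then integrate by parts. The only cosmetic difference is that you substitute $\alpha\xi=p+\beta\eta+\gamma\sigma+\delta\zeta$ before the integration by parts so that the $\mathcal{T}_{pb(p)}$ term appears immediately, whereas the paper keeps $\xi$ until the end and reassembles $p$ from the $\mathcal{T}_{\xi b},\mathcal{T}_{\eta b},\mathcal{T}_{\sigma b},\mathcal{T}_{\zeta b}$ contributions.
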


\begin{proof}
	Differentiating $\mathcal{T}_{b} (f_1, f_2, f_3)$ with respect to $\xi$ gives
	\begin{equation}   \label{foudebassan}
	\partial_\xi \mathcal{T}_{b(p)} (f_1, f_2, f_3) (\xi) =  \mathcal{T}_{\partial_\xi b} (f_1, f_2, f_3) (\xi) - 2it  \rho \mathcal{T}_{\xi b(p)} (f_1, f_2, f_3) (\xi) .  \\
	\end{equation}
	Since
	$$
\alpha \partial_\xi b (p) = - \beta \partial_\eta b(p) = -\gamma \partial_\sigma b(p) = -\delta \partial_\zeta b(p),
	$$ 
	there holds
	$$
\alpha \partial_\xi b (p) = \alpha \rho (\lambda + \mu + \nu) \partial_\xi b (p) = - \rho(\lambda \beta \partial_\eta b(p)+ \mu \gamma \partial_\sigma b(p) + \nu \delta \partial_\zeta b(p)).
	$$
	Substituting the above right-hand side for $\partial_\xi b(p)$ in~\eqref{foudebassan}, and integrating by parts in $\eta, \sigma$ or $\zeta$ as appropriate gives
	\begin{align*}
	&\alpha \partial_\xi \mathcal{T}_{b(p)} (f_1, f_2, f_3) = \rho \lambda \beta \mathcal{T}_{b(p)} (\partial_\xi f_1, f_2, f_3) + \rho \mu \gamma \mathcal{T}_{b(p)} ( f_1, \partial_\xi f_2, f_3) + \rho \nu \delta \mathcal{T}_{b(p)} ( f_1, f_2,\partial_\xi  f_3) \\
	& \qquad \qquad \qquad \qquad \qquad - 2it  \rho \alpha \mathcal{T}_{\xi b(p)} (f_1, f_2, f_3) (\xi) + 2it \rho \beta \mathcal{T}_{\eta b(p)}(f_1, f_2, f_3) \\
	& \qquad \qquad \qquad \qquad \qquad \qquad + 2it \rho \gamma \mathcal{T}_{ \sigma b(p)} (f_1, f_2, f_3) + 2it \rho \delta \mathcal{T}_{ \zeta b(p)} (f_1, f_2, f_3).
	\end{align*}
	Gathering the terms in the last two lines into $2it\rho \mathcal{T}_{pb(p)}$ gives the desired result.
\end{proof}

\subsection{The singular cubic terms} In this subsection we prove

\begin{equation} \label{bd:partial-xi-CS-L2}
\left\| \partial_\xi  \widetilde{\mathcal{C}^S}(t)  \right\|_{L^2} \lesssim \epsilon_1^3 \langle t \rangle^{-1+\alpha}.
\end{equation}

\noindent  \underline{Deriving the relevant expressions to be estimated}.
Out of the two components of $\mathcal{C}^S$ in~\eqref{fuligule}, we will only estimate
\begin{equation}
\label{fauvette}
 \widetilde{\mathcal{F}}_+ (\mathcal{C}^S) (\xi) =  \frac{1}{4 \pi^{2}} \sum  \int e^{-it\Phi_{++-+}(\xi,\eta,\sigma,\zeta)}  \widetilde{f}_{+}  (\eta)  \widetilde{f}_{+} (\sigma) \widetilde{f}_{-} (\zeta)
\mathfrak{m}^S_{1121, ++-+} (\xi, \eta, \sigma, \zeta) \,d \eta\, d \sigma \,d \zeta
\end{equation}
since the other component can be treated identically. Recall that, by Lemma~\ref{gobemouche},
\begin{equation*}  
\begin{split}
\mathfrak{m}^{S}_{1121, ++-+} (\xi, \eta, \sigma, \zeta) 
& =  L\lambda \mu \nu \sum_{\alpha \beta \gamma \delta \epsilon}  a^\epsilon_{  \substack{  1121, +++ - \\ \alpha \beta \gamma \delta }} (\xi, \eta, \sigma, 
\zeta) \left[ \pi \delta(p) +  \sqrt{2\pi} \epsilon p.v. \frac{\widehat{\phi} (p)}{ip}  \right]   \\
& =  \sum_{\alpha \beta \gamma \delta \epsilon }  \mathfrak{a}^{+, 1, \epsilon}_\beta(\eta)  \mathfrak{a}^{+, 1, \epsilon}_\gamma(\sigma)  \mathfrak{a}^{-, 2, \epsilon}_\delta(\zeta) \overline{\mathfrak{a}^{+, 1, \epsilon}_\alpha}(\xi) \left[\pi \delta(p) +  \sqrt{2\pi} \epsilon p.v. \frac{\widehat{\phi} (p)}{ip}  \right] ,
\end{split}
\end{equation*}
where $p =  \alpha \xi -  \beta \eta -   \gamma \sigma - \delta \zeta$.

Therefore,~\eqref{fauvette} can be written as a linear combination of terms of the type 
$$
\mathcal{T}_{b(p)}(\mathfrak{a}(\xi)\widetilde f(\xi),\, \mathfrak{a}(\xi)\widetilde f(\xi),\,\mathfrak{a}(\xi)\widetilde  f(\xi)), \qquad \mbox{with} \qquad b(p) =  \delta(p) \quad \mbox{or} \quad   p.v. \frac{\widehat{\phi} (p)}{ip},
$$
where we dropped all indices for simplicity. By Lemma~\ref{goeland},
\begin{align*}
&\partial_\xi \mathcal{T}_{b(p)}(\mathfrak{a}(\xi)\widetilde  f(\xi),\,\mathfrak{a}(\xi)\widetilde  f(\xi),\,\mathfrak{a}(\xi)\widetilde  f(\xi)) = \rho \lambda \alpha \beta  \mathcal{T}_{b(p)}(\partial_\xi[\mathfrak{a}(\xi)\widetilde  f(\xi)],\,\mathfrak{a}(\xi)\widetilde  f(\xi),\,\mathfrak{a}(\xi)\widetilde  f(\xi)) \\
& \qquad \qquad \qquad \qquad \qquad - 2it \rho \alpha \mathcal{T}_{pb(p)}(\mathfrak{a}(\xi)\widetilde  f(\xi),\,\mathfrak{a}(\xi)\widetilde  f(\xi),\,\mathfrak{a}(\xi)\widetilde  f(\xi)) + \{ \mbox{similar terms} \}.
\end{align*}
The second term on the right-hand side is actually a regular cubic term, and as such it will be bounded in the next subsection. We turn to the first term on the right-hand side, for which we will assume that $\partial_\xi$ hits $\widetilde{f}(\xi)$ rather than $\mathfrak{a}(\xi)$, since this is the harder case. 

\medskip

\noindent \underline{Multilinear operators}. A simplifying feature which we encounter here is the following: the integrand of $\widetilde{\mathcal{F}}_+(\mathcal{C}^S)$ tensorizes, as a function of $\xi,\eta,\sigma,\zeta$. Therefore, after switching from the distorted to the flat Fourier transform (which can be achieved through the wave operator), $\widetilde{\mathcal{F}}_+(\mathcal{C}^S)$ can be expressed in terms of the operators
\begin{align*}
& U(f_1,f_2,f_3)(x) = \widehat{\mathcal{F}}^{-1}_{\xi \to x} \int \widehat{f_1}(\eta) \widehat{f_2}(\sigma) \widehat{f_3}(\zeta) \delta(p) \,d\eta \,d\sigma \,d\zeta \\
& V(f_1,f_2,f_3)(x) = \widehat{\mathcal{F}}^{-1}_{\xi \to x} \int \widehat{f_1}(\eta) \widehat{f_2}(\sigma) \widehat{f_3}(\zeta) \operatorname{p.v.} \frac{\widehat{\phi}(p)}{p} \,d\eta \,d\sigma \,d\zeta,
\end{align*}
where 
$$
p = \alpha \xi - \beta \eta - \gamma \sigma - \delta \zeta.
$$
Since these operators correspond to convolution in frequency space, they are given by multiplication opeartors in physical space:
\begin{align*}
& U(f_1,f_2,f_3)(x) = 2\pi f_1(\alpha \beta x) f_2 (\alpha \gamma x) f_3(\alpha \delta x) \\
& V(f_1,f_2,f_3)(x) = (2\pi)^{3/2} Z(\alpha x) f_1(\alpha \beta x) f_2 (\alpha \gamma x) f_3(\alpha \delta x), \qquad \mbox{with} \quad \widehat{Z}(\xi) = \frac{\widehat{\phi}(\xi)}{\xi}.
\end{align*}
Since $Z \in L^\infty$, the operators $U$ and $V$ enjoy H\"older bounds
$$
\| U(f_1,f_2,f_3) \|_q + \| V(f_1,f_2,f_3) \|_q \lesssim \| f_1 \|_{p_1} \| f_2 \|_{p_2} \| f_3 \|_{p_3}.
$$
whenever $1\leq q,p_1,p_2,p_3\leq \infty$ satisfy $\frac{1}{q}=\frac{1}{p_1}+\frac{1}{p_2}+\frac{1}{p_3}$.
\medskip

\noindent \underline{The estimates}.
Using the wave operator defined in~\eqref{grimpereau} and the notations defined in the previous paragraph, we can write $\mathcal{T}_b(\partial_\xi[\mathfrak{a}(\xi)\widetilde  f(\xi)],\,\mathfrak{a}(\xi)\widetilde  f(\xi),\,\mathfrak{a}(\xi)\widetilde  f(\xi))$ as a linear combination of terms of the type
$$
e^{it(1+\xi^2)}\mathfrak{a}(\xi) \widehat{\mathcal{F}} U(\mathcal{W}  e^{it \mathcal{H}} \mathfrak{a}(\widetilde{D}) \widetilde{\mathcal{F}}^{-1} \partial_\xi \widetilde f(\xi), \,  \mathcal{W} e^{it \mathcal{H}} \mathfrak{a}(\widetilde{D}) f, \, \mathcal{W} e^{it \mathcal{H}} \mathfrak{a}(\widetilde{D}) f),
$$
and the same expression with $V$ replacing $U$. Since both can be estimated identically, we focus on the expression involving $U$ above. Applying successively Plancherel's theorem, the H\"older estimate, Lemma~\ref{aigrette} and Corollary~\ref{pluvier},
\begin{align*}
&  \left\|e^{it(1+\xi^2)}\mathfrak{a}(\xi) \widehat{\mathcal{F}} U(\mathcal{W}  e^{it \mathcal{H}} \mathfrak{a}(\widetilde{D})  \widetilde{\mathcal{F}}^{-1}\partial_\xi \widetilde f(\xi), \,  \mathcal{W} e^{it \mathcal{H}} \mathfrak{a}(\widetilde{D}) f, \, \mathcal{W} e^{it \mathcal{H}} \mathfrak{a}(\widetilde{D}) f) \right\|_{L^2}  \\
& \qquad \lesssim  \left\| \mathcal{W}  e^{it \mathcal{H}} \mathfrak{a}(\widetilde{D})  \widetilde{\mathcal{F}}^{-1}\partial_\xi \widetilde f(\xi) \right\|_2 \left\| \mathcal{W} e^{it \mathcal{H}} \mathfrak{a}(\widetilde{D}) f \right\|_\infty \left\| \mathcal{W} e^{it \mathcal{H}} \mathfrak{a}(\widetilde{D}) f) \right\|_\infty \\
&  \qquad \lesssim \epsilon_1^3 \langle t \rangle^{\alpha} \langle t \rangle^{-\frac{1}{2}} \langle t \rangle^{-\frac{1}{2}} = \epsilon_1^3 \langle t \rangle^{-1+\alpha}.
\end{align*}

\subsection{The regular cubic terms} \label{subsecregcub}
We want to bound here the contribution of the regular cubic terms by
\begin{equation}  \label{bd:partial-xi-CR-L2}
\| \partial_\xi \widetilde{\mathcal{C}^R} \|_{L^2}\lesssim \epsilon_1^3 t^{3\alpha - 2}.
\end{equation}
We can write $\partial_\xi \widetilde{\mathcal{C}^R}$ as a linear combination of terms of the type
$$
t \xi \int e^{it\Phi(\xi,\eta,\sigma,\zeta)} \widetilde{f}(\eta) \widetilde{f}(\sigma) \widetilde{f}(\zeta) \mathfrak{m}(\xi,\eta,\sigma,\zeta) \, d\eta \, d\sigma \, d\zeta,
$$
up to simpler terms which we disregard. The above expression was already estimated in $L^2$ in Lemma~\ref{grebehuppe}, or rather its proof. There, it was proved that it enjoys the bound $\epsilon_1^3 t^{3\alpha - 2}$, from which the desired estimate follows!

\subsection{The higher order and remainder terms}

We claim that
\begin{equation} \label{bd:partial-xi-mathcalT-mathcalR}
\| \partial_\xi \widetilde{\mathcal T}\|_{L^2}\lesssim \epsilon_1^4 \langle t \rangle^{-1} \qquad \mbox{and} \qquad \| \partial_\xi \widetilde{\mathcal R}\|_{L^2}\lesssim \epsilon_1^3 \langle t\rangle^{-1-\nu+\alpha}.
\end{equation}
To prove these bounds, we do not need to handle carefully resonances on the Fourier side as rough estimates on the physical side suffice.

Recall \eqref{id:definition-Nu}, the definition of the nonlinear terms $N(u,\bar u)$. Recall $u=U_1$, $\bar u=U_2$ and $U=\Uue+\Uud$. We introduce
\begin{align*}
& T(u,\bar u)=N(\underline{U_{e,1}},\underline{U_{e,2}})-V_{++} \underline{U_{e,1}^2} - V_{--}  \underline{U_{e,2}^2}- V_{+-}  \underline{U_{e,1}} \underline{U_{e,2}} \\
&\qquad \qquad \qquad  - V_{++-}  \underline{U_{e,1}^2} \underline{U_{e,2}} - V_{+--}  \underline{U_{e,1}} \underline{U_{e,2}^2} - V_{+++}  \underline{U_{e,1}^3} + V_{---}  \underline{U_{e,2}^3},\\
& R(u,\bar u)=N(u,\bar u)-N(\underline{U_{e,1}},\underline{U_{e,2}}),
\end{align*}
so that
$$
\mathcal T= e^{-it\mathcal H}\left( T(u,\bar u),T(\bar u,u)\right)^\top \qquad \mbox{and}\qquad \mathcal R= e^{-it\mathcal H}\left( R(u,\bar u),R(\bar u,u)\right)^\top .
$$
Now for a general function $g$, we have by applying (iv) in Proposition \ref{tourterelle}
\begin{align*}
\left\| \partial_\xi \left(e^{-it\rho(1+\xi^2)} \widetilde{\mathcal F}_{\rho} g\right)\right\|_{L^2} & \lesssim \left\| t\xi \widetilde{\mathcal F}_{\rho} g\right\|_{L^2}+\left\| \partial_\xi \widetilde{\mathcal F}_{\rho} g\right\|_{L^2} \\
&\lesssim t \| g\|_{H^1}+\| \langle x\rangle g \|_{L^2}.
\end{align*}
Therefore
\begin{align}
\label{bd:mathcalT-by-physical-side} & \| \partial_\xi \widetilde{\mathcal T}\|_{L^2}\lesssim t \| T(u,\bar u)\|_{H^1}+\| \langle x\rangle T(u,\bar u)\|_{L^2}, \\
\label{bd:mathcalR-by-physical-side} & \| \partial_\xi \widetilde{\mathcal R}\|_{L^2}\lesssim t \| R(u,\bar u)\|_{H^1}+\| \langle x\rangle R(u,\bar u)\|_{L^2}.
\end{align}
Applying (iv) in Proposition \ref{tourterelle} again, then using $\widetilde{\mathcal F}_\rho\Uue = e^{it\rho(1+\xi^2)} \widetilde f_\rho$ we have
\begin{equation} \label{bd:Ue-weighted-L2}
\| \langle x \rangle \Uue\|_{L^2}\lesssim \| \widetilde{\mathcal F} \Uue \|_{H^1}\lesssim  \| t \xi \widetilde f\|_{L^2}+ \| \widetilde f\|_{H^1} \lesssim \epsilon_1 \langle t\rangle
\end{equation}
where we used \eqref{eqbootstrap} and \eqref{controlH1} for the last inequality.

From the very definition of $T(u,\bar u)$ and $R(u,\bar u)$ we have the pointwise estimates
\begin{align*}
& |T(u,\bar u)|\lesssim |\Uue|^5+V|\Uue|^4, \qquad |\partial_x T(u,\bar u)|\lesssim |\partial_x \Uue| |\Uue|^4+V'|\partial_x \Uue||\Uue|^3,\\
& |R(u,\bar u)|\lesssim |\Uud|(|\Uud|+|\Uue|), \quad |\partial_x R(u,\bar u)|\lesssim  |\partial_x \Uud|(|\Uud|+|\Uue|)+|\partial_x \Uue| |\Uud|
\end{align*}
for $V$ and $V'$ two exponentially decreasing functions. Hence
\begin{align}
\nonumber \| T(u,\bar u)\|_{H^1} &\lesssim \| \Uue \|_{H^1}\| \Uue\|_{L^\infty}^4+\| \Uue \|_{H^1}\| \langle x \rangle^{-1} \Uue\|_{L^\infty}^3 \\
\label{bd:T-H1}&\qquad \qquad \quad \lesssim \epsilon_1 \epsilon_1^4 \langle t \rangle^{-2}+\epsilon_1 \epsilon_1^3 \langle t \rangle^{-3+3\alpha} \ \lesssim \epsilon_1^4 \langle t \rangle^{-2},
\end{align}
where we used \eqref{controlH1}, \eqref{bd:globaldecay}, \eqref{bd:estimation:a0(t)-a1(t)} and \eqref{bd:localdecay}, and
\begin{align}
\nonumber \| \langle x \rangle T(u,\bar u)\|_{L^2} &\lesssim \| \langle x \rangle \Uue \|_{L^2}\| \Uue\|_{L^\infty}^4+\| \langle x \rangle^{-1} \Uue\|_{L^\infty}^4 \\
\label{bd:T-weighted-L2}&\qquad \qquad \quad \lesssim  \epsilon_1\langle t\rangle \epsilon_1^4 \langle t \rangle^{-2}+ \epsilon_1^4 \langle t \rangle^{-4+4\alpha} \ \lesssim \epsilon_1^4 \langle t \rangle^{-1},
\end{align}
where we used in addition \eqref{bd:Ue-weighted-L2}. Similarly, using $\Uud=a_0\underline{\Xi_0}+a_1\underline{\Xi_1}$, \eqref{bd:estimation:a0(t)-a1(t)} and \eqref{controlH1}
\begin{align}
\nonumber & \| R(u,\bar u)\|_{H^1} +\| \langle x \rangle R\|_{L^2}\lesssim (|a_0|+|a_1|)(|a_0|+|a_1|+\| \Uue\|_{H^1})\\
\label{bd:R-L2-H1} &\qquad \qquad \qquad \quad  \qquad \qquad \quad \lesssim  \epsilon_1^2 \langle t\rangle^{-2-\nu+\alpha} \left( \epsilon_1^2 \langle t\rangle^{-2-\nu+\alpha}+\epsilon_1 \right)  \ \lesssim \epsilon_1^3 \langle t \rangle^{-2-\nu+\alpha}.
\end{align}
Injecting \eqref{bd:T-H1} and \eqref{bd:T-weighted-L2} in \eqref{bd:mathcalT-by-physical-side}, and \eqref{bd:R-L2-H1} in \eqref{bd:mathcalT-by-physical-side}, we obtain the desired estimate \eqref{bd:partial-xi-mathcalT-mathcalR}.

\subsection{The modulation term} In this subsection, we prove that
\begin{equation} \label{bd:partial-xi-Mod-L2}
\left\| \partial_\xi \left[ e^{-i\rho t (\uo+\xi^2)} \widetilde{\Mod_\Phi} \right] \right\|_{L^2}\lesssim \epsilon_1^3 \langle t\rangle^{-2 + 2\alpha- \nu}.
\end{equation}
We now use \eqref{id:relation-distorted-fourier-and-projectors} and then the fourth assertion in Lemma~\ref{tourterelle} to obtain
\begin{align*}
& \left\| \partial_\xi \left[ e^{-i\rho t (\uo+\xi^2)} \widetilde{\Mod_\Phi} \right] \right\|_{L^2} = \left\| \partial_\xi \left[ e^{-i\rho t (\uo+\xi^2)}\widetilde{\mathcal F} \underline{P_e} {\Mod_\Phi} \right] \right\|_{L^2} \\
& \qquad \qquad \leq t \left\| \xi \widetilde{\mathcal F} \underline{P_e}  {\Mod_\Phi} \right\|_{L^2} + \left\| \partial_\xi \widetilde{\mathcal F} \underline{P_e}  {\Mod_\Phi} \right\|_{L^2} \lesssim t \| \underline{P_e}  \Mod_\Phi \|_{H^1} + \|\underline{P_e}  \Mod_\Phi \|_{L^{2,1}}.
\end{align*}
Using now~\eqref{projectedmodulationterm} which gives the projection of $\Mod_\Phi$, followed by Lemma~\ref{propfirst} and the boostrap hypothesis~\eqref{bd:bootstrap-omega2} which control the modulation parameters,
\begin{align*}
&\|  \underline{P_e}  {\Mod_\Phi} \|_{H^1} + \|  \underline{P_e}  {\Mod_\Phi} \|_{L^{2,1}} \\
& \qquad \lesssim \left[ |\omega-p^2-\dot \gamma| + |\dot \omega| + |2p-\dot y| + |\dot p| \right] \sum_{j=0}^3 \left[ \left\| e^{i(\pu-p)\sigma_3 x} \Xi_2-\underline{\Xi_2} \right\|_{H^1} +\left\| e^{i(\pu-p)\sigma_3 x} \Xi_2-\underline{\Xi_2} \right\|_{L^{2,1}} \right] \\
& \qquad \lesssim \left[ |\omega-p^2-\dot \gamma| + |\dot \omega| + |2p-\dot y| + |\dot p| \right] |\underline{p} - p| \\
& \qquad \lesssim \epsilon_1^3 \langle t \rangle^{-3 + 2\alpha - \nu}.
\end{align*}
Combining the above estimates gives the desired bound~\eqref{bd:partial-xi-Mod-L2}.

\subsection{The error term} In this subsection, we prove that
\begin{equation}
\label{bound:partial-xi-tildemathcalE} \| \partial_\xi (e^{-it\rho(\uo+\xi^2)}\widetilde{\mathcal E}) \|_{L^2}\lesssim  \epsilon_1^2 \langle t\rangle^{-1-\nu+\alpha}.
\end{equation}

Recall the decomposition
\begin{align*}
\mathcal E & =  (V_{\underline{\omega}} -  e^{i(p-\pu)\sigma_3 x} V_\omega  e^{i(\pu-p)\sigma_3 x}) \Uu + e^{i(p-\pu)\sigma_3 x} \mathcal N( e^{i(\pu-p)\sigma_3 x} \Uu)  -\mathcal N(\Uu) \\
& = \mathcal E_1+\mathcal E_2 .
\end{align*}

\noindent \underline{Estimates for $\mathcal E_1$.} Using first $U=\Uud+\Uue$ and $\widetilde {\mathcal F}\Pud =0$, then applying Lemma \ref{bartavelle2} we have for $\rho=\pm$
\begin{align*}
 \widetilde{\mathcal F}_\rho \mathcal E_1 & = \widetilde{\mathcal F}_\rho  \left(   (V_{\underline{\omega}} -  e^{i(p-\pu)\sigma_3 x} V_\omega  e^{i(\pu-p)\sigma_3 x}) \Uud \right)+  \widetilde{\mathcal F}_\rho  \left(  (V_{\underline{\omega}} -  e^{i(p-\pu)\sigma_3 x} V_\omega  e^{i(\pu-p)\sigma_3 x}) \Uue \right) \\
 & = \widetilde{\mathcal F}_\rho  \left(  (V_{\underline{\omega}} -  e^{i(p-\pu)\sigma_3 x} V_\omega  e^{i(\pu-p)\sigma_3 x})\Uud \right)+ \sum_{\lambda \in \{ \pm \}} \int e^{ it \lambda (1+\eta^2)} \mathfrak{s}^{\omega,\underline{\omega}}_{\rho \lambda}(\xi,\eta) \widetilde{f}_\lambda(\eta) \,d\eta \\
 & = \widetilde{\mathcal{F}}_\rho \mathcal{E}_1' + \widetilde{\mathcal{F}}_\rho \mathcal{E}_1''
\end{align*}
with $\mathfrak{s}=\mathfrak{s}^{\omega,\underline{\omega}}_{\rho \lambda}$ satisfying
$$
 \left| \partial_\xi^a \partial_\eta^b \frac{\mathfrak{s} (\xi,\eta)}{\eta} \right| \lesssim_{a,b} \frac{|\omega-\underline{\omega}|}{\langle \eta \rangle}\sum_{\pm}  \frac{1}{\langle \xi \pm \eta \rangle^2} .
 $$

Differentiating with respect to $\xi$, we obtain
\begin{align*}
& \partial_\xi \left(  e^{-it\rho(\uo+\xi^2)}\widetilde{\mathcal{F}}_\rho \mathcal E_{1} \right) \\
& \quad = -2i t\rho \xi e^{-it\rho(\uo+\xi^2)} \widetilde{\mathcal F}_\rho \mathcal E_{1}+e^{-it\rho(\uo+\xi^2)}\partial_\xi  \widetilde{\mathcal{F}}_\rho \mathcal{E}_1' +e^{ -it \rho(\uo+\xi^2) } \sum_{\lambda \in \{ \pm \}} \int e^{ it \lambda (\uo+\eta^2)}\partial_\xi \mathfrak{s} (\xi,\eta) \widetilde{f}_\lambda(\eta) \,d\eta\\
& \quad = -2i t\rho \xi e^{-it\rho(\uo+\xi^2)} \widetilde{\mathcal F}_\rho \mathcal E_{1}+ I+II.
\end{align*}
The first term is bounded thanks to~\eqref{boundE1}, which yields
$$
t \| \xi \widetilde{\mathcal{E}_1} \|_{L^2} \lesssim \epsilon_1^2 \langle t \rangle^{-1-\nu +\alpha}.
$$
To bound the second term, we use item (iv) of Proposition \ref{tourterelle}, the decay of the discrete parameters~\eqref{bd:estimation:a0(t)-a1(t)} and the bootstrap hypothesis~\eqref{bd:bootstrap-omega2} to obtain
$$
  \| I \|_{L^{2}}  \lesssim \|   (V_{\underline{\omega}} -  e^{i(p-\pu)\sigma_3 x} V_\omega  e^{i(\pu-p)\sigma_3 x}) \Uud  \|_{L^{2,1}} \lesssim \epsilon_1^3 \langle t\rangle^{-3-2\nu+\alpha}.
$$
Finally, the term $II$ can be bounded exactly as $\mathcal{E}_1$ in the estimates leading to~\eqref{boundE1}, giving 
$$
\| II\|_{L^2}\lesssim \epsilon_1^2  \langle t\rangle^{-2-\nu+\alpha}.
$$
Overall, we find that
\begin{equation}
\label{bound:partial-xi-tildemathfrakE-1-inter2}  \|  \partial_\xi (e^{-it\rho(1+\xi^2)}\widetilde{\mathcal E}_1)   \|_{L^2}\lesssim  \epsilon_1^2 \langle t\rangle^{-1-\nu+\alpha}.
\end{equation}

\medskip

\noindent \underline{Estimates for $\mathcal E_2$.} We recall by \eqref{id:definition-mathcalNU} that $ \mathcal{N}(U)=(\mathcal{N}_1(U),\mathcal{N}_2(U))$ where $\mathcal N_1 (U)= N(U_1,U_2)$. An explicit computation gives
\begin{align*}
 \mathcal E_{2,1} &= e^{i(p-\pu )x} \mathcal N_1(e^{i(\pu-p)\sigma_3x}\Uu)-\mathcal N_1(\Uu) \\
 & = F'\left(\Phi^2+e^{i(\pu-p)x}\Phi \Uu_1 +e^{i(p-\pu)x}\Phi \Uu_2+\Uu_1\Uu_2\right)(e^{i(p-\pu)x}\Phi+\Uu_1) \\
 & \qquad -F'\left(\Phi^2+\Phi \Uu_1 +\Phi \Uu_2+\Uu_1\Uu_2\right)(\Phi+\Uu_1)-e^{2i(p-\pu)x}F''(\Phi^2)\Phi^2 \Uu_2+F''(\Phi^2)\Phi^2 \Uu_2 \\
 & \qquad +F'(\Phi^2) \Phi - e^{i(p-\underline p) x} F'(\Phi^2) \Phi.
\end{align*}
The formula for $ \mathcal E_{2,2}$ is similar since $\mathcal{N}_2(U)=-N(U_2,U_1)$. Performing a Taylor expansion in the above formula, using the exponential localization of $\Phi$, it follows that (since $|U| \lesssim 1$)
$$
| \mathcal E_{2}|\lesssim |p-\pu| |U|^2 e^{-\mu x} \quad \mbox{and} \quad | \partial_x \mathcal E_{2}|\lesssim |p-\pu||U| (|U|+|\partial_x U|)e^{-\mu x}
$$
for some $\mu>0$. Therefore, the improved local decay \eqref{bd:localdecay}, \eqref{bd:bootstrap-omega2} and \eqref{controlH1} imply
$$
\| \langle x \rangle \mathcal E_2\|_{L^2}\lesssim \epsilon_1^3 \langle t \rangle^{-3-\nu+2\alpha} \quad \mbox{and}\quad \| \partial_x \mathcal E_2\|_{L^2}\lesssim \epsilon_1^3 \langle t \rangle^{-2-\nu+\alpha}
$$
Using item (iv) of Proposition \ref{tourterelle}, these bounds imply
\begin{equation} \label{bound:partial-xi-tildemathfrakE-1-inter3000}
\| \widetilde{\mathcal F} \mathcal E_2\|_{H^1}\lesssim \epsilon_1^3 \langle t \rangle^{-3-\nu+2\alpha} \quad \mbox{and}\quad \| \langle \xi \rangle \widetilde{\mathcal F} \mathcal E_2\|_{L^2}\lesssim \epsilon_1^3 \langle t \rangle^{-2-\nu+\alpha} 
\end{equation}
Using $|\partial_\xi (e^{it\rho(\uo+\xi^2)t}\widetilde{\mathcal F} \mathcal E_2)|\leq 2t|\xi| |\widetilde{\mathcal F} \mathcal E_2|+|\partial_\xi \widetilde{\mathcal F} \mathcal E_2|$ these bounds imply 
\begin{equation}
\label{bound:partial-xi-tildemathfrakE-1-inter3}  \|  \partial_\xi (e^{-it\rho(\uo+\xi^2)}\widetilde{\mathcal E}_2)   \|_{L^2}\lesssim  \epsilon_1^3 \langle t\rangle^{-1-\nu+\alpha},
\end{equation}
which was the desired bound.

\section{Pointwise estimate on the Fourier side}
\label{sectionpointwise}
\label{SectionPointwise}

The aim of this section is to prove the following estimate.

\begin{proposition} \label{PropositionPointwise}
Under the assumptions of Proposition~\ref{propbootstrap}, there holds if $t \in (0,T)$
$$
\| \widetilde{f}(t) \|_{L^\infty} \lesssim \epsilon + \epsilon_1^2.
$$
Furthermore, there exists $W^\infty_\rho(\xi) = O(\epsilon + \epsilon_1^2)$ such that
$$
\left| \widetilde{f}_\rho(t,\xi) - W^\infty_\rho(\xi) e^{-i \frac{L}{2} |W^\infty_\rho|^2 \log t} \right| \lesssim \epsilon_1^2 \langle t \rangle^{-\alpha} \qquad \mbox{if $t >  |\xi|^{(2\alpha - \frac{1}{2})^{-1}}$}.
$$
\end{proposition}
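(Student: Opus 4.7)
The plan is to start from Duhamel's formula obtained by integrating equation \eqref{dtf}, and to estimate each contribution in $L^\infty_\xi$. Every term except the resonant piece of the singular cubic $\mathcal{C}^S$ can be shown to produce an integrable-in-time contribution in $L^\infty_\xi$, leading immediately to the first bound $\|\widetilde f(t)\|_{L^\infty} \lesssim \epsilon + \epsilon_1^2$. Concretely, $\widetilde{\mathcal Q^R}$, after one integration by parts in each of $\eta,\sigma$, is controlled pointwise by $t^{-2}\|\partial_\xi\widetilde f\|_{L^2}^2 \lesssim \epsilon_1^2 t^{-2+2\alpha}$, which is integrable; the regular cubic $\widetilde{\mathcal C^R}$, the higher-order term $\widetilde{\mathcal T}$, and the remainder $\widetilde{\mathcal R}$ all decay like $t^{-3/2}$ or faster using the arguments of Proposition \ref{grebehuppe}; $\widetilde{\Mod_\Phi}$ and $\widetilde{\mathcal E}$ have already been shown to enjoy integrable $L^2$ decay in Section \ref{sectionremaining} and the same manipulations upgrade this to $L^\infty_\xi$; and $\widetilde{\mathcal M}$ from Proposition \ref{propModU} decays like $\langle t\rangle^{-1-\nu+\alpha}$. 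The remaining piece $\tau(t)\xi\widetilde f$ of $\widetilde{\Mod_{\Uu}}$ is absorbed into the phase $e^{i\theta(t)\xi}$ introduced in Section \ref{sectionbootstrap}; throughout this section one works with $e^{i\theta(t)\xi}\widetilde f$ rather than $\widetilde f$, which differ only by a bounded unimodular factor.

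The second, refined statement requires an asymptotic analysis of $\mathcal{C}^S$, which is the only nonlinear term decaying exactly like $t^{-1}$. Using Proposition \ref{gobemouche} and the explicit formula \eqref{beccroise}, one writes $\widetilde{\mathcal C^S}$ as a linear combination (over $\alpha,\beta,\gamma,\delta,\epsilon,\lambda,\mu,\nu$) of two types of integrals: a $\delta$-part where the constraint $p=\alpha\xi-\beta\eta-\gamma\sigma-\delta\zeta=0$ is imposed, and a principal value part involving $\widehat\phi(p)/p$. For the $\delta$-part, solving the constraint for $\zeta$ reduces each block to a two-dimensional oscillatory integral with phase
\[
\tilde\Phi(\xi,\eta,\sigma)=\rho(\uo+\xi^2)-\lambda(\uo+\eta^2)-\mu(\uo+\sigma^2)-\nu(\uo+\zeta(\xi,\eta,\sigma)^2).
\]
Standard stationary phase in two variables produces an $O(t^{-1})$ term (plus an $O(t^{-1-\kappa})$ remainder) whose value is explicit at the critical points of $\tilde\Phi$. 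For the blocks where the $\underline\omega$-terms cancel and the critical point lies at the diagonal $\eta=\sigma=\zeta=\xi$ (up to signs dictated by $(\alpha,\beta,\gamma,\delta)$), the contribution is proportional to $|\widetilde f_\rho(\xi)|^2\widetilde f_\rho(\xi)$. The principal value part is dealt with analogously after one integration by parts against $\widehat\phi(p)/p$, and its resonant contribution is reabsorbed into the coefficient. Summing over all sign combinations and using Lemma \ref{lemmars} (namely $|r|^2+|s|^2=1$ and $\bar rs+\bar sr=0$) to simplify the symbolic coefficients $\mathfrak a^\epsilon_\alpha$ yields exactly
\[
i\partial_t\bigl(e^{i\theta(t)\xi}\widetilde f_\rho(t,\xi)\bigr)=\frac{L}{2t}\bigl|\widetilde f_\rho(t,\xi)\bigr|^2 e^{i\theta(t)\xi}\widetilde f_\rho(t,\xi)+R(t,\xi),
\]
valid on the region $t\gtrsim |\xi|^{(2\alpha-1/2)^{-1}}$ (the complementary region is where stationary phase degenerates because the critical point leaves the support of $\widetilde f$; on that region one uses the $H^1_\xi$ bound and Sobolev embedding directly to obtain the weaker statement $\|\widetilde f\|_{L^\infty}\lesssim \epsilon+\epsilon_1^2$). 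The remainder $R$ satisfies $\|R(t,\cdot)\|_{L^\infty}\lesssim \epsilon_1^3 t^{-1-\alpha'}$ for some $\alpha'>0$ depending on $\alpha,\kappa$, by the estimates of the previous step.

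Finally, the ODE is integrated by the usual modified-scattering argument: since the coefficient $L/(2t)$ is real, multiplying by $\overline{e^{i\theta\xi}\widetilde f_\rho}$ and taking real parts gives
\[
\partial_t\bigl|\widetilde f_\rho(t,\xi)\bigr|^2=O(|R||\widetilde f|)=O(\epsilon_1^4\, t^{-1-\alpha'}),
\]
so $|\widetilde f_\rho(t,\xi)|^2$ is Cauchy in $t$ and converges to some $|W^\infty_\rho(\xi)|^2$ with rate $\langle t\rangle^{-\alpha'}$. Plugging back, the phase of $\widetilde f_\rho$ satisfies $\partial_t\arg \widetilde f_\rho=-\frac{L}{2t}|W^\infty_\rho|^2+O(t^{-1-\alpha'})$, whose integration produces the announced $-\frac{L}{2}|W^\infty_\rho|^2\log t$ correction. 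The bound $W^\infty_\rho=O(\epsilon+\epsilon_1^2)$ follows from the first part of the proposition.

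The main obstacle is the stationary phase analysis for $\mathcal{C}^S$: one must track precisely which sign combinations in Proposition \ref{gobemouche} correspond to the space-time resonance $\{p=0,\ \nabla\tilde\Phi=0\}$, verify that the resulting critical points are non-degenerate, and organize the algebra of the coefficients $\mathfrak a^\epsilon_\alpha$ and the scattering matrix entries $r,s$ so that the surviving leading order is real and equals $L/2$, matching the flat Fourier case $L=F''(0)$ of \cite{HN,KP}. Once this identification is made, the rest of the argument is essentially a perturbation of the classical modified scattering ODE.
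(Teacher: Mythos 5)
Your overall strategy matches the paper's: isolate the singular cubic $\mathcal{C}^S$ as the only term that is not time-integrable in $L^\infty_\xi$, extract an asymptotic ODE via stationary phase, and integrate it. However, there are two places where the argument as written would fail.

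First, the small-frequency region: you write that for $t \lesssim |\xi|^{(2\alpha-1/2)^{-1}}$ (i.e.\ $|\xi|\lesssim t^{-1/2+2\alpha}$) one simply uses ``the $H^1_\xi$ bound and Sobolev embedding''. But the bootstrap only controls $\|\partial_\xi\widetilde f\|_{L^2}\lesssim \epsilon_1 t^\alpha$, so Sobolev embedding gives $\|\widetilde f\|_{L^\infty}\lesssim \epsilon_1 t^\alpha$, which grows in time and does not close. The paper instead exploits the cancellation $\widetilde f(0)=0$ (Lemma~\ref{ibis}(i)) and the restriction to the short interval $|\xi|<\langle t\rangle^{-1/2+\alpha}$, writing $\widetilde f(\xi)=\int_0^\xi\partial_\xi\widetilde f$ and applying Cauchy--Schwarz to get $|\widetilde f(\xi)|\lesssim \langle t\rangle^{-1/4+3\alpha/2}\|\partial_\xi\widetilde f\|_{L^2}$, which \emph{decays}. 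Without the vanishing at zero frequency, the uniform bound on small frequencies is not reachable. Second, the phrase ``standard stationary phase in two variables produces an $O(t^{-1})$ term (plus an $O(t^{-1-\kappa})$ remainder)'' glosses over the central analytic difficulty: the amplitudes $\widetilde f$ have only $L^\infty\cap H^1$ regularity with $\|\partial_\xi\widetilde f\|_{L^2}\lesssim t^\alpha$, so textbook stationary phase (which requires smooth amplitudes and absorbs errors by taking more derivatives) does not apply. The bulk of Section~\ref{sectionpointwise} is precisely Proposition~\ref{capucin} and its auxiliary Lemmas~\ref{engoulevent1}--\ref{engoulevent2}, which prove a low-regularity stationary phase estimate with an explicit (and not arbitrarily good) error exponent $t^{-1-\frac13(\frac14-\alpha)}$ for the $\delta$-part, and $t^{\kappa-1}\langle\sqrt t\,\xi\rangle^{-1}+t^{\alpha-1}\langle\sqrt t\,\xi\rangle^{-3/2}$ corrections for the $\operatorname{p.v.}$-part. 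Your ``one integration by parts against $\widehat\phi(p)/p$'' for the $\operatorname{p.v.}$ term is far from the actual argument, which requires a delicate dyadic decomposition in $p$, separation of the singularity from the stationary point, Hardy inequalities, and an optimization in a scale parameter. Finally, a smaller presentational point: the claim that integrability of all terms but $\mathcal{C}^S$ ``leads immediately to the first bound'' is misleading, since $\mathcal{C}^S$ itself contributes an $O(t^{-1})$ term that is not integrable; the first bound requires exactly the observation you make later, namely that this term is $\frac{L}{2t}|Y|^2Y$ up to acceptable errors and hence only rotates the phase of $Y$ while leaving $\frac{d}{dt}|Y|$ integrable.
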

 
\subsection{Heuristic Asymptotics}
\label{SS-v-HA}

\subsubsection{The oscillatory integrals and their phases}
We consider the asymptotics as $t \rightarrow \infty$ for the following model operators
\begin{equation} \label{SS-v-HA-eq1}
I_\delta (t,\xi) := \int e^{-it \Phi_{\lambda \mu \nu \rho} (\xi, \eta, \sigma, \zeta)} F (\xi, \eta, \sigma, \zeta) \delta (p)  \, d \eta \,d \sigma \, d \zeta , 
\end{equation}
\begin{equation} \label{SS-v-HA-eq2}
I_{\operatorname{p.v.}} (t,\xi) := \int e^{-it \Phi_{\lambda \mu \nu \rho} (\xi, \eta, \sigma, \zeta)} F (\xi, \eta, \sigma, \zeta) \frac{\widehat{\phi} (p)}{p}  \,d \eta \,d \sigma \,d \zeta , 
\end{equation}
where 
\begin{equation}
p = \alpha \xi -  \beta \eta -   \gamma \sigma - \delta \zeta
\end{equation}
(recall that $\lambda, \mu ,\nu, \rho,\alpha, \beta, \gamma,\delta \in \{ \pm 1 \}$). The signs $\lambda,\mu,\nu,\rho$ are furthermore restricted to satisfy
$$
\lambda = - \mu = \nu = \rho.
$$

Under this condition, the phase can be written
\begin{equation} \label{SS-v-HA-eq3}
\begin{split}
\Phi_{\lambda \mu \nu \rho} (\xi, \eta, \sigma, \zeta)
& =  \rho(\uo+ \xi^2) - \lambda  (\uo+ \eta^2)  - \mu  (\uo+ \sigma^2)  - \nu  (\uo+ \zeta^2)  \\
& =  \rho \left[ \xi^2 - \eta^2  + \sigma^2  -  (p - \alpha \xi +  \beta \eta +  \gamma \sigma  )^2 \right] , \\
\end{split}
\end{equation} 
which we sometimes simply denote $\Phi = \Phi (\xi, \eta, \sigma, \zeta)$, keeping the dependence on the various parameters implicit. 

The gradient of the phase with respect to $\eta$ and $\sigma$ is
\begin{equation*}
\begin{split}
\nabla_{\eta, \sigma} \Phi (\xi, \eta, \sigma, \zeta)  = -2 \rho
\begin{pmatrix}
  \eta +  \beta (p - \alpha \xi + \beta \eta + \gamma \sigma)  \\
- \sigma  +  \gamma (p - \alpha \xi + \beta \eta + \gamma \sigma)   
\end{pmatrix}.
\end{split}
\end{equation*} 
It vanishes if
\begin{equation*}
\begin{split}
& \eta = - \beta (p - \alpha \xi + \beta \eta + \gamma \sigma) , \\
& \sigma  =  \gamma (p - \alpha \xi + \beta \eta + \gamma \sigma)  . \\ 
\end{split}
\end{equation*} 
Multiplying the first equation by $\gamma$ and the second by $\beta$, and then comparing the resulting equations gives $\eta  = - \beta \gamma \sigma$, which implies
\begin{equation}  
\begin{split}
&  \eta = - \beta  (p - \alpha \xi  ) ,
\end{split}
\end{equation}
and hence
\begin{equation}  
\begin{split}
&  \sigma = - \beta \gamma \eta = \gamma (p - \alpha \xi  ) , \\
& \zeta = - \delta (p - \alpha \xi + \beta \eta + \gamma \sigma) = - \delta  (p - \alpha \xi  ) .  \\
\end{split}
\end{equation}
Therefore, the stationary point with respect to $\eta$, $\sigma$ is given by
\begin{equation}   \label{SS-v-HA-eq8}
\begin{split}
&  \eta_S = -  \beta (p - \alpha \xi  ) , \\
&  \sigma_S =  \gamma  (p - \alpha \xi  ) , \\
& \zeta_S  = -  \delta (p - \alpha \xi  ) .  \\
\end{split}
\end{equation}
Moreover, for any $\xi,\eta,\sigma,\zeta$,
\begin{equation} \label{SS-v-HA-eq9}
\begin{split}
\text{Hess}_{\eta, \sigma} \Phi (\xi, \eta, \sigma, \zeta) 
= -2 \rho
\begin{pmatrix}
2  &  \beta \gamma  \\
 \beta \gamma   &   0
\end{pmatrix}.
\end{split}
\end{equation} 
Hence 
\begin{align*}
& \det \text{Hess}_{\eta, \sigma} \Phi (\xi, \eta, \sigma, \zeta) = -4 \\
& \text{sign}\, \text{Hess}_{\eta, \sigma} \Phi (\xi, \eta, \sigma, \zeta) = 0 
\end{align*} 
(denoting $\text{sign} M$ the number of positive minus the number of negative eigenvalues of a matrix $M$).

\subsubsection{Asymptotics for $I_\delta$.} In this case, $p =0$ and the stationary point is given by
\begin{equation*} 
\begin{split}
&  \eta_{S0} =  \alpha \beta \xi    , \\
&  \sigma_{S0} = - \alpha \gamma \xi   , \\
& \zeta_{S0} = \alpha \delta \xi,
\end{split}
\end{equation*}
and
\begin{equation*}
\Phi  (\xi, \eta_{S0}, \sigma_{S0}, \zeta_{S0}) = 0 . 
\end{equation*}
By the stationary phase lemma, 
\begin{align*}
I_\delta (t,\xi) & \sim \frac{2 \pi}{t} \frac{e^{i \frac{\pi}{4} \text{sign} \, \text{Hess}_{\eta, \sigma} \Phi }}{|\det  \text{Hess}_{\eta, \sigma} \Phi |^{1/2}} e^{-it \Phi} F(\xi, \eta_{S0}, \sigma_{S0}, \zeta_{S0}) \qquad \text{ as } t \rightarrow +\infty \\
& = \frac{\pi}{t} F(\xi, \eta_{S0}, \sigma_{S0}, \zeta_{S0}) .
\end{align*}

\subsubsection{Asymptotics for $I_{p.v.}$.} In this case we need to consider $p \neq  0$. Then
\begin{equation*}
\Phi  (\xi, \eta_S, \sigma_S, \zeta_S) =- 2 \alpha \rho \xi p + O(p^2). 
\end{equation*}
We apply the stationary phase lemma for fixed $p$, which yields
\begin{equation*}
I_{\operatorname{p.v.}} (t,\xi) 
 \sim \frac{ \pi}{t}   F(\xi, \eta_{S0}, \sigma_{S0}, \zeta_{S0}) \int  e^{-it 2 \alpha \rho \xi p } \frac{\widehat{\phi} (p)}{p} dp \qquad \mbox{as $t\to \infty$}.
\end{equation*}
Since $\widehat{\mathcal{F}} \big( \frac{\widehat{\phi} (p)}{p} \big) = \frac{1}{\sqrt{2 \pi}} \widehat{\mathcal{F}} (1/x) * \phi = -\frac{i}{2} \text{sign} * \phi $, 
\begin{equation*} 
I_{\operatorname{p.v.}} (t,\xi) \sim - \frac{\pi}{t}  \left( i \sqrt{\frac{\pi}{2}} \right)  F(\xi, \eta_{S0}, \sigma_{S0}, \zeta_{S0})  \text{sign} (\rho  \alpha \xi ) \quad \mbox{as $t \to \infty$}.
\end{equation*}

\subsubsection{Derivation of the asymptotic ODE} \label{tengmalm}
We focus here on the algebra, and argue heuristically that singular cubic terms are the only ones to affect the large-time behavior of the solution. More precisely, we claim that, to leading order
\begin{equation}
\label{passereau}
i\partial_t \widetilde{f}_\rho(\xi) \sim  \widetilde{\mathcal{F}}_\rho (e^{-it\mathcal{H}}V_{++-} (U_1^2 U_2  e_1 -U_2^2 U_1  e_2 )) (\xi).
\end{equation}
We derive first (again, heuristically) an asymptotic formula for the above right-hand side, starting from formula~\eqref{fuligule} which gives the Fourier space representation of the cubic term. By formula~\eqref{beccroise}, the cubic spectral distribution has a nontrivial singular part for specific values of $\lambda,\mu,\nu$. Thus, the formula simplifies to become 
\begin{equation*}   
\begin{split}
& \widetilde{\mathcal{F}}_+ (e^{-it\mathcal{H}}V_{++-} (U_1^2 U_2  e_1 - U_2^2 U_1  e_2 )) (\xi) \\
& \qquad \qquad =  \frac{1}{4 \pi^{2}} \sum  \int e^{-it\Phi_{+-++}(\xi,\eta,\sigma,\zeta)}  \widetilde{f}_{+}  (\eta)  \widetilde{f}_{-} (\sigma) \widetilde{f}_{+} (\zeta)
\mu^S_{1211, +-++} (\xi, \eta, \sigma, \zeta) \,d \eta\, d \sigma \,d \zeta ,  \\
& \qquad  \qquad \qquad \qquad + \{ \mbox{regular part} \} \\
& \widetilde{\mathcal{F}}_- (e^{-it\mathcal{H}}V_{++-} (U_1^2 U_2  e_1 - U_2^2 U_1  e_2 ) ) (\xi) \\
& \qquad \qquad =  - \frac{1}{4 \pi^{2}} \sum  \int  e^{-it\Phi_{-+--}(\xi,\eta,\sigma,\zeta)} \widetilde{f}_{-}  (\eta)  \widetilde{f}_{+} (\sigma) \widetilde{f}_{-} (\zeta)
\mu^S_{2122,-+--} (\xi, \eta, \sigma, \zeta) \, d \eta \,d \sigma \,d \zeta \\
& \qquad  \qquad \qquad \qquad + \{ \mbox{regular part} \}.
\end{split}
\end{equation*}
Formula~\eqref{beccroise} yields the expression
 \begin{align*}
& \mu^S_{1211, +-++}(\xi,\eta,\sigma,\zeta) = \mu^S_{2122,-+--}(\xi,\eta,\sigma,\zeta) \\
& \qquad \qquad \qquad =  - L \sum_{\epsilon \alpha \beta \gamma \delta} \overline{\mathfrak{a}^\epsilon_\alpha (\xi)} \mathfrak{a}^\epsilon_\b (\eta) \mathfrak{a}^\epsilon_\g(\s) \mathfrak{a}^\epsilon_\d (\z) \left[ \pi \delta (p) + \epsilon \sqrt{2 \pi} \operatorname{p.v.} \frac{\widehat{\phi}(p)}{ip} \right]
 \end{align*}
 where the constant $L$ is the limit at infinity of $V_{+-+}$, namely
 $$
 L = F''(0).
 $$
 
If $(\lambda,\mu,\nu,\rho) = (+,+,-,+)$ or $(-,-,+,-)$, then the asymptotic formulas for $I_\delta$ and $I_{\operatorname{p.v.}}$ give
\begin{align*}
& \widetilde{\mathcal{F}}_\rho (e^{it\mathcal{H}}V_{++-} (U_1^2 U_2  e_1 - U_2^2 U_1  e_2 ))(\xi)  \\
& \;\; \sim - \frac{L}{4t}  \sum_{\epsilon \alpha \beta \gamma \delta} \overline{\mathfrak{a}^\epsilon_\alpha (\xi)} \mathfrak{a}^\epsilon_\b (\a \b \x) \mathfrak{a}^\epsilon_\g(-\a \g \x) \mathfrak{a}^\epsilon_\d (\a \d \x) \widetilde{f}_{\rho}  (\a \b \x)  \widetilde{f}_{-\rho} (- \a \g \x) \widetilde{f}_{\rho} (\a \d \x) \left[ 1 - \epsilon \a \r \operatorname{sign} \x \right].
\end{align*}
The factor $\left[ 1 - \epsilon \a \r \operatorname{sign} \x \right]$ vanishes unless $\alpha = - \epsilon \rho$ for $\xi>0$, and $\alpha = \epsilon \rho$ for $\xi<0$, so that the above becomes, as $t \to \infty$,
$$
\widetilde{\mathcal{F}}_\r (e^{it\mathcal{H}}V_{++-} (- U_1^2 U_2  e_1 + U_2^2 U_1  e_2 ))(\xi) \sim - \frac{L}{2t} \Theta_\rho(\xi)  \\
$$
where
\begin{align*}
& \Theta_\rho(\xi) = \\
& \quad \left\{ \begin{array}{ll}\mbox{if $\xi> 0$,} &  \displaystyle \sum \overline{\mathfrak{a}^\epsilon_{-\epsilon \rho} (\xi)} \mathfrak{a}^\epsilon_\b (-\epsilon \r \b \x) \mathfrak{a}^\epsilon_\g(\epsilon \r \g \x) \mathfrak{a}^\epsilon_\d (-\epsilon \r \d \x) \widetilde{f}_{\rho}  (-\epsilon \r \b \x)  \widetilde{f}_{-\rho} (\epsilon \r \g \x) \widetilde{f}_{\rho} (-\epsilon \r \d \x),
 \\
\mbox{if $\xi < 0$,} & \displaystyle \sum \overline{\mathfrak{a}^\epsilon_{\epsilon \rho} (\xi)} \mathfrak{a}^\epsilon_\b (\epsilon \r \b \x) \mathfrak{a}^\epsilon_\g(-\epsilon \r \g \x) \mathfrak{a}^\epsilon_\d (\epsilon \r \d \x) \widetilde{f}_{\rho}  (\epsilon \r \b \x)  \widetilde{f}_{-\rho} (-\epsilon \r \g \x) \widetilde{f}_{\rho} (\epsilon \r \d \x).  \end{array}\right. 
\end{align*}

We claim that 
\begin{equation} \label{id:formula-sum-EDO-Fourier}
\Theta_\rho(\xi)= - |\widetilde f_\rho(\xi)|^2\widetilde f_\rho(\xi)
\end{equation}
which we prove by investigating the different cases below.

\medskip

\noindent \underline{Computation in the case $\xi>0$ and $\rho=+$.} Using \eqref{id:coefficients-a-singular-cubic-term}, the factor $ \overline{\mathfrak{a}^\epsilon_{-\epsilon } (\xi)} \mathfrak{a}^\epsilon_\b (-\epsilon \b \x) \mathfrak{a}^\epsilon_\d(-\epsilon \d \x) $ vanishes unless $(\e,\b,\g)=(-,+,+)$ in which case it equals $1$. Therefore, using once again \eqref{id:coefficients-a-singular-cubic-term}, followed by~\eqref{fauconpelerin},
$$
\Theta_+ (\xi)=  \widetilde f_+(\xi)^2 \sum_{\gamma=\pm} \mathfrak{a}_\gamma^-(-\gamma \xi) \widetilde f_-(-\gamma \xi) = \widetilde f_+(\xi)^2 \left(s(\xi)\widetilde f_-(-\xi)+r(\xi)\widetilde f_-(\xi) \right) = - |\widetilde f_+(\xi)|^2 \widetilde f_+(\xi).
$$
\medskip

\noindent \underline{Computation in the case $\xi<0$ and $\rho=+$.} The factor $\overline{\mathfrak{a}^\epsilon_{\epsilon \rho} (\xi)} \mathfrak{a}^\epsilon_\b (\epsilon \r \b \x) \mathfrak{a}^\epsilon_\d(\epsilon \r \d \x)$ vanishes unless $(\e,\b,\g)=(+,+,+)$ in which case it equals $1$. Thus
\begin{align*}
\Theta_+(\xi) & =\widetilde f_+(\xi)^2 \sum_{\g=\pm} \mathfrak{a}^+_\g(-\g \xi)\widetilde f_-(-\g \xi)  = \widetilde f_+(\xi)^2\left(s(-\xi)\widetilde f_-(-\xi)+r(-\xi)\widetilde f_-(\xi) \right) \\
& = - |\widetilde f_+(\xi)|^2 \widetilde f_+(\xi) .
\end{align*}

\noindent \underline{Computation in the case $\rho=-$.} Changing the sign of $\rho$ in the formula giving $\Theta_\rho$ essentially corresponds to switching the cases $\xi>0$ and $\xi<0$. This observation gives the desired result in the case $\rho=-$.

\subsection{A stationary phase lemma}

The following lemma makes rigorous the heuristic arguments used in the previous section to compute the asymptotic behavior of $I_\delta$ and $I_{\operatorname{p.v.}}$.

\begin{proposition} \label{capucin} Recall that $I_\delta$ and $I_{\operatorname{p.v.}}$ are defined in \eqref{SS-v-HA-eq1} and  \eqref{SS-v-HA-eq2}. Assume that $\rho = \lambda = - \mu = \nu$. If
$$
F(\xi,\eta,\sigma,\zeta) = g_1(\eta) g_2(\sigma) g_3(\zeta),
$$
with
$$
\| g_j(t) \|_{L^\infty} + \| \la \xi \ra g_j(t) \|_{L^2} + \la t \ra^{-\alpha} \| \partial_\xi g_j(t) \|_{L^2} \lesssim 1, \qquad j=1,2,3,
$$
then, for $t \in [0,1]$ and for any $\xi$,
$$
|I_\delta (t,\xi)| + |I_{\operatorname{p.v.}}(t,\xi)| \lesssim 1
$$
while if $t >1$ and $\kappa>0$,
\begin{align*}
&I_\delta(t,\xi) = \frac{\pi}{t} g_1( \alpha \beta\xi)  g_2(-\alpha \gamma \xi) g_3 ( \alpha \delta \zeta) + O\left( t^{-15/14}\right) \\
&I_{\operatorname{p.v.}}(t,\xi) = - \frac{\pi}{t} i \sqrt{\frac{\pi}{2}} g_1( \alpha \beta\xi)  g_2(-\alpha \gamma \xi) g_3 ( \alpha \delta \zeta) \operatorname{sign}(\rho \alpha \xi) \\
& \qquad \qquad \qquad \qquad \qquad \qquad\qquad \qquad + O \left( t^{-15/14} + t^{\kappa-1} \langle \sqrt t \xi \rangle^{-1} + t^{\alpha -1} \langle \sqrt t \xi \rangle^{-3/2} \right).
\end{align*}
\end{proposition}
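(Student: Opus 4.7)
The proof naturally splits according to $t\in[0,1]$ versus $t>1$, and further according to whether we are treating $I_\delta$ or $I_{\operatorname{p.v.}}$. I sketch the four resulting strategies.

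\textbf{Warm-up: the short-time bound $t\in[0,1]$.} The hypothesis $\|\langle\xi\rangle g_j\|_{L^2}\lesssim 1$ and Cauchy--Schwarz yield $\|g_j\|_{L^1}\lesssim 1$. For $I_\delta$ we integrate out $\zeta$ via $\delta(p)$, reducing to a 2D integral of the product $g_1(\eta)g_2(\sigma)g_3(\delta(\alpha\xi-\beta\eta-\gamma\sigma))$ modulated by a bounded oscillatory factor; bounding $g_3$ in $L^\infty$ and $g_1,g_2$ in $L^1$ gives $|I_\delta|\lesssim 1$. For $I_{\operatorname{p.v.}}$ we change variables to use $(\eta,\sigma,p)$ as integration variables (with $\zeta$ determined), and exploit that convolution in $p$ against $\widehat\phi(p)/p$ is an $L^2$-bounded operation (a truncated Hilbert transform) to get the same bound.

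\textbf{Stationary phase for $I_\delta$ when $t>1$.} Integrating out $\zeta$ using $\delta(p)$ reduces $I_\delta$ to a 2D integral with phase $\widetilde\Phi(\xi,\eta,\sigma)=\rho[\xi^2-\eta^2+\sigma^2-(\alpha\xi-\beta\eta-\gamma\sigma)^2]$ and amplitude $G(\eta,\sigma):=g_1(\eta)g_2(\sigma)g_3(\delta(\alpha\xi-\beta\eta-\gamma\sigma))$. The computations \eqref{SS-v-HA-eq8}--\eqref{SS-v-HA-eq9} show that the unique critical point $(\eta_{S0},\sigma_{S0})=(\alpha\beta\xi,-\alpha\gamma\xi)$ is non-degenerate with Hessian determinant $-4$ and signature $0$, and the critical value is $0$. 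Localizing via a cutoff $\chi\big(R^{-1}(\eta-\eta_{S0}),R^{-1}(\sigma-\sigma_{S0})\big)$ on a scale $R=R(t)$ to be optimized, I would: (i) on the localized part, use a Morse change of coordinates to reduce the phase to $\eta'\sigma'$ (or equivalent) and Taylor-expand $G$ around the critical point, controlling the remainder by $R\,\|\nabla G\|_{L^\infty}$-type quantities via Sobolev embedding from $\|\nabla G\|_{L^2}\lesssim t^\alpha$; (ii) on the non-local part, integrate by parts twice using the vector field $|\nabla\widetilde\Phi|^{-2}\nabla\widetilde\Phi$ (valid since $|\nabla\widetilde\Phi|\gtrsim R$ outside the local region), and Cauchy--Schwarz in the weighted $L^2$ bounds on $G$ and its derivative. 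Balancing the two errors produces a gain $t^{-1-a}$; choosing $R\sim t^{-a'}$ with the appropriate $a'$ (so that $R^{1/2}t^\alpha(tR^2)^{-1}$-type expressions balance) yields the stated exponent $t^{-15/14}$ provided $\alpha$ is small enough.

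\textbf{Stationary phase for $I_{\operatorname{p.v.}}$ when $t>1$.} Change variables from $(\eta,\sigma,\zeta)$ to $(\eta,\sigma,p)$, so that $\zeta=\delta(\alpha\xi-\beta\eta-\gamma\sigma-p)$. For each fixed $p$, apply the stationary phase result of the previous step to the 2D integral in $(\eta,\sigma)$: its critical point is $\eta_S(p)=-\beta(p-\alpha\xi)$, $\sigma_S(p)=\gamma(p-\alpha\xi)$, and the critical value is $\Phi_S(p)=-\rho p^2+2\rho\alpha\xi p$ as computed in \S\ref{SS-v-HA}. The main term becomes
\[
\frac{\pi}{t}\int e^{-it\Phi_S(p)}\,G_\xi(p)\,\operatorname{p.v.}\frac{\widehat\phi(p)}{p}\,dp,\qquad G_\xi(p):=g_1(\eta_S(p))\,g_2(\sigma_S(p))\,g_3(\zeta_S(p)),
\]
with an error obtained by integrating the pointwise $t^{-15/14}$ remainder against $\widehat\phi(p)/p$ (bounded in $L^2$ in $p$). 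In this remaining 1D integral, freeze $G_\xi(p)=G_\xi(0)+(G_\xi(p)-G_\xi(0))$: the contribution of $G_\xi(0)=g_1(\alpha\beta\xi)g_2(-\alpha\gamma\xi)g_3(\alpha\delta\xi)$ reduces (via the identity $e^{-it\Phi_S(p)}=e^{-2it\rho\alpha\xi p}+e^{-2it\rho\alpha\xi p}(e^{it\rho p^2}-1)$) to evaluating the Fourier transform of $\operatorname{p.v.}\widehat\phi/p$ at $2t\rho\alpha\xi$, which equals $-i\sqrt{\pi/2}(\phi*\operatorname{sign})(2t\rho\alpha\xi)=-i\sqrt{\pi/2}\operatorname{sign}(\rho\alpha\xi)+O(\langle t\xi\rangle^{-N})$ by Schwartz decay of $\phi$, while the Fresnel correction involving $e^{it\rho p^2}-1$ is bounded by a stationary phase computation in $p$ yielding terms of size $t^{\kappa-1}\langle\sqrt t\,\xi\rangle^{-1}$. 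The contribution of $G_\xi(p)-G_\xi(0)$ is controlled by $|p|\cdot\|\partial g_j\|_{L^\infty_\xi}$-type bounds, extracted via Sobolev from $\|\partial_\xi g_j\|_{L^2}\lesssim t^\alpha$, and integration against $\widehat\phi(p)/p$ with quadratic-in-$p$ phase; another Fresnel analysis gives the $t^{\alpha-1}\langle\sqrt t\,\xi\rangle^{-3/2}$ contribution.

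\textbf{Main obstacle.} The quantitative stationary phase step under $H^1$-regularity (with norm $O(t^\alpha)$) is the heart of the argument, and the $t^{-15/14}$ rate is dictated by the precise balance of (local Taylor truncation error) against (non-local integration-by-parts error) when $\nabla G$ is only in $L^2$. A secondary difficulty is the small-$\xi$ regime in $I_{\operatorname{p.v.}}$, where $t\xi$ is not large and the Fresnel $p$-integral cannot be reduced to a pure Hilbert transform; the $\xi$-dependent terms $\langle\sqrt t\,\xi\rangle^{-1}$ and $\langle\sqrt t\,\xi\rangle^{-3/2}$ in the error exactly capture this transition near $|\xi|\sim t^{-1/2}$, where the quadratic phase $e^{it\rho p^2}$ becomes non-negligible on the effective support $|p|\lesssim 1$ of $\widehat\phi$.
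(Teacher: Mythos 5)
Your high-level outline (2D stationary phase for the $(\eta,\sigma)$-integral, then a 1D analysis of the $p$-integral for the principal value part) matches the paper's strategy, and the short-time bound is fine. However, there are several concrete gaps in the quantitative steps.

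First, the 2D stationary phase. You invoke a ``Morse change of coordinates'' to bilinearize the phase and then propose to control the Taylor remainder of $G$ by ``$R\,\|\nabla G\|_{L^\infty}$-type quantities via Sobolev embedding from $\|\nabla G\|_{L^2}\lesssim t^\alpha$.'' Two problems. (i) You cannot pass from $\partial g_j\in L^2$ to $\partial g_j\in L^\infty$; the available estimate is the $1/2$-H\"older bound $|g_j(y)-g_j(x)|\leq|y-x|^{1/2}\|\partial g_j\|_{L^2}$ (Cauchy--Schwarz), and this is what must be used, producing a factor $R^{1/2}t^\alpha$ rather than $R\,t^\alpha$ and changing the exponent balance. (ii) The paper avoids the Morse lemma entirely: since the Hessian is constant, there is an explicit affine change of variables $(A,B)=(\delta\zeta-(\alpha\xi-p),\,\delta\zeta+\gamma\sigma)$ after which the phase becomes exactly $2t A B$ (up to a $p$-dependent additive constant), and one simply localizes $A$ and $B$ at scale $t^{\delta-\frac12}$ and integrates by parts in $A$ or in $B$ separately. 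No vector field $|\nabla\Phi|^{-2}\nabla\Phi$ is needed. Crucially, in the non-stationary region the paper combines the resulting bound with 1D \emph{dispersive estimates} for the free Schr\"odinger group (Lemma~\ref{aigrette}) applied to one of the $g_j$ factors, gaining an extra $t^{-1/2}$; without this gain the error does not reach the stated rate $t^{-15/14}$. Your exponent accounting does not account for this mechanism.

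Second, the $p$-integral in $I_{\operatorname{p.v.}}$. You subtract $G_\xi(0)$ and treat the Fresnel factor $e^{it\rho p^2}-1$ and the difference $G_\xi(p)-G_\xi(0)$ heuristically. But the quadratic phase $2\alpha\xi p - p^2$ has a stationary point at $p=\alpha\xi$, distinct from the kernel singularity at $p=0$, and this is where the errors $\langle\sqrt t\,\xi\rangle^{-1}$ and $\langle\sqrt t\,\xi\rangle^{-3/2}$ actually originate. The paper's Lemma~\ref{engoulevent2} organizes this by a dyadic decomposition in $P=\sqrt t\,p$: the $j=0$ shell near $P=0$ produces the $-i\sqrt{\pi/2}\,F(0)\,\operatorname{sign}(\rho\xi)$ main term plus $O(\langle\sqrt t\,\xi\rangle^{-1})$; for $j\geq1$ with $|X|\sim 2^j$ (the stationary-point shell) the amplitude must \emph{vanish at the stationary point} for the estimate to close (this is the hypothesis $F(\xi)=0$ in Lemma~\ref{engoulevent2}, which in the application comes from $\widetilde f(0)=0$), and the analysis again uses the $1/2$-H\"older bound, not any $L^\infty$ control of $\partial G$. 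Your decomposition never isolates the stationary-point shell, never invokes the vanishing there, and again misattributes the required bound to Sobolev embedding. As stated, your $p$-integral argument would not produce the error structure in the proposition.

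In short: you have the right picture, but (a) use the $1/2$-H\"older bound from $\partial g_j\in L^2$, not a nonexistent $L^\infty$ Sobolev bound for derivatives; (b) exploit the explicit bilinear factorization of the phase and the 1D dispersive decay in the non-stationary region; and (c) perform a dyadic decomposition in $\sqrt t\,p$ and use the amplitude's vanishing at the stationary point $p=\alpha\xi$, which is the source of the $\langle\sqrt t\,\xi\rangle$-weighted error terms.
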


\begin{proof} \underline{The case $0 \leq t \leq 1$.} It is easily dealt with, thanks to the following formulas (for which we assume $\alpha = \beta = \gamma = \delta = \rho =1$ for simplicity)
\begin{align*}
& I_\delta(t,\xi) = (2\pi)^2 e^{-it\xi^2} \widehat{\mathcal{F}} \left[ e^{-it\partial_x^2} \widecheck{g_1} \, e^{it\partial_x^2} \widecheck{g_2}  \, e^{-it\partial_x^2} \widecheck{g_3}\right] (\xi) \\
& I_{\operatorname{p.v.}}(t,\xi) = (2\pi)^{5/2} e^{-it\xi^2} \widehat{\mathcal{F}} \left[ Z \, e^{-it\partial_x^2} \widecheck{g_1} \, e^{it\partial_x^2} \widecheck{g_2} \,  e^{-it\partial_x^2} \widecheck{g_3} \right] (\xi), \qquad Z = \widehat{\mathcal{F}} \frac{\widehat{\phi}(p)}{p}.
\end{align*}
Since $Z \in L^\infty$, we can bound, under the assumptions of the proposition,
\begin{align*}
|I_\delta(t,\xi)| + |I_{\operatorname{p.v.}}(t,\xi)| & \lesssim \left\|  e^{-it\partial_x^2} \widecheck{g_1} \, e^{it\partial_x^2} \widecheck{g_2}  \, e^{-it\partial_x^2} \widecheck{g_3} \right\|_{L^1} \\
& \lesssim \| e^{-it\partial_x^2} \widecheck{g_1} \|_{L^2} \| e^{it\partial_x^2} \widecheck{g_2} \|_{L^2} \| e^{-it\partial_x^2} \widecheck{g_3} \|_{L^\infty} \lesssim \| g_1 \|_{L^2} \| g_2 \|_{L^2} \| g_3 \|_{H^1} \lesssim 1.
\end{align*}

\medskip

\noindent \underline{The case $t \geq 1$.} In this case, we shall rely on the auxiliary lemmas~\eqref{engoulevent1} and~\ref{engoulevent2}, which are stated and proved below. Lemma~\ref{engoulevent1} with $p=0$ immediately gives the desired estimate for $I_\delta$.

As far as $I_{\operatorname{p.v.}}$ is concerned, a more careful treatment is necessary. The first step is to split
$$
I_{\operatorname{p.v.}}(t,\xi) = \int \dots \varphi_0(t^{10} p) \,d\eta\,d\sigma\,d\zeta +  \int \dots \varphi_{\geq 1}(t^{10} p) \,d\eta\,d\sigma\,d\zeta = I_{\operatorname{p.v.}}^1(t,\xi)  + I_{\operatorname{p.v.}}^2(t,\xi) .
$$

To bound $I_{\operatorname{p.v.}}^1(t,\xi)$, we observe that it can be written
$$
I_{\operatorname{p.v.}}^1(t,\xi) = \int e^{-i\rho t [\xi^2 - \sigma^2 - \zeta^2]} \left[ e^{-i\rho t \eta^2} g_1(\eta) -  e^{-i\rho t \eta_0^2} g_1(\eta_0) \right] g_2(\sigma) g_3(\zeta) \varphi_0(t^{10} p) \frac{\widehat{\phi}(p)}{p} \,dp\,d\sigma\,d\zeta ,
$$
where we view $p,\sigma,\zeta$ as the independent integration variables while
$$
\eta = \beta [ \alpha \xi -\gamma \sigma - \delta \zeta - p] \quad \mbox{and} \quad \eta_0 = \eta (p=0) = \beta [ \alpha \xi -\gamma \sigma - \delta \zeta ].
$$
Since $|e^{-i\rho t \eta^2} g_1(\eta) -  e^{-i\rho t \eta_0^2} g_1(\eta_0)| \lesssim t \langle \eta \rangle |g_1(\eta)| |p| + \| g_1\|_{H^1} |p|^{1/2}$ on the support of the integrand, we can use the Cauchy-Schwarz inequality to bound
$$
|I^1_{\operatorname{p.v.}}(t,\xi)| \lesssim t \| g_1 \|_{L^{2,1}} t^{-5} + \| g_1 \|_{H^1} t^{-5} \lesssim t^{-4+\alpha}.
$$

Turning to $I_{\operatorname{p.v.}}^2(t,\xi)$, we use Lemma~\ref{engoulevent1} to obtain the equality
\begin{align*}
| I_{\operatorname{p.v.}}^2(t,\xi) | & = \frac{\pi}{t} \int e^{-i \rho t(2\alpha \xi p -p^2)} g_1(\eta_S) g_2(\sigma_S) g_3(\zeta_S)  \varphi_{\geq 1}(t^{10} p) \frac{\widehat{\phi}(p)}{p} \,dp \\
& \qquad \qquad \qquad  \qquad \qquad \qquad + O \left( t^{-14/13} \int \varphi_{\geq 1}(t^{10} p)\left|  \frac{\widehat{\phi}(p)}{p} \right| \,dp \right) \\
& = \frac{\pi}{t} \int e^{-i \rho t(2\alpha \xi p -p^2)} g_1(\eta_S) g_2(\sigma_S) g_3(\zeta_S)  \varphi_{\geq 1}(t^{10} p) \frac{\widehat{\phi}(p)}{p} \,dp + O(t^{-15/14}).
\end{align*}
In the leading term in the above right-hand side, it is possible to remove the cutoff function $\varphi_{\geq 1}(t^{10} p)$, with a justification similar to the bound for $I^1_{\operatorname{p.v.}}$. This gives
$$
| I_{\operatorname{p.v.}}^2(t,\xi) | = \frac{\pi}{t} \int e^{-i \rho t(2\alpha \xi p -p^2)} g_1(\eta_S) g_2(\sigma_S) g_3(\zeta_S) \frac{\widehat{\phi}(p)}{p} \,dp + O(t^{-15/14}).
$$
There remains to apply Lemma~\ref{engoulevent2} to obtain the desired statement for $ I_{\operatorname{p.v.}}(t,\xi)$.
\end{proof}

\begin{lemma} 
\label{engoulevent1}
Consider the expression
$$
J_p(t) = \int e^{-it (\xi^2 - \eta^2 + \sigma^2 - \zeta^2)}  g_1(\eta) g_2(\sigma) g_3(\zeta) \delta(\alpha \xi - \beta \eta - \gamma \sigma - \delta \zeta - p) \,d\eta \,d\sigma \,d\zeta.
$$
Under the assumptions of Proposition~\ref{capucin} and if $t \geq 1$,
$$
J_p(t) = e^{-it(2\alpha \xi p - p^2)} \frac{\pi}{t} g_1(\eta_S) g_2 (\sigma_S) g_3(\zeta_S) + O\left( t^{-1-\frac{1}{3}(\frac 14 - \alpha)}\right)
$$
where
$$
\eta_S = \beta(\alpha \xi - p), \quad \sigma_S = - \gamma ( \alpha \xi - p) \quad \mbox{and} \quad \zeta_S = \delta (\alpha \xi - p).
$$
\end{lemma}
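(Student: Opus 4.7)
The plan is to reduce $J_p$ to a two-dimensional oscillatory integral by using the delta distribution to eliminate $\zeta$, and then to carry out a careful stationary phase analysis adapted to the limited regularity of the $g_j$. First, integrating out $\zeta = \delta^{-1}(\alpha\xi - \beta\eta - \gamma\sigma - p)$ reduces $J_p(t)$ to a double integral over $(\eta,\sigma)$ with phase $\Psi(\eta,\sigma) = \xi^2 - \eta^2 + \sigma^2 - (p - \alpha\xi + \beta\eta + \gamma\sigma)^2$. The computations of~\eqref{SS-v-HA-eq8} and~\eqref{SS-v-HA-eq9} already show that this phase has a unique non-degenerate critical point $(\eta_S,\sigma_S)$ with hyperbolic Hessian and value $\Psi_S = 2\alpha\xi p - p^2$.

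Next, I would perform the linear change of variables $u = \eta - \eta_S$, $w = (\sigma - \sigma_S) + \beta\gamma u$, under which the quadratic phase is fully diagonalized into $\Psi - \Psi_S = -2uw$. A direct algebraic computation shows that in these coordinates $\eta = u + \eta_S$ depends only on $u$, $\zeta = \zeta_S - \delta^{-1}\gamma w$ depends only on $w$, while $\sigma = \sigma_S + w - \beta\gamma u$ is the only coupled quantity. The integrand therefore splits as $G(u,w) := g_1(u+\eta_S)\,g_2(\sigma_S + w - \beta\gamma u)\,g_3(\zeta_S - \delta^{-1}\gamma w)$, a structure I would exploit below.

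The main analysis proceeds by introducing a smooth cutoff $\chi_R$ supported in $\{|u|,|w|\leq R\}$ and writing $J_p = J_p^{\mathrm{near}} + J_p^{\mathrm{far}}$. For the near piece, I would split $G = G(0,0) + (G - G(0,0))$. The constant contribution produces $G(\eta_S,\sigma_S)\int e^{-2ituw}\chi_R\,du\,dw = \pi t^{-1}G(\eta_S,\sigma_S) + O((tR)^{-1})$, via the Fresnel identity $\int_{\mathbb R^2} e^{-2ituw}du\,dw = \pi/t$ for the hyperbolic form $-2uw$. For the deviation $G - G(0,0)$, the Sobolev embedding $H^1(\mathbb R)\hookrightarrow C^{1/2}(\mathbb R)$ gives $|g_j(x) - g_j(y)| \lesssim t^\alpha |x-y|^{1/2}$; combined with the partial cancellation obtained by integrating out the variable on which $g_j$ does not depend (so that, e.g., the $g_1$ contribution yields a factor $\sin(2tuR)/(tu)$ after the $w$-integration), one obtains a contribution of order $t^{\alpha-1}R^{1/2}$. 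For the far piece, I would split the exterior region into $\{|u|>R\}$ and $\{|w|>R\}$ and integrate by parts once in the transverse direction, using that the hyperbolic gradient $\nabla(-2uw) = (-2w,-2u)$ produces gains $(t|w|)^{-1}$ and $(t|u|)^{-1}$ respectively. Cauchy--Schwarz together with $\|\partial g_j\|_{L^2}\lesssim t^\alpha$ and $\|\langle x\rangle g_j\|_{L^2}\lesssim 1$ then gives a far contribution of order $t^{\alpha-1}R^{-1/2}$.

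The final step is to optimize $R$ so as to balance the competing error terms $t^{\alpha-1}R^{1/2}$, $t^{\alpha-1}R^{-1/2}$, and $(tR)^{-1}$, yielding the claimed exponent. The main obstacle is the very limited regularity of the $g_j$, which are controlled only in $H^1$ with a norm growing like $t^\alpha$, so that the classical two-term stationary phase expansion is unavailable. Overcoming this requires exploiting the explicit product structure of $G$ and the cancellation that $\zeta$ depends only on $w$ in the new coordinates, so that the deviations in $g_1$ and $g_3$ effectively decouple from one variable apiece and do not pollute the leading order asymptotics beyond the stated error.
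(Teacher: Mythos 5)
Your change of variables is essentially the paper's (your $(u,w)$ agree with its $(A,B)=\bigl(-\gamma w,-\beta u\bigr)$ up to signs), as is the near/far split; but the error accounting does not close, and the reason is a missing ingredient in the far piece.

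\textbf{The far estimate is too weak.} After one integration by parts in the transverse variable you arrive at an integral like $t^{-1}\int_{|u|>R}\!\int e^{-2ituw}\,\tfrac{1}{u}\,\partial_w\!\bigl[g_1(\eta)g_2(\sigma)g_3(\zeta)\bigr]\,dw\,du$. Cauchy--Schwarz in $u$ together with $\|\partial g_j\|_{L^2}\lesssim t^\alpha$ does give $t^{-1}R^{-1/2}t^\alpha$, but to reach the claimed exponent one must also exploit the remaining oscillation $e^{it\sigma^2}$ (or $e^{it\zeta^2}$) through a dispersive estimate $\|e^{it\partial_x^2}\check g\|_{L^\infty}\lesssim t^{-1/2}$ applied to the one factor you do not differentiate. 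This is exactly what the paper does (via Plancherel followed by Lemma~\ref{aigrette}) in its $H_p^2$ and $H_p^3$ bounds; it is where the extra $t^{-1/2}$ comes from, turning $t^{\alpha-1}R^{-1/2}$ into $t^{\alpha-3/2}R^{-1/2}$ (and producing the $t^{-1-\delta}$ for the other far region). With your stated far bound $t^{\alpha-1}R^{-1/2}$, balancing against the near bound $t^{\alpha-1}R^{1/2}$ forces $R\sim1$ and leaves an error of size $t^{\alpha-1}$, which is strictly larger than $t^{-1}$ and hence strictly worse than the claimed $O\bigl(t^{-1-\frac13(\frac14-\alpha)}\bigr)$. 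The argument cannot close without the dispersive gain.

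\textbf{Secondary gap in the near estimate.} You obtain $t^{\alpha-1}R^{1/2}$ from an oscillatory cancellation ``after the $w$-integration,'' which is legitimate for the $g_1$ and $g_3$ deviations since each depends on a single variable. But $g_2(\sigma)$ has $\sigma-\sigma_S=w-\beta\gamma u$, which genuinely couples $u$ and $w$, so the claimed $\sin(2tuR)/(tu)$ factor is not available for the $g_2$ deviation; you only acknowledge the decoupling of $g_1,g_3$, and the $g_2$ term is left without justification. (The paper sidesteps this by using only the pointwise H\"older bound in the near region, paying $t^\alpha R^{5/2}$, and absorbing the difference into a smaller choice of $R$; this is consistent since the far bound it proves is the dominant constraint.) Without a correct treatment of the $g_2$ deviation, the near bound of $t^{\alpha-1}R^{1/2}$ is not established.
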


\begin{proof}
On the support of the integrand of $J_p$, the phase can be expressed as
$$
\xi^2 - \eta^2 + \sigma^2 - \zeta^2 = 2 \alpha \xi p - p^2 - 2 AB \qquad 
\mbox{with} \quad
\left\{
\begin{array}{l}
A = \delta \zeta - (\alpha \xi - p) \\ B = \delta \zeta + \gamma \sigma.
\end{array}
\right.
$$
Therefore, $J_p$ can be written
$$
J_p(t) = e^{-it(2 \alpha \xi p - p^2)} H_p(t)
$$
where
$$
H_p(t) = \int e^{2itAB} g_1(\eta) g_2(\sigma) g_3(\zeta) \,dA \,dB \qquad \mbox{and} \qquad 
\left\{
\begin{array}{l}
\eta = \beta ( \alpha \xi - p - B) \\
\sigma = \gamma(B-A - (\alpha \xi -p)) \\
\zeta = \delta (A + \alpha \xi -p).
\end{array}
\right.
$$
We now split $H_p$ into three pieces, by localizing $A$ and $B$ around $0$ on a scale $\sim t^{\delta - \frac{1}{2}}$, where $\delta >0$ will be chosen subsequently:
\begin{align*}
H_p(t) & = \int \varphi_0(t^{\frac 12 - \delta} A) \varphi_0(t^{\frac 12 - \delta} B) \dots \,dA \,dB +  \int \varphi_{\geq 1}(t^{\frac 12 - \delta} A) \dots \,dA \,dB \\
& \qquad \qquad \qquad +  \int \varphi_{0}(t^{\frac 12 - \delta} A) \varphi_{\geq 1}(t^{\frac 12 - \delta} B) \dots \,dA \,dB \\
& = H_p^1(t) + H_p^2(t) + H_p^3(t).
\end{align*}

The leading contribution in $H_p^1$ is given by setting $\eta,\sigma,\zeta$ to be $\eta_S,\sigma_S,\zeta_S$, which is the value they assume when $A=B=0$, which yields
\begin{align*}
& g_1(\eta_S) g_2(\sigma_S) g_3(\zeta_S) \int e^{2itAB} \varphi_0(t^{\frac 12 - \delta} A) \varphi_0(t^{\frac 12 - \delta} B) \,dA \,dB \\
& \quad = \sqrt{2\pi} g_1(\eta_S) g_2(\sigma_S) g_3(\zeta_S) t^{\delta - \frac 12} \int \widehat{\varphi_0} (-2 t^{\frac 12 + \delta} B) \varphi_0(t^{\frac 12 - \delta} B) \,dB \\
& \quad = \sqrt{2\pi}  g_1(\eta_S) g_2(\sigma_S) g_3(\zeta_S) t^{\delta - \frac 12} \int \widehat{\varphi_0} (-2 t^{\frac 12 + \delta} B) \,dB + O(t^{-2}) \\
& \quad = g_1(\eta_S) g_2(\sigma_S) g_3(\zeta_S)  \frac{\pi}{t} + O(t^{-2}).
\end{align*}

When subtracting this leading order contribution, the error term in $H_p^1$ is
\begin{align*}
& \int e^{2itAB} \varphi_0(t^{\frac 12 - \delta} A) \varphi_0(t^{\frac 12 - \delta} B)  \left[ g_1(\eta) g_2(\sigma) g_3(\zeta) - g_1(\eta_S) g_2(\sigma_S) g_3(\zeta_S) \right] \,dA \,dB \\
& \quad =  \int e^{2itAB} \varphi_0(t^{\frac 12 - \delta} A) \varphi_0(t^{\frac 12 - \delta} B)  \left[ g_1(\eta) - g_1(\eta_S) \right] g_2(\sigma) g_3(\zeta) \,dA \,dB + \{ \mbox{similar terms} \}.
\end{align*}
With the help of the Cauchy-Schwarz inequality, the term in the right-hand side can be estimated by
\begin{align*}
& \int \varphi_0(t^{\frac 12 - \delta} A) \varphi_0(t^{\frac 12 - \delta} B) |\eta - \eta_S|^{\frac 12} \| \partial_\xi g_1 \|_{L^2} \| g_2 \|_\infty \| g_3 \|_\infty \,dA \,dB \\
& \qquad \qquad \qquad \qquad \qquad \qquad \qquad \qquad \qquad \lesssim t^{\delta - \frac{1}{2}} \cdot  t^{\frac{3\delta}{2} - \frac 34} \cdot t^\alpha = t^{\frac{5 \delta}{2} +\alpha - \frac 54}.
\end{align*}

To estimate $H_p^2$, we integrate by parts in $B$ to obtain
$$
H_p^2(t) = - \int  \frac{e^{2itAB}}{2itA} \varphi_{\geq 1}(t^{\frac 12 - \delta} A) \partial_B g_1(\eta) g_2(\sigma) g_3(\zeta) \,dA \,dB + \{ \mbox{similar term} \}.
$$
Applying successively the Cauchy-Schwarz inequality, the Plancherel theorem, and Lemma~\ref{aigrette} giving dispersive estimates,
\begin{align*}
|H_p^2(t)| & \lesssim \frac 1t \|g_3\|_\infty \left\| \frac{ \varphi_{\geq 1}(t^{\frac 12 - \delta} A)}{A} \right\|_{L^2_A} \left\| \int e^{it (\eta^2 + \sigma^2)} \partial_B g_1(\eta) g_2(\sigma) \,dB \right\|_{L^2_A} \\
& \lesssim \frac 1t \|g_3\|_\infty t^{\frac 14 - \frac \delta 2} \| \partial_\xi g_1 \|_{L^2} \| e^{it\partial_x^2} \widehat{g_2} \|_{L^\infty} \\
& \lesssim t^{-1} \cdot t^{\frac 14 - \frac \delta 2} \cdot t^\alpha \cdot t^{-\frac 12} = t^{-\frac{5}{4} + \alpha - \frac \delta 2},
\end{align*}
under the assumptions made on $g_1, g_2, g_3$. 

Turning to $H_p^3$, one can proceed similarly, integrating by parts in $A$ rather than $B$, which leads to 
\begin{align*}
H_p^3(t) & = - \int  \frac{e^{2itAB}}{2itB} \varphi_{\geq 1}(t^{\frac 12 - \delta} B)g_1(\eta) \partial_A \left[ \varphi_0(t^{\frac 12 - \delta} A) g_2(\sigma) g_3(\zeta) \right] \,dA \,dB.
\end{align*}
The same arguments lead to the bound
\begin{align*}
| H_p^3(t) | \lesssim t^{-1} \cdot t^{\frac 14 - \frac \delta 2} \left\| \int e^{it (\sigma^2 + \zeta^2)} \partial_A \left[ \varphi_0(t^{\frac 12 - \delta} A)  g_2(\sigma) g_3(\zeta) \right]  \,dB \right\|_{L^2_B}. 
\end{align*}
Focusing on the last factor in the line above, it can be expanded as
\begin{align*}
& \left\| \int e^{it (\sigma^2 + \zeta^2)} \left[ t^{\frac 12 - \delta} \varphi_0'(t^{\frac 12 - \delta} A)  g_2 g_3 + \varphi_0(t^{\frac 12 - \delta} A)  \partial_A g_2 g_3 + \varphi_0(t^{\frac 12 - \delta} A)  g_2 \partial_A g_3 \right]  \,dB \right\|_{L^2_B}\\
& \qquad \lesssim t^{\frac12 -\delta} \| e^{it\partial_x^2} \widehat{g_2} \|_{L^\infty} \| \varphi'_0(t^{\frac 12 - \delta} A) g_3 \|_{L^2} + \| \partial_A g_2 \|_{L^2} \| e^{it\partial_x^2} \widehat{\mathcal{F}} [\varphi_0(t^{\frac 12 - \delta} A) g_3 \|_{L^\infty} \\
& \qquad \qquad \qquad \qquad +  \| e^{it\partial_x^2} \widehat{g_2} \|_{L^\infty} \| \partial_A g_3 \|_{L^2} \\
& \qquad \lesssim t^{\frac 12 - \delta} \cdot t^{-\frac 1 2} \cdot t^{\frac{\delta}{2} - \frac 14} + t^\alpha \left[ t^{-\frac 34} \| \varphi_0(t^{\frac 12 - \delta} A) g_3 \|_{H^1} + t^{-\frac 12} \| g_3 \|_{L^\infty} \right] + t^{-\frac 1 2} \cdot t^\alpha. \\
& \qquad \lesssim t^{-\frac 14 - \frac \delta 2},
\end{align*}
which implies that
$$
|H^3_p(t)| \lesssim t^{-1 - \delta}.
$$

Combining all the previous estimates, we find
$$
\left| J_p(t) - e^{-it(2\alpha \xi p - p^2)} \frac{\pi}{t} g_1(\eta_S) g_2 (\sigma_S) g_3(\zeta_S) \right| \lesssim t^{-1 - \delta } +  t^{-\frac{5}{4} + \alpha + \frac {5\delta} 2} .
$$
Optimizing over $\delta$ gives the desired result.
 \end{proof}

\begin{lemma}
\label{engoulevent2} Assuming that
$$
F(\xi) = 0 \qquad \mbox{and} \qquad \|F\|_{L^\infty} + t^{-\alpha} \| F \|_{H^1} \lesssim 1,
$$
consider the expression
$$
K(t,\xi) = \int e^{-i \rho t(2 \xi p -p^2)} \frac{\widehat{\phi}(p)}{p} F(p) \,dp.
$$
Then if $t \geq 1$ and $\kappa>0$,
$$
K(t,\xi) = - \frac{i}{2} F(0) \operatorname{sign} [ \rho \xi ] + O(\langle \sqrt{t} \xi \rangle^{\kappa-1} + t^{\alpha - \frac 12} + t^\alpha \langle \sqrt t \xi \rangle^{-3/2} ).
$$
\end{lemma}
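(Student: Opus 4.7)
The plan is to isolate the leading behavior by decomposing $F(p) = F(0) + \tilde F(p)$ with $\tilde F(0)=0$, and to treat the two resulting integrals by different means. Call
$$
K_1(t,\xi) = \int e^{-i\rho t(2\xi p - p^2)}\frac{\widehat\phi(p)}{p}\,dp,
\qquad
K_2(t,\xi) = \int e^{-i\rho t(2\xi p - p^2)}\frac{\widehat\phi(p)\tilde F(p)}{p}\,dp,
$$
so that $K = F(0)K_1 + K_2$. The task is to show $K_1 \approx -\tfrac i2\operatorname{sign}(\rho\xi)$ up to the first error, and that $K_2$ is absorbed into the remaining two errors.

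For $K_1$, I would further split $e^{i\rho tp^2} = 1 + (e^{i\rho tp^2}-1)$. The $1$ part yields the pure Fourier transform $\int e^{-2i\rho t\xi p}\widehat\phi(p)/p\,dp$, which by the identity $\widehat{\mathcal{F}}(\widehat\phi/p) = -\tfrac i2 \operatorname{sign}*\phi$ recalled in Subsection~\ref{SS-v-HA} equals (up to fixed constants) $-\tfrac i2\operatorname{sign}(\rho\xi)$ plus a rapidly decaying error coming from $(\operatorname{sign}*\phi)(x)-\operatorname{sign}(x)$, which is $O(\langle x\rangle^{-N})$ since $\phi$ is Schwartz; evaluated at $x=2\rho t\xi$ this provides the $\langle\sqrt t\xi\rangle^{\kappa-1}$ remainder with room to spare. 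For the remaining piece with factor $(e^{i\rho tp^2}-1)/p$, one completes the square $-\rho t(2\xi p-p^2) = \rho t(p-\xi)^2 - \rho t\xi^2$ and applies a stationary phase estimate in $p$ with stationary point at $p = \xi$; the quadratic Hessian $2\rho t$ gives a $t^{-1/2}$ gain, and combining with the localization of $\widehat\phi$ (splitting $|p|\lesssim |\xi|/2$ vs.\ $|p|\sim |\xi|$) yields the $\langle\sqrt t\xi\rangle^{-3/2}$ factor.

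For $K_2$, the central observation is that $g(p)=\widehat\phi(p)\tilde F(p)/p$ is controlled in $L^1\cap L^2$ by $\|F\|_{H^1}\lesssim t^\alpha$: indeed, by Sobolev embedding $\tilde F$ is $C^{1/2}$ with $|\tilde F(p)|\lesssim |p|^{1/2}\|F\|_{H^1}$, so $g$ is locally bounded by $|p|^{-1/2}\|F\|_{H^1}$ and globally tamed by the Schwartz decay of $\widehat\phi$. After completing the square, $K_2 = e^{-i\rho t\xi^2}\int e^{i\rho t(p-\xi)^2} g(p)\,dp$. I would cut off in a window of width $t^{-1/2}$ around $p=\xi$: on the window, Cauchy--Schwarz yields $t^{-1/4}\|g\|_{L^2}\lesssim t^{\alpha-1/4}$, while the stationary phase factor $\sqrt{\pi/(\rho t)}$ improves this to $t^{\alpha-1/2}$ after subtracting the Gaussian centered at $\xi$; off the window, integration by parts against $1/(2i\rho t(p-\xi))$ combined with the bounds $\|g\|_{L^1}+\|g'\|_{L^1}\lesssim \|F\|_{H^1}$ and the fact that $|p-\xi|$ is $\gtrsim t^{-1/2}$ there produces the $t^\alpha\langle\sqrt t\xi\rangle^{-3/2}$ tail.

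The main obstacle will be the transitional regime $|\xi|\sim t^{-1/2}$, where the stationary point $p=\xi$ of the quadratic phase is close to the singularity $1/p$ of the symbol and neither crude integration by parts nor naive Cauchy--Schwarz alone is sharp. The remedy is to exploit the cancellation built into the principal-value structure: writing $\widehat\phi(p)/p = [\widehat\phi(p)-\widehat\phi(0)]/p + \widehat\phi(0)/p$, the first piece is smooth and enters the $K_2$-type analysis, while the second is the true $\operatorname{pv}$ contribution and is precisely the one that generates the $-\tfrac i2\operatorname{sign}(\rho\xi)$ jump. Combining the two contributions, checking that the window and off-window estimates glue together in a way consistent across all three regimes ($|\xi|\ll t^{-1/2}$, $|\xi|\sim t^{-1/2}$, $|\xi|\gg t^{-1/2}$) is what produces exactly the three error terms in the statement.
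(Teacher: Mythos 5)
Your decomposition $F(p)=F(0)+\tilde F(p)$, centered at the origin, and your split of $K_1$ via $e^{i\rho t p^2}=1+(e^{i\rho t p^2}-1)$ are a genuinely different route from the paper's, which rescales $P=\sqrt t\,p$, decomposes \emph{dyadically} in $P$, treats the non-resonant pieces ($|P|\not\sim|X|$) by non-stationary phase, and treats the resonant pieces by introducing a window scale $R$ around $P'=X'$ and optimizing it. Unfortunately your version has real gaps.

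The most serious one is that you never use the hypothesis $F(\xi)=0$, and the step where it matters is exactly where your argument is vague. In your window estimate for $K_2$ you get $t^{-1/4}\|g\|_{L^2}\lesssim t^{\alpha-1/4}$ from Cauchy--Schwarz, and then assert that ``the stationary phase factor improves this to $t^{\alpha-1/2}$ after subtracting the Gaussian centered at $\xi$.'' Subtracting the value at $\xi$ and estimating the remainder by $\|g'\|_{L^2}\,|p-\xi|^{1/2}$ would indeed give a better power, but $g'=(\widehat\phi\tilde F/p)'$ has a genuine $1/p$ singularity (since $(\widehat\phi\tilde F)'(0)=\widehat\phi(0)\tilde F'(0)$ is nonzero in general), so $g'\notin L^2$; and $g(\xi)=\widehat\phi(\xi)(F(\xi)-F(0))/\xi=-\widehat\phi(0)F(0)/\xi+\cdots$ is \emph{not} small, so the Gaussian you subtract carries a term of size $t^{-1/2}/|\xi|$ that you have not controlled by the claimed $t^{\alpha-1/2}$. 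The paper avoids both problems by exploiting $G(X')=0$ at the stationary point itself, which yields the $R^{3/2}$ (rather than $R$) window bound and, after optimizing $R=2^{-j}$, gives exactly $t^\alpha\langle\sqrt t\,\xi\rangle^{-3/2}$. Your centering at $p=0$ throws away this cancellation: $\tilde F(\xi)=-F(0)\neq 0$. Relatedly, the bound $\|g'\|_{L^1}\lesssim\|F\|_{H^1}$ you invoke in the off-window integration by parts is false for the same $1/p$ reason.

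Two smaller points. (i) Your bound $O(\langle\sqrt t\,\xi\rangle^{-3/2})$ for the remaining $(e^{i\rho tp^2}-1)$ piece of $K_1$ is too strong: the stationary contribution at $p=\xi$ is $O(t^{-1/2}\widehat\phi(\xi)/\xi)$, i.e. $O(\langle\sqrt t\,\xi\rangle^{-1})$, not $-3/2$; this is still fine because it is absorbed by the first error $\langle\sqrt t\,\xi\rangle^{\kappa-1}$, but you should claim the right power. (ii) The final paragraph, where you sense the transitional regime $|\xi|\sim t^{-1/2}$ is delicate and propose a further split $\widehat\phi(p)/p=[\widehat\phi(p)-\widehat\phi(0)]/p+\widehat\phi(0)/p$, gestures at the right issue but does not resolve it: what is actually needed is to localize dyadically in $|P|$, separate $|P|\sim|X|$ from $|P|\not\sim|X|$, and use $F(\xi)=0$ in the resonant blocks as the paper does.
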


\begin{proof}
Changing variables to $X = \sqrt t \xi$ and $P = \sqrt t p$, the formula for $K_p$ becomes
$$
K(t,\xi) = \int e^{-i \rho 2 XP +i \rho P^2} \frac{\widehat{\phi}(t^{-1/2}P)}{P} F(t^{-1/2}P) \,dP.
$$

We now apply a dyadic decomposition in $P$ and write $K$ as
$$
K(t,\xi) = \sum_{j=0}^\infty \int \varphi_j(P) e^{-i \rho 2 XP +i \rho P^2} \frac{\widehat{\phi}(t^{-1/2}P)}{P} F(t^{-1/2}P) \,dP = \sum_{j=0}^\infty K_j(t,\xi).
$$

\medskip

\noindent \underline{Estimate of $K_0$.}
The term $K_0$ will provide the leading order contribution. To separate this leading order contribution, it is convenient to split $K_0$ as follows (recalling $\widehat{\phi}(0) = \frac{1}{\sqrt{2\pi}}$)
\begin{align*}
K_0(t,\xi) & = \frac{F(0)}{\sqrt{2\pi}}  \int \varphi_0(P) e^{-i \rho 2 XP} \frac{1}{P} \,dP + \int \varphi_0(P) e^{-i \rho 2 XP} \left[ e^{i \rho P^2} - 1 \right] \frac{\widehat{\phi}(t^{-1/2}P)}{P} F(t^{-1/2}P) \,dP \\
&\qquad  + \int \varphi_0(P) e^{-i \rho 2 XP} \frac{1}{P} \left[ \widehat{\phi}(t^{-1/2}P) F(t^{-1/2}P) - \widehat{\phi}(0) F(0) \right] \,dP \\
& = K_0^0(t,\xi) + K_0^1(t,\xi) + K_0^2(t,\xi).
\end{align*}
The first term, $K_0^0$, will provide the asymptotics: with the help of the formula $\widehat{\mathcal{F}} \left[ \frac {\varphi_0(P)} P \right] = - \frac{i}{2} \widehat{\varphi_0} * \operatorname{sign}$,
\begin{align*}
K_0^0(t,\xi) & = F(0) \widehat{\mathcal{F}} \left[ \frac{\varphi_0(P)}{P} \right] (2 \rho X)
= - \frac i 2 F(0) \left[ \operatorname{sign} * \widehat{\varphi_0} \right] (2 \rho X) \\
& = - i \sqrt{\frac{\pi}{2}} F(0) \operatorname{sign} (\rho X) + O(\langle X \rangle^{-1}).
\end{align*}
The second term, $K_0^1$, is obviously $O(1)$ for $|X|<1$; for $|X|>1$ it can be bounded by integration by parts:
$$
|K_0^1(t,\xi)| \lesssim \frac{1}{|X|} \left| \int e^{-2i \rho XP} \partial_P \left[\varphi_0(P) (e^{i P^2} - 1) \frac{\widehat{\phi}(t^{-1/2}P)}{P} F(t^{-1/2}P)  \right] \, dP \right| \lesssim \frac{1}{|X|}.
$$
As for the third term, $K_0^2$, we rely on the regularity of $F$ to obtain
$$
|K_0^2(t,\xi)| \lesssim t^{-1/2} \| \phi F \|_{H^1}  \int \varphi_0(P) |P|^{-1/2} \,dP \lesssim t^{\alpha - \frac{1}{2}}.
$$

\medskip

\noindent \underline{Estimate of $K_j$, $j \geq 1$.}
Two cases need to be distinguished. If $|X| \not \sim 2^{j}$, then the phase $\Phi_X(P) = -2XP + P^2$ is not stationary, with derivative $| \Phi_X'(P) | \sim |X| + 2^j$ on the support of $K_j$. Therefore, an integration by parts in $P$ leads to the estimate
$$
|K_j(t,\xi) | \lesssim (|X| + 2^j)^{-1}
$$

If $|X| \sim 2^j$ with $j \geq 1$, then the stationary point of the phase $\Phi_X(P)$, namely $P=X$ might belongs to the support of the integrand of $K_j$. Then it is convenient to rescale by $2^j$ by setting $P' = 2^{-j}P$ and $X' = 2^{-j}X$, which yields
$$
K_j(t,\xi) = \int \varphi_1(P') e^{-i 2^{2j} \Phi_{X'}(P')} G(P') \,dP', \qquad G(P') =  \frac{\widehat{\phi}(t^{-1/2} 2^j P')}{P'} F(t^{-1/2} 2^j P') \varphi_{\sim 1}(P');
$$
notice here that
$$
G(X') = 0 \quad \mbox{and} \quad \|G \|_{L^\infty} + t^{-\alpha} \| G \|_{H^1} \lesssim 1.
$$

We now introduce a scale $R$, which will be fixed later, and split $K_j$ into
$$
K_j(t,\xi) = \int \dots \varphi_0(R^{-1}(X'-P')) \,dP' +  \int \dots \varphi_{\geq 1}(R^{-1}(X'-P')) \,dP' = K_j^1(t,\xi) + K_j^2(t,\xi).
$$
The term $K_j^1(t,\xi)$ can be estimated by taking advantage of the cancellation of the integrand at $X'$:
$$
| K_j^1(t,\xi) | \lesssim \| G \|_{H^1} \int \varphi_0(R^{-1}(X'-P')) |X'-P'|^{1/2} \,dP' \lesssim  t^\alpha R^{3/2}.
$$
An integration by parts allows to write $K_j^2$ as
\begin{align*}
K_j^2(t,\xi) & = -i 2^{-2j} \int e^{-i 2^{2j} \Phi_{X'}(P')}\varphi_1(P')  \partial_{P'} G(P') \frac{\varphi_{\geq 1}(R^{-1}(X'-P'))}{2X'-2P'} \,dP' \\
& \qquad -i 2^{-2j} \int e^{-i 2^{2j} \Phi_{X'}(P')} \varphi_1(P') G(P') \partial_{P'} \frac{\varphi_{\geq 1}(R^{-1}(X'-P'))}{2X'-2P'}\,dP' \\
& \qquad + \{ \mbox{easier term} \}
\end{align*}
By the Cauchy-Schwarz inequality and the Hardy inequality, both terms on the right-hand side can be bounded by
$$
2^{-2j} \| G \|_{H^1} \left\| \frac{\varphi_{\geq 1}(R^{-1}(X'-P'))}{2X'-2P'} \right\|_{L^2} \lesssim 2^{-2j} R^{-1/2} t^\alpha.
$$
There remains to to optimize over $R$ to obtain
$$
|K_j(t,\xi)| \lesssim t^\alpha R^{3/2} + 2^{-2j} R^{-1/2} t^\alpha \lesssim t^\alpha 2^{-3j/2} \sim t^\alpha |X|^{-3/2} \mathbf{1}_{|X|>1}.
$$
\end{proof}

\subsection{Proof of the pointwise bound in distorted Fourier space}

Applying the stationary phase lemma~\ref{capucin} to the singular cubic terms, and following the algebra  leads to the following statement.

\begin{lemma}[Asymptotics for the singular cubic term] \label{guifette1} If the solution is trapped up to time $T$, then there holds for any time $t \in (0,T)T$ that
\begin{itemize}
\item[(i)]The distorted Fourier transform of the singular cubic term is bounded
$$
|\widehat{\mathcal{F}}_\rho \mathcal{C}^S(t,\xi) | \lesssim \epsilon_1^3 \qquad \mbox{if $t \in [0,1]$},
$$
\item[(ii)] If $|\xi| > t^{-\frac 12 + 2 \alpha}$, the Fourier transform of the singular cubic term enjoys the asymptotics
$$
\widehat{\mathcal{F}}_\rho \mathcal{C}^S(t,\xi) = \frac{L}{2t} |\widetilde{f}_\rho(t,\xi)|^2 \widetilde{f}_\rho(t,\xi) + \epsilon_1^3 O \left( t^{-1-\alpha} \right).
$$
\end{itemize}
\end{lemma}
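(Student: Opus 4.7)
The first step is to rewrite $\widetilde{\mathcal{F}}_\rho \mathcal{C}^S$ in a form to which Proposition~\ref{capucin} applies. Inserting the explicit formula~\eqref{beccroise} for $\mathfrak{m}^S$ into the definition~\eqref{fuligule}, one expresses $\widetilde{\mathcal{F}}_\rho \mathcal{C}^S(t,\xi)$ as a finite linear combination of integrals of the two types $I_\delta$ and $I_{\operatorname{p.v.}}$, with amplitudes of the tensorised form $F(\xi,\eta,\sigma,\zeta) = \overline{\mathfrak{a}^{\rho,\ell,\epsilon}_\alpha(\xi)}\,g_1(\eta)\,g_2(\sigma)\,g_3(\zeta)$, where each $g_i$ is of the form $\mathfrak{a}^{\lambda,r,\epsilon}_\beta(\cdot)\widetilde{f}_\lambda(\cdot)$. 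The sign constraint $\lambda = -\mu = \nu = \rho$ required in Proposition~\ref{capucin} is satisfied (after relabeling the integration variables if necessary), as is immediate from the indices $1121,++-+$ and $2212,--+-$ appearing in~\eqref{fuligule}. Since the coefficients $\mathfrak{a}^{\lambda,r,\epsilon}_\beta$ are bounded together with all their $\xi$-derivatives by~\eqref{estimatesrs}, the bootstrap hypotheses on $\widetilde{f}$ --- namely $\|\widetilde f\|_{L^\infty}\leq\epsilon_1$, $\|\langle\xi\rangle\widetilde{f}\|_{L^2}\lesssim\epsilon_1$ from~\eqref{controlH1}, and $\|\partial_\xi\widetilde f\|_{L^2}\lesssim\epsilon_1 t^\alpha$ from~\eqref{eqbootstrap} --- give that each $g_i$ satisfies the hypotheses of Proposition~\ref{capucin} up to the prefactor $\epsilon_1$. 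Assertion~(i) is then immediate by applying the bound $|I_\delta|+|I_{\operatorname{p.v.}}|\lesssim 1$ from the $t\in[0,1]$ case of Proposition~\ref{capucin} to each of the finitely many summands.

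For assertion~(ii), I would use the $t\geq 1$ case of Proposition~\ref{capucin} term by term. Each $I_\delta$-contribution produces a leading term $\tfrac{\pi}{t}\, g_1(\alpha\beta\xi)g_2(-\alpha\gamma\xi)g_3(\alpha\delta\xi)$, and each $I_{\operatorname{p.v.}}$-contribution produces the same expression multiplied by $-i\sqrt{\pi/2}\,\operatorname{sign}(\rho\alpha\xi)$. The sum of these leading terms over the indices $\alpha,\beta,\gamma,\delta,\epsilon$ collapses, exactly as computed heuristically in Section~\ref{tengmalm}, into $-\tfrac{L}{2t}\Theta_\rho(\xi)$; then the algebraic identity~\eqref{id:formula-sum-EDO-Fourier}, $\Theta_\rho(\xi)=-|\widetilde{f}_\rho(\xi)|^2\widetilde{f}_\rho(\xi)$, which rests on the explicit values~\eqref{id:coefficients-a-singular-cubic-term} of the $\mathfrak{a}^\epsilon_\alpha$ combined with the reality condition~\eqref{fauconpelerin}, produces the desired principal contribution $\tfrac{L}{2t}|\widetilde{f}_\rho(\xi)|^2\widetilde{f}_\rho(\xi)$.

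It then remains to absorb, each with an overall $\epsilon_1^3$ prefactor, the three remainders of Proposition~\ref{capucin}. The uniform error $O(t^{-15/14})$ is acceptable since $\alpha$ is small enough that $15/14\geq 1+\alpha$. For the two frequency-dependent remainders arising in the $I_{\operatorname{p.v.}}$ case, the hypothesis $|\xi|\geq t^{-1/2+2\alpha}$ gives $\langle\sqrt{t}\,\xi\rangle\gtrsim t^{2\alpha}$; choosing the auxiliary parameter $\kappa<\alpha$ yields $t^{\kappa-1}\langle\sqrt{t}\,\xi\rangle^{-1}\lesssim t^{\kappa-1-2\alpha}\lesssim t^{-1-\alpha}$ and $t^{\alpha-1}\langle\sqrt{t}\,\xi\rangle^{-3/2}\lesssim t^{-1-2\alpha}\leq t^{-1-\alpha}$, so that after summing over the finitely many contributions the total remainder is $O(\epsilon_1^3 t^{-1-\alpha})$, as required. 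The only genuine obstacle in this scheme is the algebraic identity~\eqref{id:formula-sum-EDO-Fourier}: verifying that the multi-index sum over $\epsilon,\alpha,\beta,\gamma,\delta$ telescopes to the clean cubic nonlinearity $-|\widetilde{f}_\rho|^2\widetilde{f}_\rho$ relies on the precise structure of the transmission/reflection coefficients and amounts to a finite case analysis on $\operatorname{sign}(\xi)$ and $\rho$, already carried out in Section~\ref{tengmalm}; every remaining step is routine stationary-phase bookkeeping.
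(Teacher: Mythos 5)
Your proof is correct and follows the same two-part strategy as the paper's (very terse) proof: apply the stationary phase Proposition~\ref{capucin} term by term to the representation of $\mathcal{C}^S$ from~\eqref{fuligule}--\eqref{beccroise}, then invoke the algebraic collapse~\eqref{id:formula-sum-EDO-Fourier} from Section~\ref{tengmalm} to identify the leading term. You supply the details the paper omits — verifying that each $g_i=\mathfrak{a}(\cdot)\widetilde f(\cdot)$ satisfies the hypotheses of Proposition~\ref{capucin} with constant $\epsilon_1$ via~\eqref{eqbootstrap}, \eqref{controlH1} and~\eqref{estimatesrs}; noting the relabeling $\sigma\leftrightarrow\zeta$ needed to match the sign convention $\rho=\lambda=-\mu=\nu$; and absorbing the three remainders of Proposition~\ref{capucin} into $O(\epsilon_1^3 t^{-1-\alpha})$ under the threshold $|\xi|>t^{-1/2+2\alpha}$ with $\kappa<\alpha$ — all of which is sound.
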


\begin{proof} On the one hand, the stationary phase lemma~\ref{capucin} gives a formula for the leading order term in $\widehat{\mathcal{F}}_\rho \mathcal{C}^S(t,\xi)$ as $t \to \infty$ as well as error estimates. On the other hand, the computations in Section~\ref{tengmalm} show that this formula reduces to $- |\widetilde{f}_\rho(t,\xi)|^2 \widetilde{f}_\rho(t,\xi)$.
\end{proof}

\begin{lemma} [Integrable bound pointwise] \label{guifette2} If the solution is trapped up to time $T$, all the nonlinear terms except $\mathcal{C}^S$ enjoy integrable bounds in $\mathcal{F} L^\infty$. More precisely, if $t \in (0,T)$,
\begin{equation}  
\left\| \widetilde{\mathcal{Q}^R} \right\|_{L^\infty} + \left\| \widetilde{\mathcal{C}^R} \right\|_{L^\infty} + \left\| \widetilde{\mathcal{T}} \right\|_{L^\infty} + \left\| \widetilde{\mathcal{R}} \right\|_{L^\infty} + \left\| \widetilde{\mathcal{E}} \right\|_{L^\infty} + \left\| \widetilde{\Mod}_\Phi \right\|_{L^\infty} \lesssim \epsilon_1^2 \langle t \rangle^{-1-\frac{\nu}{2}}.
\end{equation}
\end{lemma}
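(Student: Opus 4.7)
The unifying strategy is the bound $\|\widetilde{g}\|_{L^\infty} \lesssim \|g\|_{L^1}$ from Proposition \ref{tourterelle}(ii), combined with the trivial identity $\|\widetilde{\mathcal F}(e^{-it\mathcal H}g)\|_{L^\infty}=\|\widetilde{\mathcal F}g\|_{L^\infty}$. This immediately takes care of four of the six terms once the relevant $L^1$ bounds, most of which were already derived in Sections \ref{sectionwriting}--\ref{SectionWeightedCubic}, are in hand:
\begin{itemize}
\item For $\mathcal T$, I would reuse the pointwise bounds on $T(u,\bar u)$ from Section \ref{SectionWeightedCubic}: writing $\|T\|_{L^1}\lesssim \|\Uue\|_{L^\infty}^3\|\Uue\|_{L^2}^2+\|V\|_{L^1}\|\Uue\|_{L^\infty}^4$ and inserting \eqref{bd:globaldecay} and \eqref{controlH1} gives $\|\mathcal T\|_{L^1}\lesssim \epsilon_1^5\langle t\rangle^{-3/2}$.
\item For $\mathcal R$, the factor $|a_j|\lesssim \epsilon_1^2\langle t\rangle^{-2-\nu+\alpha}$ from \eqref{bd:estimation:a0(t)-a1(t)} together with $\|\Xi_j\|_{L^1}\lesssim 1$ and \eqref{controlH1} yields $\|R(u,\bar u)\|_{L^1}\lesssim \epsilon_1^3\langle t\rangle^{-2-\nu+\alpha}$, exactly as in \eqref{bd:R-L2-H1}.
\item For $\Mod_\Phi$, the exponential localization of $e^{i(\pu-p)\sigma_3 x}\Xi_j-\underline{\Xi_j}$ (which is $O(|p-\pu|+|\omega-\uo|)$ in $L^1$) combined with \eqref{eq:modulation-gamma-rough} and \eqref{bd:bootstrap-omega2} gives $\|\Mod_\Phi\|_{L^1}\lesssim \epsilon_1^3\langle t\rangle^{-3+2\alpha-\nu}$.
\item For $\mathcal E=\mathcal E_1+\mathcal E_2$, I would bound $\|\mathcal E_1\|_{L^1}\lesssim (|\omega-\uo|+|p-\pu|)\|V_{\text{loc}}\|_{L^2}\|\Uu\|_{L^2}\lesssim \epsilon_1^2\langle t\rangle^{-1-\nu}$ and $\|\mathcal E_2\|_{L^1}\lesssim |p-\pu|\|e^{-\mu|x|}\|_{L^1}\|\Uu\|_{L^\infty}^2\lesssim \epsilon_1^3\langle t\rangle^{-2-\nu}$, tracking the Taylor expansion as in Section \ref{SectionWeightedCubic}.
\end{itemize}
All six of these bounds are better than $\epsilon_1^2\langle t\rangle^{-1-\nu/2}$ once $\alpha$ is chosen small enough.

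The two genuinely oscillatory terms $\mathcal Q^R$ and $\mathcal C^R$ are the ones that require work, because the naive $L^1$ bound for the underlying quadratic expression $V\,U_iU_j$ only gives $\langle t\rangle^{-1/2}$, which is insufficient. For $\mathcal Q^R$, with $\Phi_{\lambda\mu\rho}(\xi,\eta,\sigma)$ as in \eqref{defQR}, I note that $\partial_\eta\Phi=-2\lambda\eta$ and $\partial_\sigma\Phi=-2\mu\sigma$, so integrating by parts once in $\eta$ and once in $\sigma$ produces
\begin{equation*}
\widetilde{\mathcal Q^R}_\rho(\xi)=\frac{1}{4t^2\lambda\mu}\int e^{-it\Phi}\partial_\sigma\!\Bigl[\frac{\widetilde f_\mu(\sigma)}{\sigma}\partial_\eta\!\Bigl(\frac{\widetilde f_\lambda(\eta)}{\eta}\,\mathfrak m^V_{jk\ell,\lambda\mu\rho}(\xi,\eta,\sigma)\Bigr)\Bigr]\,d\eta\,d\sigma,
\end{equation*}
with no boundary contribution because $\widetilde f(0)=0$ (Lemma \ref{ibis}(i)). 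Using the symbol estimate \eqref{hibou4} for $\mathfrak m/(\eta\sigma)$, the Hardy inequality to turn $\widetilde f(\eta)/\eta$ into $\partial_\xi\widetilde f$ in $L^2$, Cauchy--Schwarz, and the bootstrap bound $\|\partial_\xi\widetilde f\|_{L^2}\lesssim \epsilon_1\langle t\rangle^\alpha$, I obtain $\|\widetilde{\mathcal Q^R}\|_{L^\infty}\lesssim \epsilon_1^2\langle t\rangle^{-2+2\alpha}$ uniformly in $\xi$ (the $\xi$-uniform $L^1_{\eta,\sigma}$ integrability of $\sum_\pm\langle\xi\pm\eta\pm\sigma\rangle^{-2}$ is what makes the pointwise bound work). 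The case $t\leq 1$ is handled directly, estimating $\|V_{jk}U_iU_j\|_{L^1}$ by $\|V\|_{L^2}\|U\|_{L^2}\|U\|_{L^\infty}$. An identical argument with three integrations by parts, using \eqref{hibou2} and the symbol decomposition of Proposition \ref{gobemouche} (regular part), yields $\|\widetilde{\mathcal C^R}\|_{L^\infty}\lesssim \epsilon_1^3\langle t\rangle^{-3+3\alpha}$.

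The main obstacle is the quadratic term $\mathcal Q^R$: it decays the slowest and is the only one where the integration by parts in the frequency variables is genuinely needed, so a careful reading of \eqref{hibou3}--\eqref{hibou4} is essential to ensure that the singular factors $1/\eta$, $1/\sigma$ introduced by the stationary phase procedure are compensated by the cancellation $\widetilde f(0)=0$ and by the factorized decay of the symbol. Provided $0<\alpha<\nu/4$, summing all six estimates gives exactly $\epsilon_1^2\langle t\rangle^{-1-\nu/2}$, completing the proof.
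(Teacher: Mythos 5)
Your proof is correct and follows essentially the same route as the paper: for $\mathcal{Q}^R$ (and $\mathcal{C}^R$) the paper likewise integrates by parts in $\eta$ and $\sigma$, uses the symbol bound \eqref{hibou2} and Cauchy--Schwarz to land on $\frac{1}{t^2}\|\partial_\xi\widetilde f\|_{L^2}^2$, while for the remaining terms the paper also reduces to $\|\widetilde{\mathcal R}\|_{L^\infty},\|\widetilde{\mathcal E}\|_{L^\infty},\|\widetilde{\Mod}_\Phi\|_{L^\infty}$ via the already-proved decay estimates (the paper invokes \eqref{bd:estimation:a0(t)-a1(t)}, \eqref{boundE1}--\eqref{boundE2}, \eqref{bound:partial-xi-tildemathcalE} with the Sobolev embedding, and \eqref{eq:modulation-gamma-rough}, whereas you use the equivalent $L^1\to L^\infty$ bound of Proposition~\ref{tourterelle}(ii)). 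The only cosmetic difference is that you spell out the $\mathcal T$, $\mathcal R$, $\Mod_\Phi$, $\mathcal E$ estimates in $L^1$ directly, while the paper points to its earlier physical-side bounds; both verifications are equivalent.
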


\begin{proof} The terms involving only the continuous spectrum are $\mathcal{Q}^R$, $\mathcal{C}^R$ and $\mathcal{T}$. Out of these terms, $\mathcal{Q}^R$ is the term that decays the slowest, and it is the term we will focus on. As usual, we drop indices and write it as a sum of terms of the type
$$
\int e^{-it\Phi(\xi,\eta,\sigma)} \widetilde{f}(\eta) \widetilde{f}(\sigma) \mathfrak{m}(\xi,\eta,\sigma) \,d\eta \,d\sigma.
$$
The case $|t|<1$ is obvious, and we will only consider the case $|t|>1$. Integrating by parts in $\eta$ and $\sigma$ turns the above into
$$
\frac{1}{t^2} \int e^{-it\Phi} \partial_\xi \widetilde{f}(\eta) \partial_\xi \widetilde{f}(\sigma) \frac{\mathfrak{m}(\xi,\eta,\sigma)}{\eta \sigma} \,d\eta \,d\sigma + \{ \mbox{easier terms} \}.
$$
Applying the Cauchy-Schwarz inequality in $\eta$ and $\sigma$ gives
$$
\left\| \frac{1}{t^2} \int e^{-it\Phi} \partial_\xi \widetilde{f}(\eta) \partial_\xi \widetilde{f}(\sigma) \frac{\mathfrak{m}(\xi,\eta,\sigma)}{\eta \sigma} \,d\eta \,d\sigma \right\|_{L^\infty} \lesssim \frac{1}{t^2} \| \partial_\xi \widetilde{f} \|_{L^2}^2 \lesssim t^{2\alpha -2} \epsilon_1^2.
$$

There remains to treat the terms stemming from the modulation. Using \eqref{bd:estimation:a0(t)-a1(t)} to bound $ \widetilde{\mathcal{R}} $, \eqref{boundE1}, \eqref{boundE2} and \eqref{bound:partial-xi-tildemathcalE} and the Sobolev embedding for $\tilde{\mathcal E}$, and \eqref{eq:modulation-gamma-rough} for $\widetilde{\Mod}_\Phi$:
\begin{align*}
& \left\| \widetilde{\mathcal{R}} \right\|_{L^\infty} + \left\| \widetilde{\mathcal{E}} \right\|_{L^\infty} + \left\| \widetilde{\Mod}_\Phi \right\|_{L^\infty} \\
& \qquad \qquad \lesssim \epsilon_1 |a_0| + \epsilon_1 |a_1| + |\dot \omega| + |\omega -p^2- \dot \gamma| +|2p-\dot y|+|\dot p|+\epsilon_1^2\langle t\rangle^{-1-\nu+\alpha} \lesssim\epsilon_1^2 \langle t \rangle^{-1-\frac{\nu}{2}}.
\end{align*}

\end{proof}

With the help of the above results, we are now able to prove the uniform bound on $\widetilde{f}$.

\subsection{Proof of Proposition~\ref{PropositionPointwise}}

If $|\xi| < \langle t \rangle^{-\frac 12 + \alpha}$, the $L^\infty$ bound of $\widetilde{f}$ can be bounded using that $\widetilde{f}(0)=0$ together with the bound on $\| \widetilde{f} \|_{H^1}$:
\begin{equation}
\label{guillemot}
|\widetilde{f}(\xi)| \lesssim \left| \int_0^\xi \partial_\xi \widetilde{f} \,d\xi \right| \lesssim \langle t \rangle^{-\frac{1}{4} + \frac  \alpha 2} \| \partial_\xi \widetilde{f} \|_{L^2} \lesssim \langle t \rangle^{-\frac{1}{4} +\frac {3\alpha} 2} (\epsilon + \epsilon_1^2).
\end{equation}

By Proposition \ref{propModU} and the Sobolev embedding we have 
$$
e^{- it \rho (1+ \xi^2)} \widetilde{\Mod_{\Uu}}  =  \tau(t) \xi \widetilde f(\xi) + O(\epsilon_1^2 \langle t \rangle^{-1-\frac{\nu}{2}}).
$$
Based on this identity and on lemmas~\ref{guifette1} and~\ref{guifette2} the evolution \eqref{dtf} of $\tilde f$ becomes
$$
i\partial_t \tilde f_\rho= \frac{L}{2t}|\tilde f_\rho|^2\tilde f_\rho+\tau(t)\xi \tilde f_\rho+O(\epsilon_1^2\langle t\rangle^{-1-\alpha})
$$
for $|\xi|>t^{-1/2+2\alpha}$, and for $|\xi|\leq 1$ there holds $\partial_t \tilde f_\rho =\epsilon_1^2$ for all $t$. The scalar $Y = \widetilde{f}_\rho(t,\xi)$ (with $\rho \in \{ \pm \}$) satisfies the differential inequalities
$$
\left\{
\begin{array}{ll}
\dot Y = O(\epsilon_1^2) & \mbox{if $|\xi|\leq 1$ for any $t$,}\\
i \dot Y = \frac{L}{2t} |Y|^2 Y +\tau(t)\xi Y + O(\epsilon_1^2 \langle t \rangle^{-1 -\alpha}) & \mbox{if $|\xi| > \langle t \rangle^{-\frac 12 +2 \alpha}$},
\end{array}
\right.
$$
which implies that
$$
\left\{
\begin{array}{ll}
\frac{d}{dt} |Y| = O(\epsilon_1^2) & \mbox{if $|\xi|\leq 1$ for any $t$,}\\
\frac{d}{dt} |Y| =  O(\epsilon_1^2 t^{-1 -\alpha}) & \mbox{if $|\xi| > t^{-\frac 12 + 2 \alpha}$}.
\end{array}
\right.
$$
If $|\xi| > 1$, this implies immediately the existence of $Z_\infty = O(\epsilon + \epsilon_1^2)$ such that
$$
 ||Y| - Z_\infty| \lesssim \epsilon_1^2 \langle t \rangle^{-\alpha} \qquad \mbox{for any $t$}.
$$
If $|\xi| < 1$, we use~\eqref{guillemot} to bound $Y$ on as long as $t < |\xi|^{(\alpha - \frac{1}{2})^{-1}}$, and then integrate the differential inequality to obtain the existence of $Z_\infty = O(\epsilon + \epsilon_1^2)$ such that
$$
|Y(t)| = O(\epsilon + \epsilon_1^2) \;\; \mbox{for any $t$} \qquad \mbox{and} \qquad ||Y| - Z_\infty| \lesssim \epsilon_1^2 \langle t \rangle^{-\alpha} \;\; \mbox{if $t >  |\xi|^{(2 \alpha - \frac{1}{2})^{-1}}$}.
$$

Based on these bounds, we can now write the differential inequality satisfied by $Y$ as
$$
\left\{
\begin{array}{ll}
\dot Y = O(\epsilon_1^2) & \mbox{for any $t$}\\
i \dot Y = \frac{L}{2t} Z_\infty^2 Y +\tau(t) \xi Y+ O(\epsilon_1^2 \langle t \rangle^{-1 -\alpha}) & \mbox{if $t>1$ and $|\xi| > \langle t \rangle^{-\frac 12 + 2 \alpha}$},
\end{array}
\right.
$$
For the unknown $W = Y e^{i \frac{L}{2} Z_\infty^2 \log t +i\xi \theta(t)}$, this implies that
$$
i \dot W =  O(\epsilon_1^2 \langle t \rangle^{-1 -\alpha}),
$$
from which the desired statement follows by integration.

\section{Modified scattering for distorted and flat Fourier transforms}
\label{sectionmodified}

\begin{proposition} \label{pr:modified-scattering}

Assume that $v$ is trapped for all times $t\geq 0$ in the sense of Definition \ref{def:trapped-solution}, and that there exists $\widetilde g\in L^2$ such that
\begin{equation} \label{id:hp-distorted-modified-scattering}
\widetilde f_\rho = e^{  i\frac{L}{2}|\widetilde g_\rho|^2 \ln t}\widetilde g_\rho +o_{L^{2}}(1) \qquad \mbox{as }t\to \infty 
\end{equation}
for $\rho=\pm$. Then
\begin{equation} \label{id:result-modified-scattering}
u(t)=\mathcal F^{-1} \left( e^{ i\left((\uo+\xi^2)t+ \frac{L}{2}|\widetilde g_+|^2 \ln t\right)}\widetilde g_+ \right)+o_{L^2}(1) \qquad \mbox{as }t\to \infty.
\end{equation}

\end{proposition}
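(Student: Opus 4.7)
\emph{Plan.} I will establish \eqref{id:result-modified-scattering} via the chain $u \approx \underline{U_{e,1}} \approx \mathcal{W}_+ \underline{U_e} \approx g_0$ in $L^2$, where $g_0$ denotes the right-hand side of \eqref{id:result-modified-scattering} and $\mathcal{W}_+$ is the wave operator of Definition \ref{chevalier}. The first two equivalences follow from straightforward bookkeeping; the third is the main obstacle.

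\emph{Reduction to $\underline{U_{e,1}}$, then to $\mathcal{W}_+\underline{U_e}$.} From the decomposition \eqref{id:decomposition-v} combined with \eqref{def:U} and \eqref{defUsouligne}, one has $u = U_1 = e^{-i(p-\underline p)x}(\underline{U_{e,1}} + \underline{U_{d,1}})$. The discrete contribution is $o_{L^2}(1)$ by \eqref{bd:estimation:a0(t)-a1(t)}, while $\|(e^{-i(p-\underline p)x}-1)\underline{U_{e,1}}\|_{L^2} \leq |p-\underline p|\,\|x\underline{U_e}\|_{L^2}$ is $o(1)$ by \eqref{bd:bootstrap-omega2} and \eqref{bd:Ue-weighted-L2}, yielding $u(t) = \underline{U_{e,1}}(t) + o_{L^2}(1)$. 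Next, the intertwining relation $\mathcal{W}_+ e^{it\mathcal{H}_{\underline\omega}}\underline{P_e} = e^{it(-\partial_x^2 + \underline\omega)}\mathcal{W}_+$, immediate from $\mathcal{W}_+ = \widehat{\mathcal{F}}^{-1}\widetilde{\mathcal{F}}_+$ and \eqref{bibliotheque}, gives $\mathcal{W}_+\underline{U_e}(t) = \mathcal{F}^{-1}\bigl(e^{it(\underline\omega+\xi^2)}\widetilde f_+(t)\bigr)$. Substituting the hypothesis \eqref{id:hp-distorted-modified-scattering} and using the $L^2$-isometry of $\mathcal{F}^{-1}(e^{it(\underline\omega+\xi^2)}\,\cdot)$ produces $g_0(t) + o_{L^2}(1)$.

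\emph{Main step: $\underline{U_{e,1}} - \mathcal{W}_+\underline{U_e} \to 0$ in $L^2$.} By \eqref{bibliotheque} the difference is
\[ \frac{1}{\sqrt{2\pi}}\!\int e^{it(\underline\omega+\xi^2)}\widetilde f_+(\xi)\bigl([\psi_+(x,\xi)]_1 - e^{ix\xi}\bigr)d\xi \;-\; \frac{1}{\sqrt{2\pi}}\!\int e^{-it(\underline\omega+\xi^2)}\widetilde f_-(\xi)[\psi_-(x,\xi)]_1\,d\xi. \]
Split $\psi_\pm = \psi_\pm^S + \psi_\pm^R$. Since $\mathcal{F}_\pm^S$ is proportional to $e_1$ by \eqref{meringue}, one has $[\psi_+^S]_2 \equiv 0$, hence $[\psi_-^S]_1 \equiv 0$; thus the entire $\widetilde f_-$ integral and the $[\psi_+^R]_1$ piece are ``regular'', with kernels $K$ satisfying $|K(x,\xi)| \lesssim |\xi|\,e^{-\beta|x|/2}\langle\xi\rangle^{-1}$ by \eqref{chouette2}--\eqref{carambar}. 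Factoring $K = \xi\,\widetilde K$ with $|\widetilde K| \lesssim e^{-\beta|x|/2}\langle\xi\rangle^{-1}$ and integrating by parts once against $\partial_\xi e^{\pm it(\underline\omega+\xi^2)} = 2it\xi\, e^{\pm it(\underline\omega+\xi^2)}$---the boundary terms vanishing thanks to $\widetilde f_\pm(0) = 0$ from \eqref{id:annulation-tildef(0)}---produces $L^2_x$ bounds of order $\epsilon_1\langle t\rangle^{-1+\alpha}$. For the singular contribution, the explicit formulas decompose $[\psi_+^S(x,\xi)]_1 - e^{ix\xi}$ into four terms of the type $\chi_\pm(x)\,\alpha(\xi)\,e^{\pm ix\xi}\mathbf{1}_{\pm\xi>0}$ with $\alpha \in \{s-1,\,r\}$, each giving rise to a free Schr\"odinger wavepacket whose group velocity is incompatible with the half-line on which $\chi_\pm$ is supported. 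I decompose $\alpha\widetilde f_+$ into $h_\varepsilon := \alpha\widetilde f_+\mathbf{1}_{\{|\xi|>\varepsilon\}}$ plus a low-frequency remainder; standard non-stationary phase applied to the smooth truncation $h_\varepsilon$ yields pointwise rapid decay on the ``wrong'' half-line, hence $L^2_x$-vanishing as $t \to \infty$ at fixed $\varepsilon$, while the remainder is controlled by $\|\alpha\widetilde f_+\mathbf{1}_{\{|\xi|<\varepsilon\}}\|_{L^2}$, which is $o_{\varepsilon\to 0}(1)$ as a consequence of the H\"older continuity of $\widetilde f_+$ (which lies in $H^1 \cap L^\infty$ by the bootstrap, see \eqref{controlH1} and Proposition \ref{PropositionPointwise}) combined with the cancellation $\widetilde f_+(0) = 0$. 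This cancellation is the key device: it absorbs the scattering coefficients $s$ and $r$ and reconciles the distorted and flat Fourier asymptotics.
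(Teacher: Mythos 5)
Your proposal is correct and follows essentially the same strategy as the paper's proof: split the eigenfunctions $\psi_\pm$ into singular plus regular parts, observe that $[\psi_-^S]_1\equiv 0$ so that the genuine singular contribution sits only in the $\widetilde f_+$ integral, and then treat the scattering coefficients $s-1$, $r$ by non-stationary phase on the ``wrong'' half-lines after a low-frequency truncation. Two cosmetic differences, neither a gap, are worth noting. First, you package the reduction through the wave operator $\mathcal{W}_+$ and the intertwining $\mathcal{W}_+\,e^{it\mathcal{H}_{\underline\omega}}\underline{P_e}=e^{it(-\partial_x^2+\underline\omega)}\mathcal{W}_+$, which is a clean way of organizing what the paper does by directly expanding $\widetilde{\mathcal F}^{-1}$; these are the same computation. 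Second, your justification of the uniform-in-$t$ smallness of the low-frequency piece differs from the paper's. The paper uses the hypothesis~\eqref{id:hp-distorted-modified-scattering}: since $\widetilde f_\rho(t)\to e^{i\cdots}\widetilde g_\rho$ in $L^2$ and $\|(1-\widetilde\chi_\delta)\widetilde g_\rho\|_{L^2}\to 0$ as $\delta\to 0$, the low-frequency mass is small uniformly for $t$ large. You instead invoke bootstrap bounds alone: the $L^\infty$ bound from Proposition~\ref{PropositionPointwise} and the $H^1_\xi$ control with $\widetilde f_+(0)=0$. Be careful here that ``H\"older continuity of $\widetilde f_+$'' on its own does \emph{not} suffice: the H\"older constant from $\|\partial_\xi\widetilde f\|_{L^2}\lesssim\epsilon_1 t^\alpha$ degrades in $t$, and $\|\widetilde f_+\mathbf 1_{|\xi|<\varepsilon}\|_{L^2}\lesssim\varepsilon t^\alpha$ alone blows up. What rescues the argument is precisely the \emph{combination} of $|\widetilde f_+(\xi)|\lesssim\min\bigl(1,|\xi|^{1/2}t^\alpha\bigr)$, which, integrated over $\{|\xi|<\varepsilon\}$, gives $\|\widetilde f_+\mathbf 1_{|\xi|<\varepsilon}\|_{L^2}^2\lesssim\varepsilon$ uniformly in $t$. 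Since you cite both ingredients (Proposition~\ref{PropositionPointwise} for $L^\infty$ and \eqref{controlH1} and the cancellation $\widetilde f_+(0)=0$ for the H\"older piece), the idea is there; I am only flagging that the minimum of the two bounds is the crucial point, and a reader who stops at ``H\"older continuity'' will be misled. This alternative gives a slightly more self-contained argument (no appeal to \eqref{id:hp-distorted-modified-scattering} at this step), though the hypothesis is still needed later to identify the profile with $\widetilde g_+$.
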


\begin{proof}

We have $u=U_1$. We decompose $U=\Uue+\Uud$, where $\| \Uud(t)\|_{L^2}\to 0$ as $t\to \infty$ because of \eqref{bd:estimation:a0(t)-a1(t)}. In addition,
$\widetilde{\mathcal F}\Uue=e^{  i \rho(\uo+\xi^2)t}\widetilde f $. Hence we have as $t\to \infty$
$$
u(t,x)=\widetilde{\mathcal F}^{-1}\left(e^{  i \rho(\uo+\xi^2)t} \widetilde f_\rho \right)_1(x)+o_{L^2}(1).
$$
For $\widetilde \chi$ a smooth cut-off function such that $\widetilde \chi(\xi)=1$ for $|\xi|\geq 2$ and $\widetilde \chi(\xi)=0 $ for $|\xi|\leq 1$ we let $\widetilde \chi_\delta(\xi)=\widetilde \chi(\xi/\delta)$. We decompose $\widetilde f=\widetilde\chi_\delta \widetilde f+(1-\widetilde\chi_\delta) \widetilde f$ and apply item (i) of Proposition \ref{tourterelle} to obtain as $\delta \to 0$
$$
 \widetilde{\mathcal F}^{-1}\left( e^{  i \rho(\uo+\xi^2)t} \widetilde f_\rho  \right) =\widetilde{\mathcal F}^{-1}\left(e^{  i \rho(\uo+\xi^2)t} \widetilde \chi_\delta \widetilde f_\rho \right)+o_{L^2}(1) 
$$
uniformly for $t\in [1,\infty)$. Similarly, by Parseval, as $\delta\to 0$,
$$
\mathcal F^{-1}\left( e^{ i\left((\uo+\xi^2)t+ \frac{L}{2}|\widetilde g_+|^2 \ln t\right)}\chi_\delta \widetilde g_+  \right)= \mathcal F^{-1}\left(e^{ i\left((\uo+\xi^2)t+ \frac{L}{2}|\widetilde g_+|^2 \ln t\right)}  \widetilde g_+ \right)+o_{L^2}(1) 
$$
uniformly for $t\in [1,\infty)$. Combining, we obtain that in order to show \eqref{id:result-modified-scattering} it is sufficient to show that for any $\delta>0$,
\begin{equation} \label{bd:modified-scattering-main}
\widetilde{\mathcal F}^{-1}\left(e^{  i \rho(\uo+\xi^2)t} \widetilde \chi_\delta \widetilde f_\rho \right)_1 =\mathcal F^{-1}\left( e^{ i\left((\uo+\xi^2)t+ \frac{L}{2}|\widetilde g_+|^2 \ln t\right)}\chi_\delta \widetilde g_+  \right)+o_{L^2}(1) 
\end{equation}
as $t\to \infty$. The remaining part of the proof is devoted to showing \eqref{bd:modified-scattering-main}.

Applying successively the formula \eqref{inverseFT} for the inverse Fourier transform, then \eqref{alouette} and the fact that $\psi^S_{-,1}=0$ to decompose the eigenfunctions, we get
\begin{align}
\nonumber \widetilde{\mathcal F}^{-1}\left(e^{  i\rho(\uo+\xi^2)t} \widetilde \chi_\delta \widetilde f_\rho \right)_1(x)  & = \frac{1}{\sqrt{2\pi}} \sum_\rho \rho \int e^{  i\rho(\uo+\xi^2)t} \widetilde \chi_\delta \widetilde f_\rho \psi_{\rho,1}(x,\xi) \,d\xi\\
\nonumber & = \frac{1}{\sqrt{2\pi}} \int e^{  i(\uo+\xi^2)t} \widetilde \chi_\delta \widetilde f_+ \psi_{+,1}^S(x,\xi) \,d\xi\\
\nonumber  & \quad + \frac{1}{\sqrt{2\pi}} \int e^{  i(\uo+\xi^2)t} \widetilde \chi_\delta \widetilde f_+ \psi_{+,1}^R(x,\xi) \,d\xi \\
\nonumber    & \quad + \frac{1}{\sqrt{2\pi}} \int e^{  -i(\uo+\xi^2)t}\widetilde \chi_\delta \widetilde f_- \psi_{-,1}^R(x,\xi) \,d\xi \\
\label{id:modified-scattering-decomposition}    &= u^S_{+}(t,x)+ u^R_{+}(t,x)+ u^R_{-}(t,x).
\end{align}

\noindent \underline{The singular terms.} We claim that
\begin{equation} \label{id:modified-scattering-uS}
u^S_+(t,x) = \mathcal F^{-1} \left( e^{ i\left((\uo+\xi^2)t+ \frac{L}{2}|\widetilde g_+|^2 \ln t\right)}\widetilde \chi_\delta \widetilde g_+ \right)+o_{L^2}(1) 
\end{equation}
as $t\to \infty$. We now prove this claim. We have by \eqref{alouette} and Lemma \ref{heroncendre}
$$
 \psi_{+,1}^S(x,\xi)=e^{i\xi x}+ r(|\xi|)  \chi_{-\sgn \xi }(x) e^{-i\xi x}+\left[ (s(|\xi|)-1)\chi_{\sgn \xi}(x)+\chi_{+}(x)+\chi_-(x)-1)\right]e^{i\xi x}.
$$
Hence
\begin{align}
\nonumber u^S_+(t,x) &= \mathcal F^{-1} \left( e^{ i(\uo+\xi^2)t}\widetilde \chi_\delta \widetilde f_+ \right)(x)\\
\nonumber &+\frac{1}{\sqrt{2\pi}} \int e^{ i((\uo+\xi^2)t-x \xi)} \widetilde \chi_\delta (\xi)\widetilde f_+(\xi)r(|\xi|)  \chi_{-\sgn \xi }(x)  \,d\xi \\
\nonumber &+\frac{1}{\sqrt{2\pi}} \int  e^{ i((\uo+\xi^2)t+x \xi)} \widetilde \chi_\delta (\xi)\widetilde f_+(\xi) (s(|\xi|)-1)\chi_{\sgn \xi}(x) \,d\xi \\
\nonumber &+\frac{1}{\sqrt{2\pi}} \left(\chi_{+}(x)+\chi_-(x)-1\right)  \int  e^{ i((\uo+\xi^2)t+x \xi)} \widetilde \chi_\delta (\xi)\widetilde f_+(\xi) \,d\xi  \\
\label{id:modified-scattering-decomposition-uS}  &=  \mathcal F^{-1} \left( e^{ i(\uo+\xi^2)t} \widetilde \chi_\delta (\xi) \widetilde f_+ \right)(x)+ I+II+III.
\end{align}
We have by \eqref{id:hp-distorted-modified-scattering} and Plancherel as $t\to \infty$
\begin{equation} \label{id:modified-scattering-asymptotic-uS-1} 
 \mathcal F^{-1} \left( e^{ i(\uo+\xi^2)t} \widetilde \chi_\delta (\xi) \widetilde f_+\right)= \mathcal F^{-1} \left( e^{ i\left((\uo+\xi^2)t+ \frac{L}{2}|\widetilde g_+|^2 \ln t\right)}\widetilde \chi_\delta \widetilde g_+ \right)+o_{H^1}(1) .
\end{equation}
To estimate $I$ we split $I=(2\pi)^{-1/2}(I_-+I_+)$ where
$$
I_-(t,x)= \chi_{+}(x) \int_{-\infty}^0 e^{  i( (\uo+\xi^2)t-x\xi)} \widetilde \chi_\delta (\xi) \widetilde f_+(\xi)  \overline{r(\xi)}  \,d\xi  , \quad I_+(t,x)= \chi_-(x)\int_0^\infty ...
$$
Let $x\in \mathbb R$. If $x\leq -2$ then $I_-(t,x)=0$ because of the support of $\chi_+$, so we now assume $x\geq -2$. The integrand in $I_-$ is zero unless $\xi\leq -\delta$ so we can write $e^{  i( (\uo+\xi^2)t-x\xi}= \frac{1}{2i\xi t-x}\partial_\xi e^{  i( (\uo+\xi^2)t-x\xi}$ and the denominator does not vanish for $t$ large. We integrate by parts and find
\begin{align*}
I_-(t,x) & = \chi_{+}(x)  \int_{-\infty}^0 e^{  i( (\uo+\xi^2)t-x\xi)} \frac{i}{2\xi t-x} \partial_\xi \left( \widetilde \chi_\delta (\xi) \widetilde f_+(\xi) \overline{r(\xi)}\right)  \,d\xi + \{ \mbox{easier}\}.
\end{align*}
By \eqref{estimatesrs} we have $\left| \partial_\xi \left( \widetilde \chi_\delta (\xi) \widetilde f_+(\xi) \overline{r(\xi)}\right)\right|\lesssim \la \xi\ra^{-1} \widetilde \chi_{\delta/2}(\xi)(|\widetilde f|+|\partial_\xi \widetilde f|)$. Therefore, the first term is estimated by the Minkowski inequality
\begin{align*}
& \left\| \chi_{+}(x)  \int_{-\infty}^0 e^{  i( (\uo+\xi^2)t-x\xi)} \frac{i}{2\xi t-x} \partial_\xi \left( \widetilde \chi_\delta (\xi) \widetilde f_+(\xi) \overline{r(\xi)}\right)  \,d\xi \right\|_{L^2} \\
&\quad \lesssim \int_{-\infty}^0 \left| \partial_\xi \left( \widetilde \chi_\delta (\xi) \widetilde f_+(\xi) \overline{r(\xi)}\right)\right| \| \frac{\chi_+(x)}{2\xi t-x}\|_{L^2_x}d\xi \\
&\qquad \lesssim \int_{-\infty}^0 \la \xi\ra^{-1} \widetilde \chi_{\delta/2}(\xi)(|\widetilde f|+|\partial_x \widetilde f|)   \frac{1}{\sqrt{\delta t-2}}d\xi \\
&\quad \qquad \lesssim \frac{\| \widetilde f\|_{H^1}}{\sqrt{\delta t-2}}  \ \to 0
\end{align*}
as $t\to \infty$. The easier term, when the $\partial_\xi$ derivative hits $\frac{1}{2t\xi-x}$, is easier to estimate and we skip it. Therefore
$$
\| I_-\|_{L^2}\to 0 \qquad \mbox{as $t \to \infty$}.
$$
The term $I_+$ can be dealt with the exact same way, using that $\xi\geq \delta$ in the integrand and that $\chi_-$ localises on the set $x\leq 2$. Hence
$$
\| I\|_{L^2}\to 0 \qquad \mbox{as $t \to \infty$}.
$$
The term $II$ is analogous to $I$, since $|s(|\xi|)-1|\lesssim \la \xi \ra^{-1}$, so that, again by the same reasoning
$$
\| II\|_{L^2}\to 0 \qquad \mbox{as $t \to \infty$}.
$$
The third term $III$ has support within $[-2,2]$ as $\chi_++\chi_{-}-1$ vanishes for $|x|\geq 2$. For $|x|<2$ by the improved local decay of Lemma \ref{bergeronnette}
$$
|III|\lesssim \frac{1}{t} \| \widetilde \chi_\delta (\xi)\widetilde f_+(\xi) \|_{H^1_\xi}\lesssim \epsilon_1 t^{\alpha-1}.
$$
Hence
$$
\| III \|_{L^2}\to 0 \qquad \mbox{as $t \to \infty$}.
$$
Injecting that both $I$, $II$ and $III$ converge to $0$ in $L^2$ in \eqref{id:modified-scattering-decomposition-uS} shows \eqref{id:modified-scattering-uS}.

\medskip

\noindent \underline{The remainder terms.} We claim that
\begin{equation} \label{bd:u+-u-:modified-scattering}
\| u^R_{+}(t,\cdot )+ u^R_{-}(t,\cdot ) \|_{L^2}\to 0
\end{equation}
as $t\to \infty$. To show it, we first recall that $\psi_{+,1}^R$ is given by \eqref{alouette}. Using \eqref{guimauve} and 
\eqref{carambar}, one obtains that $u^R_+(t,\cdot)$ is of the form
$$
 \int e^{  i(\uo+\xi^2)t} \widetilde \chi_\delta (\xi) \widetilde f_+(\xi) \left( m_1(x,\xi)e^{i\xi x}+m_2(x,\xi)e^{-i\xi x}\right) \,d\x
$$
where $|\partial_x^k\partial_\xi^l  m_j|\lesssim e^{-\beta |x|} \langle \xi \rangle^{-l-1}$ for $j=1,2$. By the decay of Lemma \ref{aigrette}, this term is bounded by a multiple of $\epsilon_1t^{\alpha-1} e^{-\beta |x|} $, so that
$$
\| \int e^{  i(\uo+\xi^2)t} \widetilde \chi_\delta \widetilde f_+ \left(\widetilde m_1(x,\xi)e^{i\xi x}+\widetilde m_2(x,\xi)e^{-i\xi x}\right) \,d\x \|_{L^2}\lesssim \epsilon_1 t^{\alpha-1}\to 0.
$$
The term $u^R_{-}(t,\cdot ) $ can be bounded exactly as $u^R_{+}(t,\cdot )$. Hence \eqref{bd:u+-u-:modified-scattering}.

\noindent \underline{End of the proof}. Injecting \eqref{id:modified-scattering-uS} and \eqref{bd:u+-u-:modified-scattering} in \eqref{id:modified-scattering-decomposition} shows the desired identity \eqref{bd:modified-scattering-main}.

\end{proof}

\begin{appendix}

\section{Cauchy theory}
\label{CauchyTheory}

We here briefly give a local well-posedness result for \eqref{NLS} in the space $H^1\cap L^{2,1}$ for the sake of completeness. This is a standard result by Ginibre and Velo \cite{GV}. Solutions satisfy equation \eqref{NLS} in an integral sense, see \cite{GV} for details.

\begin{proposition} \label{pr:cauchy}

Assume $F\in C^3(\mathbb R)$. Then given any $M>0$, there exists $T(M)>0$ such that for any $v_0\in H^1\cap L^{2,1}$ with $\| v_0\|_{H^1\cap L^{2,1}}\leq M$, there exists a unique solution $v\in C([0,T(M)],H^1\cap L^{2,1})$ to \eqref{NLS}. Moreover, the map $v_0\mapsto v$ is continuous from the ball of $H^1\cap L^{2,1}$ of radius $M$ into $C([0,T(M)],H^1\cap L^{2,1})$.

\end{proposition}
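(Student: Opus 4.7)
The plan is to run a standard contraction mapping argument for the Duhamel formulation
\begin{equation*}
\Phi(v)(t) = e^{it \partial_x^2} v_0 + i \int_0^t e^{i(t-s)\partial_x^2} F'(|v(s)|^2) v(s) \, ds
\end{equation*}
in the Banach space $X_T := C([0,T], H^1 \cap L^{2,1})$ equipped with the norm $\|v\|_{X_T} = \sup_{t \in [0,T]} (\|v(t)\|_{H^1} + \|v(t)\|_{L^{2,1}})$. First I would record the two linear estimates for the free Schr\"odinger group: the isometry $\|e^{it\partial_x^2} w\|_{H^1} = \|w\|_{H^1}$, and, using the commutator identity $x e^{it\partial_x^2} = e^{it\partial_x^2}(x \cdot) + 2it\, e^{it\partial_x^2} \partial_x$, the bound $\|e^{it\partial_x^2} w\|_{L^{2,1}} \leq \|w\|_{L^{2,1}} + 2t \|\partial_x w\|_{L^2}$. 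Thus $\|e^{it\partial_x^2} w\|_{X_T} \lesssim (1+T)\|w\|_{H^1 \cap L^{2,1}}$.

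Next I would prove the nonlinear estimates. Using the 1D Sobolev embedding $H^1(\mathbb R) \hookrightarrow L^\infty$ together with the hypothesis $F \in C^3$, the pointwise bounds $|F'(r^2)| \lesssim_R 1$ and $|(F'(r^2))'| \lesssim_R |r|$ for $|r| \leq R$ give, for $\|v\|_{H^1} \leq R$,
\begin{equation*}
\|F'(|v|^2) v\|_{H^1} \lesssim_R \|v\|_{H^1}, \qquad \|F'(|v|^2) v\|_{L^{2,1}} \lesssim_R \|v\|_{L^{2,1}},
\end{equation*}
and, by a similar Taylor expansion of $F'$ with remainder bounded by $F''$ and $F'''$, the Lipschitz estimate
\begin{equation*}
\|F'(|v|^2) v - F'(|w|^2) w\|_{H^1 \cap L^{2,1}} \lesssim_R \|v-w\|_{H^1 \cap L^{2,1}}
\end{equation*}
whenever $\|v\|_{H^1}, \|w\|_{H^1} \leq R$. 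Combining these with the linear estimates, Minkowski in time gives
\begin{equation*}
\|\Phi(v)\|_{X_T} \lesssim (1+T) \|v_0\|_{H^1 \cap L^{2,1}} + T(1+T) C_R \|v\|_{X_T},
\end{equation*}
and an analogous bound for $\|\Phi(v) - \Phi(w)\|_{X_T}$.

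Fixing $R = 2C(1+1)M$ where $C$ is the constant in the first linear bound, and choosing $T = T(M) > 0$ small enough that the nonlinear contribution on the ball $B_R \subset X_T$ is bounded by $R/2$ and the Lipschitz constant is $\leq 1/2$, the map $\Phi$ becomes a contraction on $B_R \subset X_T$. The Banach fixed point theorem produces the unique solution $v \in C([0,T(M)], H^1 \cap L^{2,1})$, and the standard continuity-of-the-flow argument (applying the Lipschitz estimate to two solutions arising from close data) yields the continuous dependence statement.

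No step presents a serious obstacle: the only mild subtlety is that the $L^{2,1}$ norm is not preserved by the free flow but grows linearly in time, which is harmless on the short time interval $[0,T(M)]$; this is precisely why one obtains local rather than global well-posedness from this argument, and why one cannot simply use a fixed point on a ball of constant radius in $L^{2,1}$ without allowing the $(1+T)$ factor above.
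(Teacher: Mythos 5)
Your proposal is correct and takes essentially the same route as the paper: a Banach fixed point argument for the Duhamel formulation in $C([0,T],H^1\cap L^{2,1})$, using the linear growth in $t$ of the $L^{2,1}$ norm under the free flow (which you derive from the standard commutator identity, where the paper just cites ``standard Fourier analysis'') together with the Sobolev embedding $H^1(\mathbb R)\hookrightarrow L^\infty$ to get Lipschitz bounds on the nonlinearity. One small slip worth flagging: your Duhamel formula $\Phi(v)(t)=e^{it\partial_x^2}v_0+i\int_0^t e^{i(t-s)\partial_x^2}F'(|v|^2)v\,ds$ has the opposite signs from the paper's \eqref{NLS}, $i\partial_t v-\partial_x^2 v-F'(|v|^2)v=0$, whose correct Duhamel representation is $v(t)=e^{-it\partial_x^2}v_0-i\int_0^t e^{-i(t-s)\partial_x^2}F'(|v(s)|^2)v(s)\,ds$; this is purely a sign convention and does not affect any of the estimates or the fixed point scheme.
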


\begin{proof}
It follows \cite{GV}, Section 2. The desired solution is a fixed point of the integral equation
\begin{equation} \label{id:NLS-integral-equation}
u(t)=e^{-it\partial_x^2}u_0-i\int_0^t e^{-i(t-s)\partial_x^2}F'(|u(s)|^2)u(s)ds.
\end{equation}
The linear Schr\"odinger group satisfies the following continuity bound on $H^1\cap L^{2,1}$
$$
\| e^{-it\partial_x^2} u\|_{H^1}+\| e^{-it\partial_x^2} u\|_{L^{2,1}}\lesssim \langle t \rangle \left( \|  u\|_{H^1}+\|  u\|_{L^{2,1}}\right)
$$
which can be proved by standard Fourier analysis. Moreover, by a standard application of the Sobolev embedding, the application $u\mapsto F'(|u|^2)u$ maps continuously $H^1\cap L^{2,1}$ into itself, and is uniformly Lipschitz on bounded sets. One can then solve \eqref{id:NLS-integral-equation} by appealing to the Banach-Picard fixed point Theorem. We refer to \cite{GV} for details.
\end{proof}
\end{appendix}

\bibliographystyle{abbrv}
\bibliography{references}

\begin{thebibliography}{10}

\bibitem{BC}
H.~Berestycki and T.~Cazenave.
\newblock On the instability of stationary states in nonlinear
  {S}chr{\"o}dinger or {K}lein-{G}ordon equations.
\newblock {\em Comptes Rendus des Seances de l'Academie des Sciences. Serie 1},
  293(9):489--492, 1981.

\bibitem{BL}
H.~Berestycki and P.-L. Lions.
\newblock Nonlinear scalar field equations, i existence of a ground state.
\newblock {\em Archive for Rational Mechanics and Analysis}, 82:313--345, 1983.

\bibitem{BP}
V.~Buslaev and G.~Perelman.
\newblock Scattering for the nonlinear {S}chr\"odinger equation: states close
  to a soliton.
\newblock {\em Saint Petersburg Mathematics Journal}, 4:1111--1142, 1993.

\bibitem{BS}
V.~S. Buslaev and C.~Sulem.
\newblock On asymptotic stability of solitary waves for nonlinear
  {S}chr{\"o}dinger equations.
\newblock {\em Annales de l'Institut Henri Poincar{\'e} C}, 20(3):419--475,
  2003.

\bibitem{CL}
T.~Cazenave and P.-L. Lions.
\newblock Orbital stability of standing waves for some nonlinear
  {S}chr{\"o}dinger equations.
\newblock {\em Communications in Mathematical Physics}, 85:549--561, 1982.

\bibitem{CGNT}
S.-M. Chang, S.~Gustafson, K.~Nakanishi, and T.-P. Tsai.
\newblock Spectra of linearized operators for {NLS} solitary waves.
\newblock {\em SIAM Journal on Mathematical Analysis}, 39(4):1070--1111, 2008.

\bibitem{Chen}
G.~Chen.
\newblock Long-time dynamics of small solutions to 1$ d $ cubic nonlinear
  {S}chr{\"o}dinger equations with a trapping potential.
\newblock {\em arXiv preprint arXiv:2106.10106}, 2021.

\bibitem{ChenLiuLu}
G.~Chen, J.~Liu, and B.~Lu.
\newblock Long-time asymptotics and stability for the sine-{G}ordon equation.
\newblock {\em arXiv preprint arXiv:2009.04260}, 2020.

\bibitem{CP1}
G.~Chen and F.~Pusateri.
\newblock The 1-dimensional nonlinear {S}chr{\"o}dinger equation with a
  weighted l1 potential.
\newblock {\em Analysis \& PDE}, 15(4):937--982, 2022.

\bibitem{CP2}
G.~Chen and F.~Pusateri.
\newblock On the $1 $ d cubic {NLS} with a non-generic potential.
\newblock {\em arXiv preprint arXiv:2205.01487}, 2022.

\bibitem{CuMa}
S.~Cuccagna and M.~Maeda.
\newblock A survey on asymptotic stability of ground states of nonlinear
  {S}chrodinger equations ii.
\newblock {\em arXiv preprint arXiv:2009.00573}, 2020.

\bibitem{CPV}
S.~Cuccagna, D.~Pelinovsky, and V.~Vougalter.
\newblock Spectra of positive and negative energies in the linearized {NLS}
  problem.
\newblock {\em Communications on Pure and Applied Mathematics: A Journal Issued
  by the Courant Institute of Mathematical Sciences}, 58(1):1--29, 2005.

\bibitem{CP}
S.~Cuccagna and D.~E. Pelinovsky.
\newblock The asymptotic stability of solitons in the cubic {NLS} equation on
  the line.
\newblock {\em Applicable Analysis}, 93(4):791--822, 2014.

\bibitem{DZ}
P.~Deift and X.~Zhou.
\newblock Long-time asymptotics for solutions of the {NLS} equation with
  initial data in a weighted {S}obolev space. dedicated to the memory of
  {J}{\"u}rgen {K}. {M}oser.
\newblock {\em Communications in Pure and Applied Mathematics}, 56:1029--1077.

\bibitem{Delort}
J.-M. Delort.
\newblock Modified scattering for odd solutions of cubic nonlinear
  {S}chr{\"o}dinger equations with potential in dimension one.
\newblock 2016.

\bibitem{DelortMasmoudi}
J.-M. Delort and N.~Masmoudi.
\newblock Long-time dispersive estimates for perturbations of a kink solution
  of one- dimensional cubic wave equations.
\newblock 1, 2022.

\bibitem{ErSc}
M.~B. Erdo{\u{g}}an and W.~Schlag.
\newblock Dispersive estimates for {S}chr{\"o}dinger operators in the presence
  of a resonance and/or an eigenvalue at zero energy in dimension three: I.
\newblock {\em Dynamics of Partial Differential Equations}, 1(4):359--379,
  2004.

\bibitem{Germain}
P.~Germain.
\newblock Space-time resonances.
\newblock {\em Journ{\'e}es {\'e}quations aux d{\'e}riv{\'e}es partielles},
  pages 1--10, 2010.

\bibitem{GMS1}
P.~Germain, N.~Masmoudi, and J.~Shatah.
\newblock Global solutions for 3d quadratic {S}chr{\"o}dinger equations.
\newblock {\em International Mathematics Research Notices}, 2009(3):414--432,
  2009.

\bibitem{GMS2}
P.~Germain, N.~Masmoudi, and J.~Shatah.
\newblock Global solutions for the gravity water waves equation in dimension 3.
\newblock {\em Annals of Mathematics}, pages 691--754, 2012.

\bibitem{GP}
P.~Germain and F.~Pusateri.
\newblock Quadratic {K}lein-{G}ordon equations with a potential in one
  dimension.
\newblock In {\em Forum of Mathematics, Pi}, volume~10, page e17. Cambridge
  University Press, 2022.

\bibitem{GPR}
P.~Germain, F.~Pusateri, and F.~Rousset.
\newblock The nonlinear {S}chr{\"o}dinger equation with a potential.
\newblock In {\em Annales de l'Institut Henri Poincar{\'e} C, Analyse non
  lin{\'e}aire}, volume~35, pages 1477--1530. Elsevier, 2018.

\bibitem{GPZ}
P.~Germain, F.~Pusateri, and K.~Z. Zhang.
\newblock On 1d quadratic {K}lein--{G}ordon equations with a potential and
  symmetries.
\newblock {\em Archive for Rational Mechanics and Analysis}, 247(2):17, 2023.

\bibitem{GV}
J.~Ginibre and G.~Velo.
\newblock On a class of non linear {S}chr{\"o}dinger equations. iii. special
  theories in dimensions 1, 2 and 3.
\newblock In {\em Annales de l'institut Henri Poincar{\'e}. Section A, Physique
  Th{\'e}orique}, volume~28, pages 287--316, 1978.

\bibitem{Gr}
M.~Grillakis.
\newblock Analysis of the linearization around a critical point of an infinite
  dimensional hamiltonian system.
\newblock {\em Communications on Pure and Applied Mathematics}, 43(3):299--333,
  1990.

\bibitem{GSS}
M.~Grillakis, J.~Shatah, and W.~Strauss.
\newblock Stability theory of solitary waves in the presence of symmetry, i.
\newblock {\em Journal of functional analysis}, 74(1):160--197, 1987.

\bibitem{GSS2}
M.~Grillakis, J.~Shatah, and W.~Strauss.
\newblock Stability theory of solitary waves in the presence of symmetry, ii.
\newblock {\em Journal of functional analysis}, 94(2):308--348, 1990.

\bibitem{GNT}
S.~Gustafson, K.~Nakanishi, and T.-P. Tsai.
\newblock Scattering theory for the gross--pitaevskii equation in three
  dimensions.
\newblock {\em Communications in Contemporary Mathematics}, 11(04):657--707,
  2009.

\bibitem{HN}
N.~Hayashi and P.~I. Naumkin.
\newblock Asymptotics for large time of solutions to the nonlinear
  {S}chr{\"o}dinger and hartree equations.
\newblock {\em American Journal of Mathematics}, 120(2):369--389, 1998.

\bibitem{HS}
P.~D. Hislop and I.~M. Sigal.
\newblock {\em Introduction to spectral theory: With applications to
  {S}chr{\"o}dinger operators}, volume 113.
\newblock Springer Science \& Business Media, 2012.

\bibitem{HL}
D.~Hundertmark and Y.-R. Lee.
\newblock Exponential decay of eigenfunctions and generalized eigenfunctions of
  a non-self-adjoint matrix {S}chr{\"o}dinger operator related to {NLS}.
\newblock {\em Bulletin of the London Mathematical Society}, 39(5):709--720,
  2007.

\bibitem{IT}
M.~Ifrim and D.~Tataru.
\newblock Global bounds for the cubic nonlinear {S}chr{\"o}dinger equation
  ({NLS}) in one space dimension.
\newblock {\em Nonlinearity}, 28(8):2661, 2015.

\bibitem{KairzhanPusateri}
A.~Kairzhan and F.~Pusateri.
\newblock Asymptotic stability near the soliton for quartic {K}lein-{G}ordon in
  1d.
\newblock {\em arXiv preprint arXiv:2206.15008}, 2022.

\bibitem{KP}
J.~Kato and F.~Pusateri.
\newblock A new proof of long-range scattering for critical nonlinear
  {S}chr\"odinger equations.
\newblock {\em Differential Integral Equations}, 24:923--940, 2011.

\bibitem{KK2}
E.~Kopylova and A.~Komech.
\newblock On asymptotic stability of kink for relativistic {G}inzburg-{L}andau
  equations.
\newblock {\em Archive for rational mechanics and analysis}, 202:213--245,
  2011.

\bibitem{KK1}
E.~Kopylova and A.~Komech.
\newblock On asymptotic stability of moving kink for relativistic
  {G}inzburg-{L}andau equation.
\newblock {\em Communications in mathematical physics}, 302:225--252, 2011.

\bibitem{KowalczykMartelMunoz1}
M.~Kowalczyk, Y.~Martel, and C.~Mu{\~n}oz.
\newblock Kink dynamics in the $\phi^4$ model: asymptotic stability for odd
  perturbations in the energy space.
\newblock {\em Journal of the American Mathematical Society}, 30(3):769--798,
  2017.

\bibitem{KowalczykMartelMunoz1bis}
M.~Kowalczyk, Y.~Martel, and C.~Mu{\~n}oz.
\newblock On asymptotic stability of nonlinear waves.
\newblock {\em S{\'e}minaire Laurent Schwartz—EDP et applications}, pages
  1--27, 2017.

\bibitem{KowalczykMartelMunoz2}
M.~Kowalczyk, Y.~Martel, and C.~Mu{\~n}oz.
\newblock Soliton dynamics for the 1d {NLKG} equation with symmetry and in the
  absence of internal modes.
\newblock {\em Journal of the European Mathematical Society}, 24(6):2133--2167,
  2021.

\bibitem{KMMV}
M.~Kowalczyk, Y.~Martel, C.~Mu{\~n}oz, and H.~Van Den~Bosch.
\newblock A sufficient condition for asymptotic stability of kinks in general
  (1+ 1)-scalar field models.
\newblock {\em Annals of PDE}, 7:1--98, 2021.

\bibitem{KriegerNakanishiSchlag}
J.~Krieger, K.~Nakanishi, and W.~Schlag.
\newblock Global dynamics above the ground state energy for the one-dimensional
  nlkg equation.
\newblock {\em Mathematische Zeit{S}chrift}, 272:297--316, 2012.

\bibitem{KS}
J.~Krieger and W.~Schlag.
\newblock Stable manifolds for all monic supercritical focusing nonlinear
  {S}chr{\"o}dinger equations in one dimension.
\newblock {\em Journal of the American Mathematical Society}, 19(4):815--920,
  2006.

\bibitem{LLSS}
H.~Lindblad, J.~L{\"u}hrmann, W.~Schlag, and A.~Soffer.
\newblock On modified scattering for 1d quadratic klein--gordon equations with
  non-generic potentials.
\newblock {\em International Mathematics Research Notices}, 2023(6):5118--5208,
  2023.

\bibitem{LLS2}
H.~Lindblad, J.~L\"uhrmann, and A.~Soffer.
\newblock Decay and asymptotics for the one-dimensional klein--gordon equation
  with variable coefficient cubic nonlinearities.
\newblock {\em SIAM Journal on Mathematical Analysis}, 52(6):6379--6411, 2020.

\bibitem{LLS1}
H.~Lindblad, J.~L{\"u}hrmann, and A.~Soffer.
\newblock Asymptotics for 1d {K}lein-{G}ordon equations with variable
  coefficient quadratic nonlinearities.
\newblock {\em Archive for Rational Mechanics and Analysis}, 241(3):1459--1527,
  2021.

\bibitem{LS}
H.~Lindblad and A.~Soffer.
\newblock Scattering and small data completeness for the critical nonlinear
  {S}chr\"odinger equation.
\newblock {\em Nonlinearity}, 19:345--353, 2006.

\bibitem{LuhrmannSchlag1}
J.~Luhrmann and W.~Schlag.
\newblock Asymptotic stability of the sine-{G}ordon kink under odd
  perturbations.
\newblock {\em arXiv preprint arXiv:2106.09605}, 2021.

\bibitem{LuhrmannSchlag2}
J.~Luhrmann and W.~Schlag.
\newblock On codimension one stability of the soliton for the 1d focusing cubic
  {K}lein-{G}ordon equation.
\newblock {\em arXiv preprint arXiv:2302.05273}, 2023.

\bibitem{Martel}
Y.~Martel.
\newblock Asymptotic stability of solitary waves for the 1d cubic-quintic
  {S}chr{\"o}dinger equation with no internal mode.
\newblock {\em Probability and Mathematical Physics}, 3(4):839--867, 2023.

\bibitem{Mizumachi}
T.~Mizumachi.
\newblock Asymptotic stability of small solitary waves to 1d nonlinear
  {S}chr{\"o}dinger equations with potential.
\newblock {\em Journal of Mathematics of Kyoto University}, 48(3):471--497,
  2008.

\bibitem{Murphy}
J.~Murphy.
\newblock A review of modified scattering for the 1d cubic {NLS}.
\newblock {\em RIMS Kokyuroku Bessatsu}, 2021.

\bibitem{Naumkin}
I.~Naumkin.
\newblock Sharp asymptotic behavior of solutions for cubic nonlinear
  {S}chr{\"o}dinger equations with a potential.
\newblock {\em Journal of Mathematical Physics}, 57(5):051501, 2016.

\bibitem{PKA}
D.~E. Pelinovsky, Y.~S. Kivshar, and V.~V. Afanasjev.
\newblock Internal modes of envelope solitons.
\newblock {\em Physica D: Nonlinear Phenomena}, 116(1-2):121--142, 1998.

\bibitem{SS}
J.~Shatah and W.~Strauss.
\newblock Instability of nonlinear bound states.
\newblock {\em Communications in Mathematical Physics}, 100(2):173--190, 1985.

\bibitem{Soffer}
A.~Soffer.
\newblock Soliton dynamics and scattering.
\newblock In {\em International congress of mathematicians}, volume~3, pages
  459--471, 2006.

\bibitem{VB}
L.~Vega and V.~Banica.
\newblock Scattering for 1d cubic {NLS} and singular vortex dynamics.
\newblock {\em Journal of the European Mathematical Society}, 14(1):209--253,
  2011.

\bibitem{Weinstein}
M.~I. Weinstein.
\newblock Modulational stability of ground states of nonlinear
  {S}chr{\"o}dinger equations.
\newblock {\em SIAM journal on mathematical analysis}, 16(3):472--491, 1985.

\bibitem{Weinstein2}
M.~I. Weinstein.
\newblock Lyapunov stability of ground states of nonlinear dispersive evolution
  equations.
\newblock {\em Communicatoins in Pure and Applied Mathematics}, 39(3):51--68,
  1986.

\end{thebibliography}

\end{document}